\newtheorem{theorem}{Theorem}[section]
\newtheorem{lemma}[theorem]{Lemma}
\newtheorem{proposition}[theorem]{Proposition}
\newtheorem{corollary}[theorem]{Corollary}
\newtheorem{definition}[theorem]{Definition}
\theoremstyle{remark}
\newtheorem{remark}[theorem]{Remark}
\newtheorem{convention}[theorem]{Convention}
\title{From cascades to \texorpdfstring{$J$}{J}-holomorphic curves and back}
\newcommand{\mcal}{\mathcal}
\newcommand{\bb}{\mathbb}
\newcommand{\ep}{\epsilon}
\newcommand{\db}{\bar{\partial}}
\newcommand{\p}{\partial}
\newcommand{\dt}{\delta}
\newcommand{\la}{\langle} 
\newcommand{\ra}{\rangle }
\newcommand{\mf}{\mathbf}
\newcommand{\op}{\operatorname}
\newcommand{\cas}[1]{#1^{\text{\Lightning}}}
\author{Yuan Yao}
\begin{document}
\maketitle
\begin{abstract}
    This paper develops the analysis needed to set up a Morse-Bott version of embedded contact homology (ECH) of a contact three-manifold in certain cases. In particular we establish a correspondence between ``cascades" of holomorphic curves in the symplectization of a Morse-Bott contact form, and holomorphic curves in the symplectization of a nondegenerate perturbation of the contact form. The cascades we consider must be transversely cut out and rigid. We accomplish this by studying the adiabatic degeneration of $J$-holomorphic curves into cascades and establishing a gluing theorem. We note our gluing theorem satisfying appropriate transversality hypotheses should work in higher dimensions as well. The details of ECH applications will appear elsewhere.
\end{abstract}

\tableofcontents

\section{Introduction}
Let $(Y^3, \lambda)$ be a contact 3-manifold. We assume the Reeb orbits of $\lambda$ are Morse-Bott and come in $S^1$ families, i.e. we have tori foliated by Reeb orbits, which we call Morse-Bott tori. Examples of this include the standard contact structure on the 3-torus, and boundaries of toric domains. See \cite{ECHT3}, \cite{embedding_convex_concave}. (Toric domains are also called Reinhardt domains in \cite{Hermann}.)

In this setup, one would like to make sense of Floer theoretic invariants constructed via counting $J$-holomorphic curves in the symplectization of our contact manifold, which we write as
\[
\left( Y^3\times \bb{R}, d(e^a \lambda), J\right ).
\]
In the above $a$ is the variable in the $\bb{R}$ direction, $d(e^a \lambda)$ is the symplectic form and $J$ is a (generic) almost complex structure compatible with $\lambda$.

However, most versions of Floer homology require the contact form to be non-degenerate. One way to get around this is as follows. We first fix a very large number $L>0$, and consider the action filtered version of our Floer theory up to action $L$. We will have embedded contact homology (ECH) in mind when we describe this process, but it also applies to other types of Floer theories assuming suitable transversality. For a Morse-Bott torus with action less than $L$, which we write as $\mcal{T}$, we perform a small perturbation of the contact form $\lambda$ written as
\[
\lambda \longrightarrow \lambda_\dt
\]
for $\dt>0$ small, in a small fixed neighborhood of $\mcal{T}$. Such perturbation requires the information of a Morse function $f:S^1 \rightarrow \bb{R}$, with two critical points. After this perturbation we also need to change the almost complex structure to $J_\dt$ to make it compatible with the new contact form $ \lambda_\dt$.

The effect of this perturbation is so that the Morse-Bott torus $\mcal{T}$ splits into two nondegenerate Reeb orbits corresponding to the critical points of $f$, one elliptic and one hyperbolic, and that no other Reeb orbits of action less than $L$ are introduced. We perform this perturbation for all Morse-Bott tori of action less than $L$. Then in this case, for at least up to action $L$, we can define our Floer theory with generators as collections of non-degenerate Reeb orbits with total action $<L$ and the differential as counts of $J_\dt$-holomorphic curves connecting between our generators (the details of which Reeb orbits/holomorphic curves to consider depend on whichever Floer theory we choose to work with.) 

However, it is often desirable to be able to compute our Floer theory purely in the Morse-Bott setting, in part because often the count of $J$-holomorphic curves is easier in the Morse-Bott setting. To this end, in order to find out what kind of objects that ought to be counted in the Morse-Bott setting, one can imagine turning the above process around. For given $\dt>0$, we know how to compute our Floer theory up to action $L$ with the contact form $\lambda_\dt$ via counts of a collection of $J_\dt$-holomorphic curves, then we take the limit of $\dt\rightarrow 0$, and see what kind of object our $J_\dt$-holomorphic curves degenerate into. It turns out in this process $J$-holomorphic curves degenerate into cascades \cite{BourPhd}, \cite{oancea},\cite{urslag}, \cite{SFT}. See Definition \ref{cascade_def} for the definition of a (height 1) cascade, and Definition \ref{def height k cascade} for the more general case. For the purposes of our paper we only need to consider height 1 cascades. See Section \ref{degenerations} and the Appendix for a fuller explanation of setup and more precise definition of degeneration of $J_\dt$-holomorphic curves into cascades.

Roughly speaking, a cascade $\cas{u} =\{u^1,...,u^n\}$ consists of a sequence of (not necessarily connected) $J$-holomorphic curves with ends on Morse-Bott tori. We think of the curves $u^i$ as living on different levels (for more precise definitions of level and height and their distinctions, see Definitions \ref{cascade_def} and \ref{def height k cascade}.)  Between adjacent levels, say $u^i$ and $u^{i+1}$, there is also the data of a number $T_i\in [0,\infty]$. The negative ends of $u^i$ and positive ends of $u^{i+1}$ are connected by gradient flow segments of length $T_i$. Said differently, recall each $S^1$ family of Reeb orbits is equipped with a Morse function $f$ on $S^1$, and if we start at a Reeb orbit reached by a positive puncture of $u^{i+1}$, follow the upwards gradient flow of $f$ on $S^1$ (this $S^1$ means the $S^1$ family of Reeb orbits) for time $T_i$, we will arrive at a Reeb orbit hit by a negative puncture of $u^{i}$. The Reeb orbits hit by the positive punctures of $u^1$ and negative punctures of $u^n$ are connected to Reeb orbits on the Morse-Bott tori corresponding to critical points of $f$ via the upwards gradient flow. The definition of cascade being height 1 is simply that no flow time $T_i$ between adjacent curves $u^i$ and $u^{i+1}$ is allowed to be infinite. A schematic picture of a height 1 cascade (of two levels) is given in Figure \ref{fig:cascade}.

\begin{figure}[!hb]
\centering
\includegraphics[width=.4\linewidth]{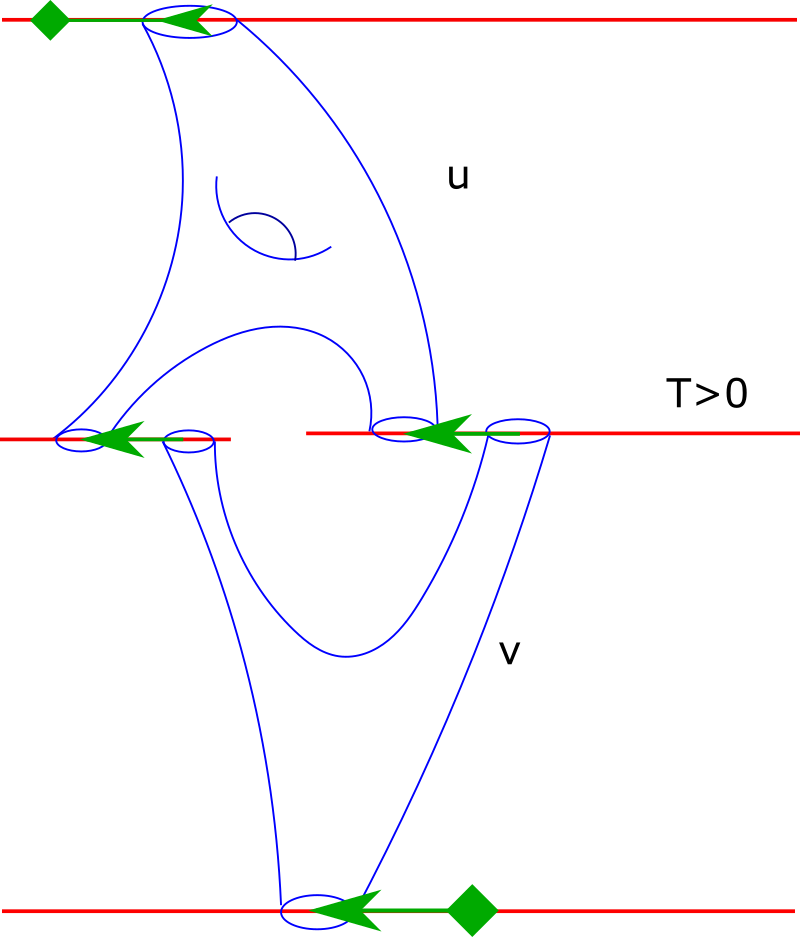}
\caption{A schematic picture of a height one 2-level  cascade: the cascade $\cas{u}$ consists of two levels, $u$ and $v$. Horizontal lines correspond to Morse-Bott tori. Moving in the horizontal direction along these horizontal lines corresponds to moving to different Reeb orbits in the same $S^1$ family. Arrows correspond to gradient flows, and diamonds correspond to critical points of Morse functions on $S^1$ families of Reeb orbits. Between the holomorphic curves $u$ and $v$, there is a single parameter $T$ that tells us how long positive ends of $v$ must follow the gradient flow to meet a negative end of $u$.}
\label{fig:cascade}
\end{figure}

We would then like a way to compute Floer homology purely in the Morse-Bott setting via enumeration of cascades. To prove that enumeration of cascades recovers the enumeration of $J_\dt$-holomorphic curves in the non-degenerate setting, we would require a correspondence theorem between the two types of objects. The correspondence theorem will of course then involve gluing cascades into $J_\dt$-holomorphic curves. We remark that we currently do not have the technology to glue together all cascades; there are issues pertaining to transversality: curves could be multiply covered, and even if they are somewhere injective and even after generic choice of $J$, there could still be non-transverse cascades because we required all negative ends of $u^i$ meet positive ends of $u^{i+1}$ after flowing for a single time length, $T_i$. In general it is convenient to think of a cascade as existing in a fiber product, and we require the fiber product to be transverse. Also we only concern ourselves with rigid cascades, and their correspondences with rigid holomorphic curves. For a more precise definition of transverse and rigid, as well as the description of this fiber product, see Definition \ref{def_rigid_cas}. Our version of the gluing theorem should work for gluing higher index (transversely cut out) cascades, but making sense of a correspondence between two high dimensional moduli spaces could be much trickier. With the above preamble we state in a slightly imprecise way our main theorem:

\begin{theorem}
Given a transverse and rigid height one $J$-holomorphic cascade $\cas{u}$, it can be glued to a rigid $J_\dt$-holomorphic curve $u_\dt$ for $\dt>0$ sufficiently small. The construction is unique in the following sense: if $\{\dt_n\}$ is a sequence of numbers that converge to zero as $n\rightarrow \infty$, and $u'_{\dt_n}$ is sequence of $J_{\dt_n}$-holomorphic curves converging to $\cas{u}$, then for large enough $n$, the curves $u_{\dt_n}'$ agree with $u_{\dt_n}$ up to translation in the symplectization direction.
\end{theorem}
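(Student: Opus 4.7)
The plan is to carry out gluing in the standard four-step pattern---pre-gluing, uniform linear analysis, Newton iteration, and uniqueness via compactness---while taking special care that every estimate is uniform in $\dt$, since both $\lambda_\dt$ and $J_\dt$ degenerate as $\dt\to 0$. The hard part will be constructing a right inverse to the linearised $\db_{J_\dt}$ at the pre-glued solution whose norm does not blow up in this limit.

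\textbf{Pre-gluing.} Given the transverse rigid cascade $\cas{u}=\{u^1,\ldots,u^n\}$ with flow parameters $T_1,\ldots,T_{n-1}\in[0,\infty)$, I first translate each level in the $\bb{R}$-direction so that its asymptotic orbits align with the prescribed points on the $S^1$-families, and then splice them together along long cylindrical necks lying over the perturbed Morse--Bott tori. Using the explicit model of $\lambda_\dt$ in a fixed tubular neighborhood of each $\mcal{T}$, the neck length between $u^i$ and $u^{i+1}$ is chosen to be $R_i(\dt)\sim T_i/\dt$ so that the $\lambda_\dt$-Reeb flow inside the perturbed region realises the gradient flow of $f$ on $S^1$ for time exactly $T_i$. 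A partition-of-unity construction with cutoffs supported in these necks produces an approximate solution $u_\dt^{\mathrm{pre}}$ whose $\db_{J_\dt}$-error, measured in weighted Sobolev norms with weights adapted to the eigenvalues (of order $\dt$) of the asymptotic operators at the new orbits $e_\dt, h_\dt$, is exponentially small in $\min_i R_i(\dt)$.

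\textbf{Uniform linear analysis.} The Fredholm theory for each level operator $D_{u^i}$ in weighted spaces produces a cokernel generated by the $S^1$-directions at its ends, so the direct sum $\bigoplus_i D_{u^i}$ has cokernel of dimension equal to the number of matching conditions between consecutive levels. The assumption that $\cas{u}$ is transverse and rigid says exactly that these $S^1$-cokernels are killed by the linearisation of the gradient-flow matching map, i.e.\ that the evaluation fiber product defining the cascade moduli space is cut out transversely with zero-dimensional tangent space. I will build a right inverse $Q_\dt$ of $D_{u_\dt^{\mathrm{pre}}}$ by combining level-wise approximate right inverses with a correction term that uses this transversality to absorb the Morse--Bott cokernel, patched together through the necks. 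The main obstacle is verifying that the patching error is small enough in $\dt$ for a Neumann series to converge and for $\|Q_\dt\|$ to stay bounded independently of $\dt$; this hinges on choosing the exponential weights to lie strictly between $0$ and the smallest nonzero eigenvalue of the asymptotic operator of $\lambda_\dt$, and on uniform control of the exponential decay constants on each level as $\dt\to 0$.

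\textbf{Newton iteration and uniqueness.} With a uniformly bounded $Q_\dt$ and the standard quadratic estimate for $\db_{J_\dt}$, a contraction mapping argument will produce a genuine $J_\dt$-holomorphic curve $u_\dt$ in a small $Q_\dt$-controlled neighborhood of $u_\dt^{\mathrm{pre}}$, and the same estimate shows that $u_\dt$ is transversely cut out and hence rigid. For uniqueness, the cascade compactness recalled earlier in the paper places any sequence $u'_{\dt_n}\to \cas{u}$, after a suitable $\bb{R}$-translation, inside this small neighborhood of $u_{\dt_n}^{\mathrm{pre}}$ for all $n$ large; uniqueness of the contraction-mapping fixed point then forces $u'_{\dt_n}=u_{\dt_n}$ up to translation.
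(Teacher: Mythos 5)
Your outline is the ``single pre-gluing + single Newton iteration'' scheme of Bourgeois--Oancea, which is precisely the approach the paper states it was unable to adapt, and the reason is a concrete obstruction that your proposal does not address. The approximate cokernel of $\bigoplus_i D_{u^i}$ in the Morse--Bott directions is killed, as you say, by varying the asymptotic positions of the ends on the $S^1$-families of orbits and the relative $\bb{R}$-shifts of the levels. But a displacement of size $\ep$ of an end along a Morse--Bott torus changes the flow time to the next level by $O(\ep)$ and hence changes the length of the corresponding neck by $O(\ep/\dt)$ (the gradient flow of $\dt f$ is slow). So the ``correction term that absorbs the Morse--Bott cokernel'' is not a bounded perturbation on a fixed weighted Sobolev space over a fixed pre-glued domain: it moves you to a drastically different pre-gluing. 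A Neumann series / contraction argument on a fixed Banach space cannot see this, and this is exactly where your Step 2 has a gap. The paper's workaround is to keep the matching parameters $(r,a,p)$ as finite-dimensional gluing parameters on which the pre-gluing itself depends, to write one equation $\Theta$ per curve and per gradient cylinder, to solve the gradient-cylinder equations first for each fixed parameter value, and then to prove (via delicate exponential decay estimates for the solution and for its derivative in $p$, Sections \ref{expdecay} and \ref{gluing}) that the feedback of these solutions into the curve-level equations is $C^1$-small in the parameters, so that a final contraction closes. Without some substitute for those decay-in-$p$ estimates, your iteration does not converge uniformly in $\dt$.

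A second, related problem is your choice of weights ``strictly between $0$ and the smallest nonzero eigenvalue of the asymptotic operator of $\lambda_\dt$.'' At the perturbed orbits $e_\dt,h_\dt$ the eigenvalues that were zero in the Morse--Bott limit become of size $O(\dt)$, so such a weight tends to $0$ with $\dt$; over a neck of length $T_i/\dt$ this yields a decay factor $e^{-O(\dt)\cdot O(1/\dt)}=e^{-O(1)}$, which is not small, and your claim that the pre-gluing error is ``exponentially small in $\min_i R_i(\dt)$'' fails (the error also contains $O(\dt)$ terms from $J_\dt-J$ on the thick parts, which are only polynomially small). The paper instead fixes a $\dt$-independent weight $d$ below the first nonzero eigenvalue of the \emph{Morse--Bott} operator, uses a tent-shaped weight profile over each finite neck, and treats the three zero-eigenvalue directions explicitly (the asymptotic vectors $V_\Gamma$ and the codimension-three subspace $H_0$ complementing $\ker D_{J_\dt}$ over each gradient cylinder). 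Your uniqueness step inherits the same issues: the paper must first show that a converging sequence $u'_{\dt_n}$ can be written as a solution of the coupled system lying in the prescribed subspaces (adjusting the gluing parameters using the asymptotic estimates of Section \ref{behav}) before invoking uniqueness of the fixed point.
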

See Definition \ref{def_rigid_cas} for the description of  ``transverse and rigid''. See Theorem \ref{main_theorem} for a more precise formulation of this theorem.

\begin{remark}
The purpose of Morse-Bott theory is usually that $J$-holomorphic curves are often more easily enumerated in the Morse-Bott setting due to presence of symmetry. While cascades only contain $J$-holomorphic curves in the Morse-Bott situation, counting them explicitly can be difficult in its own way. Even though rigid and transverse cascades are themselves discrete, they may be built out of curves that live in high dimensional moduli spaces. Since in principle arbitrarily high dimensions of moduli spaces can show up, one usually needs some extra simplifications for the enumeration of cascades to be tractable.
\end{remark}

\begin{remark}
Since we will have future applications to ECH in mind, we make some comments about our ``transverse and rigid'' condition versus the ECH index 1 condition:
\begin{itemize}
    \item In general restricting to cascades that have ECH index one (of course one first needs to extend the notion of ECH index one to cascades) and choosing a generic $J$  does not necessarily imply the cascades we get are transversely cut out. However there are special cases where transversality can be achieved by restricting to ECH index one cascades, and the correspondence theorem (Theorem \ref{main_theorem}) would allow us to compute ECH using an enumeration of $J$-holomorphic cascades. Work in this direction is forthcoming in \cite{Tips}.
    \item If we already had cascades that are transverse and rigid, from a gluing point of view, further restricting to the cascades that have ECH index one does not change very much: it just implies all the curves in the cascade are embedded (with the exception of unbranched covers of trivial cylinders) and distinct curves within each level do not intersect each other. We further have some partition conditions on the ends of holomorphic curves in the cascade, but again from a gluing point of view this does not make a difference.
\end{itemize}
\end{remark}

\subsection{Relations to other work}

The idea of doing Morse-Bott homology certainly isn't new. Methods of working with Morse-Bott homology predate the construction of cascades, and were described in \cite{austinbraam}, \cite{Fukaya}. The construction of cascades was discovered independently in \cite{BourPhd} and \cite{urslag}. There were then a plethora of constructions of Floer-type theories using cascades (or in many cases, constructions very similar to cascades). For Lagrangian Floer theory, in addition to \cite{urslag}, there was also \cite{biran2007quantum}. For symplectic homology, see \cite{oancea}. See also \cite{ohke}. For Morse homology, see \cite{Morse-cas}, \cite{3approach}. For special cases of contact homology, see \cite{nelhuthyper},\cite{Nelauto}. For the special case of ECH where the cascades can only have one level, see \cite{colin2021embedded}. For abstract perspectives on Morse-Bott theory, see \cite{zhengyi}, \cite{JoMiAx}. Finally, the gauge theory analogue of ECH, monopole Floer homology, has a Morse-Bott version constructed in \cite{lin}, though there they do not use a cascade model.

For cascades there are two general approaches to show the Morse-Bott homology theory constructed agrees with the original homology theory. One way is to show the differential obtained via counts of cascades squares to zero, hence one has \emph{some} homology theory. Then one shows that this homology theory is isomorphic to the original by constructing a cobordism interpolating the Morse-Bott geometry and the non-degenerate geometry. For standard Floer theory reasons this cobordism induces a cobordism map between the two homology groups. Also for standard Floer theory reasons we could show this cobordism map induces an isomorphism on homology. This is the approach taken in \cite{urslag}, \cite{biran2007quantum}.

The other approach is to directly show that non-degenerate holomorphic curves degenerate into cascades in the $\dt\rightarrow 0$ limit, and there is a correspondence between cascades and holomorphic curves. This degeneration of holomorphic curves into cascades is also sometimes called the adiabatic limit. This approach of computing Morse-Bott homology is taken in \cite{oancea} \cite{BourPhd} \cite{Morse-cas} \cite{ohke}. This is also the approach we take here. We prove the correspondence theorem under transversality assumptions (Definition \ref{def_rigid_cas}), and will take up applications to ECH in a separate paper \cite{Tips}.

The reason we take the latter approach is that in ECH, which is the application we have in mind, everything except transversality is very hard. That the differential squares to zero requires 200 pages of obstruction bundle gluing calculations \cite{obs1} \cite{obs2}, and a similar story must be repeated in the Morse-Bott case for showing the count of ECH index 1 cascades defines a chain complex. Constructing cobordism maps in ECH is even harder, and generally relies on passing to Seiberg-Witten theory. Cobordism maps on ECH defined purely using holomorphic curves techniques have only been worked out for very special cases \cite{rooney2020cobordism}, \cite{chenthesis},\cite{Gerig1},\cite{Gerig2}. Hence in light of these difficulties, it would seem the path of least resistance would be to prove a correspondence theorem and do the adiabatic limit analysis for ECH, despite this being a generally difficult approach.

We must highlight the relation of our work with \cite{oancea}, which produces a correspondence theorem in the case of symplectic homology. We borrowed heavily the techniques of that paper in the areas of analysis of linear operators over gradient flow trajectories (most notably the construction of uniformly bounded right inverses in the $\dt \rightarrow 0$ limit), as well as the degeneration of holomorphic curves into gradient trajectories near Morse-Bott tori. Both of these ideas have previously appeared in \cite{BourPhd} but were worked out in more detail in \cite{oancea}. However, our construction of gluing is markedly different from \cite{oancea}, as we were unfortunately unable to adapt their approach. Instead, our approach of both gluing and proving the gluing procedure produces a bijection between cascades and holomorphic curves mirrors the approach of \cite{obs1} \cite{obs2}, the two papers where Hutchings and Taubes show the differential in ECH squares to zero using obstruction bundle gluing. In particular, our approach can in fact be rephrased in terms of obstruction bundle gluing, see Remark \ref{obsrmk}, though in our case the obstruction bundle gluing is particularly simple and can be thought of as an application of the intermediate value theorem. For a formulation of this kind of gluing results in a simpler case in ECH where there is only 1 level in our cascades using obstruction bundle gluing, see the Appendix of \cite{colin2021embedded}, which we wrote jointly with Colin, Ghiggini and Honda.

We briefly outline the differences between our approach to gluing compared to those in \cite{oancea}, \cite{ohke}. In \cite{oancea}, \cite{ohke}, the gradient trajectories connecting different levels of the cascade are preglued to the $J$-holomorphic curves in the cascade; they consider the deformations of the entire preglued curve, and use the implicit function theorem to obtain gluing results. In our approach, in following the approach of \cite{obs1}, \cite{obs2}, the condition that a cascade can be glued to a $J_\dt$-holomorphic curve is translated into a \emph{system} of coupled nonlinear PDEs, which we loosely write as $\{\mathbf{\Theta}_i=0\}$. Gluing is established by systematically solving this system of PDEs. How this is accomplished is explained first in a simplified setting in Section \ref{semiglue}, then in the general case in Section \ref{gluing}. 

To say a bit more about this system of PDEs, we note that in this system there is a PDE for each $J$-holomorphic curve that appears in the cascade, and a PDE for each upwards gradient trajectory. In some sense this allows us to think about deformations of $J$-holomorphic curves and deformations of gradient trajectories separately from each other (of course in the end the equations are coupled, so this is only metaphorically true). The point is that in considering cascades, gradient trajectories and $J$-holomorphic curves are in some sense different kinds of objects. For small values of $\dt>0$, measured in a suitable norm, $J$-holomorphic curves in the cascade are very close to being $J_\dt$-holomorphic curves in the perturbed picture, but the gradient flow trajectories in the cascade come from very long gradient flow cylinders (their lengths go to $\infty$ as $\dt \rightarrow +\infty)$ that follow a very slow gradient flow (the gradient flow of $\dt f$). For a description of these cylinders see Section \ref{diffgeo}. The consequence of this is that deformations that appear to be small from the perspective of $J$-holomorphic curves in the cascade can be extremely large from the perspective of gradient flow cylinders in the cascade. See Figure \ref{fig:displace_figure} and the accompanying explanations. Hence one benefit of our approach in writing down a system of equations $\{\mathbf{\Theta}_i=0\}$ is that then it becomes easy to keep track of which deformations are very large, and hence easy to understand the effects of these deformations on the equations in $\{\mathbf{\Theta}_i=0\}$ and the way different equations in the system $\{\mathbf{\Theta}_i=0\}$ are coupled to each other.

\begin{figure}[!h]
\centering
\begin{subfigure}{.5\textwidth}
  \centering
  \includegraphics[width=.7\linewidth]{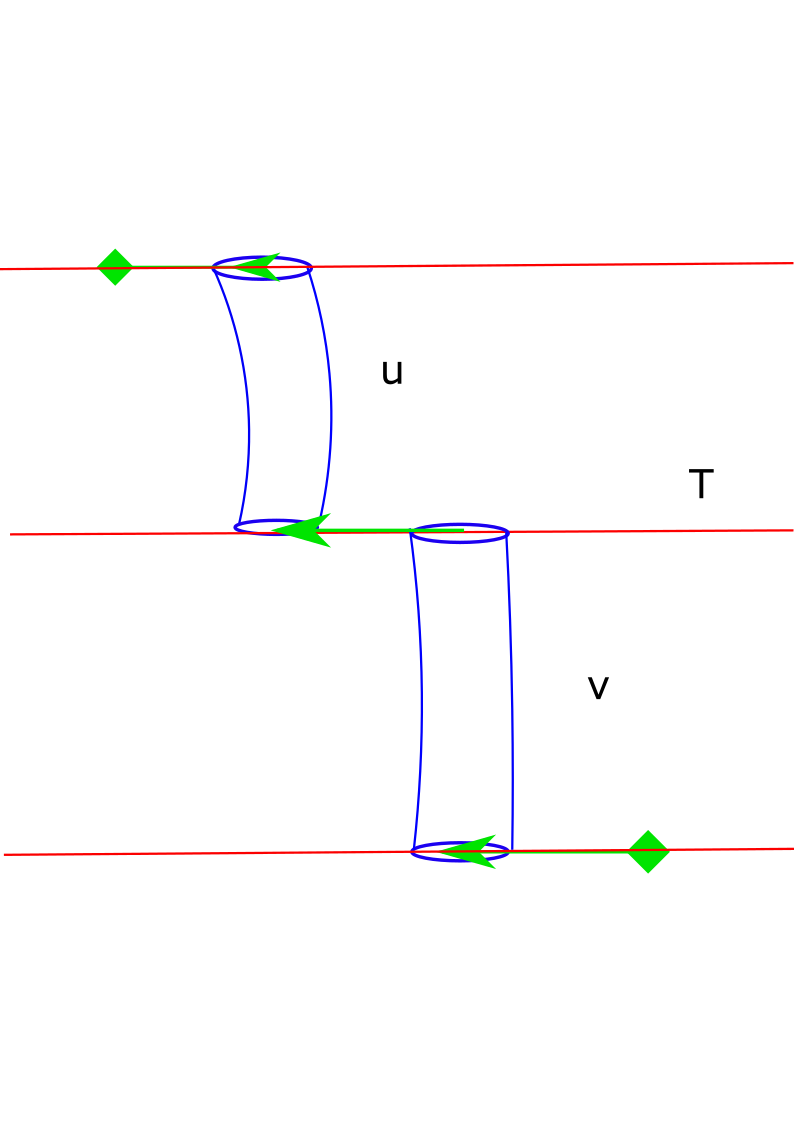}
  \label{fig:sub1}
\end{subfigure}%
\begin{subfigure}{.5\textwidth}
  \centering
  \includegraphics[width=.7\linewidth]{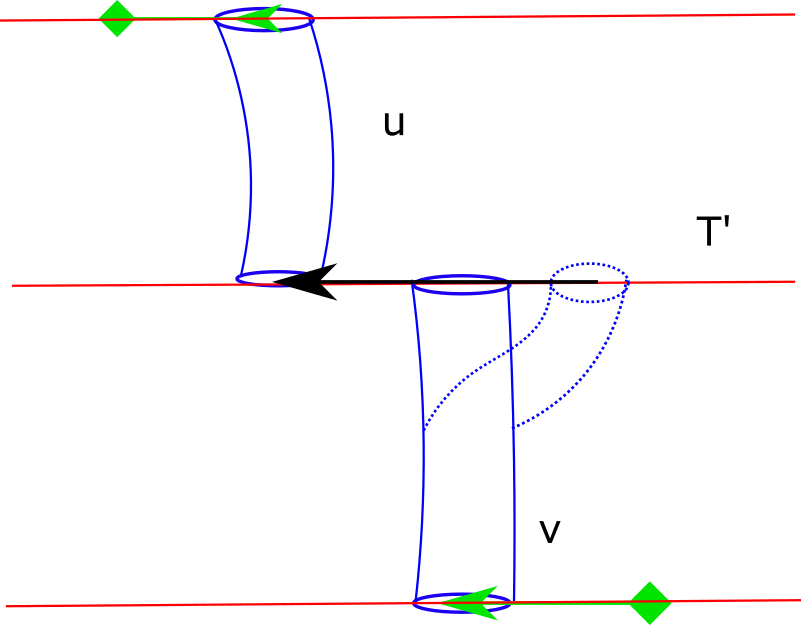}
  \label{fig:sub2}
\end{subfigure}
\caption{On the left we see a cascade of two levels consisting of the curves $\{u,v\}$. They meet along a Morse-Bott torus in the middle, and their ends are connected by a gradient flow trajectory of length $T$, shown by the green arrow. On the right, we imagine slightly displacing the end of the curve $v$ along the Morse-Bott torus, shown in dashed blue lines. From the perspective of $v$, measured with appropriate norms this is a small deformation of $v$. The flow time from this deformed $v$ to $u$ is given by $T' =T +\Delta T$, which is a slightly longer flow time, indicated by the black arrow. However the picture is deceiving, because for small values of $\dt$, the gradient flow cylinder corresponding to the black arrow is significantly longer than the gradient cylinder of the green arrow in the original picture, by an additional length of order $\Delta T /\dt$. This is because for small values of $\dt$, the gradient flow is very slow (it follows the gradient of $\dt f$), hence it needs to flow for very long to cover that extra distance. This is what we mean when we say deformations that can seem very small from the perspective of $J$-holomorphic curves in the cascade can be arbitrarily large from the perspective of gradient trajectories.}
\label{fig:displace_figure}
\end{figure}

Finally, we remark that despite only working with Morse-Bott tori, we expect our approach to work for most Floer theories based on counts of holomorphic curves that do not have multiple covers or issues with transversality (both in the non-degenerate setting and the Morse-Bott setting). We expect the generalization from Reeb orbits showing up in $S^1$ families to higher dimensional families to be straightforward, and the rest of the analysis should carry over directly. However, we do not know how our analysis or proof of correspondence theorem interact with virtual techniques that are often used to define Floer theories when classical transversality methods fail.

\subsection{Applications to ECH}
As mentioned above the main application we have in mind of this work is the computation of Embedded Contact homology in the Morse-Bott setting.
Previously several computations of ECH (or its related cousin Periodic Floer homology) have assumed results about Morse-Bott theory and cascades, for instance computations in \cite{pfh}, \cite{ECHT3}, \cite{choi2016combinatorial}.

This paper does not contain the full construction of Morse-Bott ECH, but the analysis done here will lay the groundwork for constructing a correspondence theorem for ECH index 1 cascades and ECH index 1 holomorphic currents. This is the subject of a forthcoming paper, see \cite{Tips}.

However at this juncture we make several remarks about the construction. The condition of ``transverse and rigid'' for a cascade does not hold in general, even if we restricted to cascades that have ECH index 1 and used a generic $J$. Hence in general ECH index one cascades might apriori have multiply covered components due to lack of transversality coming from the fiber products we used to define the cascades. However in simple cases where all ECH index one curves have genus zero, there is indeed enough transversality, and we expect the machinery developed here and \cite{Tips} to fill in the foundations for Morse-Bott ECH for the computations in \cite{pfh}, \cite{ECHT3}, \cite{choi2016combinatorial}, in which all ECH index one curves are shown to have genus zero.

For contact 3-manifolds, Morse-Bott degeneracy might also mean the Morse-Bott critical manifold is two dimensional, which means the manifold itself is foliated by Reeb orbits. The computation of ECH in that case was done using different techniques, see \cite{nelson2020embedded}, \cite{farris}. However, our methods (suitably extended to allow for the case where Reeb orbits can come in higher dimensional families) could potentially be applied to ECH computations in these cases as well.

\subsection{Outline}
The paper is organized as follows. After some quick description of the geometric setup, we describe in Section \ref{degenerations} how holomorphic curves in the non-degenerate case degenerate into objects we call cascades, and introduce a version of SFT type compactness, already introduced in \cite{SFT}, \cite{BourPhd}. We relegate the more technical definitions of convergence and proof of degeneration into cascades to the Appendix for the sake of exposition.

In Section \ref{transversality} we establish what we mean by generic choice of $J$, the definition of transversality, and in particular describe the set of cascades we will be able to glue into $J$-holomorphic curves.

Then we get to the most technical part of the paper, in which we prove transverse and rigid $J$-holomorphic cascades can be glued to $J_\dt$-holomorphic curves as we perturb the contact form. We first start with some preamble on differential geometry in Section \ref{diffgeo}, and describe the gradient trajectories that arise from perturbing the contact form. We then find a suitable Sobolev space for the gradient trajectories which we will use for our gluing, and prove some nice properties of the linearized Cauchy Riemann operator in this Sobolev space for later use in Section \ref{linearization}.

To initiate the gluing, first as a warm up we explain how to glue a semi-infinite gradient trajectory to a $J$-holomorphic curve in Section \ref{semiglue}. This corresponds to gluing 1-level cascades to $J_\dt$-holomorphic curves as we perturb the contact form from $\lambda$ to $\lambda_\dt$, which is also done in \cite{colin2021embedded}. We then prove an important property of the curve we constructed in this process, i.e. the solution exponentially decays along the gradient trajectory. This is done in Section \ref{expdecay}, and will be crucial for gluing together multiple level cascades.

Section \ref{gluing}, we complete the gluing construction. We first consider the simplified case of gluing together 2-level cascades, which will contain the heart of the construction and is markedly different from gluing semi-infinite trajectories. As before we first do some basic Sobolev space setup. The key idea is to first preglue, then use the solution constructed for semi-infinite trajectories to construct another pregluing on top of the original pregluing with substantially smaller pregluing error, and then use the contraction mapping principle one last time to turn the second pregluing into a genuine gluing. As illustrated in Figure \ref{fig:displace_figure}, during the $\dt \rightarrow 0$ degeneration the gradient flow cylinders correspond to very long necks, and when we try to preglue a cascade, deformations that appear small from the perspective of $J$-holomorphic curves in the cascade can become very large from the perspective of the preglued curve when we try to fit a gradient flow cylinder between adjacent levels of the cascade during the pregluing. So all of the complications in the gluing we mentioned above arise from trying to keep track of these deformations and finding a setup where all of the vectors that we see are sufficiently small, so that the contraction mapping principle can be applied.

After this, the generalization to multiple level cascades is mostly a matter of keeping track of notation.

In anticipation of proving bijectivity of gluing, we deduce some analytic estimates of how $J_\dt$-holomorphic curves behave near Morse-Bott tori as we degenerate the contact form $\lambda_\dt$. This is done in Section \ref{behav}. Much of this analysis is taken from the appendix of \cite{oancea} where they work out a very similar case in symplectic homology. This kind of analysis has also appeared in \cite{BourPhd}.

Finally we take up the bijectivity of gluing; for this step we largely follow the footsteps of \cite{obs1} \cite{obs2}. This is taken up in Section \ref{surj}. 

The appendix contains the necessary background to state the SFT compactness theorem required for our kind of degenerations, which was stated in \cite{SFT} and proved in \cite{BourPhd}. A similar result also appears in \cite{oancea}. We also provide a proof for completeness, which relies also on the analysis done in Section \ref{behav}.

\subsection{Acknowledgements}
First and foremost I would like to thank my advisor Michael Hutchings for his consistent help and support throughout this project. I would also like to thank Ko Honda, Jo Nelson, Alexandru Oancea, Katrin Wehrheim and Chris Wendl for helpful discussions and comments.

I would like to acknowledge the support of the Natural Sciences and Engineering Research Council of Canada (NSERC), PGSD3-532405-2019.
Cette recherche a été financée par le Conseil de recherches en sciences naturelles et en génie du Canada (CRSNG), PGSD3-532405-2019.

\section{Morse-Bott setup and SFT type compactness} \label{degenerations}
Let $(Y^3,\lambda)$ be a contact 3-manifold with Morse-Bott contact form $\lambda$. Throughout we assume all Reeb orbits come in $S^1$ families; hence we have tori foliated by Reeb orbits, which we call Morse-Bott tori.

\begin{convention}
Throughout this paper we fix a large number $L>0$, and only consider collections of Reeb orbits that have total action less than $L$. This is implicit in all of our constructions and will not be mentioned further. We prove the correspondence theorem between cascades and $J_\dt$-holomorphic curves up to action level $L$, and then when we need to apply this construction to Floer theories we can take $L\rightarrow \infty$.
\end{convention}

The following theorem, which is a special case of a more general result in  \cite{oh_wang_2018}, gives a characterization of the neighborhood of Morse-Bott tori. Let $\lambda_0$ denote the standard contact form on $(z,x,y) \in S^1\times S^1 \times \mathbb{R}$ of the form
\[
\lambda_0= dz-ydx.
\]
\begin{proposition} \label{prop_locform} 
$(M^3,\lambda)$ be a contact 3 manifold with Morse-Bott contact form $\lambda$. We assume the Morse-Bott Reeb orbits come in families of tori, which we write as $\mcal{T}_i$, with minimal period $T_i$. Then we can choose coordinates around each Morse-Bott torus so that a neighborhood of $\mcal{T}_i$ is described by $(z,x,y) \in S^1\times S^1 \times (-\ep,\ep)$, and the contact form $\lambda$  in this coordinate system looks  like
\[
\lambda = h(x,y,z) \lambda_0
\]
where $h(x,y,z)$ satisfies
\[
h(x,0,z)=1, \quad dh(x,0,z) =0.
\]
Here we identify $z\in S^1 \sim \bb{R}/2\pi T_i \bb{Z}$.
\end{proposition}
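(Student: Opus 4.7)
My plan is to build the desired coordinate chart in stages, normalizing $\lambda$ step by step.

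\emph{Stage 1: adapted coordinates on $\mcal{T}_i$.} Since $\mcal{T}_i$ is foliated by closed Reeb orbits of common minimal period $T_i$, I parametrize $\mcal{T}_i \cong S^1_x \times \bb{R}/2\pi T_i\bb{Z}$ so that the Reeb orbits are the level sets $\{x = \text{const}\}$ and $z$ is rescaled to satisfy $R|_{\mcal{T}_i} = \p_z$. Since $d\lambda(R, \cdot) \equiv 0$, the restriction $\lambda|_{\mcal{T}_i}$ is closed, and $\lambda(R) = 1$ forces its $dz$-coefficient to be $1$. In the Morse--Bott settings of interest (e.g.\ $T^3$ with its standard contact form, or boundaries of toric domains) the cohomology class of $\lambda|_{\mcal{T}_i}$ has trivial $dx$-component after a suitable choice of transverse $S^1$-cycle, and after shifting $z$ by a function of $x$ I arrange $\lambda|_{\mcal{T}_i} = dz$.

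\emph{Stage 2: tubular extension.} The contact distribution $\xi = \ker \lambda$ is $2$-dimensional and intersects $T\mcal{T}_i$ in a $1$-dimensional subbundle (since $R \in T\mcal{T}_i \setminus \xi$), so $\xi$ contains a direction transverse to $\mcal{T}_i$. I choose a smooth section $V$ of $\xi$ on a neighborhood of $\mcal{T}_i$ that is transverse to $\mcal{T}_i$ along $\mcal{T}_i$, and use the integral curves of $V$ to define a transverse coordinate $y \in (-\ep, \ep)$ by setting $\p_y = V$. Then $\lambda(\p_y) = \lambda(V) \equiv 0$, so in $(x, y, z)$-coordinates,
\[ \lambda = A(x, y, z)\, dx + C(x, y, z)\, dz, \qquad C(x, 0, z) = 1, \qquad A(x, 0, z) = 0. \]
Hadamard's lemma writes $A = y \tilde A$ for a smooth $\tilde A$, and computing $\lambda \wedge d\lambda$ at $y = 0$ shows $\tilde A(x, 0, z) \neq 0$ from the contact condition. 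Hence $\tilde y := -A/C$ is a valid local coordinate replacement near $\{y = 0\}$, and in the new coordinates $\lambda = C \cdot (dz - \tilde y\, dx) = h \cdot \lambda_0$ with $h := C$ and $h(x, 0, z) = 1$.

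\emph{Stage 3: verifying $dh(x, 0, z) = 0$.} The partials $\p_x h$ and $\p_z h$ at $\tilde y = 0$ vanish because $h \equiv 1$ along $\{\tilde y = 0\}$, so only $\p_{\tilde y} h(x, 0, z) = 0$ needs checking. I compute the Reeb vector at $\tilde y = 0$ directly in the model form: the conditions $\lambda(R) = 1$ and $d\lambda(R, \cdot) = 0$ yield
\[ R|_{\tilde y = 0} = \p_z + \p_{\tilde y} h(x, 0, z)\, \p_x, \]
so the already-established equality $R|_{\mcal{T}_i} = \p_z$ from Stage 1 forces $\p_{\tilde y} h(x, 0, z) = 0$.

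The main obstacle is the topological subtlety in Stage 1 --- namely, arranging that the cohomology class of $\lambda|_{\mcal{T}_i} \in H^1(\mcal{T}_i; \bb{R})$ has trivial $dx$-component, which depends on the ambient geometry and the existence of a transverse $S^1$-cycle of vanishing $\lambda$-action. In the Morse--Bott settings at hand this is automatic; once Stage 1 is completed, Stages 2 and 3 are direct consequences of the contact condition and the Reeb flow equations.
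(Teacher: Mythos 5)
Your route is genuinely different from the paper's: the paper does not construct the chart by hand but simply invokes the canonical neighborhood theorem of Oh--Wang, specializing their data to $Q=T^2$, $\theta=dz$, $E=0$, $F=T^*\mcal{N}$. Your Stages 2 and 3 are a correct, self-contained substitute for that citation: the reduction to $\lambda=A\,dx+C\,dz$ with $A=y\tilde A$, the verification $\tilde A(x,0,z)\neq 0$ from $\lambda\wedge d\lambda\neq 0$, the substitution $\tilde y=-A/C$, and the computation $R|_{\tilde y=0}=\p_z+\p_{\tilde y}h(x,0,z)\,\p_x$ forcing $\p_{\tilde y}h=0$ are all sound, and arguably more illuminating than the reference.

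The genuine gap is exactly the point you flag in Stage 1 and then dismiss as ``automatic.'' What you need is a primitive class $c\in H_1(\mcal{T}_i;\bb{Z})$ with $c\cdot[\gamma]=\pm1$ ($[\gamma]$ the primitive orbit class) and $\int_c\lambda=0$; equivalently, an embedded Legendrian circle meeting each Reeb orbit exactly once. This is \emph{not} automatic in the paper's own motivating examples. On the standard $T^3=(\bb{R}/\bb{Z})^3$ with $\lambda=\cos(2\pi\theta)dx+\sin(2\pi\theta)dy$, the Morse--Bott torus of orbit slope $(1,1)$ has $\lambda|_{\mcal{T}}=\tfrac{1}{\sqrt{2}}(dx+dy)$; a primitive class complementary to $(1,1)$ has the form $(a,a\pm1)$, whose action is $(2a\pm1)/\sqrt{2}\neq 0$ for every integer $a$. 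So no such $c$ exists, $\lambda|_{\mcal{T}}$ cannot be made equal to $dz$ in any global product coordinates on the torus, and the stated normal form holds only after passing to a double cover of the neighborhood (the Reeb foliation and the characteristic foliation $\ker(\lambda|_{\mcal{T}})$ have intersection number $2$) or locally in $x$. The honest fix is to add the hypothesis that such a $c$ exists (equivalently, that $[\lambda|_{\mcal{T}_i}]$ is the simple-orbit action times a primitive integral class), or to work on the appropriate cover. To be fair, the paper's citation-based proof carries the same implicit assumption --- taking $\theta=dz$ and $\mcal{N}=\{z\}\times S^1$ in Oh--Wang presupposes precisely that $\ker(\lambda|_{\mcal{T}_i})$ is a circle foliation forming an integral basis with the orbits --- but your write-up asserts the condition is automatic, and that assertion is false.
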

\begin{proof}
This is implicitly in \cite{oh_wang_2018}. We need to apply the setup of \cite{oh_wang_2018} Theorem 4.7 to \cite{oh_wang_2018}, Theorem 5.1.

In \cite{oh_wang_2018} Theorem 4.7, in their notation we have $E=0$, $Q = S^1\times S^1$, with coordinates $(z,x)$, and $\theta =dz$. The foliation $\cal{N}$ is given by $\{z\} \times S^1$. The contact form on the total space of the fiber bundle $F=T^*\mcal{N} = \mathbb{R} \times T^2$ is given by $dz+ydx$. Our proposition then follows from Theorem 5.1 in \cite{oh_wang_2018} (we need to take another transformation $y\rightarrow -y$ to get our specific choice of contact form, our sign conventions for the contact form are different from those of \cite{BourPhd}.)
\end{proof}

We assume we have chosen above neighborhoods around all Morse-Bott tori ${\mcal{T}_i}$ with action less than $L$. By the Morse-Bott assumption there are only finitely many such tori up to fixed action $L$. Next we perturb them to nondegenerate Reeb orbits by perturbing the contact form in a neighborhood of each torus. 

Let $\delta>0$, let $f:x\in \mathbb{R}/\bb{Z} \rightarrow R$ be a smooth Morse function with max at $x=1/2$ and minimum $x=0$. Let $g(y):\bb{R} \rightarrow \bb{R}$ be a bump function that is equal to $1$ on $[-\ep_{\mcal{T}_i},\ep_{\mcal{T}_i}]$ and zero outside $[-2\ep_{\mcal{T}_i},2\ep_{\mcal{T}_i}]$. Here $\ep_{\mcal{T}_i}$ is a number chosen for each $\mcal{T}_i$ small enough so that the normal form in the above theorem applies, and that all such chosen neighborhoods of Morse-Bott tori of action $<L$ are disjoint. Then in a neighborhood of the Morse Bott torus $\mcal{T}_i$, we perturb the contact form as
\[
\lambda \longrightarrow \lambda_\delta:= e^{\delta gf}\lambda.
\]
We can describe the change in Reeb dynamics as follows:
\begin{proposition}\label{prop:perturb}
For fixed action level $L>0$ there exists $\delta>0$ small enough so that the Reeb dynamics of $\lambda_\delta$ can be described as follows. In the neighborhood specified by Proposition \ref{prop_locform}, each Morse-Bott torus splits into two non-degenerate Reeb orbits corresponding to the two critical points of $f$. One of them is hyperbolic of index $0$, the other is elliptic with rotation angle $|\theta| <C\dt <<1$ and hence its Conley-Zehnder index is $\pm 1$. There are no additional Reeb orbits of action $<L$.
\end{proposition}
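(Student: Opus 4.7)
The plan is to work locally in each of the Morse-Bott neighborhoods provided by Proposition \ref{prop_locform}: outside the union of these (finitely many, disjoint) neighborhoods the contact form is unchanged, so Reeb orbits of $\lambda_\delta$ of action $<L$ that avoid the neighborhoods coincide with those of $\lambda$ and form a finite list by the Morse-Bott assumption. Inside one such neighborhood I would compute the Reeb vector field $R_\delta$ of $\lambda_\delta = e^{\delta g(y)f(x)} h(x,y,z)(dz - y\,dx)$ explicitly and then carry out a quantitative Morse-Bott perturbation analysis.

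Writing $\lambda_\delta = H\,dz - G\,dx$ with $H = e^{\delta g f}h$ and $G = yH$, one computes $d\lambda_\delta$ and solves the defining equations $\lambda_\delta(R_\delta)=1$, $\iota_{R_\delta} d\lambda_\delta = 0$ for $R_\delta$ in closed form. Using $g(0)=1$, $g'(0)=0$, $h(x,0,z)\equiv 1$, and $dh(x,0,z)\equiv 0$, the restriction to the Morse-Bott torus simplifies to
\[
R_\delta\big|_{y=0} = e^{-\delta f(x)}\bigl(\partial_z - \delta f'(x)\,\partial_y\bigr),
\]
which is tangent to $\{y=0\}$ precisely at the critical points of $f$. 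This already produces the two candidate simple Reeb orbits, an orbit $e_i$ over $x=0$ and an orbit $h_i$ over $x=1/2$, with action close to the action of $\mathcal{T}_i$.

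To rule out further Reeb orbits of action $<L$ in the neighborhood, I would parametrize the flow by $z$ (permissible since the $\partial_z$-component of $R_\delta$ is close to $1$) and expand to leading order in $y$. The transverse dynamics form a Hamiltonian system
\[
\tfrac{dx}{dz} = h_{yy}(x,0,z)\,y + O(y^2), \qquad \tfrac{dy}{dz} = -\delta f'(x) + O(y^2),
\]
with leading-order effective Hamiltonian $K(x,y,z) \approx \delta f(x) + \tfrac{1}{2}h_{yy}(x,0,z)\,y^2$. A Reeb orbit of action $<L$ winding $k$ times through the neighborhood satisfies $k \lesssim L/T_i$ and corresponds to a fixed point of the time $2\pi k T_i$ flow of this system; for $\delta$ small enough the only such fixed points are perturbations of the critical points of $f$ on $\{y=0\}$, i.e.\ iterates of $e_i$ and $h_i$. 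Linearizing along $e_i$ and $h_i$ gives the variational equation
\[
\tfrac{d}{dz}\begin{pmatrix}u\\v\end{pmatrix} = \begin{pmatrix} 0 & h_{yy}(x_0,0,z) \\ -\delta f''(x_0) & 0 \end{pmatrix}\begin{pmatrix}u\\v\end{pmatrix},
\]
whose monodromy over one period is positive hyperbolic with eigenvalues close to $1$ at $x_0=1/2$ (Conley-Zehnder index $0$), and a rotation by a small angle $\theta$ with $|\theta| \to 0$ as $\delta\to 0$ at $x_0=0$ (elliptic, Conley-Zehnder index $\pm 1$); in particular both orbits are nondegenerate.

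The main technical point is the uniformity in the preceding step: one must choose $\delta$ small enough that no spurious fixed point of the Poincaré map appears within the $k\lesssim L/T_i$ iterations permitted by the action bound. This reduces to a quantitative Morse-Bott perturbation estimate (essentially the one worked out in \cite{BourPhd} and \cite{oancea}), which succeeds because the transverse perturbation is $O(\delta)$ on an $O(1)$-sized region and the leading-order averaged Hamiltonian has only the two expected critical points on $\{y=0\}$. The same estimates yield the decay of $\theta$ with $\delta$.
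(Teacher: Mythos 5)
Your proposal is correct, and its starting point agrees with the paper's own later computations: your formula $R_\delta|_{y=0}=e^{-\delta f(x)}(\partial_z-\delta f'(x)\partial_y)$ coincides with the expression $R+X$ derived in Section \ref{diffgeo}. The route is more explicit than the paper's, which simply records the linearized return maps $\left(\begin{smallmatrix}1&\mp t\\0&1\end{smallmatrix}\right)$ in the trivialization of Proposition \ref{prop_locform}, notes the Robbin--Salamon indices $\pm 1/2$, and cites Lemmas 2.3 and 2.4 of \cite{BourPhd} for both the splitting into one elliptic and one hyperbolic orbit and the absence of other orbits of action $<L$. You instead re-derive the content of those lemmas in outline — explicit Reeb vector field, transverse Hamiltonian dynamics parametrized by $z$, linearization $\left(\begin{smallmatrix}0&h_{yy}\\-\delta f''&0\end{smallmatrix}\right)$ along the candidate orbits — before deferring the same quantitative uniformity estimate (no spurious fixed points of the $k\lesssim L/T_i$-fold return map for $\delta$ small) back to \cite{BourPhd} and \cite{oancea}. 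Your version makes visible where the indices and the smallness $|\theta|<C\delta$ come from; it costs nothing extra, since the hard counting step is cited in both treatments. One small caution: your definite assignment of the elliptic orbit to the minimum $x=0$ and the hyperbolic orbit to the maximum $x=1/2$ holds only for one sign of $h_{yy}$, i.e.\ for one of the two types (positive or negative) of Morse--Bott torus; for the other sign the roles swap. The proposition only asserts that one orbit of each type appears, so this does not affect your argument, but the sign is exactly what the paper's subsequent positive/negative dichotomy keeps track of.
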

\begin{definition}
We say an Morse-Bott torus is \textbf{positive} if the elliptic Reeb orbit has Conley Zehnder index 1 after perturbation, otherwise we say it is \textbf{negative} Morse Bott torus. This condition is intrinsic to the Morse-Bott torus itself, and is independent of perturbations.
\end{definition}
\begin{proof}[Proof of Proposition \ref{prop:perturb}]
After we have fixed our local neighborhood near a Morse-Bott torus from Proposition \ref{prop_locform}, we get natural trivializations of the contact plane along the Morse-Bott torus given by the $x-y$ plane.  With this trivialization in mind, the linearized return map takes either of the following forms \footnote{This fact is referenced in Section 4 of \cite{colin2021embedded}, and Section 5 of \cite{beyondech}. Section 3 of the paper \cite{hwzproperties4} works out the detailed computation leading up to this result. We remark that in all of these three papers the linearized return map is lower triangular. This is because we have chosen different conventions. For instance in \cite{hwzproperties4} they chose their $y$ coordinate to denote the $S^1$ family of Reeb orbits, and their $x$ coordinate to denote the normal direction to their Morse-Bott torus. Hence their contact form is written as $dz+xdy$. Our linearized return maps agree with theirs after we switch to their coordinate system.}
\begin{itemize}
    \item  Positive Morse-Bott Torus: $\phi(t) = \begin{bmatrix}
    1 & -t \\
      0    &1 \\

\end{bmatrix}.$ 

    \item Negative Morse-Bott torus: $\phi(t) = \begin{bmatrix}
    1 & t \\
    0      &1 \\
\end{bmatrix}.$ 
\end{itemize}
They are degenerate, but they admit a Robbin-Salamon index, see Section 4 of \cite{Gutt2014}. The positive Morse-Bott torus has Robbin-Salamon index $1/2$ and the negative Morse-Bott torus has Robbin-Salamon index $-1/2$ (see \cite{Gutt2014}, Proposition 4.9). Then the claims behaviour of Reeb orbits follow from Lemmas 2.3 and 2.4 in \cite{BourPhd}.
\end{proof}

\begin{remark}
Later when we define various terms in the Fredholm index, they will depend on choices of trivializations of the contact structure along the Reeb orbits. We will always choose the trivializations specified by Proposition \ref{prop_locform}, and where the return maps take the form specified above. For notational convenience we will call this trivialization $\tau$.

We also observe that after iterating the Reeb orbits in the Morse-Bott tori, their Robbin-Salamon indices stay the same. So up to action $L$, in the nondegenerate picture, we will only see Reeb orbits of Conley-Zehnder indices in the set $\{-1,0,1\}$.
\end{remark}

Let us consider for small $\delta>0$ the symplectization
\[
(M^4, \omega_\delta) := (\bb{R} \times Y^3,de^a\lambda_\delta).
\]

We also consider the symplectization in the Morse-Bott case
\[
(M^4, \omega_0 ):= (\bb{R} \times Y^3,de^a\lambda).
\]

We fix our conventions for almost complex structures for the rest of the article as follows:

\begin{convention}
We equip $(M,\omega_0)$ with $\lambda$ compatible almost complex structure $J$ (for purposes of tranversality Definition \ref{def_rigid_cas} we may want to take $J$ to be generic). We restrict $J$ to take the following form near a neighborhood of each Morse-Bott torus (if we are using the action filtration we can only require this condition for Morse-Bott tori up to action $L$). Recall each Morse-Bott torus has neighborhood described by $(a,z,x,y) \in \bb{R} \times S^1\times S^1\times \bb{R}$, then on the surface of the Morse-Bott torus, i.e. $y=0$, we require
\[
J\p_x =\p_y.
\]

Our requirement for $J_\dt$ is that it is $\lambda_\dt$ compatible, and in a neighborhood of each Morse-Bott torus (resp. Morse-Bott tori up to action $L$), its restriction to the contact distribution agrees with the restriction of $J$. See Remark \ref{remark_generic_J} for additional comments for genericity.

\end{convention}

For fixed $L>0$ large and $\dt >0$ small enough, all collections of orbits with total action less than $L$ are non-degenerate, and hence there are corresponding $J$-holomorphic curves with energy less than $L$ with non-degenerate asymptotics. To motivate our construction, we next take $\delta \rightarrow 0$ to see what these $J$-holomorphic curves degenerate into. By a theorem that first appeared in Bourgeois' thesis \cite{BourPhd} (Chapter 4) and also stated in \cite{SFT} (Theorem 11.4), they degenerate into $J$-holomorphic cascades. (For a more careful definition see the appendix that takes into account of stability of domain and marked points, but the definition here suffices for our purposes).

\begin{definition} \label{cascade_def}
\cite{BourPhd}
Let $\Sigma$ be a punctured (nodal) Riemann surface, potentially with multiple components.
A cascade of height 1, which we will denote by $\cas{u}$, in $(\bb{R}\times Y^3,\lambda,J)$ consists of the following data :
\begin{itemize}
    \item A labeling of the connected components of $\Sigma ^*=\Sigma \setminus \{ \text{nodes} \}$ by integers in
    $\{1, . . . , l\}$, called levels, such that two components sharing a node have levels differing by at most 1. We denote by $\Sigma_i$ the union of connected components of level $i$, which might itself be a nodal Riemann surface.
    \item $T_i \in [0,\infty)$ for $ i = 1, . . . , l - 1$.
    \item  $J$-holomorphic maps $u^i: (\Sigma_i, j) \rightarrow (\bb{R}\times Y^3, J)$ with $E(u_i) < \infty$ for $ i = 1, . . . , l$, such that:
    \begin{itemize}
        \item Each node shared by $\Sigma_i$ and $\Sigma_{i+1}$, is a negative puncture for $u^i$ and is a positive puncture for $u^{i+1}$. Suppose this negative puncture of $u^i$ is asymptotic to some Reeb orbit $\gamma_i \in \mcal{T}$, where $\mcal{T}$ is a Morse-Bott torus, and this positive puncture of $u^{i+1}$ is asymptotic to some Reeb orbit $\gamma_{i+1} \in \mcal{T}$, then we have that $\phi^{T_i}_f(\gamma_{i+1}) = \gamma_{i}$. Here $\phi^{T_i}_f$ is the upwards gradient flow of $f$ for time $T_i$. It is defined by solving the ODE
        \[
        \frac{d}{ds} \phi_f(s) = f'(\phi_f(s)).
        \]
        \item $u^i$ extends continuously across nodes within $\Sigma_i$.
        \item No level consists purely of trivial cylinders. However we will allow levels that consist of branched covers of trivial cylinders.
    \end{itemize}
\end{itemize}
With $\cas{u}$ defined as above, we will informally write $\cas{u} = \{u^1,..,u^l\}$.
\end{definition}
\begin{convention}
We fix our conventions as follows. 
\begin{itemize}
    \item We say the punctures of a $J$-holomorphic curve that approach Reeb orbits as $a\rightarrow \infty$ are positive punctures, and the punctures that approach Reeb orbits as $a\rightarrow -\infty$ are negative punctures. We will fix cylindrical neighborhoods around each puncture of our $J$-holomorphic curves, so we will use ``positive/negative ends'' and ``positive/negative punctures'' interchangeably.  By our conventions, we think of $u^1$ as being a level above $u^2$ and so on.
    \item We refer to the Morse-Bott tori $\mcal{T}_j$ that appear between adjacent levels of the cascade $\{u^i,u^{i+1}\}$ as above, where negative punctures of $u^i$ are asymptotic to Reeb orbits that agree with positive punctures from $u^{i+1}$ up to a gradient flow, \textit{intermediate cascade levels}.
    \item We say that the positive asymptotics of $\cas{u}$ are the Reeb orbits we reach by applying $\phi_f^\infty$ to the Reeb orbits hit by the positive punctures of $u^1$. Similarly, the negative asymptotics of $\cas{u}$ are the Reeb orbits we reach by applying $\phi_f^{-\infty}$ to the Reeb orbits hit by the negative punctures of $u^l$. We note if a positive puncture (resp. negative puncture) of $u^1$ (resp. $u^l$) is asymptotic to a Reeb orbit corresponding to a critical point of $f$, then applying $\phi^{+\infty}_f$ (resp. $\phi_f^{-\infty}$) to this Reeb orbit does nothing.
\end{itemize}
\end{convention}

\begin{definition}[\cite{BourPhd}, Chapter 4] \label{def height k cascade}
A cascade of height $k$ consists of $k$ height 1 cascades, $\cas{u}_k =\{u^{1\text{\Lightning}},...,u^{k\text{\Lightning}}\}$ with matching asymptotics concatenated together. By matching asymptotics we mean the following. Consider adjacent height one cascades, $u^{i\text{\Lightning}}$ and $u^{i+1\text{\Lightning}}$. Suppose a positive end of the top level of $u^{i+1\text{\Lightning}}$ is asymptotic to the Reeb orbit $\gamma$ (not necessarily simply covered). Then if we apply the upwards gradient flow of $f$ for infinite time we arrive at a Reeb orbit reached by a negative end of the bottom level of $u^{i\text{\Lightning}}$. We allow the case where $\gamma$ is at a critical point of $f$, and the flow for infinite time is stationary at $\gamma$. We also allow the case where $\gamma$ is at the minimum of $f$, and the negative end of the bottom level of $u^{i\text{\Lightning}}$ is reached by following an entire (upwards) gradient trajectory connecting from the minimum of $f$ to its maximum. If all ends between adjacent height one cascades are matched up this way, then we say they have matching asymptotics.

We will use the notation $\cas{u}_k$ to denote a cascade of height $k$. We will mostly be concerned with cascades of height 1 in this article, so for those we will drop the subscript $k$ and write $\cas{u} = \{u^1,...,u^l\}$.
\end{definition}
\begin{remark}
In this paper our families of Reeb orbits are parameterized by $S^1$, and in particular there are no broken gradient flow lines on $S^1$. In general, when the critical manifold (the manifold that parameterizes the Morse-Bott family of Reeb orbits) is more complicated, the notion of matching asymptotics between height one cascades mentioned in the above definition involves going from a Reeb orbit hit by a positive puncture of the top level of $u^{i+1\text{\Lightning}}$ to a Reeb orbit hit by a negative puncture of the bottom level of $u^{i\text{\Lightning}}$ via broken Morse trajectories on the critical manifold.
\end{remark}
\begin{remark}
Once we have given the definition of cascades, we must then describe what it means for two cascades to be equivalent to each other. The precise definition of when two cascades are equivalent to one another can only be more precisely stated after we have given the more precise definition of cascades in the Appendix, where we keep track of all of the marked points and punctures of each level. Essentially we simply need to adapt the definition of when SFT buildings are equivalent to one another as stated in \cite{SFT} Section 7.2 by viewing  gradient flow trajectories in cascades as extra levels.
Here we just remark that for our gluing purposes this is not really an issue for us, all of the cascades we care about (see Definition \ref{def_rigid_cas}) will have $u^i:\Sigma_i \rightarrow \bb{R} \times Y$ be somewhere injective $J$-holomorphic curves, with the possible exception of unbranched covers of trivial cylinders, hence for us it will be obvious when two cascades are equivalent to one another.
\end{remark}

Now we state informally our version of the SFT compactness theorem, the full version with a precise definition of convergence is stated in the Appendix.

\begin{theorem}
A sequence of $J_\dt$-holomorphic curves $\{u_{\dt_n}\}$ that have fixed genus, are asymptotic to the same non-degenerate Reeb orbits, and $\dt_n \rightarrow 0$, has a subsequence that converges to a $J$-holomorphic cascade of height $k$. 
\end{theorem}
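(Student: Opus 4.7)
The plan is to adapt the standard SFT compactness paradigm of Bourgeois--Eliashberg--Hofer--Wysocki--Zehnder to account for the family $\lambda_{\dt_n}\to\lambda$ collapsing to a Morse--Bott contact form. The first step is to obtain uniform a priori bounds. Because the positive and negative asymptotic Reeb orbits (and their actions) are fixed, Stokes' theorem applied to $d(e^a\lambda_{\dt_n})$ together with the standard Hofer energy accounting gives a uniform energy bound $E(u_{\dt_n})\leq C$ independent of $n$. Together with the fixed genus, this prevents any uncontrolled bubbling.

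Next I would extract a limit away from neighborhoods of the Morse--Bott tori. Outside the fixed tubular neighborhoods of each $\mcal{T}_i$ provided by Proposition \ref{prop_locform}, we have $J_{\dt_n}\to J$ and $\lambda_{\dt_n}\to\lambda$ in $C^\infty$, so ordinary Gromov/SFT compactness produces a subsequence converging on compact subsets of the complement to a holomorphic building for $(\lambda,J)$, whose components are $J$-holomorphic curves $u^i:\Sigma_i\to\bb{R}\times Y$ with (potentially multiply covered) ends on Reeb orbits of the Morse--Bott tori. Decompose each $\Sigma_n$ into a ``thick'' part that maps into the complement of the neighborhoods of the $\mcal{T}_i$, and ``thin'' cylindrical necks mapping into these neighborhoods; the thick parts converge to the $u^i$.

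The heart of the argument is the analysis of the thin necks, which is precisely where the cascade structure emerges. In the local coordinates $(a,z,x,y)$ of Proposition \ref{prop_locform}, using the estimates of Section \ref{behav} (borrowed from the appendix of \cite{oancea} and from \cite{BourPhd}), one shows that any portion of $u_{\dt_n}$ that lingers in the neighborhood of some $\mcal{T}_i$ is $C^1$-close to a solution of a perturbed Cauchy--Riemann equation whose $(x,y)$-component, after rescaling the conformal cylinder parameter by a factor proportional to $\dt_n^{-1}$, degenerates to an upward gradient trajectory of $f$ on the $S^1$-factor of $\mcal{T}_i$. Recording the limiting rescaled length of each thin neck in $[0,\infty]$ gives the flow parameters $T_i$ in Definition \ref{cascade_def}. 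Finite $T_i$'s give the gluing data within a single height-one cascade, and thin necks whose rescaled length tends to $+\infty$ cause the building to break into a concatenation of height-one cascades with matching asymptotics in the sense of Definition \ref{def height k cascade}. A diagonal subsequence argument over all Morse--Bott tori simultaneously, combined with the stability/nontriviality convention that no level is purely a union of trivial cylinders, extracts the cascade of height $k$.

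The hard part will be the third step: showing that the thin necks really do converge, after the correct rescaling, to honest gradient flow segments of $f$ with the precise matching condition $\phi_f^{T_i}(\gamma_{i+1})=\gamma_i$, and controlling how the rescaling factor depends on the rate $\dt_n$. This is technical because near a Morse--Bott torus the linearized Reeb flow is a shear, so the $(x,y)$-dynamics on the nondegenerate side is slow of order $\dt_n$, and the relevant cylinders in the domain have conformal modulus tending to infinity at precisely this rate; the matching condition must be read off from a careful normalization of the $x$-coordinate at the two ends of the neck. Once these near-torus estimates are in place, the combinatorial packaging into a height-$k$ cascade and verification of the conditions in Definitions \ref{cascade_def} and \ref{def height k cascade} is routine, and the full proof is deferred to the Appendix as indicated in the outline.
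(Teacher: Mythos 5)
Your proposal follows essentially the same route as the paper's Appendix proof: uniform energy bounds, the Deligne--Mumford thick--thin decomposition of the domains, $C^\infty_{loc}$ convergence on the thick parts, and the near-torus estimates of Section \ref{behav} (Propositions \ref{inf conv} and \ref{finite conv}) to identify the rescaled thin necks with gradient flow segments of $f$, followed by the level/height bookkeeping read off from the symplectization coordinate. The one point you gloss over, treated as Case 2 in the paper's proof, is that a thin neck across which the contact action drops can itself contain further nontrivial $J$-holomorphic curves interleaved with gradient segments, but this is resolved by the same standard SFT partitioning argument you invoke.
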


\begin{remark}
It is apparent, with the definition of convergence outlined in the Appendix, that if $u_{\dt_n}$ converges to a cascade $\cas{u_k}$ of height $k$, and all the curves in the cascade are somewhere injective (except unbranched covers of trivial cylinders), then this limit $\cas{u}_k$ is unique up to equivalence.
\end{remark}
\section{Transversality} \label{transversality}
In this section we describe the necessary transversality hypothesis we need for gluing and the correspondence theorem. 

We fix a metric $g$ that is invariant under $\bb{R}$ which we shall use for linearization purposes. We require that it is of the form
\[
g= da^2 +dx^2+dy^2+dz^2
\]
in a neighborhood of each Morse-Bott torus.

We also note the following convention that will be followed throughout this paper:
\begin{convention}
Since we will be doing a lot of gluing in the paper there is a lot of demand for various cut off functions. We fix once and for all our convention for cut off functions. We use the notation $\beta_{a;b,c;d}: \bb{R} \rightarrow \bb{R}$ to denote a function with support in $(b,c)$, all of its derivatives are also supported in this interval. $\beta_{a;b,c;d}$ is equal to $1$ on the interval $(b+a,c-d)$, and over the interval $(a,b+a)$ it satisfies a derivative bound of the form $|\beta'(s) |\leq C/(a)$, and likewise for the interval $(c-d,c)$.

If we would want cut off functions that are equal to $1$ at either $\pm \infty$, we will write $\beta_{-\infty,c;d}$ or $\beta_{a;b,\infty}$. The behaviour of the cut off function on intervals $(c-d,c)$ (resp. $(a,b+a)$) is the same as the above paragraph.
\end{convention}

Let $u: \dot{\Sigma} \rightarrow ( Y\times \bb{R}, \lambda)$ denote a holomorphic curve from a punctured Riemann surface $\dot{\Sigma}$ with $N_\pm$ positive (resp. negative) punctures labeled $p_j^\pm$, the collection of which we denote by $\Gamma_\pm$.
For each puncture $p_j^\pm$ we fix cylindrical neighborhoods around the puncture of the form $(s,t) \in [0,\pm \infty) \times S^1$. 
The punctures of $\dot{\Sigma}$ are asymptotic to Reeb orbits on Morse-Bott tori. There are moduli spaces, which we generally write as $\mcal{M}$, of $J$-holomorphic maps from $\dot{\Sigma} \rightarrow (Y\times \bb{R}, \lambda)$, which can be specified as follows. For given puncture $p_j^\pm$, we first specify which Morse-Bott torus $\mcal{T}_j^\pm$ that it lands on and with what multiplicity it covers the Reeb orbits on that Morse-Bott torus. Then we have the option of specifying whether this end is ``free'' or ``fixed'', and each choice will lead to a different moduli space. By ``free'' end we mean elements in the moduli space can have their $p_j^\pm$ puncture be asymptotic to any Reeb orbit on $\mcal{T}_j^\pm$ with given multiplicity. By ``fixed'' end we mean elements in the moduli space must have their $p_j^\pm$ end land on a specific Reeb orbit in $\mcal{T}_j^\pm$ with given multiplicity. With this designation it will enough to specify a moduli space of $J$-holomorphic curves. The virtual dimension of this moduli space with the above specifications, is given by (See Section 3 of \cite{wendlauto} or Corollary 5.4 of \cite{BourPhd})
\begin{equation}
    Ind(u) : = -\chi(u) +2c_1(u) + \sum _{p_j^+} \mu\left (\gamma^{q_{p_j^+}} \right)- \sum _{p_j^-} \mu\left(\gamma^{q_{p_j^-}}\right) + \frac{1}{2}\# \text{free ends} -\frac{1}{2} \# \text{fixed ends}
\end{equation}
where $\chi$ is the Euler characteristic, $c_1$ the first Chern class, $\mu(-)$ is the Robbin Salamon index for path of symplectic matrices with degeneracies defined in \cite{Gutt2014}. The letter $\gamma$ denotes the embedded Reeb orbit the end $p_j^\pm$ is asymptotic to, with covering multiplicity $q_{p_j^\pm}$.

To explain the notation we think of $u$ as being an element of the moduli space $\mcal{M}$, and $Ind(u)$ is the Fredholm index of $u$. Implicitly when we write $Ind(u)$ we are including the information of which punctures of $\dot{\Sigma}$ are considered free/fixed. We also note in constructing this moduli space the complex structure of $\dot{\Sigma}$ is allowed to vary.

This moduli space can be viewed as the zero set of a Fredholm map. We borrow the set up as explained in Section 3.2 of \cite{wendlauto}. To this end, consider the space of vector fields $W^{2,p,d}(u^*TM)$ with exponential weights at the cylindrical ends of the form $e^{d|s|}$. We consider the map, following \cite{wendlauto}\footnote{In \cite{wendlauto} this operator is denoted by $D\db_J$.} Section 3.2
\begin{equation}\label{equation_neighborhood}
D_J: W^{2,p,d}(u^*TM) \oplus V_\Gamma \oplus T\mcal{J} \longrightarrow W^{1,p,d}(\overline{\op{Hom}}(T\dot{\Sigma}, u^*TM))
\end{equation}
where $V_\Gamma:=\oplus V^\pm_j$ is a direct sum of vector spaces for each puncture $p_j^\pm$. For a positive puncture at a fixed end, it is 2 dimensional vector space spanned by vector fields
\[
\beta_{1;0,\infty} \partial_t, \quad
\beta_{1;0,\infty}\partial_a
\]
where $a\in \bb{R}$ is the symplectization coordinate. For negative punctures we use instead cut off functions $\beta_{-\infty,0;1}$.

For free ends we additionally include another asymptotic vector that displaces the ends along the Morse-Bott torus
\[
\beta_* \partial_x
\]
where $\beta_*$ is as above, depending on whether this end is at a positive or negative puncture. 

$T\mcal{J}$ is a finite dimensional vector space corresponding to the variation of complex structure (in \cite{wendlauto} Section 3.1 it is called a Teichmuller slice). We note we have chosen the variation of complex structure to be supported away from the fixed cylindrical neighborhoods.
\begin{remark}\label{remark_sobolev_exponents}
It will later turn out very important to us we work with $W^{2,p}$ as our domain instead of $W^{1,p}$. The reason for this is the analytical fact that product of $L^p$ functions is generally not in $L^p$ for $p>2$, but products of $W^{1,p}$ functions remain in $W^{1,p}$. In particular in Equation \ref{equation_product} we took one more $s$ derivative than usual due to translations of terms, and if we used $W^{1,p}$ spaces we would have ended up with products of $L^p$ functions.

Another possibility is working with the Morrey spaces in Section 5.5 of \cite{obs2}, where all products are allowed and the space has an $L^2$-type inner product. In fact this is the approach taken in the Appendix of \cite{colin2021embedded}, and if we did this we might be able to avoid the awkward exponential factors of $2/p$ that appear in our subsequent exponential decay estimates.
\end{remark}

If $\dot{\Sigma}$ is stable, $u$ is somewhere injective, not a trivial cylinder, and of positive index, then for generic $J$, the operator $D_J$ is surjective and its index is equal to the dimension of the moduli space $\mcal{M}$, which is given by $Ind(u)$.

If $\dot{\Sigma}$ is not stable and $u$ is not a trivial cylinder, $u$ still lives in a moduli space of dimension calculated by the index, after we quotient out by automorphisms of the domain. As an analytic matter we address this by adding some marked points to make the domain stable and make the appropriate modifications to Sobolev spaces, in the following convention:

\begin{convention}[Stabilization of Domain] \label{stabilization}
Given a cascade $\cas{u}$, each of the $u^i$ may have components that are unstable, i.e. holomorphic curves whose domain are cylinders or planes. A main source of example is trivial cylinders. Since in this paper we are gluing curves as opposed holomorphic submanifolds, we stabilize these domains following Section 5 of  \cite{pardon} (see also \cite{Cieliebak} Section 4). For each $J$-holomorphic curve whose domain is a cylinder, we first fix a surface $\Sigma$ that intersects the $J$-holomorphic curve transversely at one point. We endow the $J$-holomorphic curve with an additional marked point on its domain and require this marked point passes through $\Sigma$. For a $J$-holomorphic curve whose domain is a plane, we fix two disjoint surfaces $\Sigma_1, \Sigma_2$, each of which intersects the $J$-holomorphic curve transversely at a single point. We add two marked points $p_1,p_2$ to the domain and require the $J$-holomorphic curve maps them to $\Sigma_1$ and $\Sigma_2$ respectively.

The effect of this is that we eliminate the reparametrization symmetry of the domain. This makes (subsequent) uniqueness statements unambiguous. We note here during the gluing construction, we will be performing large scale symplectization direction translations of each of the $u^i$. We translate the surfaces $\Sigma_i$ along with $u^i$ in these large scale symplectization direction translations. We shall make no further remark on this point and henceforth assume all $u^i$ have domains that are stable.
\end{convention}

For trivial cylinders there is a tad more to be said. If both ends are free then the moduli space is transversely cut out of index 1, where the one dimension of freedom is moving the the trivial cylinder along the Morse-Bott torus. With one end fixed the other free the moduli space is still transversely cut out of index zero. However with both ends fixed the $D_J$ operator is of index $-1$, yet obviously such trivial cylinders still exist. In this discussion we will only talk about trivial cylinders with at most one fixed ends.

We now come to the definition of what we call \emph{transverse and rigid} cascade. It is these cascades that we will eventually glue.

\begin{definition}\label{def_rigid_cas}
Suppose $\cas{u} = \{u^1,..,u^n\}$ is a height 1 cascade that satisfies the following properties:
\begin{enumerate}
    \item All curves $u^i$ are somewhere injective, except trivial cylinders, which can be unbranched covers.
    \item The\footnote{We only consider $T_i>0$, the case of $T_i=0$ requires different transversality assumptions and is handled by standard gluing methods.} numbers $T_i$ satisfy $T_i \in (0,\infty)$. 
    \item Given $u^i$ and $i>1$, the $s\rightarrow \infty$ ends of $u^i$ approach distinct Reeb orbits. For $u^i$, and $i<n$, the $s \rightarrow -\infty$ ends of $u^i$ approach distinct Reeb orbits.
    \item No end of $u^i$ land on critical points of $f$, with the following exceptions:
    \begin{enumerate}
        \item If a positive end of $u^i$ lands on a Reeb orbit corresponding to a critical point of $f$ in the intermediate cascade levels, it must then be the minimum of $f$. Suppose this orbit is $\gamma$. Furthermore, for all $j<i$, $u^j$ has a trivial cylinder (potentially unbranched cover) asymptotic to $\gamma$ and no other curves asymptotic to $\gamma$.
        \item If a negative end of $u^i$ is asymptotic to a Reeb orbit corresponding to a critical point of $f$ in the intermediate cascade levels, it must be the maximum of $f$. Call this orbit $\gamma$. For all $j>i$, $u^j$ has a trivial cylinder (potentially unbranched cover) asymptotic to $\gamma$ and no other curves asymptotic to $\gamma$.
    \end{enumerate}
    We call the chain of trivial cylinders all at a critical point of $f$ \textbf{a chain of fixed trivial cylinders}.
    \item We remove all chains of fixed trivial cylinders from $\cas{u}$. For the remaining curves, if an end of $u^i$ lands on a critical point of $f$, we designate it as fixed, and if an end of $u^i$ avoids critical points of $f$, we designate it as free.
    Then each $u^i$ can be thought of as living in a moduli space of $J$-holomorphic curves. If the domain of $u^i$ is written as $\dot{\Sigma}$, this is the moduli space of $J$-holomorphic maps from $(\dot{\Sigma},j) \rightarrow (Y\times \bb{R}, J)$ where we allow the complex structure to vary, but impose the same fixed/free conditions on the punctures as $u^i$. We denote this moduli space by $\mcal{M}(u^i)$. Then $\mcal{M}(u^i)$ is transversely cut out, and its dimension is given by $Ind(u^i)$.
    \item Again we assume we have removed all chains of fixed trivial cylinders and $\cas{u}$ satisfy all of the previous conditions. Each $\mcal{M}(u^i)$ comes with two evaluation maps, $ev_i^\pm: \mcal{M}(u^i) \rightarrow (S^1)^{k_i^\pm} $ where $k_i^\pm$ refers to how many Reeb orbits are hit by free ends of $u^i$ at $\pm \infty$. Note $k_i^-=k_{i+1}^+$. The evaluation map simply outputs the location of the Reeb orbit that an end of $u^i$ is asymptotic to on the Morse-Bott torus. 
    If we let $u'^{i}\in \mcal{M}(u^i)$,
    and the map
    \begin{align}
        EV^+: \mcal{M}(u^2) \times \bb{R}\times  \mcal{M}(u^3)\times \bb{R} \times \ldots \times \mcal{M}(u^{n}) \times \bb{R} \longrightarrow (S^1)^{k_2^+} \times (S^1)^{k_3^+} \times \ldots \times (S^1)^{k_{n}^+} 
    \end{align}
    given by
    \begin{align}
        (u'^{2}, T_1',\ldots,u'^{n},T_{n-1}') \longrightarrow \left(\phi_f^{T_1'}(ev_2^+(u'^2)),  \phi_f^{T_2'}(ev_3^+(u'^3)),\ldots,\phi_f^{T_{n-1}'}(ev_{n}^+(u'^{n}))\right)
    \end{align}
    and the map
    \begin{align}
        EV^-: \mcal{M}(u^1) \times \mcal{M}(u^2) ...\times \mcal{M}(u^{n-1})  \longrightarrow (S^1)^{k_1^-} \times (S^1)^{k_2^-} \times ... \times (S^1)^{k_{n-1}^-} 
    \end{align}
    given by
    \begin{align}
        (u'^{1},...,u'^{n}) \longrightarrow \left(ev_2^-(u'^1),...,ev_{n-1}^-(u'^{n-1})\right)
    \end{align}
    are transverse at $\cas{u}$, then we say the cascade $\cas{u}$ is transversely cut out.
    \item In particular if $\cas{u}$ is transversely cut out, it lives in a moduli space that is a manifold. The manifold has dimension given by the following formula. Assuming again we have removed all chains of fixed trivial cylinders, then the dimension is given by
    \begin{align*}
    &Ind(\cas{u}):= Ind(u^1)+\ldots+Ind(u^{n})  - k_1^--\dots- k_{n-1}^- +(n-1)-n
    \end{align*}
    If $\cas{u}$ is transversely cut out and $Ind(\cas{u}) =0$, then we say $\cas{u}$ is rigid. Note here we have quotiented by the $\bb{R}$ action on each level.
    \item (Asymptotic matchings).
    \footnote{ We describe the analogue of this construction in the nondegenerate case. Suppose $u$ and $v$ are both nontrivial somewhere injective transversely cut out rigid holomorphic cylinders in $\bb{R}\times Y^3$, and the negative end of $u$ approaches an embedded (nondegenerate) Reeb orbit $\gamma$ with multiplicity $n$, and the positive end of $v$ also approaches $\gamma$ with multiplicity $n$. Then there are $n$ distinct ways to glue $u$ and $v$ together, and we use asymptotic markers to keep track of this. This is explained in Lemma 4.3 of \cite{Hutneldynam}. Our definition of matching is an analogue of this phenomenon for cascades, except we fit a gradient trajectory (or several segments of gradient trajectories connected to each other by trivial cylinders) between two non-trivial curves. Our notation for asymptotic markers is taken from Section 1 of \cite{obs1}.}
    
    Suppose $C_i$ is a nontrivial curve that appears on the $i$th level, i.e. it is a component of $u^i$, and a negative end of $C_i$ is asymptotic to a Reeb orbit $\gamma$. Suppose $\gamma$ is the $m$ times multiple cover of an embedded Reeb orbit, $\gamma'$. Consider the preimage of a point $w$ in $\gamma'$ of the covering map from $\gamma$ to $\gamma'$, which is a set of multiplicity $m$ that we write as $\{1,....,m\}$. Consider the smallest $j>i$ such that $u^j$ contains a nontrivial curve $C_j$ that has a positive end that is asymptotic to a Reeb orbit $\tilde{\gamma}$ that is connected to $\gamma$ by the upwards gradient flow segments in $\cas{u}$. (If $j>i+1$ we allow several segments of gradient flow concatenated together with trivial cylinders in the middle\footnote{As explained in the proof of Theorem \ref{SFTtheorem} in the Appendix, we should really think of collections of trivial cylinders connected by finite gradient flow segments between them as being a \emph{single} gradient flow segment that flows across different cascade levels.}.) The orbit $\tilde{\gamma}$ covers some embedded Reeb orbit $\tilde{\gamma}'$ with multiplicity $m$. Let $\tilde{w}$ denote the point in $\tilde{\gamma}$ that corresponds to $w$ under the gradient flow, i.e. in the neighborhoods we have chosen for Morse-Bott tori, $w$ and $\tilde{w}$ have the same $z$ coordinate. Then the preimage of $\tilde{y}$ under the covering map $\tilde{\gamma}\rightarrow \tilde{\gamma}'$ is a set with $m$ elements, $\{\tilde{1},..,\tilde{m}\}$. A \textbf{matching}, which is part of the data of the cascade $\cas{u}$, is a choice of a function from $\{1\}$ to  $\{\tilde{1},..,\tilde{m}\}$.
\end{enumerate}
With the above conditions all satisfied, we say $\cas{u}$ is a transverse and rigid height 1 cascade.
\end{definition}
Now we are in a position to state our correspondence theorem:
\begin{theorem} \label{main_theorem}
For all $\dt>0$ sufficiently small, a rigid transverse cascade $\cas{u}$ can be uniquely glued to a $J_\dt$-holomorphic curve $u_\dt$ with non-degenerate ends. 

The free ends in the $u^1$ level correspond to ends of $u_\dt$ that are asymptotic to Reeb orbits corresponding to the maximum of $f$. The fixed positive ends of $u^1$ correspond to positive ends of $u_\dt$ that are asymptotic to the Reeb orbits at the minimum of $f$. Similarly the free negative ends of $u^n$ correspond to negative ends of $u_\dt$ that are asymptotic to the Reeb orbits at the minimum of $f$, and the fixed negative ends of $u^n$ correspond to negative ends asymptotic to Reeb orbits corresponding to the maximum of $f$. The curve $u_\dt$ also has Fredholm index $1$. 

By uniqueness we mean that if $\{\dt_n\}$ is a sequence of numbers that converge to zero as $n\rightarrow \infty$, and $u'_{\dt_n}$ is a sequence of $J_{\dt_n}$-holomorphic curves converging to $\cas{u}$, then for large enough $n$, the curves $u_{\dt_n}'$ agree with $u_{\dt_n}$ up to translation in the symplectization direction.
\end{theorem}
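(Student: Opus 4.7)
The plan is to reformulate the gluing problem as solving a coupled system of nonlinear PDEs $\{\mathbf{\Theta}_i=0\}$ with one equation for each level $u^i$ of $\cas{u}$ and one equation for each gradient flow segment between adjacent levels, and then to solve this system via a two-stage pregluing plus contraction-mapping argument as previewed in the introduction. The underlying analytical difficulty is that the gradient cylinders between adjacent levels have length of order $T_i/\dt$, which diverges as $\dt\to 0$, and that (per Figure \ref{fig:displace_figure}) a small displacement of an end of $u^i$ along a Morse-Bott torus produces a huge variation in the matching condition between levels. Sobolev spaces and weights must therefore be chosen carefully so that the linearized operator of the system admits a right inverse whose norm is bounded independently of $\dt$.

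First I would set up the linear theory. On each $u^i$, use the weighted $W^{2,p,d}$ framework of \eqref{equation_neighborhood} with asymptotic vectors $V_\Gamma$ for free and fixed ends; the transversality condition in Definition \ref{def_rigid_cas} yields right inverses for each $D_J^{u^i}$, and the fiber-product transversality of Definition \ref{def_rigid_cas}(f) is exactly what is needed to assemble these into a uniformly bounded right inverse for the full linearization $D\mathbf{\Theta}$. On the gradient flow cylinders, I would use the weighted spaces developed in Section \ref{linearization}, adapting the $\dt$-uniform estimates of \cite{oancea}, so that the linearized operator on a cylinder of length $T_i/\dt$ also admits a right inverse with $\dt$-uniform norm.

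The gluing itself is built up in stages. As a warm-up (Section \ref{semiglue}), glue a single semi-infinite gradient trajectory to each free end of $u^1$ and $u^n$; the crucial output is the exponential decay estimate of Section \ref{expdecay}, which shows that the correction along the trajectory decays like $e^{-c|s|}$ with $c$ uniform in $\dt$. For the interior gradient segments, preglue adjacent curves $u^i$ and $u^{i+1}$ using cut-offs on a cylinder of length $T_i/\dt$, obtaining a first approximate solution $u_\dt^{\text{pre}}$. A direct contraction argument on $u_\dt^{\text{pre}}$ is not yet effective because the pregluing error is too large when measured against the $1/\dt$ length scales, so I would then plug in the semi-infinite solutions from the warm-up to perform a secondary pregluing that drives the error down to size $O(e^{-c/\dt})$. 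The matching conditions for the neck lengths reduce to scalar equations which by an intermediate-value-theorem / obstruction-bundle argument (Remark \ref{obsrmk}) have unique solutions near $T_i/\dt$. After this second pregluing, the contraction mapping principle applied to $\{\mathbf{\Theta}_i=0\}$ produces a unique small correction, yielding $u_\dt$. Generalizing from height-one $2$-level cascades to height-one $n$-level cascades is then mostly notational, as in the introduction.

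The hardest part will be the uniqueness statement, which is handled in Section \ref{surj}. Given a sequence $u'_{\dt_n}$ of $J_{\dt_n}$-holomorphic curves converging to $\cas{u}$, the task is to show that for large $n$ each $u'_{\dt_n}$ lies in the contraction ball around $u_{\dt_n}^{\text{pre}}$, so that the uniqueness half of the contraction mapping forces $u'_{\dt_n}=u_{\dt_n}$ up to $\bb{R}$-translation. This requires the sharp asymptotic analysis of $J_\dt$-curves along long Morse-Bott necks developed in Section \ref{behav} (following the appendix of \cite{oancea}): one decomposes $u'_{\dt_n}$ into pieces close to the $u^i$ joined by pieces $C^0$-close to gradient cylinders of length $T_i/\dt_n$, and bounds the deviation exponentially into each neck. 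Combined with the asymptotic-marker bookkeeping of Definition \ref{def_rigid_cas}(h), which matches up the discrete gluing data across multiply covered necks, this places $u'_{\dt_n}$ in the local chart on which $\mathbf{\Theta}$ has been shown to be a contraction, yielding $u'_{\dt_n}=u_{\dt_n}$ up to translation as claimed.
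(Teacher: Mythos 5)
Your proposal follows essentially the same route as the paper: the coupled system $\{\mathbf{\Theta}_i=0\}$ with one equation per curve and per gradient segment, the $\dt$-uniform right inverses over the long gradient cylinders (Section \ref{linearization}), the two-stage pregluing in which the semi-infinite solutions of Section \ref{semiglue} are reinserted to cut the neck error down to $O(e^{-c/\dt})$, the final contraction on the product of deformation spaces with constrained neck parameters, and the surjectivity argument via the asymptotic estimates of Section \ref{behav}. One caution: the intermediate-value-theorem / obstruction-bundle step of Remark \ref{obsrmk} by itself yields only \emph{existence} of a zero, not uniqueness; uniqueness hinges on the $C^1$-control of the neck solution $\phi$ with respect to the pregluing parameters $(r,a,p)_\pm$ (the ``well behaved'' estimates of Section \ref{gluing}, which rest on the exponential decay of $\partial_p\zeta$ proved in Section \ref{expdecay}), and this is the most delicate point of the whole argument, so it deserves explicit mention in your plan rather than being subsumed into the final contraction step.
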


\begin{remark} \label{remark_generic_J}
In Section \ref{diffgeo} we will see that in perturbing from $\lambda$ to $\lambda_\dt$, the almost complex structure $J$ will need to be perturbed to $J_\dt$ to ensure it is compatible with $\lambda_\dt$. We will specify how to perturb $J$ into $J_\dt$ near each Morse Bott torus in Section \ref{diffgeo}. We can in fact perturb $J_\dt$ to be different from $J$ away from the Morse-Bott tori as well. Our construction works as long as in $C^\infty$ norm the difference between $J$ and $J_\dt$ is bounded above by $C\dt$. We bring this up because we can choose a generic path between $J$ and $J_\dt$ as $\dt \rightarrow 0$ so that for generic $\dt>0$, the glued curve $u_\dt$ is also transversely cut out. This will be useful for Floer theory constructions in \cite{Tips}, and will be explained in more detail there.
\end{remark}
\section{Differential geometry} \label{diffgeo}
In this section we work out the differential geometry surrounding the Morse-Bott tori. We first work out the Reeb dynamics, then we show two gradient flow trajectories of $f$ correspond to $J_\dt$-holomorphic cylinders.
\subsection{Reeb dynamics}
We recall the local neighborhoood near a Morse-Bott torus: if $(Y^3,\lambda)$ is a contact 3 manifold with Morse-Bott degenerate contact form, near a Morse-Bott torus we have coordinates
$(z,x,y) \in S^1\times S^1 \times \mathbb{R}$. Let
\[
\lambda_0= dz-ydx
\]
denote the standard contact form, then by Theorem \ref{prop_locform} $\lambda$ looks like 
\[
\lambda = h(x,y,z) \lambda_0
\]
where $h(x,y,z)$ satisfies
\[
h(x,0,z)=0,\quad  dh(x,0,z) =0.
\]
Next we perturb the contact form to 
\[
\lambda \longrightarrow \lambda_\delta:= e^{\delta gf}\lambda
\]
We assume we are working in a small enough neighborhood so that $g=1$. We are interested in the Reeb dynamics on the torus $y=0$.
\begin{proposition}
On the torus $y=0$, let $R_\delta$ denote the Reeb vector field of $\lambda_\delta$. We write it in the form $R_\delta = R+X$ where $R = \partial/\partial_z$ is the Reeb vector field of $\lambda$. Then the following equaitons are satisfied
\begin{equation*}
    \iota_X \lambda = \frac{1-e^{\delta f} }{e^{\delta f}}
\end{equation*}
\[
\iota_X d\lambda = \frac{de^{\delta f}}{e^{2\delta f}}
\]
and these two equations completely characterize the the behaviour of $R_\delta$ on the $y=0$ surface.
\end{proposition}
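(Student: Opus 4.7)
The approach is to directly unpack the two defining conditions for the Reeb vector field of $\lambda_\delta$, namely $\iota_{R_\delta}\lambda_\delta = 1$ and $\iota_{R_\delta}d\lambda_\delta = 0$, using $\lambda_\delta = e^{\delta f}\lambda$ (since $g \equiv 1$ in the chosen neighborhood) and the ansatz $R_\delta = R + X$, and to restrict everything to the surface $y=0$. The first condition gives $e^{\delta f}(\lambda(R) + \lambda(X)) = 1$; since $\lambda(R) = 1$, this immediately yields the stated formula $\iota_X\lambda = (1-e^{\delta f})/e^{\delta f}$.

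For the second condition, I would expand $d\lambda_\delta = de^{\delta f}\wedge\lambda + e^{\delta f}\,d\lambda$ and contract with $R+X$. Using $\iota_R d\lambda = 0$, the fact that $(de^{\delta f})(R) = 0$ (since $R = \partial_z$ on $y=0$ by Proposition \ref{prop_locform}, and $f$ depends only on $x$), and $\lambda(R_\delta) = e^{-\delta f}$ from the first step, rearrangement leads to the preliminary identity
\[
\iota_X d\lambda \;=\; \frac{de^{\delta f}}{e^{2\delta f}} \;-\; \frac{(de^{\delta f})(X)}{e^{\delta f}}\,\lambda.
\]
The extra term on the right is then eliminated by pairing both sides with $R$: the left-hand side vanishes because $d\lambda(X,R) = -d\lambda(R,X) = 0$, while on the right the first term dies since $(de^{\delta f})(R) = 0$ and $\lambda(R) = 1$ leaves $-(de^{\delta f})(X)/e^{\delta f}$. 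This forces $(de^{\delta f})(X) = 0$ on $y=0$, and the identity collapses to the claimed $\iota_X d\lambda = de^{\delta f}/e^{2\delta f}$.

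For the characterization claim, I would use the splitting $TY = \bb{R}R \oplus \xi$ with $\xi = \ker\lambda$ and write $X = \lambda(X)R + X_\xi$. The first equation pins down the coefficient of $R$. For the $\xi$-component, observe that $\iota_X d\lambda = \iota_{X_\xi} d\lambda$ (since $\iota_R d\lambda = 0$), and both sides of the second equation vanish on $R$ (so the equation is consistent on $\xi$), while the non-degeneracy of $d\lambda|_\xi$ means the restriction of $\iota_{X_\xi} d\lambda$ to $\xi$ uniquely determines $X_\xi$. The only step requiring real care is the cancellation of the cross term in the second equation; once this is dispatched by pairing against $R$, everything else is bookkeeping.
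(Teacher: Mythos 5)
Your proposal is correct and follows essentially the same route as the paper: both derive the first identity directly from $\iota_{R_\delta}\lambda_\delta=1$, and both obtain the second by expanding $\iota_{R+X}d(e^{\delta f}\lambda)$ and then contracting the resulting identity with $R$ to force $(de^{\delta f})(X)=0$, which kills the cross term. The only difference is cosmetic bookkeeping in how the expansion of $d\lambda_\delta$ is organized, plus your (valid) explicit uniqueness argument via the splitting $\mathbb{R}R\oplus\ker\lambda$, which the paper leaves implicit.
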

\begin{proof}
From definition
\begin{equation*}
    \iota_{R_\delta} \lambda_\delta =1
\end{equation*}
hence we have
\begin{equation*}
    \iota_X \lambda = \frac{1-e^{\delta f} }{e^{\delta f}}.
\end{equation*}
For the second equation, 
\begin{align*}
    &\iota_{R+X} d\left(e^{\delta f}\lambda\right )\\
    &=\iota_{R+X} \left(e^{\delta f}d\lambda +\delta e^{\delta f}df \wedge \lambda \right)\\
    &=\iota_R(...) +\iota_X(...)
\end{align*}
If we look at the first term we see
\begin{align*}
    &\iota_R(e^{\delta f }d\lambda +\delta e^{\delta f} df \wedge \lambda )\\
    &=\iota_R e^{\delta f} d\lambda +\delta e^{\delta f} (\iota_R df) \wedge \lambda - \delta e^{\delta f} df \iota_R \lambda \\
    &= 0 +0 -\delta e^{\delta f} df .
\end{align*}
Next we look at the second term
\begin{align*}
    &\iota _Xd\lambda_\delta\\
    &=\iota_X(e^{\dt f} d\lambda +\delta e^{\delta f} df \wedge \lambda )\\
    & = \iota_X e^{\delta f}d\lambda + \delta e^{\delta f} (\iota_X df) \lambda - \delta e^{\delta f} df \iota_X \lambda \\
    &= e^{\delta f} \iota_X d\lambda + \delta e^{\delta \lambda} (\iota_X df) \lambda - \delta e^{\delta f} df \iota_X \lambda.
\end{align*}
Combining the above two equations we get
\begin{equation*}
    e^{\delta f} \iota_X d\lambda + \delta e^{\delta f} (\iota_X df) \lambda - \delta e^{\delta f} df \iota_X \lambda = \delta e^{\delta f} df.
\end{equation*}
Evaluate both sides with $\iota_R$ we see that
\begin{equation*}
    \iota_X df =0
\end{equation*}
so we get
\begin{align*}
    &e^{\delta f} \iota_X d\lambda = de^{\delta f}\left(1+\frac{1-e^{\delta f} }{e^{\delta f}}\right)
\end{align*}
\[
\iota_X d\lambda = de^{\delta f}/e^{2\delta f}.
\]
\end{proof}
In particular on the $y=0$ surface we can write
\[
X = \frac{1-e^{\delta f} }{e^{\delta f}}\partial_z - \frac{\delta e^{\delta f}f'(x) \partial_y} {e^{2\delta f}}
\]
\subsection{Almost complex structures and gradient flow lines}
For $(\bb{R}\times Y^3, \lambda)$ we choose a generic almost complex structure $J$ that is standard on the surface of the Morse-Bott torus, i.e. $J\partial_x = \partial_y$. After we perturb to $\lambda_\delta $, we must perturb $J$ to $J_\delta$ to make the complex structure compatible with the new contact form. However we keep the same complex structure on the contact distribution, i.e. 
\[
J_\delta \partial_x =\partial_y.
\]
We wish to understand what $J_\dt$ does to the Reeb vector field and the vector field in the symplectization direction. By definition
\begin{equation*}
    J_\delta (R+X) = -\partial_a 
\end{equation*}
\begin{equation*}
    J_\delta \partial_a = R+X.
\end{equation*}
From the above we deduce
\begin{equation*}
    J_\delta R= -\partial_a -J_\delta X= -\partial_a- J_\delta \left(\frac{1-e^{\delta f} }{e^{\delta f}}\partial_z - \frac{\delta e^{\delta f}f'(x) \partial_y} {e^{2\delta f}}\right)
\end{equation*}
\begin{equation*}
     J_\delta \partial_z = e^{\delta f} \left(-\partial_a -  \frac{\delta e^{\delta f}f'(x) \partial_x} {e^{2\delta f}}\right)= -e^{\delta f} \partial_a - \delta f'(x) \partial_x.
\end{equation*}
Next we consider $J_\delta$-holomorphic curves constructed by lifting gradient flows of $\delta f$.
Consider maps
\[
v:(s,t) \longrightarrow (a(s), z(t), x(s),y(s)) \in \bb{R} \times S^1 \times S^1 \times \bb{R}
\]
defined by
\begin{equation*}
   \partial_s a = e^{\delta f(x(s))}
\end{equation*}
\begin{equation*}
    \partial_t z(t)= R
\end{equation*}
\begin{equation*}
    \partial_s x(s) = + \delta f'(x)
\end{equation*}
\[
y=0
\]
and initial conditions
\begin{equation*}
    a(0,t) = 0, x(0) = \textup{constant}.
\end{equation*}
\begin{proposition}
The map $v$ as defined above is a $J_\delta$-holomorphic curve.
\end{proposition}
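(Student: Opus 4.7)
The plan is to verify the Cauchy--Riemann equation $\partial_s v + J_\dt \partial_t v = 0$ by direct substitution, using the formulas for $J_\dt$ on the basis $\{\partial_a, \partial_z, \partial_x, \partial_y\}$ that were derived in the preceding subsection.

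First, I would compute the $(s,t)$-derivatives of $v$ from the defining ODEs. Since $y\equiv 0$, the only contributions are
\[
\partial_s v = e^{\dt f(x(s))}\,\partial_a + \dt f'(x(s))\,\partial_x,
\qquad
\partial_t v = \partial_z,
\]
where I use that on the surface $y=0$ the Reeb vector field of $\lambda$ is $R = \partial_z$, so the condition $\partial_t z = 1$ (which is how I read ``$\partial_t z = R$'') gives $\partial_t v = \partial_z$.

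Next I would apply $J_\dt$ to $\partial_t v$. The key input is the identity derived just above the proposition,
\[
J_\dt \partial_z = -e^{\dt f}\,\partial_a - \dt f'(x)\,\partial_x,
\]
valid on $\{y=0\}$. Adding this to $\partial_s v$ gives
\[
\partial_s v + J_\dt \partial_t v
= \bigl(e^{\dt f} - e^{\dt f}\bigr)\partial_a + \bigl(\dt f'(x) - \dt f'(x)\bigr)\partial_x
= 0,
\]
which is exactly the $J_\dt$-holomorphic curve equation. There are no subtleties coming from the $y$-component since $y\equiv 0$ kills both $\partial_s y$ and $\partial_t y$, and the formulas for $J_\dt$ used above were derived precisely on the surface $\{y=0\}$, so everything is consistent. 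This is essentially a one-line verification once the preceding computation of $J_\dt$ on $\partial_z$ is in hand, so I do not expect any step to present a real obstacle; the only bookkeeping point worth flagging is to make sure the image of $v$ really does stay on $\{y=0\}$ (which follows immediately from the ODE $y\equiv 0$ being preserved by the flow) so that the restricted formulas for $J_\dt$ remain applicable throughout.
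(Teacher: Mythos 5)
Your verification is correct and is essentially identical to the paper's own proof: both compute $\partial_s v = e^{\dt f}\partial_a + \dt f'(x)\partial_x$ and $\partial_t v = \partial_z$, then invoke the formula $J_\dt\partial_z = -e^{\dt f}\partial_a - \dt f'(x)\partial_x$ derived just before the proposition to see the cancellation. Your extra remark about the flow preserving $\{y=0\}$ is a harmless bookkeeping point the paper leaves implicit.
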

\begin{proof}
Let $\hat{v}:= (z(t),x(s),y(s))$. We apply the $J_\delta$ holomorphic curve equation to $v$
\begin{align*}
    &\partial_s \hat{v} + \partial_s a \frac{\partial}{\partial a} + J \partial_t \hat{v}\\
    &= e^{\delta f(x(s))}\frac{\partial}{\partial a} + \delta f'(x) \frac{\partial}{\partial x} -(e^{\delta f}) \partial_a -\delta f'(x) \partial_x =0.
\end{align*}
\end{proof}
We observe since there are two gradient flow lines on $S^1$, there are two $J_\dt$-holomorphic curves as above corresponding to their lifts. Further:
\begin{proposition}
The curve $v$ is transversely cut out. The same is true for unbranched covers of $v$ by cylinders.
\end{proposition}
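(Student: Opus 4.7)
Proof Proposal: The plan is to verify directly that the linearized Cauchy--Riemann operator $D_v$ at $v$ is surjective, with one-dimensional kernel generated by the $\bb{R}$-translation $\partial_a$, matching the Fredholm index $\mathrm{Ind}(v)=1$. The essential simplification is the autonomous structure of $v$: since $v(s,t)=(a(s), t, x(s), 0)$ depends on $t$ only through the Reeb coordinate $z=t$, and both $\lambda_\dt$ and $J_\dt$ are $z$-independent in the local normal form of Proposition \ref{prop_locform}, the coefficients of $D_v$ in a natural frame depend only on $s$. This autonomy permits a $t$-Fourier decomposition reducing the linearized PDE to a countable family of ODEs on $\bb{R}$.

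First I would trivialize $v^*T(\bb{R}\times Y)$ in the frame $(\partial_a,\partial_z,\partial_x,\partial_y)$ along the image of $v$, using the formulas for $J_\dt$ derived in this section. Split the pull-back bundle as $T_v\oplus N_v$, where $T_v=\mathrm{span}(\partial_s v,\partial_t v)$ is generated by $\partial_s v=e^{\dt f(x)}\partial_a+\dt f'(x)\partial_x$ and $\partial_t v=\partial_z$, and $N_v$ is a complementary complex line bundle (obtained for instance by projecting the contact plane $\xi=\mathrm{span}(\partial_x,\partial_y)$ onto the $T_v$-complement). The operator $D_v$ preserves this decomposition up to compact, zeroth order terms, and on $N_v$ takes the standard form $\partial_s+J_0\partial_t+S(s)$, where $S(s)$ is a one-parameter family of symmetric operators converging exponentially as $s\to\pm\infty$ to the asymptotic operators $S_\pm$ of the hyperbolic and elliptic Reeb orbits at the critical points of $f$.

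Next I would Fourier-expand in $t$: writing $\eta(s,t)=\sum_{k\in\bb{Z}}\eta_k(s)e^{ikt}$, the equation $D_v^N\eta=0$ decouples into ODEs $\partial_s\eta_k+(S(s)+ikJ_0)\eta_k=0$, one per Fourier mode. Each mode is Fredholm on the appropriate weighted $W^{2,p}$-space, with index determined by the spectral flow of $S_\pm+ikJ_0$. By nondegeneracy of the perturbed Reeb orbits and the bound $|\theta|=O(\dt)$ on the elliptic rotation angle, none of these asymptotic operators has a zero eigenvalue, so each ODE is well-posed. A standard spectral flow analysis (following \cite{BourPhd} for the autonomous case) shows that only the $k=0$ mode has a nontrivial kernel, which is one-dimensional and accounts for the asymptotic vector corresponding to $\partial_a$; the cokernel vanishes in every mode. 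Combined with the index calculation (which yields $\mathrm{Ind}(v)=1$ once the correct assignment of elliptic and hyperbolic orbits to the critical points of $f$ is made), this shows $D_v$ is surjective with one-dimensional kernel, so $v$ is transversely cut out.

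For an unbranched cylindrical $m$-fold cover $v^m:(s,t)\mapsto v(s,mt)$, sections of $(v^m)^*T(\bb{R}\times Y)$ decompose under the $\bb{Z}/m$ deck action into isotypic components labeled by $j=0,\ldots,m-1$. The $j=0$ component reproduces the analysis for $v$ above. For $j\neq 0$, the corresponding Fourier modes are shifted to $k+j/m$, and the relevant asymptotic operators become $S_\pm+i(k+j/m)J_0$; these remain invertible provided $|\theta|=O(\dt)$ is strictly smaller than the smallest nonzero fractional shift $1/m$, which holds for $\dt$ sufficiently small. Hence each nontrivial isotypic component is Fredholm with trivial kernel and cokernel, so $D_{v^m}$ is surjective with one-dimensional kernel and $v^m$ is transversely cut out. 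The main obstacle is the spectral non-resonance check for the shifted asymptotic operators; this reduces to verifying that the $O(\dt)$ elliptic rotation angle does not accidentally land on a fractional shift $j/m$, which follows directly from the smallness of $\dt$.
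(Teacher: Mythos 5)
Your route is genuinely different from the paper's, and as written it has two real gaps. The paper disposes of this in one line via Wendl's automatic transversality criterion \cite{wendlauto}: $v$ is an immersed punctured sphere with $\operatorname{Ind}(v)=1$, $c_N=0$, $Z(du)=0$, so $\operatorname{Ind}(v)=1>c_N+Z(du)=0$ forces regularity, and the same numerics hold for unbranched cylindrical covers (the iterated hyperbolic orbit still has even index and the iterated elliptic orbit, having rotation angle $O(\dt)$, still has index $\pm 1$, so the index of the cover is again $1$). No spectral analysis is needed.

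The first gap in your argument is the autonomy claim. Proposition \ref{prop_locform} only gives $h(x,0,z)=1$ and $dh(x,0,z)=0$ on the Morse--Bott torus; the second derivatives of $h$ transverse to the torus may still depend on $z$, and these enter the zeroth-order term of the linearization. The paper itself writes the linearization over a trivial cylinder as $\partial_s+J_0\partial_t+S_x(t)$ with explicit $t$-dependence, so the clean decoupling of $D_v$ into Fourier modes in $t$ (and, for covers, into isotypic components with shifted modes) is not available in general. The second gap is more fundamental: even granting the decoupling, spectral flow of the asymptotic operators only computes the \emph{index} of each mode operator $\partial_s+B_k(s)$, not the dimensions of kernel and cokernel separately. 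For such an ODE system with hyperbolic limits the kernel is $\dim\bigl(E^s(+\infty)\cap E^u(-\infty)\bigr)$, which can be positive even when the index is zero; so "index $0$ in every nonzero mode" does not yield "cokernel vanishes in every mode." That surjectivity statement is precisely the content of transversality and is the step your write-up asserts rather than proves. Either supply an explicit solution of the mode ODEs exploiting the special form of $v$, or do what the paper does and invoke automatic transversality, which is designed exactly to convert the index inequality into surjectivity without this mode-by-mode analysis.
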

\begin{proof}
We use Theorem 1 from Wendl's paper on automatic transversality \cite{wendlauto}. In the language of Theorem 1, $\text{Ind} (v) =1$, $\Gamma_0=1$ (only one end is asymptotic to Reeb orbits with even Conley-Zehnder index), there are no boundary components, and $c_N=0$, hence
\begin{equation*}
   \text{ Ind}(v)=1 >c_N +Z(du)=0.
\end{equation*}
The same proof works for unbranched covers of $v$ as well.
\end{proof}
For future references, we record the form of the vector field
\[
v_* \partial_s = e^{\delta f(x(s))}\partial_a + \delta f'(x) \partial_x.
\]
\section{Linearization of \texorpdfstring{$\bar{\partial}_{J_\delta}$}{Cauchy Riemann operator} over \texorpdfstring{$v$}{v}} \label{linearization}
In this section we define the linearization of the Cauchy Riemann operator $\db_{J_\delta}$ over $v$, the holomorphic cylinder constructed in the above section that corresponds to a gradient flow of $f$. We also equip it with an appropriate Sobolev space on which the linearized operator is Fredholm. This is preparation for the gluing construction.

\begin{convention}
For this point onward in the paper we will assume all gradient trajectories are simply covered for ease of notation. In practice they can be (unbranched) multiply covered. For any of the analysis we are doing this will not make any difference. 

The point to note here is that if we see any finite gradient cylinders (or chains of finite gradient cylinders connected to each over by trivial cylinders) that are multiply covered connecting between two non-trivial curves in the cascade, the number of ways to glue is counted precisely by the number of different matchings (see Definition \ref{def_rigid_cas}) we can assign to such a segment.
\end{convention}

Fix a holomorphic cylinder $v_\delta$ (we make the $\delta$ dependence explicit), consider the space of vector fields over $v_\dt$,
\[
\Gamma (v_\delta ^*TM).
\]
We take a weighted Sobolev space
\[
W^{2,p,d}(v_\delta^*TM)
\]
which is the $W^{2,p}(v_\delta^*TM)$ with exponential weight $e^{w(s)} = e^{ds}$, where $d>0$ is a small fixed number that only depends on the Morse-Bott torus. Here we can also use $e^{-ds}$.

Note as given, these are vector fields with exponential decay as $s\rightarrow \infty$ and exponential growth as $s\rightarrow -\infty$. The end with exponential growth is not suited for nonlinear analysis of the Cauchy Riemann equation, but we will find them useful as a formal device so all our linear operators have the right Fredholm index and uniformly bounded right inverse. It will be apparent from our gluing construction that vector fields with exponential growth will not cause any difficulty. This is also the approach taken in \cite{colin2021embedded}.
The main result of the section is the following:
\begin{proposition}\label{uniform_bounded_inverse_gradient_flow}
Let $D_{J_\delta}$ denote the linearization of $\db_{J_\delta}$ along $v_\delta$ using metric $g$. Then the operator
\[
D_{J_\delta}: W^{2,p,d}(v_\delta^*TM) \longrightarrow W^{1,p,d}(v_\delta^*TM)
\]
is a Fredholm operator of index 0. In particular it is an isomorphism. Further it has right (and left) inverse $Q_\delta$ whose operator norm is uniformly bounded as $\delta \rightarrow 0$.
\end{proposition}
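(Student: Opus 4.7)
The plan is to establish Fredholm index zero and injectivity by standard linear theory together with Wendl's automatic transversality, and then to obtain the uniform bound on the inverse by a contradiction-and-rescaling argument in the $\delta \to 0$ limit, following the template of \cite{oancea}.

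In the trivialization of $v_\delta^*TM$ coming from the Morse-Bott chart $(\partial_a,\partial_z,\partial_x,\partial_y)$, the operator $D_{J_\delta}$ takes the Cauchy--Riemann form $\partial_s+J_0\partial_t+S_\delta(s,t)$ with $S_\delta(s,t)$ approaching asymptotic operators $A_\pm^\delta$ at $s\to\pm\infty$. Because the ends of $v_\delta$ lie at nondegenerate critical orbits of $f$, the $A_\pm^\delta$ have a spectral gap at $0$ (of order $\delta$), so Lockhart--McOwen theory yields the Fredholm property between the weighted spaces for any weight $d$ lying in this gap. The index is then read off from the usual spectral flow formula for weighted operators; comparison with the unweighted index $1$ obtained from Wendl's formula applied to $v_\delta$ in the preceding section, together with the fact that the weight $e^{ds}$ at the positive end shifts the asymptotic spectrum across precisely one eigenvalue, gives index $0$. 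For injectivity, I would reuse the automatic transversality theorem (Theorem~1 of \cite{wendlauto}) already used to analyze $v_\delta$: the unweighted kernel of $D_{J_\delta}$ is one-dimensional, spanned by the $\bb{R}$-translation vector field $v_\delta^*\partial_a$. Since $\partial_a$ does not decay at $s\to+\infty$, this generator does not lie in the weighted domain, so the weighted kernel is trivial. Together with index $0$, this shows $D_{J_\delta}$ is an isomorphism and furnishes a two-sided inverse $Q_\delta$.

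The heart of the proposition, and the most delicate step, is the uniform bound $\|Q_\delta\|\leq C$ as $\delta\to 0$. The subtlety is that $v_\delta$ has a ``body'' of effective $s$-length of order $1/\delta$ over which $S_\delta(s,t)$ differs from the degenerate Morse-Bott asymptotic operator $A^{MB}$ only by $O(\delta)$; a uniformly bounded inverse cannot simply be patched from inverses on the asymptotic ends since the body grows longer and longer. The plan is to argue by contradiction: if $\|Q_{\delta_n}\|\to\infty$ along some $\delta_n\to 0$, then there exist $\eta_n\in W^{2,p,d}$ with $\|\eta_n\|_{W^{2,p,d}}=1$ and $\|D_{J_{\delta_n}}\eta_n\|_{W^{1,p,d}}\to 0$. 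Track where the mass of $\eta_n$ concentrates along the cylinder; after translating in $s$ and applying elliptic bootstrapping with interpolation constants that are uniform in $\delta_n$, extract a subsequential limit $\eta_\infty$ of one of three possible types: (i) a weighted solution on the positive end $[0,\infty)\times S^1$ in the kernel of the limiting constant-coefficient operator $\partial_s+J_0\partial_t+A_+^0$; (ii) the analogous solution on the negative end; or (iii) a weighted solution on $\bb{R}\times S^1$ in the kernel of $\partial_s+J_0\partial_t+A^{MB}$. Cases (i) and (ii) contradict invertibility of the model half-cylinder operators in the appropriate weighted norms, which can be verified by an explicit Fourier decomposition in $t\in S^1$.

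Case (iii), the ``bulk'' escape, is the main obstacle: one must rescale the $s$-coordinate by $\delta$ so that the body becomes a cylinder of fixed length in the rescaled variable and the operator acquires a genuine limit, then show by a direct kernel computation on $\bb{R}\times S^1$ (using the weight profile induced on the rescaled domain) that no nontrivial weighted element exists. The technically delicate ingredient is that the fixed weight $d$ does not rescale with $\delta$ while the spectral gap of $A_\pm^\delta$ does, so the weight becomes ``large'' relative to the spectral gap in the rescaled picture; this must be handled carefully to ensure that the concentration dichotomy above really is exhaustive and that the limiting problem in case (iii) has only the trivial solution. This is essentially the adaptation of the argument in the appendix of \cite{oancea} to our setting.
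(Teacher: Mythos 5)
Your proposal diverges from the paper precisely at the step you yourself identify as "the heart of the proposition," and that step is not carried out correctly. In your case (iii) you propose to rescale $s$ by $\delta$ so that the body of $v_\delta$ becomes a cylinder of fixed length. This is the wrong move: under $s=s'/\delta$ the operator $\p_s+J_0\p_t+S_\delta(s,t)$ becomes $\delta(\p_{s'}+\delta^{-1}J_0\p_t+\cdots)$, the angular term blows up, and there is no sensible limiting problem. The correct observation is that no rescaling is needed. If the mass of $\eta_n$ concentrates at interior points $s_n$ with $\delta_n s_n\to c$, then after a mere translation the coefficients of $D_{J_{\delta_n}}$ converge in $C^\infty_{loc}$ to those of the translation-invariant operator $\p_s+J_0\p_t+S_{x}(t)$ of the \emph{trivial cylinder} at the point $x=x(c)$ of the $S^1$-family (because $x'(s)=\delta f'(x)=O(\delta)$, the local geometry is frozen in the limit), and this operator with weight $e^{ds}$ is an isomorphism by the Fourier argument. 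So the exhaustive list of limit models is the $S^1$-family of trivial-cylinder operators $D_x$ together with the two end models -- which is exactly the collection of building blocks the paper uses. The paper avoids the contradiction argument altogether: it covers the neck of length $O(1/\delta)$ by $N$ overlapping windows on which $D_{J_\delta}$ differs from $D_{x_i}$ by $O(1/N+\delta)$ in operator norm, glues the uniformly bounded inverses $Q_{x_i}$ into an approximate right inverse, and corrects it. Your concentration argument is salvageable in principle, but as written the decisive case is both misidentified and deferred to "an adaptation of \cite{oancea}," which is not a proof.

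There is a second gap in the injectivity step. The bounded kernel of $D_{J_\delta}$ is not one-dimensional: $\p_a$, $\p_z$ and $v_*\p_s$ all lie in it (the index-$1$ count from automatic transversality refers to the moduli space after quotienting by the two-dimensional reparametrization group of the cylinder, not to the kernel of this operator). More seriously, elements of $W^{2,p,d}(v_\delta^*TM)$ are allowed exponential \emph{growth} at rate up to $d$ as $s\to-\infty$, so showing that the bounded kernel elements fail to decay at $+\infty$ does not rule out weighted kernel elements that decay at $+\infty$ and grow at $-\infty$; the asymptotic eigenfunction expansion at $-\infty$ admits many such candidate modes, including the $O(\delta)$-eigenvalue ones. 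Injectivity should instead be deduced from surjectivity (supplied by the uniformly bounded right inverse) together with index $0$, which is how the paper's statement "in particular it is an isomorphism" is obtained. The index-$0$ computation itself is cleaner than your route through Wendl's formula: since the same weight profile $e^{ds}$ is used at both ends, the conjugated asymptotic operators at $\pm\infty$ agree up to $O(\delta)$ and there is no spectral flow; your claim that the weight "shifts across precisely one eigenvalue" does not account for the two-dimensional kernel ($\p_a$, $\p_z$) that the asymptotic operator retains even after the nondegenerate perturbation.
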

The proof will occupy the rest of this section. The idea is for sufficiently small $\delta>0$ the $J_\dt$-holomorphic curve $v_\dt$ is nearly horizontal, and hence can be approximated by a finite collection of trivial cylinders glued together. But the linearization of $\db$ over a trivial cylinder is an isomorphism with inverse independent of $\delta$, and by standard gluing theory of operators the operator glued from linearizations of $\db$ over trivial cylinders has the properties described in the theorem.

\subsection{Linearizations over trivial cylinders}
Fix $x$, which corresponds to fixing a Reeb orbit in the Morse-Bott torus. Consider the trivial cylinder $C_x$ at $x$. The Cauchy Riemann operator $\db_J$ (with unperturbed complex structure $J$) has linearization $D_x$ of the form
\begin{equation*}
    \partial_s + J_0 \partial_t +S_x(t).
\end{equation*}
The matrix $J_0$ is the standard complex structure on $\bb{R}^4$, and $S_x(t)$ is a symmetric matrix. Considered as an operator, we have
\[
D_x:W^{2,p,d}(C_x^*TM) \longrightarrow W^{1,p,d}(C_x^*TM)
\]
with exponential weight $e^{ds}$ on both sides. 
\begin{lemma}
$D_x$ is an isomorphism.
\end{lemma}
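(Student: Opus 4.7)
The plan is to exploit that the trivial cylinder $C_x$ has the same asymptotic operator at both ends, so adding a single exponential weight shifts the spectrum symmetrically and produces an invertible operator with trivial kernel (hence trivial cokernel by the index computation). I will set this up via conjugation by the weight, followed by an eigenfunction expansion.

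First I would write $D_x = \partial_s + A_x$ where the asymptotic operator is $A_x = J_0 \partial_t + S_x(t)$, a $t$-dependent self-adjoint operator on $L^2(S^1; \mathbb{R}^4)$ with discrete real spectrum $\{\lambda_i\}$ and orthonormal eigenbasis $\{\phi_i\}$. Because $x$ lies on a Morse-Bott torus, $0 \in \operatorname{spec}(A_x)$, with the zero eigenspace spanned by the tangent vector $\partial_x$ along the family of Reeb orbits. Choose $d > 0$ strictly smaller than the modulus of every nonzero eigenvalue of $A_x$, so that $A_x - d$ is invertible.

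Next I would reduce to the unweighted setting via conjugation. Setting $v = e^{ds} u$, one computes
\[
e^{ds} D_x (e^{-ds} v) = (\partial_s + A_x - d) v,
\]
so $D_x : W^{2,p,d} \to W^{1,p,d}$ is conjugate to $\tilde D_x := \partial_s + (A_x - d)$ acting on the unweighted $W^{2,p}$. Since $\tilde D_x$ has the same invertible asymptotic operator at $s \to \pm\infty$, the standard spectral-flow formula for the Fredholm index of an operator of the form $\partial_s + A(s)$ on a cylinder gives $\operatorname{ind}(\tilde D_x) = 0$; equivalently, the Conley–Zehnder (or Robbin–Salamon) indices at the two ends agree because the asymptotic operator does not change along the cylinder.

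With the index pinned at zero, it suffices to show $\ker \tilde D_x = 0$. Any solution of $\tilde D_x u = 0$ expands as
\[
u(s,t) = \sum_i c_i\, e^{(d-\lambda_i)s} \phi_i(t),
\]
and membership in the unweighted $W^{2,p}$ requires both $d - \lambda_i < 0$ (decay at $+\infty$) and $d - \lambda_i > 0$ (decay at $-\infty$) whenever $c_i \ne 0$. This is impossible, so $c_i = 0$ for all $i$ and the kernel is trivial. Combined with $\operatorname{ind}(\tilde D_x) = 0$, the cokernel vanishes as well, and conjugating back shows $D_x : W^{2,p,d} \to W^{1,p,d}$ is an isomorphism.

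The only real subtlety is the choice of $d$: it must be smaller than the spectral gap of $A_x$ around $0$, which is uniform over $x$ in the given Morse-Bott torus by compactness, justifying the earlier statement that $d$ may be chosen depending only on the torus. No other step is more than bookkeeping; in particular, no genericity on $J$ is needed because the trivial cylinder is automatically regular once the weight removes the Morse-Bott degeneracy.
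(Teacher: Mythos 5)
Your proof is correct and follows essentially the same route as the paper: conjugate by $e^{ds}$ to the unweighted space, get index $0$ from the absence of spectral flow (same invertible asymptotic operator $A_x-d$ at both ends), and kill the kernel via the eigenfunction expansion whose modes must grow at one end. The only imprecision is your claim that the zero eigenspace of the asymptotic operator is spanned by $\partial_x$ alone — on $C_x^*TM$ it is three-dimensional, spanned by $\partial_a,\partial_z,\partial_x$ — but this does not affect the argument, which only needs $0$ to be an eigenvalue avoided by the shift $d$.
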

\begin{proof}
We consider this operator defined on $W^{2,p}(C_x^*TM)$ instead of $W^{2,p,d}(C_x^*TM)$ by using the isometry
\[
e^{-ds} : W^{2,p}(C_x^*TM) \longrightarrow W^{2,p,d}(C_x^*TM).
\]
The effect of this on the operator $D_x$ is
\[
e^{ds} D_x e^{-ds} :W^{2,p}(C_x^*TM) \longrightarrow W^{1,p}(C_x^*TM)
\]
\[
e^{ds} D_x e^{-ds} = \partial_s + J_0\partial_t + S_x(t) -d.
\]
The operator $A(t):W^{2,p}(S^1) \rightarrow W^{1,p}(S^1)$ given by $A= -J_0\partial_t -S_x(t) +d$ has eigenfunctions $\{e_n\}$ with eigenvalues $\{\lambda_n\}$, and no eigenvalue $\lambda_n$ is equal to zero. This shows $D_x$ is index 0 because there is no spectral flow. An element in the kernel of $e^{ds} D_x e^{-ds}$ can be written in the form
\[
\sum c_n e^{\lambda_n s} e_n(t)
\]
but all $c_n$ must equal to zero because terms like $e^{\lambda_ns}$ have exponential growth on one end hence cannot live in $W^{2,p}(C_x^*TM)$. This implies $D_x$ is an isomorphism hence has an inverse, which we denote by $Q_x$. Note this inverse does not depend on $\delta$.
\end{proof}
Observe since $x$ varies in a $S^1$ family, there exists $C$ such that
\[
\|Q_x\| \leq C
\]
in operator norm for all $x\in S^1$.
\subsection{Uniformly bounded inverse for \texorpdfstring{$D_{J_\delta}$}{D}}
In this subsection we prove the main theorem of this section. This is inspired by analogous constructions in Proposition 4.9 in \cite{oancea} and Proposition 5.14 in \cite{BourPhd}.
\begin{proof}[Proof of Proposition \ref{uniform_bounded_inverse_gradient_flow}]
We identify $S^1$, the circle of Morse-Bott orbits, with $x\in [0,1]/\sim$, and we recall $f$ has critical points at $x=0$ and $x=1/2$. WLOG we consider the $v_\delta (s,t)$ corresponding flow from with $-\infty$ end at $x=0$, towards $x=1/2$ as $s\rightarrow +\infty$ and take $s_0=-\infty$.\\
Fix $N$ large, let $x_i = 1/2N$, $i=1,..,N$ denote Reeb orbits on the Morse-Bott torus. Let $s_i \in \bb{R}$ denote the time it takes for $v_\delta$ to flow to $x_i$, i.e. when $x\,\text{ component of } \, v_\delta(s_i, \cdot) = x_i$. We implicitly take $s_N =+\infty$. We observe $s_i$ implicitly depends on $\delta$ and 
\[
s_{i+1} -s_i \geq C/(\delta N).
\]
We let $D_{i}:=  \partial_s+J_0\partial_t +S_{x_i}(t)$ denote the linearization of the $\db_J$ operator at a trivial cylinder at $x_i$. We define the parameter
\begin{equation*}
R:= \frac{1}{5d} \log \frac{1}{\delta}
\end{equation*}
Let $\beta_o(s)$ be a cut off function equal to $1$ for $s\geq1$ and $0$ for $s\leq0$. we define the ``glued" operator
\[
\#_N D_i : = \p_s+J_0 \p_t + \sum_{i=0}^{N-1} (1-\beta_o(s-s_{i+1}))\beta_o(s-s_{i})S_{x_{i+1}}(t).
\]
So we have $\#_ND_i = D_i$ on the interval $[s_{i-1}+1,s_i]$
by construction. Viewed as operators \[
W^{2,p,d}(v_\delta^*TM) \longrightarrow W^{1,p,d}(v_\delta ^*TM)
\]
we have 
\[
\|D_{J_\delta} - \#_N D_i\| \leq C(1/N+\delta) 
\]
in operator norm with constant $C$ independent of $\dt$ or $N$. It follows from the same spectral flow argument as above that $\#_N D_i$ is Fredholm of index 0. We now proceed to construct a uniformly bounded (as $\delta \rightarrow 0$) right inverse $Q_N$ for it. Let $Q_i$ denote inverses to $D_i$, we first construct approximate inverse $Q_R$ using the following commutative diagram
\begin{equation*}
\begin{tikzcd}
W^{1,p,d}(v_\delta ^*TM) \arrow[r,"Q_R"] \arrow[d, "s_R"] & W^{2,p,d}(v_\delta^*TM)\\
\bigoplus_i[W^{1,p,d}(v_\delta^*TM)]_i \arrow[r,"\bigoplus Q_i"] &\bigoplus_i[W^{2,p,d}(v_\delta^*TM)]_i\arrow[u, "g_R"]
\end{tikzcd}
\end{equation*}
with splitting maps $s_R$ and gluing maps $g_R$ defined as follows:
if $\eta \in W^{1,p,d}(v_\delta^*TM)$, $s_R(\eta) = (\eta_i,..,\eta_N)$
where 
\[
\eta_i := \eta (1-\beta_o (s-s_{i}))\beta_o(s-s_{i-1}).
\]
We see immediately $s_R$ has uniformly bounded operator norm as $\delta \rightarrow 0$, and that its norm is also bounded above independently of $N$.
Let $\gamma_R(s)$ be a cut off function $\gamma_R(s) =1$ for $s<1$ and $\gamma_R(s) =0$ for $s>R/2$ and $\gamma'(s) \leq C/R$. If $(\xi_1, ..,\xi_N)\in \bigoplus_i W^{2,p,d}(v_\delta^*TM)$ we define
\[
g_R(\xi_1,..,\xi_N) = \sum_i \xi_i \gamma_R(s-s_i) \gamma_R(s_{i-1}-s).
\]
We also see that $g_R$ is an uniformly bounded operator as $\delta \rightarrow 0$ and its upper bound on norm is independent of $N$. We conclude $Q_R$ has uniformly bounded norm as $\delta \rightarrow 0$. We next show it is an approximate inverse to $\#_N D_i$.\\
If we start with $\eta \in W^{1,p,d}(v_\delta ^*TM)$, with $Q_R(\eta) = \sum_i\xi_i \gamma_R(s-s_i) \gamma_R(s_{i-1}-s) $. We apply $\# _N D_i$ to it and observe away from the intervals of the form $\bigcup_i [s_i-R,s_i+R]$ - which we think of the region where gluing happens,
\[
\#_ND_i Q_R \eta  = D_i Q_i \eta_i =\eta
\]
so we focus our attention to an interval of the form $[s_i-R,s_i+R]$, in which $Q_R(\eta) = \gamma_R(s-s_i)\xi_i + \gamma_R(s_i-s) \xi_{i+1}$.\\
We observe over intervals of this form $\|D_i- \#_N D_i\| \leq C/N$ in operator norm, so when we apply $\#_N D_i$ to $Q_R(\eta)$ we get
\begin{align*}
     \#_ND_i Q_R \eta
     =& \#_N D_i  (\gamma_R(s-s_i)\xi_i + \gamma_R(s_i-s) \xi_{i+1})\\
    =&\gamma_R'(s-s_i)\xi_i - \gamma_R' (s_i-s)\xi_{i+1}\\
    &+ \gamma_R(s-s_i)\#_ND_i \xi_i + \gamma_R(s_i-s) \#_ND_i \xi_{i+1}.
\end{align*}
In light of the above, in this region we have
\begin{align*}
    \#_ND_i \xi_i &= D_i\xi_i + (\#_N D_i -D_i )\xi_i \\
    &= \beta_o(s-s_i)\eta + (\#_ND_i -D_i) \xi_i 
\end{align*}
with
\[
\|(\# _ND_i -D_i) \xi_i  \| \leq C/N \|\xi_i\| \leq C/N \|\eta\|
\]
and likewise for the $\xi_{i+1}$ term in weighted Sobolev norm. We also note $\gamma_R' \leq C/R$, so we can write
\[
 \#_ND_i Q_R \eta = \gamma_R(s-s_i) \beta_o(s-s_i) \eta + \gamma_R(s_i-s)(1-\beta_o(s-s_i))\eta + \textup{error}
\]
for $s\in [s_i-R,s_i+R]$.
But by the construction of $\beta_o$ and $\gamma_R$, we have  $\gamma_R(s-s_i) \beta_o(s-s_i) \eta + \gamma_R(s_i-s)(1-\beta_o(s-s_i))\eta = \eta$ in $[s_i-R,s_i+R]$, so we have
\[
\|\#_N D_i Q_R \eta -\eta\| \leq C/N \|\eta\|
\]
in weighted Sobolev norm.
So for sufficiently large values of $N$, the operator $Q_R$ is an approximate right inverse. Then we can define a true right inverse $\#_N D_i$ by
\[
Q_N:=Q_R(\#_N D_i Q_R)^{-1}
\]
which also has uniformly bounded norm as $\delta\rightarrow 0$. This in particular implies $\#_N D_i$ is surjective.\\
Finally using 
\[
\|D_{J_\delta} - \#_N D_i\| \leq C(1/N+\delta) 
\]
in operator norm we see that $Q_N$ is an approximate right inverse to $D_{J_\delta}$ because:
\begin{align*}
    \|D_{J_\delta} Q_N \eta -\eta\|
    &=\|(D_{J_\delta}- \#_N D_i)Q_N +\#_N D_iQ_N \eta -\eta\|\\
    &=\|(D_{J_\delta}- \#_N D_i)Q_N\eta\|\\
    &\leq \|Q_N\|\cdot \|(D_{J_\delta}- \#_N D_i)\|\cdot \|\eta\|\\
    &\leq C/N \|\eta\|
\end{align*}
and hence $D_{J_\delta}$ has uniformly bounded right inverse as $\delta \rightarrow 0$. 
\end{proof}
\begin{remark}
We proved for given $D_{J_\delta}$ acting on $W^{2,p,d}(v_\delta^*TM)$ over fixed $v_\delta$ it has uniformly bounded right inverse. For our proof we assumed the exponential weight is of the form $e^{ds}$, but it should be apparent from our proof even as we translate the weight profile from $e^{ds}$ to $e^{ds-T}$ for any $T \in \bb{R}$, the same proof goes through. Said another way, for any sufficiently small $\delta$ and any $T$, the operator $D_{J_\delta}$ defined over $W^{2,p,d}(v_\delta^*TM)$ with weight $e^{\pm ds+T}$ has a uniformly bounded inverse. 
\end{remark}

\begin{remark}
In the above construction we implicitly fixed a parametrization of $v_\dt$ with respect to the $t$ variable, i.e. we picked out which point on the Reeb orbit corresponds to $t=0$. We could also have changed this, resulting in a reparametrization of $v_\dt$, of the form $t\rightarrow t+c$. For all such reparametrizations it is obvious $D_{J_\dt}$ continues to have uniformly bounded right inverse, and this upper bound is uniform across all possible reparametrizations in the $t$ variable.
\end{remark}
\section{Gluing a semi-infinite gradient trajectory to a holomorphic curve} \label{semiglue}
In this section we glue a $J$-holomorphic curve $u$ to a semi-infinite gradient trajectory $v$. This is a simpler case of gluing for multi-level cascades, and properties of this gluing developed here and in the following sections will be used extensively in gluing together multiple level cascades. The novel feature of this gluing construction, which separates it from standard types of gluing constructions, is that we will make the pregluing dependent on asymptotic vectors. The general setup will follow that of Section 5 in \cite{obs2}, and in a sense we are doing obstruction bundle gluing, see also Remark \ref{obsrmk}. This approach to gluing has appeared in the Appendix of \cite{colin2021embedded}.

The section is organized as follows: in subsection 1 we first introduce the gluing setup. In subsection 2 we do the pregluing. In subsection 3 we take care of the differential geometry/estimates needed to deform the pregluing. Further, we write down the $J$-holomorphic curve equation we need to solve, and split it into two different equations as was done in Section 5 of \cite{obs2}. And finally in subsection 4 we solve both of these equations. We do not yet say anything about surjectivity of gluing and save it for the end when we discuss surjectivity of gluing in the general case.

\subsection{Gluing setup}
Let $u: \dot{\Sigma} \rightarrow M$ be a $J$-holomorphic curve with only one positive puncture which is free, asymptotic to a Morse-Bott torus with multiplicity 1 (higher multiplicities are handled similarly). We choose local coordinates on $u$ around the puncture given by $(s,t) \in [0,\infty)\times S^1$. We also assume $\dot{\Sigma}$ is stable. Our assumptions are purely a matter of convenience since it will be apparent from our construction how to glue semi-infinite gradient trajectories with arbitrary number of positive/negative ends. We also assume (purely as a matter of notational convenience) that we have shifted our coordinates so that $\lim_{s\rightarrow \infty} u(s,t)$ converges to the Reeb orbit at $x=0$, and the critical points of $f$ are at $x=\pm1/4$ with max at $x=1/4$ and min at $x=-1/4$. We assume $u$ is rigid, i.e. the operator
\[
D_J: W^{2,p,d}(u^*TM) \oplus V_\Gamma \oplus T\mcal{J} \longrightarrow W^{1,p,d}(\overline{\op{Hom}}(T\dot{\Sigma}, u^*TM))
\]
is surjective of index 1. It has a right inverse $Q_u$. Here $V_\Gamma:=span\{\beta_{1;0,\infty} \partial_z, \beta_{1;0,\infty}\partial_a, \beta_{1;0,\infty} \partial_x\}$. This is a 3 dimensional vector space with a given basis, we denote elements of this space by triples $(r,a,p)$. The norm of elements $(r,a,p) \in V_\Gamma$ is simply $|r|+|a|+|p|.$ We will often write $|(r,a,p)|$ to mean this norm.

\begin{convention}
We will generally use the symbol $(r,a,p)$ as a shorthand for the asymptotically constant vector field
\[
r \beta_{1;0,\infty} \partial_z + a \beta_{1;0,\infty}\partial_a + p\beta_{1;0,\infty} \partial_x.
\]
This is generally the case when we use $(r,a,p)$ to deform curves, and the case later where the symbol $(r,a,p)$ appears in the equations $\Theta_\pm$. We will also sometimes to use the symbol $(r,a,p)$ to simply denote the tuple of numbers, $r,a,p$. It will be clear from context what we mean. 
\end{convention}
We observe by definition $D_J(r,a,p)$ decays exponentially (at a rate faster than $e^{-ds}$, which we denote by $e^{-Ds}$, $D>>d$) as $s\rightarrow \infty$.

\begin{convention}
We use the following convention regarding $d$ and $D$. The symbol $D$, when written as $e^{-Ds}$ will always be used to denote a rate of exponential decay that only depends on the background geometry, say the local geometry around the Morse-Bott torus. An example will be the rate of exponential convergence to a trivial cylinder of a $J$-holomorphic curve asymptotic to a Reeb orbit. The lower case $d$ will be chosen to be independent of $\dt$, $d<<D$ and as usual much smaller than the distance between the nonzero eigenvalues of operator $A(t)$ and 0. This is the exponential weight we will use in our weighted Sobolev spaces.
\end{convention}

The rest of the section is devoted to proving the following:
\begin{proposition}
For every $\delta >0$ sufficiently small, there is a $J_\delta$-holomorphic curve $u_\delta: \dot{\Sigma}\rightarrow M$ that is positively asymptotic to the Reeb orbit $x=1/4$ obtained by gluing a semi-infinite gradient trajectory along the Morse-Bott torus to $u$.
\end{proposition}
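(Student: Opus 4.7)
The plan is to follow the obstruction-bundle style gluing of Hutchings--Taubes as adapted in the Appendix of \cite{colin2021embedded}, making the pregluing itself depend on the asymptotic parameter $(r,a,p)\in V_\Gamma$. First, for each triple $(r,a,p)$ of small norm and each sufficiently small $\delta>0$, I build a preglued map $u_{(r,a,p),\delta}:\dot\Sigma\to M$ as follows. Pick a neck length $L=L(\delta)$ of order $1/\delta$, comparable to the time the slow gradient flow of $\delta f$ takes to travel from $x=0$ to near the max at $x=1/4$. On $s\leq L/2$ I use $u$ deformed by the asymptotically constant vector $r\beta_{1;0,\infty}\partial_z+a\beta_{1;0,\infty}\partial_a+p\beta_{1;0,\infty}\partial_x$; on $s\geq L/2+1$ I insert the gradient cylinder $v_\delta$ starting at the Reeb orbit through $x=p$, translated suitably in the symplectization direction; in the thin overlap I interpolate with a cutoff. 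The $J_\delta$-equation is exactly satisfied on the $v_\delta$ side by Section \ref{diffgeo}, and by the exponential convergence of $u$ to its asymptotic trivial cylinder together with $\|J-J_\delta\|_{C^\infty}=O(\delta)$, the pregluing error $\db_{J_\delta}u_{(r,a,p),\delta}$ is supported in the overlap and has size $O(\delta + e^{-DL})$ in $W^{1,p,d}$, with leading $(r,a,p)$-dependence governed by $D_J(r,a,p)$.

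Next I construct a right inverse for the linearization $D_{(r,a,p),\delta}$ of $\db_{J_\delta}$ over $u_{(r,a,p),\delta}$ on the space $W^{2,p,d}(u_{(r,a,p),\delta}^\ast TM)\oplus V_\Gamma\oplus T\mcal{J}$. Over $u$ I already have the right inverse $Q_u$; over $v_\delta$, Proposition \ref{uniform_bounded_inverse_gradient_flow} furnishes a right inverse $Q_\delta$ with $\delta$-uniform norm, and the remark following its proof shows this bound is unaffected by the symplectization-direction translation needed for pregluing. Splicing $Q_u$ and $Q_\delta$ with cutoffs of unit width yields an approximate right inverse, which a Neumann-series correction promotes to a true right inverse $Q_{(r,a,p),\delta}$ whose norm is bounded uniformly in $\delta$ and $(r,a,p)$. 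The quadratic remainder of $\db_{J_\delta}\circ\exp$ satisfies the standard bounds in $W^{2,p,d}$, which is why the excerpt insists on $W^{2,p}$ rather than $W^{1,p}$ in Remark \ref{remark_sobolev_exponents}.

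With these ingredients the equation $\db_{J_\delta}\exp_{u_{(r,a,p),\delta}}(\xi)=0$ becomes, after substitution $\xi=Q_{(r,a,p),\delta}\eta$, a fixed-point problem of the form $\eta=\Phi_{(r,a,p),\delta}(\eta)$ where $\Phi$ has Lipschitz constant $\leq 1/2$ on a ball of radius $O(\delta+e^{-DL})$. The contraction mapping principle then produces a unique small solution $\xi(r,a,p,\delta)$ for each $(r,a,p)$. The three leftover real parameters are fixed by the condition that the resulting $J_\delta$-holomorphic curve be positively asymptotic to the Reeb orbit at the maximum $x=1/4$ rather than to a nearby orbit; this is a three-dimensional matching equation in $(r,a,p)$ whose linearization at $(r,a,p)=0$, $\delta=0$ is close to the identity (the pregluing depends to leading order linearly on $(r,a,p)$ via the generators of $V_\Gamma$), so either the implicit function theorem or, as hinted in the introduction, the intermediate value theorem solves it uniquely.

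The step I expect to be the main obstacle is controlling the interaction between the very long gradient cylinder and the deformation by $p\partial_x$: shifting the end of $u$ by $p$ along the Morse-Bott torus changes the starting point of the attached gradient trajectory, and because the flow has speed of order $\delta$, this translates into an $O(p/\delta)$ change in effective neck length, as illustrated in Figure \ref{fig:displace_figure}. Ensuring that this potentially enormous shift neither inflates the pregluing error nor destroys the $\delta$-uniform bound on $Q_{(r,a,p),\delta}$ is exactly what the uniform estimate in Proposition \ref{uniform_bounded_inverse_gradient_flow} and its translation-invariance are designed for. Once this $\delta$-uniform control is secured, the contraction mapping and the three-dimensional matching both close up for all sufficiently small $\delta$, yielding the desired $u_\delta$.
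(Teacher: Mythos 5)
Your overall architecture --- a pregluing depending on the asymptotic triple $(r,a,p)$, a $\delta$-uniform right inverse built from $Q_u$ and the inverse of Proposition \ref{uniform_bounded_inverse_gradient_flow}, and a contraction mapping --- matches the paper's in spirit, and the spliced-inverse route (one global Newton iteration instead of the paper's coupled system $\Theta_u=0$, $\Theta_v=0$) is a legitimate alternative for this one-level case. But there are two genuine gaps.

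First, the neck length. You cut over to the gradient cylinder at $s\sim L/2$ with $L\sim 1/\delta$. The pregluing error is not supported only in the overlap: since $u$ is $J$- but not $J_\delta$-holomorphic, there is an error of pointwise size $C\delta$ on the entire portion of the cylindrical end lying in the perturbation neighborhood, i.e.\ on all of $[0,L/2]$, in addition to the term $D_J(r,a,p)$ coming from cutting off the asymptotically constant vector field. Measured in $W^{1,p,d}$ with weight $e^{ds}$ (which is forced: the weight must grow at the puncture for $D_J$ on $W^{2,p,d}\oplus V_\Gamma\oplus T\mcal{J}$ to be surjective, and Proposition \ref{uniform_bounded_inverse_gradient_flow} uses the same profile over $v_\delta$), the $C\delta$ error contributes roughly $\delta e^{dL/2}=\delta e^{dc/\delta}\to\infty$, and the contraction fails. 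The gluing parameter must be logarithmic in $1/\delta$; the paper takes $R=\frac{1}{5d}\log(1/\delta)$, so that $\delta e^{dR}=\delta^{4/5}\to 0$. This choice is load-bearing, not cosmetic.

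Second, the role of $(r,a,p)$ and the final ``matching.'' The three parameters are not leftover after the contraction, and there is no asymptotic condition to impose at the end: the inserted semi-infinite gradient cylinder converges to the nondegenerate orbit at the maximum of $f$ for every small $(r,a,p)$, and a $W^{2,p,d}$-small perturbation cannot change which nondegenerate orbit the end limits to, so the glued curve is automatically asymptotic to $x=1/4$. What $(r,a,p)$ actually does is restore surjectivity: $D_J$ restricted to $W^{2,p,d}\oplus T\mcal{J}$ alone has index $-2$ and cannot be surjective, so no right inverse exists with $(r,a,p)$ frozen. In the paper $(r,a,p)$ are unknowns of the fixed-point problem --- deforming the pregluing by them produces the leading error term $D_J(r,a,p)$, which is exactly what $Q_u$ (defined on the space including $V_\Gamma$) absorbs --- and the contraction determines them together with $\psi$ and $\dt j$. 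Your write-up is internally inconsistent here: you define the right inverse on the full space including $V_\Gamma$, so the fixed point already determines the $V_\Gamma$-components, yet you then try to fix them by a separate three-dimensional equation whose linearization you assert, without justification, is ``close to the identity.'' Drop the matching step and let $(r,a,p)$ be solved for inside the contraction, identifying the pregluing parameter with the $V_\Gamma$-component of the solution; that is what the paper does and what the argument needs.
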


\subsection{Pregluing}
We make the pregluing dependent on the triple of asymptotic vectors $(r,a,p)$. We first describe the neighborhood of $u|_{[0,\infty)\times S^1}$. Recall
we are working in a neighborhood of the Morse-Bott torus whose local coordinates in the symplectization are given by
\begin{equation*}
    \mathbb{R} \times S^1 \times S^1 \times \mathbb{R} \ni (a,z,x,y)
\end{equation*}
where $x$ is displacement across Morse-Bott torus direction, $y$ is the vertical direction, $a$ symplectization direction, and $z$ Reeb direction.
At the surface of $y=0$, $J$ is the standard complex structure. The metric here is the flat metric, so we will simply ``add" vectors together as opposed to taking the exponential map.
The map $u$ comes in the form
\begin{equation*}
    u(s,t) = (s+ \epsilon_s, t+\epsilon_t, \eta_x,\eta_y)
\end{equation*}
where 
\begin{equation*}
    \lim_{s\longrightarrow \infty} \eta_* =0
\end{equation*}
of order $e^{-Ds}$, where $D$ is some fixed constant specific to Morse-Bott torus ($d<<D$).
We also have
\begin{equation*}
    \epsilon_* \approx O(e^{-Ds}).
\end{equation*}
Then
\begin{equation*}
    u(s,t)+(r,a,p) = (s+\epsilon_s +a\beta_{1;0,\infty} ,t+\epsilon_t+r\beta_{1;0,\infty},\eta_x+\beta_{1;0,\infty}p, \eta_y).
\end{equation*}
Recalling the important parameter $R$:
\[
R = \frac{1}{5d}\log(1/\delta)
\]
which we will take to be our gluing parameter, we cut off $u+(r,a,p)$ at $s=R$ and glue in a gradient trajectory $v_{r,a,p}(s,t)$ satisfying
\[
v_{r,a,p}(R,t) = (R+a,t+r,p,0).
\]
We observe that since $\dt <<R$, in the range of $s\in [R,5R]$, the map $v_{r,a,p}(s,t)$ remains almost a trivial cylinder, which can make precise by noting
\[
|v_{r,a,p} -(R+s,t,p,0)|_{C^k} \leq C R\delta, s\in [R,5R].
\]
We are now ready to define the pregluing. We define
\[
u_{r,a,p}(s,t):=  \begin{cases}
u(s,t) + (r,a,p), s<R-1\\
v_{r,a,p} , s\geq R-1/2\\
\text{smooth, bounded interpolation between $u+(r,a,p)$ and $v_{r,a,p}$ for}\,  s \in [R-1,R-1/2].
\end{cases}
\]
The interpolation above should be chosen so that the difference between $u_{r,a,p}$ and the trivial cylinder of the form $(s,t) \rightarrow (R+a,t+r,p,0)$ should be bounded by $e^{-DR}$ in $C^k$ norm.

We first observe the preglued curve is still defined on the same domain $\dot{\Sigma}$. It still has the same coordinate neighborhood $[0,\infty) \times S^1$ near the unique positive puncture. As a warm up to considering the deformations of this preglued curve, we next measure how non-holomorphic this preglued curve is by applying $\db_{J_\delta}$ to it. 

\begin{remark}
Here in constructing the domain for the pregluing we ``rotated" our gradient trajectory $v_{r,a,p}$ (denoted by $v$ in Section \ref{linearization}) by $r$ to match $u+(r,a,p)$. It is also possible to instead glue $u_{r,a,p}$ with $v_{0,a,p}$ by making the identification $t \sim t+r$ at $s=R$. In this case we get back the same surface, however when we later glue over finite cylinders this will make a difference, as it corresponds to the same topological surface but a new complex structure on the preglued domain.
\end{remark}

\begin{convention}
We adopt the convention that for terms that are supposed to be small, e.g. uniformly bounded by $C\epsilon$ (in say $C^k$ norms or any norm we care about), we just write the upper bound $C \epsilon$ instead of the specific term in its entirety.
\end{convention}

\begin{proposition}
After we apply the $\db_{J_\delta}$ operator to the preglued curve $u_{r,a,p}$ over the interval $(s,t) \in [0,\infty) \times S^1$ we get terms of the form
\begin{align*}
    & [D_J(r,a,p) + C(s,t) |(r,a,p)|^{2}e^{-Ds} ] \beta_{[0,R+1;1]}\\   
    &+ C[\delta(1+(r,a,p))] \beta_{[0,R+1;1]} \\
    &+ C[e^{-DR}(1+|(r,a,p)|)  + C\delta(1+|(r,a,p)| )] \beta_{[1;R-2,R+2;1]}.
\end{align*}
By $C(s,t)$ or oftentimes $C$, we mean a function of $(s,t)$ and occasionally also including the variables $(r,a,p)$, whose derivatives are uniformly bounded. When we write $|r,a,p|$ we mean the absolute value of the numbers $|r|, |a|, |p|$.

Note the term $ D_J(r,a,p)$, which is the only term in this expression that is not ``small".
Figuratively we can write this as
\begin{equation*}
    D_J(r,a,p) +\mcal{F}(r,a,p) + \mcal{E}(r,a,p)
\end{equation*}
where 
\begin{equation*}
   \mcal{F}(r,a,p)= C(s,t) |(r,a,p)|^{2}e^{-Ds}
\end{equation*}
and 
\begin{equation*}
\mcal{E} (r,a,p) =   C\left[\delta(1+(r,a,p))\right] \beta_{[0,R+1;1]} 
    + C\left[e^{-DR}(1+(r,a,p))  + C\delta(1+(r,a,p) )\right] \beta_{[1;R-2,R+2;1]}
\end{equation*}
where we think of $\mcal{F}(r,a,p)$ as a quadratic order term and $\mcal{E}(r,a,p)$ as an error term.
\end{proposition}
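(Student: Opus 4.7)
I would estimate $\db_{J_\dt} u_{r,a,p}$ on $(s,t) \in [0,\infty)\times S^1$ separately in each of the three regimes of the pregluing. In the outer regime $s \geq R-1/2$ there is nothing to do: $u_{r,a,p} = v_{r,a,p}$ is honestly $J_\dt$-holomorphic by the construction of Section \ref{diffgeo}. So the two substantive regimes are the bulk $s < R-1$ and the narrow interpolation window $s \in [R-1, R-1/2]$, and everything beyond these reduces to bookkeeping with the cutoffs $\beta_{[0,R+1;1]}$ and $\beta_{[1;R-2,R+2;1]}$.

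In the bulk $s < R-1$ we have $u_{r,a,p} = u + (r,a,p)$ with addition in the flat Morse-Bott coordinates. I split $\db_{J_\dt} = \db_J + (\db_{J_\dt} - \db_J)$. Since $\|J_\dt - J\|_{C^k} \leq C\dt$ near the torus and $|d(u + (r,a,p))| \leq C(1 + |(r,a,p)|)$, the second piece contributes an error $C\dt(1 + |(r,a,p)|)$ which accounts for the first term in $\mcal{E}$. For the first piece I Taylor-expand to second order about $u$:
$$\db_J(u + (r,a,p)) = \db_J u + D_J(r,a,p) + Q_u\bigl((r,a,p),(r,a,p)\bigr),$$
and $\db_J u = 0$ since $u$ is $J$-holomorphic. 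Hence the entire contribution from the bulk is $D_J(r,a,p) + Q_u$ plus the $\dt$ error, and it remains only to bound the quadratic remainder $Q_u$ by the claimed $C(s,t)|(r,a,p)|^2 e^{-Ds}$.

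This exponential factor, which is the heart of the estimate, is proved as follows. Write $u = u_\infty + w$ where $u_\infty$ is the limit trivial cylinder and $|w|_{C^k} \leq Ce^{-Ds}$ by standard exponential convergence to Morse-Bott orbits. Because $\p_z, \p_a, \p_x$ are coordinate vector fields on the flat Morse-Bott neighborhood that generate $J$-preserving isometries (at $y=0$), the shifted map $u_\infty + (r,a,p)$ is again a trivial cylinder, hence $J$-holomorphic; differentiating $\db_J(u_\infty + \tau(r,a,p)) = 0$ twice in $\tau$ shows that $Q_{u_\infty}((r,a,p),(r,a,p)) = 0$. Since $Q$ depends smoothly on its base point,
$$Q_u\bigl((r,a,p),(r,a,p)\bigr) = Q_u - Q_{u_\infty} = O\bigl(|w|\cdot|(r,a,p)|^2\bigr) \leq C(s,t)|(r,a,p)|^2 e^{-Ds},$$
which is exactly $\mcal{F}$.

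Finally, on the interpolation window $s \in [R-1, R-1/2]$, both $u + (r,a,p)$ and $v_{r,a,p}$ lie within $C^k$-distance $Ce^{-DR}(1+|(r,a,p)|)$ and $C\dt(1+|(r,a,p)|)$, respectively, of the reference trivial cylinder $(s,t)\mapsto (s+a, t+r, p, 0)$: the first bound uses exponential convergence of $u$ at scale $s = R$, the second uses that $v_{r,a,p}$ has traveled only $O(\dt)$ across the width-$\tfrac12$ window under the $\dt f$-gradient flow. The smooth interpolation therefore lies within the same distance, and applying $\db_{J_\dt}$ (with the reference cylinder itself $J_\dt$-holomorphic) bounds the result by $C[e^{-DR}(1+|(r,a,p)|) + \dt(1+|(r,a,p)|)]$ on the support of $\beta_{[1;R-2,R+2;1]}$, matching the second $\mcal{E}$-term. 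The main analytical obstacle is the quadratic estimate of Step 3: without the extra $e^{-Ds}$ factor in $\mcal{F}$ the Newton-type iteration in the weighted Sobolev norms of Section \ref{linearization} would fail to contract, and this factor is precisely the structural payoff of using translation-invariant asymptotic vectors $(r,a,p)$ combined with exponential convergence near the Morse-Bott torus.
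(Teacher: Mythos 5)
Your proposal is correct and follows essentially the same route as the paper: the same three-regime decomposition (bulk $s<R-1$, interpolation window, holomorphic tail), the same bookkeeping of the $J_\dt-J$ and interpolation errors, and the same source for the crucial $e^{-Ds}$ factor — the paper phrases it as $\partial^n_{(r,a,p)}J(u(\infty,t))\cdot(0,1,0,0)=0$ by compatibility of $J$ together with $\partial_t u=(0,1,0,0)+O(e^{-Ds})$, while you phrase it as vanishing of the quadratic remainder at the limiting trivial cylinder (since translated trivial cylinders stay $J$-holomorphic) plus smooth dependence on the base point, which is the same underlying fact. One minor imprecision: the reference cylinder $(s,t)\mapsto(s+a,t+r,p,0)$ is $J$-holomorphic but not $J_\dt$-holomorphic (its $\db_{J_\dt}$ is of size $O(\dt)$ because $p$ need not be a critical point of $f$), though this extra $O(\dt)$ contribution is already absorbed into the $C\dt(1+|(r,a,p)|)$ term of $\mcal{E}$, so the stated bound is unaffected.
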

\begin{remark}\label{int_err}
We first note that $u$ is holomorphic with respect to $J$, but in the above theorem we applied the $\db$ operator with respect to $J_\delta$, which is responsible for the appearance of several error terms. Further since $u$ is not holomorphic with respect to $J_\delta$, there is another error term that appears in the interior of $u$, i.e. $\dot{\Sigma} \setminus [0,\infty)\times S^1$ of size $C\delta$. Note no such error term appears in the interior region of $v_{r,a,p}$ This term is not very important because by our metric it is (uniformly) small, we will include it when we solve for the equation more globally. 
\end{remark}

\begin{proof}
We first consider downwards of the pregluing region, in the region $s\in [0,R-1]$, the pregluing is simply
consider $u+(r,a,p)$, then after applying $\db_{J}$ we get
\begin{equation*}
    \partial_s [u+(r,a,p)]+J(u+(r,a,p)) \partial_t (u+(r,a,p)) = \beta'_{1;0,\infty}(r\p_z+a\p_a+p\p_x) + \p_su +J(u+(r,a,p))\p _t u.
\end{equation*}
To this end, observe $\p_su +J(u)\p_t u =0$ so we get an expansion of the form
$D_J(r,a,p)+ \sum C|(r,a,p)|^{n\geq 2}\partial_{r,a,p}^n J(u) \partial_t u $. This is a $C^0$ bound, we will need a better bound since eventually the size of the vector will be measured with respect to weighted Sobolev norms. Observe
$\partial_t u$ is of the form 
\begin{equation*}
    (\p_t \ep_s,1 + \p_t \epsilon_t, \partial_t \eta_x,\partial_t \eta_y).
\end{equation*}
All $\eta_*$ terms decay like $e^{-Ds}$, except $(0,1,0,0)$. But we observe by compatibility of $J$, the term $(\partial^n_{r,a,p}J(u(\infty, t))) (0,1,0,0) =0$. Hence overall the second term $C|(r,a,p)|^{n\geq 2}\partial_{r,a,p}^n J(u) \partial_t u$ is of the form 
\[
C|r,a,p|^2 e^{-Ds}.
\]
Next let's include the effect of $J_\delta$, now we have
\begin{equation*}
    (\partial_{J_\delta} -\partial_J )(u+(r,a,p))= (J_\delta(u+(r,a,p))-J (u+(r,a,p)) \partial_t (u+(r,a,p)).
\end{equation*}
This term has size $\delta C$ and it only exists for length $s\in [0,R]$ and disappears after the pregluing region.
We clarify its dependence on various variables: it is of the form
\begin{equation*}
    C\delta(1+|r,a,p| ) \beta_{[0,R+1;1]}
\end{equation*}
and this is everything in the region $s\in[0,R-1]$.
We observe by definition $u_{r,a,p}$ is $J_\delta$-holomorphic in the region $s>R$ so we only need to look at the pregluing region to find rest of the pregluing error. It follows from the uniform boundedness of our interpolation construction in the pregluing that this error is of the form $C[e^{-DR}(1+|r,a,p|)  + C\delta(1+|r,a,p| )] \beta_{[1;R-2,R+2;1]}$, whence we complete our proof.
\end{proof}
\begin{remark}
The reason we are painstakingly computing all of these terms carefully (and in our subsequent computations) is because later we will be differentiating this entire expression with respect to $(r,a,p)$ so we must take note how our expressions depend on these asymptotic vectors.
\end{remark}
\subsection{Deforming the pregluing}
Now that we have constructed $u_{r,a,p}$, we deform it to try to make it $J_\delta$-holomorphic. We recall a neighborhood of $u$ is given by:
$W^{2,p,d}(u^*TM) \oplus V_\Gamma \oplus T\mcal{J}$. We recall for $[0,\infty)\times S^1$ there is an exponential weight $e^{ds}$. We already explained how to construct the pregluing with asymptotic vector fields $(r,a,p)$. We fix $\psi \in W^{2,p,d}(u^*TM)$, $\dt j\in T\mcal{J}$. Recall deformations of complex structure of the domain $\dt j$ is away from the cylindrical neighborhood so does not affect our gluing construction for the most part, so unless it is explicitly needed for rest of this section we will drop it from our notation. Now for $v_{r,a,p}$ fix $\phi \in W^{2,p,w}(v_{r,a,p}^*TM)$. Note this choice of Sobolev space is dependent on the asymptotic vectors $(r,a,p)$. We equip the space $W^{2,d,w}(v_{r,a,p}^*TM)$ with weighted Sobolev norm $e^{w(s)} = e^{ds}$.

We fix cut off functions 
\[
\beta_u:=\beta_{[-\infty, 2R;R/2]}
\]
and 
\[
\beta_v:=\beta_{[R/2;R, +\infty]}.
\]
We deform the pregluing $u_{r,a,p}$ via
\begin{equation}
    (u_{r,a,p},j_0)\longrightarrow (u_{r,a,p}+\beta_u\psi + \beta_v \phi, j_0 +\dt j).
\end{equation}
The next proposition describes what happens to the deformed curve when we apply $\db_{J_\dt}$ to it.
\begin{proposition}\label{proposition_equations}
The deformed curve $(u_{r,a,p}+\beta_u\psi + \beta_v \phi, j_0 +\dt j)$ is $J_\delta$-holomorphic if and only if the equation
\[
\beta_u \Theta_u + \beta_v \Theta_v=0
\]
is satisfied. $\Theta_u$ and $\Theta_v$ are equations depending on $\psi_u,\psi_v, \dt j$, and they take the following form
\[
\Theta_u = D_{J} \psi + \beta_v'\phi+\mcal{F}_u(\psi,\phi) + \mcal{E}_u(\psi,\phi)
\]
and 
\[
\Theta_v= \beta_u' \psi + D_{J_\dt} \phi + \mcal{F}_v(\phi,\psi).
\]
The forms of functionals $\mcal{F}_*,\mcal{E}_*$ are given in the course of the proof.
\end{proposition}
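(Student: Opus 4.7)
The plan is to apply $\db_{J_\dt}$ to the deformed object $(u_{r,a,p}+\beta_u\psi + \beta_v \phi,\, j_0+\dt j)$ and Taylor expand in the vector fields $\psi,\phi$ and in the Teichmuller variation $\dt j$ around the preglued curve $u_{r,a,p}$. The zeroth order (in $\psi,\phi,\dt j$) contribution is precisely $\db_{J_\dt}(u_{r,a,p})$, which by the preceding proposition is $D_J(r,a,p)+\mcal{F}(r,a,p)+\mcal{E}(r,a,p)$. The linear part, $D_J(r,a,p)$, lives in $\{s\le R-1\}$, which is inside $\text{supp}(\beta_u)$ and outside $\text{supp}(\beta_v)$; the quadratic remainder $\mcal{F}(r,a,p)$ is similarly located, and the interpolation error inside $\mcal{E}(r,a,p)$ is concentrated in the narrow band $s\in [R-2,R+2]$. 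All of these contributions will be assigned to $\beta_u\Theta_u$, after being absorbed into the functionals $\mcal{F}_u$ and $\mcal{E}_u$.

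The first order (linear) part of $\db_{J_\dt}$, after applying Leibniz, reads
\[
\beta_u\, D_{J_\dt}\psi + \beta_v\, D_{J_\dt}\phi + \beta_u'\,\psi + \beta_v'\,\phi + \mcal{L}_{\dt j}(u_{r,a,p}),
\]
where $\beta_u'\psi$ and $\beta_v'\phi$ arise from the $\p_s$ part of the linearized Cauchy-Riemann operator hitting the cutoff functions, and $\mcal{L}_{\dt j}$ collects the linearization against the complex structure variation. The essential observation is that by my choice $\beta_u=\beta_{-\infty,2R;R/2}$ and $\beta_v=\beta_{R/2;R,+\infty}$, the support of $\beta_u'$ (which lies in $(3R/2,2R)$) is contained in the region where $\beta_v\equiv 1$, and symmetrically the support of $\beta_v'$ (which lies in $(R,3R/2)$) is contained in the region where $\beta_u\equiv 1$. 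Consequently
\[
\beta_u'\psi = \beta_v(\beta_u'\psi), \qquad \beta_v'\phi = \beta_u(\beta_v'\phi),
\]
and this is precisely the coupling that appears as the cross term $\beta_v'\phi$ inside $\Theta_u$ and as $\beta_u'\psi$ inside $\Theta_v$. On the $\beta_u$ side I further replace $D_{J_\dt}\psi$ by $D_J\psi + (D_{J_\dt}-D_J)\psi$, dumping the $O(\dt)\|\psi\|$ remainder into $\mcal{E}_u$, which is legitimate because $\|J-J_\dt\|_{C^k}=O(\dt)$ near the Morse-Bott torus by construction.

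Finally, the contributions of order at least two in $(\psi,\phi,\dt j)$ from the Taylor expansion of $\db_{J_\dt}$ come only through the almost complex structure evaluated at the moving base point $u_{r,a,p}+\beta_u\psi+\beta_v\phi$ and from the coupling of $\dt j$ with $\beta_u\psi+\beta_v\phi$. Using $\beta_u+\beta_v\ge 1$ on the union of supports of the relevant linear pieces (outside this union everything is already trivial), I split these nonlinear remainders by multiplying by $\beta_u/(\beta_u+\beta_v)$ and $\beta_v/(\beta_u+\beta_v)$ and define $\mcal{F}_u(\psi,\phi)$, resp. $\mcal{F}_v(\phi,\psi)$, as the corresponding pieces. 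Putting everything together yields
\[
\db_{J_\dt}\bigl(u_{r,a,p}+\beta_u\psi+\beta_v\phi,\; j_0+\dt j\bigr) = \beta_u\Theta_u + \beta_v\Theta_v
\]
with $\Theta_u$ and $\Theta_v$ in the announced form, so that vanishing of the left hand side is equivalent to $\beta_u\Theta_u+\beta_v\Theta_v=0$.

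There is no genuine analytical obstacle at this stage; the proposition is essentially a structural organization of the equation into a form on which contraction mapping can be set up in the next section. The only conceptual input is the support compatibility between $\beta_u'$ and $\beta_v$, which is engineered into the definitions of the cutoffs. The main labor is careful bookkeeping of the $(r,a,p,\psi,\phi,\dt j)$ dependence inside $\mcal{F}_*$ and $\mcal{E}_*$, since the quantitative form of these functionals, rather than merely their existence, is what will be needed later to invert $D_J$ and $D_{J_\dt}$ simultaneously and to close the fixed point argument.
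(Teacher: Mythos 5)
Your overall skeleton is the same as the paper's: apply $\db_{J_\dt}$ to the deformed pregluing, use the support compatibility $\op{supp}(\beta_u')\subset\{\beta_v\equiv 1\}$ and $\op{supp}(\beta_v')\subset\{\beta_u\equiv 1\}$ to assign the cross terms $\beta_u'\psi$ and $\beta_v'\phi$, and absorb the pregluing error and the $J$ versus $J_\dt$ discrepancy into $\mcal{E}_u$. That part is fine.

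The genuine gap is in your splitting of the nonlinear remainder by the partition $\beta_u/(\beta_u+\beta_v)$, $\beta_v/(\beta_u+\beta_v)$. The proposition is not merely the assertion that \emph{some} decomposition $\beta_u\Theta_u+\beta_v\Theta_v$ exists; the specific forms of $\mcal{F}_u,\mcal{F}_v$ are part of its content, and the property the later argument relies on (flagged in the remark immediately following the statement) is that $\Theta_v$ contains no derivatives of $\psi$ and $\Theta_u$ contains no derivatives of $\phi$. Your democratic splitting puts terms of the shape $\phi\,\p_t\psi$ and $\psi\,\p_t\phi$ into \emph{both} equations, which destroys exactly this structure: the elliptic bootstrap that upgrades $\phi$ to $W^{3,p,d}$ (needed because the stretch in $p_\pm$ costs an $s$-derivative in the multi-level gluing) would then be capped by the regularity of $\psi$, and conversely. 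The paper instead achieves the separation by grouping $J_\dt(\cdot)\,\p_t(\beta_v\phi)$ entirely with $\Theta_v$ and $J_\dt(\cdot)\,\p_t(\beta_u\psi)$ entirely with $\Theta_u$, and by factoring the remaining nonlinearity as $G=\beta_v\phi\,g_v+\beta_u\psi\,g_u$ so the piece proportional to $\phi$ (with no $\p_t\psi$) goes to $\Theta_v$ and the piece proportional to $\psi$ goes to $\Theta_u$. You would need to redo your splitting along these lines.

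Two smaller omissions: (i) you need the extra cutoff $\beta_{[1;R-2,\infty]}$ in front of the quadratic terms of $\Theta_v$, so that the equation is \emph{linear} in the region where the weight $e^{ds}$ permits $\phi$ to be pointwise large; without it the quadratic estimates on $\mcal{F}_v$ in $W^{2,p,d}(v_{r,a,p}^*TM)$ fail. (ii) Replacing the linearization at the preglued curve by $D_J$ at $u$ produces not only an $O(\dt)\|\psi\|$ error but also terms $C(s,t)(r,a,p)\psi+C(s,t)(r,a,p)\p_t\psi$ coming from the shift of base point by the asymptotic vector; these are of size $O(\ep)\|\psi\|$, not $O(\dt)\|\psi\|$, and must be booked as quadratic terms in $\mcal{F}_u$ rather than as errors.
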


\begin{remark}
We will write the equation $\Theta_u$ and $\Theta_v$ in two different forms, one form will make it easy to apply elliptic regularity, the other makes it easy to use the contraction mapping principle. It will be later crucial for us to use elliptic regularity, as when we do finite trajectory gluing we will lose one derivative by lengthening/shortening the domain of the neck, and we will use elliptic regularity to gain one derivative to make up for this. The key ingredient is to arrange things so that $\Theta_v$ does not contain derivatives of $\psi$, and $\Theta_u$ does not contain derivatives of $\phi$. We shall see that this requires some careful differential geometry to achieve.
\end{remark}
\begin{proof}
\textbf{Step 0}
We first prepare to write our equation in a way that makes apparent the elliptic regularity in the equation, then we will linearize everything to make linear operators appear. We first consider
\[
\db_{J_\dt} (u_{r,a,p} + \beta_u \psi + \beta_v \phi)
\]
in the region $s>R$. We recall over in this region $u_{r,a,p} = v_{r,a,p}$. Let's use $u_*$ to denote $u_{r,a,p}$ for short. Then we are looking at the equation
\[
\partial_s (u_* + \beta_u \psi + \beta_v \phi) + J_\dt (u_* + \beta_u \psi + \beta_v \phi) \p_t (u_* + \beta_u \psi + \beta_v \phi) =0.
\]
We rewrite this in the following fashion
\begin{align*}
&\partial_su_* +\beta_u'\psi + \beta_v'\phi + \beta_u\p_s \psi + \beta_v \p_s \phi + J_\dt (u_* +\beta_u \psi + \beta_v \phi) \p_t (u_* + \beta_u \psi + \beta_v \phi)\\
=& \beta_v (\partial_s \phi + J_\dt(u_*+\beta_u\psi + \beta_v \phi) \p_t \phi + \beta_u'\psi)+  \beta_v' \phi + \partial_s u_* + J_\dt (u_*+\beta_u\psi +\beta_v\phi) \partial_t u_*  \\
&+\beta_u (\p_s \psi + J_\dt (u_*+\beta_u\psi +\beta_v\phi) \p_t \psi)\\
=&0.
\end{align*}
Recalling that $\p_s u_* + J_\dt (u_*) \p_t u_*=0$ in this region, we can write $\partial_s u + J_\dt (u+\beta_u\psi +\beta_v\phi) \partial_t u$ as
\[
\p_s u_* + J_\dt(u_*) \p_t u_* + \partial_{\beta_u\psi} J_\dt (u_*) \p_tu_* + \p_{\beta_v \phi} J_\dt(u_*) \p_t u_* + G(\beta_v\psi, \beta_u \phi) =0
\]
where we can further write 
\[
G(\beta_v\psi, \beta_u \phi) = \beta_v\phi g_v(\beta_u\psi,\beta_v \phi) + \beta_u\psi g_u(\beta_u\psi,\beta_v \phi)
\]
where $g_u(x,y)$ and $g_v(x,y)$ are smooth functions so that pointwise
\begin{equation}\label{equation:property of g}
|g_*(x,y)|\leq C (|x| + |y|)
\end{equation}
and $g_*$ have uniformly bounded derivatives.
If we introduce the modified cutoff functions,
\[
\beta_{ug}:= \beta_{[1/2;R-1/2,2R;R/2]}
\]
\[
\beta_{vg}:= \beta_{[R;R/2,2R+2;1/2]}.
\]
Then we define $\Theta_v$ to be
\begin{equation}\label{theta_v_elliptic}
    \Theta_v : = \p_s \phi + J_\dt(v_{r,a,p} + \beta_{ug}\psi + \beta_{[1;R-2,\infty]}\beta_v\phi) \p_t \phi+ \partial_\phi J_\dt(v_{r,a,p}) \p_t v_{r,a,p} + \beta_{[1;R-2,\infty]}\phi g_v(\beta_{ug}\psi,\beta_v\phi) + \beta_u'\psi.
\end{equation}
We make a few remarks about the important features of our definition of $\Theta_v$. We first remark by our cut off function $\beta_{[1;R-2,\infty]}$, the equation becomes linear for $s<R-1$, as all the quadratic terms have disappeared. This is desirable as we will be solving $\Theta_v$ with $W^{2,p}(v^*TM)$ with exponential weight $e^{ds}$. Usually having vector fields that grow exponentially is undesirable for doing analysis, but in our case where the vector field grows exponentially the equation is linear, and hence poses no problem for the solution for our equation. The deformed preglued curve also doesn't see the segments of $\phi$ that grows exponentially by our choice of cut off functions.

We also remark that $\Theta_v$ appears in a form that allows us to apply elliptic regularity as stated in Theorem \ref{Mcduff_elliptic}, which we will need much later on.

The definition of $\Theta_u$ is slightly more involved. From now on we think of $(s,t)$ as coordinates in the cylindrical ends of $u$. Let $\overline{u}$ denote the interpolation from $v_{r,a,p}$ to $u+(r,a,p)$ that starts at $s=2R+1$ and finishes the interpolation process at $s=2R+2$. The difference between $v_{r,a,p}$ and $u+(r,a,p)$ in this interval is uniformly bounded in $C^k$ norm by $C(e^{-2DR} +R\dt)$ over $s\in [2R+1,2R+2]$.  Note also where $\beta_u$ is nonzero and $s>R$, $\overline{u}$ agrees with $u_*$. Let us also consider 
\[
\p_s \overline{u} + J_\dt (\overline{u} +\beta_u\psi +\beta_{vg} \phi) \p_t \overline{u}
\]
which we expand as
\[
\p_s \overline{u} + J_\dt(\overline{u}) \partial_t \overline{u} + \partial_{\beta_u \psi} J_\dt (\overline{u})\p_t\overline{u} + \p_{\beta_{vg}\phi} J_\dt (\overline{u})\p_t\overline{u} + \overline{G}(\beta_u\psi, \beta_{vg}\phi) 
\]
where the definition of $\overline{G}$ is analogous to that of $G$.
We recognize that the first term $\p_s \overline{u} + J_\dt(\overline{u})\p_t\overline{u}$ is supported for $s\in [2R+1,\infty]$ whenever $s>R+1$ and is of size (in $C^k$ norm) $C(e^{-2DR} +R\dt)$. The term $\overline{G}(\beta_u\psi, \beta_{vg}\phi)$ admits a similar expansion as $G$ above that gives
\[
\overline{G}(\beta_{u}\psi, \beta_{vg}\phi) = \beta_{vg}\phi \overline{g}_v(\beta_u\psi,\beta_{vg} \phi) + \beta_u\psi \overline{g}_u(\beta_u\psi,\beta_{vg} \phi)
\]
with $\overline{g}_*$ satisfying the same norm bound as before.
Then for $s>R$ we define $\Theta_u$ to be
\begin{equation*}
    \Theta_u : = \beta_v' \phi + \p_s \psi + J_\dt(\overline{u} + \beta_{vg}\phi + \beta_u \psi) \p_t \psi + \p_\psi J_\dt (\overline{u}) + \psi \overline{g}_u(\beta_u\psi,\beta_{vg}\phi).
\end{equation*}
Note that we choose $\overline{g}_u$ to agree with $g_u$ for $s<2R$.
Then we observe by this construction over $s\in [R,\infty)$, the equation:
\[
\beta_u \Theta_u + \beta_v \Theta_v=0
\]
implies directly that the deformation of the pregluing $u_*$ under $\beta_u \psi + \beta_v \phi$ is $J_\dt$ holomorphic.

The definition of $\Theta_u$ extends also naturally to $s\in [0,R]$ as
\begin{equation*}
    \partial_s (u_* +\psi) + J_\dt(u_*+\psi)\p_t(u+\psi) =0
\end{equation*}
as in this region the effect of $\phi$ vanishes. The extension of $\Theta_u$ to the interior of $u$ is standard, albeit one also needs to take into account of deformation of complex structure $\dt j$ in the interior of $u$.

As promised the derivatives of $\psi$ does not appear in $\Theta_v$ and vice versa. As written it is manifest that solutions of $\Theta_u$ and $\Theta_v$ satisfy elliptic regularity. We next rewrite them into a form that makes the linearizations of operators appear, and hence more amendable to fixed point techniques.

\textbf{Step 2} We now establish an alternative form of $\Theta_v$, namely we take Equation \ref{theta_v_elliptic} and expand the nonlinear terms. We get
\begin{equation*}
    \Theta_v = D_{J_\dt} \phi + \beta_u' \psi + \beta_{[1;R-2,\infty]}\phi g_{v1}(\beta_{ug}\psi,\beta_v\phi) + \p_t\phi g_{v2}(\beta_{ug}\psi,\beta_{[1;R-2,\infty]}\beta_v\phi)
\end{equation*}
where $g_{v*}$ have the same properties as $g_*$. Even though they are different functions, we will sometimes just write $g_v (\phi +\p_t\phi)$ for convenience. We then can take
\begin{equation*}
    \mcal{F}_v:=\beta_{[1;R-2,\infty]}\phi g_{v1}(\beta_{ug}\psi,\beta_v\phi) + \p_t\phi g_{v2}(\beta_{ug}\psi,\beta_{[1;R-2,\infty]}\beta_v\phi)
\end{equation*}
which we think of a quadratic term. There is no error term.

\textbf{Step 3} The analogous expression for $\Theta_u$ is more complicated, in part because we have to deal with asymptotic vectors and have to pull back everything to $W^{2,p,d}(u^*TM)\oplus V_\Gamma$. We first focus on $s>R$ part of $\Theta_u$ from which we can write this as
\[
\beta_v' \phi + [(\p_s + J_\dt (\overline{u})\p_t)\psi + \p_\psi J_\dt(\overline{u})] + \psi \overline{g}_u(\beta_u\psi,\beta_{vg}\phi) + (J_\dt(\overline{u} +\beta_{vg}\phi + \beta_u\psi)- J_\dt(\overline{u}))\p_t \psi=0. 
\]
We loosely think of $[(\p_s + J_\dt (\overline{u})\p_t)\psi + \p_\psi J_\dt(\overline{u})]$ as the linearization, and the rest of the terms as quadratic perturbation. The quadratic terms
\[
\psi \overline{g}_u(\beta_u\psi,\beta_{vg}\phi) + (J_\dt(\overline{u} +\beta_{vg}\phi + \beta_u\psi)- J_\dt(\overline{u}))\p_t \psi
\]
generally take the form:
$\psi \cdot g(\psi,\phi) + \p_t\psi g(\psi,\phi)$
where $g$ is the function having the property of Equation \ref{equation:property of g} and uniformly bounded derivatives.

Next we consider what happens for $s\leq R$, where $\Theta_u$ takes the form 
\[
\p_s(\overline{u} + \psi) + J_\dt (\overline{u} +\psi) \p_t(\overline{u} +\psi)=0
\]
which we can rewrite as
\[
\p_s\psi + J_\dt(\overline{u}) \p_t \psi + \p_\psi J_\dt(\overline{u}) \p_t \overline{u} + g(\psi) \p_t \psi  + \p_s \overline{u} + J_\dt(\overline{u}) \p_t \overline{u}.
\]
We think of $\p_s\psi + J_\dt(\overline{u}) \p_t \psi + \p_\psi J_\dt(\overline{u}) \p_t \overline{u}$ as the linear term, $g(\psi) \p_t \psi$ as the quadratic correction ($g$ is just some function satisfying property of Equation \ref{equation:property of g}), and $\p_s \overline{u} + J_\dt(\overline{u}) \p_t \overline{u}$ the pregluing error, which was already estimated in the previous proposition.

We next wish to understand how the linear terms in the various segments of $\Theta_u$ compare with the linearization of $\db_J$ along $u$, which we turn to in the next step.

\textbf{Step 4} We first focus on $s<R$. We are trying to compare the linearization term in $\Theta_u$ to $D_J$, which can be written as 
\[
\p_s\psi +J(u)\p_t\psi + \p_\psi J(u) \p_t u.
\]
We compare their difference. We first consider the linear term in $\Theta_u$ with $J$ instead of $J_\dt$, and in taking their difference we see terms of the form
\[
(J(u) -J(\overline{u}) )\p_t \psi + \p_{\psi} J(u) \p_t u -\p_\psi J(\overline{u}) \p_t \overline{u}.
\]
In the first term above the difference is of the form $C(s,t) (r,a,p) \p_t\psi + C(s,t)\beta_{[1;R-2.R+2;1]}e^{-DR} \p_t\psi$ where $e^{-DR} \p_t\psi$ is coming from pregluing error. 
In the second term above we can write it as:
\[
C(s,t) (r,a,p) \psi + C \psi \beta_{[1;R-2.R+2;1]} e^{-DR}
\]
the second term coming from the difference between $\p_t u$ and $\p_t \overline{u}$.

Then we must take into accout the difference between $J_\dt$ and $J$, this introduces terms of the form
\[
C \dt \psi + C \dt \p_t \psi.
\]
This concludes our computations for the $s<R$ region. For $s>R$, we repeat a similar procedure, we recall the linear term in $\Theta_u$ in this region takes the form
\[
\p_s\psi  +J_\dt(\overline{u})\p_t\psi  + \p_\psi J_\dt (\overline{u})\p_t \overline{u}.
\]
As before to understand this difference we first replace instances of $J_\dt$ with $J$, and get
\begin{align*}
&D_J\psi - (\p_s\psi  +J(\overline{u})\p_t + \p_\psi J(\overline{u})\p_t\overline{u})\\
=& C(s,t) \{((r,a,p)+ C(\dt +e^{-Ds})) \beta_{[1;R-1, 2R+2;1]} + \beta_{[1;2R-2,2R+2;1]} (e^{-DR} +\dt)\} \p_t \psi \\
&+C(s,t) \{((r,a,p)+ C(\dt +e^{-Ds})) \beta_{[1;R-1, 2R+2;1]} + \beta_{[1;2R-2,2R+2;1]} (e^{-DR} +\dt)\} \psi
\end{align*}
where the terms of the form $C(\dt +e^{-Ds}) \beta_{[1;R-1, 2R+2;1]}$ and $\beta_{[1;2R-2,2R+2;1]} (e^{-DR} +\dt)$ came from the difference between $v_{r,a,p}$ and $\overline{u}$. Finally the effect of putting $J_\dt$ is to add a term of size:
\[
C\dt \psi + C \dt \p_t\psi.
\]

Hence collecting all of the above computations, we can write \begin{equation*}
    \Theta_u = \beta_v'\phi+ D_J \psi + \mcal{F}_u + \mcal{E}_u
\end{equation*}
where we think of $\mcal{F}_u$ as the quadratic term and $\mcal{E}_u$ as the error term. They take the following forms:
\begin{equation*}
    \mcal{F}_u = \begin{cases}
    g(\psi)\p_t\psi + C(s,t)(r,a,p)\psi + C(s,t)(r,a,p) \p_t\psi+C(s,t) (r,a,p)^{2}e^{-Ds}, s<R\\
    \psi \overline{g}_u(\beta_u\psi,\beta_{vg}\phi) + (J_\dt(\overline{u} +\beta_{vg}\phi + \beta_u\psi)- J_\dt(\overline{u}))\p_t \psi + C(s,t)(r,a,p) \psi + C(s,t) (r,a,p) \p_t\psi, s\geq R
    \end{cases}
\end{equation*}
and for $s<R$
\begin{align*}
    \mcal{E}_u =& 
    C \dt \psi + C \dt \p_t \psi +\beta_{[1;R-2.R+2;1]}e^{-DR} \p_t\psi +\beta_{[1;R-2.R+2;1]}e^{-DR} \p_t\psi\\
            & +C\delta(1+|r,a,p|) \beta_{[0,R+1;1]} \\
            &+ C[e^{-DR}(1+|r,a,p|)  + C\delta(1+|r,a,p| )] \beta_{[1;R-2,R+2;1]}.
\end{align*}
For $s\geq R$ we have
\begin{align*}
    \mcal{E}_u =&  C\{(\dt +e^{-Ds}) \beta_{[1;R-1, 2R+2;1]} + (\beta_{[1;2R-2,2R+2;1]} (e^{-DR} +\dt))\}\p_t \psi \\
    &+ C\{(\dt +e^{-Ds}) \beta_{[1;R-1, 2R+2;1]} + (\beta_{[1;2R-2,2R+2;1]} (e^{-DR} +\dt))\} \psi \\
    &+C\dt \psi + C \dt \p_t\psi.
\end{align*}
\end{proof}

We also need to version of elliptic regularity given in Proposition B.4.9 in \cite{mcdfuff}, which we reproduce here.
\begin{theorem}\label{Mcduff_elliptic}
Let $\Omega' \subset \Omega$ be open domains in $\mathbb{C}$ so that $\overline{\Omega}' \subset \Omega$. Let $l$ be a positive integer and $p>2$. Assume $J\in W^{l,p}(\Omega,\mathbb{R}^{2n})$ satisfies $J^2 =-1$, and $\|J\|_{W^{l,p}} \leq c_0$, then:
\begin{enumerate}
    \item If $u\in L^p_{loc}(\Omega, \bb{R}^{2n})$, $\eta \in W^{l,p}_{loc}(\Omega,\bb{R}^{2n})$, and $u$ weakly solves 
    \begin{equation}
        \p_s u +J\p_tu = \eta.
    \end{equation}
    Then $u\in W^{l+1,p}_{loc} (\Omega,\bb{R}^{2n})$, and satisfies this equation almost everywhere.
    \item \begin{equation}
        \|u\|_{W^{l+1,p}(\Omega',\bb{R}^{2n})} \leq c( \|\p_su +J\p_tu\|_{W^{l,p}(\Omega,\bb{R}^{2n})} +\|u\|_{W^{l,p}(\Omega,\bb{R}^{2n})})
    \end{equation}
    where $c$ only depends on $c_0, \Omega, $ and $\Omega'$.
\end{enumerate}
\end{theorem}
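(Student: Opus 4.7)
The plan is to reduce the variable-coefficient equation to the standard constant-coefficient Cauchy--Riemann operator $\db_0 := \p_s + J_0\p_t$, where $J_0$ is a constant matrix with $J_0^2 = -1$, by freezing coefficients, and then bootstrap. The foundation I will rely on is the classical Calder\'on--Zygmund interior estimate for $\db_0$: for each $1 < p < \infty$ and $k \geq 0$,
\[
\|u\|_{W^{k+1,p}(\Omega')} \leq C\bigl(\|\db_0 u\|_{W^{k,p}(\Omega)} + \|u\|_{W^{k,p}(\Omega)}\bigr)
\]
whenever $\overline{\Omega'} \subset \Omega$. This reduces to boundedness of Riesz transforms on $L^p$ and is the only ``hard analysis'' input.

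The technical heart is the initial regularity upgrade from $L^p_{loc}$ to $W^{1,p}_{loc}$. Rewrite the equation as $\db_0 u = \eta + (J_0 - J)\p_t u$. I would cover $\Omega'$ by small balls $B_r(z_i) \Subset \Omega$ and, on the $i$-th ball, set $J_0 := J(z_i)$. Because $p > 2$ and $J \in W^{l,p}$ with $l \geq 1$, Morrey's embedding makes $J$ H\"older continuous, so by shrinking $r$ one can arrange $\|J - J_0\|_{L^\infty(B_r(z_i))} < \epsilon$ for any prescribed $\epsilon$. Since $u$ is a priori only $L^p$, the perturbation $(J_0 - J)\p_t u$ is not yet defined pointwise, so I would argue via mollification: let $u_\nu := \phi_\nu * u$, derive a perturbed equation for $u_\nu$ (picking up an error from commuting $\phi_\nu *$ with the variable coefficient $J$), apply the Calder\'on--Zygmund estimate at $k=0$, and absorb the small perturbation into the left-hand side using $\|J - J_0\|_{L^\infty} < \epsilon$. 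Passing to the limit $\nu \to 0$ gives $u \in W^{1,p}_{loc}$ together with the quantitative estimate at $l = 1$.

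Once $u \in W^{1,p}_{loc}$ is in hand, the rest is a clean bootstrap. For each $k \geq 1$, the space $W^{k,p}$ on a two-dimensional domain is a Banach algebra because $kp > 2$, and multiplication by a fixed $W^{l,p}$ function is bounded on $W^{k,p}$ whenever $k \leq l$. Using this multiplier property, I would show inductively that if $u \in W^{k,p}_{loc}$ for some $1 \leq k \leq l$, then $(J - J_0)\p_t u$ together with $\eta$ lies in a Sobolev class controlled well enough that the Calder\'on--Zygmund estimate, applied to $\db_0 u$, upgrades $u$ to $W^{k+1,p}_{loc}$ with the desired quantitative bound. Iterating $k = 1, \ldots, l$ produces $u \in W^{l+1,p}_{loc}$, and the almost-everywhere statement in part (1) is then automatic since both sides of the equation now lie in $W^{l,p}_{loc}$ and agree as distributions.

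The main obstacle is the $L^p \to W^{1,p}$ upgrade: the coefficient $J$ cannot be differentiated pointwise at that stage, so the argument requires an approximation (mollification, or difference quotients adapted to the H\"older continuity of $J$) followed by an absorption that is legitimate only because the Morrey embedding $W^{1,p} \hookrightarrow C^0$ for $p>2$ lets us localize to balls where $J$ is arbitrarily close to a constant. Every subsequent step is mechanical, driven by the Banach algebra property of $W^{k,p}$ for $kp > 2$ and by iteration of a single constant-coefficient elliptic estimate.
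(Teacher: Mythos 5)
First, a point of comparison: the paper does not prove this statement at all --- it is quoted verbatim as Proposition B.4.9 of McDuff--Salamon and used as a black box. Your proposal is therefore an attempt at the textbook proof, and its overall strategy (freeze coefficients, Calder\'on--Zygmund for the constant-coefficient operator, bootstrap using the algebra property of $W^{k,p}$ for $kp>2$) is indeed the right skeleton. But as written there are two concrete gaps.

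The first is in the step you correctly identify as the heart of the matter, the upgrade from $L^p_{loc}$ to $W^{1,p}_{loc}$. The term you propose to absorb, $(J_0-J)\p_t u_\nu$, is the harmless one; the obstruction is the commutator. Since $u$ is only $L^p$, the distribution $J\p_t u$ must be read as $\p_t(Ju)-(\p_t J)u$, and the error from commuting $\phi_\nu\ast$ with $J$ splits into a kernel term $\int \p_t\phi_\nu(x-y)\bigl(J(x)-J(y)\bigr)u(y)\,dy$ plus $\phi_\nu\ast\bigl((\p_t J)u\bigr)$. With $J$ merely in $W^{1,p}$, $p>2$, each piece is uniformly bounded (and convergent) only in $L^{p/2}$, because $(\p_t J)\,u\in L^p\cdot L^p\subset L^{p/2}$; the H\"older continuity of $J$ supplied by Morrey is not enough to make the kernel term small in $L^p$ (its $L^1$ kernel norm scales like $\nu^{\alpha-1}\to\infty$). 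So the $L^p$-based elliptic estimate for $u_\nu$ does not close, and you only obtain $u\in W^{1,p/2}_{loc}$ on the first pass. This is fixable --- either by an iteration in the integrability exponent until $u\in L^\infty$, or, as McDuff--Salamon do, by a duality argument against the formal adjoint --- but some such device is genuinely needed and is absent from the sketch.

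The second gap is in the bootstrap. If $u\in W^{k,p}_{loc}$ with $1\le k\le l$, then $(J_0-J)\p_t u$ lies only in $W^{k-1,p}_{loc}$ (the product estimate cannot do better than the regularity of $\p_t u$), so applying the Calder\'on--Zygmund estimate to $\db_0 u=\eta+(J_0-J)\p_t u$ returns $u\in W^{k,p}_{loc}$ --- no gain of a derivative. The correct induction differentiates the equation: $w=\p u$ weakly solves $\p_s w+J\p_t w=\p\eta-(\p J)\p_t u$, and one applies the already-established case of lower order to $w$ (with one further wrinkle at $l=1$, where $(\p J)\p_t u$ is again only $L^{p/2}$ and the integrability iteration reappears). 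The Banach algebra property enters in estimating the new right-hand side, not in the way your sketch uses it.
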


\begin{remark}\label{quadratic_term}
In what follows, ignoring for now our choice of cut off functions, we will think of $\mcal{F}_v$ in the following form:
\begin{equation*}
\mcal{F}_v (\phi,\psi)= g(\phi,\psi) \phi + h(\phi,\psi)\p_t(\phi)
\end{equation*}
where measured in $C^0$ norm we have, \[
|g(x,y)| \leq C (|x|+|y|)
\]

\[
|h(x,y)| \leq C(|x|+|y|)
\]
We also have $g,h$ are both smooth functions whose derivatives are uniformly bounded, which in particular implies that the $W^{k,p}$ norm of $g(\phi,\psi)$ and $h(\phi,\psi)$ are bounded above by the $W^{k,p}$ norm of $\phi$ and $\psi$.

In comparison with Section 5 of \cite{obs2}, our conditions on $\mcal{F}_v$ are slightly stronger than the condition in there called \emph{quadratic of type 2} because only the derivative of $\phi$ is allowed to appear.
We will think of $\mcal{F}_u$ in the following form (note this is slightly different from above conventions):
\begin{equation*}
\mcal{F}_u = g(\beta_v \phi, \psi,r,a,p) + h(\beta_v \phi, \psi,r,a,p) \p_t (\psi)
\end{equation*}
Ignoring the precise details of cut off functions, we have (all norms below are the $C^0$ norm)
\[
\|g\| \leq C(\|\phi\| \cdot \|\psi\| +\|\psi\|^2 + |(r,a,p)|^2 e^{-Ds} + |(r,a,p)|(\|\phi\|+\|\psi\|))
\]
\[
\|h\| \leq C (|(r,a,p)| + \|\phi\| + \|\psi\|)
\]

Analogous expressions for pointwise bounds for higher derivatives of $\mcal{F}_u$ also hold, essentially because $\mcal{F}_u$ comes from expanding a smooth function. For most of our purposes the bounds above will suffice.
\end{remark}

\begin{remark}
The terms $\mcal{F}_*$ and $\mcal{E}_*$ are viewed as error terms, so what is important is their relative sizes and not the constants appearing in front of them. In what follows we will not be too careful to distinguish $+\mcal{F}_*$ and $-\mcal{F}_*$ and similarly for $\mcal{E}_*$.
\end{remark}

\subsection{Solution of \texorpdfstring{$\Theta_u =0, \Theta_v=0$}{Theta=0}}
In this subsection we will finally solve the system of equations $\Theta_u=0$, $\Theta_v=0$. We will adopt the following strategy:
\begin{itemize}
    \item Given fixed $(r,a,p),\psi$, construct our lift of gradient trajectory, $v_{r,a,p}$, which we preglue to $u+(r,a,p)$.
    \item For this fixed choice, we solve $\Theta_v(\phi)=0$ over $v_{r,a,p}^*TM$ using the contraction mapping principle to obtain an unique solution, $\phi(r,a,p,\psi)$.
    \item Then we try to solve $\Theta_u=0$ over $u^*TM$. We do this via another contraction mapping principle with input variables $(\psi,r,a,p, \dt j)$. The function $\phi$ enters the equation, but as a dependent on these variables. As such, we need to understand how $\phi$ varies when we change  $\psi,r,a,p$. This is made non-trivial by the fact when we change variables $r,a,p$, the deformation is not local, we are twisting/moving an entire semi-infinite cylinder. We will need to understand under these changes, how the $\phi$ terms that enter $\Theta_u$ change. Hence to keep track of these changes, we will make certain identifications of bundles $v_{r,a,p}^*TM$ and $v_{r',a',p'}^*TM$ so we can compare the solutions of different equations over the same space. Then from that we get from the perspective of the equation $\Theta_u$ over $u^*TM$, $\phi_{r,a,p}$ depends nicely on the variables $(r,a,p,\psi,\dt j)$.
    \item We apply the contraction mapping principle over $u^*TM$ to solve $\Theta_u$.
\end{itemize}
\begin{proposition}
Let $\ep>0$ be fixed and sufficiently small (small relative to the constants $C$ that describe the local geometry of Morse-Bott torus but fixed with respect to $\dt>0$). Let the tuple $(\psi,r,a,p,\dt j)$ be fixed and in an $\ep$ ball around zero. Then we can view $\Theta_v=0$ as an equation with input $\phi \in W^{2,p,d}(v_{r,a,p}^*TM)$. This equation has an unique solution $\phi \in W^{2,p,d}(v_{r,a,p}^*TM)$ whose norm is bounded by
\begin{equation*}
    \|\phi\| \leq \ep/R.
\end{equation*}
Furthermore, this solution $\phi$ is actually in $W^{3,p,d}(v_{r,a,p}^*TM)$, its $W^{3,p,d}$ norm is likewise bounded by $C\ep/R$.
\end{proposition}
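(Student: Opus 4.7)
The plan is to solve $\Theta_v(\phi)=0$ by a contraction mapping argument, then bootstrap to $W^{3,p,d}$ using the elliptic form of $\Theta_v$ recorded in Equation \eqref{theta_v_elliptic}. By Proposition \ref{uniform_bounded_inverse_gradient_flow}, the operator $D_{J_\delta}\colon W^{2,p,d}(v_{r,a,p}^*TM)\to W^{1,p,d}(v_{r,a,p}^*TM)$ is an isomorphism with right inverse $Q_\delta$ whose norm is bounded independently of $\delta$ (and, by the remarks following that proposition, independently of reparametrizations in $t$ and of the asymptotic data $(r,a,p)$). Writing $\Theta_v=D_{J_\delta}\phi+\beta_u'\psi+\mcal{F}_v(\phi,\psi)$, I would recast the equation as the fixed point problem
\[
\phi = T(\phi) := -Q_\delta\bigl(\beta_u'\psi+\mcal{F}_v(\phi,\psi)\bigr),
\]
and show that $T$ is a contraction on the closed ball $B_{\ep/R}\subset W^{2,p,d}(v_{r,a,p}^*TM)$.

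The two estimates I would establish are: (a) the inhomogeneous term $\beta_u'\psi$ contributes at size $\ep/R$, and (b) the quadratic term $\mcal{F}_v(\phi,\psi)$ is small relative to this. For (a), recall $\beta_u=\beta_{[-\infty,2R;R/2]}$, so $\beta_u'$ is supported on an interval of length $R/2$ and satisfies $|\beta_u'|\leq C/R$ pointwise with comparable derivative bounds; combined with $\|\psi\|_{W^{2,p,d}(u^*TM)}\leq\ep$ and the uniform bound on $Q_\delta$, this yields $\|Q_\delta(\beta_u'\psi)\|_{W^{2,p,d}}\leq C\ep/R$. For (b), using the pointwise bounds on $g_{v1},g_{v2}$ from Remark \ref{quadratic_term} together with the Sobolev embedding $W^{2,p}\hookrightarrow C^0$ (for $p>2$) and the fact that $W^{1,p}$ is a Banach algebra in this range, one shows $\|\mcal{F}_v(\phi,\psi)\|_{W^{1,p,d}}\leq C(\|\phi\|+\|\psi\|)\|\phi\|_{W^{2,p,d}}$, so for $\phi\in B_{\ep/R}$ and $\ep$ small, $\|Q_\delta\mcal{F}_v(\phi,\psi)\|\leq C\ep\cdot\ep/R \ll \ep/R$. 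Together (a) and (b) put $T(B_{\ep/R})\subset B_{\ep/R}$.

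For the contraction property I would expand $\mcal{F}_v(\phi_1,\psi)-\mcal{F}_v(\phi_2,\psi)$ using that $g$ and $h$ are smooth with uniformly bounded derivatives, obtaining the Lipschitz estimate $\|\mcal{F}_v(\phi_1,\psi)-\mcal{F}_v(\phi_2,\psi)\|\leq C(\|\phi_1\|+\|\phi_2\|+\|\psi\|)\|\phi_1-\phi_2\|$. Applying $Q_\delta$ gives $\|T(\phi_1)-T(\phi_2)\|\leq C\ep\|\phi_1-\phi_2\|$, which is a contraction for $\ep$ sufficiently small. The Banach fixed point theorem then produces the unique solution $\phi\in B_{\ep/R}$ with $\|\phi\|_{W^{2,p,d}}\leq C\ep/R$.

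For the $W^{3,p,d}$ regularity, I would use the elliptic form of $\Theta_v$ from Equation \eqref{theta_v_elliptic}: it has the shape $\partial_s\phi+J_\delta(\cdot)\partial_t\phi+(\text{zeroth order in }\phi)=0$ where the zeroth order terms and the almost complex structure entry depend on $(\psi,\phi)$ in a $W^{2,p}$ manner. Applying Theorem \ref{Mcduff_elliptic} locally on the cylinder (with the exponential weight absorbed in the standard way) gives $\phi\in W^{3,p,d}_{\mathrm{loc}}$ together with the estimate $\|\phi\|_{W^{3,p,d}}\leq C(\|\Theta_v\text{-data}\|_{W^{2,p,d}}+\|\phi\|_{W^{2,p,d}})$. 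The inhomogeneous data $\beta_u'\psi$ lies in $W^{2,p,d}$ with norm $\lesssim \ep/R$ (since $\psi\in W^{2,p,d}(u^*TM)$ and $\beta_u'$ is smooth of derivative size $1/R$), and the nonlinear term is controlled by the $W^{2,p,d}$ norm already obtained, giving $\|\phi\|_{W^{3,p,d}}\leq C\ep/R$.

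The main obstacle is the bookkeeping for the weighted Sobolev norms and the interface between $u^*TM$ (where $\psi$ lives) and $v_{r,a,p}^*TM$ (where $\phi$ lives): one must verify that the $1/R$ derivative loss from $\beta_u'$ survives passage through $Q_\delta$ with the exponential weight $e^{ds}$ intact, and that the mixed product terms in $\mcal{F}_v$ involving $\beta_{ug}\psi$ really are small in $W^{1,p,d}(v^*TM)$ despite $\psi$ being defined in a different weighted space. Both come down to applying Hölder/Banach algebra estimates in the overlap region and using that the weight $e^{ds}$ is harmless on the compact support of the cutoffs.
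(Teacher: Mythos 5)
Your proposal is correct and follows essentially the same route as the paper: recast $\Theta_v=0$ as a fixed point problem for $\phi\mapsto Q_\delta(-\beta_u'\psi-\mcal{F}_v(\phi,\psi))$, use the uniformly bounded right inverse from Proposition \ref{uniform_bounded_inverse_gradient_flow}, the $1/R$ gain from $\beta_u'$, and the quadratic structure of $\mcal{F}_v$ to get a contraction, then bootstrap to $W^{3,p,d}$ via Theorem \ref{Mcduff_elliptic}. The only cosmetic difference is that you contract directly on the ball of radius $\ep/R$, whereas the paper contracts on the $\ep$-ball and extracts the $C\ep/R$ bound afterward from the fixed point equation; both work.
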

\begin{proof}
The equation we need to solve is
\begin{equation*}
    D_{v} \phi +  \mcal{F}_v(\phi, \psi) + \beta_u' \psi  =0
\end{equation*}
where $D_v$ is the linearization of $\db_{J_\dt}$ along $v_{r,a,p}$, which we previously denoted by $D_{J_\dt}$. We dropped the subscript $(r,a,p)$ to make the notation manageable.

Let $Q$ denote the inverse of $D_{v}$. Now consider the map $I:  W^{2,p,d}(v_{r,a,p}^*TM) \rightarrow  W^{2,p,d}(v_{r,a,p}^*TM)$ defined by
\begin{equation*}
    \phi \longrightarrow Q(- \beta_v' \psi- \mcal{F}_v(\phi, \psi))
\end{equation*}
We note a solution to $\Theta_v=0$ is equivalent to $I$ having a fixed point. We demonstrate a fixed point exists via the contraction mapping principle.
Since $\psi \in W^{2,p,d}(u^*TM)$ has norm $\leq \ep$, the function $\beta_u\psi$ viewed as a element in $W^{2,p,d}(v_{r,a,p}^*TM)$ also has norm bounded above by $\epsilon$, hence $\|\beta_u'\psi\| \leq \ep/R$. Also we note for $\|\phi\| \leq \ep$, $\|Q\circ \mcal{F}_v\|\leq C\ep^2$, both of these being measured in $W^{2,p,d}(v_{r,a,p}^*TM)$ norm.

If we let $B_\ep(0)$ denote the $\ep$ ball in $W^{2,p,d}(v_{r,a,p}^*TM)$ then by the above, we see $I$ sends $B_\ep(0)$ to itself. We also see it satisfies the contraction mapping property. If $\phi,\phi' \in B_\ep(0)$
then
\begin{align*}
    \|I(\phi)-I(\phi')\| &\leq \|\mcal{F}_v(\phi,\psi)- \mcal{F}_v(\phi',\psi)\|\\
    &\leq C max \{\|\phi\|_{W^{2,p,d}}, \|\phi'\|_{W^{2,p,d}},\|\psi\|_{W^{2,p,d}}\}\|\phi-\phi'\|\\
    &\leq C\epsilon \|\phi-\phi'\|
\end{align*}
Hence for small enough $\ep$ the conditions for contraction principle is satisfied, the map $I$ has a unique fixed point. Since $D_{v_{r,a,p}}$ is invertible, this is equivalent to $\Theta_v$ having a unique solution.

We can estimate the size of this fixed point. Consider the equation
\begin{equation*}
    \phi = Q( -\beta_v' \psi- \mcal{F}_v(\phi, \psi))
\end{equation*}
If we measure the size of both sides in $W^{2,p,d}(v_{r,a,p}^*TM)$ we get
\[
\|\phi\|\leq C\epsilon/R + C\epsilon \|\phi\|
\]
hence we get
\[
\|\phi\| \leq C\epsilon/R.
\]
The fact we can improve the regularity and bound the $W^{3,p,d}$ norm of $\phi$ follows directly from Theorem \ref{Mcduff_elliptic}.
\end{proof}

We next need to solve $\Theta_u$. As we mentioned in the introduction to this subsection, we think of this equation taking place over $W^{2,p,d}(u^*TM)\oplus V_\Gamma \oplus T\mcal{J}$, with input variables $(r,a,p,\psi,\dt j)$ in an $\ep$ ball. We think of the $\phi(r,a,p,\psi,\dt j)$ term that appears as a dependent variable. From above we know that for each tuple $(r,a,p,\psi,\dt j)$ there exists a unique solution $\phi(r,a,p,\psi,\dt j)$ of small norm. To apply the contraction mapping principle we need to see the \emph{derivative} of $\phi(r,a,p,\psi,\dt j)$ with respect to the tuple $(r,a,p,\psi,\dt j)$ behaves nicely as well. This is made nontrivial by the fact when we vary $(r,a,p)$ we are pregluing different gradient trajectories, hence the solution of $\Theta_{v_{r,a,p}}=0$ takes place in different spaces. We take the approach of identifying all the solutions into one space, and that as $(r,a,p)$ vary the equation over the \emph{same} space changes, and by understanding this change, we understand how the terms in $\Theta_u$ change.

To this end we let the pair $(D_v, W)$ denote the vector space
\[
\left\{W^{2,p,d}(v_{0,0,0}^*TM),e^{ds}\right\}
\]
with operator $D_v$ given by $D_{v_{0,0,0}}$-the linearization of $\db_{J_\dt}$ over $v_{0,0,0}$.
We first consider varying  $r,a$, and keeping $p=0$. Let $\phi_{r,a,0} \in W^{2,p,d}(v_{r,a,0}^*TM)$, then there is an obvious parallel transport map $PT: W^{2,p,d}(v_{r,a,0}^*TM) \rightarrow W$ that sends
\[
\phi_{r,a,0} \longrightarrow \phi_{r,a,0} -r - a \in W
\]
which is an isometry. Here we are using additive notation for parallel transport maps because the metric is flat. We denote its image by $\hat{\phi}_{r,a,0}$. Under this identification $\hat{\phi}_{r,a,0}$ satisfies a different equation, which we denote by $\hat{\Theta}_v$. This equation is of the form
\[
\hat{D}_{r,a,0} \hat{\phi}_{r,a,0} + \hat{\mcal{F}_v}(\hat{\phi}_{r,a,0}, \psi)=0
\]
If we write $D_v = \p_s+ J_\dt(s,t) \p_t + S(s,t)$, then $\hat{D}_{r,a,0}$ is given by
\[
\p_s +J_\dt (s+a,t+r) \p_t +S(s+a,t+r)
\]
and the term $\hat{\mcal{F}_v}$ has some mild dependence on $r,a$ depending on the local geometry of the Morse-Bott torus. The point here is that the term $\phi(r,a,p,\psi,\dt j)$ that enters directly  $\Theta_u$ can be identified with $s\leq 5R$ portion of $\hat{\phi}_{r,a,0}$ solving $\hat{\Theta}_v=0$. Hence to understand how $\phi(r,a,p,\psi,\dt j)$ feeds back into $\Theta_u$ as we vary $(r,a)$ we only need to understand how the parametrized solutions $\hat{\phi}_{r,a,0}$ solving the $(r,a)$ parametrized family of PDEs $\hat{\Theta}_v$ changes.

We would like to extend the previous discussion to include variations of $p$. To do that we need the next lemma which concerns the differential geometry of the situation.

\begin{lemma}
In our chosen coordinate system, let $v_p(s,t)= (a_p(s),t,x_p(s),0)$ be a lift of gradient trajectory satisfying 
\[
a_p(0)=0, x_p(0)=p
\]
and let $v_{p'}=(a_{p'}(s),t,x_{p'}(s),0)$ be another lift satisfying
\[
a_{p'}(0)=0, x_{p'}(0)={p'}
\]
with $|p-p'|\leq \ep$
then there exists a $C$ independent of $\delta$ such that:
\[
|v_p(s,t)-v_{p'}(s,t)| \leq C|p-p'|
\]
for all $s,t$.
\end{lemma}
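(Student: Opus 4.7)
The plan is to exploit the fact that the gradient flow ODE is exactly integrable, so that derivatives with respect to the initial condition $p$ can be computed in closed form and the factors of $\delta$ cancel. Because the $y$-component vanishes identically and the $z$-component is just $t$ in both lifts, the difference $v_p(s,t)-v_{p'}(s,t)$ depends only on $s$, and it suffices to produce $\delta$-independent bounds on $\partial x_p/\partial p$ and $\partial a_p/\partial p$, then integrate in $p$.

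For the $x$-component, I would use that $f'$ is bounded away from zero on a neighborhood of the initial value (since by the setup in Section \ref{semiglue} the critical points of $f$ are at $x=\pm 1/4$ while $p$ is close to $0$, and $|p-p'|\leq\epsilon$ keeps everything in this neighborhood). Set $F(x)=\int_0^x du/f'(u)$; then $F$ is a smooth diffeomorphism onto its image, and the ODE $\partial_s x=\delta f'(x)$ integrates to $F(x_p(s))=F(p)+\delta s$, i.e.\ $x_p(s)=F^{-1}(F(p)+\delta s)$. Differentiating in $p$ yields
\[
\frac{\partial x_p(s)}{\partial p}=\frac{F'(p)}{F'(x_p(s))}=\frac{f'(x_p(s))}{f'(p)},
\]
which is bounded uniformly in $s$ and $\delta$ by a constant depending only on $f$.

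For the $a$-component, differentiating $a_p(s)=\int_0^s e^{\delta f(x_p(\tau))}\,d\tau$ and substituting the formula above gives
\[
\frac{\partial a_p(s)}{\partial p}=\frac{\delta}{f'(p)}\int_0^s [f'(x_p(\tau))]^2\, e^{\delta f(x_p(\tau))}\,d\tau.
\]
The key trick is to change the variable of integration to $x=x_p(\tau)$, so that $d\tau=dx/(\delta f'(x))$; the $\delta$'s cancel, one factor of $f'$ cancels, and what remains is an exact integral $\int_p^{x_p(s)}f'(x)e^{\delta f(x)}\,dx$, equal to $(e^{\delta f(x_p(s))}-e^{\delta f(p)})/\delta$. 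Dividing by $f'(p)$ and applying the mean value theorem to $u\mapsto e^{\delta u}$ turns the $\delta$ in the denominator into a bounded factor, producing
\[
\Bigl|\tfrac{\partial a_p(s)}{\partial p}\Bigr|\leq \frac{|f(x_p(s))-f(p)|\, e^{\delta\|f\|_\infty}}{|f'(p)|}\leq C,
\]
uniformly in $s$ and $\delta$.

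The lemma then follows by integrating these two pointwise bounds in $p$ from $p'$ to $p$. The step I expect to matter most is the change of variables in the $a$-integral: without that cancellation one would naively only get a factor of $1/\delta$ times $|p-p'|$, which is false and would be catastrophic for the subsequent gluing estimates. The reason no Gronwall argument is needed despite the $\delta$-dependent length $\sim 1/\delta$ of the flow is precisely this exact integrability; once one chooses the right change of variables, all the $\delta$-factors disappear and only smooth $f$-dependent quantities remain.
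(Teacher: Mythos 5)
Your proof is correct, and it takes a genuinely different route from the paper's. The paper treats the two components separately and in a more ad hoc way: for $x_p$ it simply invokes the standard fact that reparametrized gradient flow lines of $f$ depend Lipschitz-continuously on their initial point, and for $a_p$ it renormalizes $f$ into piecewise model form ($f=Mx$ away from the critical point, $f=M'x^2/2$ near it), solves the ODEs explicitly in each region, and bounds $|a_p(s)-a_{p'}(s)|$ by splitting the $s$-axis at the time $-T$ where the trajectory enters the quadratic region. You instead differentiate in the initial condition $p$ and obtain global closed forms: $\partial_p x_p(s)=f'(x_p(s))/f'(p)$ from the integrated flow $F(x_p(s))=F(p)+\delta s$, and $\partial_p a_p(s)=\bigl(e^{\delta f(x_p(s))}-e^{\delta f(p)}\bigr)/(\delta f'(p))$ after the change of variables $x=x_p(\tau)$, with the mean value theorem absorbing the $1/\delta$. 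This avoids any case analysis or normal form for $f$, handles all $s\in\bb{R}$ in one stroke, and makes the $\delta$-independence structurally transparent rather than a consequence of explicit integrals in model coordinates; the one hypothesis you lean on, $f'(p)\neq 0$ uniformly, is exactly the paper's standing assumption that the free end sits away from the critical points of $f$ (the critical points are at $x=\pm 1/4$ while $|p|\leq\ep$), so nothing is lost. The paper's piecewise approach is slightly more robust in situations where the flow cannot be integrated exactly, but in this one-dimensional setting your argument is the cleaner of the two.
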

\begin{proof}
This is a fundamentally a statement about gradient flows. 
We recall $x_p$ satisfies the equation
$x_p(s)_s=\delta f'(x_p)$.
If we reparametrize $x_p(s)$ to be $x_p(\frac{s}{\dt})$ then it is simply a gradient flow of $f$, then we have for all $\delta$ and all $s$
\[
\left |x_p\left (\frac{s}{\dt}\right ) -x_{p'}\left (\frac{s}{\dt}\right ) \right| \leq C |p-p'|
\]
where $C$ is independent of $\delta$, and the claim follows.
To verify the claim about $a_p$ and $a_{p'}$, we need to be a bit more careful. Assume we have re-chosen coordinates $[U,V]$ around $p$ so that on $x\in[U,V]$, we have $f(x) =Mx$ and on $x\in [0,U+\epsilon]$ $f(x)=M'x^2/2$.

This fixed choice of coordinate is independent of $\delta$ so our quantitative conclusions drawn from this coordinate system continues to hold in our original coordinate system up to a change of constant. Then we analyze the behaviour of $a_p(s)$ and $a_{p'}(s) $ as $s\rightarrow-\infty$,  with the positive end being similar. For $s$ so that $x_p(s)\in [U,V]$, $x_p(s) = \dt Ms+p$ and $x_{p'}(s) = \dt Ms+p'$, since 
$a_p' = e^{\delta f(x_p)}$ this is equivalent to
\[
a_p(s)' = e^{\delta (\dt Ms+p)} 
\]
and 
\[
a_{p'}(s)' = e^{\delta (\dt Ms+p')}.
\]
If we take $T$ large enough so that $x_p(-T), x_{p'}(-T) \in [U,U+\epsilon]$, we have the upper bound:
\[
T< C( \max(p,p')-U)/\delta.
\]
Hence  we have a uniform upper bound in the integral of the form:
\begin{align*}
|a_p(-T)-a_{p'}(-T)| &\leq \left|\int_0^{A/\dt} e^{\dt^2 Ms +\dt p} - e^{\dt^2Ms +\dt p'}ds\right| \\
& \leq \frac{1}{\dt^2M}(e^{\dt^2 M A /\dt}-1)(e^{\dt p} - e^{\dt p'})\\
&\leq C|p-p'|
\end{align*}
where $A$ is just some constant.
Next for $s<-T$, the curves $x_p(s)$ and $x_{p'}(s)$ enter the region where $f(x)=M'x^2/2$, and they satisfy the differential equation
\[
x_p(s)' = \delta M'x
\]
so they satisfy
\[
x_p(s) =x_p(-T)e^{\delta M's}
\]
and likewise for $x_{p'}(s)$, hence the difference between $a_p(s)$ and $a_{p'}(s)$ satisfies 
\[
\p_s (a_p-a_p') = e^{\dt x_p(-T)e^{\delta M's}}-e^{\dt x_{p'}(-T)e^{\dt M's}}.
\]
Since we are taking $s\rightarrow -\infty$ the right hand side can be bounded by
\[
C\dt(x_p(-T)-x_{p'}(-T))e^{\delta M's} \leq \dt C|p-p'|e^{\delta M's}.
\]
Hence for $s<-T$
\[
|a_p-a_{p'}|  \leq C|p-p'| \dt \int_{-\infty}^{-T} e^{\dt M's}ds \leq C |p-p'|
\]
and hence our conclusion follows.
\end{proof}
With the above calculations we have can extend the parallel transport map
\[
PT: W^{2,p,d}(v_{0,0,p}^*TM) \longrightarrow W
\]
defined by
\[
PT(\psi) = \psi-(v_{0,0,p}-v_{0,0,0})
\]
which is also an isometry.
The $\Theta_v=0$ equation also pulls back to $W$, the linearized operator $\hat{D}_{0,0,p}$ given by
\[
\p_s\hat{\phi}_{0,0,p} + J_\dt (p) \p_t \hat{\phi}_{0,0,p} + S(p,s,t) \hat{\phi}_{0,0,p}.
\]
This is what $D_{v_{0,0,p}}$ would look like in the coordinates we chose for $v_{0,0,p}$. The full equation $\hat{\Theta}_v=0$ looks like
\[
\hat{D}_{0,0,p} \hat{\phi}_{0,0,p} + \hat{\mcal{F}}_v(p,\hat{\phi}_{0,0,p},\psi)=0.
\]

The previous lemma ensures the coefficient matrices $J_\dt$, $S(p,s,t)$, as well as $\hat{\mcal{F}}_v$ are uniformly well behaved (say in $C^k$ norm) as we vary $p$ as $\dt \rightarrow 0$. And as before the components of $\phi(r,a,p,\psi,\dt j)$ that enters $\Theta_u$ can be identified with the $s>5R$ component of $\hat{\phi}_{0,0,p}$.
Combining the previous discussion, we have:

\begin{proposition}
The derivative of the operator $\hat{D}_{v_{r,a,p}}:W\rightarrow W^{1,p,d}(v_0^*TM)$ with respect to $(r,a,p)$ is well defined, and satisfies for a fixed constant $C$
\[
\|\p_a D_{v_{r,a,p}}\|=0
\]
\[
\|\p_r D_{v_{r,a,p}}\| \leq C
\]
\[
\|\p_pD_{v_{r,a,p}}\| \leq C.
\]
Further we have the bound:
\[
\|\p_* \hat{ \mcal{F}_v}\| \leq C.
\]
In the above, $\p_*D_{v_{r,a,p}}$ is viewed as an operator $W\rightarrow W^{1,p,d}(v_0^*TM)$, and $\hat{\mcal{F}}_v$ is viewed as a map from $W\rightarrow W^{1,p,d}(v_0^*TM)$ and its derivative is a map over the same space.
\end{proposition}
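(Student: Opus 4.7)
The plan is to verify each bound by writing the coefficients of the pulled-back operator explicitly in terms of $(r,a,p)$ and using the Lipschitz/smoothness estimates already in hand. After parallel transporting, $\hat{D}_{r,a,p}$ acts on the fixed space $W$ with coefficients of the form $J_\delta|_{v_{r,a,p}(s,t)}$ and analogous zeroth-order terms $S$. Since $PT$ is the translation by $(a,r,0,0) + (v_{0,0,p}-v_{0,0,0})$ in the flat coordinates near the Morse-Bott torus, the conjugation replaces the evaluation point $v_{r,a,p}(s,t)$ by $v_{0,0,0}(s,t) + (a,r,0,0) + (v_{0,0,p}-v_{0,0,0})$. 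All four bounds then reduce to controlling how the ambient geometric data $(J_\delta, S)$ vary under these three perturbations.

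For the $a$-derivative, I would simply invoke the $\mathbb{R}$-invariance of $J_\delta$ and of the connection data on the symplectization: translating the evaluation point in the $a$-direction has \emph{no} effect on $J_\delta$ or on $S$, so $\partial_a \hat{D}_{r,a,p} = 0$ identically (not only in the limit). For the $r$-derivative, shifting $t\mapsto t+r$ amounts to evaluating the (smooth, fixed) ambient objects along a $z$-translate; since $h(x,y,z)$ and $J$ are fixed smooth functions in the normal form of Proposition \ref{prop_locform}, and $J_\delta - J = O(\delta)$ in $C^\infty$, the coefficients have uniformly bounded $\partial_r$, giving an operator-norm bound $\|\partial_r \hat{D}\|\le C$ independent of $\delta$.

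The $p$-derivative is the main obstacle, because $v_{0,0,p}$ is itself a $\delta$-dependent gradient cylinder of length $\sim 1/\delta$. Here I would invoke the previous lemma, which supplies a $C^0$ Lipschitz bound $|v_p-v_{p'}|\le C|p-p'|$ with $C$ independent of $\delta$. To upgrade this to the operator norm bound $W\to W^{1,p,d}$, I would differentiate the defining ODE $\partial_s v_p = e^{\delta f(x_p)}\partial_a + \delta f'(x_p)\partial_x$ in $p$ and observe that $\partial_p v_p$ satisfies a linear ODE whose coefficients are $O(\delta)$; the same rescaling trick $s \mapsto s/\delta$ used in the previous lemma converts this into a standard Gronwall-type estimate uniform in $\delta$, yielding $C^1$ control of $v_p$ in $p$. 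Chaining with the uniform $C^k$ bounds on the ambient geometric data and the $L^\infty\to L^p$ embedding on weighted cylinders then gives $\|\partial_p \hat{D}\|\le C$.

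Finally, for the bound on $\partial_* \hat{\mathcal{F}}_v$: the nonlinear term $\hat{\mathcal{F}}_v$ is built from the same data $J_\delta$ and its derivatives evaluated at $v_{r,a,p}$, together with the quadratic structure of Remark \ref{quadratic_term}. The exact same three arguments (invariance in $a$; smoothness in $r$; Lipschitz-in-$p$ via the previous lemma) apply term by term, and the product rule together with the fact that in $W^{2,p,d}$ products behave well (as flagged in Remark \ref{remark_sobolev_exponents}) gives $\|\partial_*\hat{\mathcal{F}}_v\|\le C$. The part I expect to require the most care is verifying that the Lipschitz bound on $v_p$ actually upgrades to the $C^1$-in-$p$ control of derivatives $\partial_s v_p, \partial_t v_p$ needed to bound $\partial_p \hat{D}$ as an operator rather than just pointwise, but this reduces to a standard parameter-smoothness result for the slow ODE after the $s\mapsto s/\delta$ rescaling.
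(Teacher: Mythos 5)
Your proposal is correct and follows essentially the same route as the paper, which presents this proposition as a direct consequence of the parallel-transport formulas ($\hat{D}_{r,a,0} = \p_s + J_\dt(s+a,t+r)\p_t + S(s+a,t+r)$, giving vanishing $a$-derivative by $\bb{R}$-invariance and bounded $r$-derivative by smoothness of the ambient data) together with the preceding lemma giving the $\dt$-uniform Lipschitz bound $|v_p - v_{p'}|\le C|p-p'|$. Your extra care in upgrading that $C^0$ Lipschitz control to $C^1$-in-$p$ control of the coefficients via the rescaled ODE is a point the paper glosses over ("uniformly well behaved in $C^k$ norm"), and your treatment of it is consistent with the proof of that lemma.
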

The next step is to understand how $\hat{\phi}_{r,a,p}$ varies with respect to the variables $(r,a,p,\psi)$.

\begin{proposition}
Fix $(r,a,p,\psi)$, let $\hat{\phi}(r,a,p,\psi,\dt j)$ (which for the purpose of this proof we abbreviate $\hat{\phi}$) denote the solution of $\hat{\Theta}_v=0$ viewed as an element of $W$. We can take the derivative of $\p_* \hat{\phi}(r,a,p,\psi)$ where $*=r,a,p,\psi$. They satisfy the equations
\[
\|\p_* \hat{\phi}\| \leq C \epsilon,\quad  * = r,a,p,\psi,\dt j
\]
where the norm is measured in $W^{2,p,d}(v_0^*TM)$ for $*=r,a,p$ and in $Hom(W^{2,p,w}(u^*TM), W^{2,p,w}(v_0^*TM))$ for $* = \psi$, and $Hom(T\mcal{J},W^{2,p,w}(v_0^*TM))$ for $*=\dt j$. See Remark \ref{Frechet} for the interpretation of terms $\p_* \hat{\phi}$.
\end{proposition}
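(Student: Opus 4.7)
The plan is to obtain the derivatives of $\hat{\phi}$ by implicit differentiation of the equation $\hat{\Theta}_v(\hat{\phi};r,a,p,\psi,\dt j)=0$, viewed as a smoothly parametrized family of PDEs on the fixed space $W$. Since we already know from the previous proposition that the coefficient matrices of $\hat{D}_{r,a,p}$ and the nonlinearity $\hat{\mcal{F}}_v$ depend smoothly on $(r,a,p)$ with uniformly bounded derivatives, the full map
\[
F:W\times B_\ep(0)\longrightarrow W^{1,p,d}(v_0^*TM),\qquad F(\hat{\phi};r,a,p,\psi,\dt j):=\hat{\Theta}_v
\]
is smooth in every variable, with derivatives whose norms are uniformly bounded in $\dt$. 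The existence of $\p_*\hat{\phi}$ then follows from the implicit function theorem in Banach spaces, and the uniform $C\ep$ bound is obtained by bounding $\partial_* F|_{\hat{\phi}\text{ fixed}}$ and applying a uniformly bounded right inverse of $\partial_{\hat{\phi}}F$.

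First I would verify invertibility of $\partial_{\hat{\phi}}F$ at the solution $\hat{\phi}(r,a,p,\psi,\dt j)$. Writing
\[
\partial_{\hat\phi}F=\hat{D}_{r,a,p}+\partial_{\hat\phi}\hat{\mcal{F}}_v(r,a,p,\hat\phi,\psi),
\]
the first term has a uniformly bounded inverse by Proposition~\ref{uniform_bounded_inverse_gradient_flow} (applied with the translated weight profile, as noted in the remark following that proposition, to account for variable $(r,a)$). The second term is a bounded operator of norm $\leq C\|\hat\phi\|+C\|\psi\|\leq C\ep$ by the quadratic structure of $\hat{\mcal{F}}_v$ summarized in Remark~\ref{quadratic_term}. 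For $\ep$ sufficiently small the Neumann series argument gives a right inverse $\hat Q_{r,a,p,\psi}$ of $\partial_{\hat\phi}F$ with $\|\hat Q_{r,a,p,\psi}\|\leq 2\|Q\|$, uniformly in $\dt$ and in the parameters.

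Next I would bound $\partial_* F|_{\hat\phi}$ for each parameter. For $*=r,a,p$,
\[
\partial_* F=\bigl(\partial_*\hat{D}_{r,a,p}\bigr)\hat{\phi}+\partial_*\hat{\mcal{F}}_v(r,a,p,\hat\phi,\psi).
\]
The previous proposition gives $\|\partial_*\hat{D}_{r,a,p}\|\leq C$ and $\|\partial_*\hat{\mcal{F}}_v\|\leq C$; combined with $\|\hat\phi\|\leq C\ep/R$ and the fact that $\hat{\mcal{F}}_v$ vanishes to second order at $(\hat\phi,\psi)=0$ (so its derivative with respect to a background parameter still carries at least one factor of $\hat\phi$ or $\psi$), we obtain $\|\partial_* F\|\leq C\ep$. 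For $*=\psi$, the relevant derivative is $\partial_\psi F=\beta_u'(\cdot)+\partial_\psi\hat{\mcal{F}}_v$; the first piece has operator norm $\leq C/R$ and the second $\leq C\ep$ by the quadratic estimate. The $\dt j$ variable enters through the interior deformation of complex structure and is treated identically. Then the implicit function theorem identity
\[
\partial_*\hat\phi=-\hat Q_{r,a,p,\psi}\bigl(\partial_* F|_{\hat\phi}\bigr)
\]
delivers $\|\partial_*\hat\phi\|\leq C\ep$ for every variable, as claimed.

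The main obstacle I anticipate is purely bookkeeping: one must check that the parallel transport identifications used to realize $\hat{\phi}_{r,a,p}$ inside the single Banach space $W$ are compatible with differentiation, so that $\partial_* F|_{\hat\phi\text{ fixed}}$ really is a bounded operator and not merely a formal expression. This amounts to showing that the family $(r,a,p)\mapsto \hat{D}_{r,a,p}$ and $(r,a,p)\mapsto \hat{\mcal{F}}_v(r,a,p,\cdot,\cdot)$ is $C^1$ in the operator norm topology on $W$, which follows from the uniform $C^k$ bounds on the coefficient matrices established in the preceding lemma and the smooth dependence of the pregluing construction on $(r,a,p)$. Once this is in place, the argument above is just a standard application of the implicit function theorem with uniform estimates.
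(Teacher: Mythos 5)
Your proposal is correct and follows essentially the same route as the paper: the paper also differentiates the fixed-point/PDE form of $\hat{\Theta}_v=0$ in each parameter, uses the uniformly bounded inverse $\hat{Q}_{r,a,p}$ together with the bounds $\|\p_*\hat{D}_{r,a,p}\|\leq C$, $\|\p_*\hat{\mcal{F}}_v\|\leq C$ and $\|\hat\phi\|\leq C\ep/R$, and absorbs the $\p_{\hat\phi}\hat{\mcal{F}}_v\cdot\p_*\hat\phi$ terms (of size $C\ep\|\p_*\hat\phi\|$) back into the left-hand side — which is exactly your Neumann-series inversion of the full linearization, and matches the paper's Remark \ref{Frechet} interpretation $(1-\mcal{D}_{\hat\phi})^{-1}\mcal{D}_\psi$. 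Your bookkeeping concern about compatibility of the parallel-transport identifications with differentiation is likewise resolved the same way the paper does it, via the preceding lemma and proposition on smooth dependence of the coefficient matrices on $(r,a,p)$.
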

\begin{proof}
The fixed point equation looks like
\begin{equation*}
    \hat{\phi}= \hat{Q}_{r,a,p}( -\beta_u' \psi  - \hat{\mcal{F}_v}(\hat{\phi}, \psi,r,a,p)).
\end{equation*}
This is really a family of equations over $\psi,r,a,p$.

We first differentiate w.r.t. $\psi$. Note unlike differentiating w.r.t. $r,a,p$, the term $\frac{d\phi}{d\psi}$ is a Frechet derivative which should be viewed as a linear operator 
\[
\frac{d\hat{\phi}}{d\psi}: W^{2,p,w}(u^*TM) \longrightarrow W^{2,p,w}(v_0^*TM)
\]
and when we measure its norm it is the operator norm. When we write $\beta_u'$ below we mean the operator defined by multiplication by $\beta_u'$ etc. Differentiating both sides of the fixed point equation we have
\begin{equation*}
\left\|\frac{d\hat{\phi}}{d\psi}\right \| \leq C \left \|Q\circ \left(1/R+ \psi \frac{ d\hat{\phi}}{d \psi} +(\p_t \hat{\phi})\cdot \frac{d\hat{\phi}}{d\psi} + \hat{\phi} \p_t \frac{d \hat{\phi}}{d\psi}\right)\right\|
\end{equation*}
where the norms of both sides are operator norms. To make sense of $Q \hat{\phi} \p_t \frac{d \hat{\phi}}{d\psi}$ see Remark \ref{Frechet}. We recall the $C^1$ norm of is $\psi$ bounded above by $C\epsilon$ via the Sobolev embedding theorem, so the above equation can be rearranged to be
\[
(1-C\epsilon)\|d\hat{\phi}/d\psi \| \leq C(1/R)\leq \epsilon
\]
and this proves the claim for $*=\psi$.
We next consider the case $*=p$, the cases for $*=r,a$ are analogous. We consider the equation
\begin{equation*}
\hat{D}_{r,a,p} \hat{\phi} + \hat{\mcal{F}}_v(\psi,r,a,p, \hat{\phi}) +\beta_u'\psi =0
\end{equation*}
and differentiate both sides w.r.t. $p$:
\begin{equation}\label{pderivative}
    \frac{d\hat{D}_{r,a,p}}{dp} \hat{\phi} + \hat{D}_{r,a,p}\frac{d\hat{\phi}}{dp} = -\frac{d}{dp}\mcal{ F}_v(r,a,p,\psi,\hat{\phi
    })
\end{equation}
rearranging to get
\begin{equation*}
    \frac{d\hat{\phi}}{dp} =\hat{Q}_{r,a,p}\left[-\frac{d\hat{D}_{r,a,p}}{dp} \hat{\phi} - \frac{d}{dp}\mcal{F} (r,a,p,\psi,\hat{\phi})\right].
\end{equation*}
The only new thing we need to estimate is $\frac{d}{dp}\hat{\mcal{F}} (r,a,p,\psi,\hat{\phi})$
which we calculate as
\begin{align*}
&\left\|\hat{Q}_{r,a,p} \circ \frac{d}{dp} \hat{\mcal{F}}_v(r,a, p,\psi, \hat{\phi})\right\| \\
&=\left \| \hat{Q}_{r,a,p} \circ \left\{\p_{\hat{\phi}}g (\hat{\phi},\psi) \frac{d\hat{\phi}}{dp} +\p_{\hat{\phi}}h(\hat{\phi},\psi)\p_t\hat{\phi} \frac{d\hat{\phi}}{dp} + h(\hat{\phi},\psi )\p_t \frac{d\hat{\phi}}{dp} + \p_p g(\hat{\phi},\psi) + \p_p h(\hat{\phi},\psi) \p_t (\hat{\phi} )\right\} \right\|\\
& \leq C(\|\psi\| \cdot \|\hat{\phi}\| + \|\hat{\phi}\|^2 ) +\epsilon \left\|\frac{d\hat{\phi}} {dp}\right\|
\end{align*}
where $\p_p h$ and $\p_pg$ refer to how the functions $h$ and $g$ themselves depend on $p$. The norms above are all in $W^{2,p,d}$ norm in the domain.
Combining the above inequalities we conclude
\[
\|d\hat{\phi}/dp\| \leq C\epsilon.
\]
The same proof works for $*=r,a,\dt j$.

\end{proof}

\begin{remark}\label{Frechet}
As we mentioned in the course of the proof, the attentive reader might feel uneasy about the appearance of the term $Q\circ \p_t \frac{d\hat{\phi}}{d\psi}$. The proper way to take this Frechet derivative is explained in Proposition 5.6 in \cite{obs2}. The idea is to take the fixed point equation
\[
 \hat{\phi}= \hat{Q}_{r,a,p}(- \beta_u' \psi  - \hat{\mcal{F}}(r,a,p,\hat{\phi}, \psi)).
\]
Let $\mcal{D}_\psi$ and $\mcal{D}_{\hat{\phi}}$ denote the derivative of the right hand side of the above equation with respect to $\psi,\hat{\phi}$ respectively, then the derivative $\frac{d\hat{\phi}}{d\psi}$ is defined to be $(1-\mcal{D}_{\hat{\phi}})^{-1} \mcal{D}_\psi$, and sends $W^{2,p,w}(u^*TM) \rightarrow W^{2,p,w}(v_0^*TM)$. In light of this the composition $Q\circ \p_t \frac{d\hat{\phi}}{d\psi}$ is not at all problematic, and our estimates of norms continue to hold.

We take our approach to things because in the case of differentiation with respect to the parameters $*=r,a,p$ (say for $p$ for definiteness), the resulting derivative is a linear map from $\mathbb{R} \rightarrow W^{2,p,w}(v_0^*TM)$, and hence has a right to be viewed as an honest function in $W^{2,p,w}(v_0^*TM)$. It further satisfies an elliptic PDE as in Equation \ref{pderivative}, which gives us estimates on its norms and exponential decay properties, which will be essential for our later purposes.
\end{remark}

\begin{remark}
We can actually show $\frac{d\hat{\phi}}{dp}$ (and likewise for $*=r,a$) belongs in $W^{3,p,d}(v_0^*TM)$ with the help of elliptic regularity. If we recall the form of $\Theta_v=0$ written in Equation \ref{theta_v_elliptic}, we differentiate with respect to $p$ to see $\frac{d\hat{\phi}}{dp}$ weakly satisfies an equation of the form
\[
\p_s \frac{d\hat{\phi}}{dp} + J_\dt(v_{r,a,p} +\beta_{ug}\psi+ \beta_{[1;R-2,\infty]}\beta_v \hat{\phi}) \p_t \frac{d\hat{\phi}}{dp} + \mcal{F}(\hat{\phi},\p_t\hat{\phi}, \frac{d\hat{\phi}}{dp},\psi)=0
\]
where $\mcal{F}$ is a smooth function. Using elliptic regularity we see that $\frac{d\hat{\phi}}{dp}$ is in $W^{3,p,d}(v_0^*TM)$ with norm bounded above by $C\ep$.
\end{remark}
Finally we turn our attention to solving $\Theta_u(\psi,\phi,r,a,p)=0$. 
\begin{proposition}
The equation $\Theta_u=0$ has a solution.
\end{proposition}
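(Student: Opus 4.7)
The plan is to convert $\Theta_u=0$ into a fixed-point equation via the right inverse $Q_u$ of $D_J$, and then verify the hypotheses of the Banach contraction principle on a small ball in the parameter space. Write $\Theta_u(\psi,r,a,p,\dt j) = D_J(\psi,r,a,p,\dt j) + N(\psi,r,a,p,\dt j)$, where $N$ bundles together the terms $\beta_v'\phi + \mcal{F}_u + \mcal{E}_u$, with $\phi = \phi(r,a,p,\psi,\dt j)$ the unique small solution to $\Theta_v=0$ from the previous proposition. Since $D_J$ is surjective of index $1$ with right inverse $Q_u$, any solution of $(\psi,r,a,p,\dt j) = -Q_u\circ N(\psi,r,a,p,\dt j) + k$ with $k\in \ker D_J$ gives a solution of $\Theta_u=0$. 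Fix a transverse slice to $\ker D_J$ and look for a fixed point of $\Phi(\psi,r,a,p,\dt j) := -Q_u\circ N(\psi,r,a,p,\dt j)$ in the $\ep$-ball of that slice.

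For self-mapping, I would estimate each piece. The error term satisfies $\|Q_u \circ \mcal{E}_u\| \leq C(\dt + e^{-DR})(1+|(r,a,p)|)$, which is bounded by a positive power of $\dt$ using $R=\frac{1}{5d}\log(1/\dt)$, hence much smaller than $\ep$ for $\dt$ sufficiently small. The quadratic term obeys $\|Q_u\circ\mcal{F}_u\| \leq C\ep^2$ by Remark~\ref{quadratic_term} together with the Sobolev embedding that bounds the $C^0$ norms of $\psi,\phi$ by their $W^{2,p,d}$ norms. The term $\beta_v'\phi$ is supported on an interval of length $O(R)$ and, since $\|\phi\|_{W^{2,p,d}(v_{r,a,p}^*TM)} \leq C\ep/R$ by the previous proposition, contributes at most $C\ep/R$; here one must use the identification of $v_{r,a,p}^*TM$ with $W$ via parallel transport to view $\beta_v'\phi$ cleanly as a compactly supported vector field along $u$. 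Putting these together, $\|\Phi(\psi,r,a,p,\dt j)\| \leq \ep$ for all $\dt$ small enough.

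For the contraction property, I would use the Lipschitz estimates on $\mcal{F}_u$ inherited from its pointwise form $g(\cdot) + h(\cdot)\p_t\psi$ together with the fact, established in the last proposition before the statement, that the maps $(\psi,r,a,p,\dt j)\mapsto \hat\phi(r,a,p,\psi,\dt j)$ and $(\psi,r,a,p,\dt j)\mapsto \p_t\hat\phi$ are Lipschitz with constant $\leq C\ep$ into $W^{2,p,d}(v_0^*TM)$ (the elliptic regularity remark upgrading $\hat\phi$ to $W^{3,p,d}$ is what allows us to take $\p_t$ of this derivative without losing control). Since $\mcal{E}_u$ depends on $(\psi,r,a,p,\dt j)$ only through smooth coefficients times $\psi$ and $\p_t\psi$ with a small overall factor of $\dt+e^{-DR}+|(r,a,p)|$, and $\mcal{F}_u$ is quadratic in $(\psi,\phi,(r,a,p))$, one obtains $\|\Phi(\psi_1,\ldots) - \Phi(\psi_2,\ldots)\| \leq C\ep \|(\psi_1,\ldots) - (\psi_2,\ldots)\|$, and picking $\ep$ (and then $\dt$) small gives contraction.

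The main obstacle, and the reason all of the preceding bookkeeping was necessary, is the coupling through $\phi$: even though $\phi$ is ``small'' in $W^{2,p,d}(v_{r,a,p}^*TM)$, it lives on a different domain for every choice of $(r,a,p)$, and differentiating the composite map through the $\Theta_v=0$ fixed point equation requires the parallel-transport identification and the uniform bounds $\|\p_*\hat D_{r,a,p}\|\leq C$, $\|\p_*\hat{\mcal F}_v\|\leq C$ set up above. Once these ingredients are in hand, the application of the contraction mapping principle is routine, and the resulting fixed point yields the desired $J_\dt$-holomorphic deformation $u_\dt = u_{r,a,p} + \beta_u\psi + \beta_v\phi$ with $(j_0+\dt j)$.
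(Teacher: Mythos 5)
Your proposal is correct and follows essentially the same route as the paper: rewrite $\Theta_u=0$ as a fixed-point equation for $Q_u$ applied to the nonlinear and error terms, verify self-mapping of the $\ep$-ball from the smallness of $\mcal{E}_u$, the quadratic bounds on $\mcal{F}_u$ (and the interior quadratic term), and $\|\phi\|\leq C\ep/R$, and verify contraction from the estimates $\|\p_*\hat{\phi}\|\leq C\ep$ established via the parallel-transport identification. The extra detail you give on the individual estimates is consistent with what the paper invokes implicitly.
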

\begin{proof}
Recall we view $\Theta_u(r,a,p,\psi, \phi(r,a,p,\psi))$ as an equation with independent variables $(r,a,p,\psi,\dt j) \in W^{2,p,d}(u^*TM) \oplus V_\Gamma \oplus T\mcal{J}$, and we have the surjective linearized operator
\[
D_J: W^{2,p,d}(u^*TM) \oplus V_\Gamma \oplus T\mcal{J} \longrightarrow W^{1,p,d}(\overline{\op{Hom}}(T\dot{\Sigma}, u^*TM)).
\]
The equation $\Theta_u=0$ over the entire domain $\dot{\Sigma}$ can be written to be of the form
\begin{equation}
    D_J (r,a,p,\psi,\phi) + \mcal{F}_u +\mcal{E}_u + \mcal{F}_{int}(\psi,\dt j)=0
\end{equation}
where $ \mcal{F}_u$ is supported in the cylindrical neighborhood $[0,\infty)\times S^1$ and the term $\mcal{F}_{int}$ is quadratic in $\psi,\dt j$ and supported in $\dot{\Sigma}\setminus [0,\infty)\times S^1$ and $\mcal{E}_u$ is described by Proposition \ref{proposition_equations} and supported in the cylindrical neck. We let $Q_u$ denote a right inverse to $D_J$. Then to find a solution to $\Theta_u$ it suffices to find a fixed point of the map
\[
I:(r,a,p,\psi, \dt j)\longrightarrow Q_u(-\mcal{F}_u-\mcal{E}_u-\mcal{F}_{int}).
\]
Let $B_\ep$ denote the $\epsilon$ ball in $W^{2,p,d}(u^*TM) \oplus V_\Gamma \oplus T\mcal{J}$. It follows from the fact $\mcal{F}_{int}$ is quadratic, our previous derived expressions for $\mcal{F}_u$, $\mcal{E}_u$, and our size estimate $\|\phi\|\leq C\epsilon$ that for $\dt <<\epsilon$ small enough, $I$ maps $B_\ep$ to itself. It further follows from the fact that $\|\p_*\phi \|\leq C\ep$ and the explicit expressions of $\mcal{F}_u,\mcal{F}_{int}, \mcal{E}_u$ that $I$ is a contraction mapping, and hence a solution to $\Theta_u=0$ exists.
\end{proof}
\begin{remark}
The contraction mapping principle actually says the fixed point of $I$ is unique. However this does not mean the solution to $\Theta_u$ is unique. If $D_J$ is not injective (which it never is if the curve is nontrivial due to translations in the symplectization direction, and if the curve is a free trivial cylinder there is also a global translation along the Morse-Bott torus), we could have chosen a different right inverse $Q'$ which leads to a (presumably) different solution of $\Theta_u$. We leave discussions of uniqueness of gluing to after when we glued together general cascades.
\end{remark}
We note that even though the previous construction was only for one end, the construction works for arbitrary number of free ends.
\begin{corollary}
Given a transversely cut out $J$-holomorphic curve $u$ with free-end Morse-Bott asymptotics, the ends can be glued with semi-infinite gradient trajectories into $J_\delta$-holomorphic curves.
\end{corollary}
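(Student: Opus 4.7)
The plan is to reduce the multi-end case to the single-end case by exploiting the fact that distinct punctures have disjoint cylindrical neighborhoods, so the analysis at each end essentially decouples. Let $u:\dot{\Sigma}\to M$ be a transversely cut out rigid $J$-holomorphic curve with positive free ends $p_1^+,\ldots,p_{N_+}^+$ and negative free ends $p_1^-,\ldots,p_{N_-}^-$. Around each puncture $p_i^\pm$ fix cylindrical coordinates $(s_i,t_i)\in[0,\pm\infty)\times S^1$ on pairwise disjoint neighborhoods, and assign an independent triple of asymptotic vectors $(r_i^\pm,a_i^\pm,p_i^\pm)\in V_i^\pm$. The total asymptotic space becomes $V_\Gamma=\bigoplus_i V_i^\pm$, of dimension $3(N_++N_-)$, and the linearized operator
\[
D_J: W^{2,p,d}(u^*TM)\oplus V_\Gamma \oplus T\mcal{J} \longrightarrow W^{1,p,d}(\overline{\op{Hom}}(T\dot{\Sigma},u^*TM))
\]
remains surjective by the transversality hypothesis, with a bounded right inverse $Q_u$.

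At each end $p_i^\pm$ I would perform exactly the pregluing of the single-end construction: cut $u+(r_i^\pm,a_i^\pm,p_i^\pm)$ at $s_i=\pm R$ and glue in the lifted gradient trajectory $v_{r_i^\pm,a_i^\pm,p_i^\pm}$ (flipping all signs appropriately at negative ends, so the exponential weight becomes $e^{-ds_i}$ there). Because these preglued regions have disjoint support, the $\db_{J_\dt}$ error splits as a sum of contributions, one per end, each estimated exactly as in Proposition~\ref{proposition_equations}. Correspondingly I would introduce a family of equations $\{\Theta_{v_i^\pm}=0\}_i$ together with a single ``global'' equation $\Theta_u=0$ on $\dot\Sigma$, where $\Theta_u$ now contains a contribution $\beta_v'\phi_i^\pm$ from each end. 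The key observation is that the $\Theta_{v_i^\pm}$ equations are \emph{mutually decoupled}: each depends only on $\psi$, its own asymptotic triple, and its own $\phi_i^\pm$, since the cut-off functions in different necks have disjoint supports.

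With this setup I would run the same two-stage contraction argument. First, for fixed $(\psi,\{r_i^\pm,a_i^\pm,p_i^\pm\},\dt j)$ in an $\ep$-ball, I would solve each $\Theta_{v_i^\pm}=0$ independently using Proposition~\ref{uniform_bounded_inverse_gradient_flow}, obtaining unique solutions $\phi_i^\pm$ in the respective weighted Sobolev spaces with $\|\phi_i^\pm\|\leq C\ep/R$. The parametric dependence lemmas from the single-end case apply verbatim to each $\phi_i^\pm$, via the same identification $PT:W^{2,p,d}(v_{r_i^\pm,a_i^\pm,p_i^\pm}^*TM)\to W$, giving $\|\p_*\hat\phi_i^\pm\|\leq C\ep$ for $*\in\{r_i^\pm,a_i^\pm,p_i^\pm,\psi,\dt j\}$, and $\|\p_*\hat\phi_i^\pm\|=0$ whenever $*$ is an asymptotic parameter attached to a \emph{different} puncture. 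Substituting back, $\Theta_u=0$ becomes a fixed-point equation on the $\ep$-ball in $W^{2,p,d}(u^*TM)\oplus V_\Gamma\oplus T\mcal{J}$ of the form
\[
I(\psi,\{r_i^\pm,a_i^\pm,p_i^\pm\},\dt j)=Q_u\bigl(-\mcal{F}_u-\mcal{E}_u-\mcal{F}_{\text{int}}\bigr),
\]
where $\mcal{F}_u$ is now a sum over ends and each summand is controlled exactly as before.

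The contraction mapping principle then applies because: (i) the pregluing error $\mcal{E}_u$ is still bounded by $C(\dt+e^{-DR})$ in the weighted norm at every end; (ii) the quadratic terms $\mcal{F}_u,\mcal{F}_{\text{int}}$ are each of size $O(\ep^2)$; and (iii) the derivatives of the $\phi_i^\pm$ with respect to the independent variables remain $O(\ep)$. Since the number of ends is finite, all estimates combine into the same bounds as in the single-end case after possibly enlarging the constants $C$. The main thing to be careful about is the bookkeeping for negative ends, where one must verify that reversing the sign of the exponential weight and of the parameter $s$ does not affect the Fredholm properties of $D_{J_\dt}$ along $v$ (the remark following Proposition~\ref{uniform_bounded_inverse_gradient_flow} already covers this). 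No genuinely new analytic difficulty arises; the principal obstacle is purely notational.
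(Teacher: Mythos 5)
Your proposal is correct and follows the same route the paper takes: the paper simply asserts the corollary on the grounds that the single-end construction decouples across the finitely many disjoint cylindrical necks, which is precisely the reduction you spell out (independent asymptotic triples, mutually decoupled $\Theta_{v_i^\pm}$ equations solved first, then one global contraction for $\Theta_u$ using the surjectivity of $D_J$ on the enlarged $V_\Gamma$). Your extra care about the sign conventions at negative ends is consistent with the paper's remark that the exponential weight $e^{\pm ds}$ can be used on either side without affecting the uniformly bounded inverse.
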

\begin{remark}
In the above we only glued gradient flows to free ends. We could have also glued in trivial cylinders to fixed ends. The only difference is instead of $V_\Gamma$ being spanned by $r,a,p$ it is only spanned by $r,a$. The rest of the argument follows exactly the same way.
\end{remark}

\section{Exponential decay for solution of \texorpdfstring{$\Theta_v$}{Theta}} \label{expdecay}
Consider, in notation of previous section, $\Theta_v=0$ for $s>3R$, then it is an equation of the form
\begin{equation*}
    D_v \phi + \mcal{F}(\phi)=0
\end{equation*}
where $D_v$ denotes the linearization of $\p_{J_\dt}$ operator, and $\mcal{F}$ we loosely think of as an quadratic expression in $\phi$ and $\p_t\phi$, see Remark \ref{quadratic_term}. 
In this section we study the properties of this solution, in particular, it exhibits exponential decay as $s\rightarrow \infty$ for $\dt$ sufficiently small (exponential decay beyond what is imposed by the exponential weight $e^{ds}$). This property will be crucial for our gluing construction for multiple level cascades. 
The idea why $\phi$ undergoes exponential decay is the following: for $\delta>0$ sufficiently small, the gradient flow cylinder $v_{r,a,p}$ flows so slowly that locally the geometry resembles that of a trivial cylinder, and the usual proof that $J$-holomorphic curve decays exponentially along asymptotic ends can be applied.\\
The section is organized as follows: We first remove the exponential weights from our Sobolev spaces and work instead in $W^{2,p}(v^*TM)$. Next we follow the strategy of \cite{Hofer2} Section 2(this strategy as far as we know also dates back at least to \cite{Floer}, see Section 4, and is used frequently in various kinds of Floer homologies to prove exponential convergence), using second derivative estimates to derive the exponential decay, and finally we show the various derivatives of $\phi$ also decay exponentially.

\subsection{Exponential decay for solutions of \texorpdfstring{$\Theta_v$}{Theta}}
We begin by studying the exponential decay of $\phi$, then move on to study the exponential decay of its derivatives. First some setup that will be used for both cases.
\subsubsection{Change of coordinates and setup}
We study $\Theta_v=0$ for $s>3R$, which takes the form
\begin{equation*}
    D_v \phi + \mcal{F}_v(\phi)=0.
\end{equation*}
WLOG we assume $(r,a,p)=(0,0,0)$ and write $v$ instead of $v_{r,a,p}$. It will be clear our analysis holds for any value of $(r,a,p)$ and later we will identify sections of $v_{r,a,p}^*TM$ with sections of $v_0^*TM$ via parallel transport.

Recall $\mcal{F}_v(\phi)$ takes the form
\begin{equation*}
\mcal{F}_v (p,\phi)= g(p,\phi) \phi + h(p,\phi)\p_t\phi.
\end{equation*}
Here we have made explicit the dependence of this term on the $p$, which controls the background geometry. It also implicitly depends on $(r,a)$, which we suppress from our notation. 
Here the functions satisfy (uniformly in $p$) $\|g(\phi)\|_{C^0}\leq \|\phi\|_{C^0}$, $\|\nabla g(\phi)\|_{C^0} \leq \|\nabla \phi\|_{C^0}$.
For $h(\phi)$ we have $\|h(\phi)\|_{C^0} \leq \|\phi\|_{C^0}$, $\|\nabla h(\phi)\|_{C^0} \leq \|\nabla \phi\|_{C^0}$. These bounds will be important to us in the subsequent estimates.

Next we change variables to $W^{2,p}(v^*TM)$ i.e. by conjugation we remove the exponential weights on our space. We use the following diagram:
\begin{equation*}
\begin{tikzcd}
W^{2,p}(v^*TM) \arrow[r,"\Theta_v'"] \arrow[d, "e^{-d(s)}"] & W^{2,p}(v^*TM) \\
W^{2,p,d}(v^*TM) \arrow[r,"\Theta_v"] & W^{2,p,d}(v^*TM) \arrow[u, "e^{d(s)}"].
\end{tikzcd}
\end{equation*}
We use this to define the operator $\Theta_v'$. In terms of actual equations it looks like this: if $\zeta$ is the corresponding element of $\phi$ without exponential decay (i.e. $\zeta = e^{ds}\phi$), then $\Theta_v'$ is the same as
\begin{equation*}
    D_{J_\dt}' \zeta + e^{d(s)} \mcal{F}_v(e^{-d(s)}\zeta)=0
\end{equation*}
where $D_{J_\dt}' = e^{ds}D_{J_\dt}e^{-ds}$.
We decompose $D'_{J_\dt}$ as follows
\begin{equation*}
    D'_{J_\dt} = d/ds - A(s)- \delta A
\end{equation*}
where by $A(s) $ we denote self adjoint operator associated with linearizing $\db_J$ along Morse- Bott orbit plus $d$ due to the exponential conjugation
\begin{equation*}
    A = -J_0 \partial_t - S+d.
\end{equation*}
Consequently 
the eigenvalues of $A(s)$ are bounded away from zero, say by a factor of $\lambda >0$.
\begin{remark}
We will often change the value of $\lambda$ from one line to another, as long as it is bounded away from zero. The choice of $\lambda$ above depends somewhat on the choice of $d$, because the operator $-J_0 \frac{d}{dt} -S $ has zero as an eigenvalue. With more careful estimates we can make the decay rate only depend on local geometry, but this won't be necessary for us for purpose of gluing.

In the Section \ref{surj} we also use a $\lambda$ to describe exponential decay behaviour of $J_\dt$-holomorphic curve near a Morse-Bott torus, there the $\lambda$ is genuinely independent of $d$ and only dependent on the local geometry, as will be apparent from our proofs.
\end{remark}

$\delta A$ is the perturbed correction to $A$ due to the fact we are using $J_\dt$ instead of $J$. It has the form 
\begin{equation*}
    \delta A = \delta M d/dt + \delta N
\end{equation*}
where we use $M,N$ to denote matrices whose entries are uniformly bounded in $C^k$ (by abuse of notation we will later use them to denote other matrices
where each of the coefficient terms is uniformly bounded).

\subsubsection{Exponential decay estimates}
Let us define
\begin{equation*}
    g(s) := \int_{S^1} \langle \zeta(s,t), \zeta(s,t) \rangle dt.
\end{equation*}
We shall show:
\begin{proposition} \label{prop_exp_decay}
\begin{equation}
    g''(s) \geq \lambda^2 g.
\end{equation}
\end{proposition}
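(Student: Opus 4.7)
The plan is to differentiate $g(s)$ twice, substitute the evolutionary form of $\Theta_v'\zeta=0$, and exploit the self-adjointness of the leading operator $A(s)$ on $L^{2}(S^{1})$ to produce a dominant term $\int_{S^{1}}|A\zeta|^{2}\,dt\geq \lambda^{2}g$. All remaining terms will turn out to be of size $O\bigl((\delta+\|\phi\|_{C^{0}})\,g\bigr)$ and can be absorbed for $\delta$ sufficiently small (noting that $\|\phi\|_{C^{0}}$ is itself small by the earlier fixed-point bound). This is the standard Floer-theoretic second-derivative argument going back to \cite{Floer} and \cite{Hofer2}.

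First I would rewrite $\Theta_v'\zeta=0$ as the evolution equation
\begin{equation*}
\zeta_{s}=A(s)\zeta+F(s,\zeta),\qquad F(s,\zeta):=\delta A\,\zeta-e^{ds}\mcal{F}_{v}\!\left(e^{-ds}\zeta\right),
\end{equation*}
and record three structural facts. First, $A(s)$ is self-adjoint with spectrum bounded away from $0$ by $\lambda$, so $\|A\zeta\|_{L^{2}(S^{1})}^{2}\geq\lambda^{2}g$. Second, $\|A_{s}(s)\|=O(\delta)$, because $A$ depends on $s$ only through the slow gradient trajectory $v$, whose $s$-speed is $O(\delta)$. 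Third, the error $F$ satisfies
\begin{equation*}
\|F\|_{L^{2}(S^{1})}\leq C\delta\|\zeta\|_{H^{1}(S^{1})}+C\|\phi\|_{C^{0}}\bigl(\|\zeta\|_{L^{2}(S^{1})}+\|\partial_{t}\zeta\|_{L^{2}(S^{1})}\bigr),
\end{equation*}
using the pointwise quadratic estimate on $\mcal{F}_{v}$ (with $\phi=e^{-ds}\zeta$, so the exponential weights cancel in the $\zeta$ factors) and Sobolev embedding applied to $\phi\in W^{2,p,d}$.

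Next, I differentiate:
\begin{equation*}
g''(s)=2\int_{S^{1}}|\zeta_{s}|^{2}\,dt+2\int_{S^{1}}\langle\zeta,\zeta_{ss}\rangle\,dt,
\end{equation*}
substitute $\zeta_{ss}=A^{2}\zeta+A_{s}\zeta+AF+F_{s}$ and $|\zeta_{s}|^{2}=|A\zeta|^{2}+2\langle A\zeta,F\rangle+|F|^{2}$, and use self-adjointness $\int\langle\zeta,A^{2}\zeta\rangle=\int|A\zeta|^{2}$ to obtain
\begin{equation*}
g''(s)=4\int_{S^{1}}|A\zeta|^{2}\,dt+2\int_{S^{1}}\langle\zeta,A_{s}\zeta\rangle\,dt+4\int_{S^{1}}\langle A\zeta,F\rangle\,dt+2\int_{S^{1}}\langle\zeta,F_{s}\rangle\,dt+2\int_{S^{1}}|F|^{2}\,dt.
\end{equation*}
The first term is $\geq 4\lambda^{2}g$, and Cauchy--Schwarz together with the three structural facts bounds each of the remaining four terms by $C(\delta+\|\phi\|_{C^{0}})\,g$. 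Taking $\delta$ small enough, these errors are $\leq 3\lambda^{2}g$, so $g''(s)\geq\lambda^{2}g$.

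The one term requiring care is $\int\langle\zeta,F_{s}\rangle\,dt$, which naively involves one more $s$-derivative of $\zeta$ than $\zeta\in W^{2,p,d}$ provides. This is the main bookkeeping obstacle, and it is handled by the $W^{3,p,d}$ upgrade for $\phi$ (hence $\zeta$) obtained via elliptic regularity at the end of Section~\ref{semiglue}; alternatively, whenever $\zeta_{s}$ appears inside $F_{s}$ one can re-substitute $\zeta_{s}=A\zeta+F$ and reduce to quantities already estimated. Aside from this, the argument is a direct spectral estimate.
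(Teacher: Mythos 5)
Your proof is correct and follows essentially the same route as the paper's: the same decomposition $g''=2\int|\zeta_s|^2+2\int\langle\zeta,\zeta_{ss}\rangle$, the same substitution of the evolution form of the equation with the spectral gap of the self-adjoint leading operator producing the dominant $4\int|A\zeta|^2\geq 4\lambda^2 g$, and the same absorption of the $O(\delta)$ and quadratic errors (including the re-substitution of $\zeta_s=A\zeta+F$ / elliptic-regularity fix for the $F_s$ term, which is exactly how the paper handles its T3 term). The only cosmetic differences are that you package $\delta A\zeta$ and the nonlinearity into a single error $F$ where the paper enumerates the cross terms one by one, and that some of your error terms are really bounded by $C(\delta+\epsilon)(\|A\zeta\|^2+g)$ rather than $C(\delta+\epsilon)g$ — but they are absorbed into the dominant term in the same way.
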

This proposition combined with the following proposition, will imply exponential decay:
\begin{proposition}[Lemma 8.9.4 in \cite{AudinDamian}]
If $g''(s) \geq \lambda ^2 g(s)$ for $s>s_0$, then either:
\begin{itemize}
    \item $g(s)\leq g(s_0) e^{-\lambda (s-s_0)}$,
    \item $g(s)\rightarrow \infty$ as $s\rightarrow \infty$.
\end{itemize}
\end{proposition}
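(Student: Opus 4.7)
The plan is to convert the second-order differential inequality $g'' \geq \lambda^2 g$ into a first-order monotonicity statement via a well-chosen auxiliary function. I would introduce
\begin{equation*}
\eta(s) \;:=\; g'(s) + \lambda\, g(s),
\end{equation*}
so that a direct computation gives
\begin{equation*}
\eta'(s) \;=\; g''(s) + \lambda\, g'(s) \;\geq\; \lambda^2 g(s) + \lambda\, g'(s) \;=\; \lambda\, \eta(s).
\end{equation*}
Rewriting this as $\bigl(\eta(s) e^{-\lambda s}\bigr)' \geq 0$ shows that $\eta(s)e^{-\lambda s}$ is non-decreasing on $[s_0,\infty)$. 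In particular, for any $s_1 \geq s_0$ with $\eta(s_1) > 0$ we have the pointwise lower bound $\eta(s) \geq \eta(s_1) e^{\lambda(s-s_1)}$ for all $s \geq s_1$.

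Next I would feed this back into $g$ using the identity $\bigl(g(s) e^{\lambda s}\bigr)' = \eta(s) e^{\lambda s}$. Integrating the inequality above from $s_1$ to $s$ and rearranging yields
\begin{equation*}
g(s) \;\geq\; g(s_1) e^{-\lambda(s-s_1)} + \frac{\eta(s_1)}{2\lambda}\Bigl(e^{\lambda(s-s_1)} - e^{-\lambda(s-s_1)}\Bigr),
\end{equation*}
whose right-hand side tends to $+\infty$ as $s \to \infty$, because $\eta(s_1) > 0$. Consequently, if $g$ does not blow up then $\eta(s) \leq 0$ for every $s \geq s_0$.

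To finish, I would observe that $\eta(s) \leq 0$ is exactly $\bigl(g(s) e^{\lambda s}\bigr)' \leq 0$, so that $g(s) e^{\lambda s}$ is non-increasing on $[s_0,\infty)$, giving precisely
\begin{equation*}
g(s) \;\leq\; g(s_0)\, e^{-\lambda(s-s_0)}.
\end{equation*}
There is no serious obstacle here: this is a standard ODE comparison, and the only mildly nonobvious ingredient is the choice of barrier $\eta = g' + \lambda g$, which is dictated by the desired decay rate (the companion function $g' - \lambda g$ governs the growing mode and is not needed in this direction). Note that $g \geq 0$ holds automatically since $g(s) = \int_{S^1}\langle \zeta,\zeta\rangle\, dt$, though this positivity is not actually used in the argument above.
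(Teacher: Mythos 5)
Your proof is correct and complete: the barrier $\eta = g' + \lambda g$ satisfies $(\eta e^{-\lambda s})' \geq 0$, so either $\eta$ becomes positive somewhere (forcing $g \to \infty$ via the integrated lower bound) or $\eta \leq 0$ everywhere, which is exactly the non-increase of $g e^{\lambda s}$ and hence the decay estimate. The paper does not prove this statement itself but cites Lemma 8.9.4 of Audin--Damian, whose argument is essentially the one you give, so there is nothing to add.
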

\begin{proof} [Proof of Proposition \ref{prop_exp_decay}]
\begin{align*}
    g''(s) &= 2(\langle \zeta_s, \zeta_s \rangle + \langle \zeta_{ss} ,\zeta \rangle)
\end{align*}
where when we write $\la \cdot,\cdot \ra $ we implicitly take the $S^1$ integral over $t$. The proof is long and we separate it into steps.\\
\textbf{Step 1}
Let us first determine $\langle \zeta_s, \zeta_s \rangle$.
This is given by 
\begin{align*}
    \langle \zeta_s, \zeta_s  \rangle =& \langle (A+\delta A) \zeta + e^{d(s)} \mcal{F}_v(e^{-d(s)}\zeta), (A+\delta A) \zeta + e^{d(s)} \mcal{F}_v(e^{-d(s)}\zeta) \rangle\\
    =&\langle A\zeta, A\zeta \rangle \\
    &+ \langle A \zeta, \delta A \zeta \rangle\\
    &+ \langle A \zeta, e^{d(s)} \mcal{F}_v(e^{-d(s)}\zeta) \rangle\\
    &+ \langle \delta A \zeta, \delta A \zeta \rangle\\
    &+ \langle \delta A \zeta, e^{d(s)} \mcal{F}_v(e^{-w(s)}\zeta)\rangle\\
    & +\langle e^{d(s)} \mcal{F}(e^{-d(s)}\zeta),e^{d(s)} \mcal{F}(e^{-d(s)}\zeta)\rangle.
\end{align*}
We recall we are not tracking the signs in front of the term $\mcal{F}_v$ since it will eventually be upperbounded.
We look at the \textbf{six} terms, which we label by $\textbf{T1-T6}$, in the above expression one by one, we use \textbf{bold} to remind the reader which term we are referring to since the computation gets very long. Also when we upper bound terms from \textbf{T1-T6} we are implicitly taking the absolute value of terms. We shall keep this convention for all proofs involving exponential decay.

\textbf{T1} gives
\begin{equation*}
    \langle A\zeta, A\zeta \rangle \geq \lambda^2 \langle \zeta, \zeta \rangle.
\end{equation*}
This is because if we expand $\zeta =\sum a_ne_n$, with the collection $\{e_n(s)\}$ of eigenbasis for $A(s)$, we have $Aa_ne_n = \sum \lambda_na_ne_n$. We see this is greater than $\lambda^2 \la \zeta,\zeta \ra $.

\textbf{T2} is given by
\begin{align*}
    \langle A \zeta, \delta A \zeta \rangle &= \langle A \zeta, \delta (MA+N) \zeta \rangle\\
    &= \delta \langle A\zeta, MA \zeta \rangle + \delta \langle A\zeta, N \zeta \rangle.
\end{align*}
The first term above is bounded in absolute value by 
\begin{equation*}
    |\delta \langle A\zeta, MA \zeta \rangle| \leq \delta (\|A\zeta\|^2+\|MA\zeta\|^2)\leq  C\delta \langle A\zeta, A\zeta \rangle.
\end{equation*}
The second term satisfies
\begin{align*}
   | \delta \langle A\zeta, N\zeta \rangle |\leq \delta (\langle A \zeta, A\zeta \rangle +\langle N\zeta, N\zeta \rangle).
\end{align*}
Hence for the \textbf{T2} term we have the overall bound by
\[
 \langle A \zeta, \delta A \zeta \rangle \leq C\dt (\la A\zeta,A\zeta \ra +\la \zeta,\zeta \ra).
\]
\textbf{T3} satisfies
\begin{align*}
    &|\langle A\zeta, e^{d(s)} \mcal{F}_v(e^{-d(s)}\zeta) \rangle |\\
    =& \left|\langle \sqrt{\epsilon}A\zeta, \frac{1}{\sqrt{\epsilon}}e^{d(s)} \mcal{F}_v(e^{-d(s)}\zeta) \rangle \right|\\
    \leq& \epsilon \langle A\zeta, A\zeta \rangle + \frac{1}{\epsilon}\langle e^{d(s)} \mcal{F}_v(e^{-d(s)}\zeta),e^{d(s)} \mcal{F}(e^{-d(s)}\zeta) \rangle\\
    \leq& \epsilon \langle A\zeta, A\zeta \rangle +\epsilon \langle \zeta,\zeta \rangle.
\end{align*}
In the last line we used the fact $\p_t \zeta$ and $\zeta$ have $C^0$ norm uniformly bounded above by $C\ep$.
\textbf{T4} satisfies
\begin{align*}
    \langle \delta A \zeta, \delta A \zeta \rangle =&\delta ^2 \langle MA+N \zeta, MA+N \zeta \rangle \\
     =&\delta^2 \langle MA \zeta, MA\zeta \rangle +\langle N\zeta, N\zeta\rangle +\langle MA \zeta, N\zeta\rangle\\
    \leq& 2 \delta^2 \langle MA\zeta, MA\zeta \rangle + \langle N\zeta, N\zeta \rangle\\
    \leq&  C \delta ^2 (\langle A\zeta, A\zeta \rangle + \langle \zeta, \zeta \rangle).
\end{align*}
\textbf{T5} satisfies 
\begin{align*}
   & \langle \delta A \zeta, e^{d(s)} \mcal{F}_v(e^{-d(s)}\zeta)\rangle\\
     =& \delta \left \langle MA +N \zeta, e^{d(s)} \mcal{F}_v(e^{-d(s)}\zeta)\rangle \right \rangle \\
    \leq& \delta \left[ C \langle A\zeta,e^{d(s)} \mcal{F}(e^{-d(s)}\zeta)\rangle  + \langle N \zeta , e^{d(s)} \mcal{F}_v(e^{-d(s)}\zeta)\rangle \right]\\
     \leq& \delta \left [C \epsilon \langle A\zeta, A\zeta \rangle + (1+\epsilon)\langle \zeta, \zeta \rangle\right ]
\end{align*}
as the second last line follows from previous computation.
\textbf{T6} satisfies
\[
\langle e^{d(s)} \mcal{F}(e^{-d(s)}\zeta),e^{d(s)} \mcal{F}(e^{-d(s)}\zeta)\rangle \leq C\epsilon \la \zeta,\zeta \ra
\]
simply because $\mcal{F}_v$ is quadratic. Putting all these terms together we conclude
\[
\la \zeta_s,\zeta_s \ra \geq (\lambda^2-C\ep)\la \zeta,\zeta \ra
\]
and this concludes the first step.\\
\textbf{Step 2}
We next compute
\begin{align*}
    &\langle \zeta_{ss}, \zeta \rangle\\
    =&\langle \frac{d}{ds}[(A+\delta A) \zeta + e^{d(s)} \mcal{F}_v(e^{-d(s)}\zeta)], \zeta \rangle\\
    =& \langle d/ds(A+\delta A) \zeta,\zeta \rangle\\
    &+ \langle (A+\delta A) \zeta_s,\zeta \rangle\\
    & +\langle \frac{d}{ds} e^{d(s)}\mcal{F}_v(e^{-d(s)})\zeta, \zeta \rangle.
\end{align*}
We will need to dissect these terms one by one. We label them \textbf{T1-T3}.
For \textbf{T1} recall 
\begin{equation*}
    A=-J_0 d/dt -S(s,t)+d,
\end{equation*}
hence its $s$ derivative is a uniformly bounded matrix $\frac{dS}{ds}$ of norm $\leq C \delta$. Here we are using the fact $J$ is the standard almost complex structure along the surface of the Morse-Bott torus.
We also recall
\begin{equation*}
    \delta A = \delta (M \partial_t +N).
\end{equation*}
When we take its $s$ derivative we get
\begin{equation*}
    \frac{d}{ds} \delta A = \delta \frac{dM}{ds} \partial_t + \delta \frac{dN}{ds}.
\end{equation*}
Again we have
\begin{equation*}
    \partial_s (\delta A)= \delta^2 (M A + N)
\end{equation*}
where $M,N$ denotes matrices with bounded entries.
So the \textbf{T1} term is given by:
\begin{align*}
    &|\langle \partial_s( A +\delta A) \zeta, \zeta \rangle |\\
    & \leq C\delta \langle \zeta, \zeta \rangle + \langle \delta ^2 A \zeta, \zeta \rangle\\
    & \leq C \delta \langle \zeta, \zeta \rangle + \delta^2 \langle A \zeta, A\zeta \rangle.
\end{align*}
The \textbf{T2} term looks like
\begin{align*}
    &\langle (A+ \delta A) \zeta_s , \zeta \rangle \\
    =& \langle \zeta_s , (A + \delta A^T) \zeta \rangle \\
    =& \langle (A+\delta A)\zeta +e^{d(s)} \mcal{F}_v(e^{-d(s)}\zeta), (A + \delta A ^T ) \zeta \rangle\\
    =& \langle A \zeta, A\zeta \rangle\\
    & + \langle \delta A \zeta, A\zeta \rangle + \langle A\zeta, \delta A^T \zeta \rangle \\
    & + \langle e^{d(s)} \mcal{F}_v(e^{-d(s)}\zeta), A+ \delta A ^T \zeta \rangle.
\end{align*}
We can estimate the above using the same analysis as before ($\delta A^T$ behaves really similarly to $\delta A$ since we only care about the $\delta$ factor in front, technically we will need to take a $t$ derivative of $M$ but in our case this is still upper bounded by $\dt$ multiplied by a uniformly bounded matrix). This shows all these terms combine to make the \textbf{T2} term satisfy
\begin{equation*}
    \geq \lambda^2/2 g(s).
\end{equation*}

Finally we look at the \textbf{T3} term
\begin{align*}
    &\left \langle \frac{d}{ds} e^{d(s)} \mcal{F}_v(e^{-d(s)}\zeta), \zeta \right \rangle\\
    =& \left \la \frac{d}{ds} [ (g(e^{-ds}\zeta) \zeta + h(e^{-ds}\zeta) \zeta_t)], \zeta  \right \ra\\
    =& \left \la \frac{d}{ds}  (g(e^{-ds}\zeta) \zeta, \zeta\ra + \la \frac{d}{ds}h(e^{-ds}\zeta) \zeta_t),\zeta \right \ra \\
    \leq& \epsilon \langle \zeta, \zeta \rangle + \epsilon \langle \zeta_s, \zeta \rangle
\end{align*}
where we used the elliptic estimate $\|\zeta_{st}\|_{C^0} \leq C\ep$ and $\|\zeta_t\|_{C^0} \leq C\ep$ (technically the version of elliptic regularity in \cite{AudinDamian} or \cite{mcdfuff} only applies to $\phi = e^{-ds}\zeta$, but seeing everything we used above is local, that this implies corresponding bounds on $\zeta$ is immediate).
As before we need to estimate 
\begin{align*}
    &\epsilon \langle \zeta_s, \zeta \rangle\\
    &= \epsilon \langle A +\delta A \zeta + e^{d(s)}\mcal{F}_v(e^{-d(s)})\zeta , \zeta \rangle .
\end{align*}
The third term in the equation above is easily bounded above by
\begin{equation*}
    \epsilon \langle \zeta, \zeta \rangle.
\end{equation*}
The first term is bounded by
\begin{equation*}
   \epsilon( \langle A \zeta, A \zeta \rangle + \langle \zeta, \zeta \rangle).
\end{equation*}
The second term is similarly bounded by 
\begin{equation*}
    \epsilon \delta C (\langle A\zeta, A\zeta\rangle +\langle \zeta, \zeta \rangle)
\end{equation*}
thus the entire \textbf{T3} term satisfies
\begin{equation*}
    \leq C\epsilon( \langle A\zeta, A\zeta \rangle + \langle \zeta, \zeta \rangle)
\end{equation*}
then putting all of these terms together globally we have
\begin{equation*}
    g'' \geq \lambda^2 g
\end{equation*}
for small enough $\epsilon > 0$ and this concludes the proof.
\end{proof}
It still requires some work to go from this to exponential decay in the Sobolev spaces we want. The easiest way to do this is to realize our solution $\zeta$ has $C^0$ norm bounded above by $C \ep/R$. Hence for small enough $\ep$, its $C^0$ norm always below 1. This means $L^2$ norm bounds over intervals of form $[k,k+1]\times S^1$ gives rise to $L^p$ norm bounds over this interval. Using a version of elliptic regularity found in Theorem 12.1.5 in \cite{AudinDamian} (there's a typo in this version) or appendix B of \cite{mcdfuff}, reproduced in Theorem \ref{Mcduff_elliptic}, we conclude the exponential decay bounds can be improved to $W^{k,p}$, which we can then turn to pointwise bounds. We summarize this in the following theorem:
\begin{proposition} \label{prop:higher_order_decay}
For $s>3R$, $j=0,1,..k$,
\[
\|\nabla ^j\zeta\|(s,t) \leq C\|\zeta\|^{\frac{2}{p}}_{W^{2,p}(S^1\times [3R,\infty))}e^{-\lambda (s-3R)}.
\]
Note the $\lambda$ here is not the same as $\lambda$ from before.
\end{proposition}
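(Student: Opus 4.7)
First I would apply the ODE comparison lemma (Lemma 8.9.4 of \cite{AudinDamian}) to the inequality $g''(s) \geq \lambda^2 g(s)$ established in Proposition \ref{prop_exp_decay}. Since $\zeta$ is the image of $\phi \in W^{2,p,d}(v^*TM)$ under conjugation by $e^{ds}$, and since the fixed-point argument gives $\|\phi\|_{W^{2,p,d}} \leq C\ep/R$, the $C^0$ bound $\|\zeta\|_\infty \leq C$ rules out the possibility $g(s)\to\infty$. Hence we obtain
\[
g(s) \leq g(3R)\, e^{-\lambda(s-3R)}, \qquad s \geq 3R.
\]
Integrating this over $s\in[s_0,s_0+1]$ gives a uniform $L^2$ decay on unit-width slabs:
\[
\|\zeta\|_{L^2([s_0,s_0+1]\times S^1)} \leq C\, \|\zeta\|_{L^2([3R,3R+1]\times S^1)}\, e^{-\lambda(s_0-3R)}.
\]

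Next I would upgrade this $L^2$ slab bound to an $L^p$ slab bound. Combining the uniform pointwise estimate $\|\zeta\|_\infty \leq C$ with the elementary inequality
\[
\|\zeta\|_{L^p(\text{slab})}^p \leq \|\zeta\|_{L^\infty(\text{slab})}^{p-2}\, \|\zeta\|_{L^2(\text{slab})}^2
\]
yields $\|\zeta\|_{L^p(\text{slab})} \leq C\, \|\zeta\|_{L^2(\text{slab})}^{2/p}$. Since the $L^2$ norm of $\zeta$ on the initial slab $[3R,3R+1]\times S^1$ is controlled by its $W^{2,p}(S^1\times[3R,\infty))$ norm (by Sobolev embedding and the finite slab measure), this is exactly the source of the $2/p$ exponent that appears in the statement.

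Then I would perform an elliptic bootstrap. Writing the equation $\Theta_v'=0$ in the Cauchy--Riemann form
\[
\p_s \zeta + J_0 \p_t \zeta = (S(s,t)-d)\zeta + \dt A\zeta - e^{ds}\mcal{F}_v(e^{-ds}\zeta),
\]
the right-hand side is no rougher than $\zeta$ (since $\mcal{F}_v$ is smooth with uniformly bounded derivatives in its arguments), and its $W^{k,p}$ norm on a slab is controlled by a fixed multiple of the $W^{k,p}$ norm of $\zeta$ on a slightly enlarged slab. Applying Theorem \ref{Mcduff_elliptic} iteratively on a nested family of shrinking slabs promotes $L^p$ control to $W^{k+1,p}$ control while preserving the exponential factor. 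Finally, the Sobolev embedding $W^{k,p}(S^1\times[s_0-1/4,s_0+5/4]) \hookrightarrow C^{k-1}$, valid for $p>2$, converts this $W^{k,p}$ decay into the desired pointwise decay of $\nabla^j\zeta$ at any $(s,t)$ in the interior slab.

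The main obstacle is keeping track of how the rate $\lambda$ evolves through each stage. The interpolation from $L^2$ decay to $L^p$ decay multiplies the rate by $2/p$, and the widening of slabs during the elliptic bootstrap introduces further constant losses. Since however the eigenvalues of $A(s)$ are uniformly bounded away from zero and each bootstrap step only enlarges slabs by a bounded amount, the final rate remains a fixed positive constant depending only on local Morse--Bott data and $p$; this is precisely the reason the statement notes that $\lambda$ here need not equal the $\lambda$ of Proposition \ref{prop_exp_decay}. A secondary point to verify is that the nonlinearity $\mcal{F}_v$ does not destroy the bootstrap: because $g,h$ in Remark \ref{quadratic_term} have uniformly bounded derivatives and $\zeta$ is already known to be small in $W^{2,p}$, composition with $\mcal{F}_v$ preserves $W^{k,p}$ regularity at each stage, so the inductive argument goes through.
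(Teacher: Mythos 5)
Your proposal is correct and follows essentially the same route as the paper: the comparison lemma applied to $g''\geq\lambda^2 g$ gives $L^2(S^1)$ decay, the uniform $C^0$ bound (via the interpolation $\|\zeta\|_{L^p}^p\leq\|\zeta\|_{L^\infty}^{p-2}\|\zeta\|_{L^2}^2$, which is where the paper also gets the $2/p$ exponent) upgrades this to $L^p$ decay on unit slabs, and iterated elliptic regularity (Theorem \ref{Mcduff_elliptic}) on slightly shrunk slabs followed by Sobolev embedding yields the pointwise $C^{k}$ decay. This matches the argument the paper sketches before the proposition and spells out in Corollary \ref{cor:expdecay_p}.
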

More details of the elliptic bootstrapping argument is written up in Corollary \ref{cor:expdecay_p}.
\subsection{Exponential decay w.r.t. \texorpdfstring{$p$}{p}}

In this subsection we show the derivative of $\zeta$ with respect to $p$ also decays exponentially. To explain the notation, we recall for each $p$ we can use the parallel transport map to transport $\phi(r,a,p,\psi)$ to $W$. We remove the exponential weights to view them as vector fields:
\[
\zeta(p) \in W^{2,p}(v_0^*TM)
\]
(we suppress the dependence on $r,a,\psi$), and for $s>3R$ they satisfy equations
\[
D'(p) \zeta +e^{d(s)} \mcal{F}_v(p,e^{-d(s)}\zeta(p))=0
\]
where $D'(p)$ is of the form
\begin{equation*}
    D'(p) = d/ds -( A(p) + \delta A(p)).
\end{equation*}
As before $A(p)$ and $\dt A(p)$ take the form
\begin{equation*}
    A(p) =-J_0d/dt - S(p) +d
\end{equation*}
\begin{equation*}
    \delta A(p) = \delta MA-N.
\end{equation*}
The nonlinear term takes the form
\[
\mcal{F}_v (\phi)=  g(p,\phi)\phi + h(p,\phi)\p_t\phi
\]
where $g$ and $h$ and their $p$ derivatives (uniformly with respect to $p$) satisfy the assumptions listed in Remark \ref{quadratic_term} as well as Proposition \ref{prop_locform}.

We know from the above subsection that for each fixed $p$, the vector field $\zeta(p)$ is exponentially suppressed as $s\rightarrow \infty$. In this subsection we show the derivative of this family of vector fields
\[
\frac{d}{dp} \zeta(p)
\]
is exponentially suppressed as $s\rightarrow \infty$, as this will be crucial for our applications in gluing together multiple level cascades.
In this subsection we use $\zeta(p)$ to make explicit the dependence on $p$, and use subscripts $\zeta_p$ to denote the partial derivative with respect to $p$.
For this subsection we define
\[
p'=p/\ep
\]
for $\ep>0$ small enough. This $\epsilon$ is comparable to the $\ep$ balls we have chosen (we can take them to be the same), and depends only on the local geometry near the Morse-Bott torus, and is in particular independent of $\dt$. We write everything in terms of $p'$ instead of $p$. We next differentiate the defining equation for $\zeta(p)$ w.r.t to $p'$:
\begin{equation*}
    \frac{dD'}{dp'}\zeta(p) + D'(p) \frac{d\zeta(p)}{dp'} = \frac{d e^{d(s)} \mcal{F}_v(p,e^{-d(s)}\zeta(p))}{dp'} .
\end{equation*}
By elliptic regularity we can assume $\zeta$ in this region is infinitely differentiable in $s,t$, and its $p'$ derivative is also infinitely differentiable in $s,t$. Further the $s,t$ derivatives of $\zeta_{p'}$ are bounded in $W^{1,p}$ norm by $W^{2,p}$ norms of $\zeta_{p'}$ and $\zeta$.
Now we observe that 
\begin{equation*}
   \frac{ dD'(p)}{dp }\zeta(p) =  (MA-N) \zeta(p)
\end{equation*}
because when we are differentiating $D'(p)$ w.r.t. $p$ we are really looking at how the coefficient matrices $J_\dt$, $S(p)$ behave w.r.t. $p$, and this is determined by the local geometry and hence their variation is uniformly bounded. Hence by our definition of $p'$ we have
\begin{equation*}
   \frac{ dD(p')}{dp'} = \ep (MA-N)=: \ep B \phi.
\end{equation*}
Recalling the form of $\mcal{F}_v$:
\[
\mcal{F}_v (p,\phi)= g(p,\phi) + h(p,\phi)\p_t\phi.
\]
Here we have a $p$ dependence on both $g$ and $h$ since we are shifting the local geometry when we change $p$. Thus 
\[
e^{ds}\mcal{F}_v (p,e^{-ds}\zeta)= g(p,e^{-ds}\zeta)\zeta + h(p,e^{-ds}\zeta)\p_t\zeta
\]
Hence the $p'$ derivative of $e^{ds}\mcal{F}_v (p,e^{-ds}\zeta)$ looks like
\begin{align*}
    & \frac{d}{dp'}e^{ds}\mcal{F}_v (p,e^{-ds}\zeta) \\
    =&\ep g_1(p,e^{-ds}\zeta)\zeta + \ep h_1(p,e^{-ds}\zeta)\p_t\zeta\\
    & + g(p,e^{-ds}\zeta) \zeta_{p'}+ h(p,e^{-ds}\zeta)\p_t\zeta_{p'}\\
    &+ g_2(p,e^{-ds}\zeta)e^{-ds}\zeta_{p'} \zeta + h_2(p,e^{-ds}\zeta) \p_t \zeta \zeta_{p'}
\end{align*}
where $g_1$ and $h_1$ denote the derivative with respect to its first variable, namely $p$. The $\ep$ appears because we are differentiating with $p'$ instead of $p$. The functions $g_1$ and $h_1$ have the same properties as $g$ and $h$, i.e.
\[
g_1(x,y) \leq |x| + |y|
\]
and $g_1$ has uniformly bounded derivatives with respect to each of its variables; similarly for $h_1$.

$g_2$ and $h_2$ are the derivatives of $g$ and $h$ on their second variable. They are just bounded functions whose derivatives are also bounded. 

Hence we can write 
\[
 \frac{d}{dp'}e^{ds}\mcal{F}_v (p,e^{-ds}\zeta) = F + G(\zeta,\zeta_t) \zeta_{p'} + h(p,e^{-ds}\zeta)\p_t\zeta_{p'}
\]
where 
\[
F = \ep g_1(p,e^{-ds}\zeta)\zeta + \ep h_1(p,e^{-ds}\zeta)\p_t\zeta
\]
which essentially behaves like $e^{ds}\mcal{F}_v(p,e^{-ds\zeta})$, and 
\[
G(\zeta,\zeta_t) = g(p,e^{-ds}\zeta) + g_2(p,e^{-ds}\zeta)\zeta + h_2(p,e^{-ds}\zeta)\zeta_t.
\]
Hence $G$ is obviously bounded pointwise by $C(|\zeta| +|\zeta_t|)$, and the derivatives of $G$ w.r.t $s,t$ are also bounded by the corresponding derivatives of $\zeta$ and $\zeta_t$.

So the equation satisfied by $\zeta_{p'}$ is
\begin{equation*}
    \frac{d}{ds} \zeta_{p'} = A \zeta_{p'} +\delta A \zeta_{p'} + \ep B \zeta + F +G(\zeta,\zeta_t) \zeta_{p'} + h(p,e^{-ds}\zeta)\p_t\zeta_{p'}
\end{equation*}
The idea is to let 
\[
g(s) :=\langle \zeta, \zeta \rangle + \langle \zeta_{p'} ,\zeta_{p'} \rangle
\]
and repeat the proof of the previous subsection to show:
\begin{proposition}
$g''(s)\geq \lambda^2g(s)$.
\end{proposition}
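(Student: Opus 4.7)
The plan is to mimic the proof of Proposition \ref{prop_exp_decay} in the previous subsection, but now with two coupled vector fields. Splitting $g(s)=h(s)+k(s)$ with $h:=\langle\zeta,\zeta\rangle$ and $k:=\langle\zeta_{p'},\zeta_{p'}\rangle$, I would show separately that $h''\geq (\lambda^2+\mu)h - C\epsilon k$ and $k''\geq \lambda^2 k - C\epsilon h$ for some $\mu>0$, with all error constants controllable uniformly as $\delta\to 0$. Adding these and absorbing the mild cross-couplings using the slack in each inequality then yields $g''\geq \lambda^2 g$ after slightly shrinking $\lambda$. The first inequality for $h$ comes essentially from rerunning the computation of Proposition \ref{prop_exp_decay} verbatim; the fact that $\epsilon>0$ is fixed and small with respect to the local geometry gives us the needed headroom.

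For $k''$, I would compute
\[
k''(s)=2\langle(\zeta_{p'})_s,(\zeta_{p'})_s\rangle + 2\langle(\zeta_{p'})_{ss},\zeta_{p'}\rangle
\]
and substitute the differential equation
\[
(\zeta_{p'})_s = A\zeta_{p'} + \delta A\,\zeta_{p'} + \epsilon B\zeta + F + G(\zeta,\zeta_t)\zeta_{p'} + h(p,e^{-ds}\zeta)\partial_t\zeta_{p'}
\]
derived just above the proposition statement. The dominant term in $\langle(\zeta_{p'})_s,(\zeta_{p'})_s\rangle$ is $\langle A\zeta_{p'},A\zeta_{p'}\rangle \geq \lambda^2 \langle\zeta_{p'},\zeta_{p'}\rangle$, and the $\delta A\zeta_{p'}$, $G\zeta_{p'}$, $h\partial_t\zeta_{p'}$ contributions are handled exactly as in the previous proof: they are suppressed by $\delta$ or by the uniform $C^0$ bound $\epsilon$ on $\zeta$ and $\zeta_t$ (obtained from Proposition \ref{prop:higher_order_decay} and elliptic regularity), and can be absorbed via Young's inequality into a fraction of $\lambda^2\langle\zeta_{p'},\zeta_{p'}\rangle$.

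The genuinely new terms are the ones coupling $\zeta_{p'}$ back to $\zeta$, namely $\epsilon B\zeta$ and $F=\epsilon g_1(p,e^{-ds}\zeta)\zeta+\epsilon h_1(p,e^{-ds}\zeta)\partial_t\zeta$. For a cross term such as $\langle A\zeta_{p'},\epsilon B\zeta\rangle$ I would apply Young's inequality,
\[
|\langle A\zeta_{p'},\epsilon B\zeta\rangle| \leq \alpha\langle A\zeta_{p'},A\zeta_{p'}\rangle + \tfrac{\epsilon^2}{\alpha}\langle B\zeta,B\zeta\rangle,
\]
and use that $B=MA-N$ is a first-order operator with uniformly bounded coefficients, so $\langle B\zeta,B\zeta\rangle \leq C(\langle A\zeta,A\zeta\rangle + \langle\zeta,\zeta\rangle)$. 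Choosing $\alpha$ small enough keeps the first term absorbable into the main $\lambda^2\langle\zeta_{p'},\zeta_{p'}\rangle$ estimate, while the remainder is an $O(\epsilon)$ multiple of quantities already controlled by $h$. The $F$ contribution is even better, being itself of order $\epsilon\cdot$(a term of the type handled in Proposition \ref{prop_exp_decay}). The second-derivative piece $\langle(\zeta_{p'})_{ss},\zeta_{p'}\rangle$ is treated by differentiating the equation for $(\zeta_{p'})_s$ once more in $s$ (using elliptic regularity to justify the extra derivative) and running the same case-by-case analysis as in Step 2 of the previous proof.

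The main technical point to verify carefully is that all cross-couplings between $\zeta$ and $\zeta_{p'}$ carry a prefactor of $\epsilon$ (since they originate from the explicit $\epsilon B\zeta$ and $F$ terms, the derivation of which used $p'=p/\epsilon$), so that for $\epsilon$ sufficiently small they cannot overwhelm the main diagonal terms. The only potential obstacle is the cross term $\langle A\zeta_{p'},\epsilon B\zeta\rangle$, which is first-order in $\zeta$; but as noted above it is absorbed into $\langle A\zeta,A\zeta\rangle$, which in turn has already been majorized by $\lambda^2 \langle\zeta,\zeta\rangle$ plus a controlled error in the $h$ computation. Once all errors are collected, choosing $\epsilon$ and $\delta$ small gives the required differential inequality, and by the standard Lemma 8.9.4 of \cite{AudinDamian}, together with the boundedness of $\zeta_{p'}$ established in the previous section, this yields the desired exponential decay of $\zeta_{p'}$ as $s\to\infty$.
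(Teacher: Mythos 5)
Your proposal is correct and follows essentially the same route as the paper: the $\langle\zeta,\zeta\rangle$ part is recycled from the previous proposition, the main positivity for the $\langle\zeta_{p'},\zeta_{p'}\rangle$ part comes from $\langle A\zeta_{p'},A\zeta_{p'}\rangle\geq\lambda^2\langle\zeta_{p'},\zeta_{p'}\rangle$, and every cross-coupling ($\epsilon B\zeta$, $F$, and the terms arising in $\langle\zeta_{p'ss},\zeta_{p'}\rangle$) is absorbed by weighted Cauchy--Schwarz into $\langle A\zeta,A\zeta\rangle+\langle\zeta,\zeta\rangle$ using the $\epsilon$ prefactor coming from the rescaling $p'=p/\epsilon$. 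Your packaging as two coupled differential inequalities that are then summed is only a cosmetic reorganization of the paper's direct estimate of $g''$.
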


\begin{proof}
The term involving $\langle \zeta, \zeta \rangle$ behaves exactly the same way. So let's examine 
\[
 \langle \zeta_{p'}, \zeta_{p'} \rangle''= 2(\langle \zeta_{p's}, \zeta_{p's} \rangle+ \langle \zeta_{p'}, \zeta_{p'ss} \rangle).
\]
\textbf{Step 1}:
The first term looks like
\begin{equation*}
    \langle \zeta_{p's}, \zeta_{p's} \rangle.
\end{equation*}
This is equal to
\[
\la A\zeta_{p'}, A\zeta_{p'} \ra + [...].
\]
We have as before $\la A\zeta_{p'}, A\zeta_{p'} \ra \geq \lambda^2 \la \zeta_{p'},\zeta_{p'} \ra$, we think of the terms in $[...]$ as error terms. We will introduce them one by one and show they are bounded. The first few are of the form (the list continues)
\begin{align*}
    \langle A\zeta_{p'}, \ep B\zeta \rangle, \quad  \langle A\zeta_{p'},  F\ra, \quad \langle A \zeta_{p'}, G(\zeta,\zeta_t) \zeta_{p'} + h(p,e^{-ds}\zeta)\p_t\zeta_{p'} \rangle.
\end{align*}
We shall resume our convention of using \textbf{bold} face letters \textbf{T1-T3} to refer to the above terms. 
\textbf{T1} can be bounded
\begin{align*}
    &\leq \langle \sqrt{\ep} A\zeta_{p'}, \sqrt{\ep} B\zeta \rangle \\
    &\leq \ep \langle A \zeta_{p'}, A \zeta_{p'} \rangle + \langle  \sqrt{\ep} B \zeta , \sqrt{\ep} B \zeta \rangle\\
    &\leq  \ep \langle A \zeta_{p'}, A \zeta_{p'} \rangle + \ep (\langle A \zeta, A \zeta \rangle + \langle \zeta, \zeta \rangle).
\end{align*}
For \textbf{T2}
\begin{align*}
    &\leq \langle A\zeta_{p'} ,\epsilon \zeta\rangle+ \la A\zeta_{p'}, \ep h_1(p,e^{-ds}\zeta)\p_t\zeta\ra \\
    & \leq \epsilon( \langle A \zeta_{p'}, A \zeta_{p'} \rangle + \langle \zeta, \zeta \rangle ) + \ep (\la A\zeta_{p'}, A\zeta_{p'}\ra + \la h_1,h_1\ra\\
    &\leq C \epsilon( \langle A \zeta_{p'}, A \zeta_{p'} \rangle + \langle \zeta, \zeta \rangle ).
\end{align*}
\textbf{T3} is bounded by
\begin{align*}
    &\leq \langle A\zeta_{p'}, \epsilon  \zeta_{p'} \ra +\la A\zeta_{p'}, h(p,e^{-ds}\zeta)\zeta_{p't} \rangle \\
    & \leq \epsilon (\langle A \zeta_{p'}, A\zeta_{p'} \rangle  + \langle \zeta_{p'}, \zeta_{p'} \rangle  +\la \zeta,\zeta\ra).
\end{align*}
There are actually several ways to bound this term. The easiest way as above is to observe $\zeta_p$ has $W^{2,p}$ norm $\leq C\ep$, hence $\zeta_{pt}$ has $C^0$ norm bounded by $C\ep$, and hence $\zeta_{p't}$ has $C^0$ norm bounded by $C\ep $, hence the second term is bounded above by $\ep (\la A\zeta_{p'},A\zeta_{p'}\ra + \la h,h\ra) $ which implies the overall bound by the form of $h$.

More terms that also appear in $\langle \zeta_{p's}, \zeta_{p's} \rangle$ are given below:
\begin{align*}
    &\langle \delta A \zeta_{p'}, \ep B\zeta\rangle, \quad  \langle \delta A \zeta_{p'}, F +G(\zeta,\zeta_t) \zeta_{p'} + h(p,e^{-ds}\zeta)\p_t\zeta_{p'}\rangle,\\
    &\langle \ep B\zeta, \ep B \zeta\rangle,\quad \langle \ep B \zeta, F +G(\zeta,\zeta_t) \zeta_{p'} + h(p,e^{-ds}\zeta)\p_t\zeta_{p'}\rangle,\\
    &\langle F +G(\zeta,\zeta_t) \zeta_{p'} + h(p,e^{-ds}\zeta)\p_t\zeta_{p'}, F +G(\zeta,\zeta_t) \zeta_{p'} + h(p,e^{-ds}\zeta)\p_t\zeta_{p'} \rangle.
\end{align*}
The common feature with all of the above terms is that both inputs into the inner product are small, hence we can bound all of the terms above by
\begin{align*}
&\la \dt A\zeta_{p'}, \dt A \zeta_{p'} \ra, \quad  \la \ep B \zeta,\ep B \zeta\ra, \quad \la F,F\ra, \quad  \la G(\zeta,\zeta_t) \zeta_{p'},G(\zeta,\zeta_t) \zeta_{p'}\ra, \quad\\
&\la h(p,e^{-ds}\zeta)\p_t\zeta_{p'},h(p,e^{-ds}\zeta)\p_t\zeta_{p'}\ra .
\end{align*}

Using techniques already established when we considered exponential decay in the previous subsection, we can bound each of these above terms by (respectively)
\begin{align*}
    &C\dt (\langle A\zeta_{p'},A\zeta_{p'}\rangle + \la \zeta_{p'}, \zeta_{p'} \ra), \quad 
     C\ep (\langle A\zeta, A\zeta \rangle + \langle \zeta,\zeta \rangle), \quad  C\ep (\langle A\zeta, A\zeta \rangle + \langle \zeta,\zeta \rangle),\\
     & 
     \ep \la \zeta_{p'},\zeta_{p'}\ra, \quad  C\ep \la \zeta,\zeta \ra .
\end{align*}
This concludes the first step, in which we bounded all terms appearing in $\la \zeta_{p's},\zeta_{p's}\ra$.

\textbf{Step 2} We next compute
\begin{align*}
    &\langle \zeta_{p'ss}, \zeta_{p'}\rangle\\
    &= \langle  \partial_s((A +\delta A) \zeta_{p'} + \ep B \zeta+ F +G(\zeta,\zeta_t) \zeta_{p'} + h(p,e^{-ds}\zeta)\p_t\zeta_{p'}), \zeta_{p'}\rangle\\
    & = \left \langle (A'+\delta A')\zeta_{p'} + \ep B' \zeta +\frac{d}{ds}F + \frac{d}{ds}(G(\zeta,\zeta_t)\zeta_{p'}) + \frac{d}{ds}(h(p,e^{-ds}\zeta)\p_t\zeta_{p'}) ,\zeta_{p'} \right \rangle \\
    & + \langle (A+\delta A) \zeta_{p's} +\ep B \zeta_s ,\zeta_p \rangle.
\end{align*}
We label the above two terms by \textbf{T1} and \textbf{T2} respectively.
We first examine \textbf{T1}.
In order to make the sizes of various terms more apparent, we shall replace $\frac{d}{ds}F$ with 
\[
C\zeta^2 +C\zeta \p_t \zeta + +C \zeta \zeta_s + C \zeta_s\zeta_t + C \zeta \zeta_{ts}
\]
where the $C$ as it appears in each of the above terms may be different, but they are all uniformly bounded smooth functions of $(s,t)$ with uniformly bounded derivatives. 

Similarly we shall replace $\frac{d}{ds} G(\zeta,\zeta_t)\zeta_{p'}$ with
\[
(C\zeta + C\zeta_s + C\zeta^2 +C\zeta \zeta_s + C\zeta \zeta_t + C\zeta_s\zeta_t + C\zeta \zeta_{ts})\zeta_{p'} + (C\zeta+ C\zeta^2 + C\zeta \zeta_t) \zeta_{p's}
\]
with the same convention on $C$ as before. Finally we shall replace $\frac{d}{ds}h(p,e^{-ds}\zeta)\zeta_{tp'}$ with
\[
(C\zeta + \zeta_s)\zeta_{tp'} + C\zeta \zeta_{p'ts}.
\]
We examine various components of the \textbf{T1} term, starting with
\[
\la (A' +\dt A') \zeta_{p'} + \ep B' \zeta,\zeta_{p'}\ra.
\]
The operator $A'+ \delta A'$ for our purposes looks like $\ep (A+N)$, since the derivatives of the coefficient matrices with respect to $p'$ are bounded by $\ep$. Similarly $\ep B'$ behaves like $\ep B$
so we have, using these estimates
\[
 \langle (A'+\delta A')\zeta_{p'} + \ep B' \zeta ,\zeta_{p'} \ra \leq C \ep ( \la A\zeta_{p'}, A\zeta_{p'}\ra + \la \zeta_{p'}, \zeta_{p'} \ra) + \ep (\la \zeta_{p'}, \zeta_{p'} \ra + \la \zeta,\zeta\ra + \la A\zeta,A\zeta\ra).
\]
We next estimate
\begin{align*}
& \left \la \frac{d}{ds}F, \zeta_{p'} \right \ra\\
\leq & \la C\zeta^2 +C\zeta \p_t \zeta + +C \zeta \zeta_s + C \zeta_s\zeta_t + C \zeta \zeta_{ts}, \zeta_{p'}\ra\\
\leq& C\ep (\la \zeta,\zeta\ra + \la \zeta_{p'},\zeta_{p'}\ra) + C \ep \la \zeta_t,\zeta_{p'}\ra
\end{align*}
where we used the fact $C^0$ norm of $\zeta, \p_t\zeta,\p_s\zeta, \zeta_{st}$ are all uniformly bounded by $C\ep$ using elliptic regularity. The term $ C \ep \la \zeta_t,\zeta_{p'}\ra$ is bounded by
\begin{align*}
    & C \ep \la \zeta_t,\zeta_{p'}\ra \\
    \leq & C\ep (\la \zeta_t,\zeta_t\ra + \la \zeta_{p'},\zeta_{p'}\ra\\
    \leq& C \ep (\la A\zeta,A\zeta\ra + \la \zeta,\zeta\ra + \la \zeta_{p'},\zeta_{p'}\ra).
\end{align*}
which concludes the estimates for $\la \frac{d}{ds}F, \zeta_{p'}\ra$.

We next examine $\left \la \frac{d}{ds} G(\zeta,\zeta_t) \zeta_{p'}, \zeta_{p'}\right \ra$, which we can bound by
\begin{align*}
    &\leq \ep \la\zeta_{p'},\zeta_{p'} \ra + \ep \la \zeta_{p's}, \zeta_{p'} \ra.
\end{align*}
The second term in the above inequality is in turn bounded by
\begin{align*}
    &\leq \epsilon \langle \zeta_{p's} ,\zeta_{p'} \rangle\\
    &\leq \epsilon \langle A \zeta_{p'} +\delta A \zeta_{p'} + \ep B \zeta + F + G(\zeta,\zeta_t)\zeta_{p'} + h(p,e^{-ds}\zeta)\p_t\zeta_{p'}, \zeta_{p'}\rangle\\
    & \leq \epsilon (\langle A\zeta_{p'}, A\zeta_{p'} \rangle + \langle \zeta_{p'}, \zeta_{p'} \rangle + \langle \zeta, \zeta \rangle + \langle A \zeta, A\zeta \rangle )
\end{align*}
using techniques of the previous step. This concludes all bounds for $\la \frac{d}{ds} G(\zeta,\zeta_t) \zeta_{p'}, \zeta_{p'}\ra$.

We next turn to $\left \la \frac{d}{ds}h(p,e^{-ds}\zeta) \zeta_{tp'}, \zeta_{p'}\right \ra$, which we bound by
\begin{align*}
    &\la (C\zeta + C\zeta_s)\zeta_{tp'} + C\zeta \zeta_{p'ts}, \zeta_{p'} \ra \\
    \leq & \ep \la \zeta_{tp'}, \zeta_{p'}\ra + \ep \la \zeta_{p'ts}, \zeta_{p'}\ra\\
    \leq & \ep (\la A\zeta_{p'}, A\zeta_{p'}\ra + \la \zeta_{p'}, \zeta_{p'}\ra + \ep \la \zeta_{p'ts}, \zeta_{p'}\ra.
\end{align*}
To bound $\ep \la \zeta_{p'ts}, \zeta_{p'}\ra$, we use
\begin{align*}
    &\ep \la \zeta_{tp's},\zeta_{p'} \ra\\ &\leq \ep \la \zeta_{p's}, \zeta_{p't} \ra \\
    &\leq \ep \la  A \zeta_{p'} +\delta A \zeta_{p'} + \ep B \zeta + F +G(\zeta,\zeta_t) \zeta_{p'} + h(p,e^{-ds}\zeta)\p_t\zeta_{p'}), (MA+N)\zeta_{p'} \ra\\
    &\leq \ep [\la A \zeta_{p'}, A\zeta_{p'}\ra + \la \zeta_{p'},\zeta_{p'} \ra + \la \zeta,\zeta\ra + \la A\zeta,A\zeta\ra ].
\end{align*}
This concludes the bounds for $\la \frac{d}{ds}h(p,e^{-ds}\zeta) \zeta_{tp'}, \zeta_{p'}\ra$, and consequently all of \textbf{T1}.

We now turn to \textbf{T2}. We first examine $\la \ep B \zeta_s ,\zeta_{p'}\ra$. It can be rewritten as
\begin{equation*}
    \langle \zeta_s, \ep B^T \zeta_{p'} \rangle =\langle (A +\delta A )\zeta  +e^{d(s)} \mcal{F}(e^{-d(s)}\zeta), \ep B^T \zeta_{p'} \rangle.
\end{equation*}
We recall that $\ep B = \ep MA +N$. Now in taking the adjoint we had to differentiate the coefficient matrices w.r.t the variable $t$, but in our case $\ep B^T$ would still take the same form. Hence these terms can be handled by entirely similar techniques as before, giving
\begin{equation*}
    \leq \ep (\langle A\zeta_{p'}, A\zeta_{p'} \rangle + \langle A \zeta, A \zeta \rangle + \langle \zeta, \zeta \rangle + \langle \zeta_{p'}, \zeta_{p'} \rangle).
\end{equation*}
We consider the remaining term $\la A+\dt A \zeta_{p's},\zeta_{p'}\ra$. We can rewrite it as
\[
\langle \zeta_{p's} , (A+\delta A^T) \zeta_{p'}\rangle .
\]
Noting that $\dt A^T$ essentially takes the same form as $\dt A$, the above term will resemble the terms we computed in step 1. Hence it is equal to 
\[
\la A\zeta_{p'}, A\zeta_{p'}\ra
\]
plus an error term which is uniformly bounded by
\[
\ep( \la A\zeta_{p'},A\zeta_{p'} \ra + \la \zeta_{p'},\zeta_{p'}\ra + \la A\zeta,A\zeta\ra + \la \zeta,\zeta\ra).
\]
This gives bounds on all of the terms appearing in $g''(s)$, from which we conclude that 
\[
g''(s) \geq \lambda^2 g(s)
\]
for $\ep>0$ sufficiently small.
\end{proof}

We now switch to trying to understand $\la \zeta_p,\zeta_p\ra$, we can get this simply by rearranging terms in $g(s)$ and realizing derivatives w.r.t $p$ versus $p'$ differ by a factor of $\ep$.
\begin{corollary}
\begin{equation*}
\la \zeta_p,\zeta_p\ra_{L^2(S^1)}(s) \leq C\frac{\la \zeta,\zeta\ra_{L^2(S^1)}(s_0)+ \ep^2 \la \zeta_p,\zeta_p\ra_{L^2(S^1)}(s_0)}{\ep^2} e^{-\lambda (s-s_0)}.
\end{equation*}
for $s>s_0$ (in our case we can take $s_0=3R$, we are just stating the corollary more generallly to indicate the decay starts at $s_0$.)
\end{corollary}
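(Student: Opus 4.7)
The plan is to combine the differential inequality $g''(s) \geq \lambda^2 g(s)$ from the preceding proposition with the ODE dichotomy lemma (Lemma 8.9.4 in AudinDamian, recalled earlier in this section) and then unwind the change of variables $p' = p/\ep$ that was used to set up the differential inequality. Because we control both terms of $g$ globally in Sobolev norm, the decaying branch of the dichotomy will be forced.

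First I would apply the ODE lemma to $g(s) = \la \zeta,\zeta\ra_{L^2(S^1)}(s) + \la \zeta_{p'},\zeta_{p'}\ra_{L^2(S^1)}(s)$ on $[s_0,\infty)$. The lemma gives that either $g(s) \leq g(s_0)\, e^{-\lambda(s-s_0)}$ or $g(s)\to \infty$ as $s\to\infty$. I would rule out the blow-up alternative by recalling from Section \ref{semiglue} that $\zeta = e^{ds}\phi$ with $\phi \in W^{2,p,d}(v^*TM)$ of norm $\leq C\ep/R$, and that the earlier proposition on $\|\p_* \hat\phi\|$ gives $\zeta_{p'}\in W^{2,p}(v_0^*TM)$ with norm $\leq C\ep$ (after absorbing the factor $1/\ep$ in the change of variable). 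In particular both $\la\zeta,\zeta\ra(s)$ and $\la \zeta_{p'},\zeta_{p'}\ra(s)$ are uniformly bounded in $s$ (indeed integrable on $[s_0,\infty)$ by the $W^{2,p}$ assumption combined with Sobolev embedding into $C^0$), so $g(s)$ cannot tend to $+\infty$. Therefore the decaying branch holds:
\[
g(s) \leq g(s_0)\, e^{-\lambda(s-s_0)}, \qquad s\geq s_0.
\]

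Next I would unwind the change of variable. By the definition $p' = p/\ep$ and the chain rule, $\zeta_{p'} = \ep\, \zeta_p$, whence $\la \zeta_{p'},\zeta_{p'}\ra = \ep^2 \la \zeta_p,\zeta_p\ra$ pointwise in $s$. Substituting into both sides of the displayed inequality and using the trivial bound $\ep^2 \la\zeta_p,\zeta_p\ra(s) \leq g(s)$ gives
\[
\ep^2 \la \zeta_p,\zeta_p\ra(s) \;\leq\; g(s_0)\, e^{-\lambda(s-s_0)} \;=\; \bigl(\la \zeta,\zeta\ra(s_0) + \ep^2 \la \zeta_p,\zeta_p\ra(s_0)\bigr)\, e^{-\lambda(s-s_0)}.
\]
Dividing by $\ep^2$ yields the stated inequality (with a possibly renamed constant $C$).

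The main point to be careful about is not a genuine obstacle but the exclusion of the blow-up alternative; this has to be anchored on the global Sobolev bounds already proved in Section \ref{semiglue}, rather than on any new estimate. Everything else is algebraic bookkeeping of the rescaling $p\mapsto p/\ep$, which accounts for the asymmetric $\ep^{-2}$ factor on the right-hand side and for the asymmetry between the $\la \zeta,\zeta\ra(s_0)$ and $\ep^2 \la \zeta_p,\zeta_p\ra(s_0)$ terms in the numerator.
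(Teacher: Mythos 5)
Your argument is correct and is essentially the paper's own (mostly implicit) proof: apply the dichotomy lemma to $g = \la\zeta,\zeta\ra + \la\zeta_{p'},\zeta_{p'}\ra$, exclude the blow-up branch using the uniform $W^{2,p}$ bounds on $\zeta$ and $\zeta_{p'}$ (via Sobolev embedding into $C^0$), and then substitute $\zeta_{p'}=\ep\,\zeta_p$ and divide by $\ep^2$. Nothing further is needed.
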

It might seem unpleasant we are dividing by $\ep^2$, but in practice by elliptic regularity (and the $\zeta$ term we will be working with) we will have $\la \zeta ,\zeta \ra \sim C\ep^2/R^2$, so the decay really is of the form $C(\ep^2+\frac{1}{R^2})e^{-\lambda(s-s_0)}$. Also in the cases that interest us the decay will be so large factors of size $1/\ep^2$ will become irrelevant.

Using the same argument as before $\zeta_p$ has $W^{2,p}$ norm of size $C \ep$ so our previous strategy of bounding $L^p$ norm with $L^2$ norm continues to work, so we obtain the bound:
\begin{corollary} \label{cor:expdecay_p}
For $s>s_0>3R$, we have
\[
|\zeta_p (s,t)| \leq C \left [\frac{(\|\zeta\|^2_{W^{2,p}} + \|\zeta_p\|^2_{W^{2,p}})}{\ep^2}\right ]^{\frac{1}{p}} e^{-\lambda (s-s_0)}.
\] 
Here $\lambda$ is different from the $\lambda$ we chose previously. We will abbreviate this by writing $|\zeta_p(s,t)| \leq C e^{-\lambda(s-s_0)}$ as some more careful estimates can show the coefficient in front to be of order $O(1)$, similarly using elliptic regularity we can bound 
\[
|\zeta_{p*}(s,t)| \leq C e^{-\lambda(s-s_0)}, \quad * =s,t \, \text{and higher derivatives.}
\]
\end{corollary}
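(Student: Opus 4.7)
The plan is to promote the $L^2(S^1)$ decay that we obtained for $\zeta_p$ (via $g''\geq \lambda^2 g$) to pointwise decay in $C^k$ by a standard interpolation-plus-elliptic-bootstrap argument. The awkward factor of $2/p$ in the exponent is precisely the price for this interpolation, as foreshadowed in Remark \ref{remark_sobolev_exponents}.

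First I would upgrade the $L^2(S^1)$ bound at each slice $s$ to an $L^p$ bound on each unit strip $[s-\tfrac12,s+\tfrac12]\times S^1$. Recall that, by elliptic regularity applied to the equation already derived for $\zeta_{p'}$, the function $\zeta_{p'}$ is uniformly bounded in $W^{2,p}$, hence in $C^0$, by $C\epsilon$ on every unit strip (uniformly in $\delta$). Combining this $L^\infty$ bound with the $L^2$ bound of the preceding corollary and using the interpolation inequality $\|f\|_{L^p}\leq \|f\|_{L^2}^{2/p}\|f\|_{L^\infty}^{1-2/p}$ on each such strip gives an $L^p$ decay of the form
\[
\|\zeta_{p'}\|_{L^p([s-\frac12,s+\frac12]\times S^1)} \leq C \bigl[(\|\zeta\|^2_{W^{2,p}}+\|\zeta_p\|^2_{W^{2,p}})/\epsilon^2\bigr]^{1/p} e^{-\lambda(s-s_0)},
\]
where I have absorbed constants into $\lambda$ (and have renamed $\lambda$ accordingly, as the statement allows).

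Next I would bootstrap this $L^p$ decay to a $W^{k,p}$ decay on slightly smaller strips. The function $\zeta_{p'}$ weakly solves a Cauchy--Riemann type equation
\[
\partial_s \zeta_{p'} + J_0 \partial_t \zeta_{p'} = (\text{lower order in }\zeta_{p'}) + (\text{source in }\zeta,\zeta_t,\ldots),
\]
with smooth, uniformly bounded coefficients (in $\delta$, $p$). The source term involves $\zeta$ and its derivatives, which already satisfy the pointwise exponential decay bound from Proposition \ref{prop:higher_order_decay}. Applying Theorem \ref{Mcduff_elliptic} iteratively on nested strips $[s-\tfrac14,s+\tfrac14]\times S^1 \subset [s-\tfrac12,s+\tfrac12]\times S^1$ (with $\Omega'\subset \Omega$ as in the theorem), we pick up one derivative at each step at the cost of a constant independent of $\delta$, producing $W^{k,p}$ bounds on the smaller strip that decay at rate $e^{-\lambda(s-s_0)}$ (with the same constant as above, since the source terms decay at least this fast).

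Finally, the Sobolev embedding $W^{k,p}([s-\tfrac14,s+\tfrac14]\times S^1)\hookrightarrow C^j$ for $k$ sufficiently large relative to $j$ converts the $W^{k,p}$ decay into pointwise decay of $\zeta_p$ and all of its $s,t$ derivatives up to order $j$, with the constant and exponent exactly as claimed. The main conceptual point is that nothing in this bootstrap depends on $\delta$: elliptic regularity is uniform in $\delta$ because the coefficient matrices $J_\delta$, $S(p)$ and the nonlinearities $g,h$ have $C^k$ norms bounded uniformly in $\delta$, so the only obstacle would be loss of uniformity of constants, which we have already controlled in the preceding propositions. The remark that the prefactor is of order $O(1)$ in $\epsilon$ (despite the apparent $\epsilon^{-2/p}$) follows from the sharper elliptic estimates on $\zeta$ and $\zeta_p$, namely $\|\zeta\|_{W^{2,p}},\|\zeta_p\|_{W^{2,p}}\leq C\epsilon$, which make the quotient $(\|\zeta\|^2+\|\zeta_p\|^2)/\epsilon^2$ bounded.
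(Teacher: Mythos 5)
Your proposal is correct and follows essentially the same route as the paper: slice-wise $L^2$ decay is converted to strip-wise $L^p$ decay (your interpolation inequality $\|f\|_{L^p}\leq \|f\|_{L^2}^{2/p}\|f\|_{L^\infty}^{1-2/p}$ is the same computation the paper performs by noting $|\zeta_p|_{C^0}<1$ so that $|\zeta_p|^p\leq |\zeta_p|^2$), then elliptic bootstrapping on nested strips using the equation for $\zeta_p$ with exponentially decaying source terms in $\zeta$, and finally Sobolev embedding for pointwise bounds on $\zeta_p$ and its derivatives, all with constants uniform in $\delta$.
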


\begin{proof}
For completeness we explain how elliptic regularity is used. First using $W^{2,p}\hookrightarrow C^0$, we have
\[
\la \zeta_p,\zeta_p\ra_{L^2(S^1)}(s) \leq C\frac{\|\zeta\|^2_{W^{2,p}} + \|\zeta_p\|^2_{W^{2,p}}}{\ep^2} e^{-\lambda (s-s_0)}.
\]
Using the fact $C^0$ norm of $\zeta_p$ is $<1$, we have
\[
\|\zeta_p\|_{L^p([k-1,k+2]\times S^1)}^p \leq C\int_{k-1}^{k+2}\la  \zeta_p,\zeta_p\ra_{L^2(S^1)}(s) ds \leq \frac{\|\zeta\|^2_{W^{2,p}} + \|\zeta_p\|^2_{W^{2,p}}}{\ep^2} e^{-\lambda k}.
\]
Given this $L^p$ norm bound, we can use elliptic regularity and the fact $\zeta_p$ satisfies an equation of the form
\begin{equation*}
    D' \zeta_p=  B \zeta + F +G(\zeta,\zeta_t) \zeta_{p} + h(p,e^{-ds}\zeta)\p_t\zeta_{p}.
\end{equation*}
Here we are differentiating with respect to $p$ instead of $p'$ so we are rescaling some of the above terms so that they have norm $O(1)$ instead of $O(\ep)$.

From elliptic bootstrapping we have 
\[
\|\zeta_p\|_{W^{1,p}([k,k+1]\times S^1)} \leq C\|\zeta_p\|_{L^p([k-1,k+2]\times S^1)}  + \|\zeta\|_{W^{1,p}([k-1,k+2]\times S^1)} \leq Ce^{-\lambda k}
\]
where we used the exponential decay estimate of $\zeta$. Note we have slightly shrunk the domain to $[k,k+1]\times S^1$ to use elliptic regularity. We can iterate this argument to show
\[
\|\zeta_p\|_{W^{l,p}([k,k+1]\times S^1)} \leq C_le^{-\lambda k}
\]
and use Sobolev embedding theorems to obtain pointwise bounds as in the proposition.
\end{proof}
We also note the could have used the exact same techniques when applied to the $r$ asymptotic vector. There we need to identify $r \in S^1 = [0,1]/\sim$, and $r':= r/\ep \in S^1= [0,1/\ep]/\sim$. The result is very similar: we can obtain exponential decay bounds on $\zeta_r$, given as:
\begin{corollary}
For $s>s_0>3R$, we have
\[
|\zeta_r (s,t)| \leq C \left[\frac{(\|\zeta\|^2_{W^{2,p}} + \|\zeta_r\|^2_{W^{2,p}})}{\ep^2}\right]^{\frac{1}{p}} e^{-\lambda (s-s_0)}
\] 
\[
|\zeta_{r*}(s,t)| \leq C e^{-\lambda(s-s_0)}, \quad * =s,t \, \text{and higher derivatives.}
\]
\end{corollary}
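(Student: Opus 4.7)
The plan is to mimic verbatim the analysis just carried out for the $p$-derivative, replacing $p$ everywhere by $r$ and using the identification $r \in S^1 = [0,1]/\sim$, $r' := r/\ep$, so that differentiating with respect to $r'$ picks up an extra factor of $\ep$ from each occurrence of the background geometry. Concretely, after the parallel transport $\phi_{r,a,p} \mapsto \phi_{r,a,p} - (v_{r,a,p} - v_{0,0,0}) \in W$ already described before the $p$ analysis, the equation satisfied by $\zeta(r)$ takes the form
\[
D'(r)\zeta(r) + e^{d(s)} \mcal{F}_v\bigl(r, e^{-d(s)}\zeta(r)\bigr) = 0,
\]
where the $r$-dependence of $D'(r) = d/ds - (A(r) + \dt A(r))$ and of $\mcal{F}_v$ comes from the local background geometry near the Morse-Bott torus (the matrices $J_\dt$, $S$, and the coefficient functions $g,h$ of Remark \ref{quadratic_term}), all of whose $r$-derivatives are uniformly bounded. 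Writing $r'=r/\ep$ as before then gives $dD'/dr' = \ep B$ with $B = MA + N$, and the $r'$-derivative of $e^{ds}\mcal{F}_v$ splits into an $\ep F$ term plus a $G(\zeta,\zeta_t)\zeta_{r'}$ term plus an $h(r,e^{-ds}\zeta)\p_t\zeta_{r'}$ term, exactly as in the $p$ case.

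First I would check that the pointwise $C^k$ estimates on how the Morse-Bott torus coordinates and the almost complex structure $J_\dt$ depend on $r$ are of the same form as for $p$; this is immediate because $r$ parametrizes the $S^1$ of Reeb orbits and $(z,x,y)$ coordinates from Proposition \ref{prop_locform} depend smoothly and uniformly on rotation in the $z$-direction, while the metric $g$ was fixed to be flat in a neighborhood of the torus. Once this is verified, $\zeta_{r'}$ satisfies the same ODE model
\[
\tfrac{d}{ds}\zeta_{r'} = A\zeta_{r'} + \dt A\zeta_{r'} + \ep B\zeta + F + G(\zeta,\zeta_t)\zeta_{r'} + h(r,e^{-ds}\zeta)\p_t\zeta_{r'}
\]
that was analyzed for $p'$.

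Next I would set $g(s) := \la \zeta,\zeta \ra + \la \zeta_{r'},\zeta_{r'}\ra$ and repeat, line by line, the two-step estimate carried out in the proof of the previous proposition: Step 1 bounds $\la \zeta_{r's},\zeta_{r's}\ra$ from below by $\lambda^2\la \zeta_{r'},\zeta_{r'}\ra$ modulo $\ep$- and $\dt$-small error terms, and Step 2 bounds $\la \zeta_{r'},\zeta_{r'ss}\ra$ using $\p_s(A+\dt A)$, $\p_sF$, $\p_s(G\zeta_{r'})$, $\p_s(h\p_t\zeta_{r'})$ together with $(A+\dt A^T)$ and $\ep B^T$. Each of these error terms is absorbed by $\la A\zeta_{r'}, A\zeta_{r'}\ra$, $\la A\zeta, A\zeta\ra$, $\la \zeta,\zeta\ra$, $\la \zeta_{r'},\zeta_{r'}\ra$, using the elliptic $C^0$ bounds $\|\zeta\|_{C^0}, \|\p_t\zeta\|_{C^0}, \|\p_s\zeta\|_{C^0}, \|\zeta_{st}\|_{C^0} \leq C\ep$ which follow from the $W^{2,p}$ bounds on $\zeta$ and $\zeta_{r'}$ via Theorem \ref{Mcduff_elliptic}. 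The outcome is $g''(s) \geq \lambda^2 g(s)$.

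Then I would invoke Lemma 8.9.4 of \cite{AudinDamian} (exactly as in Proposition \ref{prop_exp_decay}): since $\zeta$ and $\zeta_{r'}$ lie in weighted Sobolev spaces and in particular do not blow up, the alternative of $g(s) \to \infty$ is ruled out, leaving
\[
\la \zeta_{r'},\zeta_{r'}\ra_{L^2(S^1)}(s) \;\leq\; C\bigl(\la \zeta,\zeta\ra_{L^2(S^1)}(s_0) + \la \zeta_{r'},\zeta_{r'}\ra_{L^2(S^1)}(s_0)\bigr) e^{-\lambda(s-s_0)},
\]
and converting back to $r$ via $\zeta_r = \zeta_{r'}/\ep$ yields the $1/\ep^2$ factor in the statement. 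Finally I would run the elliptic bootstrap of Corollary \ref{cor:expdecay_p}: use $\|\zeta_r\|_{C^0} \leq 1$ to convert the $L^2(S^1)$ decay into $L^p([k,k+1]\times S^1)$ decay, then apply Theorem \ref{Mcduff_elliptic} iteratively to the equation $D'\zeta_r = B\zeta + F + G(\zeta,\zeta_t)\zeta_r + h(r,e^{-ds}\zeta)\p_t\zeta_r$ (now differentiated with respect to $r$, not $r'$) to obtain $W^{l,p}$ decay, and Sobolev embedding to get pointwise decay for $\zeta_r$ and all of its $s,t$ derivatives.

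The only step that is not pure bookkeeping is verifying that the $r$-dependence of the coefficient matrices and of $\mcal{F}_v$ is genuinely of the same qualitative form as the $p$-dependence. I expect this to be the main conceptual check: for $p$, the $p$-variable moves the gradient trajectory transversely within the $S^1$-family of Reeb orbits and the previous lemma on $|v_p - v_{p'}| \leq C|p-p'|$ was needed; for $r$, moving $r$ corresponds instead to a rotation in the $z$-coordinate of the Morse-Bott torus, so $v_{r,a,p} - v_{0,a,p}$ is just an isometric shift along $\p_z$, which makes the estimate on how the linearized operator and $\mcal{F}_v$ vary with $r$ strictly simpler than in the $p$ case, so no new analytic input is required.
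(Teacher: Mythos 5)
Your proposal is correct and matches the paper's approach: the paper itself dispatches this corollary by asserting that the exact same techniques used for the $p$-derivative apply after the identification $r' = r/\ep$, which is precisely the verbatim transposition you carry out. Your observation that the $r$-dependence is if anything simpler than the $p$-dependence (an isometric shift in the $z$-direction, so no analogue of the $|v_p - v_{p'}| \leq C|p-p'|$ lemma is needed) is the right conceptual check and is consistent with how the paper treats $r$-variations elsewhere.
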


We do not need such a result for the $a$-asymptotic vector since the geometry is invariant in the $a$ direction.

\section{Gluing multiple-level cascades} \label{gluing}
We have assembled all the technical ingredients we need to do gluing, which we take up in this section. We note gluing together cascades with finite gradient trajectories is substantially harder than semi-infinite gradient trajectories. We start with a simplified setup of gluing together 2-level cascades, which captures most of the technical difficulty. The generalization to $n$ level cascades is then a problem of linear algebra.

Our simplified setup is this:
let $u_\pm:\Sigma_\pm \rightarrow (M,J)$ be two rigid (nontrivial) $J$-holomorphic curves. $u_+$ has one negative end, asymptotic to Reeb orbit $\gamma_+$; and $u_-$ has one positive end, asymptotic to Reeb orbit $\gamma_-$. Both of these ends are on the same Morse-Bott torus, and in fact they are connected by a gradient trajectory of length $T$. We will perturb $J$ to $J_\dt$ near this Morse-Bott torus (and nowhere else), and glue $u_\pm$ along with the finite gradient trajectory into a $J_\dt$-holomorphic curve. This construction ignores the other ends of $u_\pm$, which we assume to remain on other Morse-Bott tori, and we only have two levels. The reason for this is that the process of gluing together two $J$-holomorphic curves along a finite gradient trajectory is very technical, and we would like to carry out the heart of the technical construction with as little extra baggage as possible. 

This section is organized as follows: we first introduce the general setup and the process of pregluing. We, as before, show gluing can be realized by solving a system of three equations. We then proceed to discuss the linear theory required to describe the linearization of $\db_{J_\dt}$ over the finite gradient trajectory. After that we show the feedback terms coming from $\Theta_v$ and going into $\Theta_\pm$ (defined in the subsection below) depend nicely on the input - this is the most technical step and will take some careful estimates. Finally after this we will be able to solve the three equations as we did in the previous section. Finally, we explain the generalization to $n$-level transverse index one cascades.

\subsection{Setup and pregluing}
Recall near the Morse-Bott torus we have coordinates $(z,x,y)\in S^1  \times S^1 \times \bb{R}$. For definiteness we assume $\gamma_+$ is the Reeb orbit with $x$ coordinate $x_+$ and $\gamma_-$ is at $x_-$. To simplify notation we assume we have rescaled the $x$ coordinate so that $f(x) = x+C$ over the interval on $S^1$ connecting $x_-$ and $x_+$.

We recall for each $u_\pm$ we choose a cylindrical neighborhood around each of its punctures $(s,t) \in S^1 \times (0,\pm \infty)$. We also recall near our punctures $u_\pm$ has the coordinate form
\[
u_\pm = (a_\pm,z_\pm,x_\pm,y_\pm).
\]
We assume $u_+(s=-\infty,t)\rightarrow (-\infty, t,x_+,0)$ and $u_-(s=\infty,t)\rightarrow (\infty, t,x_-,0)$. For each $u_\pm$ we describe a neighborhood of this map as
\[
W^{2,p,d}(u_\pm^*TM)\oplus T\mcal{J}_\pm \oplus  V'_\pm \oplus V_\pm
\]
where $W^{2,p,d}(u_\pm^*TM)$ is the weighted vector space of vector fields with weight $e^{\pm ds}$ at positive/negative punctures. We use $T\mcal{J}_\pm$ to denote a Teichmuller slice. We use $V'_\pm$ to denote asymptotic vectors at other ends of $u_\pm$, and $V_\pm$ is the end that we are considering, being a 3 dimensional space consisting of vectors $(r,a,p)_\pm$.\\
We recall the important gluing constant 
\[
R:= \frac{1}{5d}\log(1/\dt)
\]
which we think of our gluing parameter.\\
Let $v_\dt$ be a gradient trajectory suitably translated so that over the interval $s\in[0,T/\dt]$, the map $v_\dt$ corresponds to the gradient flow that connects $\gamma_\pm$, in particular this means the $x$ component of $v_\dt$ satisfies
\[
x \,\text{ component of }\, v_\dt(R)=x_-
\]
\[
x\, \text{component of} \, v_\dt(T/\dt-R) =x_+.
\]
We next construct our preguling, similar to the semi-infinte case our pregluing will depend on our asymptotic vectors $(r,a,p)_\pm$.

Given fixed $(r,a,p)_\pm$, let $v_{r,a,p}=(a_v(s),t_v(t),x_v(s),0)$ (we suppress the $\pm$ that should appear in the subscript to ease the notation) denote the suitably translated gradient trajectory, so that when restricted to $s\in [0,T_p/\dt]$ satisfies
\[
v_{r,a,p}(T_p/\dt-R,t) = (a_+(-R,0)+a_+,t+r_+,x_++p_+,0)
\]
and 
\[
v_{r,a,p}(R,t) = (a_v(R),t+r_+,x_-+p_-,0).
\]
We observe that due to the form of $f$ in this region, we have $T_p= T +(p_+-p_-)$. 
We preglue this gradient trajectory to the deformed curve $u_+ + (r,a,p)_+$ at $s=T_p/\dt-R$ of $v_{r,a,p}$. This value of $s$ over $v_{r,a,p}$ is identified with $s_+=-R$ over $u_+$. At the other end we consider $u_-$ translated in $a$ direction so that $a_-(R)= a_v(R)-a_-$. Then we would like to preglue $v_{r,a,p}$ at $s=R$ to $u_-+(r,a,p)_-$ at $s_-=R$, except there is an issue that since $r_+$ is in general different from $r_-$, the curve $v_{r,a,p}(-,t)$ has $z$ component $t+r_+$, while $u_-(s,t)+(r,a,p)_-$ has $t$ component roughly equal to $t+r_-$. To remedy this we need to preglue with a different domain $\Sigma_{r,a,p}$ so that at $s=T_p/\dt-R$ we do our usual pregluing (as in the semi-infinite gradient trajectory case), but at $s_-=s=R$ we glue with a twist: recall $(s_-,t_-) \in \bb{R} \times S^1$ is a cylindrical neighborhood on $u_-$ and $(s,t)$ is the usual coordinate on $v_{r,a,p}$, then we construct the domain $\Sigma_{r,a,p}$ by identifying $t_-+r_- \sim t+r_+$ at $s=s'=R$. Then we can construct a preglued map
\[
u_{r,a,p}:\Sigma_{r,a,p} \longrightarrow (M,J)
\]
that depends on the asymptotic vectors $(r,a,p)_\pm$.
\begin{remark}
We first observe that here the domain depends non-trivially on the asymptotic vectors $(r,a,p)_\pm$, in fact in the case where the domains for $u_\pm$ are stable, changing the pregluing in $r_\pm$, i.e. ``twisting", or changing the length of the cylindrical neck by changing $p_\pm\, \text{or} \, a_\pm$ correspond to changing the complex structure of the domain curve.\\
We also observe here that if we change $a_\pm$ by size $\ep$, then the length of cylindrical length changes by size $\ep$. Similarly if we change $r_\pm$ by $\ep$m in some appropriate sense the complex structure changes within an $\ep$ neighborhood. However when we change $p_\pm$ by $\ep$, the length of the neck changes by $\ep/\dt$. This is in some sense the main source of difficulty in studying this degeneration. Since $\dt << \ep$ they operate on different scales, and care must be taken to ensure all the vectors we encounter have the right sizes.
\end{remark}
\begin{remark}
Here because we only have one end the pregluing is rather simple, when there are multitple ends and/or when we talk about degeneration into cascades more care must be taken to into pregluing, which we defer to subsection \ref{subsection_gluing_multiple_level}.
\end{remark}

\subsection{Linear theory over \texorpdfstring{$v_{r,a,p}$}{v}}
In this subsection we take a detour to study the linearization of $\db_{J_\dt}$ over $v_{r,a,p}$. In particular we find a suitable Sobolev space with suitable exponential weights so that for given $(r,a,p)_\pm$, the said linearization denoted by $D_{J_\dt}$ is surjective with uniformly bounded right inverse as $\dt \rightarrow 0$.\\
After fixing $(r,a,p)_\pm$, we consider 
\[
D_{J_\dt}: W^{2,p,w_p}(v_{r,a,p}^*TM)\longrightarrow W^{1,p,w_p}(v_{r,a,p}^*TM).
\]
Here $w_p$ is a piecewise linear function that is zero at $s=0$ and $s=T_p/\dt$, has a peak at $s=T_p/2\dt$, and has slope $\pm d$. Explicitly it is given by
\[
w_p= -|d(s-T_p/2\dt)|+dT_p/2\dt.
\]
It looks like an inverted $V$. The space $W^{2,p,w_p}(v_{r,a,p}^*TM)$ is a weighted Sobolev space with exponential weight $e^{w_p(s)}$. As is with the case for semi-infinite ends these vector fields have exponential growth as $s\rightarrow \pm \infty$, but we do not care about them because those regions do not make an appearance in our construction.
\begin{remark}
Observe with our choice of $w_p(s)$, which we sometimes denote by $w(s)$ for brevity, over the preglued curve $u_{r,a,p}$, the pregluing takes place at $s=R$ and $s=T/\dt-R$, and at these two values of $s$ where the pregluing takes place, the exponential weight profile of $v_{r,a,p}$ agrees with the exponential weight profile over $u_\pm$.
\end{remark}
\begin{theorem}
$D_{J_\dt}$ as defined above is surjective of index 3. It has a uniformly bounded right inverse as $\dt \rightarrow 0$.
\end{theorem}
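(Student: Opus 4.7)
The plan is to mirror the proof of Proposition \ref{uniform_bounded_inverse_gradient_flow} (the semi-infinite case), adapting it to the finite cylinder $v_{r,a,p}$ with the tent-shaped weight $w_p$. The key observation is that $w_p$ has slope $+d$ on the half $s<T_p/(2\dt)$ and slope $-d$ on the half $s>T_p/(2\dt)$, so on each half the weight restricted to a trivial-cylinder segment has the same affine profile ($e^{\pm ds+T}$) as in Section \ref{linearization}. First I would partition the gradient trajectory into $N$ sub-intervals centered at Reeb orbits $x_1,\ldots,x_N$ on the Morse-Bott torus---chosen so that no sub-interval straddles the midpoint $s=T_p/(2\dt)$---build a glued operator $\#_N D_i$ from the trivial-cylinder linearizations $D_{x_i}$, and assemble an approximate inverse $Q_N$ via the splitting/gluing diagram of Section \ref{linearization}. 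By the final remark of that section, each $D_{x_i}$ with the restricted affine weight is an isomorphism with bounded inverse uniform in $\dt$. The estimate $\|D_{J_\dt}-\#_N D_i\|\leq C(1/N+\dt)$ follows exactly as before, and a Neumann-series correction then turns $Q_N$ into a true right inverse for $D_{J_\dt}$ that is uniformly bounded as $\dt\to 0$. This simultaneously gives surjectivity and the desired uniform bound.

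For the Fredholm index, the conjugated asymptotic operators at $\pm\infty$ are $A_\pm\pm d$, which are invertible for $d$ sufficiently small since the perturbed Reeb orbits at the min and max of $f$ are non-degenerate; hence $D_{J_\dt}$ is Fredholm. I claim $\mathrm{Ind}(D_{J_\dt})=3$. The kernel is at least three-dimensional, spanned by the geometric symmetries $\p_a v_{r,a,p}$ (translation in the symplectization), $\p_s v_{r,a,p}$ (reparametrization of $s$), and $\p_t v_{r,a,p}$ (reparametrization of $t$). Each of these is a bounded vector field along $v_{r,a,p}$, and because $w_p\to-\infty$ at $\pm\infty$ the weighted measure $e^{p w_p}\,ds\,dt$ is integrable, so bounded sections lie in $W^{2,p,w_p}$; surjectivity then forces the cokernel to vanish, giving $\mathrm{Ind}\geq 3$. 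The opposite inequality I would obtain by a direct spectral-flow computation for the family of conjugated asymptotic operators interpolating between $A_- -d$ and $A_+ +d$, or equivalently by viewing $D_{J_\dt}$ as the gluing of two semi-infinite operators from Proposition \ref{uniform_bounded_inverse_gradient_flow} (each of index $0$) joined by a three-dimensional matching datum at the midpoint.

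The main obstacle I expect is verifying that the corner of $w_p$ at $s=T_p/(2\dt)$ does not spoil the uniform control of the building-block inverses $Q_{x_i}$. This is handled by choosing the partition so that no single trivial-cylinder segment straddles the midpoint; alternatively, by smoothing the corner in a $\dt$-independent neighborhood, which alters $w_p$ by a bounded function and so changes the weighted norms only up to equivalence. With that resolved, the uniform-boundedness estimates from Section \ref{linearization} apply to each $Q_{x_i}$, and the splitting and gluing maps are uniformly bounded independently of $\dt$ and $N$, so the construction of $Q_N$ goes through verbatim.
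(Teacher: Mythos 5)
Your proposal is correct and follows essentially the same route as the paper: the paper likewise obtains the uniformly bounded right inverse by gluing (at the midpoint $s=T_p/2\dt$) the two monotone-weight operators already analyzed in Section \ref{linearization}, and computes the index by the same spectral-flow count of the three eigenvalues crossing zero after conjugating by $e^{w_p(s)}$. Your extra step of exhibiting the explicit three-dimensional kernel ($\p_a$, $v_*\p_s$, $\p_t$) is consistent with the paper, which spans the kernel by $\p_z,\p_a,\p_v$ immediately after this theorem.
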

\begin{proof}
We can view $D_{J_\dt}$ as the gluing of two operators $D_1$ and $D_2$. The operators $D_i$ are both defined over $W^{2,p,w_i}(v_{r,a,p}^*TM)$, except they use different exponential weights. We let $w_1(s)=d(T_p/\dt-s)$,  and $w_2(s)=ds$
We glue $D_1$ and $D_2$ together at $s=T_p/2\dt$ to recover $D_{J_\dt}$. By results in Section \ref{linearization}, $D_i$ are both surjective with uniformly bounded right inverse $Q_i$, hence as before we can construct approximate right inverse of $D_{J_\dt}$ via $Q_1 \# Q_2$, hence $D_{J_\dt}$ is surjective with uniformly bounded right inverse as $\dt \rightarrow 0$.

The index computation is done by conjugating to $W^{2,p}(v_{r,a,p}^*TM)$ via multiplication by $e^{w_p(s)}$. There we observe by shape of $w_p(s)$ there are 3 eigenvalues that cross $0$ as $s$ goes from $-\infty$ to $\infty$, hence by spectral flow this operator has index 3.
\end{proof}
We now proceed to describe the kernel of $D_{J_\dt}$ and a codimensional $3$ subspace $H_0$ of its domain so that $D_{J_\dt}|_{H_0}$ is an isomorphism with uniformly bounded inverse as $\dt \rightarrow 0$. This will be crucial for us when we try to solve equations over $W^{2,p,w}(v_{r,a,p}^*TM)$.

Consider the vector fields
\[
\p_z,\p_a \in W^{2,p,w_p}(v_{r,a,p}^*TM).
\]
They are asymptotically constant, but they live in $W^{2,p,w_p}(v_{r,a,p}^*TM)$ because as $|s| \rightarrow \infty$ the Sobolev norm is exponentially suppressed (written as is they still have very large norm, of order $e^{\frac{dT_p}{2\dt}}$.) Also observe they live in the kernel of $D_{J_\dt}$.
Recall from the differential geometry section
\[
v_* \partial_s = e^{\delta f(x(s))}\partial_a + \delta f'(x) \partial_x.
\]
This vector field also lives in the kernel of $D_{J_\dt}$, and is linearly independent of $\{\p_z,\p_a \}$, we modify it to have more palatable form. Consider
\[
\frac{v_* \partial_s -\p_a}{\dt}=  [e^{\delta f(x(s))}-1]/\dt\partial_a+f'(x) \partial_x.
\]
This still lives in the kernel of $D_{J_\dt}$, and we see from Taylor expansion that the coefficient in front of $\p_a$ is bounded above as $\dt \rightarrow 0$. We defined the vector field $\p_v$ to be $a\frac{v_* \partial_s -\p_a}{\dt} +b\p_s$ where $a,b$ are constants (both of order 1, bounded above and away from 0) chosen so that $\partial_v(s=T_p/2\dt,t)=\p_x$.
Thus the kernel of $D_{J_\dt}$ is spanned by $\{\p_z,\p_a,\p_v\}$. We construct a complement of this space. Consider the linear functionals $L_*, *=z,a,v: W^{2,p,w}(v_{r,a,p}^*TM)\rightarrow \bb{R}$ defined by
\[
L_*: \phi \in W^{2,p,w}(v_{r,a,p}^*TM)\longrightarrow  \int_0^1 \la \phi(s,t), \p_*\ra dt \in \bb{R}.
\]
We define the complement subspace of $ker D_{J_\dt}$, which we write as $H_0$, via
\[
H_0 := \{ \phi \in W^{2,p,w}(v_{r,a,p}^*TM)| L_*(\phi)=0,*=z,a,v\}.
\]
We next show:
\begin{proposition}
The projection map
\begin{equation*}
    \Pi: W^{2,p,w}(v_{r,a,p}^*TM) \longrightarrow H_0
\end{equation*}
has uniformly bounded norm as $\dt \rightarrow 0 $. The map $\Pi$ also commutes with $D_{J_\dt}$.
\end{proposition}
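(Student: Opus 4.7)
My plan is to give an explicit formula for $\Pi$ and then bound its operator norm by showing that two large exponential factors cancel exactly. First I would observe that $\ker D_{J_\dt} = \mathrm{span}\{\p_z,\p_a,\p_v\}$, and that at the midpoint $s_0 := T_p/(2\dt)$, where the weight $e^{w(s)}$ attains its peak value $e^{dT_p/(2\dt)}$, the three kernel vectors specialize to the orthonormal triple $\{\p_z,\p_a,\p_x\}$ by the construction of $\p_v$. Hence the $3\times 3$ matrix $(L_*(\p_{*'}))_{*,*'}$ equals the identity, and the projection onto $H_0$ along $\ker D_{J_\dt}$ is given explicitly by
\[
\Pi(\phi) = \phi - L_z(\phi)\,\p_z - L_a(\phi)\,\p_a - L_v(\phi)\,\p_v.
\]
The uniform bound $\|\Pi\|\leq C$ therefore reduces to showing $|L_*(\phi)|\cdot\|\p_*\|_{W^{2,p,w}}\leq C\|\phi\|_{W^{2,p,w}}$ with $C$ independent of $\dt$.

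Next I would establish the two matching estimates. Since $|\p_z|, |\p_a|$ are constant and $|\p_v|$ is uniformly bounded in $\dt$ (using the Taylor expansion $(e^{\dt f(x)}-1)/\dt \to f(x)$ to control the $\p_a$-component of $\p_v$), direct integration of the symmetric tent profile gives
\[
\|\p_*\|_{W^{2,p,w}} \leq C\left(\int_0^{T_p/\dt} e^{pw(s)}\,ds\right)^{1/p} \leq C e^{dT_p/(2\dt)}.
\]
For the matching bound on $\phi$, I apply Sobolev embedding $W^{2,p}\hookrightarrow C^0$ on the unit box $[s_0-1/2,s_0+1/2]\times S^1$, on which $e^{w(s)}$ is essentially constant equal to $e^{dT_p/(2\dt)}$. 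This yields
\[
|\phi(s_0,t)| \leq C\,\|\phi\|_{W^{2,p}([s_0-1/2,s_0+1/2]\times S^1)} \leq C e^{-dT_p/(2\dt)}\,\|\phi\|_{W^{2,p,w}},
\]
and hence $|L_*(\phi)|\leq C e^{-dT_p/(2\dt)}\,\|\phi\|_{W^{2,p,w}}$. Multiplying the two bounds, the exponential factors cancel exactly.

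The commutation $D_{J_\dt}\circ\Pi = D_{J_\dt}$ is then immediate, since $\p_*\in\ker D_{J_\dt}$:
\[
D_{J_\dt}(\Pi(\phi)) = D_{J_\dt}(\phi) - \sum_* L_*(\phi)\,D_{J_\dt}(\p_*) = D_{J_\dt}(\phi).
\]
The main technical subtlety is the exact cancellation of the two exponential factors. It depends on the tent-shape of $w(s)$ concentrating the weight at a single point and on $L_*$ being evaluated precisely at the peak $s_0$; if either were off (say, if $L_*$ were instead integrated over the whole cylinder) one would pick up an uncontrolled factor of $\dt^{-\alpha}$ as $\dt\to 0$.
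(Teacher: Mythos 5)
Your proposal is correct and follows essentially the same route as the paper: the explicit formula $\Pi(\phi)=\phi-\sum_*L_*(\phi)\p_*$, the pointwise bound $|L_*(\phi)|\leq Ce^{-dT_p/2\dt}\|\phi\|$ from Sobolev embedding at the peak of the weight, the matching bound $\|\p_*\|\leq Ce^{dT_p/2\dt}$ from integrating the tent profile, and commutation because the subtracted vectors lie in $\ker D_{J_\dt}$. The only addition is your explicit check that $(L_*(\p_{*'}))$ is the identity, which the paper leaves implicit.
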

\begin{proof}
We first observe $\Pi$ is defined by
\[
\Pi(\phi) = \phi -\sum_* L(\phi) \p_*.
\]
We now estimate the norm of this operator. By the Sobolev embedding theorem
\begin{equation*}
    W^{2,p,w}(v_{r,a,p}^*TM) \hookrightarrow C^0(v_{r,a,p}^*TM).
\end{equation*}
In view of the fact we have exponential weights, we have the upper bound
\[
L_*(\phi) \leq C e^{-dT_p/2\dt}\|\phi\|_{W^{2,p,w}}.
\]
Hence to estimate the norm of $\Pi$ it suffices to calculate
\begin{align*}
    &\frac{\|L_*(\phi) \partial_*\|}{\|\phi\|}\\
    \leq& C e^{-dT_p/2\dt} \|\p_*\|\\
    \leq& C e^{-dT_p/2\dt}\left[ \int_0^{T_p/2\dt}e^{ds}ds + \int_{-\infty}^0 e^{ds}ds\right]\\
     \leq & C e^{-dT_p/2\dt}\frac{(e^{dT_p/2\dt})}{d}
    \leq C.
\end{align*}
We only integrated from $(-\infty, T/2\dt)$ because the integral over $(T/2\dt,\infty)$ takes the same form. And hence we see readily the operator norm of $\Pi$ is uniformly bounded above independently of $\dt$.\\
The fact that $\Pi$ commutes with $D_{J_\dt}$ follows from the fact $\Pi$ subtracts off elements that are in the kernel of $D_{J_\dt}$.

Hence we conclude $\Pi \circ Q$ is a uniformly bounded inverse to $D_{J_\dt}$ restricted to $H_0$.
\end{proof}
\subsection{Deforming the pregluing}
Recall that given a pair of asymptotic vectors over $u_\pm$, which we denote by $(r,a,p)_\pm$, we constructed a preglued map $u_{r,a,p}: \Sigma_{r,a,p} \rightarrow M$. Next given vector fields with exponential decay, $\psi_\pm \in W^{2,p,d}(u_\pm^*TM)$, and $\phi \in W^{2,p,w}(v_{r,a,p}^*TM)$, we use them to deform $u_{r,a,p}$. Technically the space of deformations of $u_\pm$ also includes $T\mcal{J}_\pm \oplus V_\pm'$, but we suppress them from our notation because these deformations happen away from the region where the pregluing takes place. For $s\in [R,T_p/\dt-R]$ considered over $v_{r,a,p}$, we  define the cut off functions
\begin{equation*}
    \beta_- =\beta_{[-\infty, 2R;R/2]}
\end{equation*}
\[
\beta_+:=\beta_{[R/2;T_p/\dt-2R,\infty]}
\]
\[
\beta_v:=\beta_{[R/2;R,T_p/\dt-R;R/2]}.
\]
We would like to deform $u_{r,a,p}$ by $\beta_+\psi_++\beta_-\psi_-+\beta_v\phi$, however there is one subtlety that when we constructed $\Sigma_{r,a,p}$ there was a twist at $s=R$ when we identified $t_-+r_- \sim t+r_+$ when we glued $v_{r,a,p}$ with $u_-$. Since $\beta_-$ cuts off $\psi_-$ within the interior of $v_{r,a,p}$, the only effect of this is that when we view the equation over $v_{r,a,p}$ instead of seeing $\psi_-(s,t)$, the term we see is $\psi(s,t+(r_+-r_-))$. Aside from this point, as before we can add the vector field $\beta_+\psi_++\beta_- \psi_- +\beta_v\phi$ to $u_{r,a,p}$, and apply the $\db_{J_\dt}$ operator. Using the same ``splitting up the equations" trick as we did for semi-infinite trajectories we get:
\begin{proposition}
The deformed curve $u_{r,a,p} + \beta_+\psi_++\beta_- \psi_- +\beta_v\phi$, where $\psi_\pm \in W^{2,p,d}(u_\pm^*TM)\oplus T\mcal{J}\oplus V'_\pm$ implicitly includes the variations of complex structure away from the gluing region, is $J_\dt$-holomorphic iff the following 3 equations are satisfied
\[
\Theta_v(\phi,\psi_\pm)=0
\]
\[
\Theta_\pm(\phi,\psi_\pm)=0
\]
where $\Theta_v$ is of the form
\[
D_{J_\dt} \phi + \beta_\pm' \psi_\pm + \mcal{F}_v(\phi,\psi_\pm).
\]
Here $\mcal{F}_v$ is of the same form as semi-infinite case (except at the end near $s=R$ we see effects of $\psi_-$ and near $s=T_p/\dt-R$ we see the effect of $\psi_+$). The equations $\Theta_\pm$ take the form
\[
\Theta_+ = D_J\psi_+ + D_J(r,a,p)_+ + \mcal{F}_+ + \mcal{E}_+ +\beta_{v}'\phi
\]
\[
\Theta_- = D_J\psi_- + D_J(r,a,p)_- + \mcal{F}_- + \mcal{E}_- +\beta_{v}'\phi
\]
where the scripted expressions $\mcal{F}_\pm, \mcal{E}_\pm$ taking the same form as they did in the semi-infinite case. Implicit in the above notation is also the variation of the domain complex structure $u_\pm$, which we denote by $\dt j_\pm$ when we need to make them explicit.
\end{proposition}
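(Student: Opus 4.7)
The plan is to follow verbatim the strategy of Step 0 in the proof of Proposition 6.5 (the semi-infinite case), with the only new ingredients being (i) that there are now two gluing regions, one near $s=R$ and one near $s=T_p/\dt-R$ of $v_{r,a,p}$, and (ii) that the domain $\Sigma_{r,a,p}$ is constructed with a twist $t_-+r_- \sim t+r_+$ at the gluing near $s=R$. I would first apply $\db_{J_\dt}$ to the deformation $u_{r,a,p}+\beta_+\psi_++\beta_-\psi_-+\beta_v\phi$ on the neck $s\in[R,T_p/\dt-R]$ viewed in the $v_{r,a,p}$ coordinates. Using $\p_s v_{r,a,p}+J_\dt(v_{r,a,p})\p_t v_{r,a,p}=0$ and expanding
\[
J_\dt(v_{r,a,p}+\beta_\pm\psi_\pm+\beta_v\phi)=J_\dt(v_{r,a,p})+\p_{\beta_\pm\psi_\pm}J_\dt(\cdot)+\p_{\beta_v\phi}J_\dt(\cdot)+G,
\]
with $G$ a remainder bilinear in its arguments (as in the semi-infinite case), I can pull the cut-off functions out via the product rule and rewrite the resulting equation pointwise in the form
\[
\beta_v\Theta_v+\beta_+\Theta_++\beta_-\Theta_-=0
\]
where each $\Theta_*$ is declared by grouping so that $\Theta_v$ carries no derivatives of $\psi_\pm$ and $\Theta_\pm$ carries no derivatives of $\phi$. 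This is the same algebraic packaging as in Proposition 6.5 and ensures each $\Theta_*$ is individually elliptic in its own variable, which will later allow elliptic regularity to be applied to solutions.

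Next, I would identify the explicit form of each $\Theta_*$. For $\Theta_v$, the recipe from Equation (\ref{theta_v_elliptic}) is applied at each of the two ends of the neck: the derivatives $\beta_\pm'\psi_\pm$ appear from the cut-offs, and the nonlinear remainders contribute to $\mcal{F}_v$. The twist in $\Sigma_{r,a,p}$ shows up only in that, near $s=R$, the $\psi_-$ seen in the neck is $\psi_-(s,t+(r_+-r_-))$ instead of $\psi_-(s,t)$; this reparametrization is smooth in $(r_+-r_-)$ and is absorbed into $\mcal{F}_v$ and into the identification of Sobolev norms via an isometry of $W^{2,p,d}$. For $\Theta_\pm$, I would extend the definition into the interior of $u_\pm$ as in the semi-infinite case, linearize $J_\dt$ about $u_\pm+(r,a,p)_\pm$, and pick out the term $D_J(r,a,p)_\pm$ from the asymptotically constant part of the vector field. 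The quadratic remainders go into $\mcal{F}_\pm$ and the pregluing/$J$-vs-$J_\dt$ errors go into $\mcal{E}_\pm$, giving exactly the forms listed. Implicit complex structure variations $\dt j_\pm\in T\mcal{J}_\pm$ enter only away from the cylindrical neck and contribute an additional interior quadratic piece in the usual way.

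Finally, for the ``if and only if'' statement I would note that the decomposition $\beta_v\Theta_v+\beta_+\Theta_++\beta_-\Theta_-=0$ holds pointwise on the overlap regions of the cut-offs (the only places where more than one $\beta_*$ is nonzero), but the three supports are arranged so that on each point at most two $\beta_*$ overlap, and since each $\Theta_*$ has been extended to its natural full domain, the vanishing of all three $\Theta_*$ is both sufficient and necessary for $\db_{J_\dt}$ of the deformed curve to vanish identically: sufficiency is immediate from the construction, and necessity follows because on each of the three regions where exactly one $\beta_*$ is supported, $\db_{J_\dt}=0$ forces the corresponding $\Theta_*$ to vanish, and by continuity and the prescribed extensions this vanishing propagates to the overlap regions as well. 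The main obstacle I anticipate is the careful bookkeeping around the twist identification and the presence of two gluing necks rather than one; no new analytic input beyond that of Proposition 6.5 is required, but one must keep track of the asymptotic vectors $(r,a,p)_\pm$ and the reparametrization of $\psi_-$ so that the error terms $\mcal{F}_v,\mcal{F}_\pm,\mcal{E}_\pm$ truly inherit the same uniform size bounds (in $\dt$, $R$, and the input norms) that were derived in the semi-infinite case.
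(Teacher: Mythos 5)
Your proposal follows the same route as the paper: the paper gives no independent proof of this proposition and simply invokes the "splitting up the equations" construction from the semi-infinite case (Step 0 of the earlier proposition), with the two modifications you identify — two gluing necks and the twist $t_-+r_-\sim t+r_+$, which enters only as the reparametrization $\psi_-(s,t+(r_+-r_-))$ absorbed into $\Theta_v$. Your account of the forms of $\Theta_v$, $\Theta_\pm$, $\mcal{F}_*$, $\mcal{E}_*$ and of the sufficiency direction is exactly what the paper intends.

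One caveat: your argument for the "only if" direction is not valid as written. Holomorphicity gives the pointwise identity $\beta_v\Theta_v+\beta_+\Theta_++\beta_-\Theta_-=0$, which forces each $\Theta_*$ to vanish only where the other cut-offs vanish; vanishing of a smooth function on one region does not "propagate by continuity" to the overlap regions, and on the overlaps the sum can vanish without each summand vanishing. The paper does not prove this direction here either — the honest content of the equivalence is sufficiency together with the later surjectivity argument (Section on surjectivity of gluing), where one shows that any nearby $J_\dt$-holomorphic curve can be \emph{realized} as a solution of the individual equations by explicitly constructing the extensions of the truncated vector fields via the boundary-value isomorphisms $(\Pi_-,\Pi_+,D_\dt')$. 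So the gap is in a direction the paper also defers, but you should not present the propagation step as a proof.
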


\subsection{Solving the equations \texorpdfstring{$\Theta_\pm, \Theta_v$}{Theta}}
\subsubsection{Preamble}
In this very lengthy subsection we show the system $\Theta_\pm=0, \Theta_v=0$ has a solution with nice properties. Since this is a long process we give a preamble: 
\begin{itemize}
    \item We first show as before given fixed tuple of input data $(\psi_\pm, (r,a,p)_\pm)$ there exists a unique solution $\phi(r,a,p,\psi_\pm)\in H_0$ to $\Theta_v$.
    \item Then we verify that when we vary the input, $\psi_\pm$ the solution $\phi$ behaves nicely (in the sense that its input into the equations into $\Theta_\pm$ varies differentiably, as was the case for the gluing of semi-infinite gradient trajectories.)
    \item Then we verify as we change $p_\pm$ the solution is well behaved. This is the crux of the matter, because when we vary $p_\pm$ what is actually happening is that we are drastically changing the pregluing by dramatically lengthening/shortening the length of the neck. We do this via the following process:
    \begin{itemize}
        \item We make sense of what it means for $\phi$ to be well behaved when we vary $p_\pm$.
        \item We translate $\Theta_v$ into the vector space $W^{2,p}(v^*TM)$ by removing exponential weights.
        \item We write the solution $\phi$ as a sum of two terms: an approximate solution $\gamma_+\zeta_+ + \gamma_- \zeta_-$ to $\Theta_v$ that behaves nicely when we vary $p_\pm$ and a correction to this approximate solution $\dt \zeta$, we show $\dt \zeta$ is extremely small. Here $\gamma_\pm$ are cut off functions (and definitely not Reeb orbits).
        \item We consider the behaviour of $\dt \zeta$ as we vary $p_\pm$. We consider two ways $p_\pm$ can vary called ``lengthening/stretching" and ``translation". We show $\dt \zeta$ varies nicely with $p_\pm$, hence the entire solution $\phi$ varies nicely with $p_\pm$.
    \end{itemize}
    \item We finally show as a much easier step $\phi$ varies nicely with $(r,a)$.
    \item Using all of the above steps, we solve $\Theta_\pm$ with the contraction mapping principle.
    
\end{itemize}
\subsubsection{Solution to \texorpdfstring{$\Theta_v$}{Theta}}
\begin{proposition}
For $\ep>0$ sufficiently small, for all $\dt>0$ sufficiently small, for fixed tuple $(\psi_\pm,(r,a,p)_\pm)$ with norm less than $\ep>0$, there exists a unique solution $\phi(\psi_\pm,r,a,p_\pm)\in H_0$ to $\Theta_v=0$ of size $C\ep/R$. Moreover the regularity of $\phi$ can be improved to $W^{3,p,w}(v_{r,a,p}^*TM)$ with its norm similarly bounded above by $C\ep/R$.
\end{proposition}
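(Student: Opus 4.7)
The plan is to follow exactly the contraction mapping argument used in the semi-infinite case, except now working in the finite-neck space $H_0 \subset W^{2,p,w}(v_{r,a,p}^*TM)$ where $D_{J_\delta}$ admits the uniformly bounded inverse $\Pi \circ Q$ constructed just above. Define the map
\[
I: H_0 \longrightarrow H_0, \qquad I(\phi) := \Pi \circ Q \bigl( -\beta_\pm' \psi_\pm - \mathcal{F}_v(\phi,\psi_\pm) \bigr).
\]
A fixed point of $I$ inside $H_0$ is precisely a solution of $\Theta_v = 0$ in $H_0$, and we will produce one by the Banach fixed point theorem on the ball $B_{C\epsilon/R}(0) \subset H_0$.

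First I would estimate the inhomogeneous term. The cut off $\beta_\pm'$ is supported on an interval of length $O(R)$ with derivative bounded by $C/R$, so viewed as elements of $W^{1,p,w}(v_{r,a,p}^*TM)$ (and recalling the weight $w_p$ agrees with the weight on $u_\pm$ in the pregluing region, so that the small twist $t_-+r_-\sim t+r_+$ only reparametrizes in $t$ and does not affect norms), we have $\|\beta_\pm' \psi_\pm\|_{W^{1,p,w}} \le C\epsilon/R$. The quadratic term is controlled by the bounds on $\mathcal{F}_v$ from Remark \ref{quadratic_term}: for $\phi \in B_{C\epsilon/R}(0)$ one has $\|\mathcal{F}_v(\phi,\psi_\pm)\|_{W^{1,p,w}} \le C(\|\phi\| + \|\psi_\pm\|)\|\phi\| \le C\epsilon \|\phi\|$. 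Combined with the uniform bound on $\Pi \circ Q$, this gives
\[
\|I(\phi)\|_{W^{2,p,w}} \le C\epsilon/R + C\epsilon\|\phi\|_{W^{2,p,w}},
\]
so $I$ maps $B_{C\epsilon/R}(0)$ to itself once $\epsilon$ is small. The contraction estimate is analogous: for $\phi,\phi' \in B_{C\epsilon/R}(0)$, the quadratic structure of $\mathcal{F}_v$ (and in particular the pointwise bounds on $g,h$ and their derivatives) yields
\[
\|I(\phi) - I(\phi')\| \le C \max\bigl(\|\phi\|,\|\phi'\|,\|\psi_\pm\|\bigr)\|\phi-\phi'\| \le C\epsilon \|\phi-\phi'\|,
\]
which for $\epsilon$ small is a strict contraction. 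Banach's theorem then produces a unique fixed point $\phi \in H_0$; plugging this back into $\phi = \Pi Q(-\beta_\pm'\psi_\pm - \mathcal{F}_v(\phi,\psi_\pm))$ and taking norms recovers $\|\phi\|_{W^{2,p,w}} \le C\epsilon/R$, as claimed.

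For the $W^{3,p,w}$ upgrade, I would apply the interior elliptic regularity statement of Theorem \ref{Mcduff_elliptic} (taken with $l=2$) to the form of $\Theta_v=0$ written in Equation \ref{theta_v_elliptic}, in which $\phi$ weakly satisfies an equation of the shape $\partial_s \phi + J_\delta(\cdots) \partial_t \phi + (\text{lower order smooth in }\phi,\psi_\pm) = 0$ with smooth coefficients whose $W^{2,p}$ norms are controlled uniformly in $\delta$. The multiplier $e^{w_p}$ is smooth and bounded on any fixed slab, so the weight plays no role in the local bootstrap; iterating on overlapping slabs of fixed size and reassembling with the weight yields $\phi \in W^{3,p,w}$ with norm bounded by the $W^{2,p,w}$ norm plus the $W^{2,p,w}$ norm of the nonlinearity, hence by $C\epsilon/R$.

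The one place where care is needed, and what I would view as the main technical point, is the compatibility of norms under the twisted pregluing: the cut offs $\beta_\pm'$ live on $v_{r,a,p}$ while $\psi_\pm$ lives on $u_\pm$, and the identification $t_-+r_- \sim t+r_+$ at $s=R$ makes the feedback term appear as $\psi_-(s, t+(r_+-r_-))$ on the $v$ side. Because this is a pure $t$-reparametrization and the weight $w_p$ was chosen so that $e^{w_p}$ agrees at the pregluing seams with $e^{\pm ds}$ on $u_\pm$, the Sobolev norms transfer with constants independent of $\delta$ and of $(r,a,p)_\pm$ in the $\epsilon$-ball, and all of the above estimates go through uniformly. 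Everything else is a direct translation of the semi-infinite argument of Section \ref{semiglue}, now using $\Pi \circ Q$ in place of the plain inverse.
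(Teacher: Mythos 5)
Your proposal is correct and follows essentially the same route as the paper: the fixed-point map $I(\phi)=\Pi\circ Q(-\beta_\pm'\psi_\pm-\mcal{F}_v(\phi,\psi_\pm))$ on $H_0$, the $C\ep/R$ bound on the inhomogeneous term, the quadratic contraction estimate, and elliptic regularity for the $W^{3,p,w}$ upgrade all match the paper's argument (the paper runs the contraction on the ball $B_\ep$ and extracts the sharper $C\ep/R$ bound from the fixed-point equation afterward, whereas you contract directly on $B_{C\ep/R}(0)$ — an immaterial difference).
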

\begin{proof}
Let $Q$ denote the uniformly bounded right inverse to $D_{J_\dt}$. Consider $\Pi \circ Q: W^{1,p,w}(v_{r,a,p}^*TM) \rightarrow H_0$. We observe this operator has uniformly bounded norm as $\dt \rightarrow 0$. Further we claim this is an inverse to $D_{J_\dt}|_{H_0}$. To see this first oberseve $D_{J_\dt}|_{H_0}$ is an isomorphism, as it has the same image as $D_{J_\dt}|_{W^{2,p,w}(v_{r,a,p}^*TM)}$ and has index 0. Hence it suffices to show $\Pi \circ Q$ is a right inverse for $\phi \in H_0$. This follows from
\[
D_{J_\dt} \Pi \circ Q  \phi = \Pi(\phi) =\phi.
\]
Hence we consider the map $I: H_0 \rightarrow H_0$ defined by
\[
I(\phi) = \Pi\circ Q (- \beta_\pm' \psi_\pm -\mcal{F}_v(\phi,\psi_\pm)).
\]
(For ease of notation we will write $\psi_\pm$ when both $\psi_+$ and $\psi_-$ appear in similar ways).
It is apparent that a solution $\phi\in H_0$ to $\Theta_v$ is equivalent to a fixed point of $I(\phi)$. We show that a fixed point in an epsilon ball $B_\ep \in H_0$ exists and is unique via the Banach contraction mapping principle. Since $\psi_\pm$ has norm $\leq \ep$, we have $I(\phi)\leq C(\ep/R +C\ep^2)$ hence it sends $B_\ep$ to itself. That $I$ satisfies the contraction property follows from the fact $\mcal{F}_v$ is quadratic in $\phi,\psi_\pm, \p_t \phi, \p_t\psi_\pm$, as well as the fact $\|\psi_\pm\|\leq \ep$. Hence it follows from contraction mapping principle there exists unique $\phi(\psi_\pm,r,a,p_\pm)$ solving $\Theta_v$ in $B_\ep$. We can use the equation itself to estimate the size of $\phi$ as before and get the size estimate of $C\ep/R$. The improvement to $W^{3,p,w}$ and its norm bound follows from elliptic regularity.
\end{proof}
\subsubsection{How \texorpdfstring{$\phi(\psi_\pm,r,a,p)$}{phi} varies with \texorpdfstring{$\psi_\pm$}{psi}}
For fixed $(r,a,p)_\pm$ we consider the variation of $\phi(\psi_\pm,(r,a,p)_\pm)$ as above with respect to $\psi_\pm$. As we recall from the above the expression $\frac{d\phi}{d\psi_\pm}$ is a linear operator $W^{2,p,w}(u_\pm^*TM)\rightarrow W^{2,p,w}(v_{r,a,p}^*TM)$ and has its sized measured via the operator norm. When we write below $\beta_\pm'$ we really mean the multiplication map operating between Sobolev spaces. As in the case of semi-infinite gradient trajectories, we have:
\begin{proposition}
$\left\|\frac{d\phi}{d\psi_\pm}\right \|_{W^{2,p,w}(u_\pm^*TM) \rightarrow W^{2,p,w}(v_{r,a,p}^*TM)} \leq C\ep$.
\end{proposition}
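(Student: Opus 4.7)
The plan is to adapt, essentially verbatim, the argument carried out for the $*=\psi$ case of the semi-infinite gradient trajectory gluing. The starting point is the fixed-point equation characterizing $\phi$:
\[
\phi = \Pi \circ Q\bigl(-\beta_\pm'\psi_\pm - \mcal{F}_v(\phi,\psi_\pm)\bigr),
\]
where $Q$ is the right inverse of $D_{J_\dt}$ constructed in the previous subsection and $\Pi$ is the projection to $H_0$. We already showed that both $Q$ and $\Pi$ have operator norms bounded independently of $\dt$, so the composition $\Pi\circ Q:W^{1,p,w}(v_{r,a,p}^*TM)\to H_0$ is uniformly bounded as $\dt\to 0$.

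Next I would formally differentiate this equation with respect to $\psi_\pm$, interpreting $d\phi/d\psi_\pm$ as the Fréchet derivative, which by Remark \ref{Frechet} really means $(1-\mcal{D}_\phi)^{-1}\mcal{D}_{\psi_\pm}$ applied to the nonlinear right-hand side. The multiplication operator $\beta_\pm'$ is, by our conventions for cutoff functions, supported in a region of length of order $R$ and uniformly bounded by $C/R$ in $C^0$, so as an operator $W^{2,p,d}(u_\pm^*TM)\to W^{1,p,w}(v_{r,a,p}^*TM)$ it has norm at most $C/R$. Using the form of $\mcal{F}_v$ from Remark \ref{quadratic_term} together with the a priori bounds $\|\phi\|_{W^{2,p,w}},\,\|\psi_\pm\|_{W^{2,p,d}}\leq C\ep$ and the Sobolev embedding $W^{2,p}\hookrightarrow C^1$, each of the partial derivatives $\partial_{\psi_\pm}\mcal{F}_v$ and $\partial_\phi\mcal{F}_v$ is controlled as an operator between the relevant Sobolev spaces with norm bounded by $C\ep$ (the terms involving $\p_t$ of the derivative $d\phi/d\psi_\pm$ are handled exactly as in Remark \ref{Frechet}).

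Taking operator norms in the differentiated equation, the above estimates combine to give
\[
\left\|\frac{d\phi}{d\psi_\pm}\right\| \;\leq\; C\!\left(\frac{1}{R} + C\ep + C\ep\left\|\frac{d\phi}{d\psi_\pm}\right\|\right).
\]
For $\ep>0$ sufficiently small so that $C\ep<\tfrac12$, and for $\dt$ small enough that $1/R\leq \ep$, we may absorb the last term and rearrange to conclude $\|d\phi/d\psi_\pm\|\leq C\ep$, which is the claim.

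I expect no serious obstacle beyond bookkeeping: the structural ingredients (uniform bound for $\Pi\circ Q$, quadratic structure of $\mcal{F}_v$, $C^1$-bound on $\phi$ and $\psi_\pm$ via Sobolev embedding, and the Fréchet-derivative interpretation) have all been established, and the only novelty in the two-level setting is that we now have $\psi_+$ and $\psi_-$ entering $\Theta_v$ at the two ends of $v_{r,a,p}$. Since these contributions appear symmetrically in the fixed-point equation and in the derivative computation, the estimates for $d\phi/d\psi_+$ and $d\phi/d\psi_-$ are separately identical to the semi-infinite case, and the conclusion follows.
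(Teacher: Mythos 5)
Your proposal is correct and follows essentially the same route as the paper: differentiate the fixed-point equation $\phi=\Pi\circ Q(-\beta_\pm'\psi_\pm-\mcal{F}_v)$, use the uniform bound on $\Pi\circ Q$, the $C/R$ bound on the cutoff-derivative term, the quadratic structure of $\mcal{F}_v$ together with the $C^1$ (Sobolev) bounds on $\phi,\psi_\pm$, and absorb the $C\ep\|d\phi/d\psi_\pm\|$ term. No gaps; this matches the paper's argument.
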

\begin{proof}
Consider the fixed point equation 
\[
\phi = \Pi\circ Q (-\beta_\pm' \psi_\pm - \mcal{F}_v(\phi,\psi_\pm)).
\]
We differentiate both sides w.r.t $\psi_\pm$ to get (see Remark \ref{Frechet} for this kind of operation)
\[
\frac{d\phi}{d\psi_\pm} = -\Pi \circ Q (\beta_\pm' +\frac{d}{d\psi_\pm} \mcal{F}_v).
\]
Since we know $\mcal{F}_v$ is a polynomial expression of $\psi_\pm,\phi, \p_t\phi,\psi_\pm$, we can bound (norm wise) 
\[
\left \|\Pi\circ Q \frac{d}{d\psi_\pm} \mcal{F}_v\right \| \leq C (\|\phi\| + \|\psi_\pm\|) +C \ep \left \|\frac{d\phi}{d\psi_\pm}\right \|
\]
where in the above equation, the norm for $\|\Pi\circ Q \frac{d}{d\psi_\pm} \mcal{F}_v\|$ and $\|\frac{d\phi}{d\psi_\pm}\|$ are operator norms, and $\|\phi\|$ and $\|\psi_\pm\|$ are $W^{2,p,w}$ norms.\\
Since $\psi_\pm$ have $C^1$ bounds of size $\leq C\ep$, we can move the term $d\phi/d\psi_\pm$ to the left and get
\[
(1-C\ep)\left\| \frac{d\phi}{d\psi_\pm} \right\| \leq C (1/R)
\]
which implies our conclusion.
\end{proof}
\subsubsection{Variation of \texorpdfstring{$\phi$}{phi} w.r.t. \texorpdfstring{$p_\pm$}{p}}
In this subsubsection we study the variation of $\phi$ w.r.t. $p_\pm$. When we change $p_\pm$, we are considerably changing the pregluing. So we need to make sense of what kind of result that we want. We recall from previous section we already found a solution to $\Theta_v$ in $H_0$ for every choice of $(\psi_\pm,(r,a,p)_\pm)$, so our next order of business is to solve $\Theta_\pm$, and in order to do that we need to show as we vary $p_\pm$, the part of $\phi$ that enters into equations $\Theta_\pm$ varies nicely w.r.t. $p_\pm$. We recall $\Theta_\pm$ is an equation defined over $u_\pm^*TM$. What is happening is as we vary $p_\pm$, the maps $u_\pm$ are translated further/closer to each other, but since our equations are invariant in the symplectization direction, we can identify all those translates of $u_\pm$ and consider one set of equations $\Theta_+,\Theta_-$ as we vary $p_\pm$. Thus we need to understand how $\phi$ behaves near the pregluing region. We make this a definition.
\begin{definition}
Let $s \in [-3R,3R]$, recall if we let $s_\pm$ denote coordinates near the cylindrical neighborhoods of punctures of $u_\pm$, then we have identified $s \sim s_-$ and $s  \sim -s_+ +T_p/\dt$. Then for $s\in [-3R,3R]$ (resp. $[-3R+T_p/\dt, 3R+T_p/\dt ]$), the vector field $\phi(s,t)$ can be viewed as a vector field in $W^{2,p,d}(u_- ^*TM)$ (resp. $W^{2,p,d}(u_+ ^*TM)$ ), as we noted in the pregluing section.
We say $\phi(\psi_\pm,r,a,p)$ is well behaved w.r.t. $p_\pm$ if over $s \in [-3R,3R]$,
\[
\left \|\frac{d}{dp_\pm} \phi(s+T_p/\dt,t) \right \| \leq C\ep
\]
and
\[
\left \|\frac{d}{dp_\pm} \phi(s,t) \right \| \leq C\ep
\]
where $\frac{d\phi}{dp_\pm}$ is viewed as a vector field over $W^{2,p,d}(u_\pm^*TM)$, and the norm is the weighted Sobolev norm in $W^{2,p,d}(u_\pm^*TM)$.
\end{definition}
\begin{remark}
Actually because no derivatives of $\phi$ appears in $\Theta_\pm$, only the $W^{1,p}$ norm is enough for our purposes.
\end{remark}
The main theorem of this subsubsection is then:
\begin{proposition}
$\phi$ is well behaved w.r.t. $p_\pm$.
\end{proposition}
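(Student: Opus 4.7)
The plan is to construct a more explicit approximate solution to $\Theta_v$ built from the semi-infinite trajectory solutions of Section \ref{semiglue}, and then control the correction and its $p_\pm$-derivatives separately. Concretely, for each of $u_\pm$ let $\zeta_\pm$ denote the (unique, small) solution to the corresponding semi-infinite gradient-trajectory problem obtained by running the construction of Section \ref{semiglue} with boundary data $(\psi_\pm,(r,a,p)_\pm)$; this is a vector field along a semi-infinite lift $v^\infty_{\pm}$ of a gradient trajectory, and Proposition \ref{prop:higher_order_decay} and Corollary \ref{cor:expdecay_p} give it (and its $(r,p)$-derivatives) pointwise exponential decay $e^{-\lambda |s|}$ away from the end. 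Choose cut-off functions $\gamma_-$ supported near $s=0$ and $\gamma_+$ supported near $s = T_p/\dt$, each equal to $1$ on the segment of length $\sim 3R$ where $\phi$ must feed back into $\Theta_\pm$ and vanishing past the midpoint $T_p/2\dt$. Then set
\[
\phi_{app}(\psi_\pm,(r,a,p)_\pm) := \gamma_- \zeta_- + \gamma_+ \zeta_+,
\]
transplanted onto $v_{r,a,p}$ via the obvious identifications of cylindrical neighborhoods. Because $\zeta_\pm$ already solve the corresponding $\Theta_v$-equation on their semi-infinite necks, applying the $\Theta_v$ operator to $\phi_{app}$ produces an error supported either in the cut-off region, where it is bounded in $W^{1,p,w}$ by $C e^{-\lambda T_p/(4\dt)}$ thanks to the exponential decay of $\zeta_\pm$ at the middle of the neck, or inside the nonlinear coupling between the two pieces, which is of the same exponentially small order since the two cut-offs have disjoint support.

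Write $\phi = \phi_{app} + \dt\zeta$, where $\dt\zeta \in H_0$ is to be determined. Substituting into $\Theta_v$ and using that $\Theta_v$ is already linear outside the two collars of width $\sim R$ at $s = R$ and $s = T_p/\dt - R$, $\dt\zeta$ satisfies a fixed-point equation of the form
\[
\dt\zeta \;=\; \Pi\circ Q\bigl(-\Theta_v(\phi_{app},\psi_\pm) - \mcal{N}(\phi_{app},\dt\zeta,\psi_\pm)\bigr),
\]
where $\mcal{N}$ is quadratic of the same type as $\mcal{F}_v$ and the inhomogeneous term is exponentially small. The Banach contraction argument used to solve $\Theta_v$ then gives uniqueness of $\dt\zeta$ and the size estimate $\|\dt\zeta\|_{W^{2,p,w}} \leq C e^{-\lambda T/(5\dt)}$, which is \emph{much} smaller than any polynomial bound in $\ep$ we will need.

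Now I decompose $\partial_{p_\pm}$ into two commuting operations on the data: a \emph{translation} of $u_\pm$ along the Morse-Bott torus (which is exactly the parameter whose $\zeta$-derivative was estimated in Corollary \ref{cor:expdecay_p}), and a \emph{lengthening} of the neck by $1/\dt$, which corresponds to the reparametrization $s \mapsto s + (\p_{p_\pm} T_p)/\dt$ on the $v_{r,a,p}$ side. The translation part is handled immediately: differentiating the semi-infinite construction shows that $\p_{p_\pm}\zeta_\pm$, viewed on the relevant collar, has $W^{2,p,w}$-norm $\leq C\ep$, so $\p_{p_\pm} \phi_{app}$ inherits the same bound on the window $s\in [-3R,3R]$ (and symmetrically near $T_p/\dt$). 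For the lengthening part, the translate is supported in the middle of the neck where both cut-offs $\gamma_\pm$ are locally constant, so the contribution of the reparametrization to $\p_{p_\pm}\phi_{app}$ in the feedback windows is $0$ on the nose. Combining these gives $\|\p_{p_\pm}\phi_{app}|_{[-3R,3R]}\|_{W^{2,p,d}(u_\pm^*TM)} \leq C\ep$. Differentiating the fixed-point equation for $\dt\zeta$ with respect to $p_\pm$ produces a linear equation of the same contracting type whose inhomogeneous term is a sum of $\p_{p_\pm}\phi_{app}$ times bounded coefficients plus a term that inherits exponential smallness from $\dt\zeta$ and from the derivative of the inhomogeneous error; hence $\|\p_{p_\pm}\dt\zeta\|_{W^{2,p,w}}$ is again polynomially bounded in $\ep$. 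Restricting to the feedback windows finishes the estimate.

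The main obstacle is controlling the lengthening contribution: naively, the reparametrization $s\mapsto s + \Delta T/\dt$ corresponds to a displacement of order $1/\dt$, which through a differential operator would blow up. The two saving mechanisms, which must be arranged carefully, are (i) placing the cut-off transition region strictly inside the neck where $\zeta_\pm$ are already exponentially tiny, so that \emph{only} the constant parts of $\gamma_\pm$ overlap with the lengthening move in the feedback windows, and (ii) exploiting the exponential decay of $\zeta_\pm$ and their $p$-derivatives (Corollary \ref{cor:expdecay_p}) to absorb any leftover reparametrization errors into $e^{-\lambda T/\dt}$-small terms. Once these two points are set up correctly, the remainder of the argument parallels the semi-infinite case and the fixed-point estimates of the previous subsubsection.
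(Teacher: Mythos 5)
Your proposal follows essentially the same route as the paper: the same approximate solution $\gamma_+\zeta_+ + \gamma_-\zeta_-$ built from the semi-infinite solutions of Section \ref{semiglue}, the same exponentially small correction $\dt\zeta \in H_0$ solved by contraction, and the same splitting of the $p_\pm$-variation into a neck-lengthening (``stretch'') and a translation, with the $C/\dt$ factors absorbed by the exponential decay of $\zeta_\pm$ and their $p$-derivatives at the midpoint of the neck. The one step you elide is that the paper first conjugates away the $p$-dependent exponential weight $e^{w_p(s)}$ before differentiating the fixed-point equation for $\dt\zeta$, since otherwise the ambient weighted Sobolev space itself varies with $p$ at rate $O(1/\dt)$; your argument needs that same normalization (or an equivalent bookkeeping of $\p_p(\Pi\circ Q)$ and $\p_p w_p$) to be fully rigorous.
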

To do this we need to very carefully analyze the solutions to $\Theta_v$. It turns out it is not so convenient to analyze this equation with exponential weights, because the weights themselves depend on $p_\pm$. So we first remove the exponential weights via conjugation. We use the following convention:
\[
\zeta:= e^{w(s)}\phi,\]
\[
\psi'_\pm:=e^{w(s)}\psi_\pm.
\]
The exponential weights are removed and $\Theta_v$ is rewritten using the following diagram:
\begin{equation}
\begin{tikzcd}
W^{2,p}(v_{r,a,p}^*TM) \arrow[r,"\Theta_v'"] \arrow[d, "e^{-w(s)}"] & W^{2,p}(v^*TM) \\
W^{2,p,d}(v_{r,a,p}^*TM) \arrow[r,"\Theta_v"] & W^{2,p,d}(v_{r,a,p}^*TM) \arrow[u, "e^{w(s)}"].
\end{tikzcd}
\end{equation}
Then the equation $\Theta_v$ can be rewritten as
\[
\Theta_v' := D_{J_\dt}' \zeta + \beta_\pm' \psi_\pm' + e^{w(s)}\mcal{F}_v(e^{-w(s)}\zeta,e^{-w(s)} \psi_\pm')=0
\]
where $\zeta \in H_0' \subset W^{2,p}(v_{r,a,p}^*TM)$. We use $H_0'$ to denote the subspace in $W^{2,p}(v_{r,a,p}^*TM)$ corresponding to $H_0$. To better understand $\zeta$, let us focus our attention near $s\in[0,T_p/2\dt]$. For this range of $s$, the equation $\Theta_v$ is exactly the same equation as we had solved for semi-infinite gradient trajectories since we do not see the effects of $\psi_+$. Then by previous result we have a (uniquely constructed) solution $\phi_- \in W^{2,p,d}(v_{r,a,p}^*TM)$ for $s\in [0, T_p/2\dt]$ subject to exponential weight $e^{ds}$ (which for our range of $s$ agrees with $e^{w(s)}$). Defining \[
\zeta_- := e^{w(s)}\phi_-
\]
we see $\zeta_-$ is a solution to $\Theta_v'$ for $s\in [0,T_p/2\dt]$. 

There is a slight subtlety in that near $u_-$ there is a twist in the $t$ coordinate as we constructed the pregluing domian, $\Sigma_{r,a,p}$. By the construction in the semi-infinite gradient trajectory case, $\phi_-$ should depend on input variables $(s_-,t_-)$, which we write as $\phi_-(s_-,t_-)$, but when we view it as a vector field over $v_{r,a,p}^*TM$, using coordinates $(s,t)$ it should be written as $\phi_-(s,t+(r_+-r_-))$. This won't make a difference for us as we consider variations in the $(p_-,p_+)$ direction, and for the most part we will suppress the $t$ coordinate for brevity of notation. We will take up variations in the $(r_+,r_-)$ variables after considerations of $p_\pm$.

We similarly construct $\zeta_+$. The point is: 
\begin{proposition}
$\zeta_\pm$ is well behaved w.r.t. $p$. i.e. the part of $\zeta_\pm$ that enters into $\Theta_\pm$ has derivative w.r.t. $p_\pm$ bounded above by $C\ep$.
\end{proposition}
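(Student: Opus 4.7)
The plan is to reduce the proposition to the semi-infinite analysis of Sections \ref{semiglue} and \ref{expdecay}. On the interval $s \in [0, T_p/2\dt]$ the equation $\Theta_v = 0$ does not involve $\psi_+$, so $\phi_-$ agrees with the restriction of the semi-infinite solution produced by the construction of Section \ref{semiglue} applied to the $u_-$ end with asymptotic data $(r_-, a_-, p_-)$. In the subregion $s \in [-3R, 3R]$ the inverted-$V$ weight satisfies $w_p(s) = ds$, which is identical to the weight profile used in the semi-infinite gluing, so $\zeta_- = e^{ds}\phi_-$ coincides in this region with the unweighted version $\hat\phi_-$ of the semi-infinite solution.

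For the $p_-$ dependence I would invoke directly the proposition of Section \ref{semiglue} bounding $\|\p_p \hat\phi\|_{W^{2,p,d}} \leq C\ep$, and the sharper pointwise Corollary \ref{cor:expdecay_p}, applied with $p_-$ playing the role of $p$. Because the weight $e^{ds}$ on $[-3R, 3R]$ has no $p_-$ dependence, this descends to $\|\p_{p_-} \zeta_-\|_{W^{2,p,d}([-3R,3R]\times S^1)} \leq C\ep$, which is the desired estimate.

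For the $p_+$ dependence the key observation is that the semi-infinite construction of $\hat\phi_-$ uses only data on the $u_-$ side and is defined on the full half-line $[0,\infty)$; in particular, it is independent of the length $T_p/\dt$ of the finite cylinder, and hence independent of $p_+$. The weight $w_p$ equals $ds$ throughout $[-3R,3R]$ regardless of $T_p$, so passing from $\phi_-$ to $\zeta_-$ introduces no $p_+$ dependence, and $\p_{p_+}\zeta_- \equiv 0$ on $[-3R,3R]$. The argument for $\zeta_+$ near $s = T_p/\dt$ is symmetric, with the roles of $p_-$ and $p_+$ reversed.

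The main subtlety to track is the parallel-transport identification between $v_{r,a,p}^*TM$ and $v_0^*TM$, which is needed to make sense of ``differentiating in $p_\pm$'' when the underlying gradient trajectory itself moves under the variation. This was handled in Section \ref{semiglue} via the lemma $|v_p(s,t) - v_{p'}(s,t)| \leq C|p - p'|$ with $C$ independent of $\dt$; the same identification applies here and ensures that $W^{2,p,d}$ norms in the identified picture are uniformly comparable to the norms in the original bundles as $\dt \to 0$, so the derivative estimates of Sections \ref{semiglue}--\ref{expdecay} transfer without loss of uniformity. Once the identification is in place, everything reduces to the two facts that (a) the semi-infinite solution is local in its construction on the $u_-$ side, and (b) the weight profile on the relevant matching region is unaffected by the finite-cylinder geometry.
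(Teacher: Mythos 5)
Your proposal is correct and follows the same route as the paper: the paper's proof is literally a one-line citation of the semi-infinite results, since $\phi_-$ on $[0,T_p/2\dt]$ is by construction the semi-infinite solution (anchored at the $u_-$ end, hence depending only on $(r,a,p)_-$ and $\psi_-$), the weight $e^{w(s)}=e^{ds}$ on the feedback region matches the semi-infinite weight, and the derivative bound is exactly the $\|\p_p\hat\phi\|\leq C\ep$ estimate from Section \ref{semiglue} together with the parallel-transport lemma. Your expansion of the reduction, including the observation that $\zeta_-$ is independent of $p_+$ in the anchored coordinates, is consistent with how the paper later uses this fact in the stretch/translation analysis.
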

\begin{proof}
This follows from our results on $\phi_\pm$ when we proved this property for semi-infinite trajectories. 
\end{proof}
The next step is to actually construct $\zeta$ from approximate solutions $\zeta_\pm$. Consider the cut off functions $\gamma_\pm$ defined by
\[
\gamma_+ := \beta_{[\infty,T_p/2\dt-1;1]}
\]
\[
\gamma_+:= \beta_{[-\infty; T_p/2\dt-1;1]}.
\]
Then we consider the approximate solution
\[
\gamma_+ \zeta_+ +\gamma_-\zeta_-.
\]
We also observe by construction that $\gamma_+ \zeta_+ +\gamma_-\zeta_- \in H_0'$.
We plug this into $\Theta'_v$, we observe by definition this produces zero for all $s$ except $s\in[T_p/2\dt-2,T_p/2\dt+2]$. In this interval the $\Theta_v'$ takes the form:
\begin{equation}
    D'_{J_\dt}(\gamma_+ \zeta_+ +\gamma_-\zeta_-) +e^{w(s)}\mcal{F}_v(e^{-w(s)}\gamma_\pm \zeta_\pm)
\end{equation}
which equals
\[
E:= \sum_\pm (\gamma_\pm'\zeta_\pm +\gamma_\pm D'_{J_\dt}\zeta_\pm) + e^{w(s)}\mcal{F}_v(e^{-w(s)}\gamma_\pm \zeta_\pm).
\]
Observe $D'_{J_\dt}\zeta_\pm =-e^{w(s)}\mcal{F}_v(e^{-w(s)}\zeta_\pm)$
so the error term takes the form 
\begin{align*}
E =&  \gamma_+'\zeta_++ \gamma_-'\zeta_-\\
&+ [e^{w(s)}\mcal{F}_v(e^{-w(s)} \gamma_+\zeta_+)-\gamma_+e^{w(s)}\mcal{F}_v(e^{-w(s)} \zeta_+)] +[e^{w(s)}\mcal{F}_v(e^{-w(s)} \gamma_-\zeta_-)-\gamma_-e^{w(s)}\mcal{F}_v(e^{-w(s)} \zeta_-)].
\end{align*}
We can estimate the size of this term (say in $C^1$ norm), by elliptic regularity it is easily bounded by the $W^{2,p}$ norm of $\zeta_\pm$ restricted to $ s\in[T_p/2\dt-2,T_p/2\dt+2]$. (We actually see $t$ derivatives of $\zeta_\pm$ in $\mcal{F}_v$ but this is fine, we can bound them by elliptic regularity). We know the norm of $\zeta_\pm$ undergoes exponential decay as $s$ moves into this center region, so the size of the error term is bounded above by
\[
C \op{max}\{\|\zeta_+\|,\|\zeta_-\|\}^{2/p} e^{-\lambda (T/2\dt-3R)}
\]
where in the above equation $\|\zeta_\pm\|$ denotes the full norm of $\zeta_\pm$ over $W^{2,p}(v_{r,a,p}^*TM)$, or equivalently the norm of $\phi_\pm \in W^{2,p,d}(v_{r,a,p}^*TM)$.\\
From the above we conclude the error term to the approximate solution $\gamma_+ \zeta_+ +\gamma_-\zeta_-$ is very small-exponentially suppressed in fact. We now perturb it by adding a small term $\dt \zeta \in H_0'$ to make it into a solution to $\Theta_v'$. We state this in the form of a proposition:
\begin{proposition}
We can choose $\dt \zeta \in H_0'$ so that $\zeta = \gamma_+ \zeta_+ +\gamma_-\zeta_- +\dt \zeta$. Further, the norm of $\dt \zeta$, as measured in $W^{2,p}(v_{r,a,p}^*TM)$ is bounded above by
\begin{equation*}
    C\ep^{2/p} e^{-\lambda( T_p/2\dt-3R)}.
\end{equation*}
The vector field $\dt \zeta$ also lives in $W^{3,p}(v_{r,a,p}^*TM)$, and its $W^{3,p}(v_{r,a,p}^*TM)$ norm is similarly bounded above by
\[
C\ep^{2/p} e^{-\lambda( T_p/2\dt-3R)}.
\]
\end{proposition}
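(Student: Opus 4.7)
The plan is to set up a fixed point equation for $\dt\zeta$ using the uniformly bounded right inverse $\Pi \circ Q$ to $D'_{J_\dt}|_{H_0'}$ constructed above, and solve it by the Banach contraction mapping principle on a ball of radius proportional to the error $\|E\|$. Write $\zeta^{\text{app}} := \gamma_+\zeta_+ + \gamma_-\zeta_-$, which lies in $H_0'$ by construction, and set $\zeta = \zeta^{\text{app}} + \dt\zeta$. Applying $\Theta_v'$ and using $D'_{J_\dt}\zeta^{\text{app}} + e^{w(s)}\mcal{F}_v(e^{-w(s)}\gamma_\pm\zeta_\pm, e^{-w(s)}\psi_\pm') + \beta_\pm'\psi_\pm' = E$, together with the identity $\Pi \circ Q \circ D'_{J_\dt} = \mathrm{id}$ on $H_0'$, the equation $\Theta_v'(\zeta^{\text{app}} + \dt\zeta) = 0$ is equivalent to
\[
\dt\zeta = -\Pi \circ Q \left[ E + e^{w(s)}\bigl(\mcal{F}_v(e^{-w(s)}(\zeta^{\text{app}}+\dt\zeta), e^{-w(s)}\psi_\pm') - \mcal{F}_v(e^{-w(s)}\gamma_\pm\zeta_\pm, e^{-w(s)}\psi_\pm')\bigr)\right].
\]
Call the right hand side $I(\dt\zeta)$, viewed as a map $H_0' \to H_0'$.

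Let $\rho := C_0\,\ep^{2/p} e^{-\lambda(T_p/2\dt - 3R)}$ for a suitably large constant $C_0$, and let $B_\rho \subset H_0'$ be the ball of radius $\rho$. First I would show that $I$ maps $B_\rho$ into itself: the operator norm of $\Pi\circ Q$ is uniformly bounded in $\dt$, the estimate already derived gives $\|E\| \le C\ep^{2/p}e^{-\lambda(T_p/2\dt - 3R)}$, and the difference of the two $\mcal{F}_v$ terms is quadratic in its first argument in a manner linear in $\dt\zeta$ (since both $\gamma_\pm\zeta_\pm$ and $\dt\zeta$ have $C^0$ bounds of order $\ep$ via Sobolev embedding and Proposition \ref{prop:higher_order_decay}), so its $W^{1,p}$ norm is bounded by $C\ep\,\|\dt\zeta\|_{W^{2,p}}$. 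For $\ep$ small this gives $\|I(\dt\zeta)\| \le \rho/2 + C\ep\,\rho \le \rho$. The same structure of $\mcal{F}_v$ as in Remark \ref{quadratic_term} yields, for $\dt\zeta_1, \dt\zeta_2 \in B_\rho$,
\[
\|I(\dt\zeta_1) - I(\dt\zeta_2)\|_{W^{2,p}} \le C\,\bigl(\|\psi_\pm\| + \|\zeta^{\text{app}}\| + \|\dt\zeta_1\| + \|\dt\zeta_2\|\bigr)\,\|\dt\zeta_1 - \dt\zeta_2\|_{W^{2,p}} \le C\ep\,\|\dt\zeta_1 - \dt\zeta_2\|_{W^{2,p}},
\]
so $I$ is a contraction for $\ep$ small. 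The contraction mapping principle then produces a unique $\dt\zeta \in B_\rho$ with $I(\dt\zeta) = \dt\zeta$, and by construction the norm bound of the proposition holds.

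Two small verifications remain. First, the full $\zeta = \zeta^{\text{app}} + \dt\zeta$ lies in $H_0'$ and solves $\Theta_v'=0$, so it agrees with the unique small solution produced in the previous proposition; this gives the promised decomposition of $\zeta$ (equivalently of $\phi = e^{-w(s)}\zeta$). Second, to improve regularity from $W^{2,p}$ to $W^{3,p}$ I would apply Theorem \ref{Mcduff_elliptic} to the equation $\dt\zeta$ weakly satisfies: rewriting $I(\dt\zeta) = \dt\zeta$ as a Cauchy–Riemann type PDE of the form $\p_s \dt\zeta + J_\dt \p_t \dt\zeta + (\text{lower order in } \dt\zeta)$ with right hand side built from $E$ and the quadratic terms (which are already $W^{2,p}$ thanks to the $W^{3,p}$ regularity of $\zeta_\pm$ from the semi-infinite case), elliptic bootstrapping on local slabs yields a $W^{3,p}$ bound of the same order as the $W^{2,p}$ bound, which is the claim.

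The main obstacle is simply bookkeeping: one must check that the residual $E$, which is supported in the small interval $[T_p/2\dt - 2, T_p/2\dt + 2]$ where both $\gamma_\pm \zeta_\pm$ are present, really does satisfy the claimed exponential decay bound uniformly in $(r,a,p)_\pm$. This reduces to the exponential decay estimate $\|\zeta_\pm(s,\cdot)\|_{C^0} \le C\ep^{2/p} e^{-\lambda(s - 3R)}$ of Proposition \ref{prop:higher_order_decay} (applied symmetrically at both ends), together with the fact that the cutoffs $\gamma_\pm$ and their derivatives are uniformly bounded on this interval. Once this is in place, the remaining contraction argument is routine.
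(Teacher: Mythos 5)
Your proposal is correct and follows essentially the same route as the paper: set up the fixed point equation for $\dt\zeta$ using $\Pi'\circ Q$, bound the residual $E$ by the exponential decay of $\zeta_\pm$ on its support near $s=T_p/2\dt$, and run the contraction mapping principle, with elliptic regularity giving the $W^{3,p}$ improvement. The only cosmetic differences are that the paper contracts on the $\ep$-ball and then reads off the small norm from the fixed point equation (rather than contracting directly on the ball of radius $\sim\|E\|$), and that it packages the regularity gain as boundedness of $\Pi'\circ Q: W^{2,p}\to W^{3,p}$ rather than local bootstrapping; both variants are equivalent.
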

\begin{remark}
We remark in the term $T_p/2\dt-3R$, the term $3R$ appears because we can only start the exponential decay after the effects of $\psi_\pm'$ in $\Theta_v'$ disappear. (Technically we could have used $2R$ but this will not make a difference).
\end{remark}
\begin{proof}
We plug $\zeta := \gamma_+ \zeta_+ +\gamma_-\zeta_- +\dt \zeta$ into $\Theta_v'$ and solve for $\dt \zeta$ using the contraction mapping principle. We are now looking at an equation of the form:
\[
D_{J_\dt}' (\gamma_+\zeta_+ + \gamma_-\zeta_- + \dt \zeta) + \beta_\pm' \psi_\pm' +e^{w(s)}\mcal{F}_v(e^{-w(s)}\zeta, e^{-w(s)}\psi_\pm')=0.
\]
We examine the term $e^{w(s)}\mcal{F}_v(e^{-w(s)}\zeta, e^{-w(s)}\psi_\pm')$, 
recall $\mcal{F}_v$ generally takes the form:
\[
\mcal{F}_v:=\beta_{[1;R-2,\infty]}\phi g_{v1}(\beta_{ug}\psi,\beta_v\phi) + \p_t\phi g_{v2}(\beta_{ug}\psi,\beta_{[1;R-2,\infty]}\beta_v\phi).
\]
Hence our expression can really be expanded as
\begin{align*}
e^{w(s)}\mcal{F}_v(e^{-w(s)}\zeta, e^{-w(s)}\psi_\pm') =&
e^{w(s)}\mcal{F}_v(e^{-w(s)}\gamma_+ \zeta_+ +\gamma_-\zeta_-, e^{-w(s)}\psi_\pm') \\
&+ G_1(e^{-w(s)}\zeta_\pm,e^{-w(s)}\p_t \zeta_\pm, e^{-w(s)}\psi_\pm',e^{-w(s)}\dt \zeta)\dt \zeta+G_2(e^{-w(s)}\zeta_\pm,e^{-w(s)}\psi_\pm')\p_t \dt \zeta.
\end{align*}
The functions $G_*$ (the functions themselves, ignoring its inputs such as $\zeta_\pm$) have uniformly bounded smooth derivatives and are bounded in the following way:
\[
G_*(x_1,\ldots,x_n) \leq |x_1| +\ldots+|x_n|
\]
for $x_*$ small. Recalling our choice of cut off functions we always have $w(s)>1$, so this assumption is always satisfied.
Recalling the elliptic regularity results on $\zeta_\pm$ above we can actually bound the $W^{2,p}(v_{r,a,p}^*TM)$ norm of $G_1$ and $G_2$ by $\ep$. 
Then our equation for $\Theta_v'$ simplifies to
\[
D_{J_\dt}' \dt \zeta +G_1\dt \zeta + G_2 \p_t \delta \zeta =E
\]
where $E$ was defined as the error term above. We now apply the contraction mapping principle to this equation, let $\Pi' \circ Q$ denote the right inverse to $D_{J_\dt}'|_{H_0'}$ (where $\Pi'$ corresponds to projection to $H_0'$ as we have removed exponential weights). Consider the linear functional $I(\dt \zeta)$:
\[
\dt \zeta \longrightarrow \Pi'\circ Q(-G_1 \dt \zeta - G_2 \p_t \dt \zeta +E).
\]
Let $B_\ep \subset H_0'$ denote a ball of size $\ep$, then it follows from the form of $G_*$ as well as the size estimate of $E$ that $I$ maps $B_\ep$ to itself. It follows similarly from above that $I$ is a contraction mapping, hence it follows from the contraction mapping principle that such $\dt \zeta$ is unique. It follows from uniqueness of $\zeta \in H_0'$ in previous theorem that this $\zeta$ from this contraction mapping is the $\zeta$ we constructed earlier.\\
The norm estimate of $\dt \zeta$ follows directly from the norm estimate of $E$. The improvement from $W^{2,p}$ to $W^{3,p}$ is as follows: we first realize $\zeta = \gamma_+\zeta_+ + \gamma_-\zeta_- +\dt \zeta$ lives in $W^{3,p}$, the same is true for $\gamma_\pm\zeta_\pm$, hence $\dt \zeta$ also lives in $W^{3,p}$. To get the actual norm estimates, we recall the fixed point equation
\[
\dt \zeta = \Pi'\circ Q(-G_1\dt \zeta -G_2 \p_t \dt \zeta -E).
\]
We first realize $-G_1\dt \zeta -G_2 \p_t \dt \zeta +E$ actually lives in $W^{2,p}(v_{r,a,p}^*TM)$ by previous elliptic regularity results. We then realize $\Pi' \circ Q$ restricts to a bounded operator from $W^{2,p}(v_{r,a,p}^*TM) \rightarrow W^{3,p}(v_{r,a,p}^*TM)$ with image in $H_0' \subset W^{3,p}(v_{r,a,p}^*TM)$ by applying elliptic regularity to $D_{J_\dt}$. Finally we observe the $W^{2,p}$ norm of $E$ is similarly bounded above by $
C max\{\|\zeta_+\|,\|\zeta_-\|\}^{2/p} e^{-\lambda (T/2\dt-3R)}$ owing to the fact in the region where $E$ is supported, $\zeta_\pm$ is smooth. Then to get the $W^{3,p}$ norm of $\dt \zeta$ we just measure the $W^{3,p}$ norm of both side of the fixed point equation and conclude.
\end{proof}
We now investigate how $\dt \zeta$ varies w.r.t.  $p_\pm$, because we already understand $\zeta_\pm$ is well behaved w.r.t. $p_\pm$. Instead of varying $p_\pm$ individually, we find it is more convenient to change basis and distinguish two kinds of variations.  We introduce the new variable $p$.
\begin{itemize}
    \item We call the transformation of this type: $(p_-,p_+) \rightarrow (p_--p,p_++p)$ a stretch. 
    \item We can transformation of the type $(p_-,p_+)\rightarrow (p_-+p,p_++p)$ a translation.
\end{itemize}
We shall vary $\dt \zeta$ w.r.t. $p$ with these kind of transformations. In both cases we shall show $\dt \zeta$ is well behaved w.r.t. differentiating via $p$.
\subsubsection{Stretch}
Observe in our region of interest we assumed $f'(x)=1$, and that $x'(s)=\dt f'(x)$. The effect of stretch will be thought of as keeping the same gradient trajectory $v_{r,a,p}$ prescribed by $(p_+,p_-)$ but lengthen the interval $s\in [0,T_p/\dt]$ to $[-p/\dt, T_p+p/\dt]$ over $v_{r,a,p}^*TM$ with the peak of exponential weight profile $w_p(s)$ still at $s=T_p/2\dt$. We translate $u_+$ and $u_-$ in opposite directions along symplectization coordinate. We then think of equation $\Theta_v$ as taking place over the same gradient cylinder, but various terms like  $\psi_\pm'$ being translated as we stretch along $p$. (There is some abuse in notation here, $T_p$ refers to the gradient flow length for original pair $(p_+,p_-)$, and $p$ is how much we stretched).

The $a$ distance between $a(-p/\dt)$ and $a(T_p+p/\dt)$ also changes but not in a linear fashion since $a'(s) = e^{\dt f(x)}$ but this is fine since none of our operators depend on $a$.

We make the following important observation about $\phi_\pm$. In our section dealing with semi-infinite trajectories when we moved the asymptotic vector $p$ ($p$ here as in an element among the tuple $(r,a,p)$) we preglued to a different gradient trajectory. To be specific, let's focus on $\phi_-$. In the case of semi-infinite trajectories, after changing gradient trajectories, no matter the value of $p_-$ the pregluing always happened at $s=R$. We denote the resulting function of $(s,t)$ by  $\tilde{\phi}_-(p)$ so that in this system preluing always happened at $s=R$. Now in the stretch picture we are taking a different perspective, that when we deform by $p$ we are pregluing to a different segment of the same gradient trajectory $v_{r,a,p}$, so $\tilde{\phi}_-(p)$ and $\phi_-$ are related via translation, to be precise
\[
\phi_-(s+p/\dt)=\tilde{\phi}_-(p)(s)
\]

Here we only consider variations in the $p_\pm$ directions and have suppressed the $t$ variable - there should be some identification of $t+(r_+-r_-)$ and $t_-$. Variations in $r_\pm$ will be considered in a subsequent section. The feedback into $\Theta_-$ is given precisely by $\tilde{\phi}(p)(s)$ for $s\in [-3R,3R]$.
And we understand how  $\tilde{\phi}_-(p)$ depends on $p$, and by our previous sections its feedback into equation $\Theta_-$ is \emph{well behaved} w.r.t. $p$. A similar relation also holds to $\phi_+$, and $\psi_\pm$.

Here we see the advantage of working in $W^{2,p}(v_{r,a,p}^*TM) $ instead of $W^{2,p,w}(v_{r,a,p}^*TM) $ since our norms are independent of $p$. Observe similarly our definition of $H_0'$ is independent of $p$. The only dependence in $p$ comes from terms of the form $e^{w_p(s)}$ (we include, where relevant, the subscript $p$ into our exponential weight profiles), which we will be able to describe explicitly. The formulate the following proposition:
\begin{proposition}
In the case of a stretch,
\[
\left \|\frac{d}{dp} \dt\zeta \right \|\leq \frac{C }{\dt} e^{-\lambda (T_p/2\dt-3R)}
\]
where the norm of $\|d/dp \dt\zeta\|$ is measured w.r.t. $W^{2,p}(v_{r,a,p}^*TM)$. We are taking the derivative at $p=0$, but it is obvious a similar formula holds for all small values of $p$ uniformly.
\end{proposition}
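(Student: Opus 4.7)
The plan is to differentiate the fixed point equation
\[
\delta\zeta = \Pi' \circ Q\bigl(-G_1\,\delta\zeta - G_2\,\partial_t\delta\zeta - E\bigr)
\]
with respect to the stretch parameter $p$ and to solve the resulting linear equation for $\tfrac{d}{dp}\delta\zeta$ by the Neumann series / contraction mapping trick used in Remark \ref{Frechet}. The operators $\Pi', Q$, the weight profile $w_p$, and the cutoffs $\gamma_\pm$ are all tied to the (unchanged) gradient trajectory $v_{r,a,p}$ and are therefore $p$-independent in the stretch interpretation: by design, the stretch is realized by extending the interval $s\in[0,T_p/\dt]$ to $s\in[-p/\dt, T_p/\dt+p/\dt]$ while keeping the peak of $w_p$ fixed at $s=T_p/2\dt$. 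Hence the only $p$-dependence on the right-hand side enters through the translated approximate pieces $\zeta_\pm$, and consequently through $G_1, G_2$ and $E$.

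The heart of the estimate is the chain rule for the translations $\zeta_\pm(s,t) \mapsto \zeta_\pm(s\mp p/\dt, t)$, which yields
\[
\frac{d\zeta_\pm}{dp}\bigg|_{p=0} = \mp\frac{1}{\dt}\,\partial_s\zeta_\pm;
\]
this is where the factor $1/\dt$ in the statement originates. Because $E$ is supported in the narrow band $s\in [T_p/2\dt-2,T_p/2\dt+2]$, and because Proposition \ref{prop:higher_order_decay}, together with the $W^{3,p}$ elliptic-regularity upgrade already established for $\zeta_\pm$, provides bounds $|\nabla^j\zeta_\pm(s,t)| \leq C\,e^{-\lambda(s-3R)}$ for $j\leq 3$, we obtain on this band
\[
\bigl|\nabla^j(d\zeta_\pm/dp)\bigr| \leq \tfrac{C}{\dt}\,e^{-\lambda(T_p/2\dt-3R)}, \qquad j\leq 2.
\]
Differentiating the explicit formula
\[
E = \sum_{\pm}\gamma_\pm'\zeta_\pm + \sum_{\pm}\Bigl[e^{w(s)}\mcal{F}_v(e^{-w(s)}\gamma_\pm\zeta_\pm) - \gamma_\pm\,e^{w(s)}\mcal{F}_v(e^{-w(s)}\zeta_\pm)\Bigr]
\]
using the product rule and the smoothness of $\mcal{F}_v$, then integrating over the fixed-width support of $E$, gives
\[
\|dE/dp\|_{W^{1,p}(v_{r,a,p}^*TM)} \leq \tfrac{C}{\dt}\,e^{-\lambda(T_p/2\dt-3R)}.
\]

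With this in hand, differentiating the fixed point equation yields
\[
(I-\mathcal{D})\,\tfrac{d}{dp}\delta\zeta = -\Pi'\circ Q\Bigl(\tfrac{dG_1}{dp}\delta\zeta + \tfrac{dG_2}{dp}\partial_t\delta\zeta + \tfrac{dE}{dp}\Bigr),
\]
where $\mathcal{D}$ is the bounded operator of norm $O(\ep)$ whose invertibility already justified the original contraction-mapping construction of $\delta\zeta$. The $dG_i/dp$ terms contribute products of $d\zeta_\pm/dp$ with $\delta\zeta$; using $\|\delta\zeta\|_{W^{2,p}}\leq C\ep^{2/p} e^{-\lambda(T_p/2\dt-3R)}$ from the previous proposition together with the $(1/\dt)$-scale bounds on $d\zeta_\pm/dp$, these terms are dominated by $dE/dp$. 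Inverting $I-\mathcal{D}$ by Neumann series and applying the bound on $\|dE/dp\|$ then gives the claimed estimate.

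The main obstacle is the bookkeeping that a stretch truly corresponds to translation of $\zeta_\pm$ in the $v_{r,a,p}$-frame while leaving $\Pi', Q, G_1, G_2$ and the cutoffs $\gamma_\pm$ formally $p$-independent; once this is pinned down, the result follows from the chain rule, the exponential decay of $\zeta_\pm$ proved in Section \ref{expdecay}, and the standard Neumann-series inversion of $I-\mathcal{D}$. The $1/\dt$ factor is both unavoidable (forced by the chain rule) and harmless, since it is compensated by the exponential smallness of $\partial_s\zeta_\pm$ deep in the long neck.
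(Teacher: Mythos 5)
Your overall strategy is the paper's: differentiate the fixed point equation for $\dt\zeta$, use the $p$-independence of $\Pi'\circ Q$ under a stretch, absorb the $O(\ep)\|\tfrac{d}{dp}\dt\zeta\|$ term coming from $G_1\tfrac{d}{dp}\dt\zeta + G_2\partial_t\tfrac{d}{dp}\dt\zeta$, and win on $dE/dp$ from the fact that $E$ is supported in a band of fixed width around $s=T_p/2\dt$ where everything has already decayed. That matches Steps 1--5 of the paper's proof.

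There is, however, one genuine gap in your chain rule. Under a stretch the approximate solutions are \emph{not} obtained from a fixed profile by pure translation: the paper's relation is $\zeta_\pm(s)=\tilde\zeta_\pm(s\mp p/\dt)$ where $\tilde\zeta_\pm$ is the solution of the semi-infinite problem with asymptotic data $(r,a,p_\pm\pm p)$, which itself depends on $p$ because the pregluing data changes. Hence
\[
\frac{d\zeta_\pm}{dp}=\frac{\partial\tilde\zeta_\pm}{\partial p}(s\mp p/\dt)\mp\frac{1}{\dt}\,\partial_s\tilde\zeta_\pm(s\mp p/\dt),
\]
and you have dropped the first term. That term is only bounded by $C\ep$ globally, so if it did not decay, $\gamma_\pm'\cdot\partial_p\tilde\zeta_\pm$ would contribute a term of size $C\ep$ (not $e^{-\lambda(T_p/2\dt-3R)}$) to $dE/dp$ on the support of $E$, and the claimed estimate would fail. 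What saves the argument is precisely Corollary \ref{cor:expdecay_p} (and its analogue for $\zeta_r$): the $p$-derivative of the semi-infinite solution \emph{also} decays exponentially along the neck. This is a separate, nontrivial result of Section \ref{expdecay} — it is the reason that subsection exists — and your proof needs to invoke it alongside Proposition \ref{prop:higher_order_decay}. Relatedly, when you differentiate $G_1$ you also pick up $\tfrac{d}{dp}\psi'_\pm=-\tfrac{1}{\dt}\partial_s\tilde\psi'_\pm$, which costs a derivative; this is harmless only because the product is taken with $\dt\zeta\in W^{2,p}$ (the point of Remark \ref{remark_sobolev_exponents} and Equation \ref{equation_product}), and is worth stating explicitly.
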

\begin{proof}
We already know for every $p$ there is a $\dt \zeta$ (we suppress the dependence on $p$) satisfying
\[
D_{J_\dt}' \dt \zeta +G_1\dt \zeta + G_2 \p_t\dt \zeta =E
\]
which we may rewrite as 
\[
\dt \zeta = \Pi'\circ Q(-G_1\dt \zeta - G_2 \p_t\dt \zeta +E).
\]

We next proceed to differentiate both sides w.r.t. $p$.
We see that the result is an expression of the form
\begin{align*}
    & \frac{d}{dp} (\delta \zeta)\\
    =& (\frac{d}{dp} \Pi' \circ Q)( -G_1\dt \zeta - G_2 \p_t\dt \zeta+ E)\\
    &+ \Pi' \circ Q \cdot(- \frac{dG_1}{dp} \delta \zeta - \frac{dG_2}{dp}\p_t \delta \zeta )\\
    &+ \Pi' \circ Q  (-G_1 \cdot \frac{d}{dp} \delta \zeta  - G_2 \frac{d}{dp} \p_t \dt \zeta)\\
    &+ \Pi' \circ Q \frac{dE}{dp}.
\end{align*}
See Remark \ref{Frechet} for this kind of differentiation.

\textbf{Step 1}. We first differentiate $\Pi \circ Q$ w.r.t. $p$. Recall over $W^{2,p,w}(v_{r,a,p}^*TM)$ $\Pi$ takes the form:
\[
\Pi(\phi) = \phi -\sum_* L_*(\phi) \p_*
\]
after we remove the exponential weights the corresponding operator $\Pi'$ takes the form:
\[
\Pi' \zeta = \zeta - \sum_* L_*(e^{-w(s)}\zeta) e^{w(s)}\p_*.
\]
For stretch $\p_*$ is independent of $p$, so the only dependence we see is on $w(s)$. We realize $L((e^{-w(s)}\zeta)) = L(\zeta) e^{-w(T_p/2\dt)}$, but we realize that $w(s)-w(T_p/2\dt)$ is independent of $p$, so we conclude $\Pi'$ is independent of $p$.

We next consider $\frac{d}{dp}\Pi' \circ Q$. We observe this is a map from $W^{1,p}(v_{r,a,p}^*TM)\rightarrow H_0$. It is the inverse of $D_{J_\dt}|_{H_0'}$, so we can instead differentiate the relation
\begin{equation*}
    (\Pi' \circ Q ) \circ (D'_{J_\dt} \circ \iota) =I.
\end{equation*}
where $\iota: H_0' \rightarrow W^{2,p}(v_{r,a,p}^*TM)$, to get
\begin{equation*}
     \frac{d(\Pi' \circ Q )}{dp} (D'_{J_\dt} \circ \iota) +  (\Pi '\circ Q )\frac{ (D'_{J_\dt} \circ \iota)}{dp}=0
\end{equation*}
\begin{equation*}
     \frac{d (\Pi '\circ Q )}{dp}  =-  (\Pi' \circ Q )\frac{d (D_{J_\dt}' \circ \iota)}{dp} (\Pi' \circ Q ).
\end{equation*}
We already know $\Pi' \circ Q$ is uniformly bounded w.r.t. $\dt \rightarrow 0$, now recall that
\[
D_{J_\dt}' = D_{J_\dt} +w'(s).
\]
Of course we know that $w(s)$ has a ``kink" where the absolute value bends (see definition equation for $w$) but we can smooth it. Noting that $D_{J_\dt}$ is independent of $p$ and $w'(s)$ is independent of $p$, we conclude that
\[
\left\| \frac{d (\Pi' \circ Q )}{dp}\right \| \leq C
\]
where we use the operator norm. (In this case it's in fact zero).

\textbf{Step 2}
We next examine the term $\frac{dG_*}{dp}$. There are two kinds of dependencies, one on how the function $G_*$ depends on $p$, which we denote by $\p G/\p p$, and second how its arguments $\zeta_\pm'$and $\psi_\pm$ depend on $p$. We first recall $G_*$ comes about from the expansion
\begin{align*}
e^{w(s)}\mcal{F}_v(e^{-w(s)}\zeta, e^{-w(s)}\psi_\pm') =&
e^{w(s)}\mcal{F}_v(e^{-w(s)}(\gamma_+ \zeta_+ +\gamma_-\zeta_-), e^{-w(s)}\psi_\pm') \\ &+G_1(e^{-w(s)}\zeta_\pm,e^{-w(s)}\p_t \zeta_\pm, e^{-w(s)}\psi_\pm',e^{-w(s)}\dt \zeta)\dt \zeta  \\
&+G_2(e^{-w(s)}\zeta_\pm,e^{-w(s)}\psi_\pm')\p_t \dt \zeta.
\end{align*}
The function $\mcal{F}_v$ only depends on the geometry, so the only dependence of the function $G$ on $p$ is given by $dw/ds$:
\[
\frac{\p G_*}{\p p} \leq C\left|\frac{dw}{dp}\right| \leq C/\dt.
\]
Next we try to understand the dependence of $dG_*/dp$ through its dependence on terms like $d\zeta_\pm/dp$ and $d\psi'/dp$. From previous remark by the semi-infinite trajectory case we understand $d\tilde{\zeta}_\pm/dp \leq C\ep$, and $\zeta_\pm = \tilde{\zeta}_\pm(s \mp p/\dt)$. Similarly we have $\psi'_\pm=\tilde{\psi'}(s \mp p/\dt)$. Noting $\tilde{\psi}_\pm$ doesn't depend on $p$, we have 
\[
\frac{d}{dp}\psi'_\pm = -\frac{1}{\dt} \frac{d}{ds}\tilde{\psi}'_\pm
\]
thus estimates:
\[
\left\|\frac{d\psi_\pm'}{dp}\right\| \leq C\ep/\dt.
\]
Note taking the $p$ derivative of $\psi'$ has cost us a derivative, hence the above norm can only be measured in $W^{1,p}$. Thankfully this is enough for our purposes because $Q$ brings back another derivative.
\[
\left \|\frac{d \zeta_\pm}{dp}\right \| \leq \left\|\frac{d}{dp} \tilde{\zeta}_\pm \right\| + \frac{1}{\dt} \left\|\frac{d}{ds} \tilde{\zeta}_\pm\right \| \leq C\ep(1 +1/\dt)
\]
Now in this computation $\frac{d}{dp} \tilde{\zeta}_\pm$ lives naturally in $W^{2,p}$, by elliptic regularity $\tilde{\zeta}_\pm$ lives in $W^{3,p}$, so its $s$ derivative lives in $W^{2,p}$. Hence the above inequality can at most hold in $W^{2,p}$, which suffices for our purposes.

Next we need to consider the $W^{1,p}$ norm of $\frac{d}{dp} \p_t \zeta_\pm$, which we re-write as
\[
\frac{d}{dp} \p_t \tilde{\zeta}_\pm (s\mp p/\dt) = \p_t \p_p \tilde{\zeta}_\pm (s \mp p/\dt) - \frac{1}{\dt}\p_s\p_t  \tilde{\zeta}_\pm(s\mp p/\dt).
\]
We first make a remark about commutativity of derivatives, e.g. we have commuted $\p_p \p_t \tilde{\zeta}_\pm = \p_t\p_p \tilde{\zeta}_\pm$. We know $\p_p \tilde{\zeta}_\pm$ is in $W^{2,p}$, hence we can commute the derivatives using the following version of Clairut's theorem:
\begin{proposition}
If $f: \bb{R}^2 \rightarrow \bb{R}$ is so that $\p_1 f,\p_2 f, \p_{2,1}f$ exists everywhere (here $\p_1f$ denotes the partial derivative of $f$ w.r.t the first variable), $\p_{2,1}f$ is continuous, then $\p_{1,2}f$ exists and is equal to $\p_{2,1}f$.
\end{proposition}
Hence we can commute the derivative, measure the $W^{1,p}$ norm of $\p_t \p_p \tilde{\zeta}_\pm (s \mp p/\dt)$, and bound it by the $W^{2,p}$ norm of $\frac{d}{dp}\tilde{\zeta}_\pm$, which is bounded above by $C\ep$.
The $W^{1,p}$ norm of $\p_s\p_t  \tilde{\zeta_\pm}(s\mp p/\dt)$ is bounded by the $W^{3,p}$ norm of $\tilde{\zeta}_\pm$, which is also bounded by $C\ep$ by elliptic regularity.

But observe the expression involving $\frac{dG_*}{dp}$ is multiplied by $\dt \zeta $ or $\p_t \dt \zeta$, so overall we have the estimate:
\begin{equation}\label{equation_product}
\left\|Q \circ \left\{ \frac{dG_1}{dp}\cdot  \dt\zeta +\frac{dG_2}{dp}\p_t \dt \zeta \right\}\right\|_{W^{2,p}} \leq C\ep/\dt e^{-\lambda (T_p/2\dt-3R)} +\ep \|d\dt \zeta/dp\|_{W^{2,p}}.
\end{equation}
The last term coming from the dependence of $G_1$ on $\dt \zeta$. 
We remark in the above the term $\frac{dG_1 }{dp} \cdot \p_t\dt \zeta$ we have a product of $W^{1,p}$ functions, which remains in $W^{1,p}$. This is where we justify our use of $W^{2,p}$ instead of $W^{1,p}$. See Remark \ref{remark_sobolev_exponents}.\\
\textbf{Step 3}
The next term is $\Pi' \circ Q  (-G_1 \cdot \frac{d}{dp} \delta \zeta  - G_2 \frac{d}{dp} \p_t \dt \zeta)$. Note we have $C^1$ bound on $G_*$, which is bounded by $C\ep$, so after we apply $\Pi' \circ Q$ the norm of this term is overall bounded by 
$C\ep \|\frac{d\dt\zeta }{dp}\|_{W^{2,p}}$, and we move this term to the left hand of the equation.\\
\textbf{Step 4} We finally estimate how the error term $E$ depends on $p$, and here we shall use the exponential decay estimates proved in Section \ref{expdecay}. Recall $E$ takes the form
\[
E = \gamma_\pm' \zeta_\pm + [e^{w(s)}\mcal{F}_v(e^{-w(s)} \gamma_+\zeta_+)-\gamma_+e^{w(s)}\mcal{F}_v(e^{-w(s)} \zeta_+)] +[e^{w(s)}\mcal{F}_v(e^{-w(s)} \gamma_-\zeta_-)-\gamma_-e^{w(s)}\mcal{F}_v(e^{-w(s)} \zeta_-)].
\]
The important feature of this expression is that it has support in $ s\in[T_p/2\dt-2,T_p/2\dt+2]$, the term $E$ and its derivative over $p$ can be upper bounded by the terms:
\[
|E(s,t)| \leq C |\zeta_\pm(s,t)|(1 + |\p_t \zeta_\pm (s,t)|)
\]
\[
\left|\frac{dE}{dp}(s,t)\right| \leq C\left|\frac{d \zeta_\pm}{dp}(s,t)\right| + C\left|\zeta_\pm\right|\left(\left|\frac{d}{dp} e^{-w(s)} \zeta_\pm\right| + \left|\frac{d}{dp}e^{-w} \p_t\zeta_\pm\right|\right)
\]
where for both equations size refers to $C^1$ norm. Since this is supported over $ s\in[T_p/2\dt-2,T_p/2\dt+2]$, bounds on the uniform norm imply bounds on Sobolev norms. Furthermore we know by elliptic regularity $\zeta_\pm$ and its $p$ derivative are smooth over this region so it make sense to talk about $C^1$ norms. We first note
\[
\frac{d}{dp}e^{-w} =\frac{C}{\dt} e^{-w}.
\]
So the only terms we need to worry about are 
\[
\frac{d}{dp} \zeta_\pm, \quad \frac{d}{dp} \p_t \zeta_\pm
\]
over the interval $ s\in[T_p/2\dt-2,T_p/2\dt+2]$. We recall by our convention $\zeta_\pm(s)= \tilde{\zeta}_\pm(s\mp p/\dt)$
so we have
\[
\frac{d\zeta_\pm}{dp} = \frac{d}{dp}\tilde{\zeta}_\pm(s\mp p/\dt) + \frac{1}{\dt}\frac{d}{ds}(\tilde{\zeta}_\pm(s\mp p/\dt)).
\]
By the constraint that $s\in[T_p/2\dt-2,T_p/2\dt+2]$, the terms on the right hand side have already decayed substantially, hence they are bounded by
\[
\frac{C}{\dt}e^{-\lambda (T_p/2\dt-3R)}
\]
which quickly decays to zero as $\dt \rightarrow 0$.
Finally we compute the derivative $\frac{d}{dp} \p_t \zeta_\pm$ for $ s\in[T_p/2\dt-1,T_p/2\dt+1]$. We can also break this down into
\[
\frac{d\zeta_{\pm,t}}{dp} = \frac{d}{dp}\tilde{\zeta}_{\pm,t}(s\mp p/\dt) + \frac{1}{\dt}\frac{d}{ds}(\tilde{\zeta}_{\pm,t}(s\mp p/\dt)).
\]
The exponential decay estimates in Corollary \ref{cor:expdecay_p}, as well as exponential decay in Proposition \ref{prop:higher_order_decay}, say in the interval $s\in[T_p/2\dt-1,T_p/2\dt+1]$ the above is also bounded by
\[
\frac{C}{\dt}e^{-\lambda (T_p/2\dt-3R)}.
\]
\textbf{Step 5} Combining all of the above estimates we see that 
\[
\left\|\frac{d}{dp} \dt\zeta\right \|\leq \frac{C}{\dt} e^{-\lambda (T_p/2\dt-3R)}
\]
as claimed.
\end{proof}
Now we use the above to show $\zeta$ is well behaved in the sense we originally described. 
\begin{proposition}
$\zeta$, and hence $\phi$ is well behaved with respect to $p$ when $p$ controls a stretch.
\end{proposition}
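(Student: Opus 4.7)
The plan is to combine the decomposition $\zeta = \gamma_+\zeta_+ + \gamma_-\zeta_- + \dt\zeta$ with the quantitative bound on $\|d\dt\zeta/dp\|_{W^{2,p}}$ just established, and with the previously proven well-behavedness of the semi-infinite solutions $\phi_\pm$. By symmetry, I will focus on the region $s \in [-3R, 3R]$ near the pregluing with $u_-$; the region near $u_+$ is handled identically. Since $\gamma_-$ is the constant $1$ and $\gamma_+$ vanishes on this region (they cut off only near $s = T_p/2\dt$, which lies at distance $\gg R$ from the region of interest for small $\dt$), the vector field $\phi = e^{-w_p(s)}\zeta$ restricted to this region decomposes as $\phi = \phi_- + e^{-w_p(s)}\dt\zeta$, using the identification $\zeta_- = e^{w_p(s)}\phi_-$. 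Moreover, in this region $w_p(s) = ds$ depends only on $s$ and not on the stretch parameter $p$, since under a stretch the peak of the weight profile stays at $s = T_p/2\dt$ and the slope $\pm d$ is unchanged.

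Next, I will treat the main term $\phi_-$. By construction, under the stretch the new $\phi_-$ is related to the semi-infinite solution $\tilde\phi_-(p)$ by the translation $\phi_-(s + p/\dt) = \tilde\phi_-(p)(s)$. From the $u_-$ perspective (after the corresponding translation of $u_-$ in the symplectization direction which is part of the stretch), the pregluing point stays fixed and the only variation is through $\tilde\phi_-(p)$. Since $\tilde\phi_-(p)$ is the solution produced in the semi-infinite gradient trajectory case, the Corollary of the previous section already gives $\|d\tilde\phi_-(p)/dp\|_{W^{2,p,d}(u_-^*TM)} \leq C\ep$, and this transfers to $\|d\phi_-/dp\|_{W^{2,p,d}(u_-^*TM)[-3R,3R]} \leq C\ep$ by translation invariance of the weighted Sobolev norm in the symplectization direction.

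The correction term $e^{-w_p(s)}\dt\zeta$ is handled by the estimate $\|d\dt\zeta/dp\|_{W^{2,p}(v_{r,a,p}^*TM)} \leq (C/\dt)\, e^{-\lambda(T_p/2\dt - 3R)}$ proved in the previous proposition. Since $w_p(s) = ds = ds_-$ in the region $s \in [-3R, 3R]$, the weight profile of $W^{2,p,d}(u_-^*TM)$ matches the weight we have removed, so
\[
\left\|\tfrac{d}{dp}\bigl(e^{-w_p(s)}\dt\zeta\bigr)\right\|_{W^{2,p,d}(u_-^*TM)[-3R,3R]} \;=\; \|d\dt\zeta/dp\|_{W^{2,p}(v_{r,a,p}^*TM)[-3R,3R]} \;\leq\; \tfrac{C}{\dt}\, e^{-\lambda(T_p/2\dt - 3R)}.
\]
For $\dt$ sufficiently small, this quantity is exponentially smaller than $\ep$ and in particular bounded by $C\ep$. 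Combining the two estimates proves the claim for the stretch variable.

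The main technical obstacle in this argument is already absorbed into the estimate on $d\dt\zeta/dp$: the key point is that the weight profile $w_p(s)$ for a stretch does not move the pregluing region, so the exponential weight conversion between $W^{2,p}(v_{r,a,p}^*TM)$ and $W^{2,p,d}(u_-^*TM)$ is exact rather than only approximate, allowing the exponentially small bound in the interior to propagate directly to the boundary region. The remaining routine step is a symmetric repetition for $s \in [T_p/\dt - 3R, T_p/\dt + 3R]$, relating $\phi$ to $\phi_+$ via the identification $s \sim -s_+ + T_p/\dt$ and using the corresponding semi-infinite result for $\phi_+$.
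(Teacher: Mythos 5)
Your overall strategy is the same as the paper's: decompose $\phi = \gamma_+\phi_+ + \gamma_-\phi_- + e^{-w(s)}\dt\zeta$, handle the main terms via the semi-infinite result, and control the correction term using the exponentially small bound on $\dt\zeta$. But there is a genuine gap in your treatment of the correction term. Under the convention adopted for a stretch, the parametrization of $v_{r,a,p}$ is kept fixed and the endpoints of the interval move to $[-p/\dt,\,T_p/\dt+p/\dt]$, so the window that feeds into $\Theta_-$ is $s\in[-p/\dt,-p/\dt+3R]$ in the $v$-coordinates --- it translates by $p/\dt$ as $p$ varies; it is not a fixed window $[-3R,3R]$. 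You correctly account for this translation for the main term through $\phi_-(s+p/\dt)=\tilde\phi_-(p)(s)$, but you drop it for $\dt\zeta$: in $u_-$ coordinates the correction term is $\dt\zeta(s'-p/\dt)$, so
\[
\frac{d}{dp}\,\dt\zeta(s'-p/\dt) \;=\; (\p_p\dt\zeta)(s'-p/\dt)\;-\;\frac{1}{\dt}\,(\p_s\dt\zeta)(s'-p/\dt),
\]
and your estimate only covers the first term. The second, chain-rule term is the nontrivial one: it costs a factor of $1/\dt$ \emph{and} an $s$-derivative, and it is exactly why the paper upgraded $\dt\zeta$ to $W^{3,p}$ by elliptic regularity, with $W^{3,p}$ norm again bounded by $C\ep^{2/p}e^{-\lambda(T_p/2\dt-3R)}$. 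With that extra input the chain-rule term is still $O(\dt^{-1}e^{-\lambda(T_p/2\dt-3R)})$ and the conclusion stands, but as written your argument --- in particular the assertions that ``the pregluing region does not move'' and that ``the exponential weight conversion is exact, allowing the bound to propagate directly'' --- skips the one step that actually requires work. Note also that this cannot be normalized away: if you instead fix the $u_-$ endpoint and move only the $u_+$ one, then the peak of $w_p$ and the supports of $\gamma_\pm$ (hence of the error $E$ defining $\dt\zeta$) shift by $p/\dt$, and the same $\dt^{-1}\p_s$ term reappears there.
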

\begin{proof}
Note what we feed into $\Theta_\pm$ are vector fields with exponential weights, so we put $\zeta$ back into $\Theta_\pm$ we need to turn it back to $\phi$ via
\[
\phi = e^{-w(s)}\zeta
\]
but note that from above we have
\[
\phi = \gamma_+ \phi_+ + \gamma_-\phi_- +e^{-w(s)} \dt \zeta.
\]
And we know terms like $\gamma_\pm \phi_\pm$ behave nicely with respect to $p$. So it suffices to understand how $e^{-w(s)} \dt \zeta$ feeds back into $\Theta_\pm$. For simplicity we focus on $\Theta_-$. For fixed $p$, and for $s\in [-p/\dt,-p/\dt +3R]$, if we define $\dt \phi := e^{-w(s)} \dt \zeta$, then from the perspective of $\Theta_-$, the vector field we see is $\dt \phi (s'-p/\dt)$ for $s'\in [0,3R]$ equipped weighted norm $e^{ds'}$. We observe over the region $s' \in [0,3R]$, the weight function coming from $e^{w(s')} = e^{ds'}$, so when we calculate how the $p$ variation feeds back into $\Theta_-$ we are really looking at
\[
\left\|\frac{d}{dp}\dt \zeta(s'-p/\dt)e^{-ds'}\right\|
\]
for $s' \in [0,3R]$ with respect to the norm $W^{2,p,d}(u_-^*TM)$, which is equivalent to the expression:
\[
\left\|\frac{d}{dp} \dt \zeta(s'-p/\dt) \right\|
\]
with the unweighted $W^{2,p}$ norm over the interval $s' \in [0,3R]$. We observe
\[
\left\|\frac{d}{dp} \dt \zeta(s'-p/\dt)\right \|_{W^{2,p}} \leq \frac{C}{\dt} \left\| \frac{d}{ds} \dt \zeta (s' -p/\dt)\right\|_{W^{2,p}} + \left\|\frac{d}{dp}\dt\zeta(s'-p/dt)\right\|_{W^{2,p}}
\]
Here we have used elliptic regularity on $\dt \zeta$ to control its $W^{3,p}$ norm by its $W^{2,p}$ norm. The by the preceding proposition both of the above expressions are bounded above by $\frac{C}{\dt} e^{-\lambda (T_p/2\dt-3R)}$, hence the proof.
\end{proof}
\subsubsection{Translation}
The case of translation is much easier than the case of stretch, as it bears many similarities with the case of semi-infinite trajectory. We don't even need to remove exponential weights. The only salient difference is we now have to work in a subspace $H_0$.

Let us first recall/set up some notation. Fix tuples $(r,a,p)_\pm$, and they determine a pregluing between $u_+$ and $u_-$. We use  $v_{p_\pm}$ to denote the intermediate trajectory that connects between $u_+ + (r,a,p)_+$ and $u_- + (r,a,p)_-$ in the pregluing. As before we define $w_{p\pm}(s)$ as our exponential weight profile, and we have the codimension 3 subspace $H_0$. We fix $(s,t)$ coordinates over $v_{p_\pm}$, with gluing happening at $s=R$ and $s=T_p/\dt -R$. Let $p\in \bb{R}$ be a small number denoting the size of the translation, let $p^*_\pm = p_\pm +p$, and let $v_{p^*}$ denote the gradient trajectory between the pregluing determined by $p^*_\pm$. We equip vector fields over $v_{p^*}$ with Sobolev norms as previous described and it also has a subspace $H_0^*$. On $v_{p^*}$ we choose coordinates $(s^*,t^*)$ and because we assumed the function $f(x)$ is locally linear (after maybe a change of coordinates) we have that pregluing happens at $s^*=R$ and $s^*=T_p/\dt -R$. Observe there is a parallel transport map using the flat metric
\[
PT:W^{2,p,w}(v_{p^*}^*TM)\longrightarrow W^{2,p,w}(v_{p}^*TM)
\]
such that if $\phi^*(s^*,t^*)$ is a vector based at $v_{p^*}(s^*,t^*)$, it is transported to $ \phi(s=s^*,t=t^*)$ over $v_{p}(s,t)$. Note the parallel transport map send $H_0^*$ to $H_0$. And the solution $\phi^*_{p^*_\pm}$ to $\Theta_v$ over $v_{p^*}$ can be identified with $\phi(p)\in H_0$ to an equation of the form
\[
D_{J_\dt}(p) \phi_p + \mcal{F}_v(p,\psi_\pm, \phi_p) + \beta_\pm' \psi=0
\]
and the feedback term from $\phi_{p_\pm^*}^*$ into $\Theta_\pm (p_\pm^*)$ can be identified with the feedback of $\phi_p$ which corresponds to regions $s \in [-3R,3R]$ for $\Theta_-$ and $s \in [T_p/\dt-3R,T_p/\dt+3R]$ for $\Theta_+$. Then it suffices to calculate the norm of $d\phi_p/dp$ in $H_0$.
\begin{proposition}
$\left\|\frac{d\phi_p}{dp}\right\|_{W^{2,p,w}(v_{p}^*TM)}\leq C\ep$.
\end{proposition}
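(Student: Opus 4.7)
The plan is to mirror the semi-infinite gradient trajectory argument, using the fact that translation (unlike stretching) does not alter the length of the neck, so no factors of $1/\dt$ will appear. First, after identifying everything via the parallel transport map $PT$, the $p$-family of equations takes the form
\[
D_{J_\dt}(p)\phi_p + \mcal{F}_v(p,\psi_\pm,\phi_p) + \beta_\pm'\psi_\pm = 0,
\]
which by the uniformly bounded right inverse $\Pi\circ Q$ (constructed earlier for $D_{J_\dt}|_{H_0}$) is equivalent to the fixed-point equation
\[
\phi_p = \Pi\circ Q\bigl(-\beta_\pm'\psi_\pm - \mcal{F}_v(p,\psi_\pm,\phi_p)\bigr).
\]
Because the parallel transport identification respects $H_0$ and the exponential weight profile $w_{p_\pm}$ is translation-invariant in $s$, the operator $\Pi\circ Q$ itself has $p$-derivative of uniformly bounded operator norm as $\dt \to 0$. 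This is the decisive contrast with the stretch case.

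Next I would differentiate both sides with respect to $p$. As in Remark \ref{Frechet}, this yields
\[
\frac{d\phi_p}{dp} = \frac{d(\Pi\circ Q)}{dp}\bigl(-\beta_\pm'\psi_\pm - \mcal{F}_v\bigr) - \Pi\circ Q\left(\frac{\p \mcal{F}_v}{\p p} + \frac{\p \mcal{F}_v}{\p \phi_p}\frac{d\phi_p}{dp} + \frac{\p\mcal{F}_v}{\p(\p_t\phi_p)}\p_t\frac{d\phi_p}{dp}\right).
\]
The explicit $p$-dependence of $D_{J_\dt}(p)$ and of $\mcal{F}_v$ comes only from how the coefficient matrices $J_\dt$, $S(p,s,t)$ and the smooth functions $g,h$ depend on the base point along the Morse-Bott torus, and these derivatives are bounded by a universal constant as in Proposition \ref{prop_locform}. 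Hence $\|\p_p\mcal{F}_v\|$ is bounded by $C(\|\phi_p\|+\|\psi_\pm\|) \le C\ep/R$, and the first term on the right is at most $C\ep/R$ in $W^{2,p,w}(v_p^*TM)$-norm.

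Finally, using that $\mcal{F}_v$ is quadratic in $(\phi_p,\psi_\pm,\p_t\phi_p,\p_t\psi_\pm)$ and that $\|\psi_\pm\|_{C^1},\|\phi_p\|_{C^1}\le C\ep$ via Sobolev embedding (where $C^1$ bounds on $\phi_p$ follow from the elliptic regularity upgrade $\phi_p\in W^{3,p,w}$ already established), the remaining terms contribute at most $C\ep\|d\phi_p/dp\|$ in operator norm. Collecting all of this gives
\[
(1-C\ep)\left\|\frac{d\phi_p}{dp}\right\|_{W^{2,p,w}(v_p^*TM)} \le C\ep,
\]
from which the claimed bound follows by choosing $\ep$ small. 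The main obstacle I anticipate is checking carefully that $\p_p(\Pi\circ Q)$ really is uniformly bounded: this reduces to showing that $\p_p D_{J_\dt}(p)$ is bounded independently of $\dt$ as an operator $H_0\to W^{1,p,w}$, which follows from the translation-invariance of $w_{p_\pm}$ together with the fact that only the matrix coefficients of $D_{J_\dt}$ vary with $p$, and they do so at rate $O(1)$, not $O(1/\dt)$ as in the stretch case.
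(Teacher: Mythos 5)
Your proposal is correct and follows essentially the same route as the paper: recast $\Theta_v=0$ as a fixed-point equation for $\Pi\circ Q$, show $\frac{d}{dp}(\Pi\circ Q)$ is uniformly bounded by differentiating $\Pi\circ Q\circ D_{J_\dt}(p)|_{H_0}=\mathrm{id}$ and using that the coefficient matrices vary at rate $O(1)$ under translation, then differentiate the fixed-point equation and absorb the $C\ep\|d\phi_p/dp\|$ term. The only small point the paper makes explicit that you gloss over is that the projection $\Pi$ itself depends on $p$ through the kernel vector field $\p_v$, whose $p$-derivative must be (and is) checked to be $O(1)$ in $C^1$.
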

\begin{proof}
Observe that $\|d/dpD_{J_\dt}\|\leq C$ when measured in the operator norm because the coefficient matrices in this operator only depend on the background geometry. The same is true for $\|\frac{\p \mcal{F}_v}{\p p} (p,-,-)\|_{C^1} \leq C$. We recall $D_{J_\dt}(p)$ is an isomorphism from
\[
H_0 \longrightarrow W^{1,p,w}(v_p^*TM)
\]
hence has an inverse whose operator norm is uniformly bounded over $p$ and as $\dt\rightarrow 0$. The same is true for the derivative in $p$ of this inverse. To see this, we recall $D_{J_\dt}(p): W^{2,p,w}(v_p^*TM)\rightarrow W^{1,p,w}(v_p^*TM)$ has a right inverse  uniformly bounded in $p$ and $\dt \rightarrow 0$, which we denote by $Q$. We also recall the inverse for $D_{J_\dt}(p)$ is obtained by $\Pi \circ Q$. Hence it suffices to show $\Pi$ has uniformly bounded norm as $p$ changes in a translation. \\
Recall
\[
\Pi (\phi_p)= \phi_p-\sum_* L_*(\phi_p) \p_*
\]
as an operator we see that the terms involving $*=z,s$ are independent of $p$, the vector field $v :=a\frac{v_* \partial_s -\p_s}{\dt} +b\p_s $ depends on $p$ but we see in $C^1$ norm that $|\frac{dv}{dp}| \leq C$, so we see $\Pi$ has uniformly bounded norm as $p$ varies, which in turn implies $\Pi \circ Q$ has uniformly bounded norm. We now investigate $\frac{d}{dp} \Pi \circ Q$, which we can understand by differentiating the expression
\[
\Pi \circ Q \circ D_{J_\dt}(p) |_{H_0} = id|_{H_0}
\]
w.r.t. $p$, which yields
\[
\frac{d}{dp} \Pi \circ Q  = -\Pi \circ Q  (\frac{d}{dp}  D_{J_\dt}(p)) \circ \Pi \circ Q 
\]
which implies as an operator $\frac{d}{dp} \Pi \circ Q $ has uniformly bounded norm.
Next we recast the equation:
\[
D_{J_\dt}(p) \phi_p + \mcal{F}_v(p,\psi_\pm, \phi_p) + \beta_\pm' \psi=0
\]
as a fixed point equation
\[
\phi_p = \Pi \circ Q (-\mcal{F}_v(p,\psi_\pm, \phi_p) - \beta_\pm' \psi)
\]
using the exact same procedure as we did for for semi infinite gradient trajectories, we differentiate this equation in $p$ to show $\|d\phi_p/dp\| \leq C\ep$. Observe after parallel transport there was no translation of $\psi_\pm$ involved.
\end{proof}
Since in this case we worked directly with weighted norms we can directly conclude:
\begin{corollary}
With respect to translations, the vector field $\phi_p$ is well behaved.
\end{corollary}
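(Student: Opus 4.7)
The plan is to unwind the definition of \emph{well behaved} and observe that it follows immediately from the norm bound established in the preceding proposition, once one matches up the weight profiles. Recall the definition requires that, for $s \in [-3R, 3R]$, the parallel transports of $\frac{d\phi_p}{dp}$ to the cylindrical neighborhoods of $u_\pm$ (under the pregluing identifications $s \sim s_-$ near $s = R$ and $s_+ = s - T_p/\dt$ near $s = T_p/\dt - R$) are bounded by $C\ep$ in the $W^{2,p,d}(u_\pm^*TM)$ norm.

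The key point is that the exponential weight $e^{w_p(s)}$ defining $W^{2,p,w}(v_p^*TM)$ agrees with the weights on the cylindrical neighborhoods of $u_\pm$ on the relevant regions: for $s \in [0, T_p/2\dt]$ we have $w_p(s) = ds$, which matches $ds_-$ under the pregluing identification, and for $s \in [T_p/2\dt, T_p/\dt]$ we have $w_p(s) = d(T_p/\dt - s) = d|s_+|$ under $s_+ = s - T_p/\dt$. Consequently, the restriction of the $W^{2,p,w}(v_p^*TM)$ norm to either of the intervals $[-3R, 3R]$ or $[T_p/\dt - 3R, T_p/\dt + 3R]$, composed with the parallel-transport identification, is precisely the $W^{2,p,d}(u_\pm^*TM)$ norm on the corresponding portion of the cylindrical neighborhood. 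Applying
\[
\left\|\frac{d\phi_p}{dp}\right\|_{W^{2,p,w}(v_p^*TM)} \leq C\ep
\]
from the preceding proposition to these restricted norms immediately yields the two required bounds from the definition of well behaved.

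In contrast to the stretch case, no factor of $1/\dt$ appears here: the translation does not alter the length of the pregluing neck, so the derivative in $p$ does not entail any rescaling in the domain variable $s$ (unlike the stretch, where $\phi_\pm(s)$ had to be written as $\tilde\phi_\pm(s \mp p/\dt)$ and differentiating cost a factor of $1/\dt$). This is precisely why the conclusion is immediate and there is no serious obstacle; the entire content of the corollary is a bookkeeping check that the weighted Sobolev norm on $v_p^*TM$ and the weighted Sobolev norms on $u_\pm^*TM$ are compatible under the pregluing.
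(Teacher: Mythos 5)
Your argument is correct and is essentially the paper's own: the paper disposes of this corollary with the single remark that, since the translation case was handled directly in the weighted norms (no conjugation and no rescaling of the domain variable $s$), the bound $\left\|\frac{d\phi_p}{dp}\right\|_{W^{2,p,w}(v_p^*TM)}\leq C\ep$ restricts to the feedback regions and matches the $W^{2,p,d}(u_\pm^*TM)$ norms there. Your explicit check that $w_p(s)=ds$ near $s\in[-3R,3R]$ and $w_p(s)=d(T_p/\dt-s)$ near the $u_+$ end agrees with the weights on the cylindrical ends, and your observation that no $1/\dt$ factor arises because translation does not reparametrize $s$, are exactly the (implicit) content of the paper's one-line conclusion.
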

With this and the previous subsection, we conclude that $\phi$ is well behaved with respect to variations of $p_\pm$. In the next part we examine how $\phi$ varies when we change $r_\pm,a_\pm$.

\subsubsection{Variations in \texorpdfstring{$r_\pm$}{r}, \texorpdfstring{$a_\pm$}{a}}
In this subsection we show that when we vary the parameters $a_\pm$ and $r_\pm$ the solution $\phi$ is well behaved. 
\begin{proposition}
The solution $\phi$ to $\Theta_v$ is well behaved w.r.t. $a_\pm$.
\end{proposition}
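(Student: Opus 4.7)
The plan is to imitate the argument given for $p_\pm$, decomposing a variation in $(a_+,a_-)$ into a common \textit{translation} $(a_+,a_-)\to(a_++a,a_-+a)$ and a \textit{stretch} $(a_+,a_-)\to(a_++a,a_--a)$, and handling each separately. The crucial point that makes this case substantially easier than the $p_\pm$ case is that the geometry $(\bb{R}\times Y,\lambda_\dt,J_\dt)$ is strictly invariant under translation in the symplectization direction, and that the rate of rise of $a$ along $v_{r,a,p}$ is $a'(s)=e^{\dt f(x)}=1+O(\dt)$, so a stretch by $a$ changes the effective neck length (in the $s$-variable) by $O(a)$ rather than $O(a/\dt)$. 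Consequently none of the awkward $1/\dt$ factors that we had to chase in the $p_\pm$ analysis appear here.

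For the translation case, let $\tau_a$ denote the diffeomorphism of $(s,t)$ coordinates on $v_{r,a,p}$ produced by shifting $u_+$ and $u_-$ simultaneously in the symplectization direction by $a$. By $\bb{R}$-invariance, $D_{J_\dt}$, the nonlinearity $\mcal{F}_v$, the cut-offs, the subspace $H_0$, and the right inverse $\Pi\circ Q$ all intertwine with $\tau_a$; the input data $\psi_\pm$ is simply relabelled. Therefore the fixed point $\phi$ solving $\Theta_v=0$ transforms equivariantly, and after the obvious identification is literally independent of $a$. In particular the feedback into $\Theta_\pm$ has vanishing derivative in the translation direction, so the translation case is well behaved with constant $0$.

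For the stretch case, the plan is to rerun the three-step proof used for a $p$-stretch: (i) conjugate away the exponential weight to write $\zeta=e^{w(s)}\phi\in W^{2,p}(v^*TM)$, (ii) write $\zeta=\gamma_+\zeta_++\gamma_-\zeta_-+\dt\zeta$ where $\zeta_\pm$ are the semi-infinite solutions from Section \ref{semiglue} and $\dt\zeta$ is a small correction, and (iii) differentiate the fixed-point equation $\dt\zeta=\Pi'\circ Q(-G_1\dt\zeta-G_2\p_t\dt\zeta+E)$ in $a$. The same bookkeeping as before shows $\|d\Pi'\circ Q/da\|$, $\|dG_*/da\|$, and $\|d\psi_\pm'/da\|$ are all uniformly $O(1)$, since the weight profile $w(s)$ shifts by only $O(a)$ in $s$ under a stretch (not $O(a/\dt)$). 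Analogously, identifying $\zeta_\pm(s)=\tilde\zeta_\pm(s\mp a)$ gives $\|d\zeta_\pm/da\|\leq C\ep$ and, on the overlap $s\in[T_p/2\dt-2,T_p/2\dt+2]$, the pointwise derivatives $|d\zeta_\pm/da|, |d\p_t\zeta_\pm/da|$ decay like $e^{-\lambda(T_p/2\dt-3R)}$ using the exponential decay estimates of Section \ref{expdecay} (proved there for $p$ but with identical proof for $a$ since the geometry is $a$-invariant). Plugging into the differentiated fixed-point equation and absorbing the $\ep\|d\dt\zeta/da\|$ term into the left-hand side yields
\[
\left\|\frac{d}{da}\dt\zeta\right\|_{W^{2,p}}\leq C\,e^{-\lambda(T_p/2\dt-3R)},
\]
with \emph{no} $1/\dt$ prefactor. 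Reintroducing the weight $e^{-w(s)}$, the feedback $e^{-w(s)}\dt\zeta(s\mp a,t)$ into $\Theta_\pm$ has $W^{2,p,d}$-derivative in $a$ bounded by $Ce^{-\lambda(T_p/2\dt-3R)}\ll\ep$, and the well-behaved parts $\gamma_\pm\phi_\pm$ are bounded by $C\ep$ by the semi-infinite analysis. The only part of the argument requiring any real care is step (iii), but it is strictly easier than the stretch-in-$p$ case already completed, so I expect no genuine obstacle.
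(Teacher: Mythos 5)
Your proof is correct and follows essentially the same route as the paper: decompose the $a_\pm$-variation into translation (handled by strict $\bb{R}$-invariance of the geometry) and stretch (rerun the $p$-stretch argument, noting the neck length changes by $O(a)$ rather than $O(a/\dt)$ so no $1/\dt$ factors appear). The only cosmetic difference is that you still invoke the exponential decay estimates in the stretch case, whereas the paper observes they are not even needed once the $1/\dt$ prefactors are absent; your extra care is harmless.
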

\begin{proof}
Observe that changing $a_\pm$ can also have the effect of lengthening and shortening the gradient trajectory we need to glue between $u_+$ and $u_-$, though the process is substantially less dramatic than when we changed $p_\pm$. For instance when we change $a_\pm$ by size $\ep$, the connecting gradient trajectory may lengthen/shrink by size $C\ep$, instead of $C\ep/\dt$. In particular we can redo all of the previous subsection. We separate the change to stretch and translation. We first observe in case of translation the equation $\Theta_v$ actually stays invariant, because all of our background geometry is invariant in the $a$ direction. In the case of stretch, we remove the exponential weights, and repeat the above proof. The difference is that no factor of $C/\dt$ ever appears, so we don't even need the exponential decay estimates. The rest follows as above.
\end{proof}

\begin{proposition}
$\phi(r,a,p)$ is well behaved as we vary $r_\pm$.
\end{proposition}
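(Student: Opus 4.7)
The plan is to mirror the analyses already carried out for $p_\pm$ and $a_\pm$, splitting variations of $r_\pm$ into a \emph{translation} ($r_+$ and $r_-$ change by the same amount) and a \emph{stretch} ($r_+$ and $r_-$ change by opposite amounts). The change of basis is exactly as in the $p_\pm$ discussion, so it suffices to bound $\|d\phi/dp\|$ in each case and then recombine.

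For the translation case, the background geometry near the Morse-Bott torus is genuinely invariant under simultaneous rotation in the $t = z/2\pi T$ direction: the coordinates $(a,z,x,y)$ from Proposition \ref{prop_locform}, the perturbed contact form $\lambda_\delta$, and the almost complex structure $J_\delta$ all commute with $\partial_z$-translations. Hence simultaneously shifting $r_+$ and $r_-$ by the same amount acts as an isometry on the preglued domain $\Sigma_{r,a,p}$ and conjugates the operator $\Theta_v$ to itself up to translation of its inputs $\psi_\pm'$ by the same amount in $t$. The solution $\phi$ is thus simply translated in $t$, and the derivative of its restriction to $[-3R,3R]\times S^1$ (resp. $[T_p/\delta-3R, T_p/\delta+3R]\times S^1$) is uniformly $W^{2,p,d}(u_\pm^*TM)$-bounded by $C\epsilon$, exactly as in the $a$-translation step.

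For the stretch case, the situation is formally analogous to the $p_\pm$-stretch argument but significantly milder: because $r_\pm \in S^1$ is bounded, changing the relative twist does not lengthen the cylinder by a factor $1/\delta$, so the dangerous factor $1/\delta$ that appeared in Equation \eqref{equation_product} is absent. Concretely, I would remove the exponential weights via $\zeta = e^{w(s)}\phi$, decompose $\zeta = \gamma_+\zeta_+ + \gamma_-\zeta_- + \delta\zeta$ as in the $p$-stretch case, and differentiate the fixed-point equation
\[
\delta\zeta = \Pi'\circ Q\bigl(-G_1\,\delta\zeta - G_2\,\partial_t \delta\zeta + E\bigr)
\]
with respect to the rescaled parameter $r' = r/\epsilon$. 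The operator $\Pi'\circ Q$ and the functions $G_*$ depend on $r$ only through the (uniformly bounded) background geometry and through $\zeta_\pm, \psi_\pm'$; in particular $\|d(\Pi'\circ Q)/dr\| \leq C$ by the same reasoning as in Step 1 of the $p$-stretch proof. The key input is the exponential decay of $\zeta_r$ proven in the corollary following Corollary \ref{cor:expdecay_p}, which bounds $|dE/dr|$ pointwise on the support $s\in[T_p/2\delta-2,T_p/2\delta+2]$ by $Ce^{-\lambda(T_p/2\delta-3R)}$, with no $1/\delta$ factor. Feeding these estimates back, one obtains
\[
\Bigl\|\tfrac{d\,\delta\zeta}{dr}\Bigr\|_{W^{2,p}} \leq C\,e^{-\lambda(T_p/2\delta - 3R)},
\]
and transferring back through $\phi = e^{-w(s)}\zeta$ yields the well-behavedness on the windows $s\in[-3R,3R]$ and $s\in[T_p/\delta-3R,T_p/\delta+3R]$ as required.

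The main obstacle, as in the $p_\pm$ case, is bookkeeping the $t$-twist in the construction of $\Sigma_{r,a,p}$: recall the identification $t_- + r_- \sim t + r_+$ at $s=R$ means that $\psi_-(s,t)$ is seen over $v_{r,a,p}$ as $\psi_-(s, t+(r_+-r_-))$, and parallel-transporting solutions over $v_{r_\pm,a,p}$ to a reference $v$ to take derivatives must respect this. However, the same parallel-transport identifications already used for $a_\pm$ extend verbatim here since the change of coordinates is just a $t$-rotation, and all relevant operator norms are invariant under it. Once this bookkeeping is set up, the two cases above combine to give $\|d\phi/dr_\pm\|_{W^{2,p,d}(u_\pm^* TM)} \leq C\epsilon$ on the required windows, completing the proof.
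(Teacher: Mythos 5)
Your proposal reaches the right conclusion but by a noticeably heavier route than the paper. The paper does not split $r_\pm$-variations into translation and stretch at all: since changing $r_\pm$ never changes the $s$-length of the neck (unlike $p_\pm$, where an $\epsilon$ change lengthens the cylinder by $\epsilon/\delta$), none of the machinery you import from the $p$-stretch case --- removing weights, the $\gamma_+\zeta_+ + \gamma_-\zeta_- + \delta\zeta$ decomposition, the exponential decay of $\zeta_r$ in the error term $E$ --- is needed. The paper simply parallel-transports the $(r_+,r_-)$-family of problems to the fixed space $W$ (the whole trajectory is rotated with $r_+$, and the domain twist makes $\psi_-$ appear as $\psi_-(s,t+r_+-r_-)$), writes the fixed-point equation $\hat{\phi} = \Pi\circ Q(-\beta_+'\psi_+ - \beta_-'\psi_-(s,t+r_+-r_-) - \hat{\mcal{F}}_v(r_+,\cdot))$, and differentiates it directly in $r_\pm$, exactly as in the semi-infinite and translation cases. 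The one genuinely new technical point there --- which your write-up does not isolate --- is that differentiating $\psi_-(s,t+r_+-r_-)$ in $r_\pm$ produces $\partial_t\psi_-$, which costs a derivative and only lands in $W^{1,p}$; this is harmless because $Q$ restores the derivative, but it is the thing one actually has to check.

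One claim in your translation step is wrong as stated: you assert that the coordinates, $\lambda_\delta$, and $J_\delta$ "all commute with $\partial_z$-translations," so that a simultaneous shift of $r_\pm$ acts as an isometry and the solution is literally translated in $t$. Proposition \ref{prop_locform} only normalizes $h(x,y,z)$ on the surface $y=0$; off that surface $h$, and hence $J$ and the linearization $S(s,t)$, may depend on $z$. The paper explicitly notes that "the local geometry is not necessarily invariant," so the operator $D_{J_\delta}(r_+)$ and the nonlinear term $\hat{\mcal{F}}_v(r_+,\cdot)$ genuinely depend on $r_+$. This does not break your bound --- the dependence is uniformly $C^1$-bounded, so differentiating the operator contributes terms of the same size as in the $a_\pm$ and $p_\pm$ cases --- but the "pure isometry, solution just rotates" argument is not available, and the contribution of $\partial_{r_+}D_{J_\delta}$ and $\partial_{r_+}\hat{\mcal{F}}_v$ must be estimated rather than declared zero.
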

\begin{proof}
Recall for $r_+$ in the pregluing construction we are rotating the entire gradient trajectory $v_{r,a,p}$ along with it, so we can again use parallel transport in $r_+$ to turn it into a family of equations over the same space, which we denote by $W$ as before, and the resulting $(r_+,r_-)$ family of PDEs over $W$ by $\hat{\Theta}_v$. We use $\hat{\phi}(r_+,r_-)$ to denote the solution to $\hat{\Theta}_v$. By assumption, the almost complex structure $J$, when restricted to the surface of the Morse-Bott torus, is $r$ invariant, however, the local geometry is not necessarily invariant. Therefore, the linearized operator as well as nonlinear term picks up a $r_+$ dependence, so the equation solved by $\phi(r_+,r_-)$  has a linear operator $D_{J_\dt}(r_+)$ and a nonlinear term $\hat{\mcal{F}}_v(r_+,-)$ with $r_+$ dependence.
Also observe $H_0$ is invariant under changing $r_\pm$, so we denote it by the same letter when viewed as subspace in $W$.

We now recall what happens to the pregluing near the $u_-$ end, the domain Riemann surface $\Sigma_{r,a,p}$ is constructed at $s=R$ with the identification $t+r_+ \sim t_-+r_-$. So we see this effect in the equation $\hat{\Theta}_v$ via the dependence of $\psi_-$ on $r_\pm$, in particular the $\psi_-$ term in $\hat{\Theta}_v$ should be instead $\psi_-(s,t+r_+-r_-)$. Hence after parallel transport we see $\hat{\phi}(r_+,r_-)$ is the unique solution to the equation in $H_0$:
\[
D_{J_\dt} (r_+) \hat{\phi} +\beta_+' \psi_+(s,t) + \beta_-' \psi(s,t+r_+-r_-) + \hat{\mcal{F}}_v(r_+,\psi_\pm)=0.
\]
Again, following the same procedure as we did for semi-infinite gradient trajectories we recast this as a fixed point equation
\[
\hat{\phi} = \Pi \circ Q (-\beta_+' \psi_+(s,t) - \beta_-' \psi(s,t+r_+-r_-) - \hat{\mcal{F}}_v(r_+,\psi_\pm))
\]
and differentiate both sides with respect to $r_\pm$, observing that $\frac{\p \hat{\mcal{F}}_v(r_+,\psi_\pm)}{\p r_+} =  g(\phi,\psi) + h(\phi,\psi)\p_t(\phi) $ as in Remark \ref{quadratic_term}. However, it is important to note that taking an $r_\pm$ derivative of the above equation will produce a $t$ derivative of $\psi_-$, which will produce a function in $W^{1,p}$ (we neglect any mention of weights for now). But since we are not taking any further derivatives of $\psi_\pm$, this is fine as $Q$ will send this to $W^{2,p}$, then the same argument as before shows that 
\[
\left\|\frac{d\hat{\phi}}{dr_\pm}\right\|_{W^{2,p,w}(v_{r,a,p}^*TM) }\leq C\ep
\]
as desired.
\end{proof}
\subsubsection{Solution of \texorpdfstring{$\Theta_\pm$}{Theta}}
In this subsection we use the results from previous section to finally solve $\Theta_\pm$ and hence conclude gluing exists. Recall deformations of $u_\pm$ are given by the elements  $(\psi_\pm, (r,a,p)_\pm, \p'_\pm, \dt j_\pm )\in W^{2,p,d}(u_\pm^*TM)\oplus V_\pm \oplus V_\pm'\oplus T \mcal{J}_\pm$, and the linearized Cauchy Riemann operator 
\[
D\db_{J\pm}:W^{2,p,d}(u_\pm^*TM)\oplus V_\pm \oplus V_\pm'\oplus T \mcal{J}_\pm \longrightarrow  W^{1,p,d}(\overline{\op{Hom}}(T\dot{\Sigma}, u_\pm^*TM))
\]
is surjective with right inverse $Q_\pm$. Then $\Theta_\pm$ are equations of the form
\[
D\db_{J_\pm}((\psi_\pm, (r,a,p)_\pm, \p'_\pm, \dt j_\pm )) + \mcal{F}_\pm ((\psi_\pm, (r,a,p)_\pm, \p'_\pm, \dt j_\pm ,\phi)+\mcal{E}_\pm + \beta_v' \phi=0
\]
where $\mcal{F}_\pm$ is a quadratic expression in each of its variables, implicit in $\mcal{F}_\pm$ are quadratic terms depending on $(\dt j_\pm, \psi_\pm)$ responsible for variation of domain complex structure away from the punctures. And implicit in term $\mcal{E}_\pm$ are error terms uniformly bounded by $C \dt$ in the interior of $u_\pm$ responsible for the fact that $u_\pm$ are $J$-holomorphic, instead of $J_\dt$-holomorphic.
\begin{theorem}
The system of equations $\Theta_\pm =0$ has a solution, and hence 2 level cascades with one intermediate end can be glued. Furthermore, for specific choices of $Q_\pm$, which are right inverse to $D\bar{\p}_{J_\pm}$, there is a unique solution in the image of $(Q_+,Q_-)$.
\end{theorem}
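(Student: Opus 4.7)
The plan is to set this up as a fixed point problem on the direct sum $W^{2,p,d}(u_+^*TM)\oplus V_+\oplus V_+'\oplus T\mcal{J}_+ \,\oplus\, W^{2,p,d}(u_-^*TM)\oplus V_-\oplus V_-'\oplus T\mcal{J}_-$, exactly as in the semi-infinite case of Section \ref{semiglue}, with the new feature that the feedback term $\beta_v'\phi$ must be treated as a function of the independent variables. First I will package everything as
\[
I(\psi_\pm,(r,a,p)_\pm,\p'_\pm,\dt j_\pm) := (Q_+,Q_-)\Bigl(-\mcal{F}_\pm - \mcal{E}_\pm - \beta_v'\phi\bigl(\psi_\pm,(r,a,p)_\pm\bigr)\Bigr),
\]
where $\phi(\psi_\pm,(r,a,p)_\pm)\in H_0$ is the unique solution of $\Theta_v=0$ produced in the previous subsections. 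A fixed point of $I$ in the image of $(Q_+,Q_-)$ is precisely a solution of $\Theta_\pm=0$, and uniqueness in this image is then automatic.

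Next I would verify that for $\ep>0$ sufficiently small (and $\dt\ll\ep$) the map $I$ sends the $\ep$-ball $B_\ep$ to itself. Writing $u:=(\psi_\pm,(r,a,p)_\pm,\p'_\pm,\dt j_\pm)$ with $\|u\|\le\ep$, one has $\|\phi\|\le C\ep/R$ by the $\Theta_v$ estimate, so $\|\beta_v'\phi\|\le C\ep/R$; the nonlinear term $\mcal{F}_\pm$ is quadratic in $(u,\phi)$, hence $\|\mcal{F}_\pm\|\le C\ep^2$; and $\|\mcal{E}_\pm\|\le C\dt$ by Proposition \ref{proposition_equations}. Applying the uniformly bounded $Q_\pm$ yields $\|I(u)\|\le C(\ep/R+\ep^2+\dt)\le\ep$ for $R$ large and $\dt$ small.

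The heart of the argument is the contraction property, and this is where I expect the main obstacle to lie: although $I$ is linear plus quadratic in the naive variables, it contains the hidden nonlinear term $\phi(u)$, and $\phi$ depends on $u$ through an entire family of PDEs over different domains (different pregluings $v_{r,a,p}$). This is exactly the difficulty dealt with in the preceding subsections. To differentiate $\phi$ with respect to $\psi_\pm$, $(r,a)_\pm$, $p_\pm$ separately, I would invoke the four bounds already established: $\|d\phi/d\psi_\pm\|\le C\ep$, $\|d\phi/da_\pm\|\le C\ep$, $\|d\phi/dr_\pm\|\le C\ep$, and $\|d\phi/dp_\pm\|\le C\ep$ (the last via the stretch/translation decomposition and the exponential decay of $\delta\zeta$). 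All of these are measured in the $W^{2,p,d}$ norm on the strips of length $\sim 6R$ around the gluing collars where $\beta_v'\phi$ is supported, which is precisely what is needed so that the contribution of $\phi$ to the derivative of $I$ is of order $\ep$ on that region. Combined with the standard quadratic estimate on $\mcal{F}_\pm$ (its derivative is at most $C\ep$) and the fact that $\mcal{E}_\pm$ does not depend on $u$ modulo an $O(\dt)$ term, the derivative of $I$ on $B_\ep$ has operator norm at most $C(\ep+\dt)<1/2$.

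With these two properties the Banach fixed point theorem produces a unique fixed point $u_*\in B_\ep$ of $I$, and tracing back the definitions, $(u_*,\phi(u_*))$ solves simultaneously $\Theta_\pm=0$ and $\Theta_v=0$; by the construction of $\Theta_\pm,\Theta_v$ this means the deformed preglued curve $u_{r,a,p}+\beta_+\psi_++\beta_-\psi_-+\beta_v\phi$ together with the perturbed complex structures $j_0+\dt j_\pm$ is a genuine $J_\dt$-holomorphic map, completing the gluing for 2-level cascades with a single intermediate end. The uniqueness statement in the image of $(Q_+,Q_-)$ follows directly from the uniqueness part of the contraction principle, and a different choice of right inverse would shift the solution by an element of the kernel of $D\db_{J_\pm}$, corresponding to the expected $\bb{R}$ translation symmetries.
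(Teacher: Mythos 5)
Your proposal is correct and follows essentially the same route as the paper: a contraction mapping on $W^{2,p,d}(u_\pm^*TM)\oplus V_\pm\oplus V_\pm'\oplus T\mcal{J}_\pm$ for the map $(Q_+,Q_-)(-\mcal{F}_\pm-\mcal{E}_\pm-\beta_v'\phi)$, with the self-mapping of the $\ep$-ball coming from $\|\phi\|\le C\ep/R$, the quadratic nature of $\mcal{F}_\pm$, and the smallness of $\mcal{E}_\pm$, and the contraction property coming from the previously established bounds showing $\phi$ is well behaved with respect to all input variables. The uniqueness-in-the-image-of-$(Q_+,Q_-)$ conclusion is likewise the same.
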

\begin{proof}
We consider the system of $\Theta_\pm$ as a map from 
\begin{align*}
(\Theta_+,\Theta_-):&
(W^{2,p,d}(u_+^*TM)\oplus V_+ \oplus V_+'\oplus T \mcal{J}_+) \oplus 
(W^{2,p,d}(u_-^*TM)\oplus V_- \oplus V_-'\oplus T \mcal{J}_-) \longrightarrow\\
&W^{1,p,d}(\overline{\op{Hom}}(T\dot{\Sigma}, u_+^*TM))\oplus W^{1,p,d}(\overline{\op{Hom}}(T\dot{\Sigma}, u_-^*TM)).
\end{align*}
We solve this via a fixed point theorem by finding a fixed point to the map
\[
[(\psi_+, (r,a,p)_+, \p'_+, \dt j_+ ),(\psi_-, (r,a,p)_-, \p'_-, \dt j_- )] \longrightarrow [Q_+(-\mcal{F}_+-\mcal{E}_+-\beta_v'\phi), Q_-(- \mcal{F}_--\mcal{E}_--\beta_v'\phi)].
\]
We show it maps the $\epsilon$ ball to itself. This follows from the size estimates we had of $\phi$ relative to $\psi_\pm$, as well as the fact $\mcal{F}_\pm$ is quadratic, and the size of the terms that appear in $\mcal{E}_\pm$ are very small. We next argue this map has the contraction property as we vary $\psi_\pm, (r,a,p)_\pm, \p'_\pm, \dt j_\pm$; this follows directly from the previous subsection in which we showed $\phi$ is well behaved with respect to these input variables, plus the fact $\mcal{F}_\pm$ is quadratic (see remark \ref{quadratic_term}).The sizes of terms that appear in $\mcal{E}_\pm$ are also uniformly small, as we derived in the pregluing section. Hence the contraction mapping principle shows there is a unique solution in the image of $(Q_+,Q_-)$.
\end{proof}

\begin{remark}\label{obsrmk}
\textbf{Relation to obstruction bundle gluing}. We remark we could have proved a gluing exists via obstruction bundle gluing methods in \cite{obs1},\cite{obs2}. This is more similar to how the gluing of 1 level cascades was constructed in \cite{colin2021embedded}. We explain this in the simplified setting as above, and the general case of multiple level cascade can be done analogously.  Recall $D\db_{J,\pm}$ is index 1 (i.e. $u_\pm$ is rigid), we let $U_\pm$ be a 1 dimensional vector space in $W^{1,p,d}(\overline{\op{Hom}}(T\dot{\Sigma}, u_-^*TM))$ spanned by image of asymptotically constant vector field $\p_x$ under $D\db_{J,\pm}$, and let $U'_\pm$ denote a fixed complement given by the image of $(\psi_\pm,r_\pm,a_\pm,\dt j_\pm)$ under $D\p_{J,\pm}$. The fact $U_\pm'$ is closed follows from the fact our operators are Fredholm, and it form a complement for index reasons (as long as neither $u_\pm$ is a trivial cylinder).

Then we form the (trivial) obstruction bundle with base $(p_+,p_-)\in [-\ep,\ep]^2$ and fiber $U_+\oplus U_-$. Then instead of solving $\Theta_\pm$ on the nose we introduce projections $\Pi_{U_\pm'}$ that project to $U_\pm'$. Then for fixed input data $\{(p_+,p_-), (\psi_\pm,r_\pm,a_\pm,\dt j_\pm)\}$ in an epsilon ball, we solve the equation $\Theta_v$ for $\phi$
\[
D_{J_\dt} \phi +\beta_+' \psi_+(s,t) + \beta_-' \psi(s,t+r_+-r_-) + \mcal{F}_v(r_+,\psi_\pm)=0.
\]
Its (unique) solution $\phi$, which depends on all input data $\{(p_+,p_-), (\psi_\pm,r_\pm,a_\pm,\dt j_\pm)\}$, will have norm uniformly bounded by $C\ep/R$ (the $C$ is uniform as we vary $(p_+,p_-)$). Then the solution to the system of equations $\Theta_\pm=0$ is equivalent to the solution of the following system of equations
\[
\theta_\pm :=D\db_{J,\pm} (\psi_\pm,(r,a)_\pm,\p_\pm',\dt j_\pm) +\Pi_{U_\pm'}[(+\mcal{F}_\pm+\mcal{E}_\pm+\beta_v'\phi]
\]
\[
D\db_{J,+} p_+  + (1-\Pi_{U_+'})[(+\mcal{F}_++\mcal{E}_++\beta_v'\phi)] =0
\]
\[
D\db_{J,-} p_- +(1-\Pi_{U_-'})[(+\mcal{F}_-+\mcal{E}_-+\beta_v'\phi)] =0.
\]

We observe for fixed $(p_+,p_-)$ the equations $\theta_\pm$ can always be solved via contraction mapping principle, essentially because the nonlinear term under the projection $\Pi_{U_\pm'}$ always lands in the image of $D\db_{J,\pm}$ by construction, and we have estimates $\|\phi\| \leq C\ep/R$. The other two equations in the language of \cite{obs2} define an obstruction section to the obstruction bundle, as
\[
\mathfrak{s}:=\{p_+  + (1-\Pi_{U_+})[(\mcal{F}_++\mcal{E}_++\beta_v'\phi], p_- +(1-\Pi_{U_-})[(\mcal{F}_-+\mcal{E}_-+\beta_v'\phi]\} \in \Gamma(U_+\oplus U_- \longrightarrow [-\ep,\ep]^2)
\]
and the vanishing of $\mathfrak{s}$ corresponds to gluing. In the above expression we think of $p_\pm$ as real numbers (because we have projected to the one dimensional spaces $U_\pm$).  But we observe by the size estimates of $\phi, \psi_\pm$, the size of the nonlinear term  $(1-\Pi_{U_\pm})[(\mcal{F}_\pm+\mcal{E}_+\pm\beta_v'\phi)]$ under $\Pi_{U_\pm}$ is uniformly bounded above by $C\ep^2/R$. However the linear term $p_\pm$ varies freely from $-\ep$ to $\ep$. The nonlinear term is clearly continuous with respect to variations in $p_\pm$. Hence from topological considerations the obstruction section must have at least one zero, hence we have at least one gluing.

The difficulty with the above approach, is of course there is at least one gluing, but it is unclear how many there are in total. One could improve the above conclusion by trying to argue that $\mathfrak{s}$ is not only $C^0$ close to the $(p_+,p_-)$ but also $C^1$ close, and this would imply the zero is unique. In fact what we proved about ``$\phi$ being well behaved w.r.t. $p_\pm$" is tantamount to showing $\mathfrak{s}$ is $C^1$ close to $(p_+,p_-)$. This required we do very careful exponential decay estimates as well as another contraction mapping principle. That we previously proved gluing via contraction mapping and here phrased it here as obstruction bundle gluing is purely a matter of repackaging.
\end{remark}
\begin{remark}
Another possible approach to obstruction bundle gluing might be to show for generic choice of $J_\dt$ we can arrange to have the zeros of the obstruction section be transverse to the zero section. This will show there is only one gluing up to sign. This is more in line with the strategy taken in \cite{obs2}. However, it's unclear whether we can choose generic enough $J_\dt$ since here we have a family of $J_\dt$ degenerating as $\dt\rightarrow 0$ as opposed to some fixed generic $J$.
\end{remark}

\begin{remark}
We shall later prove surjectivity of gluing. The appendix of \cite{colin2021embedded} used a different strategy for surjectivity, hence did not need to prove the solution obtained via obstruction bundle gluing is unique. Conceivably the methods there could also be applied here, but the construction would be difficult for two  reasons: one they used stable Hamiltonian structures as opposed to contact structures, therefore their equation is nicer than ours. Two it seems their methods would be difficult to carry out in multiple level cascades where the dimensions of moduli spaces that appear could be very high. Instead in what follows we use an approach in Section 7 of \cite{obs2}.
\end{remark}

\subsection{Gluing multiple level cascades}\label{subsection_gluing_multiple_level}
In this subsection we generalize gluing to multiple cascade levels. Given what we have proved above, this is mostly a matter of linear algebra. However there are still subtle details we need to take care of, we first take care of the simple case where we are still gluing together a 2-level cascades, except now with multiple ends meeting in the middle. This contains all the important features required for the gluing. Then we will simply generalize this situation to $n$ level cascades.

\subsubsection{2-level cascade meeting at multiple ends}
We consider a 2-level cascade built out of two $J$-holomorphic curves $u_+$ and $u_-$ meeting along $n$ free ends along an intermediate Morse-Bott torus. It does not matter how many intermediate Morse-Bott tori are there, so for simplicity we assume there is only one. We assume all ends of $u_-$ and $u_+$ landing on this Morse-Bott torus avoid critical points of $f$, and we have chosen coordinates so that the Morse function looks like $f(x)=x$. We assume this cascade is rigid, and $ev_-(u_+)$ and $ev_+(u_-)$ are separated by gradient flow of $f$ for time $T$. We also assume the $x$ coordinates of the positive asymptotic Reeb orbits of $u_-$ are labelled by $x_1,...,x_n$. 

In this example, for simplicity of exposition, we only focus on gluing finite gradient cylinders, and ignore gluing for semi-infinite trajectories. Hence we assume no positive end of $u_+$ nor negative end of $u_-$ lands on the Morse-Bott torus that appear in the intermediate cascade level, and we only perturb the contact form to be nondegenerate in a neighborhood of this torus.

The fact the cascade is rigid and transverse implies the following operator is surjective
\begin{align*}
D_+\oplus D_-:& W^{2,p,d}(u_+^*TM)\oplus T\mcal{J}_+\oplus V_+' \oplus V_+'' \oplus W^{2,p,d}(u_-^*TM)\oplus T\mcal{J}_-\oplus V_-' \oplus V_-'' \oplus (\Delta_t) \longrightarrow \\
&W^{1,p,d}(\overline{\op{Hom}}(T\dot{\Sigma}, u_+^*TM))\oplus W^{1,p,d}(\overline{\op{Hom}}(T\dot{\Sigma}, u_-^*TM)).
\end{align*}
$V_\pm'$ denotes asymptotic vectors associated to ends away from glued ends. $V_\pm''$ denotes asymptotic vector fields at glued ends except they only include $(r,a)_\pm$ components. $\Delta_t$ is a $n+1$ dimensional vector space that consists of asymptotic vectors that satisfy relations $p_i^+-p_i^- = t$, where $t$ is a positive real number that varies freely.  $D_\pm$ is our shorthand for the linearization of the Cauchy Riemann operator, which implicitly also depends on the complex structure of the domain.

We next do a much more careful pregluing. The main difficulty is for fixed $\dt>0$, suppose we start at $x_i$, and connect to a lift of a gradient trajectory that flows for distance $T$ in the $x$ direction, if we used $(s,t)$ coordinates on this gradient flow cylinder, the $s$ coordinate has range $s\in [0,T/\dt]$ which is independent of $i$. But the $a$ distance (i.e. distance in the symplectization direction) traveled by this gradient trajectory for the same $s$ from $0$ to $T/\dt$ varies depending on $i$, fundamentally this is because the $a$ coordinate satisfies the ODE
\[
a'(s) = e^{\delta f(x(s))}
\]
which depends on the value of $f$. Hence in the pregluing, instead of using the vector field $\Delta_t$ where $p_i^+-p_i^-=t$, there would be some nonlinear relations between the asymptotic vectors $\p_s$ and $\p_x$. \\
Fix cylindrical coordinates $(s_i^+,t_i^+)$ around each of the punctures of $u_+$ that hits the intermediate cascade level (i.e. the Morse Bott torus) and likewise $(s_i^-,t_i^-)$ for punctures of $u_-$. Near each of the punctures the maps $u_\pm$ takes the form
\[
(a_{i\pm}(s_i^\pm,t_i^\pm),z_{i\pm}(s_i^\pm,t_i^\pm),x_{i\pm}(s_i^\pm,t_i^\pm),y_{i\pm}(s_i^\pm,t_i^\pm)).
\]
We use $\pi_*$ for $*=a,z,x,y$ to denote the relevant component of a map, i.e. $(\pi_a u_+)_i$ denotes the $a$ component of $u_+$ at its $i$th end. We use the following notation to denote the various evaluation maps
\begin{align*}
ev^-_i(a)(R) := \int_{S^1}\pi_a(u^+)_i(-R,t)dt,& \quad  ev^+_i(a)(R) :=\int_{S^1}\pi_a(u^-)_i(R,t)dt
\end{align*}

\begin{align*}
ev^-_i(x)(R) := \int_{S^1}\pi_x(u^+)_i(-R,t) &,\quad ev^+_i(x)(R) :=\int_{S^1}\pi_x(u^+)_i(-R,t)(R,t)dt.
\end{align*}
We observe the deformation with respect to the asymptotically constant vector field $\p_r$ is constructed the same way as before, so we focus our attention on the vector spaces $V_+(x)\oplus V_-(x) \oplus V_+(a)\oplus V_-(a)$ consisting of the tuples $(p_{i}^+,p_{i}^-,a_{i}^+,a_{i}^-)$. \\
Let $T'>0$. Consider the submanifold $\hat{\Delta}$ in $V_+(x)\oplus V_-(x) \oplus V_+(a)\oplus V_-(a)$ defined as follows
\[
p^+_1-p^-_1=T'
\]
\[
p^+_i-p^-_i=T' + f_i(a_i^\pm, a_{1}^\pm,p_i^-,R)
\]
\[
|a_{i}^{\pm}|, |p_{i}^{\pm}| < \ep
\]
where $f_i$ is defined as follows: let $v_{1p}$ denote the gradient trajectory connecting the $i=1$ ends between $u_+$ and $u_-$. We endow it with the following specification: its $a$ coordinate at $s=R$ starts at $ev^+_1(a)(R)+a_{i}^+$, and its $x$ coordinate at $s=R$ starts at $ev_1^+(x)(R)+p^-_1$. It follows the gradient flow for $s$ length $T'$. We then translate $u_+$ in the $a$ direction so that $ev_1^-(a)(R) +a_{1-}= \pi_a (v_{1p}(T'/\dt-R,t))$. Further we have $ev_1^-(x)(R) + p_1^+ = \pi_x(v_{1p}(T'/\dt-R,t))$.

Then for $a_i^\pm, i\geq 2$, we define $f_i$ to be the amount of displacement in the $x$ direction required so that a gradient flow of $s$-length $(T'+f_i(a_i^\pm, a_{1}^\pm,p_i^\pm,R))/\dt$ flows from 
$ev_i^-(a)(R) + a_i^-$ to $ev_i^+(a)(R) + a_i^+$ at the $i$ th end between $u_+ + p_i^+$ and $u_- + p_i^-$. By $s$-length we mean for a finite segment of gradient cylinder, after having chosen coordinates $(s,t)$ on the gradient cylinder, the amount by which $s$ needs to change to go from one end of the gradient cylinder to the other end. We see immediately that 
\[
f_i \leq C\dt
\]
and 
\[
\frac{\p f_i}{\p a_{j}^{\pm}} \leq C\dt \,\, \text{where} \,\, j =1,i
\]
\[
\frac{\p f_i}{\p p_i^-} \leq C \dt.
\]
From this it follows immediately that $\hat{\Delta}$ is a submanifold. And that for small enough $\dt$ the operator
\begin{align*}
D_+\oplus D_-: &W^{2,p,d}(u_+^*TM)\oplus T\mcal{J}_+\oplus V_+' \oplus V_+(r)'' \oplus W^{2,p,d}(u_-^*TM)\oplus T\mcal{J}_-\oplus V_-' \oplus V_-''(r) \oplus (\hat{\Delta}) \longrightarrow \\
&W^{1,p,d}(\overline{\op{Hom}}(T\dot{\Sigma}, u_+^*TM))\oplus W^{1,p,d}(\overline{\op{Hom}}(T\dot{\Sigma}, u_-^*TM))
\end{align*}
is surjective with uniformly bounded right inverse. By $V''_\pm(r)$ we mean the subspace of $V''_\pm$ that only includes the $r$ components of the asymptotic vectors.
Then it follows immediately that any element in $\hat{\Delta}$ gives rise to a pregluing, since the $a$ and $x$ components of $u_\pm$ and the intermediate gradient trajectories match.

\begin{remark}
Our operator 
\begin{align*}
D_+\oplus D_-: & W^{2,p,d}(u_+^*TM)\oplus T\mcal{J}_+\oplus V_+' \oplus V_+(r)'' \oplus W^{1,p,d}(u_-^*TM)\oplus T\mcal{J}_-\oplus V_-' \oplus V_-''(r) \oplus (\hat{\Delta}) \longrightarrow \\
&W^{1,p,d}(\overline{Hom}(T\dot{\Sigma}, u_+^*TM))\oplus W^{1,p,d}(\overline{Hom}(T\dot{\Sigma}, u_-^*TM))
\end{align*}
has a two dimensional ``kernel". The kernel is in quotations because $\hat{\Delta}$ is a submanifold instead of a vector subspace, but as we have seen it is exceedingly close to a linear subspace, so we gloss over this point. The two dimensional ``kernel'' consists of two kinds of elements, they both come from the fact $u_\pm$ are $J$-holomorphic curves in symplectizations and hence there is a translation symmetry. The first kind of kernel element comes from translating $u_+$ and $u_-$ by the same amount in the sympletization direction. This is an genuine kernel element of $D_1\oplus D_2$. The other kernel element is translation $u_+$ and $u_-$ in opposite directions, so that they become closer/farther away from each other. This is no longer in the kernel because of the nonlinearities of $\hat{\Delta}$, but as we see the corrections are small. For the purposes of this section choosing a right inverse for $D_1\oplus D_2$ doesn't matter, since we only need to show a gluing exists. Later when we need to prove surjectivity of gluing we will choose specific right inverses for $D_1\oplus D_2$, which amounts to saying we consider vector fields where there are approximately no $\bb{R}$ translations over the curves $u_+$ and $u_-$.
\end{remark}
We can now state the gluing construction.
\begin{theorem}
2-level cascades of the form above can be glued. The gluing is unique up to choosing a right inverse for $D_+\oplus D_1$ when we restrict the allowed asymptotic vectors corresponding to ends that meet on the intermediate cascade level on the Morse-Bott torus to $\hat{\Delta}\oplus  V_+(r)''\oplus V_-''(r)$ as above.
\end{theorem}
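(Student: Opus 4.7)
The plan is to generalize the single-end 2-level gluing construction essentially verbatim, observing that the $n$ intermediate ends decouple at the level of the gradient-trajectory equations $\Theta_{v,i}$, and that the submanifold constraint $\hat{\Delta}$ on the asymptotic vectors is a small $C\delta$-perturbation of a genuine linear subspace. First, given an element of $\hat{\Delta}$ together with the $r$ and auxiliary asymptotic data, I would construct the preglued map $u_{r,a,p}:\Sigma_{r,a,p}\to M$ by inserting, at each of the $n$ intermediate ends, a finite lift of gradient trajectory $v_{i,r,a,p}$ on the interval $s\in[0,T_i/\delta]$ with $T_i=T'+f_i$; the defining property of $\hat{\Delta}$ is exactly that the $a$ and $x$ coordinates at the two ends of $v_{i,r,a,p}$ match those of $u_\pm+(r,a,p)_i^\pm$, so the pregluing is well-defined. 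The extra ``twist'' in the $t$ coordinate at each end is handled as in the single-end case.

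Next I would deform the pregluing by $\psi_++\psi_-+\sum_{i=1}^n \beta_{v,i}\phi_i$ and, by the same bookkeeping as in Proposition \ref{proposition_equations}, rewrite $\bar\partial_{J_\delta}=0$ as a coupled system
\begin{equation*}
\Theta_{v,i}(\phi_i,\psi_\pm)=0,\quad i=1,\ldots,n,\qquad \Theta_\pm(\psi_\pm,(r,a,p),\dt j_\pm,\phi_1,\ldots,\phi_n)=0.
\end{equation*}
The essential point is that the $\Theta_{v,i}$ decouple: the gradient trajectories $v_{i,r,a,p}$ lie in disjoint neighborhoods of the Morse-Bott torus for small $\delta$, so each $\Theta_{v,i}$ depends only on $\phi_i$ and on the asymptotic data at the $i$th end of $u_\pm$. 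Hence each $\Theta_{v,i}=0$ is exactly the equation solved in the single-end case. For fixed input tuple of norm $\leq\epsilon$ I obtain a unique solution $\phi_i\in H_{0,i}\subset W^{2,p,w_i}(v_{i,r,a,p}^*TM)$ of norm $\leq C\epsilon/R$, and every argument of the preceding subsections (stretch/translation decomposition, exponential decay of $\delta\zeta_i$, Frechet differentiability in $(\psi_\pm,r_i^\pm,a_i^\pm,p_i^\pm)$) applies word-for-word to show that the feedback $\phi_i\big|_{[-3R,3R]}$ into $\Theta_\pm$ is well-behaved in all asymptotic parameters.

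It remains to solve the coupled pair $\Theta_\pm=0$ over
\begin{equation*}
\bigoplus_\pm \bigl(W^{2,p,d}(u_\pm^*TM)\oplus T\mcal J_\pm \oplus V'_\pm \oplus V''_\pm(r)\bigr)\oplus \hat\Delta
\end{equation*}
via the contraction mapping principle using the uniformly bounded right inverse of $D_+\oplus D_-$ restricted to this domain. The error and quadratic terms $\mcal{E}_\pm,\mcal{F}_\pm$ behave exactly as before; the feedback $\beta_{v,i}'\phi_i$ is small and depends nicely on the inputs by the previous paragraph. The main obstacle, and the only genuinely new ingredient, is that $\hat\Delta$ is a submanifold rather than a linear subspace: the defining relations $p_i^+-p_i^-=T'+f_i(a_i^\pm,a_1^\pm,p_i^-,R)$ are nonlinear. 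However, the estimates $|f_i|\leq C\delta$ and $|\partial f_i/\partial(a_j^\pm,p_i^-)|\leq C\delta$ recorded before the theorem statement imply that the nonlinear corrections contribute at worst an $O(\delta)$ perturbation to both the linearized operator and the contraction constant. One may parameterize $\hat\Delta$ by the free variables $(T',a_i^\pm,p_i^-)$ and reduce to a fixed-point problem on a genuine linear $\epsilon$-ball; the $O(\delta)$ perturbation is absorbed into the error term $\mcal{E}_\pm$ without affecting either the self-map property or the contraction property. Uniqueness of the resulting solution in the image of a chosen right inverse then follows from the contraction mapping principle exactly as in the single-end case.
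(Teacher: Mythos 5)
Your proposal is correct and follows essentially the same route as the paper: preglue along each element of $\hat{\Delta}$, solve each intermediate-trajectory equation $\Theta_{v,i}=0$ uniquely in $H_{0,i}$ exactly as in the single-end case, and then run the contraction mapping for $\Theta_\pm=0$ using the surjectivity of $D_+\oplus D_-$ restricted to the domain containing $\hat{\Delta}$, with uniqueness pinned down by the choice of right inverse. Your explicit remark that the nonlinearity of $\hat{\Delta}$ is an $O(\delta)$ perturbation absorbable into the error term is a slightly more detailed justification of a step the paper treats implicitly, but it is the same argument.
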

\begin{proof}
Given a tuple of elements $(a_i^\pm, p_i^\pm)\in \hat{\Delta}$, as well as $r_i^\pm \in V_+(r)''\oplus V_-(r)''$ as twist parameters, we can define a preglued curve $u_*$ by pregluing $n$ gradient trajectories $v_i$ between $u_-$ and a translated $u_+$. Then, just as how we proved gluing for two curves with a single end meeting at intermediate cascade level, we deform the pregluing with appropriate vector fields, i.e. starting with vector fields $\psi_\pm \in W^{2,p,d}(u_\pm^*TM)$ and $\phi_i\in W^{2,p,w_i}(v_i^*TM)$. We also implicitly deform the domain complex structures of $u_\pm$ using $\dt j_\pm$; we also deform using asymptotic vectors at other ends in $u_\pm$, they live in $V'_\pm$ and we denote them by $\p_\pm'$; since they are not super relevant to our construction we suppress them from our notation. We construct the perturbation
\[
\beta_+ \psi_+ + \beta_-\psi_- + \sum \beta_{v_i} \phi_i.
\]
And as before, the deformation is holormophic iff the system of equations can be solved:
\begin{align*}
& \Theta_+ (\psi_+,(r,a,p)_{\pm i},\p_+',\dt j_+)=0\\
&\Theta_- (\psi_-,(r,a,p)_{\pm i},\p_-',\dt j_-)=0\\
&\Theta_i(\psi_\pm, \phi_i)=0.
\end{align*}
Then we follow the same strategy of proof as before, given the tuples $(\psi_\pm,(r,a,p)_{i\pm},\p_\pm', \dt j_\pm)$ of input along $u_\pm$ we can define subspaces $H_{0i} \subset W^{2,p,w_i}(v_i^*TM)$ such that there exists unique solution to $ \mathbf{\Theta}_i=0$, $\phi_i\in H_{0i}$. It follows immediately from previous theorems that $\phi_i$ has norm bounded above by $C\ep/R$ and is nicely behaved with respect to variations of all input data $(\psi_\pm,(r,a,p)_{i\pm},\p_\pm', \dt j_\pm)$. We then view the system $\Theta_\pm=0$ as looking for a zero of a map
\begin{align*}
&W^{2,p,d}(u_+^*TM)\oplus T\mcal{J}_+\oplus V_+' \oplus V_+(r)'' \oplus W^{2,p,d}(u_-^*TM)\oplus T\mcal{J}_-\oplus V_-' \oplus V_-''(r) \oplus (\hat{\Delta}) \longrightarrow \\
&W^{1,p,d}(\overline{\op{Hom}}(T\dot{\Sigma}, u_+^*TM))\oplus W^{1,p,d}(\overline{\op{Hom}}(T\dot{\Sigma}, u_-^*TM)).
\end{align*}

It follows from our previous calculations of how $\Theta_\pm$ looks like in these coordinates,
as well as the fact the operator $D_+\oplus D_-$ restricted to $ W^{2,p,d}(u_+^*TM)\oplus T\mcal{J}_+\oplus V_+' \oplus V_+(a)'' \oplus W^{2,p,d}(u_-^*TM)\oplus T\mcal{J}_-\oplus V_-' \oplus V_-''(a) \oplus (\hat{\Delta})$ being surjective that $\Theta_\pm$ can be solved simultaneously for $(\psi_\pm,(r,a,p)_{i\pm}, \p_\pm', \dt j_\pm)$ via the contraction mapping principle. Such a solution is unique provided we fix a right inverse for $D_+\oplus D_-$.
\end{proof}
We now turn to gluing $n$-level cascades. It will follow the same strategy as above as long as we introduce some new notations so we will be brief. The main purpose of the ensuing proof is to introduce some useful notations.
\begin{theorem}
$n$-level tranverse and rigid cascades can be glued, and the solutions are unique up to choosing a right inverse, as specified in the proof.
\end{theorem}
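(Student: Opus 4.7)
The plan is to extend the 2-level argument inductively across all cascade levels by setting up a coupled system of equations, one for each holomorphic curve $u^i$ and one for each finite gradient cylinder $v^i_j$ between adjacent levels. Label the cascade $\cas{u}=\{u^1,\ldots,u^n\}$ with intermediate gradient flow times $T_1,\ldots,T_{n-1}$ and let $\{v^i_j\}_j$ denote the collection of finite gradient cylinders between $u^i$ and $u^{i+1}$, meeting $k_i^-$ free ends of $u^i$ with $k_{i+1}^+$ free ends of $u^{i+1}$. Equip each $v^i_j$ with an inverted-V exponential weight $w_{ij}$ peaked in the middle of $[0,T_i/\delta]$, and each $u^i$ with the usual weight $e^{d|s|}$ on its cylindrical ends. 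Let $V^{\prime\prime}_{i\pm}(r)$ denote the tuples of $r$-asymptotic vectors at ends to be glued, and $V'_i$ the asymptotic vectors of ends not participating in gluing (handled by the semi-infinite gluing in Section \ref{semiglue}).

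The first key step is to define, at each intermediate level $i$, the nonlinear constraint submanifold $\hat{\Delta}_i$ in the space of $(p,a)$ asymptotic vectors, exactly as for the 2-level case: pick a reference end $j=1$, require $p_{i,1}^+ - p_{i,1}^- = T'_i$, and impose $p_{i,j}^+ - p_{i,j}^- = T'_i + f_{ij}(a^{\pm}_{i,j},a^{\pm}_{i,1},p_{i,j}^-,R)$ where $f_{ij}$ is the small ($O(\delta)$) correction determined by the mismatch in the $a$-coordinate ODE $a'(s)=e^{\delta f(x(s))}$ along $v^i_j$. As in the 2-level setting, $|\partial f_{ij}/\partial(\cdot)|\leq C\delta$, so the rigidity and transversality hypothesis of Definition \ref{def_rigid_cas} — which says $EV^\pm$ are transverse at $\cas{u}$ — translates to surjectivity of the direct sum
\[
\bigoplus_{i=1}^n D_{J,i} : \bigoplus_i \Big(W^{2,p,d}((u^i)^*TM)\oplus T\mcal{J}_i \oplus V'_i \oplus V^{\prime\prime}_{i\pm}(r)\Big) \oplus \bigoplus_{i=1}^{n-1}\hat{\Delta}_i \longrightarrow \bigoplus_i W^{1,p,d}\big(\overline{\op{Hom}}(T\dot{\Sigma}_i,(u^i)^*TM)\big)
\]
with a right inverse uniformly bounded as $\delta\to 0$, after identifying $\hat\Delta_i$ with a (nearly) linear subspace of the product of asymptotic vector spaces.

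Given a point in this domain of size $\leq\epsilon$, preglue as in the 2-level case at each intermediate level simultaneously, producing a map $u_{r,a,p}:\Sigma_{r,a,p}\to M$ whose domain depends on all the asymptotic data. Introducing perturbations $\psi^i\in W^{2,p,d}((u^i)^*TM)$ and $\phi^i_j\in W^{2,p,w_{ij}}((v^i_j)^*TM)$, the $J_\delta$-holomorphic equation for $u_{r,a,p}+\sum_i\beta_i\psi^i+\sum_{i,j}\beta_{v^i_j}\phi^i_j$ splits into a system
\[
\Theta_i\bigl(\psi^i,(r,a,p)_{i\pm},\partial'_i,\delta j_i,\{\phi^{i-1}_j,\phi^i_j\}_j\bigr)=0,\qquad \Theta^i_j\bigl(\phi^i_j,\psi^i,\psi^{i+1}\bigr)=0,
\]
where each $\Theta^i_j$ has exactly the form analyzed in Sections \ref{semiglue}, \ref{expdecay}, and the 2-level part of Section \ref{gluing}, and each $\Theta_i$ has the same shape as $\Theta_\pm$ with an additional boundary-coupling term $\beta'_{v^i_j}\phi^i_j$ or $\beta'_{v^{i-1}_j}\phi^{i-1}_j$ for each gluing end.

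The solution strategy then proceeds in two nested stages. First, for each fixed tuple of $u^i$-side data, solve every $\Theta^i_j=0$ independently via contraction mapping in the codimension-3 subspace $H^i_{0,j}\subset W^{2,p,w_{ij}}$, obtaining a unique $\phi^i_j$ of norm $\leq C\epsilon/R$ that, by the exponential decay results of Section \ref{expdecay} and the ``well-behavedness'' argument from the 2-level subsection, depends $C^1$-continuously (with derivative $\leq C\epsilon$) on all upstream data. Second, substitute these solutions into the $\Theta_i$ system and solve the full coupled system $\{\Theta_i=0\}_{i=1}^n$ simultaneously as a fixed-point problem for a map on an $\epsilon$-ball in $\bigoplus_i(W^{2,p,d}\oplus T\mcal{J}_i\oplus V'_i\oplus V^{\prime\prime}_{i\pm}(r))\oplus\bigoplus_i\hat{\Delta}_i$, using the uniformly bounded right inverse of the combined linearized operator. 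The main obstacle is purely notational: verifying that when several levels are coupled, the feedback $\beta'_{v^i_j}\phi^i_j$ entering $\Theta_i$ still depends $C^1$-continuously on the asymptotic vectors upstream and downstream of level $i$. But this depends only on the nearest $\phi^{i-1}_j$ and $\phi^i_j$, whose dependence was already controlled in the 2-level analysis; the couplings form a simple bipartite graph (each $\phi^i_j$ touches only $u^i$ and $u^{i+1}$), so the contraction mapping constants multiply in a controlled way. Uniqueness is up to the choice of right inverse for $\bigoplus_i D_{J,i}$, whose 1-dimensional kernel corresponding to simultaneous $\bb{R}$-translation of all levels is quotiented out (and the remaining near-kernel directions from pairwise translations are absorbed into the nonlinearities of $\hat{\Delta}_i$, exactly as in the 2-level case).
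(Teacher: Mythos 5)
Your proposal is correct and follows essentially the same route as the paper: define $\hat{\Delta}_{i,i+1}$ at each intermediate level as in the 2-level case, use transversality and rigidity to get a uniformly bounded right inverse for $\oplus_i D_i$ on the direct sum including the $\hat{\Delta}$'s, solve the gradient-cylinder equations uniquely in the subspaces $H_{0i}$ with well-behaved dependence on the input, and then solve the coupled system $\{\Theta_i=0\}$ by one contraction mapping, with uniqueness up to the chosen right inverse. The only points the paper adds that you omit are bookkeeping ones: chains of fixed trivial cylinders are replaced by single fixed cylinders, and unstable components are stabilized by marked points per Convention \ref{stabilization}.
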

\begin{proof}
Let $\cas{u}= \{u^i\}_{i=1,..,n}$ be an $n$ level cascade that is transverse and rigid. For each $u^i$ we let $W_i$ denote the vector space $W^{2,p,d}(u_i^*TM)\oplus T\mcal{J}_+\oplus V_+' \oplus V_+(r)'' $ and $L_i$ the vector space $W^{1,p,d}(\overline{\op{Hom}}(T\dot{\Sigma}, u^{i*}TM))$ and let $\hat{\Delta}_{i,i+1}$ denote the submanifold consisting of asymptotic vectors in $a,x$ directions corresponding to free ends that meet each other between $u^i$ and $u^{i+1}$, analogous to $\hat{\Delta}$ for the 2 level case, so that pregluing makes sense. Then the fact that the cascade exists, is transversely cut out, and of Fredholm index 0 implies the operator
\[
\oplus D_i: W_1\oplus \hat{\Delta}_{1,2}\oplus\ldots\oplus W_n \longrightarrow L_1\oplus ..\oplus L_n
\]
is surjective with uniformly bounded right inverse. Hence for each element in $\hat{\Delta}_{i,i+1}$ we preglue together $u^i$ and $u^{i+1}$ by inserting a collection of gradient trajectories in the middle. In case $u^i$ and $u^{i+1}$ have components consisting of trivial cylinders that begin and end on critical points, we recall such chains of trivial cylinders will eventually meet a non-trivial $J$ holomorphic curve with fixed end at the critical point. We replace such chains of trivial cylinders with a single fixed trivial cylinder as in the case of gluing fixed trivial cylinders in the case of semi-infinite trajectories. We add marked points to unstable components in cascade levels to make them stable, see Convention \ref{stabilization}. For the positive ends of $u^1$ and negative ends of $u^n$, if it a free end we glue in a semi-infinite gradient trajectory, and if it is a fixed end we glue in a trivial cylinder. This constructs for us a preglued curve $u_*$. Then we deform this preglued curve using vector fields $\psi_i$ over $u^i$ and $\phi_i$ over the gradient flow lines we preglued. We require that $\phi_i$ lives in the vector space $H_{0i}$, which is defined analogously to $H_0$ in the case of 2 level cascades, if $\phi_i$ corresponds to a finite gradient flow trajectory, and no such requirement is imposed if $\phi_i$ is over a semi-infinite gradient trajectory. As before the entire preglued curve can be deformed to be holomorphic iff the system of equations
\[
\mathbf{\Theta_i}(\psi_i, (r,a,p)_{\pm i},\phi_j,\p_i',\dt j_i)=0, \quad
\mathbf{\Theta}_{v_i}(\phi_i, \psi_j)=0
\]
can be solved. We use bold to denote a system of equations. $\mathbf{\Theta_i}(\psi_i, (r,a,p)_{\pm i},\phi_j,\p_i',\dt j_i)$ corresponds to equations over $u^i$, and $\mathbf{\Theta}_{v_i}$ corresponds to equations over gradient flow trajectories, which implicitly includes semi-infinite trajectories. As before for fixed epsilon ball in $\oplus W_i \oplus \hat{\Delta_{i,i+1}}$, the equations $\Theta_{v_i}$ have unique solutions in $H_{0i}$ that are well behaved w.r.t. input. Then the equations $\Theta_i(\psi_i, (r,a,p)_{\pm i},\phi_j,\p'_i,\dt j_i)=0$ have unique solutions follow from the fact $\oplus D_i$ is surjective with uniformly bounded inverse and the contraction mapping principle. The solution is unique to a choice of right inverse for the operator $\oplus D_i$.
\end{proof}
\begin{remark}
We note here by elliptic regularity all of our solutions are smooth, with their higher $W^{k,p}$ norms bounded by their $W^{1,p}$ norm.
\end{remark}
\begin{remark}
We note by the additivity of the relative first Chern class and the Euler characteristic, the resulting glued curve has Fredholm index one.
\end{remark}
\section{Behaviour of holomorphic curve near Morse-Bott tori} \label{behav}
In this section we prove a series of results concerning how $J$-holomorphic curves behave near Morse-Bott tori. This is part of the analysis that is needed to prove the degeneration result from $J$-holmoprhic curves to cascades in Bourgeois' thesis \cite{BourPhd}. We redo this part of the analysis, not only to prove the degeneration result in our case in the Appendix, but we will also need them to later show that the gluing we construct is surjective. The analysis here is very similar to the analysis performed in the Appendix of Bourgeois and Oancea's paper \cite{oancea}, the only major difference is we are working in symplectizations where they work near a Hamiltonian orbit.
We start with a series of analytical lemmas.
\subsection{Semi-infinte ends}

Recall the neighborhood of Morse-Bott torus we have coordinates
$(z,x,y) \in S^1\times S^1 \times \mathbb{R}$, with $J$ chosen so that at the surface of the Morse-Bott torus $J\p_x =\p_y$. The linearized Cauchy Riemann operator along trivial cylinders that land on this Morse-Bott torus takes the form $\p_s + A$, where 
\[
-A: = J_0 (d/dt)+ S_0(x,z)
\]
$S(x,z)$ is a symmetric matrix that only depends on $x$ and $z$. The kernel of $A(s)$ is spanned by $\p_a,\p_z,\p_x$.
Let $P$ denote the $L^2$ projection to its kernel, and let $Q$ denote the projection $\text{ker} A ^\perp$. 
\begin{theorem} \label{inf conv}
Let $u_\delta(s,t)= (a(s,t),z(s,t),x(s,t),y(s,t))$ be a $J_\delta$-holomorphic map that converges to a simply covered Reeb orbit corresponding to a critical point of $f$ as $s\rightarrow \infty$. We also assume for $s>0$, the map $u_\dt$ stays away from all other Reeb orbits corresponding to other critical points of $f$, uniformly as $\dt \rightarrow 0$. Assume for $s>0$ we have
\[
|y|,|z-t|, |\partial^{\leq k }_* x|, |\partial^{\leq k}_* y| \leq \epsilon
\]
where $*=s,t$, and $\ep>0$ is sufficiently small (but independent of $\dt$). We also assume all other derivatives are uniformly bounded above by $C$. There is some $r>0$ independent of $\dt$ and only depending on the local geometry of Morse-Bott tori so that
\begin{equation*}
    |y|,|z-t+c| \leq C\|Q(Y)(0,t)\|^{2/p}_{L^2(S^1)}e^{-rs}
\end{equation*}
\[
|x - x_p(s)| \leq C\|Q(Y)(0,t)\|^{2/p}_{L^2(S^1)} e^{-rs}
\]
\[
\left|a(s,t) - c - \int_{s_0}^s e^{\delta f(x_p(s'))}ds'\right| \leq C\|Q(Y)(0,t)\|^{2/p}_{L^2(S^1)}e^{-rs}
\]
where inheriting previous notation, we use $x_p(s)$ to denote a gradient trajectory of $\dt f(x)$, the definition of $Y$ is given in the proof. Further, inequalities of the above form continue to hold after we differentiate both sides with respect to $(s,t)$, in other words the inequalities hold in the $C^k$ norm.
\end{theorem}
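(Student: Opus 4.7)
The plan is to follow the standard exponential decay template adapted to the Morse-Bott perturbation setting, paralleling the Appendix of \cite{oancea} and Section 5 of \cite{BourPhd}. Let $Y(s,t)$ denote the discrepancy vector whose components measure $a(s,t) - c_0 - \int_{s_0}^s e^{\delta f(x_p(s'))} ds'$, $z(s,t) - t + c$, $x(s,t) - x_p(s)$, and $y(s,t)$. Writing the $J_\dt$-holomorphic curve equation in the coordinates of Proposition \ref{prop_locform} and subtracting the equations satisfied by the model gradient trajectory $(x_p, a_p)$, we find that $Y$ satisfies a PDE of the form $\p_s Y + A(s,t) Y + \dt B(s,t) Y + \mcal{F}(Y, \p_t Y) = 0$, where $A$ is the self-adjoint operator whose three-dimensional kernel is spanned by $\p_a, \p_z, \p_x$, the term $\dt B$ is an $O(\dt)$ correction coming from replacing $J$ by $J_\dt$, and $\mcal{F}$ collects nonlinear terms that are quadratic in $Y$ and $\p_t Y$, with smooth coefficients having uniformly bounded derivatives.

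Next, split $Y = P(Y) + Q(Y)$ by the $L^2$-projections onto $\ker A$ and its orthogonal complement. The first order of business is to bound $Q(Y)$. Setting $g(s) = \|Q(Y)\|_{L^2(S^1)}^2$, I would mimic the computation of Proposition \ref{prop_exp_decay} essentially verbatim: expand $g''(s)$, group terms, and use the smallness hypotheses on $Y, \p_t Y$ together with the spectral gap of $A$ to obtain $g''(s) \geq r^2 g(s)$ for some $r > 0$ depending only on the local geometry near the Morse-Bott torus (and not on $\dt$). The Lemma 8.9.4 of \cite{AudinDamian} dichotomy, together with the assumption that $u_\dt$ converges to the critical orbit, forces $g(s) \leq g(0) e^{-2rs}$. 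Elliptic bootstrapping exactly as in Corollary \ref{cor:expdecay_p}, applied to the PDE satisfied by $Q(Y)$ and then by its derivatives, converts $L^2$ decay into the stated pointwise bound with the $2/p$ exponent and into $C^k$ bounds on all derivatives. Since $y$ and the $t$-dependent part of $z - t + c$ lie in the image of $Q$, this already yields the first two estimates.

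For the $x$ and $a$ components one must analyze the $P(Y)$ piece, which does not decay on its own but is instead driven by the perturbation. Projecting the $J_\dt$-holomorphic equation onto $\ker A$ and using the differential-geometric identities of Section \ref{diffgeo} gives $\p_s \bar{x}(s) = \dt f'(\bar{x}) + E_x(s)$ and $\p_s \bar{a}(s) = e^{\dt f(\bar{x})} + E_a(s)$, where $\bar{x}, \bar{a}$ are the $t$-averages of $x, a$ and the remainder terms $E_x, E_a$ are pointwise controlled (including in $t$-derivatives) by $\|Q(Y)\|$ and $\|\p_t Q(Y)\|$, which we have already shown decay as $e^{-rs}$. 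Subtracting the model ODEs $\p_s x_p = \dt f'(x_p)$ and $\p_s a_p = e^{\dt f(x_p)}$, and applying a Gronwall argument on the difference (possibly decreasing $r$ slightly to absorb the coefficient), yields the third and fourth estimates. The $C^k$ improvements for $x$ and $a$ follow by differentiating these ODEs and iterating, using at each stage the exponential $C^k$ bounds on $Q(Y)$ already obtained.

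The main obstacle is the coupling between $P(Y)$, which carries the slow $\dt$-scale gradient flow behavior, and $Q(Y)$, which decays at the fast rate $r$ independent of $\dt$. The delicate point is to extract exponential decay at rate $r$ for $x - x_p$ and $a - c - \int e^{\dt f(x_p)}$ even though the underlying gradient flow itself is extremely slow; the reason this works is that the feedback from $P(Y)$ into the equation for $Q(Y)$ is only through $O(\dt)$ and nonlinear terms which do not spoil the spectral gap estimate, while conversely the feedback from $Q(Y)$ into the $P$-projected equations is controlled pointwise by quantities that already decay at rate $r$. Keeping careful track of this hierarchy, and of the cost in constants of upgrading $L^2$ to $L^p$ and then to $C^k$, is the main bookkeeping task.
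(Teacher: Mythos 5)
Your overall architecture matches the paper's proof: pass to shifted coordinates, split by the $L^2$-projections $P,Q$ onto $\ker A$ and its complement, prove $g''\geq r^2 g$ for $g=\|QY\|^2_{L^2(S^1)}$, bootstrap to $C^k$ by elliptic regularity, and then treat the $P$-projected components as perturbed ODEs. Two points, however, are asserted rather than established, and one of them is where the real content of the theorem lies.

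First, the minor one: in the $Q$-estimate the term $\delta J(x(s,t))\,\partial_t$ is a genuine inhomogeneity of size $O(\delta)$, not a term of the form $\delta B\,Y$. If you treat it as an inhomogeneous forcing you only get $g''\geq r^2g-C\delta^2$, i.e.\ decay to a floor of size $O(\delta)$ rather than to zero. The paper's fix is to write $f(x(s,t))=f(\bar x(s))+G_x(x-\bar x)$ and use that $Q$ annihilates $t$-independent functions, so $\|Q\,\delta J(x)\partial_t\|\leq C\delta\|QY\|$; only then does the term genuinely feed back through $QY$ and preserve the spectral gap inequality. You gesture at this ("does not spoil the spectral gap") but the averaging observation is the mechanism and needs to be stated.

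Second, and more seriously: you build $x-x_p(s)$ into $Y$ from the outset and then propose a Gr\"onwall argument forward from $s=0$ on $\bar x-x_p$. This cannot give the stated rate. Linearizing, $\partial_s(\bar x-x_p)=\delta f''(\xi)(\bar x-x_p)+E_x$ with $|E_x|\leq Ce^{-rs}$, so the homogeneous solutions decay or grow only at rate $O(\delta)$; matching $x_p(0)=\bar x(0)$ and integrating forward leaves a contribution $\int_0^s e^{-c\delta(s-s')}e^{-rs'}ds'\sim e^{-c\delta s}$, which is far slower than $e^{-rs}$. Moreover the theorem only asserts the \emph{existence} of a suitable $x_p$, so producing it is part of the proof. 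The paper resolves both issues simultaneously by solving the projected ODE by variation of constants and fixing the constant of integration at $s=+\infty$: writing $PY_x=c(s)e^{\mp\delta s}$ with $c(s)=c_\infty+\int_s^\infty G_x e^{\pm\delta t}dt$, the trajectory $x_p(s):=c_\infty e^{\mp\delta s}$ is \emph{defined} by the asymptotics, and the remainder is exactly the tail integral, which does decay at rate $r$. This backward matching is also where the sign of $f''$ and the hypothesis that $u_\delta$ stays uniformly away from the other critical point enter (forcing $c_\infty=0$ in the fixed-end case); forward Gr\"onwall does not see either. The same remark applies to the $a$-component. You should replace the Gr\"onwall step with this explicit integration-from-infinity argument.
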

\begin{proof}
In the course of this proof we first perform some important calculations which we will later reuse for decay estimates over finite gradient trajectories. 

\textbf{Step 0}
In our coordinates system the equation looks like (we will drop the $\delta$ subscript from $u$)
\[
\partial_s u + J_\delta(u) \partial_t u =0.
\]
Following the Appendix of \cite{oancea}, let's change variables
\[ 
Y:= (w,v,x,y)
\]
where $w:=a(s,t)-s$ and $v:=z-t$.
Then the equation changes to
\[
\partial_s (Y)+ J_\delta(u) \partial_t Y + (\p_s+J_\delta(u)\partial_t) =0.
\]
We simplify this as
\begin{align*}
    &\partial_s Y  + J(u) \partial_t (Y) + \delta J(u) (\partial_t Y)+ (\partial_s+ J_\delta(u) \partial_t)\\
     =& \partial_s Y + J_0 \partial_t (Y) + S_1(x,y,z)(\partial_t Y) + (\partial_s+ J_\delta(u) \partial_t) +\delta J \partial_t Y\\
    =& \partial_s Y + J_0 \partial_t (Y) + S_1(x,y,z)(\partial_t Y) + (\partial_s+ J(u) \partial_t) +\delta J(u) \partial_t +\delta J \partial_t Y\\
    =&\partial_s Y + J_0 \partial_t (Y) + S_1(x,y,z)(\partial_t Y + \partial_t) +\delta J(u) \partial_t +\delta J \partial_t Y\\
    =&\partial_s Y + J_0 \partial_t (Y)  +S_0(x,z)y \partial_t\\
    &+ S_1(x,y,z)(\partial_t Y) + S_2(x,y,z)\partial_t+ \delta J(u) \partial_t +\delta J \partial_t Y
\end{align*}
We clarify $J(u)$ is the Morse-Bott complex structure evaluated at $u$, and $J_0$ is the standard complex structure, which coincides with the Morse-Bott almost complex structure on the surface of the Morse-Bott torus. $\dt J := J_\dt -J$ is the difference between the Morse-Bott complex structure and the perturbed almost complex structure, and as a matrix is has norm bounded above by (its derivatives are also bounded above by) the expression $C\dt$. The term $S_0$ is a 4 by 4 matrix coming from the linearization of $\db_{J}$ on the surface of the Morse-Bott torus, and hence it only depends on $x,z$.

In the above expansion, we have the estimates
\[
S_1(x,y,z) \leq C(x,y,z) |y|
\]
and 
\[
S_2(x,y,z) \leq C(x,y,z) y^2
\]
We implicitly assume we have taken absolute values of both sides. Similar expressions hold for their derivatives (lowering orders in $y$ as we differentiate).\\
Next consider the term
\begin{align*}
&\delta J(a,z,x,y) \partial_t u \\
=& \delta J(a,z,x,y) \partial_t Y+ \partial_t\\
=& \delta J(u,v,x,0) \partial_t u + \delta T(v,x,y) y \partial_t u\\
=& \delta J(u,v,x,0) (\partial_t Y+\partial_t) + \delta T(v,x,y) y (\partial_t Y + \partial_t)
\end{align*}
where $\dt T$ is some matrix whose $C^k$ norm is bounded above by $C\dt $. 
We further observe $\delta J(x,0,v)$ doesn't depend on $v$ since at the surface of Morse-Bott torus it is rotationally symmetric.

We further examine 
\begin{align*}
    & \delta J(u,v,x,0) (\partial_t Y+\partial_t) \\
     =&\delta J(u,0,x,0) \partial_t (a,t+v,x,y)\\
     = &
\begin{pmatrix}
(1-e^{\delta f}) \partial_t (t+v)\\
-\frac{e^{\delta f}-1}{e^{\delta f}} \partial_t a \\
-\delta f'(x) \partial_t (t+v)\\
-\delta f'(x)\partial_t a
\end{pmatrix}
\end{align*}
where we used the fact $J$ restricted to the surface of the Morse-Bott torus is invariant in the $(x,y)$ direction.\\
Now we recall our assumptions about the form of $z(s,t)$. In particular we assume $z(t) = t + v$ with $|v| \leq \epsilon$ (this can always be achieved via a reparametrization of the neighborhood around the puncture), so if we plug that in to the above expression it is equal to:
\[
\begin{pmatrix}
(1-e^{\delta f})\partial_t v \\
-\frac{e^{\delta f}-1}{e^{\delta f}} \partial_t a \\
\delta f'(x)  \partial_t v\\
-\delta f'(x) \partial_t a
\end{pmatrix}
+ 
\begin{pmatrix}
1-e^{\delta f (x(s,t))}\\
0\\
-\partial_x \delta f(x(s,t))\\
0
\end{pmatrix}.
\]
Having performed these computations we return to the overall equation of the form
\begin{align*}
    &\partial_s Y + J_0 \partial_t (Y)  +S_0(x,z)y\\
    &+ S_1(x,y,z)(\partial_t Y) + S_2(x,y,z)\partial_t+ \delta J(x) \partial_t +\delta T(x,y,z)y (\partial_tY +\partial_t) +\delta J (u) \partial_t Y.
\end{align*}
For later elliptic regularity purposes it will be useful to write the above in the following form:
\begin{align*}
    \p_s Y + J_\dt(u) \p_t Y +\dt Ty(\p_tY +\p_t) + S_1 \p_t Y + \dt J( \p_t)=0 .
\end{align*}
\textbf{Step 1}
As before consider the operator 
\[
-A(s): = J_0 (d/dt)+ S_0(x,z).
\]
Note this operator as it appears in the above equation depends on the $x(s,t),z(s,t)$ coordinates of $u$, but we observe it remains true there exists a $\lambda$ so that for all functions $h(t)\in W^{1,2}(S^1)$, 
\[
\la A h, Ah \ra_{L^2(S^1)} \geq \lambda ^2 \la h,h\ra_{L^2(S^1)}.
\]
As a matter of bookkeeping we observe our vector field $Y$ is smooth, hence $Y$ has a well defined restriction to $\{s\} \times S^1$ for any value of $s$ .\\
We define 
\[ g(s) = \langle Q Y, Q Y \rangle_{L^2(S^1)}
\]
as before for our decay estimates we compute
\begin{align*}
    g''(s) = 2\langle \partial_s Q Y, \partial_s Q Y \rangle + 2\langle QY, \partial^2_s QY \rangle.
\end{align*}
We observe both $Q$ and $P$ commute with $\partial_*, *=s,t$.\\ 
\textbf{Step 2} Examining the first term above
\begin{align*}
    &\langle \partial_s Q Y, \partial_s Q Y \rangle\\
    =& \|Q (AY +S_1(x,y,z)(\partial_t Y) + S_2(x,y,z)\partial_t+ \delta J(x) \partial_t\\
    &+\delta T(x,y,z)y (\partial_tY +\partial_t) +\delta J (u) \partial_t Y)\|^2.
\end{align*}
Let's dissect these terms one by one.
First since $Q$ commutes with $A$ we have
\[
\|AQY\|^2 \geq \lambda\|QY\|^2
\]
for some $\lambda$ independent of $s$. \\
We next consider
\[
Q\delta J (x(s,t),z(s,t)) \partial_t 
\]
which warrants special treatment. For fixed $s$ we denote by $\bar{x}$ the average value of $x(s,t)$ over $t$.\\
Then we can write terms like 
\[ 
f(x(s,t)) = f(x(s,t) -\bar{x} +\bar{x}) = f(\bar{x}) + G_x(x-\bar{x}) \leq f(\bar{x}) + G_x(QY)
\]
and for $|Y|_{C^0} \leq C\ep$ we have $G_x (x) \leq C|x|$. 
Therefore we observe $Qf(\bar{x})=0$ and hence we have the estimate
\[
Qf(x(s,t)) \leq CQY.
\]
The same also applies to other functions built out of $f(x)$, hence we have
\[
\|Q\delta J(x(s,t))\partial_t\|\leq C\delta \|QY\|.
\]
Here we also note that the equation satisfied by $QY$ is of the form
\begin{equation} \label{Q-elliptic_reg}
\p_s QY + J_\dt(u) \p_t QY +\dt T(x,y,z) QY \cdot (\p_t QY +\p_t) + S_1 \p_t Y + \dt C(x,y,z) QY=0 
\end{equation}
where $C(x,y,z)$ is just a function of $x,y,z$ whose derivatives are uniformly bounded.

Aside from the two terms we calculated above, applying $Q$ to $Y$ does not have a major impact on other terms. To consider the rest of the terms appearing in $  \langle \partial_s Q Y, \partial_s Q Y \rangle$ let's estimate their norms (since later we can just use the triangle inequality to either estimate their cross term with themselves or with terms involving $\|AQY\|^2$).\\
The norms of the terms below after we apply $Q$
\[
S_1(x,y,z)(\partial_t Y),\quad S_2(x,y,z)\partial_t, \quad \delta J(x) \partial_t, \quad
\delta T(x,y,z)y (\partial_tY +\partial_t), \quad \delta J (u) \partial_t Y)
\]
are respectively bounded by the norms
\begin{align*}
&\epsilon^2(\|QAY\|^2+\|QY\|^2),\quad, \epsilon^2\|QY\|^2,\delta^2(\|QY\|^2),\quad 
\delta^2 \epsilon^2(\|AQY\|^2+\|QY\|^2)+\delta^2\|QY\|^2,\\
&\delta^2(\|AQY\|^2+\|QY\|^2).
\end{align*}
The key observation is $\partial_*^*y \leq \epsilon$ as part of our assumption, as well as the fact all occurrences of $y$ are upper bounded by $QY$. Another key observation is $\partial_t= \partial_t Q$, so every time we see $\partial_t Y$ we replace it by $\partial_t Q$ hence the appearance of the many $Q$ in the above expression.

\textbf{Step 3}
We look at the next term
\begin{align*}
    &\langle QY, \partial^2_s QY \rangle\\
    =&\la QY, Q \partial_s (AY+S_1(x,y,z)(\partial_t Y) + S_2(x,y,z)\partial_t+ \delta J(x) \partial_t\\
    &+\delta T(x,y,z)y (\partial_tY +\partial_t) +\delta J (u) \partial_t Y)\ra.
\end{align*}
Note for terms we think of being small, we are not careful about their signs. We introduce some more convenient notation. We write the $J$-holomorphic curve equation as 
\[
\partial_sY -AY +E(Y)=0
\]
Then 
we have
\begin{align*}
    &\la QY,Q\partial_s (AY+E(Y))\ra\\
    =& \la QY, \epsilon QY\ra + \la QY, Q[A(AY+E)+ \partial_sE]\ra\\
     =& \la QY, \epsilon QY\ra + \la QAY, QAY\ra + \la QAY,QE\ra  +\la QY, Q\partial_sE\ra.
\end{align*}
To obtain the first term in the above expression we used the fact that
\[
\partial_s A
\]
is a 4 by 4 matrix whose only nonzero entry is the diagonal entry corresponding to $y$,
so
\[
Q(\partial_sAY) = \epsilon y.
\]
The only term we don't know how to control is the last one $\la QY, Q\partial_sE\ra$, the previous ones follow from computation in previous steps. Let's recall the terms in $E$:
\[
S_1(x,y,z)(\partial_t Y), \quad S_2(x,y,z)\partial_t, \quad \delta J(x) \partial_t, \quad
\delta T(x,y,z)y (\partial_tY +\partial_t), \delta J (u) \partial_t Y.
\]
We need to compute the $L^2(S^1)$ norm of these terms after we take their $s$ derivative. We first only consider the $s$ derivatives on $S_1,\delta T, \delta J (u)$, by assumption that $\partial^{k}_*y\leq \epsilon$, when we take the $s$ derivatives of $S_1,\delta T, \delta J (u)$, they are still operators of the same form. For example $\p_s S_1$ is of the form $C_1(x,y,z) y + C_2(x,y,z)y_s$, and the norm of each term can be bounded above by $\ep$. The same can be said about $\p_s\dt T$, $\p_s \dt J(u)$, so by abuse of notation we use the same symbols. Then techniques from previous steps immediately show the norm of these terms are upper bounded by
\[
\{\partial_s(S_1) \partial_tY, \quad \partial_s(\delta Ty) \partial_tY, \quad \partial_s (\delta J) \partial_t Y\} \leq C(\ep+\dt)\|Q\p_tY\|^2 \leq C(\ep+\dt)(\|AQY\|^2 + \|QY\|^2).
\]
We next consider the $s$ derivative of $Q \delta J(z,x) \partial_t$. We first observe $Q$ commutes with $\p_s$ so we are evaluating $Q\p_s \delta J(z,x) \partial_t$. Recalling the previous form of this vector field, the components are essentially built out of $f(x(s,t))$, so we need to take its $s$ derivative and projection via $Q$. Using the previous trick of introducing $\bar{x}$
\begin{align*}
&\partial_s f(x(s,t))\\
 =&\partial_s f (x(s,t)-\bar{x}(s) +\bar{x}(s))\\
= &f_x(x(s,t)-\bar{x}(s) +\bar{x}(s))(x_s(s,t) - \bar{x}_s + \bar{x}_s)\\
= &f_x(x)(x_s(s,t) - \bar{x}_s) + \bar{x}_sf_x(x(s,t)-\bar{x}(s) +\bar{x}(s))\\
= &f _x(x)(Qx_s) + \bar{x}_s [f_x (\bar{x}_s) + G_x (Qx)].
\end{align*}
Observe $Q(\bar{x}_s f_x(\bar{x}_s))=0$ because this term doesn't depend on $t$. Hence pointwise
\[
Q\partial_s f(x_s(t)) \leq C |QY| + |QY_s|.
\]
Hence:
\[
\|\partial_s Q \delta J \partial_t \|_{L^2(S^1)} \leq C \dt (\|QY\|_{L^2} + \|QY_s\|_{L^2})
\]
and we have seen above how to bound the norm of $\|QY_s\|_{L^2}$.
Next:
\[
\partial_s S_2 \partial_t =C yy_s \partial_t.
\]
We assumed $y_s \leq \epsilon$ this term can be upper bounded by
\[
C\epsilon QY .
\]
Finally we turn our attention to terms of the form
\[
\epsilon \partial_s \partial_t Y
\]
which appear once the $s$ derivative hits $QY$. Here $\ep$ denotes a matrix whose $C^{k-1}$ norm is uniformly upper bounded by the real number $\ep$. We can insert a factor of $Q$ after the $t$ derivative and get
\begin{align*}
    &\la QY, \epsilon \partial_s\partial_t QY \ra \\
    =&\la \epsilon^T Q\partial_t Y,\partial_s QY \ra+ \la \epsilon^T_t QY, \partial_s QY \ra\\
    \leq& \epsilon(\|Q\partial_tY\|^2+ \|\partial_s QY\|^2 +\|QY\|^2).
\end{align*}
The terms in the last line are already well understood by previous computations. In particular $\|\partial_s QY\|^2$ was worked out in the previous step and $\|Q\partial_tY\|^2$ was worked out in this step. Hence putting all of these terms together we have 
\[
g''(s) \geq (4\lambda^2 -C\ep) g(s)
\]
hence from previous lemma we have $g(s) \leq g(0)e^{-\lambda s}$, hence the $L^2$ norm of $QY$ undergoes exponential decay. That this extends to pointwise $C^k$ norm follows from elliptic regularity, using equation \ref{Q-elliptic_reg}.

\textbf{Step 4}
In this step we look at what equation $PY$ satisfies. Let's recall the original equation
\[
\partial_s (Y)+\partial_s + J_\delta(u) \partial_t Y + J_\delta(u)\partial_t =0.
\]
We split $Y=QY+PY$ and plug into the above equation to get the pointwise bound:
\[
|\partial_sPY +\delta J \partial_t | \leq C\|Q(Y)(0,t)\|^{2/p}_{L^2(S^1)} e^{-\lambda s}.
\]
where we used the previous bound on the norm of $QY$. We remind ourselves $\lambda$ might change from previous parts because of various changes in norm.
We can replace $f(x)$ with $f(Px)$ because $\partial_t x$ is bounded by $\p_t QY$, which decays exponentially, so we can take the error term to the right hand side to get
\[
|\partial_sPY +\delta J(PY) \partial_t | \leq C\|Q(Y)(0,t)\|^{2/p}_{L^2(S^1)} e^{-\lambda s}.
\]
Observe that the function $PY$ only depends on $s$, and the above are pointwise inequalities.  Differentiability of $PY$ comes from bootstrapping and observing the differentiability of $QY$ in the $s$ variable. The decay estimates of the higher order $s$ derivatives follow as well.
We let $PY_*$, where $*=a,z,x,y$ denote the various components of $PY$. The equations in these coordinates are
\[ 
PY_y=0
\]
\[ 
|\partial_s PY_z| \leq C\|Q(Y)(0,t)\|^{2/p}_{L^2(S^1)} e^{-\lambda s}
\]
\[
|\partial_s PY_x - \delta f'(PY_x)| \leq C\|Q(Y)(0,t)\|^{2/p}_{L^2(S^1)}e^{-\lambda s}
\]
\[
|\p_s PY_a - e^{\dt f(PY_x)}|\leq C\|Q(Y)(0,t)\|^{2/p}_{L^2(S^1)}e^{-\lambda s}.
\]

We now solve the above inequalities. For brevity we denote by $G(s)_*$ the expression on the right hand side for $*=z,x,a$, and the only property we will need about $G(s)$ is that it is asymptotically of the form $e^{-\lambda s}$.
The inequality 
\[
|\partial_s PY_z| \leq C\|Q(Y)(0,t)\|^{2/p}_{L^2(S^1)}e^{-\lambda s}
\]
integrates to 
\[
|PY_z-c| = \left|\int_0^s G_z(s') ds' \right| \leq C\|Q(Y)(0,t)\|^{2/p}_{L^2(S^1)}e^{-\lambda s}.
\]
Next $|\p_s PY_x - \dt f'(PY_x)|\leq C\|Q(Y)(0,t)\|^{2/p}_{L^2(S^1)}e^{-\lambda s}$, we pick a coordinate neighborhood so that $f(x) = \mp \frac{1}{2}x^2+C$. We can do this because we know $u$ eventually limits to a critical point of $f$ as $s\rightarrow \infty$ and stays away from all other critical points of $f$, the choice of $\mp$ corresponds to whether we are in a neighborhood of maximum or minimum of $f$. Then this is an equation of the form
\[
\partial_s PY_x \pm \dt PY_x =G(s)_x.
\]
Then we have
\[
(PY_x e^{\pm \delta s})_s = G_x(s) e^{\pm \delta s}.
\]
We can write
\[
PY_x=c(s) e^{\mp \delta  s}
\]
where $c(s)$ satisfies the equation
\[
\frac{d}{ds}c(s) = G(s)_x e^{\pm \delta s}.
\]
Since we known $G(s)_x$ decays quickly when $s \rightarrow \infty$, the function $c$ must have a limit as $s\rightarrow \infty$, call this limit $c_\infty$. Then we have
\[
c(s) = c_\infty + \int_s^\infty G_x(t)_x e^{\pm \delta t}dt
\]
hence 
\[
PY_x(s) = c_\infty e^{\mp \delta  s} + e^{\mp \delta s} \int^\infty_s G_x (t) e^{\pm\delta t} dt.
\]
We recognize $c_\infty e^{\mp\delta s}$ as the gradient flow $x_p(s)$ we identified earlier and $e^{\mp\delta s} \int^\infty_s G(s)_x e^{\pm\delta t}$ is considered the error term, and by the form of $G_x$ the error term has the decay we needed.

We note in the case $f=-\frac{1}{2}x^2+C$ the gradient flow converges to zero, and this corresponds to ``free'' ends converging to the maximum of $f$ on positive punctures. In the case where $f=+\frac{1}{2}x^2 +C$, the gradient flow segment $c_\infty e^{\dt s}$, if we have $c_\infty \neq 0$, will actually flow away from the critical point $x=0$, so it will eventually leave the neighborhood where the expression $f(x) = \frac{1}{2}x^2+C$ is valid, and instead flow to the other critical point/maximum of $f$, for which we can use the above analysis directly. The exception is if $c_\infty=0$, and this end will converge to the $x=0$, or the minimum of $f$. This corresponds to the case of a ``fixed'' end converging to the minimum of $f$. Implicit in the above discussion is the assumption that $u_\dt$ stays away from all except one critical point of $f$ uniformly as $\dt \rightarrow 0$. This, in the language of our equations, means $c_\infty(\dt)$ (this constant implicitly depends on $\dt$), is either bounded away from zero for all $\dt$ small enough, or is identically zero for $\dt$ small enough. These correspond respectively to the above two cases. The case where $c_\infty(\dt) \rightarrow 0$ and $c_\infty(\dt)\neq 0$ as $\dt \rightarrow 0$ corresponds to the $J_\dt$-holomorphic curves $u_\dt$ breaking into a cascade of height $>1$, and is outside the scope of our discussion.

Finally we consider the equation
\[
\p_s PY_a - e^{\dt f(Px)} =G_a(s).
\]
Now by the above estimate on $P(x)$, there is a gradient trajectory $v$ whose $x$ component, $\pi_x v$ is approximated by $PY_x$, in the sense that
\[
|PY_x-\pi_xv|\leq C\|Q(Y)(0,t)\|^{2/p}_{L^2(S^1)}e^{-\lambda s}.
\]
Then for small enough $\ep$, we have the estimate
\[
|e^{\dt f(PY_x)} - e^{\dt f(\pi_xv)}| \leq C\dt\|Q(Y)(0,t)\|^{2/p}_{L^2(S^1)}e^{-\lambda s}
\]
hence we can write 
\[
\p_s P_a - e^{\dt f(\pi_x v)} =G_a(s)
\]
where we absorbed the error term 
$C\dt\|Q(Y)(0,t)\|^{2/p}_{L^2(S^1)}e^{-\lambda s}$ into $G_a(s)$ since they are of the same form. Then we integrate both sides to get:
\[
P_a(s) - \int_0^se^{\dt f(\pi_xv)} = \int_0^s G_a(s')ds' .
\]
Using the same trick as before we write $\int_0^s G_a(s')ds' = c_\infty - \int_s^\infty G_a(s')ds' $, recognizing $c_\infty+\int_0^se^{\dt f(\pi_xv)} $ is the $a$ component of a lift of a gradient trajectory, we arrive at the desired bound.
\end{proof}
\begin{remark}
In the above proof and what follows we assume that $u_\dt$ stays uniformly away from all but one critical point of $f$. The estimates for $Q(Y)$ is largely unaffected by this assumption, the main reason we use this is so that we could have nice exponential decay estimates for $PY$ (this is where we used local form of $f$). In general (and in manifolds where the critical Morse-Bott manifolds are higher dimensional) we could have $u_\dt$ degenerate into a broken trajectory of $f$ along the critical set, and the estimate there is more involved. Fortunately our transverse rigid constraint means our assumption about $u_\dt$ being away from except at most one critical point of $f$ will be sufficient for our purposes.
\end{remark}
\subsection{Finite gradient segments}
We now extend these exponential decay estimates to finite gradient trajectories.
\begin{theorem}\label{finite conv}
Consider an interval $I=[s_0,s_1]$ and a $J_\dt$-holomorphic curve $u$ so that when restricted to $s\in I$ the map $u$ is close to the Morse-Bott torus, i.e. in a neighborhood of the Morse-Bott torus $u$ has coordinates $(a,z,x,y)$ and the functions $a,z,x,y$ satisfy
\[
|y|,|\p_*^{\leq k}(z-t)|, |\partial^{\leq k }_* x|, |\partial^{\leq k}_* y| \leq \epsilon
\]
for some $\ep > 0$ depending only on the local geometry and independent of $\dt$, then
\[
\|QY\|_{C^{k-1}} \leq \max (\|QY(s_0,t)\|^{2/p}_{L^2(S^1)}, \|QY(s_1,t)\|^{2/p}_{L^2(S^1)}) \frac{\cosh(\lambda(s-(s_0+s_1)/2))}{\cosh(\lambda(s_1-s_0)/2)} 
\]
for some $\lambda>0$ only depending on the local geometry.\\
If $u$ is uniformly bounded away from all critical points of $f$ except maybe one, there is a lift of a gradient trajectory, which we denote by $v$, so that 
\[
\|PY-v\|_{C^{k-1}} \leq \max (\|QY(s_0,t)\|^{2/p}_{L^2(S^1)}, \|QY(s_1,t)\|^{2/p}_{L^2(S^1)}) \frac{\cosh(\lambda(s-(s_0+s_1)/2))}{\cosh(\lambda(s_1-s_0)/2)} .
\]
\end{theorem}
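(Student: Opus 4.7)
My plan is to mirror the strategy of Theorem \ref{inf conv} almost verbatim, with the key difference being that on a finite interval one-sided exponential decay is replaced by a two-sided $\cosh$-comparison obtained from a maximum principle. As before, set $g(s) := \langle QY, QY\rangle_{L^2(S^1)}$. The computations of Steps 0--3 in the proof of Theorem \ref{inf conv} were purely local in $s$: they produced the pointwise (in $s$) differential inequality
\[
g''(s) \geq (4\lambda^2 - C\epsilon)\, g(s), \qquad s \in [s_0, s_1],
\]
and nothing about their derivation used that $s\to\infty$. So after shrinking $\epsilon$ (using our standing smallness hypothesis on $|y|, |\partial^{\leq k}_*x|, |\partial^{\leq k}_*y|$) we still obtain $g''(s) \geq \lambda^2 g(s)$ on all of $[s_0,s_1]$, with the same constant $\lambda>0$ depending only on the local geometry of the Morse--Bott torus.

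Next I would invoke the following ODE maximum principle to replace the one-sided Lemma used in Theorem \ref{inf conv}: let
\[
h(s) := M\cdot \frac{\cosh\!\bigl(\lambda(s-(s_0+s_1)/2)\bigr)}{\cosh\!\bigl(\lambda(s_1-s_0)/2\bigr)}, \qquad M := \max\bigl(g(s_0),\, g(s_1)\bigr).
\]
Then $h'' = \lambda^2 h$ and $h(s_0) = h(s_1) = M \geq g(s_j)$, so $(g-h)'' \geq \lambda^2 (g-h)$ with $(g-h)(s_j)\leq 0$. If $g-h$ had a positive interior maximum, the inequality $(g-h)''\leq 0$ there would force $g-h\leq 0$, a contradiction. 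Thus $g\leq h$ on $[s_0,s_1]$, which is precisely the $L^2(S^1)$ version of the asserted $\cosh$ bound on $QY$. Upgrading to $C^{k-1}$ then follows the same elliptic bootstrapping recipe used in Proposition \ref{prop:higher_order_decay} / Corollary \ref{cor:expdecay_p}: $QY$ satisfies a nonlinear equation of the form \eqref{Q-elliptic_reg} with coefficients uniformly bounded in $C^k$, its $C^0$ norm is $\leq C\epsilon<1$, so $L^2$ bounds on unit intervals give $L^p$ bounds, and iterated interior elliptic regularity converts these to $W^{l,p}$ and then $C^{k-1}$ bounds; the price of passing from $L^2$ to $L^p$ through the pointwise bound $|QY|\leq 1$ is precisely the exponent $\tfrac{2}{p}$ that appears in the statement.

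For the second estimate, the same Step~4 computation from Theorem \ref{inf conv} applies pointwise: the components of $PY$ satisfy the (scalar) equations
\[
\partial_s PY_z = G_z(s), \quad \partial_s PY_x - \delta f'(PY_x) = G_x(s), \quad \partial_s PY_a - e^{\delta f(PY_x)} = G_a(s),
\]
with $|G_*(s)|$ now controlled by the $\cosh$-profile from the first part rather than by $e^{-\lambda s}$. The hypothesis that $u$ is uniformly bounded away from all critical points of $f$ except possibly one lets us choose coordinates on the critical neighborhood in which $f$ is quadratic (or linear, in the intermediate region). The plan is then to define $v$ as the unique lift of a gradient trajectory with prescribed initial data at $s=(s_0+s_1)/2$, matching the average of $PY$ in $a$, $z$, $x$ coordinates at the midpoint. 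The error $PY-v$ then weakly satisfies an equation of the form $\partial_s(PY-v) = L(PY-v) + G(s)$ with $L$ a bounded linear operator (coming from linearizing $\delta f'$ around $v$) and $|G(s)|$ bounded by the $\cosh$ profile, and Duhamel's formula yields the desired $\cosh$-type estimate on $PY-v$ in $C^0$; bootstrapping through the ODE gives the $C^{k-1}$ version.

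The main obstacle I anticipate is the $PY$ step, specifically the choice of the reference gradient trajectory $v$. Unlike the semi-infinite case where the asymptotic Reeb orbit pins down $v$ uniquely, on a finite interval there is a one-parameter family of candidate lifts and different choices shift $PY-v$ by solutions of the linearized gradient flow equation, which themselves grow/decay like $e^{\pm \delta s}$. I expect the right normalization to be a midpoint matching (or a boundary-matching modulo the unstable/stable mode as appropriate), and checking that this normalization produces the stated $\cosh$-bound rather than an $e^{\delta(s_1-s_0)}$-inflated version of it will require a bit of care; essentially one needs that in the quadratic model for $f$, the Green's function for $\partial_s \mp \delta \cdot$ on $[s_0,s_1]$ with the chosen boundary/matching condition is bounded by $\cosh(\lambda(s-(s_0+s_1)/2))/\cosh(\lambda(s_1-s_0)/2)$ for some (possibly smaller) $\lambda>0$, which is clear but is the one place where the local form of $f$ genuinely enters.
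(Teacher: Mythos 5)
Your proposal is correct and follows essentially the same route as the paper: the same second-derivative inequality $g''\geq\lambda^2 g$ inherited from the semi-infinite case, the same $\cosh$-comparison function and maximum principle, the same elliptic bootstrapping to pass from $L^2(S^1)$ to $C^{k-1}$ with the $2/p$ exponent, and the same midpoint-normalized integration of the $PY$ ODEs with integrating factors in the quadratic model for $f$. The "obstacle" you flag at the end is exactly the computation the paper carries out (bounding $e^{\delta s}\int G_x(s')e^{-\delta s'}ds'$ by the $\cosh$ profile using $\delta\ll\lambda$), so no gap remains.
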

\begin{proof}
The proof will follow the general thread of the semi-infinite case. We recall our convention $\lambda$ may change from line to line, but not in a fashion that depends on $\dt$. Recall we defined the function
\[
g(s):=\la QY,QY\ra_{L^2(S^1)}
\]
then we have the inequality
\[
g''\geq \lambda ^2 g.
\]
We define the auxiliary function
\[
k(s):= \max(\|QY(s_0)\|_{L^2(S^1)}^2, \|QY(s_1)\|_{L^2(S^1)}^2)\frac{\cosh(\lambda(s-(s_0+s_1)/2))}{\cosh(\lambda(s_1-s_0))} 
\]
then we have the inequality
\[
(g-k)'' \geq \lambda^2 (g-k).
\]
Then $g-k$ cannot have positive maximum, and by construction $g-k \leq 0$ at $s=s_0,s_1$. Hence $g\leq k$ globally for $s\in I$.

With elliptic regularity as before, we obtain the pointwise bound
\[
|Q(Y)(s,t)|\leq k(s)^{1/p}
\]
which by elliptic regularity can be improved to bound the derivatives of $QY$.
Using the inequalities 
\[
c_1 \cosh(x/p) \leq {\cosh(x)}^{1/p} \leq c_2 \cosh(x/p).
\]
We then obtain inequalities:
\[
\|QY\|_{C^{k-1}} \leq \max (\|QY(s_0,t)\|^{2/p}_{L^2(S^1)}, \|QY(s_1,t)\|^{2/p}_{L^2(S^1)}) \frac{\cosh(\lambda(s-(s_0+s_1)/2))}{\cosh(\lambda(s_1-s_0)/2)} 
\]
where we have of course changed the definition of $\lambda$. We also have
\[
|\partial_s P Y- J_\delta \partial_t| \leq k_1
\]
where for brevity we have defined
\[
k_1 = max (\|QY(s_0,t)\|^{2/p}_{L^2{S^1}}, \|QY(s_1,t)\|^{2/p}_{L^2{S^1}}) \frac{cosh(\lambda(s-(s_0+s_1)/2))}{cosh(\lambda(s_1-s_0)/2)}.
\]
We try to integrate this inequality as before: $
|\partial_s PY - J_\delta \partial_t| \leq k_1 $. 
There are various components to this equation, which we examine one by one. For the easiest case we have:
\[
|\p_s PY_z| \leq k_1.
\]
Integrating both sides we get
\begin{align*}
|PY_z(s) - PY_z((s_0+s_1)/2)| &\leq \int_{(s_0+s_1)/2}^s k_1\\
&\leq C \frac{\max (\|QY(s_0,t)\|^{2/p}_{L^2{S^1}}, \|QY(s_1,t)\|^{2/p}_{L^2{S^1}}))}{\cosh(\lambda(s_1-s_0)/2)} \int_{(s_0+s_1)/2}^s \cosh ( \lambda (s'-(s_1+s_0)/2)ds'\\
& \leq C \frac{\max (\|QY(s_0,t)\|^{2/p}_{L^2{S^1}}, \|QY(s_1,t)\|^{2/p}_{L^2{S^1}}))}{\cosh(\lambda(s_1-s_0)/2)}|\sinh (\lambda (s-(s_0+s_1)/2))|\\
&\leq C  \max (\|QY(s_0,t)\|^{2/p}_{L^2{S^1}}, \|QY(s_1,t)\|^{2/p}_{L^2{S^1}}) \frac{\cosh(\lambda(s-(s_0+s_1)/2))}{\cosh(\lambda(s_1-s_0)/2} .
\end{align*}
Identifying $PY_z((s_0+s_1)/2$ as a constant, we obtain the required estimate. We next examine:
\[
|\partial_s PY_x - (\partial_x \dt f)(PY_x)| \leq k_1.
\]
For segments of gradient flow uniformly away from all critical points of $f$, then we can choose our coordinates so that locally $f(x) =x+c$. Then the above equation takes the form:
\[
|\partial_s PY_x - (\partial_x \dt f)(PY_x)| \leq k_1.
\]
Using the exact same techniques as above, we conclude
\[
| PY_x(s) -PY_x((s_0+s_1)/2) - (s-(s_0+s_1)/2)| \leq C  \max (\|QY(s_0,t)\|^{2/p}_{L^2{S^1}}, \|QY(s_1,t)\|^{2/p}_{L^2{S^1}}) \frac{\cosh(\lambda(s-(s_0+s_1)/2))}{\cosh(\lambda(s_1-s_0)/2} .
\]
Identifying $PY_x((s_0+s_1/2)) - (s-(s_0+s_1))/2$ as the $x$ component of a lift of the gradient flow, the conclude the required estimate.\\
If $u$ is uniformly bounded away from all critical points of $f$ except one, then we can only choose coordinates so that $f(x) =\frac{1}{2}x^2$ (the case for $f(x) =-\frac{1}{2}x^2$ is similar), then the above equation takes the form
\[
|\partial_s PY_x - \dt PY_x| \leq k_1.
\]
Recyling notation from the previous proof we get
\[
\partial_s PY_x - \dt PY_x =G_x(s)
\]
where $G_x(s) \leq k_1(s)$
Using integration factors as before we obtain
\[
(PY_x e^{-\delta s})' = G_x(s) e^{-\delta s}.
\]
Integrating both sides, from $(s_0+s_1)/2$ to $s$
\[
PY_x = c(s) e^{\delta (s-(s_0+s_1)/2)}
\]
where $c(s)' = G_x(s) e^{-\delta s}$. Then
\[
c(s)= c_0 + \int_{s_0+s_1/2} ^s G_x(s') e^{-\delta s'} ds'
\]
\[
|PY_x - c_0 e^{\delta M(s-(s_0+s_1)/2)}|\leq e^{\delta (s-(s_0+s_1)/2)}\int_{(s_1+s_0)/2}^s G_x(s') e^{-\delta s'} ds'.
\]
Here we need be a bit careful about this integral, by our assumptions on $G_x(t)$ it is upper bounded by:
\[
G_x(t) \leq   C  \max (\|QY(s_0,t)\|^{2/p}_{L^2{S^1}}, \|QY(s_1,t)\|^{2/p}_{L^2{S^1}}) \frac{\cosh(\lambda(s-(s_0+s_1)/2))}{\cosh(\lambda(s_1-s_0)/2} .
\]
WLOG we assume $s>0$ and $ (s_0+s_1)/2=0$, then we have the inequalities:
\[
C' \cosh (\lambda s)\leq e^{\lambda s} \leq C \cosh (\lambda s).
\]
Then the integral

\begin{align*}
&e^{\delta (s)}\int_{0}^s G_x(s') e^{-\delta s'} ds' \\
&\leq e^{\dt s} C\frac{max (\|QY(s_0,t)\|^{2/p}_{L^2(S^1)}, \|QY(s_1,t)\|^{2/p}_{L^2(S^1)})}{cosh(\lambda(s_1-s_0)/2)}\frac{e^{(\lambda -\dt )s}-1}{\lambda-\dt} \\
&\leq C max (\|QY(s_0,t)\|^{2/p}_{L^2(S^1)}, \|QY(s_1,t)\|^{2/p}_{L^2(S^1)}) \frac{cosh(\lambda s)}{cosh(\lambda (s_1-s_0)/2)}
\end{align*}
which is exactly our estimate. The same works for $s<0$.

Finally we consider $|\p_s PY_a - e^{\dt f(PY_x)}| \leq k_1(s)$. As before we replace $f(PY_x)$ with $f(\pi_x v)$, which introduces an error of the same form as $k_1$ due to our above estimate, so we simply absorb it into $k_1$ on the right hand side, we then integrate both sides to get
\[
\left|PY_a +c -\int_{s_0+s_1/2}^s e^{\dt f(\pi_x v)}\right| \leq C \frac{\max (\|QY(s_0,t)\|^{2/p}_{L^2(S^1)}, \|QY(s_1,t)\|^{2/p}_{L^2(S^1)})}{\cosh(\lambda(s_1-s_0)/2)}\cosh(\lambda (s-(s_0+s_1)/2))
\]
from this we conclude the proof.
\end{proof}
\section{Surjectivity of gluing} \label{surj}
In previous sections we proved every transverse rigid cascade glues to a $J_\dt$-holomorphic curve of Fredholm index 1. In this section we show this gluing is unique, i.e. if a $J_\dt$-holomorphic curve is sufficiently close to the cascade, then it must have come from our gluing construction. The main strategy is to consider a degeneration $u_\dt \rightarrow \cas{u}= \{ u^i\}$ of a $J_\dt$-holomorphic curve $u_\dt$ into a cascade $\{ u^i \}$. Using the compactness results stated in Section 11.2 of \cite{SFT} (See also Chapter 4 of \cite{BourPhd}) and proved in our appendix, we know the convergence is $C^{\infty}_{loc}$, using our local estimates we show $u_\dt$ corresponds to a solution of our tuple of equations
\[
\mathbf{\Theta}_u=0,\quad  \mathbf{\Theta}_v=0.
\]
Here we use $\mathbf{\Theta}_u=0$ to denote the system of equations over the $J$-holomorphic curves in the cascade, and $\mathbf{\Theta}_v=0$ denotes the system of equations over each gradient trajectory (finite or semi-infinte) that appear in the cascade.
Furthermore, we show we can arrange that vector fields among the equations in  $\mathbf{\Theta}_v=0$ that correspond to finite gradient trajectories all live in $H_0$, the codimension 3 subspace we fixed for each finite gradient trajectory (we abuse notation slightly, there is a different $H_0$ for each different finite gradient trajectory). We showed in the gluing section such vector fields in $H_0$ are unique. We also make a choice of right inverse for $\oplus D_i$ for the system $\mathbf{\Theta}_u=0$, and show we can arrange so that the vector field producing $u_\dt$ lands in the image of said right inverse for $\oplus D_i$. Therefore from the uniqueness of our gluing construction there is a 1-1 correspondence between $J_\dt$-holomorphic curves and cascades.

The outline of this section is as follows. We will first focus on the simplest possible case: a two level cascade $\cas{u}= \{u^1,u^2\}$ meeting along a single Reeb orbit in the intermediate cascade level. Even in this simplified setting there are several stages to our construction: we first use the previous decay estimates to show that $u_\dt$ is in an $\ep$ neighborhood of a preglued curve constructed from the cascade $\cas{u}$. Then we adjust the pregluing using the asymptotic vectors so that the vector field over the finite gradient trajectory $v$ lives in $H_0$, and the part of the vector field living over $u^i$ lives in the preimage of our specified right inverse, while maintaining the fact the vector field still lives in the $\ep$ ball. Finally we extend the vector fields over all of $u^{i*}TM$ and $v^*TM$ so that they become solutions to $\mathbf{\Theta_u}=0$ and $\mathbf{\Theta_v}=0$, using tools from Section 7 of \cite{obs2}. We also develop some properties of linear operators for this purpose.

Then after the 2-level cascades case has been thoroughly analyzed and proper tools developed, we introduce some more elaborate notation to set up the more general $n$-level cascade case.
\subsection{Notation and setup, for 2-level cascades}
We note here that we are not proving the SFT compactness statement, we are simply using it. For ease of exposition, we first describe the case with $u_\dt$ degenerating into a 2 level cascade consisting of $u^1$ and $u^2$ and such that they only have 1 intermediate end meeting in the cascade level. We let $\gamma_1:= ev^-(u^1)$, and $\gamma_2 := ev^+(u^2)$ denote the Reeb orbits on the Morse-Bott torus. We fix domains $\Sigma_1$ and $\Sigma_2$ for $u^1$ and $u^2$. We fix cylindrical neighborhoods near punctures of $\Sigma_i$, and let $(s, t)_i$ denote coordinates near the puncture that meet along the intermediate Morse-Bott torus. We let also let $(s,t)'_i$ denote the cylindrical coordinates on $u^i$ that are on punctures away from the Morse-Bott torus that appear in the intermediate cascade level. Recall a neighborhood of the maps $u^i$ is given by
\[
W^{2,p,d}(u^{i*}TM)\oplus V_i \oplus V_i'\oplus T \mcal{J}_i.
\]
We let $\Sigma_\dt$ denote the domain for $u_\dt$. Then by the analog of SFT compactness, for each $\dt$ we can break down the domain $\Sigma_\dt$ into 3 regions, 
\[
\Sigma_\dt = \Sigma_{\dt+}\cup N_\dt \cup \Sigma_{\dt-}
\]
where we think of $\Sigma_{\dt\pm}$ as regions that converge to $\Sigma_i$, and $N_\dt$ the thin region biholomorphic to a very long cylinder that converges to the finite (yet very long) gradient trajectory connecting $u^1$ and $u^2$. To be more precise, we can translate $u_\dt$ globally so that over $\Sigma_{\dt+}$ the map $u_\dt$ converges in $C^\infty_{loc}$ to $u^1$, and there exists a sequence of $a$ translations, we denote by $a_\dt$, so that after we translate $u^2$ by $a_\dt$, which we denote by $u^2+a_\dt$, the map $u_\dt$ when restricted to $\Sigma_{\dt-}$ converges in $C^\infty_{loc}$ to $u_2+a_\dt$. Technically the convergences to $u^1$ and $u^2$ are over compact subsets of $\Sigma_{\dt \pm}$, near the other punctures $(s,t)_i'$ there are additional convergences to semi-infinite gradient trajectory. Here we only concern ourselves with convergences near $N_\dt$, and worry about semi-infinite gradient trajectories in a later section.

\subsection{Finding appropriate vector fields}
We first consider the degeneration in the intermediate cascade level. We will later consider degeneration to the configuration of a semi-infinite gradient trajectory.\\
\subsubsection{Finding a global vector field}
Let $0<\ep' <<\ep$, the specific size of $\ep'$ will be specified in the course of the construction. We fix a large real number $K>0$, then we consider the region $|s_i|\leq K, |s_i'|\leq K$ as subsets of $\Sigma_i$. We denote this compact subset of the domain by $\Sigma_{iK}$. We take $K$ large enough so that for $|s_i|\geq K$ the maps $u^i$ are in a small enough neighborhood of $\gamma_i$, that up to $k$ derivatives, we can think of $u^i$ as exponentially decaying to trivial cylinders, with exponential decay bounded by $e^{-Ds_i}$.

This choice of $K$ also determines a decomposition of the domain of $u_\dt$, to wit
\[
\Sigma_\dt = \Sigma_{+\dt K} \cup N_{\dt K} \cup \Sigma_{-\dt K} .
\]
Then the convergence statement in $C^\infty_{loc}$ implies there are vector fields $\zeta_{i\dt} \in u^{i*}TM|_{\Sigma_{iK}}$ of $C^1$ norm $<\ep'$ and variation of complex structure  $\dt j_i \in T\mcal{J}_i$ of size $\leq \ep'$ so that
\[
u_\dt|_{\Sigma_{\dt+K}} = exp_{u^1,\dt j_1}(\zeta_{1\dt})
\]
and 
\[
u_\dt|_{\Sigma_{\dt-K}} = \exp_{u^2,\dt j_2}(\zeta_{2\dt}).
\]
We shall for now suppress the variation of complex structure $(u^i,\dt j_i)$ and simply write $u^i$. When later we want to include it in the notation we shall write $(u^i,\dt j_i)$. We also recall that our metric is flat around Morse-Bott tori, so for small enough $\zeta_{i\dt}$, we have $\exp_{u^i}(\zeta_{i\dt}) = u^i+\zeta_{i\dt}$ near Morse-Bott tori.

We here simply note the $W^{2,p,d}$ norm of $\zeta_{i\dt}$ is then bounded above by $C\ep' e^{dK}$. For fixed $K$, as $\dt \rightarrow 0$, by $C^\infty_{loc}$ convergence we can take $\ep'(\dt) \rightarrow 0$ to make this expression as small as we please. We also observe for fixed $K$ and small enough $\ep'$ the deformations $(\zeta_{i\dt},\dt j_i)$ are within an $\ep$ ball of $W^{2,p,d}(u^{i*}TM)\oplus V_i \oplus V_i'\oplus T \mcal{J}_i$. We next consider the behaviour of $u_\dt$ when restricted to the neck region $N_\dt$. We first informally write $N_{\dt K}$ as the cylinder $[0,N_{\dt K}]\times S^1$. We start with the following lemma:
\begin{lemma}\label{glob}
By our assumption as $K\rightarrow \infty$ (which would take $\dt \rightarrow 0$ with it in order to satisfy our previous assumptions) we have $u_\dt|_{N_{\dt K}}$ converges in $C^\infty_{loc}$ to trivial cylinders. This is also true uniformly, i.e. for given $\ep''>0$, there is a $K$ large enough so that for every small enough values of $\dt>0$, $u_\dt |_{[k,k+1]\times S^1}$ is within $\ep''$ (in the $C^k$ norm) of a trivial cylinder of the form $\gamma \times \bb{R}$ for all values of $k$ so that $[k,k+1]\times S^1 \subset N_{\dt K} \times S^1$. 
\end{lemma}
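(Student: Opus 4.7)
The plan is to combine the finite-gradient-segment decay estimates of Theorem \ref{finite conv} with the fact that a slow gradient trajectory (flowing under $\delta f$) is itself close to a trivial cylinder on unit-length intervals. First I would verify the hypotheses of Theorem \ref{finite conv} on the full neck $N_{\delta K} \cong [0, L_\delta] \times S^1$. At the boundary slices $s = 0$ and $s = L_\delta$, the $C^\infty_{\mathrm{loc}}$ convergence $u_\delta \to u^i$ together with our choice of $K$ (large enough that $u^i$ exponentially approaches the trivial cylinder over $\gamma_i$ at rate $e^{-Ds_i}$) gives pointwise bounds $|y|, |z - t|, |\partial_{s,t}^{\leq k}x|, |\partial_{s,t}^{\leq k}y| \leq C(\epsilon' + e^{-DK})$ at the endpoints, and in particular $\|QY\|_{L^2(S^1)} \leq C(\epsilon' + e^{-DK})$ there. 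Meanwhile, the interior of $N_{\delta K}$ converges in $C^\infty_{\mathrm{loc}}$ to the finite gradient trajectory connecting $\gamma_1$ to $\gamma_2$, which by the rigidity/transversality hypothesis (Definition \ref{def_rigid_cas}) stays uniformly bounded away from all critical points of $f$. Hence, after shrinking $\delta$ if necessary, we may assume the small-displacement hypothesis of Theorem \ref{finite conv} holds throughout the neck.

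Next I would invoke Theorem \ref{finite conv} with $s_0 = 0$, $s_1 = L_\delta$ to conclude that throughout the neck
\[
\|QY\|_{C^{k-1}} \leq C(\epsilon' + e^{-DK})^{2/p}\,\frac{\cosh(\lambda(s - L_\delta/2))}{\cosh(\lambda L_\delta/2)},
\]
and that there exists a lift $v$ of a gradient trajectory of $\delta f$ with $\|PY - v\|_{C^{k-1}}$ satisfying the same bound. The right-hand side is uniformly bounded by $C(\epsilon' + e^{-DK})^{2/p}$ since $\cosh(\lambda(s - L_\delta/2))/\cosh(\lambda L_\delta/2) \leq 1$.

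Finally, I would compare $v$ itself to a trivial cylinder on each unit window $[k, k+1] \times S^1 \subset N_{\delta K}$. From the differential geometry worked out in Section \ref{diffgeo}, the components of $v$ satisfy $x'(s) = \delta f'(x)$ and $a'(s) = e^{\delta f(x)} = 1 + O(\delta)$, with $z(t) = t$ and $y \equiv 0$. Thus on any interval of length $1$, the $x$-coordinate of $v$ varies by at most $C\delta$, and $v(s,t)$ differs from the trivial cylinder over the Reeb orbit $\gamma$ at $x = x(k)$ (translated appropriately in the symplectization direction) by at most $C\delta$ in $C^k$ norm. Combining this with the $QY$ and $PY - v$ bounds yields
\[
\bigl\|u_\delta - (\text{trivial cylinder})\bigr\|_{C^k([k,k+1]\times S^1)} \leq C\bigl(\delta + (\epsilon' + e^{-DK})^{2/p}\bigr),
\]
uniformly in $k$. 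Given $\epsilon'' > 0$, we first choose $K$ large so that $e^{-DK}$ contributes less than $\epsilon''/3$, then take $\delta$ (and correspondingly $\epsilon' = \epsilon'(\delta)$) small enough to handle the remaining terms.

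The main obstacle I anticipate is ensuring the hypotheses of Theorem \ref{finite conv} hold \emph{uniformly} across the entire (long) neck rather than just near its ends. The $C^\infty_{\mathrm{loc}}$ convergence only gives uniform closeness to the Morse--Bott torus on compact sub-cylinders, whereas $L_\delta \to \infty$; bridging this gap requires using the endpoint estimates together with the maximum principle hidden inside the $\cosh$-type bound of Theorem \ref{finite conv}, essentially a bootstrap argument to propagate the small-displacement condition across the whole neck. Once that is set up, everything else is a direct application of the finite-segment decay already established.
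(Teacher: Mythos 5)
Your approach has a circularity problem that the paper avoids. Theorem \ref{finite conv} takes as its \emph{hypothesis} that the curve stays in the coordinate neighborhood of the Morse--Bott torus with $|y|, |z-t|, |\partial^{\leq k}_* x|, |\partial^{\leq k}_* y| \leq \ep$ \emph{throughout} the interval $[s_0,s_1]$ — and establishing precisely this uniform closeness across the whole (arbitrarily long) neck is the content of Lemma \ref{glob}. In the paper's logical ordering, Lemma \ref{glob} is what licenses the application of Theorem \ref{finite conv} in the proposition immediately following it; you cannot use the latter to prove the former. You flag this as "the main obstacle" and propose a bootstrap, but the bootstrap does not obviously close: if the hypothesis is only known on a sub-interval $[0,s]$, the $\cosh$-type bound near the right endpoint returns only $\max(\ldots)^{2/p}$, which for $p>2$ and small input is \emph{larger} than the input, so the estimate degrades rather than self-improves. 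Moreover the hypothesis of Theorem \ref{finite conv} includes derivative bounds, and nothing in your argument supplies a uniform gradient bound on the neck to begin with; and your assertion that the interior of $N_{\dt K}$ converges in $C^\infty_{loc}$ to the connecting gradient trajectory is itself the conclusion of the subsequent proposition, not an a priori input (the $C^\infty_{loc}$ convergence from the compactness theorem lives on the thick parts, not the thin neck).

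The paper's proof is soft and avoids all of this. Step 1: a uniform bound $|du_\dt| \leq C$ on $N_{\dt K}$, by contradiction — unbounded gradient would force a holomorphic plane to bubble off, but the energy of $u_\dt|_{N_{\dt K}}$ tends to zero as $K \to \infty$, below the Morse--Bott energy quantum for a nonconstant plane. Step 2: again by contradiction, if some unit window $[a_K,a_K+1]\times S^1$ stayed $\ep''$-far from every trivial cylinder, then since the energy on that window tends to zero uniformly, Arzel\`a--Ascoli (using the Step 1 gradient bound) produces a limiting holomorphic map of zero area, which must be a piece of a trivial cylinder — contradiction. No decay estimate is needed; only energy quantization and compactness. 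If you want to salvage your route, you would first need to reproduce these two soft steps anyway, at which point Lemma \ref{glob} is already proved and Theorem \ref{finite conv} becomes the \emph{next} step (giving the sharper quantitative $\cosh$ decay), exactly as the paper organizes it.
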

\begin{proof}

\textbf{Step 1} We claim for $K$ large enough $|d u_\dt| <C$ for all of $N_{\dt K}$. Suppose not, then we can find a sequence $(s_\dt,t_\dt)$ where $|du_\dt(s_\dt,t_\dt)| \rightarrow \infty$, by Gromov compactness a holomorphic plane bubbles off. But a holomorphic plane must have energy bounded below, by the Morse-Bott assumption. However as $K \rightarrow \infty$ the energy of $u_\dt |_{N_{\dt K}} $ goes to zero, which in particular is less than the minimum energy required to have a holomorphic plane, this is a contradiction.

\textbf{Step 2} We argue by contradiction, Suppose for all $K>0$ there exists an interval $[a_K,a_K+1] \times S^1$ so that the distance of $u_\dt |_{[a_K,a_K+1] \times S^1}$ and any trivial cylinder is $\geq \ep''$. However we observe as $K\rightarrow \infty$ the energy of $u_\dt |_{[a_K,a_K+1] \times S^1}$ goes to zero uniformly in $K$, then by Azerla-Ascoli this converges to a holomorphic curve of zero area, which must be a segment of a trivial cylinder. Hence we have a contradiction.
\end{proof}
Then the previous convergence estimate implies the following:
\begin{proposition}
We take $\ep''>0$ small enough so that previous convergence estimates near Morse-Bott tori apply. Then there is a large enough $K$, and small enough $\ep'$ (which depends on $K$), so that if we choose small enough $\dt>0$ (which depends on the choice of $\ep'$ and $K$ but can always be achieved), there is a gradient trajectory $v_K$ defined over the cylinder $(s_v,t_v)\in [0,N_{\dt K}]\times S^1$ so that there is a vector field $\zeta_K$ over $v_K$ so that
\[
u_\dt |_{N_{\dt K}} = \exp_{v_K}(\zeta_K)
\]
and the norm of $\zeta_K$ satisfies
\[
\|\zeta_K\|_{C^{k-1}} \leq C \max (\|\zeta_K(0,-)\|^{2/p}_{L^2(S^1)},\|\zeta_K(N_{\dt K},-)\|^{2/p}_{L^2(S^1)}) \frac{\cosh(\lambda(s-N_{\dt K}/2))}{\cosh(\lambda N_{\dt K}/2)} 
\]
and in particular, if we choose $\dt>0$ small enough, by $C^\infty_{loc}$ convergence
\[
\|\zeta_K(0)\|^{2/p}, \|\zeta_K(N_{\dt K})\|^{2/p}\leq \ep'.
\]
\end{proposition}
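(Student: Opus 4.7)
The plan is to combine Lemma \ref{glob} with Theorem \ref{finite conv} applied on the entire neck $N_{\dt K}$. First, by Lemma \ref{glob}, for any prescribed $\ep''>0$ we may choose $K$ large enough and then $\dt>0$ small enough so that on every unit-length subcylinder $[k,k+1]\times S^1\subset N_{\dt K}$ the restriction $u_\dt|_{[k,k+1]\times S^1}$ is within $\ep''$ in $C^k$ of some trivial cylinder. In particular, for $\ep''$ small relative to the size of the Morse-Bott neighborhood furnished by Proposition \ref{prop_locform}, the entire image $u_\dt(N_{\dt K})$ sits inside that neighborhood, so we may write $u_\dt=(a,z,x,y)$ in the Morse-Bott coordinates. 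The pointwise $C^k$-closeness to a trivial cylinder then gives the smallness hypotheses $|y|$, $|\p^{\leq k}_*(z-t)|$, $|\p^{\leq k}_* x|$, $|\p^{\leq k}_* y|\le\ep$ required for Theorem \ref{finite conv} to apply on $I=[0,N_{\dt K}]$.

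Next I apply Theorem \ref{finite conv} to the change-of-variables vector field $Y$ associated to $u_\dt$ on $I$. On the component $PY$ it produces a lift of a gradient trajectory $v_K\colon [0,N_{\dt K}]\times S^1\to \bb R\times Y^3$, with
\[
\|PY-v_K\|_{C^{k-1}} \leq C\max\!\bigl(\|QY(0,-)\|^{2/p}_{L^2(S^1)},\,\|QY(N_{\dt K},-)\|^{2/p}_{L^2(S^1)}\bigr)\frac{\cosh(\lambda(s-N_{\dt K}/2))}{\cosh(\lambda N_{\dt K}/2)},
\]
and exactly the same bound on $\|QY\|_{C^{k-1}}$. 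Here I am free to choose the starting $t$-parametrization, height $a$, and $x$-coordinate of $v_K$ since the Morse-Bott data fixes $v_K$ only up to these constants; the natural choice is to pick them so that $v_K(0,t)$ and $v_K(N_{\dt K},t)$ agree (in the averaged $PY$ sense) with $u_\dt$ at the two ends of the neck. With this choice the combined bound on $PY-v_K$ and on $QY$ gives a vector field $\zeta_K$ along $v_K$, defined by $\zeta_K=u_\dt-v_K$ in the flat Morse-Bott coordinates (which coincides with the exponential map as the metric is flat there), so that $u_\dt|_{N_{\dt K}}=\exp_{v_K}(\zeta_K)$ and $\zeta_K$ obeys exactly the displayed $\cosh$ estimate of the proposition.

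It remains to check the boundary bound $\|\zeta_K(0,-)\|^{2/p}_{L^2(S^1)},\|\zeta_K(N_{\dt K},-)\|^{2/p}_{L^2(S^1)}\le \ep'$. At these two boundary circles of $N_{\dt K}$ we sit inside the regions $\Sigma_{\dt\pm K}$ where the $C^\infty_{loc}$ convergence $u_\dt\to u^i$ (after a global $\bb R$ translation) is already available from the SFT compactness input. Thus at $s=0$ (resp.\ $s=N_{\dt K}$) the map $u_\dt$ is, up to an $\ep'$-small perturbation in $C^k$, a trivial cylinder over the Reeb orbit $\gamma_1$ (resp.\ $\gamma_2+a_\dt$). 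Choosing the constants in $v_K$ above to match these trivial cylinders at the endpoints realizes $\zeta_K(0,t)$ and $\zeta_K(N_{\dt K},t)$ as precisely the deformation of $u_\dt$ away from these trivial cylinder models; by $C^\infty_{loc}$ convergence this deformation can be made as small as we like by shrinking $\ep'$ (and then $\dt$). Shrinking $\ep'$ once more to absorb the implicit Sobolev embedding constant between $C^{k-1}$ and the $L^2(S^1)^{2/p}$ norm gives the claimed boundary bound. The main delicate point is simply the matching of constants: because $v_K$ is only determined by Theorem \ref{finite conv} up to the constant of integration in the $PY_z$, $PY_x$, $PY_a$ equations, we must use the two endpoint agreements with $u^1$ and $u^2+a_\dt$ to pin $v_K$ down, and then verify that the remaining error is genuinely small—this is exactly what the two integrated estimates in the proof of Theorem \ref{finite conv} deliver.
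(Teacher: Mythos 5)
Your argument is correct and follows exactly the route the paper intends: the paper states this proposition as an immediate consequence of the uniform-closeness-to-trivial-cylinders lemma (which supplies the smallness hypotheses on $y$, $z-t$, $x$ and their derivatives) together with the finite-segment decay estimates of Theorem \ref{finite conv}, and your proposal fills in precisely those details, including the correct observation that $v_K$ is only pinned down by matching the constants of integration in the $PY$ equations at the two ends and that the flat metric near the Morse-Bott torus lets one write $\exp_{v_K}(\zeta_K)=v_K+\zeta_K$. No further comment is needed.
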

We estimate the norm of $\zeta_K$ for later use. With some foresight we realize we need to use a weighted norm $e^{w(s_v)}$ for $s_v\in [0,N_{\dt K}]$, where
\[
w(s) = d(N_{\dt K}/2 + K -|s-N_{\dt K}/2|).
\]
Then we measure the $W^{1,p}$ norm of $\zeta_K$ with respect to $e^{w(s)}$, but by the previous proposition the norm of $\zeta_K$ undergoes exponential decay as it enters the interior of $N_{\dt K}$. Hence we have
\[
\int_{S^1}\int_0^{N_{\dt K}} \|\zeta_K\|_{C^k} dsdt \leq C \ep' e^{dK}.
\]
We now come to the main result of this subsection. We combine $\zeta_{i\dt}$ and $\zeta_K$ into a vector field over some preglued curve built from $\Sigma_{\dt \pm K}$, the curve $v_K$ and some asymptotic vector fields. We first recall that
\[
u^1(-K,t) + \zeta_{1\dt}(-K,t) = u_\dt(N_{\dt K},t)  = v_K + \zeta_K(N_{\dt K},t).
\]
Now we have the $C^k$ norm of $\zeta_{1\dt}(-K,t)$ and $\zeta_K(0,t)$ are both bounded above by $\ep'$, then we can deform $u^1|_{\Sigma_{1K}}$ by asymptotic vectors $r_1,a_1,p_1$ all of which are of size $\leq \ep'$ so that
\[
\|u^1(-K,t_1) +(r_1,a_1,p_1) -v_K(N_{\dt K},t_v)\|_{C^{k}} \leq \ep'.
\]
There are naturally several possible choices possible for $(r_1,a_1,p_1)$. In anticipation of our later constructions, we make the following important specification.\\
Recall for $(s_1,t_1) \in (-\infty,0]\times S^1$, for $s_1<<0$ the map $u^1$ converges to a parametrized trivial cylinder
\[
\tilde{\gamma}_1(s_1,t_1) : \bb{R}\times S^1 \longrightarrow M
\]
whose image is of course the trivial cylinder $\gamma_1\times  \bb{R}$. The key property is that $u^1|_{(-\infty,0]\times S^1}$ decays exponentially to $\tilde{\gamma}_1$:
\[
\|u^1-\tilde{\gamma_1}\|_{C^k} \leq C e^{-Ds_1}.
\]
We also recall properties of $v_K$, which is the finite gradient trajectory $u_\dt$ converges to. For small enough $\dt>0$, the gradient flow is extremely slow, so for $s_v \in [N_{\dt K}-2R,N_{\dt K}]$, there is another trivial parametrized cylinder
\[
\hat{\gamma}_1(s_v,t_v): \bb{R} \times S^1 \longrightarrow M
\]
so that for $s_v \in[N_{\dt K}-2R,N_{\dt K}] $
\[
\|\hat{\gamma_1} -v_K\|_{C^k} \leq C R \dt
\]
which goes to zero as $\dt \rightarrow 0$. By the comparison result above there are vectors $(r_1,a_1,p_1)\leq \ep'$ so that:
\[
\tilde{\gamma_1} + (r_1,a_1,p_1)=\hat{\gamma_1}.
\]
Then we choose this particular choice of $(r_1,a_1,p_1)$. There is some free choice of $(r_1,a_1,p_1)$ up to size $R\dt$, which for our purpose is extremely small. We will always make a choice so that the $s_1=R$ end of $u^1+(r_1,a_1,p_1)$ and $s_v = N_{\dt K}-R+K$ of $v_K$ can be preglued together, in the sense we preglued them together in Section \ref{gluing}. (Also see below).

Similarly we recall that
\[
u^2(K,t) + \zeta_{2\dt}(K,t) = u_\dt(0,t)  = v_K + \zeta_K(0,t_v).
\]
By the same reasoning there is a parametrized trivial cylinder $\tilde{\gamma}_2:\bb{R} \times S^1 \rightarrow M$ that $u^2$ decays exponentially to:
\[
\|u_2-\tilde{\gamma}_2\|_{C^k} \leq C e^{-Ds_2}.
\]
And we can find parametrized trivial cylinder $\hat{\gamma}_2$ so that for $s_v \in [0,3R]$ we have
\[
\|\hat{\gamma}_2 -v_K\|_{C^k} \leq C R \dt.
\]
Hence by comparison we choose asymptotic vectors $(r_2,a_2,p_2)$ of size bounded above by $\ep'$ over $u^2$ so that
\[
\|u^2(K,t_2) +(r_2,a_2,p_2)- v_K(0,t)\|_{C^{k}(S^1)}\leq \ep'.
\]
The trivial cylinders satisfy the relation
\[
\tilde{\gamma}_2 + (r_2,a_2,p_2)=\hat{\gamma_2}.
\]
Observe since $v_K$ as a parametrized cylinder does not rotate in the $z$ direction, here we have $r_1=r_2$. We note this here because in our gluing construction earlier where we identified $t_v \sim t_- + r_+-r_-$. We shall see where this is used in a later section.

Then we construct the preglued domain by gluing together
\[
\Sigma_{\dt, K, (r,a,p)_i} :=(u^1,\dt j_i)+(r_1,a_1,p_1)|_{\Sigma_{1R}} \cup [R-K,N_{\dt K}+K-R]\times S^1 \cup (u^2,\dt j_2)+(r_2,a_2,p_2)|_{\Sigma_{2R}}
\]
by $\Sigma_{1R}$ we mean the domain of $u^1$ with $s_1<-R$ removed. In other words $\Sigma_{1R}:= \Sigma_K \cup (s_1,t_1)\in [-R,-K]\times S^1$. (We ignore the ends of $u^1$ glued to semi-infinite trajectories for now). A similar expression holds for $\Sigma_{2R}$.
By $(u_i,\dt j_i)+(r_i,a_i,p_i)$ we mean $\Sigma_{iR}$ with complex structure deformed by $\dt j_i$ and the cylindrical neck twisted/stretched/translated by asymptotic vector fields $(r_i,a_i,p_i)$. We specify the gluing as follows. We glue together
\[
[u^1+(r_1,a_1,p_1)](s_1=-R,t_1) \sim v_K(s_v=N_{\dt K}-R+K, t_v).
\]
Using the same pregluing interpolation as we did in our pregluing construction.
At $u^2$ end we are making the identification 
\[
[u^2+(r_2,a_2,p_2)(s_2=R,t_2) \sim v_K(s_v=R-K, t_v)
\]
and this determines our preglued domain, $\Sigma_{\dt, K, (r,a,p)_i}$. In constructing this preglued domain, we have identified:
\[
-s_1-R \sim s_v-N_{\dt K}-R+K
\]
\[
s_2-R \sim s_v-R-K
\]
\[
t_1\sim t_v\sim t_2.
\]
Since $r_1=r_2$, here the $t_v$ is identified with $t_2$ without any twist. It carries a natural preglued map into $M$ by defining it to be $(u^i,\dt j_i)+(r_i,a_i,p_i)$ on $\Sigma_{iR}$ and $v_K$ on $[R-K,N_{\dt K}-R+K]\times S^1 $, and interpolated in the pregluing region the same way we preglued in Section \ref{gluing}. We call the preglued map $u_{\dt, K, (r,a,p)_i}$.
Then we can form the interpolation of the vector fields $\zeta_{i\dt}$ and $\zeta_K$ into a vector field we call $\zeta_{\dt, K,(r,a,p)_i}$ so that \:
\[
u_\dt = u_{\dt, K, (r,a,p)_i} + \zeta_{\dt, K, (r,a,p)_i}
\]
We should at this stage measure the size of $\zeta_{\dt, K, (r,a,p)_i}$. We need to measure it with exponential weights. The weight in question takes the form $e^{d|s|}$ over $\Sigma_{iR}$ and of the form $e^{w(s)}$ over $N_{\dt K}$.
\begin{proposition} \label{norm_estimate}
The norm of $\zeta_{\dt, K, (r,a,p)_i}$ measured over $u_{\dt, K, (r,a,p)_i}$ with weights as specified above is bounded above by
\[
C\ep' (C+e^{dK})+CRe^{dR}\dt + Ce^{-D'K}.\]
For small enough $\dt$ we can make this bound be as small as we please. For convenience we use another letter $\tilde{\ep}$, informally thought of as $\ep' << \tilde{\ep} << \ep$, and say given $\tilde{\ep}>0$, we can take $\dt >0$ small enough so that norm of $\zeta_{\dt, K, (r,a,p)_i}$ is bounded above by $\tilde{\ep}$. With some foresight, we will need to make it a bit smaller than $\tilde{\ep}$, we can take $\dt$ small enough so that the vector field is bounded above by $\tilde{\ep}^2$. 
\end{proposition}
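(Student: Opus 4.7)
The plan is to decompose the preglued domain $\Sigma_{\dt, K, (r,a,p)_i}$ into its natural constituent pieces and to bound the weighted $W^{2,p}$-norm contribution of $\zeta_{\dt, K, (r,a,p)_i}$ on each. The three terms in the target estimate correspond, in order, to (a) the two convergence regions $\Sigma_{iK}$ together with the deep interior of the neck, (b) the pregluing collars where the trivial cylinders $\hat\gamma_i$ differ from $v_K$, and (c) the annular regions $K\le |s_i|\le R$ on which $u^i$ differs from its asymptotic trivial cylinder. Because the metric is flat near the Morse-Bott torus, everywhere the piecewise definitions meet we can read $\zeta_{\dt, K, (r,a,p)_i}$ off by straight subtraction $u_\dt - u_{\dt, K, (r,a,p)_i}$, which makes every estimate a question of comparing two explicit maps into $\bb{R}\times Y$.

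On $\Sigma_{iK}$ the vector field agrees with $\zeta_{i\dt}$ minus the cutoff of $(r_i,a_i,p_i)$, which has $C^1$ norm at most $\ep'$ and whose weight $e^{d|s|}$ is bounded above by $e^{dK}$; this yields the summand $C\ep'(C + e^{dK})$. On the neck cylinder $s_v\in [0,N_{\dt K}]$ the vector field equals $\zeta_K$, and Theorem \ref{finite conv} gives
\[
\|\zeta_K(s_v,\cdot)\|_{C^{k-1}} \le C\ep'\,\frac{\cosh(\lambda(s_v-N_{\dt K}/2))}{\cosh(\lambda N_{\dt K}/2)}.
\]
Since the weight $d$ is chosen strictly smaller than the decay rate $\lambda$ (which depends only on the local Morse-Bott geometry), the product of the $p$-th power of this bound with $e^{pw(s_v)}$ is integrable and the integral is controlled by the endpoint values $\ep'\cdot e^{dK}$; this absorbs into the same summand $C\ep' e^{dK}$.

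The delicate contributions come from the pregluing collars. Here the preglued map interpolates between $u^i + (r_i,a_i,p_i)$ and $v_K$, and by our choice of $(r_i,a_i,p_i)$ these agree at the endpoint of the collar with the trivial cylinders $\hat\gamma_i$ up to error $\|v_K-\hat\gamma_i\|_{C^k}\le CR\dt$, whereas on the $u^i$ side, $\|u^i-\tilde\gamma_i\|_{C^k}\le Ce^{-D|s_i|}$. Integrating the first discrepancy over the collar (of bounded $s$-length but with weight peaking at $e^{dR}$) produces the term $CRe^{dR}\dt$. Integrating the second discrepancy over the annular region $K\le|s_i|\le R$, where the maximum of $e^{d|s_i|}\cdot e^{-D|s_i|}$ is attained at $|s_i|=K$, produces $Ce^{-(D-d)K}=Ce^{-D'K}$ for some $D'\in (0,D-d]$. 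The $\bb{R}$-invariance of the almost complex structure means there is no further error from reparametrizing $u^i$ in the $a$-direction to match $v_K$.

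Summing these three contributions yields the stated bound. For the final claim one first fixes $K$ large enough that $Ce^{-D'K}<\tilde\ep^2/3$; then, with $K$ fixed, by $C^\infty_{loc}$ convergence one chooses $\dt$ small enough (hence $\ep'(\dt)$ small) that $C\ep'(C+e^{dK})<\tilde\ep^2/3$; finally, since $R=\tfrac{1}{5d}\log(1/\dt)$, one has $Re^{dR}\dt = \tfrac{1}{5d}\log(1/\dt)\cdot\dt^{4/5}\to 0$, so shrinking $\dt$ further makes the last term less than $\tilde\ep^2/3$. The main subtlety throughout is the bookkeeping of the competing exponential rates $d$, $\lambda$, and $D$ — the estimate only closes because all three were chosen in a specific hierarchy $d\ll \min(\lambda,D)$ depending only on the local Morse-Bott geometry.
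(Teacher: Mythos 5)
Your proof is correct and follows essentially the same route as the paper: the same decomposition into the thick convergence regions, the neck interior (controlled by the exponential decay estimate of Theorem \ref{finite conv} against the weight), and the two collar discrepancies ($\|v_K-\hat\gamma_i\|\le CR\dt$ and $\|u^i-\tilde\gamma_i\|\le Ce^{-Ds}$), followed by the same order of quantifiers ($K$ first, then $\ep'$, then $\dt$) for the smallness claim. The explicit computation $Re^{dR}\dt=\tfrac{1}{5d}\log(1/\dt)\,\dt^{4/5}$ is a nice touch the paper leaves implicit, but the argument is the same.
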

\begin{proof}
The norm of $\zeta_{\dt, K, (r,a,p)_i}$ measured over $\Sigma_{iK}$ is upper bounded by $C\ep' e^{dK}$ as we discussed earlier. \\
Next consider the segment of $\zeta_{\dt, K, (r,a,p)_i}$ over $v_K$ for $s_v\in[R-K,N_{\dt K}-R+K]$. Recall we glued at the end points of this interval, hence by previous estimates the norm of $\zeta_{\dt, K, (r,a,p)_i}$ it is bounded above by $C\ep' e^{dK}$. Next we address the remaining region. WLOG we focus on $s_1\in[K,R]$ for $u^2$. In this region, the distance between $v_K$ and $u_\dt$ is bounded above (even when integrated against weights) by $C\ep' e^{dK}$. The distance between $v_K$ and the trivial cylinder $\hat{\gamma}_2$ is bounded above by $Re^{dR}\dt$ after integrating with the exponential weights. The distance between $u^2+(r_2,a_2,p_2)$ and $\hat{\gamma}_2$ in pointwise $C^k$ norm is bounded above by
\[
C e^{-Ds_1}
\]
so when we integrate this pointwise difference over $s_1\in [K,R-K]$ with weight $e^{ds_1}$, we have the upper bound
\[
C e^{-(D-d)K}
\]
and hence our overall bound on $\zeta_{\dt, K, (r,a,p)_i}$ is as claimed in the proposition.

To explain how we make the vector field smaller than $\tilde{\ep}$, we first choose $K$ fixed large enough so that $e^{-dK} << \tilde{\ep}$, then by choosing $\ep'$ small enough we can make $C\ep' (C+e^{dK})$ much less than $\tilde{\ep}$, and we recall as $\dt \rightarrow 0$, $\ep' \rightarrow 0$. Finally $Re^{dR}\dt \rightarrow 0$ as $\dt\rightarrow 0$ by the definition of $R$. 
\end{proof}

\subsubsection{Separating global vector field into components}

After we have obtained the preglued map $u_{\dt, K, (r,a,p)_i}$ and vector field $\zeta_{\dt, K, (r,a,p)_i}$, there are a few more steps to complete our construction. They are:
\begin{enumerate}
    \item Truncate the vector field $\zeta_{\dt, K, (r,a,p)_i}$ into 
    \[
    \zeta_{\dt, K, (r,a,p)_i} = \zeta_{1,\dt, K, (r,a,p)_1} +\zeta_{\dt,K,(r,a,p)_i,v} + \zeta_{2,\dt, K, (r,a,p)_2}
    \]
    where
    \[
    \zeta_{i,\dt, K, (r,a,p)_i} \in u^{i*}(TM)
    \]
    \[
    \zeta_{\dt,K,(r,a,p)_i,v} \in v_K^*(TM).
    \]
    \item Adjust the asymptotic vectors $(r,a,p)_i$ in the pregluing so that the vector fields $ \zeta_{i,\dt, K, (r,a,p)_i}, \zeta_{\dt,K,(r,a,p)_i,v}$ live in images of $Q_i$ and $H_0$ respectively, where the definition of these conditions will be specified below.
    \item Show $ \zeta_{i,\dt, K, (r,a,p)_i}$ and $\zeta_{\dt,K,(r,a,p)_i,v}$ can be extended to (unique) solutions of the equations $\mathbf{\Theta_i}=0$ and $\mathbf{\Theta_v}=0$, subject to our choice of right inverse in the previous step.
\end{enumerate}
In this subsection we address the first two bullet points, and the third bullet point will conclude the surjectivity of gluing, which we will take up after a technical detour.

To address the first bullet point we introduce the cut off functions over $v_K$. We define
\[
\beta_1(s_v):= \beta_{[R/2;N_{\dt K}-K-2R,\infty]}
\]
\[
\beta_2(s_v):=\beta_{[-\infty,2R-K;R/2]}
\]
\[
\beta_v:=\beta_{[R/2;R-K,N_{\dt K}+K-R;R/2]}.
\]

The obvious inference is that if we imagine we constructed $\Sigma_{\dt, K, (r,a,p)_i}$ from a prelguing construction by deforming $u^i$ with $(r_i,a_i,p_i)$ and gluing to it a finite gradient segment, the cut off functions listed above should correspond to the cut off functions we used for our gluing construction. In fact this is exactly the case, $\beta_i$ ought to be identified with $\beta_\pm$. The only difference is a change in notation where our coordinates are shifted by a factor of $K$.

Then to address the first bullet point, we take some care to specify what we mean in our definition of $\zeta_{*,\dt, K, (r,a,p)_1}, * = 1,2,v$ in anticipation of our upcoming proof of surjectivity of gluing. In particular we must define $\zeta_{*,\dt, K, (r,a,p)_i}$ so that they satisfy the following properties:
\begin{itemize}
    \item \[
    \zeta_{\dt, K, (r,a,p)_i} = \beta_1 \zeta_{1,\dt, K, (r,a,p)_1} +\beta_2\zeta_{\dt,K,(r,a,p)_i,v} + \beta_v\zeta_{v,\dt, K, (r,a,p)_i}.
    \]
    \item Their norms satisfy
    \[
    \|\zeta_{i,\dt, K, (r,a,p)_i}\| \leq 
    C\ep' (C+e^{dK})+CRe^{dR}\dt + Ce^{-D'K}\] as measured in $W^{2,p,d}(u^i+(r_i,a_i,p_i) ^*TM) \oplus T\mcal{J}_i$ with weighted norm (we ignore the other ends of $u^i$ for now).
    As well as the fact 
    \[
    \|\zeta_{v,\dt, K, (r,a,p)_i}\| \leq C\ep'(C+e^{dK})+CRe^{dR}\dt + Ce^{-D'K}
    \]
    as measured in $W^{k,p,w}(v_K^*TM)$.
    \item The vector fields $\zeta_{*,\dt, K, (r,a,p)_i}$ have support as follows. Using coordinates $(s_v,t_v)\in [0, N_{\dt K}] \times S^1$ over  $\zeta_{2,\dt, K, (r,a,p)_2} = 0 $ for $s_v >3R-K$. The vector field $\zeta_{1,\dt, K, (r,a,p)_1} = 0 $ for $s_v< N_{\dt K} -K-3R$, and $\zeta_{v,\dt, K, (r,a,p)_i} = 0 $ for $s_v< R-K$ and $N_{\dt K} -K-R<s_v$. (These supports are not too significant as we will find some other way to extend them later).
\end{itemize}
We observe such extensions are always possible.
We note the previous theorem on the norm of the global vector field $\zeta_{\dt, K, (r,a,p)_i}$ implies analogous statements on the individual vector fields $ \zeta_{i,\dt, K, (r,a,p)_i}, \zeta_{\dt,K,(r,a,p)_i,v}$. The bullet point about support follows from choice of cut off functions $\beta_*$.

To address the second bullet point, we specify what we mean by the ``right spaces". Assume $u^i$ are nontrivial with stable domains, recall for $u^1$ and $u^2$ (or rather a suitable translate of $u^2$ in the symplectization direction) with domain $\Sigma_i$  the space of deformations is given by
\[
W^{2,p,d}(u^{i*}TM)\oplus V_i \oplus V_i'\oplus T \mcal{J}_i.
\]
The operators $D_i$ with domain $W^{2,p,d}(u^{i*}TM)\oplus V_i \oplus V_i'\oplus T \mcal{J}_i$ are defined as the linearization of $\db_J$ along with variations of complex structure of $\Sigma_i$. By assumption $D_i$ have index $1$ with kernel $\p_a$, i.e. global translation in the $a$ direction. Recall to define the equations $\mathbf{\Theta}_i$ we needed to fix a right inverse $Q_i$ to $D_i$. We choose $Q_i$ as follows, consider the codimension 1 subspace $W_i \subset W^{2,p,d}(u_i^*TM)$ defined by 
\begin{equation}\label{integral_over_a}
\zeta \in W' \, \textup{iff} \, \int_{\hat{\Sigma}_i} \la \zeta, \p_a\ra =0
\end{equation}
where $\hat{\Sigma}_i$ is the compact subset of $\Sigma_i$ with all cylindrical neighborhood around punctures removed. Then $D_i$ restricted to $W'_i\oplus V_i \oplus V_i'\oplus T \mcal{J}_i$ is an isomorphism with inverse $Q_i$, and we take this $Q_i$ to be the right inverse used in the contraction mapping principle we use to solve $\mathbf{\Theta_i}=0$. In the case where the domain is not stable, we note the following convention. 

\begin{convention}
For definiteness say the domain of $u^1$ is either a plane or a cylinder, then the act of placing marked points in the domain presents us a subspace $W'\subset W^{2,p,d}(u_i^*TM)$ so that the restriction of $D_1$ to $W'\oplus V_1 \oplus V_1'\oplus T \mcal{J}_1$ is an isomorphism: we simply take $W'$ to be vector fields that preserve the condition that marked points remain on the auxiliary surfaces we chose in Convention \ref{stabilization}. If $u^1$ has several connected components, some of which are stable, and some of which are unstable prior to adding marked points, then we impose the integral condition \ref{integral_over_a} for vector fields over the domains that are stable without adding marked points, and the marked point condition in Convention \ref{stabilization} for domains are stable only after adding marked points. This picks out the subspace $W'$.
\end{convention}

We now explain our definition of $H_0$. Recall $v_K$ is a segment of a gradient trajectory that has coordinates $(s_v,t_v)$, with the segment of interest being $[0,N_{\dt K}]\times S^1$, with exponential weight $e^{w(s)}$ with peak at $s_v= N_{\dt K}/2$. We recall the functionals, analogous to previous section: $L_*, *=a,s,v: W^{2,p,w}(v_K^*TM)\rightarrow \bb{R}$ defined by
\[
L_*: \zeta \in W^{2,p,w}(v_K^*TM)\longrightarrow  \int_0^1 \la \zeta(s_v=N_{\dt K}/2,t), \p_*\ra dt \in \bb{R}
\]
and we define
\[
H_0:= \{\zeta \in W^{2,p,w}(v_K^*TM) | L_*(\zeta)=0, *=a,x,z\}.
\]
We now deform the pair $(r,a,p)_i$ to ensure our vector fields lie in the correct subspace. We first observe we can ensure $\zeta_{1,\dt, K, (r,a,p)_1}$ lives in the image of $Q_1$ by using the global $a$ translation of $u_\dt$, i.e. when we first started talking about the degeneration of $u_\dt$ into $u^1$ and $u^2$, we translate $u_\dt$ by $a$ so that $u_\dt$ always converges to $u^1$ in $C^\infty_{loc}$ via a vector field that lives in image of $Q_1$. This degree of freedom is afforded to us by the fact that the problem is $\bb{R}$ invariant. Hence we next focus on vector fields $\zeta_{2,\dt, K, (r,a,p)_2}$ and $\zeta_{v,\dt, K, (r,a,p)_i}$. We imagine $u_1|_{\Sigma_{1,K}}$ is fixed in place. To make $\zeta_{2,\dt, K, (r,a,p)_2}$ live in the image of $Q_2$, we change $p_2 \rightarrow p_2 +\dt p_2$. This changes the pregluing domain:
$\Sigma_{\dt, K, (r,a,p)_1,(r,a, p+\dt p)_2}$ as well as the associated map into $M$, which is given by
\[
u_{\dt, K, (r,a,p)_1,(r,a,p+\dt p)_2}
\]
The effect of changing $p_2$ changes the length of the finite gradient trajectory that is glued between $u^1$ and $u^2$. Naturally changing the preglued map also deforms the global vector field $\zeta_{\dt, K, (r,a,p)_1,(r,a,p+\dt p)_2}$ and its cutoffs. (Adjusting our cut off functions accordingly). We observe to make $\zeta_{2,\dt, K, (r,a,p+\dt p)_2}$ live in the image of $Q_2$ we need to lengthen/shorten the glued cylinder by $a$-length $\ep'$, this corresponds to a $\dt p_2$ of size $\ep'\dt$. After this adjustment, if we take $\dt$ sufficiently small, the global vector field still has size bounded above by $\tilde{\ep}$ (or in the case we will need, $\tilde{\ep}^2$), and hence the same is true of its cut offs.

Finally we turn our attention to $\zeta_{v,\dt, K, (r,a,p)_i}$. To do this we need the following lemma:
\begin{lemma}
\[
L_*(\zeta_{v,\dt, K, (r,a,p)_i}) \leq C\ep'^{2/p} e^{-\lambda (N_{\dt K}-CR)/2}.
\]
for $*=a,z,x$.
\end{lemma}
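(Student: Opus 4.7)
The plan is to reduce the bound on $L_*$ to a pointwise estimate of $\zeta_{v,\dt,K,(r,a,p)_i}$ at the midpoint $s_v = N_{\dt K}/2$ and then invoke the exponential decay of Theorem \ref{finite conv}. The key observation is that by construction the pregluing uses $v_K$ itself as the intermediate gradient trajectory on the long middle portion of the neck, so $\zeta_{v,\dt,K,(r,a,p)_i}$ agrees with $\zeta_K$ there.

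\textbf{Step 1 (Pointwise identification).} At $s_v = N_{\dt K}/2$ our cutoffs satisfy $\beta_v = 1$ and $\beta_1 = \beta_2 = 0$ because $N_{\dt K}\sim T/\dt$ is much larger than $R$ and $K$. Moreover, the preglued map $u_{\dt,K,(r,a,p)_i}$ was defined to be exactly $v_K$ on the interval $[R-K, N_{\dt K} + K - R]$. Hence
\[
\zeta_{v,\dt,K,(r,a,p)_i}(N_{\dt K}/2, t) \;=\; \zeta_{\dt,K,(r,a,p)_i}(N_{\dt K}/2, t) \;=\; u_\dt(N_{\dt K}/2, t) - v_K(N_{\dt K}/2, t) \;=\; \zeta_K(N_{\dt K}/2, t).
\]
Since $L_*$ integrates against the fixed unit kernel vector $\p_*$ along $\{s_v = N_{\dt K}/2\} \times S^1$, Cauchy--Schwarz and the Sobolev embedding give
\[
|L_*(\zeta_{v,\dt,K,(r,a,p)_i})| \;\leq\; C\,\|\zeta_K(N_{\dt K}/2, \cdot)\|_{L^2(S^1)} \;\leq\; C\,\|\zeta_K(N_{\dt K}/2, \cdot)\|_{C^0(S^1)}.
\]

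\textbf{Step 2 (Applying Theorem \ref{finite conv}).} Choose an interval $[s_0, s_1] \subset [0, N_{\dt K}]$ centred at $N_{\dt K}/2$ with $s_1 - s_0 \geq N_{\dt K} - CR$, where the buffer $CR$ is just large enough that at $s_0$ and $s_1$ we can replace the pointwise bound $\|\zeta_K(s_j)\|^{2/p}_{L^2(S^1)} \leq C\ep'$ (coming from $C^\infty_{loc}$-convergence of $u_\dt$ to $u^i$ plus the exponential decay of $u^i$ to its trivial-cylinder limit). By Lemma \ref{glob}, the hypotheses of Theorem \ref{finite conv} hold for $u_\dt$ on $[s_0, s_1]$ uniformly in $\dt$, and the theorem yields
\[
\|\zeta_K\|_{C^{k-1}}(s_v, \cdot) \;\leq\; C \max\!\bigl(\|\zeta_K(s_0)\|^{2/p}_{L^2(S^1)}, \|\zeta_K(s_1)\|^{2/p}_{L^2(S^1)}\bigr) \frac{\cosh(\lambda(s_v - N_{\dt K}/2))}{\cosh(\lambda(s_1 - s_0)/2)}.
\]
Evaluating at $s_v = N_{\dt K}/2$ the numerator equals $1$ and the denominator exceeds $\tfrac{1}{2} e^{\lambda(N_{\dt K} - CR)/2}$, so
\[
\|\zeta_K(N_{\dt K}/2, \cdot)\|_{C^0} \;\leq\; C\,\ep'^{\,2/p}\, e^{-\lambda(N_{\dt K} - CR)/2}.
\]
Combining with Step 1 gives the claimed inequality for each $* = a, z, x$.

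\textbf{Expected obstacle.} The main delicate point is verifying that the boundary values $\|\zeta_K(s_j)\|_{L^2(S^1)}$ are genuinely $O(\ep')$ once a buffer of length $CR$ is taken; this is what forces the shift from $N_{\dt K}$ to $N_{\dt K} - CR$ in the exponent. This uses the exponential convergence $\|u^i - \tilde{\gamma}_i\|_{C^k} \leq Ce^{-Ds_i}$ of each level to its asymptotic trivial cylinder, together with the fact that our choice $(r_i, a_i, p_i)$ of size $\leq \ep'$ aligns $u^i + (r_i,a_i,p_i)$ with $v_K$ up to error of order $\ep' + R\dt + e^{-DK}$ at the pregluing seam. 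Once these comparisons are made Theorem \ref{finite conv} does the rest of the work.
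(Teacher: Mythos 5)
Your proposal is correct and follows essentially the same route as the paper, whose proof is the one-line remark that the bound ``follows directly from exponential decay estimates'': you have simply made explicit that $L_*$ only sees the slice $s_v = N_{\dt K}/2$, that there $\zeta_{v,\dt,K,(r,a,p)_i}$ coincides with $\zeta_K$, and that Theorem \ref{finite conv} applied to the long neck gives the $\cosh$-quotient bound whose value at the midpoint is $O(\ep'^{2/p}e^{-\lambda(N_{\dt K}-CR)/2})$. The only cosmetic difference is your buffer $[s_0,s_1]$; the paper already records $\|\zeta_K(0)\|^{2/p},\|\zeta_K(N_{\dt K})\|^{2/p}\le\ep'$ directly from $C^\infty_{loc}$ convergence, so the decay can be started at the endpoints themselves, but your slightly weaker exponent is exactly what the lemma asserts.
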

\begin{proof}
Follows directly from exponential decay estimates
\end{proof}
Observe this upper bound is extremely small in the following sense. If we consider the vector field 
$C\ep'^{2/p} e^{-\lambda (N_{\dt K}-CR)/2} \p_*$ where $*=a,z,x$ and measured the size of this vector field over $v_k$ with domain $s_v \in [0,N_{\dt K}]$, with the exponential weight $e^{w(s)}$, we would still get an extraordinarily small number, of size $C\ep'^{2/p} e^{-(\lambda -d)N_{\dt K}/2} e^{C\lambda R}$, which goes to zero as $\dt \rightarrow 0$. This means we can apply a constant translation of form $C\ep'^{2/p} e^{-\lambda (N_{\dt K}-CR)/2} \p_*$ over $s_v\in [-K,N_{\dt K}+K]$ to the vector field $\zeta_{v,\dt, K, (r,a,p)_i}$ to try to make it land in $H_0$ while still keeping its overall norm $< \tilde{\ep}$. In practice this is done by further adjusting the pair of asymptotic vectors $(r_1,a_1,p_1), (r_2,a_2,p_2+\dt p_2)$ used in the pregluing, which we take up in the next lemma.
\begin{lemma}
By adjusting the pair of asymptotic vectors $(r_1,a_1,p_1), (r_2,a_2,p_2+\dt p_2)$ we can make $\zeta_{v,\dt, K, (r,a,p)_i} \in H_0$ while keeping its norm $\leq \tilde{\ep}^2$. We can also maintain the vector fields $\zeta_{i,\dt, K, (r,a,p)_i}$ are still within the image of $Q_i$.
\end{lemma}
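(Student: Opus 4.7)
The plan is to exploit the three-dimensional freedom remaining in the asymptotic vectors $(r_i,a_i,p_i)$, beyond what has already been spent to place $\zeta_{1,\dt,K,(r,a,p)_1}$ into $\op{im}(Q_1)$ (via the global $a$-translation of $u_\dt$) and $\zeta_{2,\dt,K,(r,a,p)_2}$ into $\op{im}(Q_2)$ (via $\dt p_2$), in order to kill the three functionals $L_a, L_z, L_x$ applied to $\zeta_{v,\dt,K,(r,a,p)_i}$. The preceding lemma supplies the initial estimate $|L_*(\zeta_v)| \leq C\ep'^{2/p} e^{-\lambda(N_{\dt K}-CR)/2}$, which is exponentially small compared with $\tilde{\ep}^2$, so the whole argument consists of solving a near-identity linear system with tiny right-hand side.

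First I would compute the linear response of $\zeta_v$ to infinitesimal perturbations of, say, the triple $(r_1,a_1,p_1)$. Changing $a_1$ by $\Delta a_1$ translates the preglued map in the symplectization direction on the region where $\beta_1$ is supported; on the neck this amounts, to leading order, to adding $-\Delta a_1 \p_a$ to $\zeta_v$ in a cut-off fashion, so $L_a(\zeta_v)$ shifts by $-\Delta a_1$ up to an error of order $\dt$. Similarly, $\Delta r_1$ shifts $\zeta_v$ by $-\Delta r_1\,\p_z$ and $\Delta p_1$ shifts it by $-\Delta p_1\,\p_v$, each up to errors uniformly controlled by the local differential geometry of the Morse--Bott torus, as was established in Sections \ref{diffgeo} and \ref{gluing}. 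Composing with the functionals $L_*$ yields a $3\times 3$ matrix relating $(\Delta r_1,\Delta a_1,\Delta p_1)$ to $(\Delta L_z, \Delta L_a, \Delta L_x)$ which, after normalization, equals the identity plus an error of size $O(\dt)+O(\tilde{\ep})$. Hence it is invertible with uniformly bounded inverse as $\dt \to 0$.

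Second, I would set up a combined contraction mapping whose variables are the full tuple of adjustments $(\Delta r_1,\Delta a_1,\Delta p_1, \Delta a,\Delta p_2)$ and whose equations are the three vanishing conditions $L_*(\zeta_v)=0$ together with the two projection conditions encoding $\zeta_i \in \op{im}(Q_i)$. The diagonal blocks are invertible by the previous step and by the analogous (already used) arguments for the $\zeta_i$ projections, while the off-diagonal couplings---for instance, the effect of varying $(r_1,a_1,p_1)$ on the $Q_1$-projection of $\zeta_1$---are uniformly bounded operators applied to inputs that are either exponentially small in $N_{\dt K}$ or of order $\dt$ times the relevant adjustment. Thus the full linearized system is a uniformly small perturbation of a block-diagonal invertible map, so the contraction mapping principle produces a unique simultaneous solution with all adjustments of size $O\!\left(\ep'^{2/p} e^{-\lambda(N_{\dt K}-CR)/2}\right)$.

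Finally, I would check the norm bookkeeping: these corrections, weighted by $e^{w(s)}$ over the neck and by $e^{d|s|}$ over $\Sigma_{iR}$, contribute at most $C\ep'^{2/p}e^{-(\lambda-d)N_{\dt K}/2}e^{C\lambda R}$ to the norm of $\zeta_{\dt,K,(r,a,p)_i}$, which is dwarfed by $\tilde{\ep}^2$ once $\dt$ is small enough (because $N_{\dt K}\sim T/\dt$ while $R\sim \tfrac{1}{5d}\log(1/\dt)$). The main obstacle is precisely the coupling between the three conditions: adjusting $(r_1,a_1,p_1)$ to zero $L_*(\zeta_v)$ genuinely perturbs $\zeta_1$ and the previously-arranged $Q_1$-projection condition, which is why a one-shot adjustment fails and the full contraction mapping on all five parameters is required. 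The exponentially small size of the input is exactly what makes the coupled system close.
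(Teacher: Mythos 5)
Your overall strategy---the defects $L_*(\zeta_{v,\dt,K,(r,a,p)_i})$ are exponentially small, so one solves a near-identity system using the remaining parametric freedom---is the same as the paper's, but the specific parametrization you chose does not work, and the failure is exactly the stretch-versus-translation distinction that drives Section \ref{gluing}. The triple $(\Delta r_1,\Delta a_1,\Delta p_1)$ at the $u^1$ end alone does not give three independent consistent deformations of the pregluing: once one endpoint of the gradient cylinder is anchored to $u^2+(r,a,p)_2$, its $a$- and $x$-profiles are rigid, so changing $p_1$ while holding $(r_2,a_2,p_2)$ fixed is a \emph{stretch} that changes the $s$-length of the neck by $\Delta p_1/\dt$ and hence the $a$-separation of the two levels by roughly $\Delta p_1/\dt$; similarly $\Delta a_1$ alone is not admissible without either translating the thick part of $u^1$ or re-lengthening the neck. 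Consequently the off-diagonal coupling from $\Delta p_1$ into the conditions $\zeta_{i,\dt,K,(r,a,p)_i}\in\op{Im}Q_i$ is of order $1/\dt$, not ``uniformly bounded \dots of order $\dt$ times the relevant adjustment'' as you claim, so your linearized system is not the identity plus an $O(\dt)+O(\tilde{\ep})$ error. (The iteration might still close numerically because $\ep'^{2/p}e^{-\lambda(N_{\dt K}-CR)/2}/\dt\rightarrow 0$, but that would have to be argued separately, and the near-identity structure you invoke is simply not present.) Moreover, under a stretch the evaluation locus $s=N_{\dt K}/2$ and the weight profile $w_p$ both move, so even the diagonal entry $\p L_x/\p p_1$ is not $-1$ in any clean sense.

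The paper avoids all of this by choosing the three deformations so that they act diagonally on $(L_x,L_a,L_r)$ and are supported away from the thick parts: translate \emph{both} $p_1$ and $p_2$ by the same $\Delta p$ (a translation of the whole trajectory along the Morse--Bott $S^1$, whose compensating adjustment of $a_1,a_2$ is only $O(\Delta p)$, since the $a$-distance $\int e^{\dt f}\,ds$ changes by $O(\dt\Delta p\cdot T/\dt)=O(\Delta p)$ under such a shift) to kill $L_x$; translate both $a_1$ and $a_2$ by the same amount to kill $L_a$; and rotate $r_1$ with the trajectory twisted along, re-identifying $t_v+L_r\sim t_2$ at the other end, to kill $L_r$. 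Each of these changes $\zeta_{v,\dt,K,(r,a,p)_i}$ near $s=N_{\dt K}/2$ by approximately a constant multiple of $\p_x$, $\p_a$, $\p_z$ respectively, leaves the other two functionals essentially unchanged, and---because these deformations are supported in the cylindrical necks, disjoint from the regions $\hat{\Sigma}_i$ over which the $\op{Im}Q_i$ conditions are defined---does not disturb $\zeta_{i,\dt,K,(r,a,p)_i}\in\op{Im}Q_i$ at all. With that choice no coupled fixed-point argument is needed; the three functionals are killed one at a time. If you want to keep your contraction-mapping formulation, you must first restrict to the submanifold of consistent pregluing data (the analogue of $\hat{\Delta}$) and use these translation-type coordinates on it.
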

\begin{proof}
We examine $L_*$ for $*=r,a,z$ one by one, as the different cases are relatively independent of each other.\\

Let us consider $L_x$. The idea is to change both $p_1$ and $p_2$ in the same direction, which we denote by $p_i +\Delta p$, and preglue to a different gradient trajectory $v_K'$, but the trouble is as we change $p_i$ to $p_i +\Delta p$, the new gradient trajectory $v_K'$ connecting $p_1+\Delta p$ to $p_2+\Delta p$ travels a different amount of $a$ distance as $s_v'$ (the variable for $v_K'$) ranges from $s_v =0$ to $s_v'=N_{\dt K}$, hence there must be a corresponding deformation in the pair of asymptotic vectors $a_1$ and $a_2$ to make the curves still match up and glue. Further we must also choose the deformation of $a_1$ and $a_2$ so that $\zeta_{i,\dt,K,(r,a,p)_i}\in Im Q_i$. Said differently we deform $a_1$ and $a_2$ so that there is no induced global translation of $u^1$ or $u^2$ that enters the pregluing. This is always possible. The exact expressions for these quantities are not so important, the important information is their sizes. The size of $\Delta_p$ is $L_x(\zeta_{v,\dt, K, (r,a,p)_i}) \leq C\ep'^{2/p} e^{-\lambda (N_{\dt K}-CR)/2} $, to make the new $\zeta'_{v,\dt, K, (r,a,p)_i}$ evaluate to $0$ under $L_x$, the corresponding change to $a_1,a_2$ is also of size $C C\ep'^{2/p} e^{-\lambda (N_{\dt K}-CR)/2}$, which we absorb into our notation $(r,a,p)_i$. It is also apparent after this deformation all vector fields are still small.
Next we adjust both $a_1$ and $a_2$ by $L_a(\zeta_{v,\dt, K, (r,a,p)_i})$ to make $L_a(\zeta_{v,\dt, K, (r,a,p)_i})=0$. We adjust $a_1$ and $a_2$ by the same amount in the same direction so as to maintain $\zeta_{i,\dt,K,(r,a,p)_i}\in Im Q_i$. It is clear this will land $\zeta_{v,\dt, K, (r,a,p)_i}$ in $H_0$ and keep the norm of $\zeta_{v,\dt, K, (r,a,p)_i}$ small.

Finally we consider $\p_z$. We shift $r_1$ by size $-L_r(\zeta_{v,\dt, K, (r,a,p)_i})$, and twist the segment of gradient trajectory $v_K$ along with it. But we do not change $r_2$, hence there is an new identification $t_v +L_r(\zeta_{v,\dt, K, (r,a,p)_i}) \sim t_2$ near the $u^2$ end. The result is a new $\zeta_{v,\dt, K, (r,a,p)_i}$, denoted by the same symbol by abuse of notation, so that $L_r(\zeta_{v,\dt, K, (r,a,p)_i})=0$. We also observe by the previous discussion the norm of $\zeta_{v,\dt, K, (r,a,p)_i}$ changed at most by $C\ep'^{2/p} e^{-(\lambda -d)N_{\dt K}/2} e^{C\lambda R}$.

It is also clear that this process will keep $\zeta_{i,\dt, K, (r,a,p)_i}$ in the image of $Q_i$ because the regions in which we are performing these deformations are disjoint.
\end{proof}

To summarize:
\begin{proposition}
We can choose suitable asymptotic vectors $(r,a,p)_1$, $(r,a,p)_2$, from which to construct a preglued domain $\Sigma_{\dt, K, (r,a,p)_i}$ that decomposes as
\[
\Sigma_{\dt, K, (r,a,p)_i} :=(u^1,\dt j_i)+(r_1,a_1,p_1)|_{\Sigma_{1R}} \cup [R-K,N_{\dt K}+K-R]\times S^1 \cup (u^2,\dt j_2)+(r_2,a_2,p_2)|_{\Sigma_{2R}}
\]
where $\dt j_i$ represents variation of complex structure on $\Sigma_{iR}$, and we let $v_K$ denote the segment of gradient trajectory whose domain is $[R-K,N_{\dt K}+K-R]\times S^1$. There is a preglueing  map $u_{\dt, K, (r,a,p)_i} : \Sigma_{\dt, K, (r,a,p)_i} \rightarrow M $ that agrees with our prescription for constructing pregluing maps in Section \ref{gluing}, so that there exists a vector field $\zeta_{\dt, K, (r,a,p)_i}$ so that 
\[
u_\dt = u_{\dt, K, (r,a,p)_i} + \zeta_{\dt, K, (r,a,p)_i}.
\]
If we split $\zeta_{\dt, K, (r,a,p)_i}$ into components that live over $u^1$, $u^2, v_K$ using cut off functions $\beta_*$ as we did in our gluing sections. The resulting vector fields $\zeta_{1,\dt, K, (r,a,p)_1} ,\zeta_{\dt,K,(r,a,p)_i,v}, \zeta_{2,\dt, K, (r,a,p)_1}$ satisfy
\[
\zeta_{i,\dt, K, (r,a,p)_1} \in \op{Im} Q_i
\]
\[
\zeta_{\dt,K,(r,a,p)_i,v} \in H_0
\]
and they all have norm $<\tilde{\ep}$ when measured with exponential weights. For $\zeta_{i,\dt, K, (r,a,p)_1}$ this means $W^{2,p,d}(u_i^*TM)$ and $\zeta_{\dt,K,(r,a,p)_i,v} \in W^{2,p,w(s)}(v_K^*TM)$. We remark here we are bounding the size of our vector fields by $\tilde{\ep}$, but it practice we can make them as small as we please, by $\tilde{\ep}^2$, for instance.
\end{proposition}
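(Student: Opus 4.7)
The plan is to assemble the proposition as a summary of the constructions carried out earlier in the subsection, proceeding in three stages: first produce a preliminary decomposition from the compactness/decay estimates, then use the asymptotic vectors to fix the pregluing, and finally make the two orthogonality/landing conditions hold by small further adjustments.

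First I would invoke the SFT-type compactness together with the $C^\infty_{loc}$ convergence to obtain, on each compact piece $\Sigma_{iK}$, vector fields $\zeta_{i\dt}$ and complex structure variations $\dt j_i$ with $u_\dt|_{\Sigma_{\dt\pm K}} = \exp_{(u^i,\dt j_i)}(\zeta_{i\dt})$; by translating $u_\dt$ globally in the $a$ direction once and for all, one can simultaneously arrange that $\zeta_{1\dt}\in \op{Im} Q_1$ at this initial stage. On the neck $N_{\dt K}$, Lemma \ref{glob} gives uniform proximity to a trivial cylinder, and Theorem \ref{finite conv} then produces the finite gradient trajectory $v_K$ and a vector field $\zeta_K$ with $u_\dt|_{N_{\dt K}}=\exp_{v_K}(\zeta_K)$ satisfying the $\cosh$ decay estimate, so that the boundary values at $s_v=0,N_{\dt K}$ are of size $\leq \ep'$.

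Next I would choose the asymptotic vectors $(r_i,a_i,p_i)$ of size $\leq \ep'$ by comparing the trivial cylinders $\tilde\gamma_i$ to which $u^i$ decays with the trivial cylinders $\hat\gamma_i$ to which $v_K$ is close near its endpoints (the discrepancy is $\leq CR\dt$). The important constraints are that $r_1=r_2$ (since $v_K$ does not rotate in $z$) and that the identification at the pregluing boundaries is the one specified in Section \ref{gluing}. This produces the preglued domain $\Sigma_{\dt,K,(r,a,p)_i}$, the preglued map $u_{\dt,K,(r,a,p)_i}$, and a global vector field $\zeta_{\dt,K,(r,a,p)_i}$ with $u_\dt=u_{\dt,K,(r,a,p)_i}+\zeta_{\dt,K,(r,a,p)_i}$. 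Splitting via the cut-offs $\beta_i,\beta_v$ gives the three pieces $\zeta_{i,\dt,K,(r,a,p)_i}$ and $\zeta_{\dt,K,(r,a,p)_i,v}$, and the norm estimate of Proposition \ref{norm_estimate} bounds each one by $C\ep'(C+e^{dK})+CRe^{dR}\dt+Ce^{-D'K}$, which can be made $<\tilde\ep^2$ by first fixing $K$ large, then shrinking $\ep'$ (equivalently $\dt$).

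Finally I would enforce the landing conditions. The condition $\zeta_{1,\dt,K,(r,a,p)_1}\in \op{Im}Q_1$ is already arranged by the initial global $a$-translation. For $\zeta_{2,\dt,K,(r,a,p)_2}\in \op{Im}Q_2$, changing $p_2\mapsto p_2+\dt p_2$ with $|\dt p_2|\lesssim \ep'\dt$ lengthens/shortens the neck by the required $a$-length and absorbs the $\p_a$-component over $u^2$; this perturbation is far too small to affect the $\tilde\ep$ bound. For $\zeta_{\dt,K,(r,a,p)_i,v}\in H_0$, the key observation — the crux of the whole step — is the decay estimate $|L_*(\zeta_v)|\leq C\ep'^{2/p}e^{-\lambda(N_{\dt K}-CR)/2}$ proved in the lemma just above, together with the fact that even after multiplying by the exponential weight $e^{w(s)}$ on the neck this quantity remains tiny. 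Hence I can kill each of the three components $L_a,L_x,L_z$ in turn by modifying $(a_1,a_2)$, $(p_1,p_2)$, and $r_1$ respectively, matched on the other end so as to preserve the $\op{Im}Q_i$ conditions (e.g. shifting $a_1$ and $a_2$ together, and correspondingly adjusting the $a_i$ when $p_i\mapsto p_i+\Delta p$ so that the pregluing still matches and no global $a$-translation is induced over either $u^i$). The main obstacle is precisely bookkeeping this three-dimensional adjustment simultaneously: one must verify that the $p$-adjustment used to kill $L_x$, which is the only one that changes the neck length nontrivially, forces a compensating $a$-adjustment of comparable size — both of which remain microscopic compared to $\tilde\ep$ thanks to the exponential decay — and that the three adjustments decouple enough that the order in which they are performed does not matter up to negligible error. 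Collecting the three bullet points then gives exactly the statement of the proposition.
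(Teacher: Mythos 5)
Your proposal follows the paper's argument essentially verbatim: the proposition is indeed proved in the paper as a summary of the preceding chain of lemmas (the $C^\infty_{loc}$ convergence on the thick parts, Lemma \ref{glob} and the $\cosh$ decay on the neck, the comparison of trivial cylinders to fix $(r,a,p)_i$, Proposition \ref{norm_estimate} for the norm bound, and the final lemma adjusting $(p_i,a_i,r_1)$ using the exponential smallness of $L_*(\zeta_v)$ to land in $H_0$ while preserving $\op{Im}Q_i$). Your identification of the decay of $L_*(\zeta_v)$ as the crux, and of the matched adjustments needed to decouple the three conditions, is exactly the paper's reasoning.
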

Now we are in the position to extend $\zeta_{v,\dt, K, (r,a,p)_i}$ and $\zeta_{i,\dt, K, (r,a,p)_i}$ to solutions of $\Theta_i, \Theta_v$, but before that we need to take a detour on linear operators.

\subsection{A detour on linear operators}
In this detour of a subsection we prove several key facts about linear operators to be used later. Naturally, very similar lemmas appear in Section 3 of \cite{obs2} since we are using their strategy for surjectivity of gluing.

We shall first consider the case for semi-infinite trajectories, then we will do the case for finite gradient trajectories.

We shall first work out the case for $p=2$, then deduce the necessary results $p>2$ from Morrey's embedding theorem. For this section we shall work with Sobolev regularity $k>3$, this will not make a difference to us since elliptic regularity will afford us all the regularity we need.

Let 
\begin{equation*}
    v:[0,\infty) \times S^1 \longrightarrow M
\end{equation*}
be a semi-infinite gradient trajectory, equipped with linearized operator
\begin{equation}
D_\delta = \partial_s -(A(s,t) + \delta A)
\end{equation}
where $A(s,t) = -(J_0 \frac{d}{dt} +S)$ corresponds to the linearized operator of the Morse-Bott contact form, and $\dt A$ is a operator of the form $\dt ( M\frac{d}{dt} +N)$ is the correction due to having used the $J_\dt$ almost complex structure.

We equip it with the weighted Sobolev space
$W^{k,2,w(s)}$ where 
\begin{equation*}
    w(s) = d(s+R).
\end{equation*}
We conjugate this over to $W^{k,2}$ at which point it becomes
\begin{equation}
    D_\delta' =\partial_s -(A + \delta A)-d.
\end{equation}
Let's first consider the restriction of $A+d$ to $s=0$, which we shall denote by $A_0$. By the spectral theorem there exists an orthonormal basis of $L^2(S^1)$ given by eigenfunctions of $A_0$, which we write as $\{e_n\}_{n\in \bb{Z}}$ with eigenvalue $\lambda_n$. By assumption $0$ is not an eigenvalue of $A_0$, and by convention we say $\lambda_n >0$ for $n>0$ and vice versa. \begin{theorem}
There is a continuous trace operator $T: W^{k,2}([0,\infty)\times S^1) \rightarrow W^{k-1/2,2}(S^1)$ given as follows, if $f\in W^{k,2}([0,\infty)\times S^1)$:
\[
(Tf)(t) = f(0,t)
\]
The norm in $W^{k-1/2,2}(S^1)$is given as follows, every $f(t)\in W^{k-1/2,2}(S^1)$ has a Fourier expansion
\[
f(t) = \sum_n a_n e_n(t)
\]
then the norm is equivalent to following expression:
\[
\|f(t)\|^2 : = \sum_n |a_n|^2 \lambda_n^{2k-1}.
\]
\end{theorem}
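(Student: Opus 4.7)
My plan would proceed in three stages, roughly following the standard approach to fractional Sobolev trace theorems but adapted to the eigenbasis of $A_0$ rather than the usual Fourier basis.

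First, I would verify the trace map is well defined and continuous on smooth compactly supported functions via a direct calculation. Given $f \in C_c^\infty([0,\infty)\times S^1)$, expand along the eigenbasis $f(s,t) = \sum_n a_n(s) e_n(t)$, so that $f(0,t) = \sum_n a_n(0) e_n(t)$. By the fundamental theorem of calculus applied to $|a_n(s)|^2 e^{-2|\lambda_n|s}$ (integrating from $0$ to $\infty$), one obtains $|a_n(0)|^2 \leq C|\lambda_n|(\|a_n\|_{L^2([0,\infty))}^2 + \|a_n'\|_{L^2([0,\infty))}^2/|\lambda_n|^2)$. Applying this to the functions $A_0^j a_n$ (which involves only the $t$ regularity) and summing over $n$ using Parseval, one obtains $\sum_n |a_n(0)|^2 |\lambda_n|^{2k-1} \leq C \|f\|_{W^{k,2}}^2$. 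Density of $C_c^\infty$ in $W^{k,2}$ then extends the bound to all of $W^{k,2}([0,\infty)\times S^1)$.

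Second, I would verify that the expression $\|g\|^2 = \sum_n |a_n|^2 |\lambda_n|^{2k-1}$ (note: $|\lambda_n|$ rather than $\lambda_n$, since $\lambda_n$ can be negative) is equivalent to the intrinsic $W^{k-1/2,2}(S^1)$ norm. The standard fractional Sobolev norm on $S^1$ is $\sum_m |b_m|^2 (1+m^2)^{k-1/2}$ with $b_m$ the usual Fourier coefficients in $\{e^{imt}\}$. Since $A_0 = -J_0\partial_t - S(0,t) + d$ is a first-order self-adjoint elliptic operator on $S^1$, Weyl's asymptotic law gives $|\lambda_n| \sim c|n|$ as $|n| \to \infty$, and elliptic regularity shows $\{e_n\}$ is an orthonormal basis of $L^2(S^1)$ whose elements lie in $C^\infty(S^1)$. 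The operator $|A_0|^{k-1/2}$ (defined via functional calculus) is then an isomorphism between $W^{k-1/2,2}(S^1)$ and $L^2(S^1)$, from which the norm equivalence follows by Parseval.

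The main subtlety is the sign ambiguity in the eigenvalues: the naive formula $\sum |a_n|^2 \lambda_n^{2k-1}$ would be negative for odd $2k-1$ when $\lambda_n<0$, so the statement as written should be interpreted with $|\lambda_n|^{2k-1}$ (equivalently, with the spectral weight coming from $|A_0|$ rather than $A_0$). Modulo this cosmetic fix, everything else is routine once one invokes the spectral theorem for the first-order elliptic self-adjoint operator $A_0$ on $S^1$; the perturbation $\delta A$ and the weight $d$ play no role in the statement, as they only shift $s$-dependent lower-order terms that do not affect the trace space.
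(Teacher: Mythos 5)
The paper does not actually prove this statement: it simply declares it "a standard theorem in analysis" and points to Lemma 3.7 of Hutchings--Taubes for a description. Your proposal supplies the standard argument that the citation stands in for, and it is correct. The one-dimensional inequality $|a_n(0)|^2 \leq C\bigl(|\lambda_n|\,\|a_n\|_{L^2}^2 + \|a_n'\|_{L^2}^2/|\lambda_n|\bigr)$ obtained from the fundamental theorem of calculus applied to $|a_n(s)|^2 e^{-2|\lambda_n|s}$, multiplied by $|\lambda_n|^{2k-2}$ and summed, is exactly what is needed, since the right-hand side is then bounded by the eigenbasis expression of the $W^{k,2}$ norm that the paper itself writes down in the proof of the following theorem ($\sum_n \int_0^\infty |a_n|^2|\lambda_n|^{2k} + |a_n'|^2|\lambda_n|^{2k-2} + \cdots$). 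Your identification of the norm weight as $|\lambda_n|^{2k-1}$ rather than $\lambda_n^{2k-1}$ is also right — the exponent $2k-1$ is odd and the operator $A_0$ has spectrum of both signs, and indeed the paper silently switches to $|\lambda_n|^{2k-1}$ in the very next proof — so this is a genuine (if cosmetic) correction to the statement as printed. Two small points of care: the density class should be functions smooth up to the boundary $s=0$ (not supported away from it, or the trace would vanish on a dense set), and the phrase "applying this to $A_0^j a_n$" is loose since $a_n$ is a scalar coefficient; what is meant is simply reweighting the scalar inequality by powers of $|\lambda_n|$, which is what your summation actually does. Neither affects the validity of the argument.
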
 
\begin{proof}
This is a standard theorem in analysis, for a description of this see for instance proof of Lemma 3.7 in \cite{obs2}.
\end{proof}
Then we come to the first main theorem of this detour.
\begin{theorem}
Let $W_-^{k-1/2,2}(S^1)$ denote the subspace of $W^{k-1/2,2}(S^1)$ such that $a_n=0$ for all $n>0$, let $\Pi_-: W^{k-1/2,2}(S^1) \rightarrow W_-^{k-1/2,2}(S^1)$ denote the projection. Then the map $(\Pi_-, \p_s -A_0): W^{k,2}([0,\infty)\times S^1) \rightarrow W_-^{k-1/2,2}(S^1) \times W^{k-1,2}([0,\infty)\times S^1)$ taking
\[
f(s,t) \longrightarrow (\Pi_- f(0,t), (\p_s - A_0) f(s,t))
\]
is an isomorphism.
\end{theorem}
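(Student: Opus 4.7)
The plan is to invert the operator $(\Pi_-, \partial_s - A_0)$ explicitly by Fourier decomposition in the $t$-variable, reducing the problem on the half-cylinder to a countable family of first-order scalar ODEs on $[0,\infty)$, and then to verify that the resulting inverse lands in the correct weighted Sobolev space with the expected norm bound. Since $A_0$ is $s$-independent, self-adjoint with respect to the $L^2(S^1)$ inner product, and has discrete spectrum $\{\lambda_n\}_{n\in\bb{Z}}$ with $\lambda_n>0$ iff $n>0$ and bounded below in absolute value by a constant $c>0$, one expands $f(s,t)=\sum c_n(s)e_n(t)$ and $h(s,t)=\sum d_n(s)e_n(t)$, and the equation $(\partial_s - A_0)f = h$ decouples into $c_n'(s) - \lambda_n c_n(s) = d_n(s)$.

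For each $n > 0$, the only $L^2$-in-$s$ solution of this ODE is
\[
c_n(s) = -\int_s^\infty e^{\lambda_n(s-\tau)}d_n(\tau)\,d\tau,
\]
so $c_n(0)$ is entirely determined by $h$. For each $n<0$, the general $L^2$-in-$s$ solution is
\[
c_n(s) = e^{\lambda_n s}c_n(0) + \int_0^s e^{\lambda_n(s-\tau)}d_n(\tau)\,d\tau,
\]
so $c_n(0)$ is a free parameter which we read off from $g=\Pi_- f(0,\cdot) = \sum_{n<0}c_n(0)e_n$. This immediately gives a candidate inverse: given $(g,h)$, define $c_n(0)$ by the two recipes above and then reconstruct $f$. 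The main steps are then (i) verify $f$ so defined is well-defined and satisfies the equation, (ii) verify that the map $(g,h)\mapsto f$ is bounded with image in $W^{k,2}([0,\infty)\times S^1)$ with the correct norm estimates, and (iii) verify injectivity, which follows because any $f$ with $(\partial_s - A_0)f=0$ and $\Pi_- f(0,\cdot)=0$ must have all Fourier coefficients satisfying the homogeneous ODEs with vanishing data (for $n>0$, $L^2$ forces $c_n\equiv 0$; for $n<0$, $c_n(0)=0$ forces $c_n\equiv 0$).

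For the norm estimates, I would first prove the result at the level of $W^{1,2}$ (i.e.\ $k=1$) by direct integration, using $\|c_n\|_{L^2([0,\infty))}^2 \lesssim \lambda_n^{-2}\|d_n\|_{L^2}^2$ when $c_n(0)$ is determined by $d_n$, and $\|c_n\|_{L^2}^2 \lesssim \lambda_n^{-1}|c_n(0)|^2 + \lambda_n^{-2}\|d_n\|_{L^2}^2$ in the free case, where convolution with $e^{\lambda_n s}$ is bounded by $1/|\lambda_n|$ on $L^2$. Summing over $n$ against the eigenvalue weights reproduces precisely the norm $\sum|c_n(0)|^2 \lambda_n^{2k-1}$ on $W_-^{k-1/2,2}(S^1)$ (for the boundary term) and the $W^{k-1,2}$ norm on the right-hand-side. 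The higher regularity cases $k\geq 2$ are then obtained by the standard elliptic-regularity bootstrap: $\partial_s f = A_0 f + h$ allows one to trade a $t$-derivative for an $s$-derivative, giving bounds on $\partial_s f$ in terms of $h$ and a $t$-derivative of $f$, and iterating yields the full $W^{k,2}$ bound.

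The hardest part will be keeping the bookkeeping of the Sobolev norms consistent, particularly the half-integer trace norm on $S^1$: one must check carefully that the factor of $\lambda_n^{2k-1}$ in the characterization of $W^{k-1/2,2}(S^1)$ matches the natural scaling that emerges from the ODE estimates, and this is where the precise weights $\lambda_n^{2k-1}$ (rather than $\lambda_n^{2k}$ or $\lambda_n^{2k-2}$) enter. Once that is matched, surjectivity and the two-sided norm bound follow, completing the proof that $(\Pi_-,\partial_s-A_0)$ is an isomorphism. I will set up the analogous statement for finite cylinders in a later step, where both boundary traces (positive and negative modes at opposite endpoints) enter, but the strategy there is identical once this baseline case is in hand.
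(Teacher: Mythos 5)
Your proposal is correct and follows essentially the same route as the paper: Fourier decomposition in $t$ against the eigenbasis of $A_0$, explicit solution of the decoupled ODEs with the boundary data $c_n(0)$ determined by $h$ for $n>0$ and prescribed by $g$ for $n<0$, and verification that the resulting estimates reproduce the $\lambda_n^{2k-1}$ trace weight. The only differences are cosmetic — you obtain the $L^2$ bound via Young's inequality for convolution and bootstrap to higher $k$, where the paper estimates all derivatives directly from the explicit formula using an integration-by-parts identity; and your formula $c_n(s)=-\int_s^\infty e^{\lambda_n(s-\tau)}d_n(\tau)\,d\tau$ for $n>0$ is in fact the cleaner way to write the decaying solution.
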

\begin{proof}
We can solve this equation explicitly. Given a pair $(g,h) \in W^{k-1/2,2}(S^1) \times W^{k-1,2}([0,\infty)\times S^1)$, we can write
\[
g=\sum_{n<0} c_n e_n(t)
\]
\[
h = \sum_n b_n (s) e_n(t)
\]
where 
\[
\|g\|^2 = \sum_{n<0} |c_n|^2 |\lambda_n|^{2k-1}
\]
\[
\|h\|^2 = \sum _n \int_0^\infty (|b_n(s)|^2 |\lambda_n|^{2k-2}+ |b_n'(s)|^2|\lambda_n|^{2k-4}  +..+|b_n^{(k-1)}(s)|^2)  ds.
\]
The usual Sobolev norms are equivalent to the expressions we've written above. Comparing term by term we see that $a_n$ satisfies the following ODE:
\[
a_{ns}-\lambda_n a_n=b_n(s)
\]
with boundary condition $a_n(0)= c_n$ for all $n<0$.
They have solutions
\[
a_n = e^{\lambda_n s}\int_0^s b_n(s') e^{-\lambda_n (s')} ds' + c_n e^{\lambda_n s}
\]
where the $c_n$ term only appears for $n<0$. We need to verify several things:
\begin{enumerate}
    \item The terms $e_n(t)e^{\lambda_n s}\int_0^s b_n(s') e^{-\lambda_n (s')}$ and $c_n e_n(t) e^{\lambda_n s}$ are in $W^{k,2}([0,\infty)\times S^1)$.
    \item $\int_0^\infty (|a_n|^2|\lambda_n|^{2k} +|a_n'(s)|^2\lambda_n|^{2k-2} +..+|a_n^{(k)}(s)|^2)|\leq C(|c_n|^2|\lambda_n|^{2k-1}) + C \int_0^\infty (|b_n(s)|^2 |\lambda_n|^{2k-2}+ |b_n'(s)|^2|\lambda_n|^{2k-4}  +..+|b_n^{(k-1)}(s)|^2)$.
\end{enumerate}
The first item says our constructed solution $f$ lives in our Sobolev space, the second item says its norm is upper bounded by our input.

First consider $c_n e_n(t) e^{\lambda_n s}$, its norm in $ W^{k,2}(S^1\times [0,\infty))$ is given by
\[
 \int_0^\infty|c_n|^2 |\lambda_n|^{2k} e^{2\lambda_n s}ds \leq C |c_n|^2|\lambda_n|^{2k-1}
\]
and that this is finite after we sum over $n$ follows from our assumptions on $g$. Similarly consider $d_n :=e^{\lambda_n s}\int_0^s b_n(s') e^{-\lambda_n (s')}ds'$, its norm as measured in $W^{k,2}([0,\infty) \times S^1)$ is given by
\[
\int_0^\infty (|d_n(s)|^2 |\lambda_n|^{2k}+ |d_n'(s)|^2|\lambda_n|^{2k-4}  +..+|d_n^{(k)}(s)|^2).
\]
We have
\[
|d_n^{(l)}| \leq C(l)e^{\lambda_ns} \left\{|\lambda_n|^l \int_0^s |b_n(s')| e^{-\lambda_n (s')}ds' + |\lambda_n|^{l-1}|b_n(s)| e^{-\lambda_n s} + |\lambda_n|^{l-2}|b_n^{(1)}(s)| e^{-\lambda_n s} +\ldots + |b_n^{(l-1)}| e^{-\lambda_n s} \right \} 
\]
and we need to take derivatives up to $l=0,...,k$.
We remind ourselves we need to place upper bounds on terms of the form $|\lambda_n|^{2k-2l}|d_n^{(l)}|^2$,
hence from above it suffices to bound terms of the form
\[
\int_0^\infty |\lambda_n|^{2(k-j-1)} |b_n^{(j)}|^2ds, j=0,...,k-1
\]
\[
\int_0^\infty e^{2\lambda_ns} |\lambda_n|^{2k} \left(\int_0^s b_n(s') e^{-\lambda_n (s')}ds'\right )^2 ds.
\]
The first term is bounded by the norm of $g$. The second term really is the $L^2$ norm of $a_n$ multiplied by $\lambda_n^{2k}$. We use a technique (probably much more well known) we found in \cite{donaldson}, Chapter 3. We first observe by Sobolev embedding our functions are at least $C^1$, so we can use the fundamental theorem of calculus. We then consider the defining equation for $a_n$
\[
\frac{d}{ds}a_n - \lambda_n a_n = b_n
\]
from which we get 
\[
\left(\frac{d}{ds}a_n\right)^2 + (\lambda_n a_n)^2 = b_n^2 + \lambda_n \frac{d}{ds} (a_n)^2.
\]
Integrate both sides from $[0,\infty)$ to get
\[
\int_0^\infty \left(\frac{d}{ds}a_n\right)^2 + (\lambda_n a_n)^2 ds = \int_0^\infty b_n^2 ds - \lambda_n|c_n|^2
\]
where we used continuity to apply fundamental theorem of calculus. We also used the fact for any fixed $b_n$, we have $\lim_{s\rightarrow \infty} e^{2\lambda_n s}\int_0^s b_n^2(s') e^{-2\lambda_n (s')} ds' \rightarrow 0$.
Hence we get
\[
\int_0^\infty |a_n|^2 ds \leq \frac{1}{|\lambda_n|^2} \int_0^\infty |b_n|^2 ds + |\lambda_n|^{-1} |c_n|^2.
\]
From which we deduce the second term is also bounded by norm of $g$ and $h$. Combining the above computations we see that our solution $f$ is indeed in $W^{k,2}([0,\infty)\times S^1)$, and the inequality 
\[
\|f\| \leq C (\|g\| + \|h\|)
\]
holds, from which we conclude the theorem.
\end{proof}
\begin{corollary}
Let $D_{\dt'0} $ denote the operator $D_\dt$ restricted at $s=0$, i.e. $D_{\dt'0} = \p_s -A(0,t) -\dt A (0,t) -d$, then for small enough $\dt >0$, the map $(\Pi_-, D_{\dt'0}): W^{k,2}([0,\infty)\times S^1) \rightarrow W_-^{k-1/2,2}(S^1) \times W^{k-1,2}([0,\infty)\times S^1)$ is an isomorphism with inverse $Q_0$ whose operator norm is uniformly bounded as $\dt \rightarrow 0$.
\end{corollary}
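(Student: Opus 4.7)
The plan is to deduce the corollary from the preceding theorem by a standard Banach perturbation (Neumann series) argument, exploiting that $D_{\dt'0}$ differs from $\p_s - A_0$ only by a $\dt$-small bounded operator. Concretely, $D_{\dt'0} - (\p_s - A_0) = -\dt A(0,t) = -\dt\bigl(M(0,t)\tfrac{d}{dt} + N(0,t)\bigr)$, where $M,N$ have uniformly bounded $C^k$-norms along the Morse--Bott torus (these are the coefficients recorded in the proof of Proposition \ref{prop_exp_decay}).

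First, I would verify that the perturbation operator
\[
P_\dt : W^{k,2}([0,\infty)\times S^1) \longrightarrow W_-^{k-1/2,2}(S^1)\times W^{k-1,2}([0,\infty)\times S^1), \qquad f \longmapsto \bigl(0,\, -\dt A(0,t) f\bigr)
\]
is bounded with operator norm $\|P_\dt\|\leq C\dt$, where $C$ depends only on the local geometry near the Morse--Bott torus. This is immediate: $A(0,t)$ is a first-order differential operator with bounded coefficients, so it maps $W^{k,2}$ continuously to $W^{k-1,2}$ with norm $O(1)$, and the prefactor $\dt$ provides the smallness.

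Next, by the preceding theorem, the unperturbed map $T := (\Pi_-, \p_s - A_0)$ is an isomorphism; denote its inverse by $Q^{(0)}$, with $\|Q^{(0)}\|\leq C_0$ (the constant from the explicit Fourier estimates in the proof). Then
\[
(\Pi_-, D_{\dt'0}) \;=\; T + P_\dt \;=\; \bigl(I + P_\dt Q^{(0)}\bigr)\, T,
\]
and the operator $P_\dt Q^{(0)}$ on $W_-^{k-1/2,2}(S^1)\times W^{k-1,2}([0,\infty)\times S^1)$ has norm at most $C C_0\,\dt$. For $\dt$ sufficiently small (so that $C C_0\dt < 1/2$, say), the Neumann series
\[
\bigl(I + P_\dt Q^{(0)}\bigr)^{-1} \;=\; \sum_{n\geq 0} (-1)^n (P_\dt Q^{(0)})^n
\]
converges in operator norm and is bounded by $2$. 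Therefore $(\Pi_-, D_{\dt'0})$ is invertible with inverse
\[
Q_0 \;:=\; Q^{(0)} \bigl(I + P_\dt Q^{(0)}\bigr)^{-1},
\]
and $\|Q_0\| \leq 2C_0$ uniformly in $\dt$.

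The only subtlety, and what I would double-check carefully, is that the perturbation is genuinely of lower order: the Neumann series argument requires $P_\dt$ to be bounded into the same target space as $T$, which is why the target involves $W^{k-1,2}$ (not $W^{k,2}$) in the second factor. Since $A(0,t)$ is order one and $P_\dt$ outputs zero in the $W_-^{k-1/2,2}(S^1)$ factor (i.e.\ we do not perturb the trace condition $\Pi_-$), the composition $P_\dt Q^{(0)}$ loses no derivatives when viewed as an endomorphism of the target, and the Neumann estimate goes through. No substantive obstacle is expected; the step is essentially a soft perturbation argument on top of the hard explicit inversion already established.
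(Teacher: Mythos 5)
Your proposal is correct and is precisely the argument the paper leaves implicit: the corollary is stated without proof because it follows from the preceding theorem by exactly this Neumann-series perturbation, using that $D_{\dt'0}-(\p_s-A_0)=-\dt A(0,t)$ is a first-order operator with $O(\dt)$ norm from $W^{k,2}$ to $W^{k-1,2}$ and does not touch the trace factor. Nothing is missing.
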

Using the above results we come to the theorem we will really need later on:
\begin{theorem}
For small enough $\dt>0$, the operator $(\Pi_-, D_\dt'): W^{k,2}([0,\infty)\times S^1) \rightarrow W_-^{k-1/2,2}(S^1) \times W^{k-1,2}([0,\infty)\times S^1)$ is an isomorphism whose inverse $Q$ has operator norm uniformly bounded with respect to $\dt \rightarrow 0$.
\end{theorem}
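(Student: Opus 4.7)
The plan is to adapt the approximate-inverse-plus-correction scheme used in Proposition \ref{uniform_bounded_inverse_gradient_flow} to the semi-infinite setting with the mixed boundary condition $\Pi_-$ at $s=0$. First I would upgrade the preceding corollary from the single reference point $s_* = 0$ to an arbitrary reference point $s_* \in [0,\infty)$: freezing the coefficients of $D_\dt'$ at $s = s_*$ gives an operator of exactly the same form as $D_{\dt'0}$ but modeled on the trivial cylinder over the Reeb orbit at position $x(s_*)$ on the Morse-Bott torus, and the same proof yields a uniformly bounded inverse $Q_{s_*}$ on $W^{k,2}([s_*,\infty)\times S^1)$. Uniformity in $s_*$ follows from the compactness of the Morse-Bott torus and from the fact that the eigenvalues of the associated $A_0$-type operator stay bounded away from $0$ as the reference point varies along the torus.

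Next I would partition $[0,\infty)$ into finitely many subintervals $[s_0,s_1], \ldots, [s_{N-1},s_N], [s_N,\infty)$ with $s_0 = 0$, chosen so that the coefficient perturbation $\|(A(s,t)+\dt A(s,t)) - (A(s_i,t)+\dt A(s_i,t))\|_{C^0} < \eta$ on each subinterval, for some small constant $\eta$ independent of $\dt$. Because $A(s,t)$ depends on $s$ only through $x(s)$, which varies monotonically over a range of length $O(1)$ and converges exponentially to a critical point of $f$, only finitely many subintervals are needed; on the tail $[s_N,\infty)$ the coefficients are within $\eta$ of their limit $A_\infty(t)$. Individual interval lengths may grow like $O(1/\dt)$ because the gradient-flow speed is $O(\dt)$, but neither the number of intervals nor the required $\eta$ depends on $\dt$. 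An approximate inverse $Q_{\text{app}}$ is then assembled from the frozen-coefficient semi-infinite inverses $Q_{s_0},\ldots,Q_{s_N}$ using smooth cutoff functions supported on overlapping neighborhoods of each subinterval, exactly as in Proposition \ref{uniform_bounded_inverse_gradient_flow}; the $\Pi_-$ boundary condition at $s=0$ is handled automatically by the leftmost piece $Q_{s_0}$, since the cutoff at its right end only modifies the interior.

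Applying $(\Pi_-, D_\dt')$ to $Q_{\text{app}}$ yields identity plus an error $E$ that splits into a multiplication contribution bounded by $C\eta$ (coming from the difference between the true coefficients and the frozen ones on each subinterval) and a cutoff-gluing contribution bounded by $C/L$, where $L$ is the minimum cutoff overlap width (which can be chosen independently of $\dt$). Taking $\eta$ small and $L$ large gives $\|E\| \leq \tfrac{1}{2}$, so $Q := Q_{\text{app}}(1+E)^{-1}$ is a uniformly bounded right inverse; combined with a standard spectral-flow computation showing that $(\Pi_-, D_\dt')$ has Fredholm index $0$, this upgrades to a uniformly bounded two-sided inverse, proving the theorem. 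The main technical point I expect to have to check carefully is that the coefficient and gluing errors combine into a single operator-norm bound uniformly in $\dt$ despite the $O(1/\dt)$ interval lengths; once this is verified the remainder of the argument is essentially a transcription of the corresponding step in Proposition \ref{uniform_bounded_inverse_gradient_flow}.
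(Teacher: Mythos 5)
Your construction of the uniformly bounded right inverse is essentially the paper's: both arguments freeze the coefficients at finitely many points along the Morse--Bott circle (finitely many because $x(s)$ traverses a fixed arc of $S^1$, even though each subinterval has length $O(1/\dt)$), invert the frozen-coefficient model operators uniformly, and glue the model inverses with cutoffs, absorbing the $O(\eta)+O(1/L)$ error by a Neumann series. The only cosmetic difference is that the paper uses the half-infinite model with the $\Pi_-$ condition only for the leftmost piece and full-cylinder models $D_i'$ on $\mathbb{R}\times S^1$ for all interior pieces; your all-half-infinite variant works equally well because the gluing cutoffs vanish near the artificial left endpoints $s_i$, $i\geq 1$, so the spurious boundary conditions there never enter.

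Where you genuinely diverge is the passage from ``surjective with uniformly bounded right inverse'' to ``isomorphism'': you appeal to ``a standard spectral-flow computation showing that $(\Pi_-,D_\dt')$ has Fredholm index $0$.'' The paper explicitly declines to do this -- it remarks that it could not find a convenient reference for an index theorem for this half-infinite boundary value problem with a spectral projection at $s=0$ -- and instead proves injectivity directly: a unit-norm kernel element $\zeta_\dt$ must carry at least half its norm either on $[0,R]$ or on $[R,\infty)$; cutting off by $\beta_R$ (resp.\ $\hat\beta_R$) and comparing with the two model isomorphisms, namely $(\Pi_-,\p_s-A_0)$ in the first case and $D_\dt'$ on the full weighted space over the gradient cylinder in the second, one manufactures a nonzero kernel element of an operator already known to be invertible, a contradiction. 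If you keep the index route you must either cite an APS-type index theorem for spectral boundary conditions on half-cylinders or reproduce this direct argument; note also that the deformation-invariance step has to be run at each fixed small $\dt$ (deforming $A(s,t)+\dt A+d$ to the constant-coefficient model through asymptotic operators none of whose eigenvalues cross zero, which is guaranteed by the choice of $d$), since what is needed is injectivity for every small $\dt$ with a bound uniform in $\dt$, and the right-inverse bound you already have supplies that uniformity once injectivity is known.
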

\begin{proof}
The proof is reminiscent of our original proof that $D_\dt$ (which we earlier denoted by $D_{J_\dt}$) has uniformly bounded inverse over the entire gradient trajectory, i.e. we approximate it by a sequence of operators over trivial cylinders.\\
Let $N$ be a large integer, choose $x_i$ for $i=0,1,..N$ so that $x_0$ is the $x$ coordinate on the Morse-Bott torus of $v(0,t)$, we have $|x_i-x_{i-1}| \leq 1/N$, and $x_N$ is distance $<1/N$ away from the critical point on the Morse-Bott torus corresponding to $v(\infty,t)$. We let $D_i':W^{k,2}(\bb{R}\times S^1) \rightarrow W^{k-1,2}(\bb{R}\times S^1) $ denote the linearization of $\db_J$ at the trivial cylinder located at $x_i$ on the Morse-Bott torus, and conjugated by exponential weights to remove exponential weight. In formulas we have
\[
D_i' = \p_s +J\p_t + S(x_i,t) -d.
\]
Uniformly in $N$ and $\dt>0$ and independently of $i$, the $D_i'$ are isomorphisms with uniformly bounded inverses $Q_i'$. Then similar to previous section we construct the glued operator $\# D_i'$ which satisfies
\begin{equation*}
    \|D'_\dt -\#D_i\| \leq C(1/N +\dt).
\end{equation*}
As before we  construct an approximate inverse to $\#D_i'$, which we call $Q_R'$
via the following diagram:
\begin{equation*}
\begin{tikzcd}
 W^{k-1,2}([0,\infty) \times S^1) \oplus W^{k-1/2,2}_-(S^1)] \oplus [W^{k-1,2}((-\infty,\infty)\times S^1)]_1\oplus\ldots \arrow[r,"Q_R'"] \arrow[d, "s_R"] & W^{k,2}([0,\infty) \times S^1)\\
 W^{k-1,2}([0,\infty) \times S^1) \oplus W^{k-1/2,2}_-(S^1) \oplus W^{k-1,2}((-\infty,\infty) \times S^1)_1 \oplus.. \arrow[r,"Q_0 \oplus Q_1..."] & W^{k,2}([0,\infty)\times S^1))_0 \oplus W^{k,2}_1 \ldots \arrow[u, "g_R"]
\end{tikzcd}
\end{equation*}
where we clarify 
\begin{equation*}
    s_R|_{W^{k-1/2,2}_-(S^1)} = \op{Id}.
\end{equation*}
The subscripts under $W^{k,2}((-\infty,\infty)\times S^1))_i$ denote the copies of Sobolev spaces in the direct sum. 
And the splitting map $s_R$ and the gluing map $g_R$ are defined exactly the same way we did in section \ref{gluing}. We observe 
as before this $Q_R'$ is uniformly bounded as $\dt \rightarrow 0$. Let's verify that this constructs an approximate inverse to $\# D_i'$. We first observe away from the gluing region
\begin{equation*}
    \# D_i' Q_R' \eta = \eta
\end{equation*}
and near the gluing region as before we have
\[
\|\#_N D_i' Q_R' \eta -\eta\| \leq C/N \|\eta\|.
\]
Hence we can construct a right inverse of $\#D_i$ with uniformly bounded norm. Next since $D_\dt'$ is a uniformly bounded small perturbation of $\#_N D_i'$, it also has a uniformly bounded right inverse. 

To see that this operator is injective,  since we don't have index calculations (versions of index theorems probably exist but we cannot find an easy reference) we take a more direct approach, in part inspired by the appendix of \cite{colin2021embedded}. Suppose $\zeta_\dt \in Ker (\Pi_-,D_\dt')$ is of norm 1, consider $s=R$, for definiteness we first assume for all $\dt$ the norm of $\zeta_\dt$ restricted to $0<s<R$ is $\geq 1/2$. Let  $\beta_R := \beta[-\infty,2R;R]$, and consider $\beta_R \zeta_\dt$. Then we can consider it to lie in the domain of $(\Pi_-,\p_s-A_0):W^{k,2}([0,\infty)\times S^1) \rightarrow W_-^{k-1/2,2}(S^1) \times W^{k-1,2}([0,\infty)\times S^1)$. To estimate its image under $(\Pi_-,\p_s-A_0)$, first consider
\[
\|D_\dt \beta_R \zeta_\dt\| = \|\beta_R'\zeta_\dt\|\leq C/R.
\]
Observing that over $s<2R$ we have $\|\p_s-A_0 - D_\dt'\|\leq C\dt$, we have 
\[
(\Pi_-,\p_s-A_0)(\beta_R\zeta_\dt) = (0,  (\p_s-A_0)\beta_R\zeta_\dt)
\]
where $\|(\p_s-A_0)\beta_R\zeta_\dt\| \leq C/R$, but then the element
\[
\beta_R \zeta_\dt - (\Pi_-,\p_s-A_0)^{-1}((\Pi_-,\p_s-A_0)(\beta_R\zeta_\dt)) \in W^{k,2}([0,\infty)\times S^1)
\]
has norm $>1/3$, but lies in the kernel of $(\Pi_-,\p_s+A_0)$, which is a contradiction.\\
Similarly, if the norm of $\zeta_\dt$ when restricted to $s>R$ is $\geq 1/2$ for all $\dt >0$, then we use a similar cut off function $\hat{\beta}_R : = \beta_{[R/2;R/2,\infty]}$ to view $\hat{\beta}_R \zeta_\dt$ as element of $(D_\dt', W^{k,2}(v^*TM))$ and use the same process to produce a nonzero kernel of $D_\dt'$, which cannot exist since $D_\dt'$ is an isomorphism.
\end{proof}

We now state the finite interval analogue of the above theorems for later use.
\begin{theorem}
Let $v$ be a gradient trajectory. Let $D_\dt'$ be the linearization of $\db_{J_\dt}$ over $v$ with exponential weight removed via conjugation as above. We consider its restriction to $(s,t) \in [0,CR] \times S^1$, and the Sobolev space $W^{k,2}( [0,CR] \times S^1,\bb{R}^4)$. Consider the two projections $\Pi_\pm$, where they project to the positive/ negative eigenvalues of $A_0: = -A(0,t)-d$. Then the map
\[
(\Pi_-,\Pi_+,D_\dt'): W^{k,2}([0,CR] \times S^1, \bb{R}^4)\longrightarrow W_-^{k-1/2,2}(S^1) \times W_+^{k-1/2,2}(S^1) \times W^{k-1,2}([0,CR] \times S^1, \bb{R}^4) 
\]
defined by
\[
f(s,t)\longrightarrow (\Pi_- f(0,t), \Pi_+f(CR,t), D_\dt' f)
\]
is an isomorphism whose inverse has uniformly bounded norm as $\dt \rightarrow 0$.
\end{theorem}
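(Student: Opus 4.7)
The plan is to replicate the strategy used for the semi-infinite case, with the boundary condition $\Pi_-$ at $s=0$ now supplemented by $\Pi_+$ at $s=CR$ to compensate for the fact that positive-eigenvalue modes, previously ruled out by integrability at $+\infty$, are now controlled by prescribed data at the right endpoint.

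First I would establish the result for the constant-coefficient model $\partial_s - A_0$ on $[0,CR]\times S^1$ via explicit Fourier decomposition. Writing inputs as $g_- = \sum_{n<0} c_n^- e_n$, $g_+ = \sum_{n>0} c_n^+ e_n$, and $h = \sum_n b_n(s) e_n$, each Fourier mode satisfies the scalar ODE $a_n'(s) - \lambda_n a_n(s) = b_n(s)$, which I would solve with boundary condition $a_n(0) = c_n^-$ for $n<0$ and $a_n(CR) = c_n^+$ for $n>0$, obtaining
\begin{align*}
a_n(s) &= e^{\lambda_n s} c_n^- + \int_0^s e^{\lambda_n(s-s')} b_n(s')\, ds' \quad (n<0),\\
a_n(s) &= e^{\lambda_n (s-CR)} c_n^+ - \int_s^{CR} e^{\lambda_n(s-s')} b_n(s')\, ds' \quad (n>0).
\end{align*}
The crucial point is that in each case the exponential factor is bounded by $1$ on the relevant region, so the resulting norm estimates are independent of $CR$. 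I would then upgrade the pointwise ODE estimates to $W^{k,2}$ bounds by the same energy-integration trick used in the semi-infinite case (multiplying the ODE by $a_n$ and integrating, now using the fundamental theorem of calculus over $[0,CR]$ and absorbing both boundary terms into $c_n^\pm$-contributions). This shows the map is an isomorphism with $CR$-uniform inverse $Q_0$.

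Next I would handle the variable-coefficient operator $D'_\delta$ by the same trivial-cylinder gluing scheme as in Proposition~\ref{uniform_bounded_inverse_gradient_flow}. Choose sample points $x_i$ along the Morse-Bott family with spacing $\leq 1/N$ and let $D_i'$ be the associated constant-coefficient operators on $\mathbb R\times S^1$, each an isomorphism with uniformly bounded inverse $Q_i'$. Form the glued operator $\#_N D_i'$ on $[0,CR]\times S^1$ and verify $\|D'_\delta - \#_N D_i'\| \leq C(1/N + \delta)$ in operator norm. Build an approximate right inverse $Q_R'$ via the splitting/gluing diagram used before, except that the leftmost and rightmost pieces use the boundary inverse $Q_0$ from Step~1 (applied with $\Pi_-$ data at $s=0$ for the leftmost piece, and with $\Pi_+$ data at $s=CR$ for the rightmost piece), while interior pieces use the $Q_i'$'s. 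A routine computation shows $\|(\Pi_-,\Pi_+,\#_N D_i')Q_R' - \mathrm{Id}\| \leq C/N$, and a further perturbation argument using $\|D'_\delta - \#_N D_i'\| \leq C(1/N+\delta)$ gives $\|(\Pi_-,\Pi_+,D'_\delta)Q_R' - \mathrm{Id}\| \leq C(1/N+\delta)$. For $N$ large and $\delta$ small, a Neumann series upgrade produces a uniformly bounded right inverse.

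Finally, for injectivity I would imitate the direct argument at the end of the semi-infinite case. Suppose $\zeta_\delta \in \ker(\Pi_-,\Pi_+,D'_\delta)$ has norm one; by the pigeonhole principle the norm of $\zeta_\delta$ on at least one of the three regions $[0,R]$, $[R,CR-R]$, or $[CR-R,CR]$ is bounded below. In the left boundary case, multiply by a cutoff $\beta_{[-\infty,2R;R]}$ and view the product as living in the domain of the model semi-infinite operator at $s=0$; the defect $(\Pi_-,\partial_s-A_0)(\beta\zeta_\delta)$ has norm $\leq C(1/R+\delta)$, so subtracting its preimage under the model inverse produces a nontrivial kernel element of the model isomorphism, a contradiction. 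The right boundary case is symmetric, and the interior case reduces to injectivity of the real-line operator $D_i'$. The main obstacle throughout is keeping all bounds \emph{uniform in $CR$}: the specific choice of boundary projections $(\Pi_-,\Pi_+)$ is precisely what makes this possible, since it selects the stable direction of integration for each Fourier mode.
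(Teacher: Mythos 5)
Your Step 1 (the constant--coefficient model $(\Pi_-,\Pi_+,\partial_s-A_0)$ on $[0,CR]\times S^1$, solved mode-by-mode with the stable direction of integration for each sign of $\lambda_n$, followed by the energy-integration trick) is essentially identical to the paper's argument; the only point you gloss over is that for, say, $n<0$ the boundary term $|\lambda_n|\,a_n(CR)^2$ produced by the integration by parts is not itself prescribed data and has to be bounded by a short Cauchy--Schwarz estimate against $\|b_n\|_{L^2}$ and $|c_n^-|$, exactly as in the paper. Where you diverge is in passing to the variable-coefficient operator $D_\delta'$. You redeploy the trivial-cylinder gluing machinery of Proposition \ref{uniform_bounded_inverse_gradient_flow}, with sample points, a glued operator $\#_N D_i'$, splitting/gluing maps, and a three-region pigeonhole argument for injectivity. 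The paper instead observes that the interval here has length only $CR\sim \log(1/\delta)$, so the gradient trajectory moves its $x$-coordinate by only $O(R\delta)\to 0$; hence $(\Pi_-,\Pi_+,D_\delta')$ is globally an $O(R\delta)$ perturbation of the frozen model $(\Pi_-,\Pi_+,\partial_s-A_0)$, and a single Neumann-series step gives the uniformly bounded right inverse, while injectivity is the one-line argument that a unit-norm kernel element $\zeta_\delta$ would yield the nonzero element $\zeta_\delta-Q_0(\Pi_-,\Pi_+,\partial_s-A_0)\zeta_\delta$ in the kernel of the model isomorphism. Your route is not wrong --- and it is the right tool for the full-length trajectory of length $T/\delta$ --- but here it is overkill: with spacing $1/N$ along the Morse--Bott circle the segment $[0,CR]$ meets essentially one sample point, so your glued operator degenerates to the single frozen operator and your scheme collapses to the paper's perturbation argument. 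If you do keep the gluing formulation, be aware that the cutoff-error bounds $C/R$ leave room for only boundedly many gluing regions inside $[0,CR]$, and you must specify what boundary data the end pieces carry at their interior edges (the cutoffs effectively set the unprescribed projection to zero there); these details work out but are exactly the bookkeeping the paper's shortcut avoids.
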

\begin{proof}
As before we first show the map $(\Pi_-,\Pi_+,\p_s - A_0): W^{k,2}([0,CR] \times S^1, \bb{R}^4) \rightarrow W^{k-1/2,2}(S^1) \times W^{k-1/2,2}(S^1) \times W^{k-1,2}([0,CR] \times S^1, \bb{R}^4)$ is an isomorphism with uniformly bounded inverse $Q_0$. This is essentially the same proof as before, i.e. if $(\Pi_-,\Pi_+,\p_s - A_0)f=(g_-,g_+,h)$ with $f=\sum a_n e_n$, $g_\pm = \sum c_{n\pm} e_n$ and $h=\sum b_n e_n$ then we still have the formulas
\[
a_n = e^{\lambda_n s}\int_0^s b_n(s') e^{-\lambda_n (s')} ds' + c_{n-} e^{\lambda_ns }
\]
for $n<0$ and 
\[
a_n = e^{\lambda_n (s-CR)}\int_{CR}^s b_n(s') e^{-\lambda_n (s')} ds' + c_{n+} e^{\lambda_n (s-CR) }
\]
for $n>0$. This already implies injectivity.
The same proof shows $Q_0$ exists and is uniformly bounded as $\dt \rightarrow 0$. To elaborate a bit further, we still need to estimate sizes of 3 kinds of terms. For definiteness we focus on the case $n<0$. The terms we need to consider are of forms
\begin{enumerate}
    \item $\int_0^{CR}|c_n|^2 |\lambda_n|^{2k} e^{2\lambda_n s}ds$
    \item$\int_0^{CR} |\lambda_n|^{2(k-j-1)} |b_n^{(j)}|^2ds, j=0,...,k-1$
    \item $\int_0^{CR} e^{2\lambda_ns} |\lambda_n|^{2k} (\int_0^s b_n(s') e^{-\lambda_n (s')}ds')^2 ds$.
\end{enumerate}
The first two terms work exactly the same way as before with $CR$ replacing $\infty$. The third term requires a bit more care in that when we tried to estimate the $L^2$ norm of $a_n$, the domain of integration is different giving us an extra term via integration by parts. So instead we have
\[
\lambda_n^2 \int_0^{CR} |a_n|^2 ds \leq \int_0^{CR} |b_n|^2 ds +\lambda_n \{ (a_n(CR))^2 - (a_n(0))^2\} .
\]
The additional term we need to estimate is $|\lambda_n| a_n^2(CR)$. This is upper bounded by
\[
|\lambda_n| \cdot |c_{n-}|^2 e^{2\lambda CR} + |\lambda_n| e^{2\lambda_n CR}\left(\int_0^{CR}b_n(s')  e^{-\lambda_n s'}ds'\right)^2.
\]
The first term above, after multiplying by $|\lambda_n|^{2k-2}$, is upper bounded by the norm of $g_-$ with the correct weight of $|\lambda_n|$. To examine the second term note it is bounded above by \[
|\lambda_n| e^{2\lambda_n CR}\int_0^{CR}b_n^2(s')  ds' \int_0^{CR} e^{-2\lambda_n s'}ds' \leq C \int_0^{CR}b_n^2(s')  ds' 
\]
by Cauchy-Schwartz, and this has the right weight of $|\lambda_n|$ so that when we multiply by $|\lambda_n|^{2k-2}$ it is upper bounded by the norm of $g$. This concludes the discussion of the third bullet point. Putting all of these together as in the semi-infinite case we see that the inverse is well defined, and its norm is uniformly bounded above as $\dt \rightarrow 0$. 

To conclude $ (\Pi_-,\Pi_+,D_\dt')$ has uniformly bounded inverse we need to be slightly careful, since as $\dt \rightarrow 0$ the domain changes. Since $\dt R \rightarrow 0$ the actual operator $(\Pi_-,\Pi_+,D_\dt')$ is a size $\leq R\dt$ perturbation of $(\Pi_-,\Pi_+,\p_s-A_0)$, then by the above we can construct a right inverse with uniform bound $Q$ for $(\Pi_-,\Pi_+,D_\dt')$ and this implies surjectivity. To show injectivity we proceed similarly as before, we assume $\zeta_\dt$ has norm $1$ and lives in the kernel of $(\Pi_-,\Pi_+,D_\dt')$, then $\|(\Pi_-,\Pi_+,\p_s -A_0)\zeta_\dt \| \leq CR\dt \|\zeta\| $, then the element
\[
\zeta_\dt-Q_0(\Pi_-,\Pi_+,\p_s -A_0)\zeta_\dt
\]
is an element of norm $>1/2$ in the kernel of $(\Pi_-,\Pi_+,\p_s -A_0)$, contradiction.
\end{proof}

\subsection{Surjectivity of gluing}
In this subsection we finally prove surjectivity of gluing in our simplified setting. The idea is that we shall extend our vector fields $\zeta_{*,\dt, K, (r,a,p)_i}, i=1,2,v $ so that they satisfy the set of equations $\Theta_i=0, \Theta_v=0$, subject to our choice of right inverses, which we constructed in the pregluing section. Then this shows our holomorphic curve $u_\dt$ can be realized as a solution of $\Theta_i=0, \Theta_v=0$. Since we proved such solution is unique, this shows gluing is surjective. We will first focus on extending the vector fields $\zeta_{*,\dt, K, (r,a,p)_i} $ over the intermediate finite gradient trajectory. The extension to semi-infinte trajectories is similar but independent of this process so will be treated separately. 

We remark additionally since there are exponential weights in place, we clarify our notation: when we write a vector field $\zeta_*$ without $'$, we think of it as living in some exponentially weighted Sobolev space, when we write $\zeta_*'$ we think of it as living in an unweighted space where the weight has been removed by multiplication with the exponential weight. When we write $W^{k,2,d}$ we will always mean the exponential weight $e^{ds}$; we will write $W^{k,2,w}$ if a more complicated weight is used.

Finally we remark that we will work with Sobolev exponent $p=2$, then extend our result for $p>2$, since all of our linear theory was only worked out for $p=2$.
We first observe by virtue of $u_\dt$ being $J_\dt$-holomorphic, the vector fields $\zeta_{*,\dt, K, (r,a,p)_i} $ already satisfy $\Theta_*=0$ at most places. We focus on what happens around $u^2$ and where $u^2$ is glued to the finite gradient cylinder simply for ease of notation. Entirely analogous statements hold for $u^1$.
\begin{proposition}
For $(s_v,t_v) \in [3R-K,N_{\dt K}-3R+K] \times S^1 $, the vector field $\zeta_{v,\dt, K, (r,a,p)_i} $ satisfies $\Theta_v=0$.\\
For $(s_v,t_v) \in[0, R-K] \times S^1 \cup \Sigma_{2R}$, the vector field $\zeta_{2,\dt, K, (r,a,p)_2} $ satisfies $\Theta_2=0$. An entirely analogous statement is true near $u_1$.
\end{proposition}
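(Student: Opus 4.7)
The proof will be essentially a direct verification based on the construction of the equations $\Theta_v$ and $\Theta_2$ together with the support properties of the various cut-off functions. The key point is that these equations were designed so that the splitting $\beta_1 \Theta_1 + \beta_2 \Theta_2 + \beta_v \Theta_v = 0$ is \emph{equivalent} to the full deformed preglued curve being $J_\dt$-holomorphic; so on regions where only a single $\beta$ is active (constantly $1$, with all of its derivatives vanishing, and with the other cut-offs as well as all other $\zeta_*$ vanishing), the corresponding $\Theta_*$ reduces on the nose to the $\db_{J_\dt}$-equation applied to the local deformation.

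First I would verify the cut-off behavior on the region $(s_v,t_v) \in [3R-K, N_{\dt K}-3R+K]\times S^1$. Given the formulas $\beta_v = \beta_{[R/2;R-K,N_{\dt K}+K-R;R/2]}$, $\beta_1 = \beta_{[R/2;N_{\dt K}-K-2R,\infty]}$, $\beta_2 = \beta_{[-\infty,2R-K;R/2]}$, a direct interval computation shows that for $\dt$ sufficiently small (so that $R = \tfrac{1}{5d}\log(1/\dt)$ is large compared to $K$; explicitly $2K < R$ suffices) one has $\beta_v \equiv 1$ and $\beta_1 = \beta_2 \equiv 0$ on this region, with all derivatives vanishing. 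Combined with the given support conditions $\zeta_{2,\dt,K,(r,a,p)_2}=0$ for $s_v>3R-K$ and $\zeta_{1,\dt,K,(r,a,p)_1}=0$ for $s_v<N_{\dt K}-K-3R$, the total deformation $\beta_1\zeta_1 + \beta_v\zeta_v + \beta_2\zeta_2$ collapses to $\zeta_{v,\dt,K,(r,a,p)_i}$ on this range, and the underlying preglued map is simply $v_K$ (no interpolation is happening). Inspecting the defining formula for $\Theta_v$ from Proposition~\ref{proposition_equations}, every term carrying a factor of $\beta_u'$, $\beta_v'$, or $\beta_{ug}\psi$ vanishes on this region, so $\Theta_v$ reduces to $\db_{J_\dt}(v_K + \zeta_{v,\dt,K,(r,a,p)_i})$. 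Since $u_\dt = v_K + \zeta_{v,\dt,K,(r,a,p)_i}$ there and $u_\dt$ is $J_\dt$-holomorphic, this yields $\Theta_v = 0$.

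Next I would carry out the analogous argument near $u^2$. On $(s_v,t_v) \in [0,R-K]\times S^1$, the interval computations give $\beta_2 \equiv 1$, $\beta_v \equiv 0$, $\beta_1 \equiv 0$ together with vanishing of all their derivatives (the inequality needed is $R-K \le 3R/2 - K$, which is automatic). On $\Sigma_{2R}$, only $\zeta_2$ is in play since neither $v_K$ nor $u^1$ is defined there. In both regions the defining formula for $\Theta_2$ (the finite-trajectory analogue of the expression for $\Theta_u$ in Proposition~\ref{proposition_equations}) collapses to $\db_{J_\dt}(u^2 + (r,a,p)_2 + \zeta_{2,\dt,K,(r,a,p)_2})$. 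Here I will also use the fact that the almost complex structure $J_\dt$ agrees with $J$ outside a small neighborhood of the Morse-Bott torus, so in the interior portion of $\Sigma_{2R}$ we have $\db_{J_\dt} u^2 = \db_J u^2 = 0$ and the same reasoning applies. Since $u_\dt$ agrees with this deformed curve on the stated region and is $J_\dt$-holomorphic, $\Theta_2 = 0$. The argument near $u^1$ is identical by symmetry.

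The only subtle point — and what I would expect to be the ``main obstacle'' if it were one at all — is keeping track of which cut-off intervals overlap and ensuring that the prescribed region in the proposition sits strictly inside the ``clean'' zone where a single cut-off equals $1$. This is a matter of bookkeeping with $K$ fixed and $\dt\to 0$ (so $R\to\infty$), and there is no genuine analytic content beyond unpacking the construction. No estimates, contraction arguments, or elliptic regularity are needed for this proposition — those enter only in the subsequent step where $\zeta_{v,\dt,K,(r,a,p)_i}$ and $\zeta_{i,\dt,K,(r,a,p)_i}$ are \emph{extended} to global solutions of $\mathbf{\Theta}_v = 0$ and $\mathbf{\Theta}_i = 0$.
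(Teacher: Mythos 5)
Your proposal is correct and is exactly the argument the paper intends: the paper states this proposition without a proof environment, justifying it only by the preceding remark that the $\zeta_*$ already satisfy $\Theta_*=0$ "at most places" because $u_\dt$ is $J_\dt$-holomorphic, and your unpacking of the cut-off supports and the reduction of $\Theta_v$, $\Theta_2$ to $\db_{J_\dt}(u_\dt)=0$ on the stated regions is the intended verification. The only inessential point is your appeal to $J_\dt=J$ in the interior of $\Sigma_{2R}$: this is not needed, since $\Theta_2$ is by construction (including its error term $\mcal{E}$) equivalent to $J_\dt$-holomorphicity of the deformed curve there, which holds because that curve equals $u_\dt$.
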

Because of our choice cut off functions, the global vector field $\zeta_{\dt,K,(r,a,p)_i}$ agrees with $\zeta_{2,\dt, K, (r,a,p)_2} $ at $s_v=R-K$ and $\zeta_{v,\dt, K, (r,a,p)_i}$ at $s_v=3R-K$. Here we use $(s_v,t_v)$ coordinates, and see next proposition for using $(s_2,t_2)$ coordinates.
\begin{proposition}\label{proposition:uniquedeform}
There exists a unique vector field $\xi$ of norm less than $\tilde{\ep}$ over $W^{k,2,w}([R-K,3R-K] \times S^1, \bb{R}^4)$ where $w=d(s+K)$ that satisfies \[
\Pi_-\xi(R-K,t_v) = \Pi_- \zeta_{\dt,K,(r,a,p)_i}(R-K,t_v)\]
\[
\Pi_+\xi (3R-K) = \Pi_+ \zeta_{\dt,K,(r,a,p)_i}(3R-K,t_v)\]
and $v_K + \xi$ is $J_\dt$-holomorphic. An entirely analogous statement holds near the ends of $u^1$.
\end{proposition}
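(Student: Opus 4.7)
The plan is to invoke the finite-interval linear isomorphism theorem proved just above and bootstrap to the nonlinear problem via contraction mapping, with the boundary data being precisely the projections $\Pi_\pm \zeta_{\dt,K,(r,a,p)_i}$ at the endpoints. First, I would conjugate by the exponential weight $e^{w(s)}=e^{d(s+K)}$ to pass from $W^{k,2,w}$ on $[R-K,3R-K]\times S^1$ to an unweighted Sobolev space on an interval of length $2R$. After rescaling coordinates so that the interval becomes $[0,CR]\times S^1$ with $C=2$, the linearized Cauchy-Riemann operator becomes $D'_\dt=\p_s-A(s,t)-\dt A-d$, which is exactly the operator for which the previous theorem provides a uniformly bounded inverse
\[
Q:W^{k-1/2,2}_-(S^1)\times W^{k-1/2,2}_+(S^1)\times W^{k-1,2}([0,CR]\times S^1,\bb{R}^4)\longrightarrow W^{k,2}([0,CR]\times S^1,\bb{R}^4),
\]
with operator norm independent of $\dt$.

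Next, I would write the holomorphic curve equation $\db_{J_\dt}(v_K+\xi)=0$ in the familiar split form
\[
D'_\dt \xi' + N(\xi')=0,
\]
where $\xi'$ denotes $\xi$ with exponential weight removed, and $N$ is a smooth nonlinear term that is quadratic in $\xi'$ (and its $t$-derivative) with uniformly bounded derivatives on a small ball, just as in the $\mcal{F}_v$ estimates of Section \ref{gluing}; the uniform $C^1$ bound on coefficients comes from $v_K$ being close to a family of trivial cylinders for $s\in[R-K,3R-K]$. Setting $g_\pm:=\Pi_\pm \zeta'_{\dt,K,(r,a,p)_i}$ at the respective endpoints (which, by the norm bound of Proposition \ref{norm_estimate} together with the continuity of the trace map, satisfy $\|g_\pm\|_{W^{k-1/2,2}(S^1)}\leq C\tilde{\ep}$), solving the proposition becomes equivalent to finding a fixed point of the map
\[
I:\xi'\longmapsto Q\bigl(g_-,\,g_+,\,-N(\xi')\bigr)
\]
on a ball $B_{\tilde{\ep}}\subset W^{k,2}([0,CR]\times S^1,\bb{R}^4)$.

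I would then run the standard contraction mapping argument. Since $\|Q\|$ is bounded uniformly in $\dt$, one has $\|I(\xi')\|\leq C(\|g_-\|+\|g_+\|+\|N(\xi')\|)\leq C\tilde{\ep}+C\tilde{\ep}^2$, so $I$ sends $B_{\tilde{\ep}}$ to itself for $\tilde{\ep}$ small, and the quadratic nature of $N$ together with the bound $\|\xi'\|_{C^1}\leq C\|\xi'\|_{W^{k,2}}$ (here we use $k>3$, i.e.\ Sobolev embedding) yields
\[
\|I(\xi'_1)-I(\xi'_2)\|\leq C\tilde{\ep}\,\|\xi'_1-\xi'_2\|,
\]
making $I$ a contraction. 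The unique fixed point $\xi'$, after reinserting the exponential weight, produces the desired $\xi\in W^{k,2,w}$ with norm $\leq C\tilde{\ep}$, and by construction $\Pi_\pm\xi$ at the respective endpoints equals the prescribed $\Pi_\pm\zeta_{\dt,K,(r,a,p)_i}$. Uniqueness in the $\tilde{\ep}$-ball is automatic from the contraction mapping principle, and elliptic regularity promotes the solution to the desired smoothness.

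The main obstacle I anticipate is making sure the constants in the contraction estimate are uniform as $\dt\to 0$, because the interval length grows like $2R\sim\log(1/\dt)$: both the trace norm equivalence constants and the nonlinear Lipschitz constants a priori depend on the size of the domain. This is handled precisely by the uniform bound on $Q$ from the preceding theorem (whose proof already absorbed the $R\dt$-error between $D'_\dt$ and $\p_s-A_0$) together with the fact that $v_K$ stays in the Morse-Bott neighborhood where the local geometry is bounded uniformly in $\dt$; the nonlinear term $N$ inherits this uniform control exactly as in Remark \ref{quadratic_term}. One should also verify that the $(\Pi_-,\Pi_+,D'_\dt)$ theorem extends from $p=2$ to the $W^{k,p,w}$ setting used elsewhere, which follows from Morrey embedding and elliptic bootstrapping in the standard way, noting that our solution $\xi$ is automatically smooth.
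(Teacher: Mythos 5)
Your proposal is correct and follows essentially the same route as the paper: remove the exponential weight, invoke the uniformly bounded inverse $Q$ of $(\Pi_-,\Pi_+,D_\dt')$ from the finite-interval linear theorem, and run the contraction mapping principle on the fixed-point map $I(\xi')=Q(g_-,g_+,-\mcal{F}'(\xi'))$, using the quadratic nature of the nonlinearity and the smallness of $\zeta_{\dt,K,(r,a,p)_i}$ (the $\tilde{\ep}^2$ bound) to land in and contract the $\tilde{\ep}$-ball. The paper's proof is a condensed version of exactly this argument, including your observation that the boundary projections are independent of the input $\xi'$.
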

\begin{proof}
The $J_\dt$-holomorphicity condition amounts to $\xi$ solving a equation of the form
\[
D_\dt \xi + \mcal{F}(\xi)=0
\]
where $\mcal{F}$ is an expression bounded above in $C^k$ by $C|\xi|^2 + |\xi\|\p_t \xi|$. We next remove the exponential weights to get an equation
\[
D_\dt' \xi' + \mcal{F'}(\xi')=0
\]
where we also have $\mcal{F}' \leq C|\xi'|^2 + |\xi'| |\p_t \xi'|$. 
Then finding a solution to this equation with prescribed boundary conditions amounts to finding a fixed point of the map
\[
I: W^{k,2}([R-K,3R-K] \times S^1, \bb{R}^4) \longrightarrow W^{k,2}([R-K,3R-K] \times S^1, \bb{R}^4)
\]
defined by
\[
I(\xi') = Q(\Pi_- \zeta'_{\dt,K,(r,a,p)_i}(R-K,t_v),\Pi_+ \zeta'_{\dt,K,(r,a,p)_i}(3R-K,t_v), -\mcal{F}'(\xi'))
\]
where $Q$ is inverse of the operator $(\Pi_-,\Pi_+,D_\dt')$, and $\zeta'_{\dt,K,(r,a,p)_i}(R-K,t_v)$ is $\zeta_{\dt,K,(r,a,p)_i}(R-K,t_v)$ multiplied with the inverse of the exponential weight. That $Q$ exists, is an isomorphism with uniformly bounded norm follows from previous section on linear analysis. That $I$ is a contraction mapping principle follows the fact $\mcal{F}$ is quadratic, the images of projection maps $\Pi_\pm$ are independent of the input $\xi'$, as well as the fact that the norm of $Q$ is uniformly bounded as $\dt \rightarrow 0$. The fact that $I$ sends $\tilde{\ep}$ ball to itself is inherited in the fact $\mcal{F}'$ is quadratic. We also need to recall from previous estimates that the $W^{2,p,w}$ norms of (hence its $C^0$ norm) $\zeta_{\dt,K,(r,a,p)_i}$ can be made arbitrarily small as we take $\dt \rightarrow 0$ and the norms of $\Pi_\pm$ and $Q$ are uniformly bounded, which ensure the image of the contraction map $I$ land easily in the $\tilde{\ep}$ ball in the codomain (in our previous propositions we used $\tilde{\ep}^2$ to bound the norms, and this is where it comes in).  The theorem now follows from contraction mapping principle.
\end{proof}
We next extend $\zeta_{2,\dt,K,(r,a,p)_2}$ and $\zeta_{v,\dt,K,(r,a,p)_i}$ to solutions of $\Theta_2=0$ and $\Theta_v=0$ for $s_v<R$ and $s_2>R$. We recall there is a slight subtlety in that near the pregluing at $u_2$ there is a twist in the domain, i.e. an identification $t_v = t_2 +(r_1-r_2)$, and the vector fields over $u_2$ have coordinates $t_2$. We will be careful to make this identification, though we remark it doesn't cause any difficulties.
\begin{proposition}
There are vector fields $\hat{\zeta}_{2,\dt,K,(r,a,p)_2}$, $\hat{\zeta}_{v,\dt,K,(r,a,p)_i}$ defined over $W^{k,2,d}([R,\infty)\times S^1, \bb{R}^4)$ and $W^{k,2,d}([-\infty,3R-K)\times S^1, \bb{R}^4)$ respectively, both of norm $<\tilde{\ep}$ so that
\[
\Theta_v(\hat{\zeta}_{2,\dt,K,(r,a,p)_2},\hat{\zeta}_{v,\dt,K,(r,a,p)_i})=0
\]
\[
\Theta_2(\hat{\zeta}_{2,\dt,K,(r,a,p)_2},\hat{\zeta}_{v,\dt,K,(r,a,p)_i})=0
\]
where the exponential weight looks like $e^{ds}$ over $W^{k,2,d}([R,\infty)\times S^1, \bb{R}^4)$ and $e^{d(s+K)}$ over $W^{k,2,d}([-\infty,3R-K)\times S^1, \bb{R}^4)$. Further, we have the boundary conditions that
\[
\Pi_-(\hat{\zeta}_{2,\dt,K,(r,a,p)_2}(s_2=R,t_2)) = \Pi_-(\zeta_{\dt,K,(r,a,p)_i} (s_v=R-K, t_v -(r_1-r_2))
\]
\[
\Pi_+(\hat{\zeta}_{v,\dt,K,(r,a,p)_i})(s_v=3R-K,t_v)= \Pi_+(\zeta_{\dt,K,(r,a,p)_i} (s_v=3R-K, t_v).
\]
\end{proposition}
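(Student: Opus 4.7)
The plan is to solve the coupled system $\Theta_v=0,\;\Theta_2=0$ on the full semi-infinite domains via a single contraction-mapping argument, using the linear operators constructed in the preceding subsection to handle the prescribed projection data at the pregluing endpoints. Once $(\hat{\zeta}_{2},\hat{\zeta}_{v})$ is produced, a separate step will identify this pair with the already-truncated vector fields $\zeta_{2,\dt,K,(r,a,p)_2}$ and $\zeta_{v,\dt,K,(r,a,p)_i}$ in the overlap region, and hence with $u_\dt$ itself, which is the whole point of the construction.

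\textbf{Linear setup.} First I would assemble the relevant isomorphisms. On the gradient-cylinder side, the mirror of the semi-infinite theorem proved above gives that the operator $(\Pi_+,D_\dt'):W^{k,2,w}((-\infty,3R-K]\times S^1,\bb{R}^4)\to W^{k-1/2,2}_+(S^1)\oplus W^{k-1,2,w}$ is an isomorphism with uniformly bounded inverse $Q_v^+$ as $\dt\to 0$, with $w(s)=d(3R-K-s)$ matching the weight profile used in the neck. On the $u^2$ side, I use the right inverse $Q_2$ to $D_2$ on the codimension-one subspace singled out by the integral condition \eqref{integral_over_a} (or the marked-point condition, if applicable), composed with a projection handling the boundary datum $\Pi_-\hat{\zeta}_2(R,\cdot)$ on the cylindrical end at $s_2=R$; concretely the relevant operator is $(\Pi_-,D_2)$, which remains an isomorphism with uniformly bounded inverse $Q_2^-$ by the same spectral-flow/trace argument as in the preceding subsection. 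The boundary data themselves, namely $\Pi_-\zeta_{\dt,K,(r,a,p)_i}(R-K,t_v-(r_1-r_2))$ and $\Pi_+\zeta_{\dt,K,(r,a,p)_i}(3R-K,t_v)$, have weighted norm bounded by $C\tilde{\ep}^2$ by Proposition \ref{norm_estimate} and our choice of $\dt$.

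\textbf{Contraction mapping.} I would then view $\Theta_v=0$ and $\Theta_2=0$ as a single fixed-point equation on the product space
\[
W^{k,2,d}([R,\infty)\times S^1,\bb{R}^4)\;\oplus\;W^{k,2,w}((-\infty,3R-K]\times S^1,\bb{R}^4),
\]
using the form of $\Theta_2,\Theta_v$ derived in Proposition \ref{proposition_equations} (i.e.\ $D_* \hat{\zeta}_*+\mcal{F}_*(\hat{\zeta}_2,\hat{\zeta}_v)+\mcal{E}_*+\beta'\hat{\zeta}_{*}=0$). After removing the exponential weights by conjugation as in the linear detour, the map
\[
I(\hat{\zeta}_2',\hat{\zeta}_v')=\bigl(Q_2^-(\Pi_-\zeta',-\mcal{F}_2'-\mcal{E}_2'-\beta'\hat{\zeta}_v'),\;Q_v^+(\Pi_+\zeta',-\mcal{F}_v'-\beta'\hat{\zeta}_2')\bigr)
\]
sends a $\tilde{\ep}$-ball into itself because the boundary data and pregluing errors are of order $\tilde{\ep}^2$ and the nonlinearities $\mcal{F}_*$ are quadratic, and it is a $\tfrac{1}{2}$-Lipschitz contraction once $\tilde{\ep}$ is chosen small enough; the Banach fixed-point theorem then yields a unique solution $(\hat{\zeta}_2,\hat{\zeta}_v)$ of norm less than $\tilde{\ep}$.

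\textbf{Main obstacle: identification with $u_\dt$.} The hardest part will be to verify that the pair just produced actually agrees with the truncated data coming from $u_\dt$, so that $u_\dt$ is recovered as a deformation governed by $(\hat{\zeta}_2,\hat{\zeta}_v)$ together with the $\xi$ of Proposition \ref{proposition:uniquedeform}. The idea is that both the ``true'' deformation $\zeta_{\dt,K,(r,a,p)_i}$ and the concatenation of $\hat{\zeta}_2,\hat{\zeta}_v,\xi$ solve the same nonlinear Cauchy--Riemann equation on each of the three regions (pregluing on $\Sigma_{2R}$, neck $[R-K,3R-K]\times S^1$, and semi-infinite cylinder), carry the same $\Pi_\pm$ data at the interfaces $s_v=R-K$ and $s_v=3R-K$ by construction, and lie in the small-norm regime where the trace-plus-operator map is injective with uniformly bounded inverse. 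Running the uniqueness clause of Proposition \ref{proposition:uniquedeform} on the middle slab and then propagating outward through the contraction fixed-point uniqueness on each semi-infinite piece forces $\hat{\zeta}_2=\zeta_{2,\dt,K,(r,a,p)_2}$ and $\hat{\zeta}_v=\zeta_{v,\dt,K,(r,a,p)_i}$ wherever the latter are supported, so $u_\dt$ coincides with the preglued curve deformed by $(\hat{\zeta}_2,\hat{\zeta}_v)$ globally. Combined with the uniqueness of the gluing construction in Section \ref{gluing}, this will identify $u_\dt$ with the curve produced by gluing the cascade $\cas{u}=\{u^1,u^2\}$, completing the proof of surjectivity in the two-level case.
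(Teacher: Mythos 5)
Your proposal is correct and follows the paper's own proof: after removing the exponential weights by conjugation, one runs a single contraction mapping on the product of the two half-infinite domains, using the half-cylinder isomorphisms $(\Pi_-,D_\dt')$ on $[R,\infty)\times S^1$ and $(\Pi_+,D_\dt')$ on $(-\infty,3R-K]\times S^1$ from the linear detour to impose the prescribed trace data (which are of size $O(\tilde{\ep}^2)$ by Proposition \ref{norm_estimate}), with the quadratic nonlinearities and pregluing errors absorbed exactly as you describe; the identification with $u_\dt$ that you flag as the main obstacle is the content of the \emph{following} proposition, and is handled there via the slab uniqueness of Proposition \ref{proposition:uniquedeform} just as you anticipate. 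The one slip is the weight on the gradient-cylinder side: it must be $e^{d(s+K)}$, increasing toward $s_v=3R-K$ so as to match the neck profile $w_p(s_v)=d(s_v+K)$ for $s_v\le N_{\dt K}/2$ and the statement of the proposition, not $e^{d(3R-K-s)}$ --- with your choice the conjugated operator becomes $\p_s-A+d$ rather than $\p_s-A-d$, which moves the near-zero modes $\p_a,\p_z,\p_x$ to the other side of the spectrum and changes which trace components are prescribed at $s_v=3R-K$, so the matching in the next step would no longer mesh with the conventions used on the middle slab.
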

\begin{proof}
We immediately switch to primed coordinates by removing the weight. In these primed coordinates the equations look like
\[
\Theta_v' = D_\dt'  +\mcal{F}'_v
\]
where $\mcal{F}'_v$ can be upper bounded by quadratic expressions of $\hat{\zeta}'_{2,\dt,K,(r,a,p)_2}$ , $\hat{\zeta}'_{v,\dt,K,(r,a,p)_i}$, and their $t$ derivatives, as in Remark \ref{quadratic_term}. Likewise for
\[
\Theta_2' = D_\dt' +\mcal{F}'_2 + \mcal{E}'.
\]
We remark for $\Theta_2'$, the operator $D_\dt'$ is the linearization of the $\db_{J_\dt}$ along $u_2$, with exponential weight removed via conjugation. The dependence on $(r_2,a_2,p_2)$ of the linearization appears in the quadratic term $\mcal{F}_2'$. The term $\mcal{E}'$ is the corresponding error term which takes the form in pregluing section (it is slightly different since we are using $D_\dt'$ as the linear operator, but this is of no consequence).  For $\Theta_v'$, $D_\dt'$ is the linearization of $\db_{J_\dt}$ along $v$ with exponential weights removed.

Then finding an solution to the equation is tantamount to finding a fixed point of the operator
\[
I:W^{k,2}([R,\infty)\times S^1, \bb{R}^4) \oplus W^{k,2}([-\infty,3R-K)\times S^1, \bb{R}^4) \longrightarrow W^{k,2}([R,\infty)\times S^1, \bb{R}^4) \oplus W^{k,2}([-\infty,3R-K)\times S^1, \bb{R}^4)
\]
defined by:
\begin{align*}
I(\hat{\zeta}'_{2,\dt,K,(r,a,p)_2},\hat{\zeta}'_{v,\dt,K,(r,a,p)_i}) =& \{Q_2(\Pi_-(\zeta'_{\dt,K,(r,a,p)_i} (s_v=R, t_v -(r_1-r_2)), -\mcal{F}'_2 -\mcal{E}'), \\
&Q_v(\Pi_+(\zeta'_{\dt,K,(r,a,p)_i} (s_v=3R-K, t_v)), -\mcal{F}_v')\}.
\end{align*}
Where $Q_v$ is the inverse to the pair $(D_\dt', \Pi_+):W^{k,2}([-\infty,3R-K)\times S^1, \bb{R}^4) \rightarrow W^{k-1,2}([-\infty,3R-K)\times S^1, \bb{R}^4) \oplus W_+^{k-1/2,2}(S^1)$ where $\Pi_+$ take place at $s_v=3R-K$. $Q_2$ is the inverse to $(D_\dt', \Pi_-):W^{k,2}([R,\infty)\times S^1, \bb{R}^4) \rightarrow W^{k-1,2}([R,\infty)\times S^1, \bb{R}^4) \oplus W_-^{k-1/2,2}(S^1)$ where $\Pi_-$ takes place at $s_2=R$. It follows as in the previous proposition that $I$ is a contraction, from the $\tilde{\ep}$ ball to itself, and translating back to the weighted Sobolev spaces proves our theorem.
\end{proof}
It follows from the above proposition and uniqueness that the extensions extend smoothly past $s_2=R$ and $s_v=3R-K$, and they recover $u_\dt$:
\begin{proposition}
The concatenation of $\hat{\zeta}_{2,\dt,K,(r,a,p)_2}$ at $s_2=R$ with $\zeta_{2,\dt,K,(r,a,p)_2}$ at $s_2=R$ is of class $C^k$, we denote the resulting vector field by $\zeta_{2,\dt,K,(r,a,p)_2}$, with slight abuse in notation. A similar story holds for $\zeta_{v,\dt,K,(r,a,p)_i}$. The resulting vector fields $\zeta_{2,\dt,K,(r,a,p)_2}$ and $\zeta_{i,\dt,K,(r,a,p)_i}$ are $\leq \ep$ in $W^{2,p,d}(u_2^*TM)$ and $W^{2,p,d}(v_k^*TM)$ respectively, and satisfy the pair of equations $\Theta_2=0,\Theta_v=0$.
\end{proposition}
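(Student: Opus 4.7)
The plan is to verify three claims at the junctions $s_2 = R$ and $s_v = 3R-K$: the $C^k$ regularity of the concatenation, the global norm bound by $\ep$, and satisfaction of the coupled system $\Theta_2 = 0$, $\Theta_v = 0$.

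For smoothness, the key observation is that both the extension $\hat{\zeta}_{v,\dt,K,(r,a,p)_i}$ on $(-\infty, 3R-K]$ and the truncated $\zeta_{v,\dt,K,(r,a,p)_i}$ on $[R-K, N_{\dt K}+K-R]$ satisfy the same quasilinear Cauchy--Riemann equation $\db_{J_\dt}(v_K + \zeta) = 0$ on overlapping regions. First I invoke Proposition \ref{proposition:uniquedeform} on $[R-K, 3R-K]$: the restriction of the global vector field $\zeta_{\dt,K,(r,a,p)_i}$ (coming from the decomposition $u_\dt = u_{\dt,K,(r,a,p)_i} + \zeta_{\dt,K,(r,a,p)_i}$) is the unique small solution on this interval matching the prescribed $\Pi_\pm$ projection boundary data; call this $\xi$. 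Next I claim the restriction of $\hat{\zeta}_v$ to $[R-K, 3R-K]$ also equals $\xi$. The $\Pi_+$ boundary condition at $s_v = 3R-K$ matches by construction of the extension, and $\Pi_-$ matching at $s_v = R-K$ is forced by uniqueness on the half-line $(-\infty, 3R-K]$: both $\hat{\zeta}_v$ and the extension of $\zeta_{\dt,K}$ (coming from the natural $J_\dt$-holomorphic structure on a lengthened gradient cylinder backwards) are small $W^{k,2,d}$ solutions of the same PDE with the same $\Pi_+$ boundary data, and the fixed-point uniqueness in the preceding proposition together with the linear isomorphism result pins them to be equal. Hence $\hat{\zeta}_v = \zeta_{v,\dt,K,(r,a,p)_i}$ in a neighborhood of the junction, so the concatenation coincides with a single $C^k$ solution of the elliptic PDE, and elliptic regularity upgrades this to whatever higher regularity we need. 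The same strategy works for $\hat{\zeta}_2$ at the junction $s_2 = R$.

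For the norm bound, Proposition \ref{norm_estimate} gives $\|\zeta_{\dt,K,(r,a,p)_i}\| \leq \tilde{\ep}^2$ by choosing $\dt$ sufficiently small, while the contraction-mapping estimates established in the preceding extension proposition yield $\|\hat{\zeta}_2\|, \|\hat{\zeta}_v\| \leq \tilde{\ep}$ in the appropriate weighted Sobolev norms. Because the concatenation coincides with each individual piece on its respective domain and they agree on the overlap by the matching argument above, the resulting $W^{2,p,d}$ norms of $\zeta_{2,\dt,K,(r,a,p)_2}$ and $\zeta_{v,\dt,K,(r,a,p)_i}$ are bounded by $C \tilde{\ep}$, which is less than $\ep$ for $\tilde{\ep}$ small. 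Satisfaction of the coupled system $\Theta_2 = 0$ and $\Theta_v = 0$ then follows piecewise: the first proposition of this subsection yields the equations on the deep interior regions where the truncated vector fields arise directly from $J_\dt$-holomorphicity of $u_\dt$; the extension proposition yields the equations on the outer regions by construction; and the smooth matching at the junction (Step 1) propagates validity across the seam. The case $p > 2$ reduces to $p = 2$ via Morrey's embedding, as in the linear-theory subsection.

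The main obstacle is the $\Pi_-$ matching argument of Step 1: one must verify that the construction of the half-line extension $\hat{\zeta}_v$ via contraction mapping, combined with the uniqueness of the linearized half-line problem from the detour subsection, forces $\hat{\zeta}_v$ to reproduce the $\Pi_-$ boundary value of $\zeta_{\dt,K}$ at $s_v = R-K$. Once this is in hand, the Proposition \ref{proposition:uniquedeform} uniqueness applies and the remainder of the verification reduces to elliptic regularity and careful bookkeeping of support regions.
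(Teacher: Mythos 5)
Your overall architecture is the same as the paper's (apply the uniqueness statement of Proposition \ref{proposition:uniquedeform} on the middle strip $s_v\in[R-K,3R-K]$, conclude the extensions reproduce the global vector field there, and read off smoothness of the concatenation), but the execution contains a genuine error in identifying \emph{which} object the uniqueness applies to. The deformation of $v_K$ that is $J_\dt$-holomorphic on $[R-K,3R-K]$ is not $\hat{\zeta}_{v,\dt,K,(r,a,p)_i}$ alone but the cutoff combination $\beta_2\hat{\zeta}_{2,\dt,K,(r,a,p)_2}+\beta_v\hat{\zeta}_{v,\dt,K,(r,a,p)_i}$: that is precisely what the pair of equations $\Theta_2=0$, $\Theta_v=0$ encodes (the equations are coupled through the $\beta'$ terms, and it is $\beta_2\Theta_2+\beta_v\Theta_v=0$ that translates into holomorphicity of the deformed curve). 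It is this combination that satisfies the hypotheses of Proposition \ref{proposition:uniquedeform}: its $\Pi_+$ datum at $s_v=3R-K$ is carried by $\hat{\zeta}_v$ (where $\beta_v=1$, $\beta_2=0$) and its $\Pi_-$ datum at $s_v=R-K$ is carried by $\hat{\zeta}_2$ (where $\beta_2=1$ and $\beta_v$ vanishes to infinite order) --- and \emph{both} of these matchings are imposed by construction in the extension proposition. Consequently the ``main obstacle'' you single out, namely forcing the $\Pi_-$ value of $\hat{\zeta}_v$ at $s_v=R-K$ by a separate half-line uniqueness argument, is a phantom: no boundary condition is imposed on $\hat{\zeta}_v$ at $R-K$ and none is needed, and the comparison you propose against ``the extension of $\zeta_{\dt,K}$ on a lengthened gradient cylinder backwards'' does not correspond to any object in the construction.

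With the correct identification, uniqueness gives $\beta_2\hat{\zeta}_2+\beta_v\hat{\zeta}_v=\zeta_{\dt,K,(r,a,p)_i}=\beta_2\zeta_2+\beta_v\zeta_v$ on the strip, and the $C^k$ matching of each individual concatenation is then read off at the seam $s_v=R-K$ (resp.\ $s_v=3R-K$), where one cutoff is identically $1$ and the other vanishes together with all its derivatives; your phrase ``the concatenation coincides with a single $C^k$ solution of the elliptic PDE'' skips exactly this step, which is needed because $\zeta_2$ and $\hat{\zeta}_2$ separately are not solutions of an uncoupled equation on the overlap. Finally, the passage from $p=2$ to $p>2$ is not Morrey's embedding (which runs in the opposite direction); the paper uses the elementary fact that when the vector fields and their first derivatives have $C^0$ norm less than $1$, their $W^{2,p}$ norms are bounded by powers ($\tilde\ep^{2/p}$) of their $W^{2,2}$ norms, which is what lands the extensions inside the $\ep$-ball.
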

\begin{proof}
By Proposition \ref{proposition:uniquedeform} there exists a unique vector field over $s_v\in [R-K,3R-K]$ satisfying the boundary conditions imposed by $\zeta_{\dt,K,(r,a,p)_i}$, and by whose deformation of $v_K$ makes the resulting surface $J_\dt$-holomorphic. But observe $\beta_2 \hat{\zeta}_{2,\dt,K,(r,a,p)_2} + \beta_v \hat{\zeta}_{v,\dt,K,(r,a,p)_i}$ satisfy these conditions as well by virtue of the defining conditions for the pair $\hat{\zeta}_{2,\dt,K,(r,a,p)_2}$, and $\hat{\zeta}_{v,\dt,K,(r,a,p)_i}$: that they are solutions of the pair of equations $\Theta_2=0,\Theta_v=0$. Hence we conclude $\beta_2 \hat{\zeta}_{2,\dt,K,(r,a,p)_2} + \beta_v \hat{\zeta}_{v,\dt,K,(r,a,p)_i}$ agrees with $\zeta_{\dt,K,(r,a,p)_i} $ over $s_v \in [R-K,3R-K]$, and by our choice of cut off functions this implies the concatenation of $\hat{\zeta}_{2,\dt,K,(r,a,p)_2}$ with $\zeta_{2,\dt,K,(r,a,p)_2}$ is smooth, and likewise for $\hat{\zeta}_{v,\dt,K,(r,a,p)_2}$ with $\zeta_{v,\dt,K,(r,a,p)_2}$. That we can take $p>2$ when we only constructed $\hat{\zeta}_{2,\dt,K,(r,a,p)_2}$ for $p=2$ follows from the fact that the vector fields and their first order derivatives have $C^0$ norm $<1$, and in which case we have their $W^{2,p}$ norm bounded above by powers of their $W^{2,2}$ norm. Hence in this case the extended parts $\hat{\zeta}_{2,\dt,K,(r,a,p)_2}$ and $\hat{\zeta}_{v,\dt,K,(r,a,p)_2}$ have their $W^{2,p,d}$ norm (over $u^{2*}TM$ and $v_K^*TM$ respectively) bounded above by $\tilde{\ep}^{2/p}$, and for small enough $\tilde{\ep}$ this lands in the $\ep$ ball in $W^{2,p,d}(u^{2*}TM)$ and $W^{2,p,d}(v_K^*TM)$ respectively.
\end{proof}

We make one additional remark that the equations $\Theta_2=0$ and $\Theta_v=0$ also depends on the asymptotic vectors $(r,a,p)_i$, but from our constructions these vectors have norm $<\ep'$. 
\subsection{Extension of solutions near semi-infinite gradient trajectories}
In this subsection we briefly outline how to carry out the above in the case where $u^1$ is glued to a semi-infinite gradient trajectory. This is simpler than the finite gradient case because we don't need our vector fields to lie in $H_0$. 

Recall our conventions, we assume $u_\dt$ degenerates into the cascade $\{u^1,u^2\}$. We focus on what happens near a positive puncture of $u^1$, which has coordinate $(s_1',t_1') \in [0,\infty) \times S^1$. For large $K>0$ we can recall the decomposition of the domain of $u^1$
\[
\{(s_1',t_1') \in [K,\infty)\times S^1\} \cup \Sigma_{1K} \cup \textup{other punctures of} \, u^1.
\]
We will not worry about the other punctures of $u^1$ and only talk about $\Sigma_{1K} \cup [K,\infty)\times S^1$. Similarly we can break down the domain of $u_\dt$ into
\[
\Sigma_{\dt K} \cup [0,\infty)\times S^1 \cup \textup{other parts of } \, u_\dt.
\]
As above we will only care about $\Sigma_{\dt K} \cup [K,\infty)\times S^1$ and neglect other parts of $u_\dt$. 

Following our previous conventions, we take $\ep>0$ to be the $\ep$ that controls the size of $\ep$ balls we use in the contraction mapping principle and it is fixed for any choice of $\dt>0$. Let the parameter $\ep'$ depend on $K,\dt$ and go to zero as $\dt>0$. We further introduce $\tilde{\ep}$ which we consider to be of the form $0<\ep'<<\tilde{\ep} << \ep$ to help bound various norms of vector fields as in the previous discussion.

The convergence to cascade implies for given $K$, we can choose small enough $\dt>0$ so that there exists a vector field $\zeta_{1\dt}$ and variation of complex structure of $u_1$ so that 
\[
u_\dt |_{\Sigma_{\dt K}}= \exp_{u^1,\dt j_1}(\zeta_{1\dt})
\]
and for given $K$, as $\dt \rightarrow 0$, the $C^k$ norm of $\zeta_{1\dt}\rightarrow 0$ (we will take $\dt$ small enough so that it is bounded by $\ep'$). We now turn our attention to the cylindrical end of $u_\dt$, which is of the form $[K,\infty)\times S^1$.
\begin{proposition}
For $K$ large, (which would take $\dt \rightarrow 0$ with it in order to satisfy our previous assumptions) we have $u_\dt|_{[K,\infty)\times S^1}$ converges in $C^\infty_{loc}$ to trivial cylinders. This is also true uniformly, i.e. for given $\ep''>0$, there is a $K$ large enough so that for every small enough values of $\dt>0$, $u_\dt |_{[k,k+1]\times S^1}$ is within $\ep''$ (in the $C^k$ norm) of a trivial cylinder of the form $\gamma \times \bb{R}$ for all values of $k$ so that $[k,k+1]\times S^1 \subset [K,\infty)\times S^1$. 
\end{proposition}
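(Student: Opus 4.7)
The plan is to mimic the proof of Lemma \ref{glob}, replacing the finite neck $N_{\dt K}$ by the semi-infinite cylindrical end $[K,\infty)\times S^1$. Two ingredients are needed: a uniform gradient bound on this end for $K$ large, and a contradiction argument via Arzela-Ascoli to force the local limits to be trivial cylinders. The $C^\infty_{loc}$ convergence will follow from the uniform statement, just as in Lemma \ref{glob}.

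For the uniform gradient bound, I would first verify that the $d\lambda_\dt$-energy of $u_\dt$ restricted to $[K,\infty)\times S^1$ tends to zero as $K\to\infty$, uniformly in small $\dt$. This is where the semi-infinite end is actually a bit easier than a long neck: since $u_\dt$ is positively asymptotic to a nondegenerate Reeb orbit $\gamma_\dt$ corresponding to a critical point of $f$, and $\gamma_\dt$ sits in the fixed neighborhood of the Morse-Bott torus where $\lambda_\dt = e^{\dt g f}\lambda$, the action of $\gamma_\dt$ differs from the action of the Morse-Bott orbit by $O(\dt)$. Stokes' theorem on $[K,K']\times S^1$ together with the standard reduction of $u_\dt^*d\lambda_\dt$ to a boundary integral then gives $\int_{[K,\infty)\times S^1}u_\dt^*d\lambda_\dt\to 0$ uniformly in $\dt$ as $K\to\infty$. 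With this energy estimate in hand, if $|du_\dt(s_n,t_n)|\to\infty$ for some sequence with $s_n\geq K_n\to\infty$, then rescaling produces a nonconstant $J$-holomorphic plane (or sphere), whose minimal energy is bounded below by a positive constant depending only on $(Y,\lambda)$; this contradicts the vanishing energy. Hence $|du_\dt|\leq C$ on $[K,\infty)\times S^1$ once $K$ is large enough.

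For the uniform $C^k$ convergence to trivial cylinders, I argue by contradiction exactly as in Step 2 of Lemma \ref{glob}. Suppose there exist $\ep''>0$, $K_n\to\infty$, $\dt_n\to 0$, and indices $k_n$ with $[k_n,k_n+1]\times S^1\subset [K_n,\infty)\times S^1$ such that $u_{\dt_n}|_{[k_n,k_n+1]\times S^1}$ is at $C^k$-distance $\geq\ep''$ from every trivial cylinder. Translate in the symplectization direction so that the $a$-mean over this interval vanishes, then reparametrize $s\mapsto s-k_n$ so the interval becomes $[0,1]\times S^1$. The uniform gradient bound combined with elliptic bootstrapping for the $J_{\dt_n}$-holomorphic curve equation (and $J_{\dt_n}\to J$ in $C^\infty$) gives uniform $C^k$ bounds on a slightly larger interval $[-1/2,3/2]\times S^1$; by Arzela-Ascoli a subsequence converges in $C^k([0,1]\times S^1)$ to a $J$-holomorphic map $u_\infty$. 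Because the $d\lambda_\dt$-energy over $[k_n,k_n+1]\times S^1$ tends to zero, $u_\infty$ satisfies $u_\infty^*d\lambda=0$, and consequently its image lies in a single Reeb trajectory; combined with the $J$-holomorphic equation this forces $u_\infty$ to be a segment of a trivial cylinder over some Reeb orbit on one of the Morse-Bott tori. This contradicts the $\ep''$-separation assumption and completes the proof.

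The main technical point to be careful about is the uniform-in-$\dt$ energy estimate on the semi-infinite tail, since $\lambda_\dt$ itself depends on $\dt$: one must control both the action of the asymptotic orbit $\gamma_\dt$ and the action contribution along $\{K\}\times S^1$ as $\dt\to 0$. Both are handled by the fact that the perturbation $\lambda_\dt-\lambda=(e^{\dt g f}-1)\lambda$ is $O(\dt)$ in $C^k$ in our fixed tubular neighborhoods, so the relevant action integrals converge uniformly to the Morse-Bott values; everything else is a direct adaptation of Lemma \ref{glob}.
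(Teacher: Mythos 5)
Your proof is correct and follows essentially the same route as the paper: the paper's own proof of this proposition is simply "the same proof as Lemma \ref{glob}," i.e.\ the two-step argument (gradient bound via bubbling-off and minimal energy of planes, then the Arzela--Ascoli contradiction argument forcing zero-energy limits to be trivial cylinders) that you reproduce. You supply more detail than the paper on the uniform-in-$\dt$ energy estimate for the semi-infinite tail, which is a reasonable elaboration rather than a deviation.
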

\begin{proof}
The same proof as Proposition \ref{glob}.
\end{proof}
Therefore we can choose a large enough $K$ so that when $u_\dt$ is restricted to $[K,\infty)\times S^1$ the conditions for asymptotic estimates are met, namely we have the following:
\begin{proposition}
We take $\ep''>0$ small enough so that previous convergence estimate near Morse-Bott torus applies. Then there is a large enough $K$, so that for small enough $\ep'$ (which depends on $K$), and for small enough $\dt>0$ (which depends on $\ep'$), there is a gradient trajectory $v$ defined over the cylinder $(s_v,t_v)\in [K,\infty)\times S^1$ so that there is a vector field $\zeta_v$ over $v_K$ so that
\[
u_\dt |_{[K,\infty)\times S^1} = \exp_{v_K}(\zeta_v)
\]
and the norm of $\zeta_v$ measured in $C^k$ satisfies the bound
\[
\|\zeta_v(s_1')\| \leq \|\zeta_v(K)\|^{2/p}_{L^2(S^1)} e^{-\lambda s}.
\]
\end{proposition}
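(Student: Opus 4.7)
The plan is to apply Theorem \ref{inf conv} directly to $u_\dt$ restricted to the cylindrical end $[K,\infty)\times S^1$, after verifying its hypotheses. By the preceding proposition, for $K$ sufficiently large and $\dt$ sufficiently small, the restriction $u_\dt|_{[K,\infty)\times S^1}$ is within $\ep''$ (in $C^k$ norm) of a trivial cylinder on every unit interval $[k,k+1]\times S^1$. In the normal coordinates $(a,z,x,y)$ of Proposition \ref{prop_locform}, after reparametrizing so that $z$ is close to $t$, this closeness gives the pointwise bounds $|y|, |z-t|, |\p_*^{\leq k} x|, |\p_*^{\leq k} y| \leq \ep$ required by Theorem \ref{inf conv}. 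The other hypothesis is automatic: since $u_\dt$ has a non-degenerate asymptotic Reeb orbit at a single prescribed critical point of $f$, for $\dt$ small enough $u_\dt|_{[K,\infty)\times S^1}$ stays uniformly away from all other critical points.

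Next I would invoke Theorem \ref{inf conv} on $u_\dt|_{[K,\infty)\times S^1}$, translating $s$ so that the theorem's origin corresponds to $s_1' = K$. This produces a gradient flow $x_p(s-K)$ of $\dt f$, a constant $c_a$, and a $z$-shift $c_z$ such that for all $s \geq K$
\begin{align*}
|y(s,t)| + |z(s,t) - t + c_z| + |x(s,t) - x_p(s-K)| &\leq C\|QY(K,\cdot)\|^{2/p}_{L^2(S^1)} e^{-r(s-K)}, \\
\left| a(s,t) - c_a - \int_K^s e^{\dt f(x_p(s'-K))}\,ds' \right| &\leq C\|QY(K,\cdot)\|^{2/p}_{L^2(S^1)} e^{-r(s-K)},
\end{align*}
together with analogous bounds on all derivatives. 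I then \emph{define} the semi-infinite gradient trajectory
\begin{equation*}
v_K(s,t) := \left(c_a + \int_K^s e^{\dt f(x_p(s'-K))}\,ds',\ t - c_z,\ x_p(s-K),\ 0\right),
\end{equation*}
which is indeed a lift of a gradient trajectory of $\dt f$ into the symplectization. Because the background metric is flat in the Morse-Bott neighborhood, $\exp_{v_K}(\zeta_v) = v_K + \zeta_v$, so I set $\zeta_v := u_\dt|_{[K,\infty)\times S^1} - v_K$, and the pointwise exponential decay of $\zeta_v$ in $C^k$ follows immediately.

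The final step is to replace $\|QY(K,\cdot)\|^{2/p}_{L^2(S^1)}$ on the right-hand side by $\|\zeta_v(K,\cdot)\|^{2/p}_{L^2(S^1)}$. By construction $v_K$ matches the $P$-component (projection onto $\op{ker} A$) of $u_\dt$ at $s=K$ up to errors of size $C\|QY(K,\cdot)\|^{2/p}_{L^2(S^1)}$ already controlled by Theorem \ref{inf conv}, so $\zeta_v(K,\cdot)$ differs from $QY(K,\cdot)$ by a term whose $L^2$ norm is bounded by $\|QY(K,\cdot)\|_{L^2(S^1)}$ itself. Hence the two $L^2$ norms are comparable with a constant uniform in $\dt$, and (after absorbing the constant and adjusting $\lambda$ slightly) we recover the stated bound. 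The main subtlety, and hence the only nontrivial step, is this last comparison: one must check that decomposing $u_\dt - v_K$ using $P,Q$ relative to trivial cylinders is equivalent, up to uniform constants as $\dt\to 0$, to the decomposition used in the proof of Theorem \ref{inf conv}. This is exactly where the uniform $C^k$-closeness of $v_K$ to trivial cylinders on each unit interval (itself a consequence of the slowness of the gradient flow as $\dt\to 0$) enters the argument.
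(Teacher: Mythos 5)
Your proposal is correct and follows exactly the route the paper intends: the statement is presented there without proof as a direct application of Theorem \ref{inf conv}, whose hypotheses are supplied by the preceding trivial-cylinder-closeness proposition, and your construction of $v_K$ from the $PY$-data and of $\zeta_v$ as the difference is precisely the implicit argument. Your final norm comparison is the right point to flag, and it goes through because $v_K$ is $t$-independent in the $Y$-coordinates, so $QY = Q\zeta_v$ and $\|QY(K,\cdot)\|_{L^2(S^1)} \leq \|\zeta_v(K,\cdot)\|_{L^2(S^1)}$ directly.
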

Retracing our footsteps we now construct a preglued curve $u_{r,a,p}$. There is a trivial cylinder $\gamma \times \bb{R}$ so that over the interval $[0,R)$ the difference between $v_K$ and $\gamma \times \bb{R}$ is bounded above by $R\dt \rightarrow 0$, then choose $(r,a,p)$ so that for $u^1+(r,a,p)$, the difference between
\[
|(u^1+(r,a,p)) (R,t_1') - (\gamma \times \bb{R}) (R,t_1')| \leq C(e^{-DR} + R\dt) 
\]
then using this choice of $(r,a,p)$ we construct a pregluing, by gluing together $u^1+(r,a,p) (R,t_1')$ to $v_K(R-K,t_1')$ as we did in the section for gluing. This constructs for us a preglued map 
\[
u_{(r,a,p)}: (\Sigma_{K\dt},\dt j_1) \cup [K,\infty) \times S^1 \longrightarrow M
\]
so that there exists a vector field $\zeta$ so that over $(\Sigma_{K\dt},\dt j_1) \cup [K,\infty) \times S^1$
\[
u_\dt = \exp_{u_{r,a,p}}(\zeta).
\]
We also have estimates of the size of $\zeta$:
\begin{proposition}
Over the semi-infinite interval $[0,\infty)\times S^1 \subset (\Sigma_{K\dt},\dt j_1) \cup [K,\infty) \times S^1$ we impose the exponential weight $e^{ds}$, Then with respect to this exponential weight the $W^{k,p,d}$ norm of $\zeta$ is bounded above by:
\[
\|\zeta\| \leq C\ep' (C+e^{dK})+CRe^{dR}\dt + Ce^{-D'K}
\]
which can be made arbitrarily small by taking $K$ large and $\ep'\rightarrow 0$ as $\dt \rightarrow 0$. In particular for given $\tilde{\ep}^2$ we can upper bound its norm by $\tilde{\ep}^2$ by taking $\dt \rightarrow 0$.
\end{proposition}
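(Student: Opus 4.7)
The plan is to mirror the proof of Proposition~\ref{norm_estimate}, decomposing the preglued domain $(\Sigma_{K\dt},\dt j_1)\cup[K,\infty)\times S^1$ into three regions and bounding the contribution of $\zeta$ on each: (i) the compact core $\Sigma_{K\dt}$ away from the pregluing interpolation, (ii) the cylindrical transition region on $u^1$'s puncture around $s_1'\approx R$ where the interpolation takes place, and (iii) the semi-infinite end on which the preglued map equals $v_K$. Compared with Proposition~\ref{norm_estimate}, the situation is in fact simpler: there is no finite neck in the middle that needs to be controlled by the hyperbolic cosine estimate, only a semi-infinite trajectory on which the previous $C^\infty_{\text{loc}}$ convergence plus Theorem~\ref{inf conv} already give a pointwise exponential decay bound with rate $\lambda>d$.

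On region (i) the pregluing cutoff is trivial, so $u_{(r,a,p)}$ coincides with $u^1+(r,a,p)$, and one has $\zeta=\zeta_{1\dt}-(r,a,p)$ up to the harmless deformation of complex structure $\dt j_1$. Both pieces are of $C^k$ size $\leq C\ep'$, and the weight $e^{ds}$ is at most $e^{dK}$ here, giving the $C\ep'(C+e^{dK})$ contribution. On region (iii) one has $\zeta=\zeta_v$, with $\|\zeta_v(s,\cdot)\|_{C^k}\leq C\ep'e^{-\lambda(s-K)}$ from the previous proposition; since $\lambda>d$, integrating against the weight $e^{ds}$ over $[K,\infty)$ converges and is absorbed into $C\ep'$.

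The only substantive step is the transition region (ii), and it is handled exactly as in Proposition~\ref{norm_estimate}. Writing $\zeta$ via the triangle inequality as a sum of three pieces — (a) the pointwise difference between $u^1+(r,a,p)$ and the trivial cylinder $\gamma\times\bb R$, bounded by $Ce^{-Ds_1'}$ by the Morse--Bott asymptotics of $u^1$; (b) the difference between $v_K$ and the same trivial cylinder, bounded by $CR\dt$ on the window $s_v\in[R-K,R]$ because the gradient flow is very slow for small $\dt$; and (c) the deformations $\zeta_{1\dt},\zeta_v$ already dealt with — and then integrating against the weight: piece (a) against $e^{ds_1'}$ over $[K,\infty)$ yields the $Ce^{-D'K}$ term for any $D'<D-d$, and piece (b) against $e^{ds}$ over a window of length $O(R)$ yields the $CRe^{dR}\dt$ term. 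The pregluing interpolation functions $\beta$ contribute only factors of order $1/R$ in their derivatives and so are negligible compared to the pointwise differences they cut off.

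I do not anticipate a serious obstacle, since this is essentially a bookkeeping exercise built on Proposition~\ref{norm_estimate} and Theorem~\ref{inf conv}. The one item to be careful about is that the exponential decay rate $\lambda$ for $\zeta_v$ on region (iii) genuinely dominates $d$, so that the weighted norm on the semi-infinite end is finite and bounded by $C\ep'$; this holds by our convention that $d$ is strictly smaller than the spectral gap of the asymptotic operator $A$, and in particular independent of $\dt$. The final clause — that the bound can be made as small as $\tilde{\ep}^2$ by shrinking $\dt$ — follows by choosing $K$ large (to kill $Ce^{-D'K}$), then $\ep'$ small (to kill $C\ep'(C+e^{dK})$ for that fixed $K$), then $\dt$ small enough that $CRe^{dR}\dt\to 0$, exactly as in Proposition~\ref{norm_estimate}.
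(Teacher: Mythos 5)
Your proposal is correct and follows essentially the same approach as the paper, whose own proof of this proposition is simply a citation of Proposition \ref{norm_estimate}; your three-region decomposition and the origin of each of the three terms ($C\ep'(C+e^{dK})$ from the compact core, $CRe^{dR}\dt$ from the slow-flow window, $Ce^{-D'K}$ from the weighted Morse--Bott decay of $u^1$) reproduce that argument faithfully in the semi-infinite setting. Your added remark that the decay rate $\lambda$ on the semi-infinite end must dominate $d$ is the right point to check, and it holds by the paper's convention that $d$ is chosen much smaller than the spectral gap of $A$.
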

\begin{proof}
The same proof as Proposition \ref{norm_estimate}.
\end{proof}
Then we truncate $\zeta$ into $\beta_v\zeta_v + \beta_1 \zeta_1$ so that the pair $(\zeta_v,\zeta_1)$ solves the equations $\Theta_u=0, \Theta_v=0$ near the cylindrical end. Here $\Theta_v$ is the equation living over the gradient cylinder $v_K$, and $\Theta_u$ lives over $u^1$. This process is entirely analogous to the previous section, to wit, we apply the contraction mapping principle over domains of the form $[R,\infty)\times S^1$ to show $(\zeta_v,\zeta_1)$ can be extended to solutions of $\Theta_u=0, \Theta_v=0$. We
note that we no longer need to worry whether $\zeta_v$ lands in $H_0$ because there is no such requirement over $\Theta_v$. Further $\zeta_1$ already lands inside image of $Q_1$ because we arranged this when we preglued the finite gradient trajectories, and that conclusion is unaffected by extensions of $\zeta_1$ near the cylindrical neck. Hence we apply the above to each of the ends of $u^i$ and in conjunction with the extension of vector fields along the finite gradient trajectories, we conclude :
\begin{proposition}
The gluing construction is surjective in the case of 2-level cascades with one finite gradient cylinder segment in the intermediate cascade level. To be more precise, suppose $u_\dt$ degenerates into a transverse and rigid 2-level cascade $\{u^1,u^2\}$, with only one finite gradient trajectory in the intermediate cascade level, then $u_\dt$ corresponds (up to translation) to the unique solution of the system of equations $\mathbf{\Theta_v}=0,\mathbf{\Theta}_u=0$ with our given choice of right inverses.
\end{proposition}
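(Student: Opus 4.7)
The plan is to show that any $u_\dt$ close to the cascade $\{u^1,u^2\}$ can be packaged into a solution of the system $(\mathbf{\Theta_u}=0,\mathbf{\Theta_v}=0)$ whose underlying data lies inside the chosen right-inverse images; then appeal to the uniqueness statement already established for the gluing construction. The first step will be to use SFT-type compactness (our form of which is proved in the Appendix) together with the $C^\infty_{loc}$ convergence $u_\dt \to \{u^1,u^2\}$ to decompose the domain $\Sigma_\dt = \Sigma_{+\dt K}\cup N_{\dt K}\cup \Sigma_{-\dt K}$. On the two compact pieces $\Sigma_{i\dt K}$, $C^\infty_{loc}$ convergence directly produces vector fields $\zeta_{i\dt}\in u^{i*}TM|_{\Sigma_{iK}}$ and complex structure variations $\dt j_i$ of size $\leq \ep'$, with $\ep'\to 0$ as $\dt \to 0$ for fixed $K$. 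On the long neck $N_{\dt K}$, we first show via a Gromov-bubbling argument (Lemma~\ref{glob}) that $u_\dt|_{N_{\dt K}}$ stays uniformly close to trivial cylinders, and then apply the finite-interval decay estimate Theorem~\ref{finite conv} to produce a gradient trajectory $v_K$ and a vector field $\zeta_K$ over $v_K$ with $\cosh$-type decay governed by its boundary $L^2$-norms.

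The second step will be to assemble this local data into a single preglued map and a global vector field. Because $u^i$ converges exponentially to trivial cylinders $\tilde\gamma_i$ and $v_K$ is close to trivial cylinders $\hat\gamma_i$ on its ends, one finds asymptotic vectors $(r,a,p)_i$ of size $\leq \ep'$ matching $\tilde\gamma_i+(r,a,p)_i=\hat\gamma_i$, and a resulting preglued curve $u_{\dt,K,(r,a,p)_i}$ together with a decomposition $\zeta_{\dt,K,(r,a,p)_i}=\beta_1\zeta_{1,\dt,\ldots}+\beta_v\zeta_{v,\dt,\ldots}+\beta_2\zeta_{2,\dt,\ldots}$. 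The norm estimate of Proposition~\ref{norm_estimate} will then show these can all be made smaller than an auxiliary $\tilde\ep^2\ll\ep$ provided $K$ is large and $\dt$ small. Then we adjust the asymptotic vectors so that the pieces land in the correct subspaces: the global $\bb{R}$-translation of $u_\dt$ is used to place $\zeta_{1,\dt,\ldots}$ into $\mathrm{Im}\,Q_1$; a modification $p_2\to p_2+\dt p_2$ of size $\ep'\dt$ places $\zeta_{2,\dt,\ldots}$ into $\mathrm{Im}\,Q_2$; finally, since the $L^*$-functionals on $\zeta_{v,\dt,\ldots}$ are exponentially small (by the decay estimates of $QY$ across the neck), small coordinated adjustments of $(r_i,a_i,p_i)$ force $\zeta_{v,\dt,\ldots}\in H_0$ without destroying the previous two inclusions.

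The third and most technical step will be the extension of these truncated vector fields to solutions of $\mathbf{\Theta_u}=0,\mathbf{\Theta_v}=0$ on the full domains. Here the detour on linear operators enters decisively: using the isomorphism $(\Pi_-,\Pi_+,D_\dt')$ on finite cylinders and $(\Pi_-,D_\dt')$ on semi-infinite ends (both with uniformly bounded inverses as $\dt\to 0$), we set up contraction mappings whose boundary data is fed by the appropriate projections $\Pi_\pm$ of the global vector field at the transition slices $s_v=R-K$ and $s_v=3R-K$ (respectively at $s_1'=K$ for semi-infinite ends). Proposition~\ref{proposition:uniquedeform} guarantees uniqueness of the intermediate piece on $[R-K,3R-K]$ solving $\db_{J_\dt}=0$ with the prescribed $\Pi_\pm$ boundary data. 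Concatenating with the extensions $\hat\zeta_{i,\dt,\ldots}$ produced by contraction mapping on $[R,\infty)\times S^1$ and $(-\infty,3R-K]\times S^1$, and using uniqueness to show $C^k$-smoothness across $s_v=R-K$ and $s_v=3R-K$, we obtain genuine solutions of $\Theta_i=0,\Theta_v=0$ whose norms are $\leq \ep$ in the weighted Sobolev spaces, with $\zeta_{i}\in\mathrm{Im}\,Q_i$ and $\zeta_v\in H_0$.

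Finally, invoking the uniqueness clause of the gluing theorem (the unique solution of $(\Theta_i,\Theta_v)=0$ in an $\ep$-ball subject to the prescribed right inverses), the pair of solutions just constructed must coincide with the solution produced by our gluing procedure for this cascade, and hence $u_\dt$ equals the glued curve up to the overall $\bb{R}$-translation chosen at the outset. The main obstacle in executing the plan will be the simultaneous control of all norms: the $p=2$ linear theory forces us to do the extension step in $W^{k,2}$ and then upgrade to $W^{2,p,d}$ for $p>2$ via the Sobolev-embedding/$C^0\le 1$ trick, while every adjustment of asymptotic vectors has to be checked not to violate the previously obtained size bounds or inclusions. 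The exponential decay of the $L_*$ functionals (and of the trivial-cylinder-trivialized correction between $v_K$ and $\hat\gamma_i$) is what makes all these adjustments compatible; this is the step that most directly uses the analysis of Section~\ref{behav} and the exponential decay established in Section~\ref{expdecay}.
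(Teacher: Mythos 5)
Your proposal is correct and follows essentially the same route as the paper: decompose the domain via the convergence statement, extract the neck data from Lemma \ref{glob} and Theorem \ref{finite conv}, assemble and truncate a global vector field with the norm bound of Proposition \ref{norm_estimate}, adjust the asymptotic vectors to place the pieces in $\mathrm{Im}\,Q_i$ and $H_0$, extend via the $(\Pi_-,\Pi_+,D_\dt')$ contraction mappings of Proposition \ref{proposition:uniquedeform} and its semi-infinite analogue, and conclude by the uniqueness of the glued solution. The only remarks worth adding are bookkeeping points the paper also handles (the $t_v\sim t_2+(r_1-r_2)$ twist at the $u^2$ end and the $W^{k,2}\to W^{2,p,d}$ upgrade), both of which you flag correctly.
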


\subsection{Multiple level cascades}
In this subsection we generalize our result to multiple level cascades. The main subtlety is when two consecutive levels meet along multiple ends on an intermediate cascade level. Hence we take that up first in what follows. The main difficulty will be setting up notation.
\subsubsection{2-level cascade meeting along multiple ends}
Let $u_\dt$ converge to a 2 level cascade $\{u^1,u^2\}$. Each $u^i$ is not necessarily connected. As before we first consider the finite gradient trajectories. The maps $u^1$ and $u^2$ meet along $i\in \{1,...,N\}$ free ends in the middle. Consider the tuple $(i,j)$ where $i\in \{1,..,N\}$ label the specific end, and $j\in \{1,2\}$ denotes whether the end belongs to $u^1$ or $u^2$. We fix cylindical ends around each puncture of the form $[0,\pm \infty) \times S^1$ (we won't bother labelling these with $(i,j)$ to avoid further clutter of notation). Recall the vector spaces with asymptotic vectors we associate to each end that meets the intermediate cascade level of $u^i$, which we denote by $V_{(i,j)}$. Each of these vector spaces are spanned by asymptotic vectors $(\p_a,\p_z,\p_x)$, we denote an element of these vector spaces by triples $(r,a,p)_{(i,j)}$. Recall there is a submanifold
\[
\Delta \subset \oplus_{(i,j)} V_{(i,j)}
\]
within an $\ep$ ball of the origin of $\oplus_{(i,j)} V_{(i,j)}$ so that if we used elements in $\Delta$ we would be able to construct a pregluing from the domains of $u^1$ and $u^2$. Recall the reason we have to do this is that, as we recall, moving each $p_{(i,j)}$ affects the $a$ distance between $u^1$ and $u^2$, and we need to make sure that the ends $(i,j)$ can be matched together.

Now given the degeneration of a $J_\dt$-holomorphic curve $u_\dt$ to the cascade $\cas{u} =\{ u^1,u^2\}$, let $K>0$ be large enough, for each end $i$ there is a gradient flow trajectory $v_i$ so that when restricted to the segment $[-s_i,s_i]\times S^1$, we have that
\[
|v_i(s_i,t)-u^1(-K,t)|, |v_i(-s_i,t)-u^2(K,t)| \leq \ep'
\]
and $u_\dt$ is very close to the gradient flow $v_i$.
Then as before we can constructed a preglued curve $u_{(r,a,p)_{(i,j)}}$ so that over the domain of the preglued curve, we have
\[
u_\dt = \exp_{u_{(r,a,p)_{(i,j)}}} (\zeta)
\]
for $\zeta$ a global vector field whose norm can be taken to be arbitrarily small by picking $K$ large enough and (consequently) $\ep'$ and $\dt$ small enough. Again here we are only worrying about the finite gradient trajectories, we will worry about the semi-infinite trajectories later.

Then we can split $\zeta$ into a sum of several other vector fields as before, namely we can write 
\[
\zeta = \zeta_1+\zeta_2 + \sum_i \zeta^i
\]
where $\zeta_i \in W^{2,p,d}(u^{i*}TM)$ for $i=1,2$, and $\zeta^i \in W^{2,p,w}(v_i^*TM)$ for $i=1,...,N$. Using global $a$ translation of entire cascade we can ensure $\zeta_1 \in Im Q_1$, and using a global increase in $p_{(i,1)}-p_{(i,2)}$ inside $\Delta$ we can ensure also $\zeta_2\in ImQ_2$. Here the definition of $Q_i$ is as before: we take compact neighborhoods of $u^i$ and require the integral of $\la\zeta, \p_a\ra$ over these neighborhoods is zero. This defines a codimension one subspace which we take to be the image of $Q_i$.

Finally to ensure $\zeta^i \in H_{0i}$. As before by exponential decay estimates the actual size of vector fields to make $\zeta^i \in H_{0i}$ are negligible compared to $\ep'$. The difference from the previous case is that now there are multiple ends to worry about. To do this we need some understanding of $\Delta$ as a manifold.

Recall for near any point $x\in \Delta$, its tangent space is spanned by 
\begin{align*}
&\{r_{(i,1)}, r_{(i,2)}\}, \quad \{ a_{(i,j)} \},\\
&\{ p_{(1,1)}-p_{(1,2)} =T , p_{(i,1)}-p_{(i,2)} = T + \dt f_i(a_{(1,1)},a_{(2,1)},p_{(i,2)},a_{(i,1)},a_{(i,2)} )
\}
\end{align*}
the functions $f_i$ have uniformly bounded $C^1$ norm. The reason they appear is because ends meeting at different values of $f$ travel different amounts of $a$ distance for the same change of $p$, so a correction term is needed so the preglued curve can be constructed.\\
Recall that for $\zeta_i \in H_{0i}$ we must have the functionals 
\[
L_{i,*}(\zeta_i)=0,\quad  *=r,a,p.
\]
For $*=r$, this can be adjusted for each $i$ by a change in $\{r_{(i,1)} = r_{(i,2)}\}$. For $*=p,a$, we first repeat the previous construction for $i=1$ verbatim to get vector fields $\zeta^1 \in H_{01}$ while keeping $\zeta_i \in \Im Q_i $. I.e. we take $a_{(1,j)}, p_{(1,j)}$ so that it does not induce global translations in $a$ direction of the thick parts of $u^1,u^2$ as they enter the pregluing to ensure $\zeta^1\in H_{01}$. For any other $i>1$, the only constraint is $p_{(i,1)}-p_{(i,2)} = T + f_i(a_{(1,1)},a_{(2,1)},p_{i,2},a_{(i,1)},a_{(i,2)})$, hence as before we first change $p_{(i,j)}$ simultaneously by $\Delta p_i$ to make $L_p(\zeta^i)=0$ and in this process we adjust $a_{(i,j)}$ to make the pregluing condition still hold. Finally we change $a_{(i,j)}$ by the same amount $\Delta a_{i,j}$ to make $L_a(\zeta^i)=0$ while preserving the previous equalities.

Using the same kind of machinery to extend the vector field $\zeta$ to solutions of $\Theta_1=0, \Theta_2=0, \Theta_{v_i}=0$, and using the exactly the same set up for semi-infinite gradient trajectories,
we arrive at the follow proposition:
\begin{proposition}
If a sequence of $J_\dt$-holomorphic curves $u_\dt$ degenerates into a transverse and rigid 2-level cascade $\{u^1,u^2\}$, then $u_\dt$ comes from the unique solution to our gluing construction, namely, $\Theta_1=0, \Theta_2=0, \Theta_{v_i}=0$, subject to our choice of right inverses.
\end{proposition}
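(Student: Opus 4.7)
The plan is to adapt the surjectivity argument from the single-end case of the previous subsections, with the main new ingredient being a careful parametrization of the pregluing submanifold $\Delta$ so that the asymptotic-vector adjustments needed to land in $\op{Im}Q_i\oplus\bigoplus_i H_{0i}$ can be carried out without exiting $\Delta$.

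First I would invoke the SFT-type compactness theorem together with the asymptotic estimates of Section \ref{behav}: for each $K$ large, there exists $\dt_0$ such that for all $\dt<\dt_0$, the domain $\Sigma_\dt$ decomposes into a thick part over each $u^i$ (on which $u_\dt$ is presented as a small deformation $\exp_{u^i,\dt j_i}(\zeta_{i\dt})$) together with $N$ long cylindrical necks $[-s_i,s_i]\times S^1$, each $C^\infty_{\op{loc}}$-close to a finite gradient cylinder $v_i$ connecting the relevant ends. The $C^k$ bounds $\|\zeta_{i\dt}\|\leq\ep'(\dt)\to 0$, together with Theorem \ref{finite conv}, then yield a global comparison $u_\dt=\exp_{u_{(r,a,p)_{(i,j)}}}(\zeta)$, where $u_{(r,a,p)_{(i,j)}}$ is a pregluing built from some initial data in $\Delta$ and $\zeta$ has size bounded by $\tilde\ep^2$ in the weighted norms of Proposition \ref{norm_estimate} (taking $\dt$ small).

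The next step is the crucial adjustment. Split $\zeta=\zeta_1+\zeta_2+\sum_i\zeta^i$ using the cut-off functions $\beta_i,\beta_{v_i}$ from Section \ref{gluing}. A global $\bb R$-translation of the entire cascade (allowed by symplectization invariance) puts $\zeta_1\in\op{Im}Q_1$. A uniform shift $p_{(i,1)}-p_{(i,2)}\mapsto p_{(i,1)}-p_{(i,2)}+\Delta p$ inside $\Delta$, accompanied by the tangential $f_i$-corrections, then puts $\zeta_2\in\op{Im}Q_2$. For each end $i$ independently, one adjusts $r_{(i,1)}=r_{(i,2)}$ to kill $L_{i,r}(\zeta^i)$, then shifts $p_{(i,j)}$ simultaneously by $\Delta p_i$ (with the compensating $a_{(i,j)}$ motion dictated by the constraint-defining functions $f_i$) to kill $L_{i,p}(\zeta^i)$, and finally shifts $a_{(i,j)}$ by a common amount to kill $L_{i,a}(\zeta^i)$. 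All shifts have size controlled by the exponential decay estimates on $L_*(\zeta^i)$ proved in the single-end case, and hence are orders of magnitude smaller than $\tilde\ep$; since the Jacobian of these reparametrizations differs from the identity by $O(\dt)$ via the bounds $|\p_{(\cdot)}f_i|\leq C\dt$, the three rounds of adjustments decouple to leading order, preserve all previously-arranged conditions, and keep $\|\zeta\|<\tilde\ep$.

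Finally, having arranged $\zeta_i\in\op{Im}Q_i$ and $\zeta^i\in H_{0i}$ with $\|\zeta\|<\tilde\ep$, I would extend these vector fields past the pregluing cut-offs to honest solutions of the full system $\mathbf{\Theta}_1=0$, $\mathbf{\Theta}_2=0$, $\mathbf{\Theta}_{v_i}=0$. This runs exactly as in Proposition \ref{proposition:uniquedeform} and its sequel: over each finite neck, the trace-plus-$D_\dt'$ operator $(\Pi_-,\Pi_+,D_\dt')$ has uniformly bounded inverse, so a contraction-mapping argument on the $\tilde\ep$-ball produces a unique extension $\hat\zeta^i$ with the prescribed traces at the pregluing boundaries, and an analogous unique extension works over $[R,\infty)\times S^1\subset u^{i*}TM$ and on the semi-infinite necks. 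Uniqueness in each contraction, combined with smooth concatenation at the cut-off regions, guarantees that the extensions agree with the original truncations of $\zeta$ and together assemble into a bona fide solution of the gluing system. Because the earlier gluing theorem established \emph{uniqueness} of solutions in the image of the chosen right inverses, $u_\dt$ must coincide, up to an $\bb R$-translation in the symplectization direction, with the glued curve produced by Theorem \ref{main_theorem}. The main obstacle is the coupling built into $\Delta$: without the $|\p_{(\cdot)}f_i|\leq C\dt$ estimates from Section \ref{subsection_gluing_multiple_level}, the three sequential adjustments would cross-contaminate, and the key technical point of the proof is that these cross-terms are small enough that the rounds really can be performed in sequence without undoing one another.
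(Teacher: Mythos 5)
Your proposal is correct and follows essentially the same route as the paper: obtain the global comparison vector field from compactness plus the asymptotic estimates, split it with the gluing cut-offs, adjust the asymptotic data inside $\hat{\Delta}$ (using the $|\partial f_i|\leq C\dt$ bounds to decouple the sequential adjustments) so the pieces land in $\op{Im}Q_i$ and $H_{0i}$, and then extend to a genuine solution of the system via the boundary-value contraction arguments, concluding by uniqueness of the glued solution.
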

\subsubsection{General case}
The general case proceeds largely analogously to the 2-level case. We shall be very brief in sketching it out. Assuming $u_\dt$ degenerates into an $n$-level transverse and rigid cascade, $\cas{u}=\{u^1,..,u^n\}$, then we use the notation $v_{(i,j)}$ to denote a finite gradient trajectory connection between $u^i$ and $u^{i+1}$, connecting between the $j$th end in that intermediate cascade level. As before we can find a pregluing $u_{pre} : \Sigma \rightarrow M$ depending on the data $(r,a,p)_{(i,j)} \in \oplus V_{(i,j)}$ so that there is a global vector field $\zeta$ so that
\[
u_\dt = exp_{u_{pre}}(\zeta)
\]
where $\zeta$ has very small norm.
and as before we split 
\[
\zeta = \sum_i \zeta_i + \sum_{(i,j)}\zeta^{i,j}
\]
for the intermediate cascade levels. by adjusting the asymptotic vector fields $p_{i,j}$ we can ensure $\zeta_i \in Im Q_i$, and using the same kind of adjustments as above we make sure $\zeta^{(i,j)}\in H_{0ij}$. Finally using the same analysis we extend them to solutions of $\mathbf{\Theta}_* =0$ - here we just mean the system of equations we used in the gluing construction, using the same kind of analysis to take case of semi-infinite gradient ends. Hence we have proved:
\begin{theorem}
The gluing construction is surjective in the following sense: if $u_\dt$ converges to a $n$-level transverse and rigid cascade $\cas{u}$. Then for each such cascade $\cas{u}$ after our choice of right inverses we constructed a unique glued curve for $\dt>0$ small enough, and $u_\dt$ agrees with this glued curve up to translation in the symplectization direction.
\end{theorem}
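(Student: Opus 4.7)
The plan is to reduce the surjectivity claim to the uniqueness clause of the gluing theorem by realizing $u_\dt$ as the (unique) solution of the coupled system $\{\mf{\Theta}_i=0,\ \mf{\Theta}_{v_{(i,j)}}=0\}$ with the pre-specified right inverses $Q_i$ and codimension-three subspaces $H_{0,(i,j)}$. First, I would invoke the SFT compactness statement from Section \ref{degenerations} (and the Appendix) to get $C^\infty_{\mathrm{loc}}$ convergence of $u_\dt$ to each level $u^i$ on the thick parts, and uniform $C^k$-closeness of $u_\dt$ to trivial cylinders on the neck regions, as in the analog of Lemma \ref{glob}. The asymptotic estimates of Section \ref{behav}, Theorems \ref{inf conv} and \ref{finite conv}, then identify each neck of $u_\dt$ as a graph over a particular (semi-)infinite or finite gradient cylinder $v_{(i,j)}$ with an exponentially small graphing vector field.

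Second, I would build a preglued map $u_{(r,a,p)_{(i,j)}}$ from $\cas{u}$ with asymptotic data $(r,a,p)_{(i,j)} \in \hat{\Delta}\oplus V(r)''$ chosen so that the trivial-cylinder approximations provided by Theorems \ref{inf conv}, \ref{finite conv} line up on both sides of every pregluing region. This yields $u_\dt = \exp_{u_{pre}}(\zeta)$ for a global vector field $\zeta$ whose weighted norm can, by the analog of Proposition \ref{norm_estimate}, be made smaller than $\tilde{\ep}^2$ for any prescribed $\tilde{\ep}$ by first taking $K$ large and then $\dt$ small. Cutting off $\zeta$ with the same functions $\beta_i,\beta_{v_{(i,j)}}$ used in the pregluing of Section \ref{gluing} gives the decomposition $\zeta = \sum_i \zeta_i + \sum_{(i,j)}\zeta^{(i,j)}$ with uniformly controlled weighted norms on each piece.

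Third, which is the main technical step, I would adjust the pregluing parameters so that simultaneously $\zeta_i\in \op{Im}Q_i$ and $\zeta^{(i,j)}\in H_{0,(i,j)}$. The first requirement is handled by the free $\bb{R}$-translation of the whole configuration together with the $\Delta p$ ambiguities inside $\hat{\Delta}$; the resulting shifts in $(a,p)_{(i,j)}$ propagate through the correction functions $f_i$ of size $O(\dt)$ that define $\hat{\Delta}$, and thus remain compatible with the pregluing matching condition. The second requirement uses that each functional $L_*(\zeta^{(i,j)})$, $*=a,z,x$, is exponentially small in the neck length by the decay estimates of Section \ref{expdecay} combined with the Morse-Bott asymptotic analysis; the required shift in $(r,a,p)$ is therefore negligible compared to $\tilde{\ep}$ and does not spoil the previous bounds. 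I expect this coordinated adjustment to be the main obstacle, since it is essentially a linear-algebra problem inside $\hat{\Delta}$ that must be solved consistently across all necks at once.

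Finally, I would extend each truncated piece to a genuine solution of the corresponding equation in $\{\mf{\Theta}_i,\mf{\Theta}_{v_{(i,j)}}\}$ by solving a boundary-value problem on each cylindrical neck. On a finite neck $[R-K,3R-K]\times S^1$, uniqueness and existence of a small $J_\dt$-holomorphic deformation of $v_{(i,j)}$ with prescribed $\Pi_\pm$-projections at the two ends follows from the uniformly bounded inverse of $(\Pi_-,\Pi_+,D_\dt')$ established in the linear-operator detour; on a semi-infinite end, one uses the analogous inverse of $(\Pi_-,D_\dt')$. A contraction mapping argument produces extensions $\hat{\zeta}_i,\hat{\zeta}^{(i,j)}$, and by the uniqueness part of the boundary-value problem the concatenations with the original $\zeta_i,\zeta^{(i,j)}$ match smoothly across the gluing regions. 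The result is a tuple of vector fields satisfying $\{\mf{\Theta}_i=0,\ \mf{\Theta}_{v_{(i,j)}}=0\}$ with $\zeta_i\in\op{Im}Q_i$ and $\zeta^{(i,j)}\in H_{0,(i,j)}$; by the uniqueness clause in Theorem \ref{main_theorem} this coincides with the glued curve constructed from $\cas{u}$, up to the overall symplectization translation fixed at the very first step.
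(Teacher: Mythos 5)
Your proposal is correct and follows essentially the same route as the paper: SFT compactness plus the asymptotic estimates of Section \ref{behav} to realize $u_\dt$ as a small graph over a pregluing, adjustment of the asymptotic data inside $\hat{\Delta}$ so the truncated pieces land in $\op{Im}Q_i$ and $H_{0,(i,j)}$, extension to solutions of $\{\mathbf{\Theta}_i=0,\mathbf{\Theta}_{v}=0\}$ via the boundary-value isomorphisms $(\Pi_-,\Pi_+,D_\dt')$, and conclusion by uniqueness of the solution for the fixed right inverses. This matches the paper's argument step for step.
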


\begin{remark}
We note our theorem about correspondence between transverse rigid cascades and rigid $J_\dt$-holomorphic curves studies the correspondence of a \emph{single} cascade and a \emph{single} curve. Usually in Floer theory one needs to show the collection of all transverse and rigid cascades is in bijection with the collection of all rigid holomorphic curves. To apply our results in these circumstances one usually needs some finiteness assumptions on the cascades and the holomorphic curves. For more details see \cite{Tips}.
\end{remark}

\appendix
\section{Appendix: SFT compactness for cascades}
In this appendix we outline the SFT compactness result required for the degeneration of holomorphic curves to cascades. 

We borrow heavily the results and notation from the original SFT compactness paper \cite{SFT}. In fact our compactness theorem will follow from their setup in combination with our estimates of how $J_\dt$-holomorphic curves behave near Morse-Bott tori. The behaviour of holomorphic curves near a Morse-Bot torus is already discussed in Chapter 4 of \cite{BourPhd}, and is implicit in \cite{oancea}, for example  their Section 4.2 and Appendix. Hence this appendix is more of an expository nature for the sake of completeness, and we will point out the differences and similarities between our results and theirs in the course of proving our version of SFT compactness theorem.

\subsection{Deligne-Mumford moduli space of Riemann surfaces}
We begin with a review of the Deligne- Mumford compactification of stable Riemann surfaces. Most of the material in this section is taken directly from Section 4 of \cite{SFT}, but is repeated for the convenience of the reader.

Let $\mathbf{S}= (S,j,M)$ denote a closed Riemann surface $S$ with complex structure $j$ with marked points set $M$. The surface is called \emph{stable} if $2g +\mu \geq 3$, where $g$ is the genus and $\mu :=|M|$ is the number of marked points. Stability implies the automorphism group of the surface $\mathbf{S}$ is finite.

The uniformization theorem equips $\mathbf{\dot{S}} : = (S\setminus M,j)$ with a unique complete hyperbolic metric of constant curvature and finite volume, which we denote by $h^{\mathbf{S}}$. Each puncture in $\mathbf{\dot{S}}$ corresponds to a cusp in the metric. We let $\mcal{M}_{g,\mu}$ denote the moduli space of Riemann surfaces of signature $(g,\mu)$.

\subsubsection{Thick-Thin decomposition}
Fix $\ep>0$, given a stable Riemann surface $\mf{S}$, for $x\in \mf{\dot{S}}$ let $\rho(x)$ denote the injectivity radius of $h^{\mf{S}}$ at $x$. As in Section 4 of \cite{SFT}, we denote by $\textup{Thin}_\ep(\mf{S})$ and $\textup{Thick}_\ep (\mf{S})$ its $\ep$-thin and thick parts where
\begin{equation*}
    \textup{Thin}_\ep(\mf{S}) :=\overline{ \{x\in \dot{\mf{S}} | \rho(x) < \ep \}}
\end{equation*}
\begin{equation*}
    \textup{Thick}_\ep(\mf{S}) := \{x\in \dot{\mf{S}} | \rho(x) \geq \ep \}.
\end{equation*}
It is a fact of hyperbolic geometry that there is a constant $\ep_0 = sinh^{-1}(1)$ so that for all $\ep <\ep_0$ we have each component of $\textup{Thin}_\ep(\mf{S})$ is conformally equivalent to either a finite cylinder of the form $[-L,L]\times S^1$ or semi-infintie cylinder $[0,\infty)\times S^1$. Each compact component of the form $C= [-L,L]\times S^1$ contains a unique closed geodesic of length equal to $2\rho(C)$, which we denote by $\Gamma_C$. Here we set $\rho(C):=\inf_{x\in C} \rho(x)$.

\subsubsection{Oriented blow up of punctured Riemann surface}
Given $\mf{S} = (S,j,M)$, let $z\in M$, then as in \cite{SFT} we can define the oriented blow up $S^z$ as the circle compactification of $S\setminus z $ with boundary $\Gamma_z = T_zS/ \bb{R}_+^*$. The complex structure $j$ defines an $S^1$ action on $\Gamma_z$. The surface $S^z$ comes equipped with a map $\pi:S^z \rightarrow S\setminus \{z\}$ which collapses the blown up circle. Given a finite set 
$M =\{z_1,...,z_k\}$ we can similarly define the blown up space $S^M$ with boundary circles $\Gamma_1,..,\Gamma_k$, with projection $\pi: S^M\rightarrow S\setminus M$ that collapses the boundary circles.

\subsection{Stable nodal Riemann surface}
See Section 4.4 in \cite{SFT}.
Let $\mf{S}=(S,j,M,D)$ be a possibly disconnected Riemann surface, where $M,D$ are both marked points, and the cardinality of $D$ is even. We write $D =\{\overline{d}_1,\underline{d_1},...,\overline{d_k},\underline{d_k}\}$. The \emph{nodal Riemann surface} is the tuple $\mf{S}=(S,j,M,D)$ under the additional equivalence relations so that each pair $(\overline{d_i},\underline{d_i})$ and the set of all such special pairs are unordered.

From a given nodal Riemann surface $\mf{S}=(S,j,M,D)$ we can construct the following singular surface
\begin{equation}
    \hat{S}_D := S/\{\overline{d_i}\sim \underline{d_i}, i=1,..,k\}.
\end{equation}
The arithmetic genus of a nodal Riemann surface is defined to be $g=\frac{1}{2}\# D-b_0 +\sum_{i=1}^{b_0}g_i+1$, where $b_0$
 is the number of connected components of $S$. 
The \emph{signature} of a nodal Riemann surface is given by the pair $(g,\mu)$, where $g$ is the arithmetic genus and $\mu$ is the number of marked points in $M$.

A stable Riemann surface $\mf{S}=(S,j,M,D)$ is called \emph{decorated} if for each pair $(\overline{d_i},\underline{d_i})$ we include the information of orientation reversing orthogonal map
\begin{equation}
    r_i: \overline{\Gamma_i}:= (T_{\overline{d_i}}S\setminus 0)/\bb{R}_{>0} \longrightarrow \underline{\Gamma_i}:= (T_{\underline{d_i}}S\setminus 0)/\bb{R}_{>0}.
\end{equation}
We also consider \emph{partially decorated Riemann surfaces} where such $r_i$ maps are only given for a subset $D'\subset D$.

We consider the moduli space of nodal Riemann surfaces $\overline{\mcal{M}}_{g,\mu}$ and decorated nodal Riemann surface $\overline{\mcal{M}}_{g,\mu}^\$$ of signature $(g,\mu)$. The moduli space of smooth Riemann surfaces of signature $(g,\mu)$, which we write as $\mcal{M}_{g,\mu}$, includes naturally in the above spaces. We refer the reader to Section 4.5 in \cite{SFT} for detailed topologies of these spaces. For us we only need the notion of convergence, which we summarize below.

Given a decorated stable nodal Riemann surface ($r$ denotes the decoration), which we write as $(\mf{S},r) = (S,j,M,D,r)$, we first take its oriented blow up along points of $D$, to obtain boundary circles $\overline{\Gamma_i}$ and $\underline{\Gamma_i}$ associated to the pair $\{\overline{d_i},\underline{d_i}\}$, then using the orthogonal maps $r_i$, we glue the resulting pieces together along $\overline{\Gamma_i},\underline{\Gamma_i}$ and call the resulting surface $S^{D,r}$. The glued copy of $\overline{\Gamma_i}$ and $\underline{\Gamma_i}$ is called $\Gamma_i$. The surface $S^{D,r}$ has the same genus as the arithmetic genus of $(\mf{S},r)$, and inherits a uniformizing metric from $h^{j,M\cup D}$, which we write as $h^{\mf{S}}$. The metric $h^{\mf{S}}$ is defined away from the $\Gamma_i$ and points of $M$. We can talk about the thick/thin components of $\mf{S}$ and view them as subsets of $\dot{S}^{D,r}$. Every compact component $C$ of $\overline{Thin_\epsilon(S)} \subset S^{D,r}$ is a compact annulus, it has either a closed geodesic which we denote by $\Gamma_C$, or one of the special circles $\Gamma_i$ constructed above, which we will also denote by $\Gamma_C$.

Let $(\mf{S}_n,r_n) = \{S_n,j_n,M_n,D_n,r_n\}$ be a sequence of decorated stable nodal Riemann surfaces. We say $(\mf{S}_n,r_n)$ converges to a nodal stable Riemann surface $(\mf{S},r) = (S,j,M,D,r)$ if for large enough $n$ there are diffeomorphisms $\phi_n: S^{D,r} \rightarrow S_n^{D_n,r_n}$ with $\phi_n(M_n) =M$, and the following conditions hold (Section 4.5 in \cite{SFT}):
\begin{itemize} 
    \item $\mf{CRS1}$ For all $n\geq 1$, the images $\phi_n(\Gamma_i)$ of the special circles $\Gamma_i \subset S^{D,r}$ for $i=1,..,k$ are special circles or closed geodesics of the metrics $h^{j_n,M_n\cup D_n}$ on $\dot{S}^{D_n,r_n}$. Moreover, all special circles on $S^{D_n,r_n}$ are among these images.
    \item $\mf{CRS2}$ $h_n\rightarrow h$ in $C^\infty_{loc}(S^{D,r}\setminus (M \cup \bigcup _1^k  \Gamma_i))$ where $h_n:= \phi_n^*h^{j_n,M_n\cup D_n}$.
    \item $\mf{CRS3}$ Given a component $C$ of $Thin_\ep(\mf{S})\subset \dot{S}^{D,r}$, which contains a special circle $\Gamma_i$, and given a point $c_i \in \Gamma_i$, we consider for every $n\geq 1$ the geodesic arc $\dt_i^n$ for the induced metric $h^n= \phi_n^*h^{j_n,M_n\cup D_n}$, which intersects $\Gamma_i$ orthogonally at $c_i$ (even though the distance is infinite it still makes sense to talk about geodesics intersecting orthogonally at infinity), and whose ends are contained in the $\ep$-thick parts of $h^n$. Then $C\cap \dt^n_i$ converges as $n\rightarrow \infty$ in $C^0$ as a continuous geodesic for $h^\mf{S}$ which passes through the point $c_i$.
\end{itemize}
We note that $\mf{CRS2}$ is equivalent to $\phi^*_nj_n \rightarrow j$ in $C^\infty_{loc}(S^{D,r}\setminus (M\cup \bigcup_1^k \Gamma_i))$.
The topology on 
$\overline{\mcal{M}}_{g,\mu}$ is defined to be the weakest topology for which the forgetful map $\overline{\mcal{M}}_{g,\mu}^\$ \rightarrow \overline{\mcal{M}}_{g,\mu}$ defined by forgetting the $r_i$ is continuous.
Finally the compactness theorem.
\begin{theorem}[Theorem 4.2 in \cite{SFT}]
The spaces $\overline{\mcal{M}}_{g,\mu}$ and $\overline{\mcal{M}}_{g,\mu}^\$$ are compact metric spaces that contain $\mcal{M}_{g,\mu}$, and are equal to the closure of the inclusion of $\mcal{M}_{g,\mu}$ (i.e. they are compactifications of $\mcal{M}_{g,\mu}$). As we are in a metric space, sequential compactness suffices.
\end{theorem}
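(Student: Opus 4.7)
The plan is to prove compactness and the compactification property via hyperbolic geometry, following the classical Deligne--Mumford approach adapted to the punctured/decorated setting. First I would note that by uniformization every stable punctured Riemann surface $\mf{\dot S}=(S\setminus M,j)$ carries a unique complete finite-volume hyperbolic metric $h^{\mf S}$ of constant curvature $-1$, with cusps at the marked points. Metrizability of $\overline{\mcal{M}}_{g,\mu}$ and $\overline{\mcal{M}}_{g,\mu}^{\$}$ can be built from this hyperbolic data: for instance, one can use a Fenchel--Nielsen-type pseudo-distance combined with the $C^0$-distance between the limiting geodesic arcs described in $\mf{CRS3}$, declaring the decoration circles to lie at infinity on the associated collars.

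The heart of the proof is sequential compactness. Given a sequence of stable decorated nodal Riemann surfaces $(\mf S_n,r_n)$ of signature $(g,\mu)$, pass to the smooth locus on each and consider their hyperbolic metrics $h^n$ on $\dot S_n^{D_n,r_n}$. Fix $\ep<\ep_0=\sinh^{-1}(1)$ so the Margulis lemma applies, giving a canonical thick--thin decomposition in which each thin component is either a cusp at a puncture, a collar about a short geodesic, or one of the special circles $\Gamma_i$ (which we should regard as a degenerate collar of infinite modulus). Then I would:
\begin{enumerate}
\item Apply Mumford compactness to $\mathrm{Thick}_\ep(\mf S_n)$: these have uniformly bounded diameter and bounded geometry (curvature $-1$ and injectivity radius $\geq\ep$), so after passing to a subsequence and choosing diffeomorphisms $\psi_n$ between the thick parts and a fixed smooth model, the pulled-back metrics $\psi_n^* h^n$ converge in $C^\infty_{loc}$ to a hyperbolic metric $h$ on some limit surface.
\item For each thin component, extract a subsequential limit of its hyperbolic modulus. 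If the modulus stays bounded we keep the collar as part of the smooth limit; if it diverges, the associated short geodesic pinches to a node in the limit, producing a new pair $\{\overline d,\underline d\}\in D$.
\item Glue the limit thick pieces to the resulting cusps through the nodal identifications; a calculation with collars shows the combined metric is precisely the hyperbolic metric $h^{\mf S}$ of a (possibly new) stable nodal Riemann surface $\mf S$ of signature $(g,\mu)$, using that arithmetic genus and number of marked points are preserved under pinching.
\item Upgrade to the decorated setting: each pinching collar $[-L_n,L_n]\times S^1$ comes with a natural $S^1$-worth of isometric identifications of its two boundary circles, and the isometry $\phi_n$ restricted to the waist determines an element of that $S^1$. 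Extracting a further subsequence so these angular parameters converge produces the orthogonal maps $r_i:\overline\Gamma_i\to\underline\Gamma_i$, and $\mf{CRS3}$ follows by tracking the orthogonal geodesic arcs, which on the standard hyperbolic collar are explicit.
\end{enumerate}
Verifying $\mf{CRS1}$--$\mf{CRS3}$ for the constructed $\phi_n$ is then a matter of chasing the $C^\infty_{loc}$ convergence on the thick part and the explicit hyperbolic collar model on the thin part. The hardest technical point is a diagonal argument: the number of short geodesics is a priori unbounded in the signature only through $g$ and $\mu$, but the topological bound $\#D+3g-3+\mu\leq$ const and the fact that disjoint simple closed geodesics are bounded in number by $3g-3+\mu$ let one reduce to finitely many pinching circles at a time.

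To see that $\mcal M_{g,\mu}$ is dense, I would construct for any stable nodal decorated $(\mf S,r)$ an explicit smoothing family: replace a neighborhood of each node $\{\overline d_i,\underline d_i\}$ (two hyperbolic cusps glued via $r_i$) by a hyperbolic collar of modulus $1/\epsilon_i$, using the plumbing construction; as $\epsilon_i\to 0$ the smoothed surfaces converge to $(\mf S,r)$ in the sense above, which shows both that $\mcal M_{g,\mu}$ is dense and that $\overline{\mcal M}_{g,\mu}^{\$}\to\overline{\mcal M}_{g,\mu}$ is continuous and surjective. The main obstacle in the whole program is bookkeeping: simultaneously controlling the possibly unbounded rotation parameters on long collars (needed for the decorations $r_i$) together with the $C^\infty_{loc}$ convergence on the thick part, and ruling out pathologies like accumulation of nodes, which the stability condition $2g+\mu\geq 3$ and the uniform bound on $3g-3+\mu+\#D$ ultimately prevent.
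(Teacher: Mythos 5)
The paper offers no proof of this statement: it is quoted verbatim as Theorem 4.2 of \cite{SFT} and used as a black box, so there is nothing internal to compare your argument against. Your sketch is the standard Deligne--Mumford compactness proof via hyperbolic geometry (uniformization, Margulis thick--thin decomposition, Mumford compactness on the thick part, pinching collars to nodes, subsequential convergence of the twist angles to produce the decorations $r_i$, and plumbing for density), which is essentially the route taken in the cited source, so the approach is sound and appropriately attributed. The only places where your outline leans on nontrivial facts without proof are the convergence of the hyperbolic metrics on the complement of a pinching geodesic to the complete cusped metric on the limit (needed in your step 3 and for $\mf{CRS2}$) and the metrizability of the quotient topology; both are standard but would need to be supplied or cited in a complete write-up.
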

We now state a proposition which we will later need to find all components of a holomorphic building/cascade.

\begin{proposition}[Proposition 4.3 in \cite{SFT}]
Let $\mf{S}_n= (S_n,j_n,M_n,D_n)$ be a sequence of smooth marked nodal Riemann surfaces of signature $(g,\mu)$ which converges to a nodal curve $\mf{S}=(S,j,M,D)$ of signature $(g,\mu)$. Suppose for each $n\geq 1$ we are given a pair of points $Y_n= \{y_n^1,y_n^2\}\subset S_n \setminus (M_n \cup D_n)$ so that
\begin{equation}
    dist_n(y_n^1,y_n^2) \longrightarrow 0
\end{equation}
where $dist_n$ is with respect to the hyperbolic metric $h^{j_n,M_n\cup D_n}$. Suppose in addition there is a sequence $R_n\rightarrow +\infty$ such that there exists injective holomorphic maps $\phi_n:D_{R_n}\rightarrow S_n\setminus (M_n\cup D_n)$ where $D_{R_n}$ is the disk in $\bb{C}$ with radius $R_n$,satisfying $\phi_n(0)=y_n^1, \phi_n(1)=y_n^2$. Then there exists a subsequence of the new sequence $\mf{S}_n'= (S_n,j_n,M_n\cup Y_n,D_n)$ which converges to a nodal curve $\mf{S}'=(S',j',M',D')$ of signature $(g,\mu+2)$, which has one or two additional spherical components. One of these components contains the marked points $y^1,y^2$, which corresponds to the sequence $y_n^1,y_n^2$. The possible cases are illustrated in Fig 5 of \cite{SFT}.
\end{proposition}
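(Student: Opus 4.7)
The plan is to deduce this from the compactness of $\overline{\mcal{M}}_{g,\mu+2}$ (the previous theorem), combined with a careful analysis of how the hyperbolic metric changes when the extra marked points $Y_n$ are added, and with the injective holomorphic disks $\phi_n$ forcing the newly appearing thin parts to become spheres rather than higher genus pieces. First I would apply the compactness of $\overline{\mcal{M}}_{g,\mu+2}$ to the sequence $\mf{S}_n'=(S_n,j_n,M_n\cup Y_n,D_n)$ to extract a subsequence converging to a stable nodal surface $\mf{S}'=(S',j',M',D')$ of signature $(g,\mu+2)$. Since the forgetful map $\overline{\mcal{M}}_{g,\mu+2}\to\overline{\mcal{M}}_{g,\mu}$ that erases the pair $Y_n$ is continuous, its image must agree with the given limit $\mf{S}$. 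So $\mf{S}'$ is obtained from $\mf{S}$ by either placing the two extra marked points on already existing components of $\mf{S}$, or by inserting new components which collapse to a point under the forgetful map.

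Next I would rule out the first possibility. The hypothesis $\mathrm{dist}_n(y_n^1,y_n^2)\to 0$ in the metric $h^{j_n,M_n\cup D_n}$ says the two new marked points approach the same point $y\in \dot S^{D,r}$ in the limit of the original sequence. But in $\mf{S}'$ the points $y^1$ and $y^2$ are distinct marked points, so in $h^{j',M'\cup D'}$ they correspond to distinct cusps and have infinite distance. By axiom $\mf{CRS3}$, the limiting geometry of the annulus cylinders in $\mathrm{Thin}_\epsilon(\mf{S}_n')$ that separate the two cusps must degenerate to special circles of $\mf{S}'$. Hence at least one new node of $\mf{S}'$ must appear, which separates the component(s) carrying $y^1,y^2$ from the rest of $S'$. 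This yields new components of $\mf{S}'$ that do not appear in $\mf{S}$.

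Then I would pin down these new components as spheres. Here the role of the injective holomorphic maps $\phi_n:D_{R_n}\to S_n\setminus(M_n\cup D_n)$ with $R_n\to\infty$ is decisive: together with the monotonicity of the hyperbolic metric under removal of marked points, they guarantee that the injectivity radius of $h^{j_n,M_n\cup D_n}$ at $y_n^1$ (and at $y_n^2$) is bounded below uniformly. Therefore $h^{j_n,M_n\cup D_n\cup Y_n}$ differs from $h^{j_n,M_n\cup D_n}$ only inside small fixed-size neighborhoods of the two new cusps, and any new thin annuli of $\mf{S}_n'$ must be created by this localized change. Rescaling $\phi_n$ by $R_n$ one extracts a limiting injective holomorphic map from $\bb{C}$ into (the smooth part of) the new components, whose two distinguished points correspond to $y^1$ and $y^2$. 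The only smooth stable Riemann surface carrying an injective holomorphic map from $\bb{C}$ is $\bb{CP}^1$, so the new components must all be $\bb{CP}^1$'s.

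Finally I would count cases to see that exactly one or two new spherical components appear. The newly added stable tree of spheres has total arithmetic genus zero (since the genus of $\mf{S}'$ equals that of $\mf{S}$), carries the two marked points $y^1,y^2$, and attaches to the rest of $S'$ at nodes that project to the single point $y\in\dot S^{D,r}$. Stability of each spherical component (at least three special points per sphere) together with the combinatorics of a rooted tree attached at a single point of $\mf{S}$ quickly leaves only two possible tree shapes: a single sphere with both $y^1,y^2$ on it and one node joining it to $\mf{S}$, or two spheres meeting at a node with $y^1,y^2$ distributed one to each, and each sphere carrying an additional node back to $\mf{S}$. These are precisely the cases shown in Fig.\ 5 of \cite{SFT}. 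The main obstacle is the rescaling step identifying the new components as $\bb{CP}^1$'s; this is where $R_n\to\infty$ is essential, since without a uniform lower bound on the conformal size of a holomorphic disk around $y_n^i$ the rescaled limit could in principle be a higher genus surface.
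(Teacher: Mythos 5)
First, a point of reference: the paper does not actually prove this proposition; it is quoted verbatim as Proposition 4.3 of \cite{SFT} and the paper only restates its conclusion in words afterwards. So there is no in-paper proof to compare against, and your proposal has to be judged on its own. Your overall strategy (compactness of $\overline{\mcal{M}}_{g,\mu+2}$ plus continuity of the forgetful map, then identifying the fibre of the forgetful map over $\mf{S}$ as trees of spheres) is the standard route and is essentially sound. Two of the intermediate steps are under-justified but repairable: the claim that the new components are spheres follows more cheaply from preservation of the arithmetic genus $g=\tfrac{1}{2}\#D-b_0+\sum g_i+1$ under Deligne--Mumford convergence (any added component of positive genus, or any added cycle of components, would raise $g$) than from your rescaling argument, which as written needs a Gromov-compactness statement for the maps $\phi_n$ that you have not supplied.

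The genuine error is in your final case count. The configuration ``two spheres meeting at a node with $y^1,y^2$ distributed one to each, and each sphere carrying an additional node back to $\mf{S}$'' is impossible: it adds $2$ components and $3$ node pairs, hence raises the arithmetic genus by $3-2=1$, contradicting the signature $(g,\mu+2)$; and it also contradicts the conclusion you are trying to prove, which asserts that \emph{one} component contains both $y^1$ and $y^2$. The actual content of the hypothesis on the injective holomorphic disks $\phi_n:D_{R_n}\rightarrow S_n$ with $R_n\rightarrow\infty$ is exactly this single-component statement: any annulus in $D_{R_n}\setminus\{0,1\}$ separating $0$ from $1$ has conformal modulus bounded independently of $n$, so no node can form between $y^1$ and $y^2$ in the limit, whereas the modulus of the annulus separating $\{0,1\}$ from $\partial D_{R_n}$ diverges, which is what forces at least one new node separating the sphere carrying $y^1,y^2$ from the rest. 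The correct two-sphere configurations are chains (a sphere $T_1$ inserted at a marked point or at a node of $\mf{S}$, with a second sphere $T_2$ attached to $T_1$ carrying both $y^1$ and $y^2$), as in the trichotomy the paper spells out immediately after the statement: the limit of $y_n^1,y_n^2$ in $\hat{S}_D$ is a regular point (one new sphere), a marked point (two new spheres in a chain), or a node (two new spheres, one inserted between the branches).
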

Stated in words (and also explained in Section 4 of \cite{SFT}), the scenarios are as follows. Let $r_n$ and $r$ be decorations on stable nodal Riemann surfaces $\mf{S}_n$ and $\mf{S}$ respectively, and we have $\mf{S}_n \rightarrow \mf{S}$ in the sense specified above, and $\phi_n: S^{D,r} \rightarrow S^{D_n,r_n}_n$ be the corresponding diffeomorphism. Let $\hat{S}_D$ be the singular nodal Riemann surface obtained from $\mf{S}$ by gluing together the nodal points $D$, and $\pi: S^{D,r} \rightarrow \hat{S}^D$ the associated projection. Let $Z_n=\pi(\phi^{-1}(Y_n)) \subset \hat{S}_D$. Then the following can happen:
\begin{itemize}
    \item The points $z_n^1,z_n^2 \in Z_n$ converge to a point $z_0$, which does not belong to $M$ or $D$. Then the limit $\mf{S}'$ of $\mf{S}_n'$ has an extra sphere attached at $z_0$ on which lie two extra points $y^1,y^2$.
    \item The points $z_n^1,z_n^2 \in Z_n$ converge to a marked point $m\in M$. In this case the limit $\mf{S}'$ is $\mf{S}$ with two extra sphere $T_1$ and $T_2$ attached. The sphere $T_1$ is attached at $\infty$ to the original $m$, and has $m$ at its zero. The sphere $T_2$ has its $\infty$ point attached to $1\in T_1$ and $y^1,y^2$ lie on $T_2$.
    \item The points $z_n^1,z_n^2\in Z_n$ converge to a double point $d$ corresponding to pair of points $\{x,x'\}\in D$. Then we insert a sphere $T_1$ between nodes $x,x'$, with $x$ attached to $\infty$ on $T_1$ and $x'$ attached to $0$. We insert a second sphere $T_2$ whose $\infty$ point is attached to $1$ in $T_1$, and the two points $y_1$ and $y_2$ lie on $T_2$.
\end{itemize}

\subsection{SFT compactness theorem for Morse-Bott degenerations}
We are now ready to state the SFT compactness theorem for degeneration of holomorphic curves to cascades.
We first state a more careful definition of holomorphic cascades of height 1, taking into account of decorations. This is also taken directly out of \cite{SFT}.

Recall $\lambda$ is a Morse-Bott contact form, and $\lambda_\dt$ is its perturbation defined by $\lambda_\dt =e^{\dt f}\lambda$. Fix $L>>0$, then for all $\dt>0$ small enough all Reeb orbits with action $<L$ come from critical points of $f$ on each Morse-Bott torus.
\begin{definition}[Section 11.2 in \cite{SFT}]
Suppose we are given:
\begin{itemize}
    \item $n$ nodal stable $J$-holomorphic curves
    \begin{equation}
        u^i:=(a^i,\hat{u}^i;S_i,D_i,\overline{Z}^i\cup \underline{Z_i}), i=1,...,n
    \end{equation}
    where $u^i$ is a $J$-holomorphic map from $S^i$ to $\bb{R} \times Y$. The map $a^i$ goes from $S_i$ to $\bb{R}$, the symplectization direction; and $\hat{u}^i$ is the map to $Y$. The sets $\overline{Z}^i, \underline{Z_i}$ correspond to punctures that are asymptotic to Reeb orbits hit by $u^i$ at $s=+\infty$ and $s= -\infty$ respectively. Let $\Gamma_i^\pm$ denote the corresponding boundary circle after blowing up the marked points $\overline{Z}^i$, $\underline{Z}_i$ respectively.
    \item $n+1$ collections of cylinders that are lifts of gradient trajectories of $f$ along the Morse-Bott tori, which we write as
    \begin{equation}
        \{G_{j,i, T_i}, j=1,..,p_i\}, \, \, i=0,..,n.
    \end{equation}
    In the above, $i$ indexes which collection the cylinder is in, and $j$ indexes specific element in that collection. Said another way, $i$ indexes the specific level in the cascade, and $j$ refers to which gradient flow segment in the level. The numbers $T_i$ denotes the flow time along gradient flow of $f$,
    with $T_0= -\infty$, $T_n=\infty$, $0\leq T_i < \infty$ for $i=1,..,n$. Denote the domain of the cylinders  by $\tilde{S}_i$, and $\tilde{\Gamma_i}^\pm$ their boundary circles corresponding at positive/negative ends. Even though the gradient flows may be finite, we think of these domain cylinders as infinitely long, and will think of them as living in the thin part of the glued domain Riemann surface. We do this even if the flow time $T_i$ is zero.
    \item Each positive puncture of $u^i$ (with $i=1,..,n$) is matched with a negative puncture of $u^{i-1}$, where they cover Reeb orbits on the same Morse-Bott torus of the same multiplicity. Between these two matched pair of punctures there is a unique gradient trajectory $G_{j,i,T_i}$ that connects between them after gradient flow of time $T_i$. Then there are orientation reversing diffeomorphisms $\Phi_i: \Gamma_i^+ \rightarrow \tilde{\Gamma}_i^-$ and $\Psi_{i-1}: \tilde{\Gamma}_{i-1}^+ \rightarrow \Gamma_i^- $ which are orthogonal on each boundary component.
    \item We glue the domains $S^{Z_i}_i$ and $\tilde{S}_i$ via the maps $\Phi_i$ and $\Psi_i$, to obtain a surface
    \begin{equation}
        \overline{S}:= \tilde{S}_0 \cup _{\Psi_0} S_z^{Z_1}\cup_{\Phi_1} \cup ...\cup_{\Phi_n}\tilde{S}_{n}.
    \end{equation}
    
    The maps $u^i$ and $G_{j,i, T_i}$ fit together to define a continous map from $\overline{u}:S\rightarrow \bb{R}\times Y$. Here for defining $\overline{u}$, on the gradient segment parts we use the literal gradient flow of $f$ without re-scaling by $\dt$.
    \item For the surface $\overline{S}$, we describe its complex structure. The idea is to keep the thin parts corresponding to $ \underline {Z}_i ,\overline{Z}^{i+1}$, and insert between them an infinite cylinder corresponding to the connecting gradient trajectory (with one marked point added to make it stable), with now $\underline {Z}_i ,\overline{Z}^{i+1}$ viewed as nodal points which comes with their own special circles. In our case, two points among $\underline {Z}_i ,\overline{Z}^{i+1}$ are viewed as nodal points for each gradient segment we are gluing in. Then the new decorated Riemann surface underlying the cascade can then be written as
    \begin{align}
        &(\overline{S},M =\bigcup M_i \cup \{\textup{one for each gradient flow segment}\},\\
        &D = \bigcup_i \overline {Z}^i\cup \underline{Z_i} \cup \{\textup{punctures corresponding to gradient flow cylinders}\})
    \end{align}
    We note this does not necessarily guarantee the stability of the underlying domain $\overline{S}$, since the definitions of stability of Riemann surface and $J$ holomorphic curves are distinct (see remark below). However we can always add several marked points $M'$ to make the underlying nodal Riemann surface stable.
\end{itemize}
Then we say we have defined a $n$ level $J$ holomorphic cascade of curves of height 1.
\end{definition}

\begin{remark}
In the above definition by stable we mean stable in the sense of $J$- holomorphic curves, i.e. no level consists purely of trivial cylinders, and if a component of $J$ holomorphic curve is constant, then the underlying domain for that component is stable in the sense of Riemann surfaces. We will treat the issue of stability of domain separately.
\end{remark}
The definition of height $k$  holomorphic cascade is very similar, we stack $k$ height 1 cascades on top of one another, and identify the edge punctures with maps like $\Psi$ and $\Phi$. 
The definition of when two cascades are equivalent to one another is identical to the definition in Section 7.2 of \cite{SFT} of when two SFT buildings are equivalent to one another, with the addition that we  think of gradient flow trajectories in the cascade as extra levels with marked points.

Then we are ready to state the SFT compactness result.
\begin{definition}[Section 11.2 of \cite{SFT}]
Let $(u_{\dt_n},S_n,j_n,M_n,D_n,r_n)$ be a sequence of $J_{\dt_n}$-holomorphic curves.  And let $\cas{u}=\{u^1,..,u^m\}$ be a height $k$ holomorphic cascade (we allow $k$ infinite flow times), and let $(S,j,M, D,r)$ be the underlying decorated nodal Riemann surface. We say $(u_{\dt_n},S_n,j_n,M_n,D_n,r_n)$ converges to $\cas{u}$ if we can find an extra set of marked points $M'$ on $(S,j,M, D,r)$, and an extra sequence of marked points $M_n'$ on $(u_{\dt_n},S_n,j_n,M_n,D_n,r_n)$ to make the underlying nodal Riemann surfaces stable, with diffeomorphisms $\phi_n: S^{D,r} \rightarrow S^{D_n,r_n}$ with $\phi_n(M)=M_n$ and $\phi_n (M') = M_n'$ satisfying the convergence definition of stable decorated Riemann surfaces in $\mf{CRS1-3}$, and suppose in addition the following conditions hold:
\begin{itemize}
    \item $\mf{CGHC1}$ For every component $C$ of $S^{D,r}\setminus \bigcup\Gamma_i$ which is not a cylinder coming from a gradient flow, identify the corresponding component $C_n$ in $(u_{\dt_n},S_n,j_n,M_n\cup M_n',D_n,r_n)$, and if we write $u_{\dt_n} = (a_{\dt_n}, \hat{u}_{\dt_n})$ and similarly for $\cas{u}$. Then $\hat{u}_{\dt_n}|_{C_n}$ converges to $\hat{\cas{u}}|_C$ in $C^\infty_{loc}(Y)$
    \item $\mf{CGHC2}$ If $C_{ij}$ is the union of components of $S^{D,r}\setminus \bigcup \Gamma_i$ which correspond to the same level $j$ of height $i$ of $\cas{u}$, (recall specifying height $i$ specifies a height 1 cascade, and $j$ labels the level within that height 1 cascade), then there exists sequences $c^{ij}_n$ so that $a_n\circ \phi_n-a-c^{ij}_n|_{C_{ij}}\rightarrow 0$ in $C^\infty_{loc}$ 
\end{itemize}
Then we say the sequence of $J_{\dt,n}$-holomorphic curves are convergent to the $J$-holomorphic cascade $\cas{u}$. 
\end{definition}

\begin{theorem}\label{SFTtheorem}[Theorem 11.4 in \cite{SFT}]
Let $\dt_n >0$ and $\dt_n \rightarrow 0$, let $(u_{\dt_n},S_n,j_n,M_n\cup M_n',D_n,r_n)$ denote a sequence of $J_{\dt_n}$-holomorphic curves of fixed signature and asymptotic to the same Reeb orbits (recall as long as $\dt>0$ and all orbits have energy $<L$, the orbit themselves do not depend on $\dt$), then there exists a subsequence that converges to a $J$-holomorphic cascade of height $k$.
\end{theorem}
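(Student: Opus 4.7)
The plan is to combine standard SFT/Gromov compactness on the thick part of the domain with the Morse--Bott asymptotic analysis from Section \ref{behav} on the thin part, where gradient-flow cylinders of $\delta_n f$ are forced to emerge. First, I would bound the number of extra marked points needed: the curves have fixed signature and fixed Reeb asymptotics, so the Hofer energy is uniformly bounded (independently of $\delta_n$, since $\lambda_{\delta_n} = e^{\delta_n f}\lambda$ and $f$ is bounded). Standard point-picking bubbling analysis then gives finitely many accumulation points where derivatives blow up; I add at most a bounded number of extra marked points $M_n'$ both to absorb these bubble points and to stabilize any unstable components (cylinders/planes in the prospective limit) per the scheme in Proposition~4.3 of \cite{SFT}. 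After this stabilization, the decorated nodal Riemann surfaces $(S_n,j_n,M_n\cup M_n',D_n,r_n)$ lie in a compact subset of $\overline{\mathcal{M}}_{g,\mu}^{\$}$, so up to subsequence they converge to a limit $(S,j,M\cup M',D,r)$ with diffeomorphisms $\phi_n\colon S^{D,r}\to S_n^{D_n,r_n}$ satisfying $\mathbf{CRS1}$--$\mathbf{CRS3}$.

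Next, on the $\epsilon$-thick part I would extract a $C^\infty_{loc}$ limit. Break the thick part into connected components $C$; on each $C$, after translating $u_{\delta_n}\circ\phi_n$ in the symplectization direction by a suitable constant $c_n^C$ (chosen, say, so that the image of some fixed point has $a$-coordinate $0$), a diagonal argument using elliptic bootstrapping from the uniform $C^0$-bound on $d u_{\delta_n}$ (which holds on the thick part since bubbles have been absorbed) produces a $J$-holomorphic limit $\hat u^C$. The constants $c_n^C$ organize components into levels: declare $C\sim C'$ if $c_n^C - c_n^{C'}$ stays bounded, and in the end reparametrize each equivalence class by a single translation. This exactly yields condition $\mathbf{CGHC1}$ and, componentwise, $\mathbf{CGHC2}$.

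The main obstacle is the thin part, i.e. the long cylinders (including those arising as special circles in the limit). On such a cylinder $[-L_n,L_n]\times S^1\subset S_n$ with $L_n\to\infty$, a no-bubbling argument (as in Lemma~\ref{glob}) shows $|du_{\delta_n}|$ is uniformly bounded and $u_{\delta_n}$ restricted to any fixed compact subinterval is $C^k$-close to a trivial cylinder over some Reeb orbit. The hypotheses of Theorem~\ref{finite conv} are therefore satisfied on all of $[-L_n,L_n]\times S^1$ for small enough $\delta_n$: in the coordinates there, $QY$ decays like $\cosh(\lambda(s-\tfrac12(s_0+s_1)))/\cosh(\lambda(s_1-s_0)/2)$, and the $P$-part $PY$ is $C^{k-1}$-close to the lift of a genuine gradient trajectory $x_{p(n)}(s)$ of $\delta_n f$. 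I would then pass to a subsequence so that the flow time $T_{i,n}=\delta_n\cdot(\text{length in }x\text{-coordinate})$ along this trajectory converges to some $T_i\in[0,\infty]$, and so that the endpoints of the gradient segment converge to definite Reeb orbits on the Morse--Bott torus. A finite $T_i$ contributes a gradient cylinder inside a single cascade height, while $T_i=\infty$ forces a break between heights, producing a height-$k$ cascade with $k$ determined by the number of infinite flow times. The matching between positive punctures of $u^i$ and negative punctures of $u^{i-1}$ via $\phi_f^{T_i}$ is automatic from the endpoint convergence of the gradient segments, and the asymptotic orthogonal maps $\Phi_i,\Psi_i$ are recorded by the decorations $r_n\to r$ using $\mathbf{CRS3}$.

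Finally, I would verify stability of the resulting cascade: any level consisting entirely of trivial cylinders can be collapsed (by absorbing the corresponding thin region into an adjacent gradient segment, using the convention noted in Section~\ref{gluing} that chains of trivial cylinders between finite-flow segments be regarded as a single longer gradient segment), while unstable domain components (cylinders/planes without enough marked points) were already stabilized in the first step via $M_n'$. The hardest technical step is step three: one must carefully choose the thick/thin cutoff $\epsilon$ simultaneously with the subsequence and with $\delta_n$, so that on every thin cylinder the hypotheses of Theorems~\ref{inf conv} and \ref{finite conv} hold uniformly in $n$; this requires a diagonal argument balancing the Morse--Bott $\epsilon$-radius and the $C^0$-closeness to trivial cylinders guaranteed by Lemma~\ref{glob}. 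Once this is arranged, assembling the thick limits and the gradient segments into a height-$k$ cascade with the required matching is a matter of bookkeeping, and the convergence conditions $\mathbf{CGHC1}$--$\mathbf{CGHC2}$ hold by construction.
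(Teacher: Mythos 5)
Your overall architecture matches the paper's: Deligne--Mumford convergence of the stabilized domains, $C^\infty_{loc}$ limits on the thick part via gradient bounds and Arzel\`a--Ascoli, identification of the thin cylinders with gradient segments via the estimates of Section \ref{behav}, and a level structure read off from the symplectization translation constants. However, there is a genuine gap in your treatment of the thin part.

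You assert that on a thin cylinder $[-L_n,L_n]\times S^1$ the curve is $C^k$-close to a trivial cylinder on every compact subinterval and that therefore ``the hypotheses of Theorem \ref{finite conv} are satisfied on all of $[-L_n,L_n]\times S^1$.'' This only holds when the two asymptotic limits $\gamma^\pm$ at the ends of the neck satisfy $\int_{\gamma^+}\lambda-\int_{\gamma^-}\lambda=0$, i.e.\ when no contact area is lost in the neck. In general the neck can carry positive $d\lambda$-energy (bounded below by the Morse--Bott energy quantum), in which case nontrivial $J$-holomorphic curves --- entire new cascade levels invisible in the thick part of the limiting domain --- form inside the thin cylinder, with bounded gradient, so they are not caught by the bubble-point marked points. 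The paper's proof handles this as Case 2 of Step 2: one partitions $[-N_n,N_n]$ into subintervals $[a_n^i,b_n^i]$ on which $u_{\delta_n}$ converges uniformly to nontrivial curves, alternating with low-energy subintervals that converge to gradient segments, and only on the latter does one invoke Propositions \ref{finite conv} and \ref{inf conv}. Without this partition your construction can produce adjacent components whose asymptotic Reeb orbits lie on different Morse--Bott tori, so the matching condition defining a cascade fails. A secondary, more minor omission: even in the zero-action case, when the flow time is infinite the trajectory approaches two critical points of $f$, whereas Propositions \ref{finite conv} and \ref{inf conv} require the curve to stay uniformly away from all but one critical point; the paper splits the interval at an intermediate $L_n$ and applies the semi-infinite estimate to each half. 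You should incorporate both points before the bookkeeping in your final paragraph can go through.
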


The rest of this appendix is dedicated to the proof of this theorem. First a theorem on gradient bounds:

\begin{theorem}\label{gradbound}
(Gradient Bounds, Lemma 10.7 in \cite{SFT}) Let $\dt_n\rightarrow 0$ and $(u_{\dt_n},S_n,j_n,M_n\cup M_n',D_n,r_n)$ be a sequence of $J_\dt$-holomorphic curves with fixed signature, and the curves $u_{\dt_n}$ have a uniform energy bound $E$. Then by Deligne-Mumford compactness the domain $(S_n,j_n,M_n\cup M_n',D_n,r_n)$ converges in the sense of $\mf{CRS1-3}$ to a decorated Riemann surface
\[
(\mf{S},j,M,D,\overline{Z}\cup \underline{Z} ,r)
\]
Then there exists a constant $K$ which only depends on the upper energy bound $E$ so that if we add to each $M_n$ an additional collection of marked points
\[
Y_n = \{y^1_n, w^1_n,...,y_n^K,w_n^K \} \subset \dot{S_n} = S_n \setminus (M_n\cup\underline{Z_n}\cup \overline{Z_n})
\]
we have the following uniform gradient bound
\begin{equation}
    \|\nabla u_{\dt_n}(x)\| \leq \frac{C}{\rho(x)}
\end{equation}
Here the gradient $\nabla u_{\dt_n}(x)$ is measured with respect the fixed $\bb{R}$ invariant metric in $\bb{R}\times Y$ in conjunction with the hyperbolic metric on $\dot{S_n}\setminus Y_n$, and $\rho(x)$ is the injectivity radius of the hyperbolic metric at $x$.
\end{theorem}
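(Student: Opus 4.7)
The plan is to run the standard bubbling–off argument adapted to the Morse–Bott setting, the key point being that as $\delta_n \to 0$ the almost complex structures $J_{\delta_n}$ converge in $C^\infty$ to $J$ away from the Morse–Bott tori, so the usual $\epsilon$-regularity and energy–quantization facts apply uniformly in $n$.

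First I would proceed by contradiction: suppose that for every $K$ there is no choice of marked points $Y_n$ so that the inequality $\|\nabla u_{\delta_n}\|(x) \leq C/\rho(x)$ holds on $\dot S_n \setminus Y_n$. Then I can find a sequence $x_n \in \dot S_n$ (where $\dot S_n$ carries the hyperbolic metric of $(S_n,j_n,M_n\cup D_n)$) along which $\rho_n(x_n)\,\|\nabla u_{\delta_n}(x_n)\| \to \infty$. Applying Hofer's lemma inside the Poincaré disk at $x_n$ produces nearby points $x_n' \to x_n$ and radii $r_n \to 0$ such that $R_n := r_n \|\nabla u_{\delta_n}(x_n')\| \to \infty$ and $\|\nabla u_{\delta_n}\|$ is bounded by $2\|\nabla u_{\delta_n}(x_n')\|$ on the ball of radius $r_n$ around $x_n'$. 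Rescaling $u_{\delta_n}$ on this ball by the factor $\|\nabla u_{\delta_n}(x_n')\|$ (and translating in the $\mathbb R$-direction) yields a sequence of $J_{\delta_n}$-holomorphic maps defined on disks of radii $R_n \to \infty$ with uniformly bounded gradient.

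Next, elliptic bootstrapping and the $C^\infty$-convergence $J_{\delta_n} \to J$ give a subsequential $C^\infty_{\rm loc}$ limit which is a non-constant $J$-holomorphic plane in $\mathbb R \times Y$ of finite energy, bounded by the fixed energy $E$. By removal of singularities and the SFT/Morse–Bott version of the Hofer energy–quantization lemma (whose proof carries over verbatim in our setting, using that all Reeb orbits below action $L$ sit on finitely many Morse–Bott tori with a positive lower bound on periods), such a plane has energy at least a fixed constant $\hbar > 0$ depending only on the Morse–Bott geometry and on $E$. Now I add to the marked point set $M_n$ a pair of new marked points $(y_n^1, w_n^1)$ chosen inside this bubbling disk at Poincaré distance tending to $0$; this is the device that, under the Deligne–Mumford limit, sprouts an extra spherical/cylindrical component onto which the bubble escapes (as in Proposition 4.3 of \cite{SFT}), and in particular turns the high-gradient region into a thin neck on which the rescaled hyperbolic injectivity radius is controlled.

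With these two extra marked points inserted, the inequality either now holds, or one can repeat the argument with the enlarged marked-point set. Each iteration captures a new bubble of energy at least $\hbar$, and since the total energy is uniformly bounded by $E$, the process terminates after at most $K := \lfloor E/\hbar \rfloor$ steps. After this finite number of marked-point insertions, no further sequence $x_n$ with $\rho_n(x_n)\|\nabla u_{\delta_n}(x_n)\| \to \infty$ can exist, which produces the desired uniform bound $\|\nabla u_{\delta_n}(x)\| \leq C/\rho(x)$ on $\dot S_n \setminus Y_n$. The main obstacle is the iteration/quantization step: one must ensure that the lower bound $\hbar$ on the energy of a bubble is truly uniform in $n$ despite $J_{\delta_n}$ varying, and that successive bubbles really are disjoint in the hyperbolic metric so that the added pairs of marked points remain at hyperbolic distance tending to $0$ from each other while the bubbles themselves remain distinct in the Deligne–Mumford limit; this is handled by always extracting bubbles at the points of maximal rescaling factor and using the monotonicity of the hyperbolic metric under adding punctures.
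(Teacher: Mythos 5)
Your proposal is correct and follows essentially the same route as the paper: the paper simply invokes Lemma 10.7 of \cite{SFT} and records exactly the two points you elaborate, namely that the energy-quantization ($\epsilon$-regularity) lemma holds uniformly as $J_{\dt_n}\rightarrow J$, and that the Morse--Bott assumption gives a positive lower bound on bubble energy so the bubbling process terminates after at most $E/\hbar$ steps. Your write-up is just the unpacked version of that citation.
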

\begin{proof}
The same proof as in for Lemma 10.7 in \cite{SFT} goes through. The only two observations needed are: first the analogue of lemma 5.11 continues to hold even as we take $J_{\dt_n}\rightarrow J$. The second observation is that due to Morse-Bott assumption each plane or sphere that bubbles off also has a lower nonzero bound on energy, so the set of points that bubbles off is finite.
\end{proof}

\begin{proof}[Proof of Theorem \ref{SFTtheorem}]
\textbf{Step 1}. We first discuss convergence in the thick parts. The discussion largely mirrors the discussion of \cite{SFT} Section 10.2.2. Following the setup in Theorem \ref{gradbound}, we assume we have added enough marked points to the converging Riemann surfaces $(S_n,j_n,M_n\cup M_n', D_n,r_n) $ so that the gradient bound holds everywhere away from the marked points. We call the limit of the sequence $(\mf{S},j,M,D,\overline{Z}\cup \underline{Z} ,r)$. We let $\Gamma_i$ denote the special circles on $S$, then we may assume
\begin{equation*}
    \|\nabla u_{\dt_n} \circ \phi_n (x)\| \leq \frac{C}{\rho(x)}, x\in S\setminus \bigcup \Gamma_i
\end{equation*}
where $\phi_n$ is the diffeomorphism from $S\rightarrow S_n$ (defined away from the nodes) given by the definition of convergence. Then by Azerla-Ascoli and Gromov-Schwarz we can extract a subsequence that over the thick parts of $S_n$ converges in $C^\infty_{loc}(Y\times \bb{R})$ to a $J$ holomorphic map defined on thick parts of $\mf{S}$.

\textbf{Step 2}. Next we consider what happens on the thin parts near a node, following \cite{SFT} Section 10.2.3. Let $C_1,..,C_N$ denote the connected components of $S\setminus \cup \Gamma_i$, we have from the above discussion that $u_{\dt_n} \circ \phi$ converges to $J$-holomorphic maps in $C^\infty_{loc}$ over each of $C_i$. Call these maps $u_i$. The point is in this description there may be levels missing near the nodes that connect between $C_i$, and by examining closely what happens near the nodes we recover the entire cascade.

The first case is if $u_i$ is bounded in $\bb{R}\times Y$ near one of the nodes, then by the removal of singularities theorem then $u_i$ extends continuously to the node. If $u_i$ is unbounded near a node then it must converge to a Reeb orbit, and extend continuously to the circle at infinity which compactifies the puncture.

Given a pair of components of $S\setminus \cup \Gamma_i$, call them $C_i$ and $C_j$, that are adjacent to each other. The behaviour of $u_i$ and $u_j$ could be quite different. The maps $u_i$ and $u_j$ may be asymptotic to either a point or a Reeb orbit at their connecting node, and even if they are both asymptotic to Reeb orbits they might not even be asymptotic to the same one (not even Reeb orbits that land on the same Morse-Bott torus). The reason for this, as explained above, is that there may be further degenerations of the curve $u_{\dt_n}$ near this node. To capture this idea, let $\gamma^\pm$  denote the the asymptotic limit of $u_i$ and $u_j$ (which could be either a point or a Reeb orbit), then there is a component $T_n^\ep$ of the $\ep$-thin region of the hyperbolic metric $h^n = \phi_n^*h^{j_n,M_n}$ on $S=S^{D,r}$, with conformal parametrization
\[
g_n^\ep: A_n^\ep:= [-N_n^\ep, N_n^\ep] \times S^1 \longrightarrow T^\ep_n
\]
such that in $C^\infty(S^1)$
\[
\lim_{\ep \longrightarrow 0} \lim_{n\longrightarrow \infty} \hat{u}_{\dt_n}\circ \phi_n \circ g_n^\ep|_{\pm N_n^\ep \times S^1} = \gamma ^\pm
\]
Note that $g_n^\ep$ can be chosen to satisfy
\[
\|\nabla g_n^\ep(x)\| \leq C \rho (g_n^\ep(x))
\]
where the norm on the left hand side is measured with respect to the flat metric on the source and the hyperbolic metric on the target. Then under this parametrization we have
\[
\|\nabla u_{\dt_n}\circ \phi_n \circ g_n^\ep \| \leq C
\]
and if we take a subsequence of $\ep_k \rightarrow 0$ (also denoted by $\ep_k$), then we have
\[
\lim_{k\rightarrow +\infty} \hat{u}_{\dt_n}\circ \phi_k \circ g^{\ep_k}_k (\pm N_k \times S^1) = \gamma^\pm
\]
and hence obtain a homotopically unique map $\Phi:[0,1]\times S^1 \rightarrow Y$ satisfying $\Phi(0\times S^1) = \gamma^-$ and $\Phi(1\times S^1) = \gamma_+$.

Sinice we have a uniform bound on $\|\nabla u_{\dt_n}\circ \phi_n \circ g_n^\ep \|$, by Azerla-Ascoli it converges in $C^\infty_{loc}$ to holomorphic curves (which specfic curve it converges to might depend on which shift we are considering on the domain, this is akin to the degeneration of a gradient flow line to a broken gradient flow line in the Morse case). We break it down in to cases:
\begin{itemize}
\item $\int_{\gamma^+} \lambda - \int_{\gamma^-} \lambda =0$
\item $\int_{\gamma^+} \lambda - \int_{\gamma^-} \lambda>0$.
\end{itemize}

\textbf{Case 1}: We first consider when $\int_{\gamma^+} \lambda - \int_{\gamma^-} \lambda =0$. If both $\gamma^\pm$ are points, then they are connected by a sequence of $J$-holomorphic spheres touching each other at nodes, however in symplectizations all $J$ holomorphic sphere are points, so in this case $\gamma^\pm$ are the same point.

If one of the ends (say $\gamma^+$) is a Reeb orbit, and $\Gamma^-$ is a point. The fact that $u_{\dt_n}\circ \phi_n \circ g_n^\ep$ converges in $C^\infty_{loc}$ implies we can find a $J$ holomorphic plane with $\gamma^+$ as its positive puncture. But then this $J$ holomorphic plane must have zero energy, which contradicts the Morse-Bott assumption.

The last case is if both $\gamma^\pm$ are Reeb orbits. Then they must lie on the same Morse-Bott Torus, because the energy of the segment $u_{\dt_n}\circ \phi_n \circ g_n^\ep |_{[-N_n,N_n]\times S^1}$ approaches zero as $n\rightarrow \infty$, and there is not enough energy to support a cylinder connecting Reeb orbits from one Morse-Bott torus to another, hence the Reeb orbits must lie on the same Morse-Bott torus.

Then by Lemma \ref{glob} for large enough $n$ the derivatives of $u_{\dt_n}\circ \phi_n \circ g_n^\ep$ are pointwise bounded by $\ep>0$, then by Propositions \ref{finite conv}, \ref{inf conv}, there is a number $T\in [0,\infty]$, a segment of  gradient trajectory of  $f$ of time $T_n$, lifted to be a $J_{\dt_n}$-holomorphic curve, which we denote by $v_{\dt_n}$, such that after taking a subsequence, over $[-N_n,N_n]\times S^1$, $u_{\dt_n}\circ \phi_n \circ g_n^\ep$ is $C^k([-N_n,N_n] \times S^1)$ close to $v_\dt$. 

To elaborate a bit more, we note Propositions \ref{finite conv}, \ref{inf conv} only apply when we can establish the segment of $J_\dt$-holomorphic cylinder is uniformly bounded away from all except at most one critical point of $f$. If this is not the case, then necessarily $T_n \rightarrow +\infty$. We assume $u_{\dt_n}\circ \phi_n \circ g_n^\ep(-N_n \times S^1)$ approaches the minimum of $f$ and $u_{\dt_n}\circ \phi_n \circ g_n^\ep(N_n\times S^1)$ approaches the maximum of $f$. Then we can choose $L_n \in [-N_n,N_n]$ so that $u_{\dt_n}\circ \phi_n \circ g_n^\ep$ restricted to $[-N_n,L_n]\times S^1$ is uniformly bounded away from the maximum of $f$, and its restriction to $[L_n,N_n]\times S^1$ is uniformly bounded away from the minimum of $f$. Then we apply Proposition \ref{inf conv} to find two semi-infinite gradient cylinders $v_{\dt_n -}$ and $v_{\dt_n+}$ to which the restriction of $u_{\dt_n}\circ \phi_n \circ g_n^\ep$ to $[-N_n,L_n]\times S^1$ (resp. $[L_n,N_n]\times S^1)$ converges in $C^k$ norm. By local convergence the restriction of $v_{\dt_n-}$ and $v_{\dt_n+}$ to $L_n\times S^1$ are $C^k$ close to each other, so for our purposes\footnote{Establishing exponential decay estimates for gradient flow lines that go from critical point of $f$ to another critical point requires more careful analysis, and is outside the scope of this work. Incidentally this is related to the problem of gluing cascades of height greater than 1 - we need to think more carefully about how we choose our Sobolev spaces and place our exponential weights.} \footnote{We mention here our work is simplified because our critical manifold (the manifold that parametrizes the space of Reeb orbits) is $S^1$, hence there are no broken gradient trajectories. In the case where the critical manifold is higher dimensional the analysis near broken trajectories is more delicate, and is outside the scope of the current work. However it is probably within the convex span of current technology.} we can take $v_{\dt_n}$ to be either $v_{\dt_n +}$ or $v_{\dt_n-}$. 

The estimate we proved for its local behaviour also tells us how to define the relevant gluing maps $\Phi_i$ and $\Psi_i$. We should also attach a marked point to this cylindrical segment to make the domain stable.

\textbf{Case 2}: We consider the second case $\int_{\gamma^+}\lambda -\int_{\gamma^-}\lambda >0$. We first observe that there is a lower bound on $\int_{\gamma^+}\lambda -\int_{\gamma^-}\lambda $ by the Morse-Bott assumption. We shall see that over $[-N_n,N_n]\times S^1$ the map $u_{\dt_n}\circ \phi_n \circ g_n^\ep$ converges to a sequence of $J$ holomorphic cylinders (and in the case where $\gamma^-$ is a point, a collection of cylinders followed by a $J$-holomorphic plane) connected by gradient cylinders along Morse-Bott tori similar to the previous case. We first observe by the gradient bounds that there is no bubbling off of holomorphic planes, and that over any compact domain of $[-N_n,N_n]\times S^1$ the sequence $u_{\dt_n}\circ \phi_n \circ g_n^\ep$ converges uniformly to a $J$ holomorphic curve. We note this is very similar to the case in Morse theory where a gradient trajectory converges to a broken gradient trajectory.

Let $h$ denote the minimal energy of a nontrivial $J$ holomorphic curve in the Morse-Bott setting, after successively taking subsequences, we pick out numbers $a_n^i,b_n^i \in [-N_n,N_n]$ which partition the interval $[-N_n,N_n]$ so that the following holds:
\begin{itemize}
    \item $b_n^i-a_n^i \rightarrow \infty$, $a^{i+1}_n - b^i_n \rightarrow \infty$.
    \item $u_{\dt_n}\circ \phi_n \circ g_n^\ep$ converges uniformly to a nontrivial $J$ holomorphic curve $u^i$ over $[a_n^i,b_n^i]$.
    \item $u_{\dt_n}\circ \phi_n \circ g_n^\ep$ restricted to $[b_n^i, a_n^{i+1}]$ has energy $< h/20$. We shall show that in fact the energy goes to zero as $n\rightarrow \infty$.
\end{itemize}
We first observe by our assumptions there must be an interval of the form $[a_n^i,b_n^i]$, because otherwise the entire interval $[-N_n,N_n]$ the curve $u_{\dt_n}\circ \phi_n \circ g_n^\ep$ has energy less than $h/20$, hence over each compact subset the curve converges to trivial cylinders, then this implies that $\gamma^+$ and $\gamma^-$ are on the same Morse-Bott torus, which is the situation in case 1. 

We note in the second bullet point we required \emph{uniform} convergence over the interval $[a_n^i,b_n^i]$, as opposed to the usual convergence over compact set. The reason is that if we had $C^\infty_{loc}$ convergence over an interval of the form  $[a_n^i,b_n^i]$, and by looking at different compact subsets in the domain we got convergence in $C^\infty_{loc}$ into two different curves, we could have inserted more partitions into the interval $[a_n^i,b_n^i]$ until the three bullet points above are achieved.

We also observe that the evaluation maps $ev^+(u^i)$ and $ev^-(u^{i+1})$ land in the same Morse-Bott torus, since over $[b_n^i,a^{i+1}_n]$ the energy is too small to cross from one Morse-Bott torus to the next, hence in fact the energy of $u_{\dt_n}\circ \phi_n \circ g_n^\ep$ over $[b_n^i,a^{i+1}_n]$ converges to zero. As in the proof of the previous case, over the interval $[b_n^i,a^{i+1}_n]$, the map $u_{\dt_n}\circ \phi_n \circ g_n^\ep$ converges uniformly to a gradient flow trajectory $v^i$, as was shown in the previous case. As a technical point, once we have found $u^i$ and $u^{i+1}$, we should identify the region when they first enter the Morse-Bott torus, and perform the analysis as we did in propositions \ref{finite conv}, \ref{inf conv} to identify the correct length of the gradient trajectory (this may result in us moving the partition points $b_n^i,a_n^i$ to lengthen the segment that we think of as being the gradient trajectory). We add marked points to both domains of $v^i$ and $u^i$ to make the domain stable, and the gluing map $\Phi$ and $\Psi$ are naturally supplied by considerations of convergence.

\textbf{Step 3}: We remark that the behaviour of $u_{\dt_n}$ near a puncture (either symptotic to a Reeb orbit or to a point), around which the hyperbolic metric produces another thin region, is entirely analogous to the analysis we performed above: we can choose a cylindrical neighborhood of the form $[0,\infty)\times S^1$ or $(-\infty,0]\times S^1$, and along this neighborhood the curve $u_{\dt_n}$ reparametrized as above degenerates into a cascade of cylinders connected by Morse flow lines. The only additional piece of information which follows readily is that if in the original $u_{\dt_n}$ is asymptotic to $\gamma$ near this puncture, then the end of the chain of holomorphic cylinders and gradient trajectories also is also asymptotic to $\gamma$.

\textbf{Step 4}: Finally we discuss the level structure. 

Recall that after the previous modifications the domain of $u_{\dt_n}$ converges to a stable Riemann surface $(S,j,M,D, \overline{Z}\cup \underline{Z},r)$ so that each connected component of $S\setminus D_i$ is assigned either a $J$-holomorphic curve $u$, or a gradient cylinder $v$. We label the components of the domain associated with $J$-holomorphic curves $C_i$ and those labeled with gradient flow cylinders $\tilde{C}_i$. Now for each $C_i$ we pick a point $x_i \in C_i$ and define an ordering that
\[
C_i\leq C_j
\]
if 
\[
a_n(\phi_n(x_i))-a_n(\phi_n(x_j))< \infty
\]
and if $C_i \leq C_j$ and $C_j \leq C_i$, we say $C_i \sim C_j$. This ordering defines a level structure as in the SFT picture, then we add in the gradient flow $v_j$ by hand at each of the levels. We note that if a gradient flow flows across multiple levels of holomorphic curves, then it will appear at these levels as a trivial cylinder. With this convention we see that then the flow time at each intermediate cascade level is the same for all Morse-Bott tori on that level (if a gradient flow needed to flow longer it would simply appear as a trivial cylinder). Then we have the SFT compactness result as desired.
\end{proof}

\newpage
\printbibliography
\end{document}